\newtheorem{theorem}{Theorem}[section]
\newtheorem{lemma}[theorem]{Lemma}
\newtheorem{proposition}[theorem]{Proposition}
\newtheorem{corollary}[theorem]{Corollary}
\theoremstyle{definition}
\newtheorem{example}[theorem]{Example}
\newtheorem{definition}[theorem]{Definition}
\newtheorem{remark}[theorem]{Remark}
\newtheorem{ipotesi}[theorem]{Assumption}
\newcommand\sP{{\mathscr P}}
\newcommand\supp{{\rm spt}}
\newcommand{\sing}{{\rm Sing}}
\newcommand{\reg}{{\rm Reg}}
\newcommand\rD{{\rm D}}
\newcommand\res{\mathop{\hbox{\vrule height 7pt width .3pt depth 0pt
\vrule height .3pt width 5pt depth 0pt}}\nolimits}
\newcommand{\im}{{\rm Im}}
\newcommand{\gr}{{\rm Gr}}
\newcommand{\bT}{\mathbf{T}}
\newcommand{\bG}{\mathbf{G}}
\newcommand{\bJ}{{\mathbf{J}}}
\newcommand{\p}{{\mathbf{p}}}
\newcommand{\sW}{{\mathscr{W}}}
\newcommand{\sC}{{\mathscr{C}}}
\newcommand{\bGam}{{\bm \Gamma}}
\newcommand\bmo{{\bm m}_0}
\newcommand{\cM}{{\mathcal{M}}}
\newcommand{\bU}{{\mathbf{U}}}
\newcommand{\bL}{{\mathbf{L}}}
\newcommand{\phii}{{\bm{\varphi}}}
\newcommand{\Phii}{{\bm{\Phi}}}
\newcommand{\cU}{{\mathcal{U}}}
\newcommand{\cV}{{\mathcal{V}}}
\newcommand{\cL}{{\mathcal{L}}}
\newcommand{\cK}{{\mathcal{K}}}
\newcommand{\bD}{{\mathbf{D}}}
\newcommand{\bH}{{\mathbf{H}}}
\newcommand{\bI}{{\mathbf{I}}}
\newcommand{\bE}{{\mathbf{E}}}
\newcommand{\bB}{{\mathbf{B}}}
\newcommand{\bC}{{\mathbf{C}}}
\newcommand{\be}{{\mathbf{e}}}
\newcommand{\bd}{{\mathbf{d}}}
\newcommand\Z{{\mathbb Z}}
\newcommand\N{{\mathbb N}}
\newcommand\C{{\mathbb C}}
\newcommand\R{{\mathbb R}}
\newcommand{\eps}{{\varepsilon}}
\newcommand{\bA}{\mathbf{A}}
\newcommand{\bmax}{\mathbf{m}}
\def\Xint#1{\mathchoice
{\XXint\displaystyle\textstyle{#1}}%
{\XXint\textstyle\scriptstyle{#1}}%
{\XXint\scriptstyle\scriptscriptstyle{#1}}%
{\XXint\scriptscriptstyle\scriptscriptstyle{#1}}%
\!\int}
\def\XXint#1#2#3{{\setbox0=\hbox{$#1{#2#3}{\int}$ }
\vcenter{\hbox{$#2#3$ }}\kern-.6\wd0}}
\def\mint{\Xint-}
\newcommand{\cone}{{\times\hspace{-0.6em}\times\,}}
\newcommand{\Lip}{{\rm {Lip}}}
\newcommand{\dist}{{\rm {dist}}}
\newcommand\weaks{{\stackrel{*}{\rightharpoonup}}\,}
\newcommand{\cB}{{\mathcal{B}}}
\newcommand{\cG}{{\mathcal{G}}}
\newcommand{\cF}{{\mathcal{F}}}
\newcommand{\cH}{{\mathcal{H}}}
\newcommand{\cQ}{{\mathcal{Q}}}
\newcommand{\cR}{{\mathcal{R}}}
\newcommand\sS{{\mathscr S}}
\newcommand\Pe{{\mathscr P}}
\newcommand{\mass}{{\mathbf{M}}}
\newcommand\e{\mathbf{e}}
\newcommand{\Om}{\Omega}
\def\I#1{{\mathcal{A}}_{#1}}
\def\Is#1{{\mathcal{A}}_{#1} (\R^{n})}
\newcommand{\Iqs}{{\mathcal{A}}_Q(\R^{n})}
\newcommand{\Iq}{{\mathcal{A}}_Q}
\def\a#1{\left\llbracket{#1}\right\rrbracket}
\newcommand{\abs}[1]{\left|#1\right|}
\newcommand{\norm}[2]{\left\|#1\right\|_{#2}}
\newcommand{\D}{\textup{Dir}}
\newcommand{\de}{\partial}
\newcommand{\xii}{{\bm{\xi}}}
\newcommand{\ro}{{\bm{\rho}}}
\newcommand{\etaa}{{\bm{\eta}}}
\newcommand\B{{\mathbf{B}}}
\newcommand{\bh}{\mathbf{h}}
\title[The singular set of area-minimizing currents]{The size of the singular set of area-minimizing currents}
\author{Camillo De Lellis}
\address{Mathematik Institut der Universit\"at Z\"urich}
\email{delellis@math.uzh.ch}
\begin{document}

\maketitle

\section{Introduction}

The Plateau's problem investigates those surfaces of least area spanning a given contour. It is one of the most classical
problems in the calculus of variations, it lies at the crossroad
of several branches of mathematics and it has generated a large amount of mathematical theory in the last one hundred years. 
The problem itself and its various generalizations have found fundamental applications in several mathematical and scientific 
branches. Since it is a prototype of a vast family of questions with geometric and physical significance,
the techniques developed to analyze it have proved to be very useful in a variety of other situations.

Although its original formulation is restricted to $2$-dimensional surfaces spanning a given curve $\gamma$ in the $3$-dimensional space, in this note we will consider the case of (suitably generalized) $m$-dimensional surfaces in oriented Riemannian manifolds $\Sigma$. 
We will restrict ourselves to compact (sufficiently smooth) $\Sigma$ and since all the considerations will be of a local nature we will often assume that $\Sigma$ itself is isometrically embedded in some euclidean space (of dimension $m+n$). In this way the competitor surfaces (classical or generalized) spanning the contour $\gamma$ will always be (suitable generalizations of) subsets of the standard euclidean space, constrained to be subsets of $\Sigma$. Although this is not very elegant from a geometric point of view, it allows us to avoid a lot of technicalities.

\medskip

The very formulation of the Plateau's problem has proved to be a quite challenging mathematical question. In particular, how general are the surfaces that one should consider? What is the correct concept of ``spanning'' and the correct concept of ``$m$-dimensional volume'' that one should use? The author believes that there are no final answers to these two questions: many different significant ones have been given in the history of our subject and, depending upon the context, the features of one formulation might be considered more important than those of the others.  

One popular way to generalize the concept of surface is to consider it in duality with differential forms: on a smooth $m$-dimensional oriented submanifold of $\Sigma$ we can integrate compactly supported forms and this action gives a natural relation between smooth submanifolds and linear maps on the space of smooth (compactly supported) forms $\mathcal{D}^m (\Sigma)$. Following a pioneering idea of De Rham (cf. \cite{DeRham}) we introduce the $m$-dimensional {\em currents}, which are linear functionals on $\mathcal{D}^m (\Sigma)$ satisfying a suitable continuity property. The action of a current $T$ on a form $\omega$ is then given by $T (\omega)$ and we can introduce naturally a concept of boundary ``enforcing'' Stokes' theorem: $\partial T (\omega) := T (d\omega)$. In fact currents can be thought as an extension of the classical notion of chain, namely of formal linear combinations of submanifolds, and the space of currents with the operator $\partial$ forms a chain complex: this in turn leads to an homology theory which can be shown to be equivalent to the several other (classical) homology theories. There is also a natural generalization of the concept of $m$-dimensional volume and using standard functional analytic methods it is then possible to show the existence of minimizers for the Plateau's problem. As a nice byproduct of this theory we can represent each class in (real) homology with a cycle of least mass.

\medskip

Note however that De Rham's theory allows real multiplicities. This is not natural from the point of view of Plateau's problem (in fact generalized minimizers might have non-integer multiplicities, cf. Example \ref{e:meridiani} below). We could try to remedy to this shortcome by considering only the subspace of currents which can be represented as classical chains with integer multiplicities and then look at its closure in the appropriate topology. This point of view was taken in the celebrated paper \cite{FF} by Federer and Fleming and the corresponding objects, called {\em integral currents}, provide an ideal framework for the general {\em oriented} Plateau's problem. A very nice feature of the Federer and Fleming theory is that each integral homology class can be represented by a cycle of least area. In fact the theory is powerful enough to deal with more general coefficient groups.

\medskip

There are two first fundamental issues that a satisfactory variational theory needs to address: existence and regularity. Of course these are by no means the {\em only} important aspects of Plateau's problem: however almost all the other necessarily build on these two important pieces of information, namely that
\begin{itemize}
\item[(a)] there is a minimizer for a large class of boundaries;
\item[(b)] the minimizer is sufficiently regular, so that one can compute interesting geometric quantities and infer additional conclusions.
\end{itemize}
The success of the Federer and Fleming's theory is due to the vast applicability of its existence part in all dimensions and codimensions. Thanks to the efforts of several outstanding mathematicians a rather far-reaching (and satisfactory) regularity theory was achieved in the seventies in codimension $1$ (see for instance \cite{Giusti}). This theory has been digested by the subsequent generations of scholars working in differential geometry and PDEs, leading ultimately to many breakthroughs in different problems in geometry, PDEs and mathematical physics. Indeed the codimension $1$ case is considerably easier than the higher codimension: the reason is that integral $m$-dimensional cycles in $\R^{m+1}$ are in fact (countable integral combinations of) boundaries of sets. The Federer-Fleming theory is thus equivalent to the theory of sets of finite perimeter (or Caccioppoli sets), which was indeed introduced a few years earlier by De Giorgi, developing a pioneering idea of Caccioppoli (cf. \cite{Caccioppoli,DeGiorgi1,DeGiorgi3}; for an english translation of De Giorgi's works we refer to \cite{DeGiorgi2}). 

\medskip

In the higher codimension case the most important conclusion of the regularity theory can be attributed to the monumental work of a single person, F. J. Almgren Jr. (\cite{Alm}). Unlike the codimension one case, only a relatively small portion of the theory of Almgren has been truly understood. In a recent series of papers Emanuele Spadaro and the author have given a new, much shorter, account of Almgren's regularity program, relying on the several advances in geometric measure theory  of the last two decades and on some new ideas. The aim of this note is to give a rather detailed picture of the several issues that this program must face and of how they are resolved.

\medskip

We will assume that the reader is familiar with the basic concepts of the theory of integer rectifiable currents: we will therefore only recall them very quickly to lay a common notational ground (the reader who is not familiar with this part of geometric measure theory is instead encouraged to read first the survey \cite{CDMsurvey}).
 As part of our exposition we will isolate the features of codimension $1$. In Section \ref{s:first} we will recall the first considerations in the regularity theory and summarize the state of the art in the subject.  
We will then review the regularity theory for minimizers in codimension $1$, focusing on its most important steps, cf. Section \ref{s:cod-1}. The reader who is very familiar with these topics can skip the corresponding sections. However some of the discussions in there
will play an important role in the rest of the note, where we will describe the details of the proofs of Almgren's theorem as presented in the papers \cite{DS1,DS2,DS3,DS4,DS5}.

\section{Terminology and fundamental results in the theory of currents}

\subsection{General currents}
The idea of treating (oriented) surfaces as linear functionals on a suitable space of smooth objects dates back at least to De Rham, cf. \cite{DeRham}. In what follows we denote by $\mathcal{D}^m (\Omega)$ the space of smooth compactly supported $m$-forms in $\Omega$, where $\Omega$ is a differentiable manifold. We will omit $\Omega$ when it is clear from the context. 

\begin{definition}[Current, De Rham, cf. {\cite[Definition 26.1]{Simon}}]\label{d:current}
An $m$-dimensional current $T$ is a continuous linear map $T: \mathcal{D}^m (\Omega) \to \mathbb R$. Here the continuity condition must be understood in the following sense: $T (\omega^k) \to T(\omega)$ whenever $\{\omega^k\}\subset \mathcal{D}^m$ is a sequence such that
\begin{itemize}
\item[(a)] there is an open set $\Omega'\subset\subset \Omega$ with $\supp (\omega^k)\subset\Omega'$ for every $k$; 
\item[(b)] $\omega^k\to \omega$ in $C^j (\Omega')$ for every $j$.
\end{itemize}
A sequence of currents $T^k$ (of the same dimension $m$) converges to $T$ if $T^k (\omega)\to T (\omega)$ for every $\omega\in \mathcal{D}^m$ (cf. \cite[eq. 26.12]{Simon}).
\end{definition}

As already mentioned the concept of boundary is defined ``enforcing'' Stokes' Theorem.

\begin{definition}[Boundary, De Rham, cf. {\cite[eq. 26.3]{Simon}}]\label{d:bordo}
We say that an $(m-1)$-dimensional current $S$ is the boundary of an $m$-dimensional current $T$ if
\begin{equation}
T (d\omega) = S (\omega) \qquad \mbox{for every $\omega\in \mathcal{D}^{m-1}$.}
\end{equation}
$S$ will then be denoted by $\partial T$.
\end{definition}

The class of smooth oriented submanifolds $\Gamma\subset \Omega$ with smooth boundaries can then naturally be viewed as a subset of the space of $m$-dimensional currents in $\Omega$. In order to distinguish between any such $\Gamma$ and its ``action'' as linear functional via integration of forms, we will use the notation $\a{\Gamma}$ for the current, namely $\a{\Gamma} (\omega):= \int_\Gamma \omega$ (cf. \cite[eq. 26.2]{Simon}).  If we consider points $P\in \Omega$ as $0$-dimensional submanifolds, consistently with our convention we will denote by $\a{P}$ the usual Dirac delta at $P$, although a more common symbol would be $\delta_P$.

Although the definition above is quite general, in the rest of this note we will always assume that $\Omega$ is an open subset of the euclidean space $\R^{m+n}$. If $\Sigma$ is then a submanifold of $\R^{m+n}$, we will consider currents $T$ in $\Sigma$ as currents $T$ in $\R^{m+n}$ whose support $\supp (T)$ is contained in $\Sigma$. As usual the support $\supp (T)$ is the complement of the maximal open set $U$ for which $T (\omega) =0$ whenever $\supp (\omega)\subset U$ (cf. \cite[Section 4.1]{Fed}). 

\medskip

Recall that a simple $m$-vector is an element of $\Lambda_m (\R^{m+n})$ of the form $v_1\wedge \ldots \wedge v_m$.  Moreover there is a natural definition of {\em length} of a simple $m$-vector: $|v_1\wedge \ldots \wedge v_m|$ is the $m$-dimensional Hausdorff measure of the parallelogram spanned by the vectors $v_1, \ldots, v_m$. $|\cdot|$ can be easily extended to a Hilbert norm on $\Lambda_m (\R^{m+n})$ and the corresponding scalar product takes the following form when computed on simple vectors:
\[
\langle v_1\wedge \ldots \wedge v_m, w_1\wedge \ldots \wedge w_m \rangle = {\rm det}\, (v_i\cdot w_j)\, .
\]

\begin{definition}[Comass and mass, cf. {\cite[Section 1.8]{Fed}}]\label{d:comass}
Let $\omega\in \Lambda^m (\R^{m+n})$. Then the comass of $\omega$ is the norm 
\[
\|\omega\| := \max \left\{\langle \omega, v_1\wedge \ldots \wedge v_m\rangle: |v_1 \wedge \ldots \wedge v_m|=1\right\}\, ,
\]
(here $\langle , \rangle$ is the standard duality pairing between covectors and vectors). The dual norm on $\Lambda_m (\R^{m+n})$, called the mass norm, is
\[
\|\vec{T}\| = \max \left\{\langle \omega, \vec{T}\rangle : \|\omega\|\leq 1 \right\}\, .
\]
If $\omega\in \mathcal{D}^m (\R^{m+n})$ we then define its comass norm to be
\[
\|\omega\|_c := \max_p \|\omega (p)\|\, .
\]
\end{definition}

\begin{definition}[Mass of a current, cf. {\cite[Section 4.1.7]{Fed}}]\label{d:mass}
Given an $m$-dimensional current $T$ we denote by $\mass (T)$ its mass, namely the quantity
\begin{equation}\label{e:mass}
\mass (T) = \sup \left\{T (\omega) :  \|\omega\|_c\leq 1\right\}\;\, .
\end{equation}
For any open set $\Omega$ we define
\begin{equation}\label{e:tot_var}
\|T\| (\Omega) = \sup \left\{T (\omega) : \mbox{$\supp (\omega)\subset \Omega$ and $\|\omega\|_c\leq 1$}\right\}\,  .
\end{equation}
\end{definition}

When $T$ is a current of finite mass, namely $\mass (T) = \|T\| (\R^{m+n}) < \infty$, $\|T\|$ turns out to be a Radon measure
(cf. \cite[Sections 4.1.5 and 4.1.7]{Fed}). The same holds under the weaker assumption that $\|T\| (\Omega) < \infty$ for any bounded open set $\Omega$: in this case we will say that $T$ has locally finite mass. If we endow $\Lambda_m (\R^{m+n})$ with the mass norm, $T$ can then be seen as a measure taking values in a (finite-dimensional) Banach space and $\|T\|$ is the total variation measure of $T$. It is customary to take the ``polar (or Radon-Nikod\'ym) decomposition'' (cf. \cite[eq. 26.7 and Definition 27.1]{Simon}) of $T$ and infer the existence of a Borel-valued map $\vec{T}$ such that $T = \vec{T} \|T\|$, namely
\begin{equation}\label{e:RN}
T (\omega) = \int \langle \omega (p) , \vec{T} (p) \rangle\, d\|T\| (p)\qquad \forall \omega\in \mathcal{D}^m (\R^{m+n})\, .
\end{equation}
The comass of $\vec{T} (x)$ equals $1$ at every point $x$. In particular at those points where $\vec{T} (x)$ is simple, it can be represented as $e_1 \wedge \ldots \wedge e_m$ for a set $e_1, \ldots, e_m$ of vectors which span a parallelogramm of volume $1$.

\medskip

If $F$ is a smooth {\em proper} map between two Euclidean spaces $\R^N$ and $\R^k$, the pullback $F^\sharp \omega$ of an element $\omega\in \mathcal{D}^m (\R^k)$ is an element of $\mathcal{D}^m (\R^N)$. By duality this gives naturally a notion of pushforward of currents, namely $F_\sharp T (\omega) = T (F^\sharp \omega)$ (cf. \cite[eq. 26.20]{Simon}). The assumption that $F$ is proper is needed to guarantee that $F^\sharp \omega$ has compact 
support if $\omega$ has compact support. This is however not needed when $\supp (T)$ is compact: since this will be mostly the case, in several occasions we will push currents forward via maps which are just smooth.

Moreover, when $\Omega\subset \Omega'$ are open and $T$ is a current on $\Omega'$ we can easily restrict its action to forms supported in $\Omega$: for such restriction we use the notation $T\res \Omega$. When $T$ has finite mass, we can define $T\res E$ for any Borel set $E$ and consider it as a current in the original domain of definition by simply setting
\[
T\res E (\omega) := \int_E \langle \omega (p) , \vec{T} (p) \rangle\, d\|T\| (p)\, .
\]
In the future, to make our discussion simple, we will say that two currents $T$ and $S$ agree on an open subset $\Omega$ if $T\res \Omega = S \res \Omega$. If the currents have (locally) finite mass, this notion can be extended to Borel subsets.
We use a similar notation on Borel measures $\mu$ (which we always assume to be defined on some subset $K$ of the Euclidean space): given a $\mu$-measurable $E\subset K$, we use the symbol $\mu \res E$ for the measure $\mu\res E (A) := \mu (A\cap E)$.

\medskip

The product of an $m$-dimensional current $T$ in $\R^k$ and an $\bar{m}$-dimensional current $S$ in $\R^{\bar k}$ is defined on forms of type $\omega\wedge \nu$ with $\omega \in \mathcal{D}^m (\R^k)$ and $\nu\in \mathcal{D}^{\bar m} (\R^{\bar k})$ by
\[
T\times S (\omega \wedge \nu) := T (\omega) S (\nu)\, .
\]
The action is then extended by linearity to general forms in $\mathcal{D}^{m+\bar m} (\R^{k+\bar k})$.

If $S$ and $T$ are two currents in $\R^k$ we define their {\em join} $S\cone T$ as 
\[
H_\sharp (\a{[0,1]}\times S \times T)
\]
where $H: \R\times \R^k\times R^k \ni (t,x,y) \mapsto y + (1-t) x$. Of particular interest for us is the case where $S$ is $\a{P}$ for some point $P$: the join $\a{P}\cone T$ is then the {\em cone with vertex $P$ and basis $T$} and to simplify the notation we will use $P\cone T$.

\subsection{The Federer-Fleming theory} As already mentioned, general currents allow to take real linear combinations of classical surfaces. Following Federer-Fleming it is instead possible to build an homology theory with integer coefficients.

\begin{definition}[Integral currents, Federer-Fleming, cf. {\cite[Definition 27.1]{Fed}}]\label{d:integral}
A current $T$ of locally finite mass is {\em integer rectifiable} if there is a sequence of $C^1$ oriented surfaces $\Sigma_i\subset \R^{m+n}$, a sequence of pairwise disjoint closed subsets $K_i\subset \Sigma_i$ and a sequence of positive integers $k_i$ such that
\begin{equation}\label{e:def_rect}
T (\omega) = \sum_i k_i \int_{K_i} \omega \qquad \forall \omega\in \mathcal{D}^m\, .
\end{equation}
$T$ is {\em integral} if both $T$ and $\partial T$ are integer rectifiable. 
\end{definition}

In the definition above it is implicitely assumed that the right hand side of \eqref{e:def_rect} is a convergent series. In fact it is not difficult to see that, under the above assumptions, $\|T\| (\Omega) = \sum_i k_i \cH^m (K_i\cap \Omega) = \sum_i k_i {\rm Vol}^m\, (K_i\cap \Omega)$, where $\cH^\alpha$ denotes the $\alpha$-dimensional Hausdorff measure.

The space of integer rectifiable currents is not any more a {\em linear space} and there is no simple functional-analytic principle which provides a good compactness property. A fundamental result in the theory of Federer-Fleming is that, nonetheless, the space of integral currents is compact in a suitable sense.

\begin{theorem}[Compactness of integral currents, Federer-Fleming, cf. {\cite[Theorem 32.2]{Simon}}]\label{t:FFcomp}
If $\{T^k\}$ is a sequence of integral $m$-dimensional currents in $\Omega \subset \R^{m+n}$ open such that 
\[
\sup_k (\mass (T^k) + \mass (\partial T^k)) < \infty\, ,
\] 
then there is a subsequence, not relabeled, and an integral $m$-dimensional current $T$ such that $T_k \to T$.
\end{theorem}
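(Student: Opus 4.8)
We argue by induction on the dimension $m$, the case $m=0$ being elementary: an integral $0$-current of finite mass is a finite integer combination $\sum_i n_i\a{p_i}$, the bound $\sup_k \mass(T^k)<\infty$ caps both the number of atoms and the size of the integers $n_i^k$, and along a subsequence the finitely many points converge and the bounded integer coefficients stabilise, so the limit is again of this form. Assume $m\geq 1$ and the statement known in dimension $m-1$. Since $\sup_k\mass(T^k)<\infty$, the $T^k$ are equibounded functionals on $\{\omega:\|\omega\|_c\leq 1\}$; a Banach--Alaoglu argument, diagonalised over an exhaustion of $\Omega$ by relatively compact open sets, yields a subsequence (not relabeled) with $T^k\to T$, and using $\sup_k\mass(\partial T^k)<\infty$ also $\partial T^k\to S$. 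Passing to the limit in $T^k(d\omega)=\partial T^k(\omega)$ gives $\partial T=S$, while lower semicontinuity of mass under weak convergence gives $\mass(T),\mass(S)<\infty$; thus $T$ is \emph{normal}. Moreover $\{\partial T^k\}$ satisfies the hypotheses of the theorem in dimension $m-1$ (its boundaries vanish), so by the inductive hypothesis $S=\partial T$ will be integer rectifiable as soon as $T$ is, and then $T$ is integral. Everything therefore reduces to showing that the normal current $T$ is integer rectifiable.

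\emph{Step 1: a lower density bound.} Because each $T^k$ is integer rectifiable one has a uniform mass-ratio lower bound of Federer--Fleming type: there is $c(m)>0$ such that for every $x$ and a.e.\ $r$, $\mass(T^k\res B_r(x))$ is either controlled by the boundary term or at least $c(m)r^m$. This follows from the isoperimetric inequality for integral currents (a consequence of the Deformation Theorem) applied to $T^k\res B_r(x)$, whose boundary is $(\partial T^k)\res B_r(x)$ plus the slice $\langle T^k,\dist(x,\cdot),r\rangle$, together with integration of the resulting differential inequality for $r\mapsto\|T^k\|(B_r(x))$. Passing to the limit, one obtains $\limsup_{r\downarrow 0}\|T\|(B_r(x))/r^m>0$ for $\|T\|$-a.e.\ $x$, and consequently $\|T\|\leq C\,\cH^m$ on a set of $\sigma$-finite $\cH^m$-measure carrying $\|T\|$.

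\emph{Step 2: rectifiability and integrality of the multiplicity.} Split the carrier of $\|T\|$ into a countably $m$-rectifiable part $R$ and a purely $m$-unrectifiable part $P$. By the Besicovitch--Federer structure theorem, $\cH^m(\pi_V(P))=0$ for almost every orthogonal projection $\pi_V$ onto an $m$-plane $V\subset\R^{m+n}$, hence $(\pi_V)_\sharp(T\res P)=0$ for a.e.\ $V$; combining this with the slicing of $T\res P$ by the maps $\pi_V$ (whose slices are $0$-dimensional and therefore vanish for a.e.\ $V$ and a.e.\ slicing level) forces $\|T\|(P)=0$. Thus $\|T\|$ is carried by a rectifiable set, $\vec T$ is $\|T\|$-a.e.\ a simple unit $m$-vector orienting the approximate tangent plane, and $T=\theta\,\a{M}$ with $M$ rectifiable and $\theta>0$ the $\|T\|$-density. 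To see $\theta\in\N$ a.e., slice $T$ by a generic linear $\pi\colon\R^{m+n}\to\R$: for a.e.\ level $y$ the slice $\langle T,\pi,y\rangle$ is a normal $(m-1)$-current which, along a suitable further subsequence and using the standard Fubini-type slicing estimates, is the weak limit of the slices $\langle T^k,\pi,y\rangle$ of the integral currents $T^k$, with uniformly bounded mass and boundary mass; by the inductive hypothesis it is integer rectifiable. Since for $\cH^m$-a.e.\ $x\in M$ the slices through $x$ carry multiplicity exactly $\theta(x)$ at the corresponding points, we conclude $\theta(x)\in\N$. Hence $T$ is integer rectifiable, and by the opening paragraph it is integral, closing the induction.

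\emph{Main obstacle.} The weak-compactness step is routine; the real content is the Federer--Fleming closure theorem assembled in Steps 1--2. Its two genuinely hard ingredients are the Deformation Theorem, which underlies the isoperimetric inequality and hence the density bound, and the Besicovitch--Federer structure theorem for purely unrectifiable sets, which is what excludes the unrectifiable part of the limit. The slicing induction that upgrades rectifiability to integer multiplicity is delicate in its bookkeeping — Fubini for slices, compatibility of the successive subsequences, measurability of the slicing levels — but comparatively elementary once those two pillars are in place.
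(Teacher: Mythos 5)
The paper cites this theorem to Simon's lecture notes and does not reproduce a proof, so there is no in-paper argument to compare against; what you sketch is the classical Federer--Fleming closure theorem route (weak compactness, lower density bound via the deformation/isoperimetric machinery, Besicovitch--Federer to kill the unrectifiable piece, slicing induction for integrality), which is indeed the proof in the cited reference. It is worth knowing that the paper itself later advertises a genuinely different modern route: interpreting $y\mapsto\langle T,\p,y\rangle$ as a $BV$ map into the metric space of $0$-currents (Jerrard--Soner, Ambrosio--Kirchheim, White's ``rectifiable slices'' theorem) gives a proof that sidesteps the structure theorem entirely, at the price of some metric-space machinery; your version buys a more elementary target (Euclidean slices and classical $BV$) at the price of the structure theorem.

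There is one concrete non sequitur in Step~2. You write that $\cH^m(\pi_V(P))=0$ ``hence'' $(\pi_V)_\sharp(T\res P)=0$. This does not follow: $T\res P$ is not a normal current (restriction of a normal current to a Borel set can have infinite boundary mass), so you cannot invoke the $BV$/Constancy argument to say a top-dimensional pushforward supported on a Lebesgue-null set vanishes; a finite-mass top-dimensional current can be a Dirac on a null set. The correct route is the one you allude to in the parenthetical: slice $T$ itself (which \emph{is} normal) by $\pi_V$, observe that for a.e.\ $y$ the $0$-slice is carried by the $\sigma$-finite carrier of $\|T\|$, and that $\pi_V(P)$ Lebesgue-null forces those slices to live in $R$ for a.e.\ $y$; integrating the slice masses then gives $(T\res P)\res d\pi_V=0$, i.e.\ $\langle d\pi_V,\vec T(x)\rangle=0$ for $\|T\|$-a.e.\ $x\in P$. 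You then still need an extra step you do not state: for fixed $\xi\in\Lambda_m$, the function $V\mapsto\langle d\pi_V,\xi\rangle$ is real-analytic on the Grassmannian, so vanishing for a.e.\ $V$ forces $\xi=0$; applied to $\xi=\vec T(x)$ this is what actually yields $\|T\|(P)=0$. Without that analyticity observation the slicing information does not close the loop. The remaining gaps you flag yourself (transfer of the lower density bound to the limit, the Fubini/diagonal bookkeeping in the slicing induction) are real but standard; this one is the point where the written argument, as stated, breaks.
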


As a corollary we achieve 

\begin{corollary}[Existence of area minimizing integral currents]\label{c:existence}
Let $Z$ be an $(m-1)$-dimensional integer rectifiable current and $\bar T$ an $m$-dimensional integral current with $\partial \bar T = Z$ and $\mass (\bar T) < \infty$. Then there is an integer rectifiable current $T_0$ such that 
$\partial T_0 = Z$ and 
\[
\mass (T_0) = \min \{\mass (T) : \mbox{ $T$ is int. rect. and }\partial T = Z\}\, .
\]

If $\supp (\bar T)\subset K$ for some closed set $K$ we also have the existence of an integer rectifiable current $T_0$ such that $\partial T_0 = Z$, $\supp (T_0) \subset K$ and 
\[
\mass (T_0) = \min \{\mass (T) : \mbox{$T$ is int. rect., $\partial T = Z$ and  $\supp (T)\subset K$}\}\, .
\]
\end{corollary}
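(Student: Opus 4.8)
The plan is to obtain $T_0$ as a limit of a minimizing sequence and to invoke Theorem~\ref{t:FFcomp}. First I would observe that the competitor class
\[
\mathcal{C}_Z := \{T : T \text{ is integer rectifiable}, \ \partial T = Z, \ \mass(T) < \infty\}
\]
is nonempty, since $\bar T \in \mathcal{C}_Z$. Hence the infimum $m_0 := \inf\{\mass(T) : T \in \mathcal{C}_Z\}$ is a well-defined nonnegative real number, and we may pick a minimizing sequence $\{T^k\} \subset \mathcal{C}_Z$ with $\mass(T^k) \to m_0$. Since each $T^k$ is integer rectifiable with $\partial T^k = Z$ and $Z$ is itself integer rectifiable, each $T^k$ is in fact an integral current; moreover $\partial T^k = Z$ for all $k$, so $\mass(\partial T^k) = \mass(Z)$ is a fixed finite constant, and $\sup_k \mass(T^k) < \infty$ because the sequence is minimizing. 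Thus the hypotheses of Theorem~\ref{t:FFcomp} are satisfied and, passing to a subsequence (not relabeled), $T^k \to T_0$ for some integral current $T_0$.

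Next I would check that $T_0$ lies in the competitor class and realizes the minimum. The boundary condition passes to the limit: for every $\omega \in \mathcal{D}^{m-1}$ we have $\partial T_0(\omega) = T_0(d\omega) = \lim_k T^k(d\omega) = \lim_k \partial T^k(\omega) = Z(\omega)$, so $\partial T_0 = Z$. For the mass, I would use lower semicontinuity of mass under the weak convergence of currents: from the definition \eqref{e:mass}, for any fixed admissible $\omega$ with $\|\omega\|_c \le 1$ we have $T_0(\omega) = \lim_k T^k(\omega) \le \liminf_k \mass(T^k) = m_0$, and taking the supremum over such $\omega$ gives $\mass(T_0) \le m_0$. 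On the other hand $T_0$ is an admissible competitor (it is integer rectifiable with $\partial T_0 = Z$ and finite mass), so $\mass(T_0) \ge m_0$. Hence $\mass(T_0) = m_0$, which is exactly the asserted minimality.

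For the constrained version, I would run the identical argument inside the smaller class
\[
\mathcal{C}_{Z,K} := \{T : T \text{ is integer rectifiable}, \ \partial T = Z, \ \supp(T) \subset K, \ \mass(T) < \infty\},
\]
which is nonempty because $\bar T \in \mathcal{C}_{Z,K}$ by hypothesis. The only additional point is that the support constraint is preserved in the limit: if $\supp(T^k) \subset K$ for all $k$ and $T^k \to T_0$, then for any $\omega$ with $\supp(\omega) \subset \R^{m+n} \setminus K$ we have $T_0(\omega) = \lim_k T^k(\omega) = 0$, whence $\supp(T_0) \subset K$ as well. Everything else — membership of the minimizing sequence in $\mathcal{C}_{Z,K}$, the uniform mass bound, the application of Theorem~\ref{t:FFcomp}, the stability of the boundary condition, and the lower semicontinuity of mass — goes through verbatim.

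The one genuinely nontrivial ingredient is the compactness theorem itself (Theorem~\ref{t:FFcomp}), which I am allowed to assume; beyond that, the main thing to be careful about is the lower semicontinuity of mass, but this is immediate from the fact that $\mass$ is defined as a supremum of the linear functionals $T \mapsto T(\omega)$ over a fixed family of test forms, and a pointwise-convergent sequence of such functionals has the property that every element of the defining family evaluated at the limit is bounded by the $\liminf$ of the masses. So there is no real obstacle; the proof is a packaging of the compactness theorem together with these two routine continuity/semicontinuity observations.
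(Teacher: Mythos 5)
Your argument is the standard direct method and is essentially the proof the paper has in mind (the paper simply states the corollary after the compactness theorem without spelling out the details): take a minimizing sequence, note it satisfies the hypotheses of Theorem~\ref{t:FFcomp}, pass to a convergent subsequence, and verify that the boundary condition, the support constraint, and the minimality all survive the limit via continuity of $\partial$ under weak convergence, the definition of support, and lower semicontinuity of mass, respectively.

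One small point worth flagging: when you assert that $\mass(\partial T^k)=\mass(Z)$ is a \emph{finite} constant, note that the hypotheses only explicitly give $\mass(\bar T)<\infty$, while $Z$ being integer rectifiable a priori yields only \emph{locally} finite mass. The finiteness of $\mass(Z)$ is implicitly part of the Plateau-problem setup (otherwise the hypotheses of Theorem~\ref{t:FFcomp} cannot be met at all, and indeed no minimizing sequence of integral currents with uniformly bounded boundary mass exists), so the argument is sound, but it would be cleaner to state this as an assumption or to observe that it is forced if the competitor class is to be nonempty in the sense required by Theorem~\ref{t:FFcomp}. Everything else — in particular the observation that lower semicontinuity of mass is automatic because mass is defined as a supremum of the weakly continuous linear functionals $T\mapsto T(\omega)$, and that the support constraint passes to the limit because any test form supported in the open complement of $K$ is annihilated by every $T^k$ — is exactly right.
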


Indeed both Theorem \ref{t:FFcomp} and Corollary \ref{c:existence} can be extended to a fairly large class of metric spaces, cf. \cite{AK}. 

Of course a similar corollary can be drawn via classical functional analysis for general currents: if $Z$ is a general $m-1$-dimensional current, $K$ a closed set and $T$ an $m$-dimensional current of finite mass with $\partial T = Z$ and $\supp (T)\subset K$, then there is an $m$-dimensional current $T_0$ with $\partial T_0 = Z$, $\supp (T_0) \subset K$ and
\[
\mass (T_0) = \min \{\mass (T) : \mbox{$\partial T = Z$ and  $\supp (T)\subset K$}\}\, .
\]
However the latter formulation allows for general real coefficients.

\begin{example}\label{e:meridiani}
Consider the south and north poles $S$ and $N$ in the standard sphere $\mathbb S^2 \subset \mathbb R^3$ and let $Z$ be the $0$-dimensional current $\a{N}-\a{S}$. For any meridian $\gamma$ joining $S$ to $N$ the corresponding current $\a{\gamma}$ is a minimizer of the mass among all currents $T$ with $\partial T = Z$ and $\supp (T)\subset \mathbb S^2$. However the same holds for any convex combination $\lambda \a{\gamma} + (1-\lambda) \a{\eta}$ where $\eta$ is any other meridian and $\lambda \in [0,1]$. In fact one can push this idea even further. Let us parametrize the meridians as $\{\gamma_t\}_{t\in \mathbb S^1}$, where $t$ is the intersection of $\gamma_t$ with the equator $\{x_3=0\}\cap \mathbb S^2$. If $\mu$ is a probability measure on $\mathbb S^1$, then the current
\[
T_0 (\omega) := \int_{\mathbb S^1} \a{\gamma_t} (\omega)\, d\mu (t)
\]
is also a minimizer of the mass (among those currents $T$ with $\supp (T)\subset \mathbb S^2$ and $\partial T = Z$). 
\end{example}

This does not seem a serious issue as there are anyway ``classical minimizers'' in the example above. However, we have the following remarkable theorem (for a very short and elegant proof we refer to \cite{White1}).

\begin{theorem}[Lavrentiev gap, Young \cite{Young}]\label{l:lav}
For every smooth closed embedded curve $\gamma$ in $\mathbb R^4$ define
\begin{align}
&M (\gamma) := \inf \left\{ \mass (T): T \mbox{ is integer rectifiable and } \partial T = \a{\gamma}\right\}\\
&m (\gamma) := \min \left\{ \mass (T): \partial T = \a{\gamma}\right\}\, .
\end{align}
Then there are $\gamma$'s for which $M (\gamma) > m (\gamma)$.
\end{theorem}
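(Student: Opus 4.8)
The plan is to exhibit a single smooth closed embedded curve $\gamma\subset\R^4$ together with a \emph{real} current that fills it cheaply and a bound, valid for \emph{every} integer rectifiable filling, that strictly exceeds that cost. The first thing to internalise is what cannot work: a calibration, or an isoperimetric/monotonicity estimate, bounds the mass of \emph{all} currents with boundary $\a{\gamma}$ — real or integer rectifiable — by the \emph{same} constant, so it can never by itself produce a gap. The gap must be created by the one structural difference available: the multiplicity of an integer rectifiable current is integer valued, whereas a real current may carry any real multiplicity. Accordingly $\gamma$ is engineered (this is Young's example, streamlined in \cite{White1}) so that its \emph{optimal} filling is a surface which, over a definite region, must be covered with multiplicity $\tfrac12$.

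\emph{Step 1: the curve and the upper bound on $m(\gamma)$.} One constructs $\gamma$ so that, although $\gamma$ itself is ``tangled'' — no integer rectifiable current with boundary $\a{\gamma}$ is cheap — \emph{two} copies of $\gamma$ do bound an efficient, explicit, orientable piece: there is an integer rectifiable current $N$ with $\partial N = 2\,\a{\gamma}$ and $\mass(N)$ computable. Then $\tfrac12 N$ is a \emph{real} current with $\partial(\tfrac12 N)=\a{\gamma}$, whence
\[
m(\gamma)\ \le\ \mass(\tfrac12 N)\ =\ \tfrac12\,\mass(N)\,.
\]
One must resist the temptation to take $\gamma$ to be simply the boundary of a small non-orientable surface $P$: cutting $P$ along a seam produces an \emph{integer rectifiable} disk with boundary $\a{\gamma}$ and area $\cH^2(P)$, which kills the gap. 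Making $\gamma$ genuinely tangled — so that no integral filling is comparably cheap — while keeping the ``doubled'' filling $N$ efficient is the delicate part of the construction.

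\emph{Step 2: the lower bound on $M(\gamma)$ (the crux).} It remains to prove that
\[
M(\gamma)\ =\ \inf\{\mass(T):T\ \text{integer rectifiable},\ \partial T=\a{\gamma}\}\ >\ \tfrac12\,\mass(N)\,,
\]
an inequality that is \emph{false} for real competitors (witness $\tfrac12 N$), and whose proof must therefore use integrality. The scheme is a calibration argument refined by integrality: one produces, as part of the construction, a closed $2$-form $\phi$ of comass $\le 1$ for which $\tfrac12 N$ is calibrated — so that $\tfrac12\mass(N)=(\tfrac12 N)(\phi)=\int_\gamma\psi$ with $d\psi=\phi$, a number fixed by $\gamma$ — together with an auxiliary configuration (a $2$-cycle $Z$ with $\mathrm{lk}(\gamma,Z)\neq0$ and a distinguished $2$-dimensional region $R$ that every filling of $\gamma$ must cross) arranged so that no $\phi$-calibrated integer rectifiable current with boundary $\a{\gamma}$ can exist: the linking forces any integer rectifiable $T$ with $\partial T=\a{\gamma}$ to carry over $R$ a nonzero, hence $\ge 1$, integer multiplicity where $\tfrac12 N$ carries only $\tfrac12$, and the constancy theorem applied to slices of $T$ shows that this extra sheet lies in a direction along which $\phi$ cannot credit it. Thus $\mass(T)=\int 1\,d\|T\| > \int\langle\phi,\vec T\rangle\,d\|T\|=\int_\gamma\psi$ for every such $T$; a compactness argument — after truncating competitors of large mass via a density estimate and a cut-and-paste, then invoking Theorem \ref{t:FFcomp} and lower semicontinuity of mass — upgrades this strict inequality to the \emph{uniform} bound $\mass(T)\ge \int_\gamma\psi+\delta = \tfrac12\mass(N)+\delta$ for a fixed $\delta>0$. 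Combining, $M(\gamma)\ge\tfrac12\mass(N)+\delta>\tfrac12\mass(N)\ge m(\gamma)$.

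The main obstacle is precisely the passage, in Step 2, from the \emph{qualitative} statement ``no integral filling of $\gamma$ can be $\phi$-calibrated'' to the \emph{quantitative}, competitor-uniform gap $\delta>0$: this is where one needs the interplay of compactness for integral currents, lower semicontinuity of mass, and the arithmetic rigidity supplied by the constancy theorem, and it is where the bulk of the work in \cite{White1} lies. A secondary difficulty — conceptually routine but technically fiddly — is assembling the whole package simultaneously: $\gamma$ smoothly embedded, the efficient doubled filling $N$, the calibration $\phi$, the linking data $Z$ and the obstacle $R$, all mutually transverse; as noted in Step 1 the obvious low-complexity candidates for $\gamma$ are too cheap to fill integrally and must be discarded. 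Finally, an alternative realisation of Step 2 replaces $\phi$ by a reduction modulo $2$ (or a prime $p$) against a strictly stable, uniquely mass-minimising non-orientable comparison chain, with the quantitative gap then coming from strict stability rather than from compactness.
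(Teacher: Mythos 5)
The paper offers no proof of this statement, only the citations to Young \cite{Young} and White \cite{White1}, so your attempt has to be judged against the argument in those references. Your Step~1 framework is correct and is the right starting point: produce an integer rectifiable $N$ with $\partial N=2\a{\gamma}$ and $\mass(N)<2M(\gamma)$, so that $\tfrac12 N$ witnesses $m(\gamma)\le\tfrac12\mass(N)<M(\gamma)$. But your primary Step~2 mechanism, the calibration $\phi$ for $\tfrac12 N$, is not merely vague — I believe it is self-defeating. If $\phi$ has constant coefficients then after a rotation $\phi=e^1\wedge e^2+\lambda\,e^3\wedge e^4$ with $|\lambda|\le 1$. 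For $|\lambda|<1$ the only calibrated $2$-planes are the translates of $\operatorname{span}(e_1,e_2)$, so a calibrated $N$ is planar, $\gamma$ is a plane curve, and $\gamma$ bounds the obvious planar region integrally: no gap. For $|\lambda|=1$, $\phi$ is a K\"ahler form and a calibrated $N$ is a positive holomorphic chain; but near a generic point of $\gamma$ such a chain consists of finitely many smooth holomorphic sheets with positive integer multiplicities adding up to $2$, i.e.\ either one sheet with multiplicity $2$ or two sheets with multiplicity $1$, and in either case one extracts a positive (hence calibrated, hence integral) holomorphic chain with boundary $\a{\gamma}$ and mass $\tfrac12\mass(N)$, giving $M(\gamma)=m(\gamma)$ again. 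Your linking argument cannot save this: the intersection number $T\cdot Z=\mathrm{lk}(\gamma,Z)$ is a purely homological invariant, identical for every filling $T$ whether integer rectifiable or real, so linking by itself does not see integrality; and the clause ``the extra sheet lies in a direction along which $\phi$ cannot credit it'' flatly contradicts the hypothesis that $T$ is $\phi$-calibrated, under which every tangent plane of $T$ is credited in full. The mechanism you relegate to a closing aside — reduce mod~$2$ against a strictly stable, uniquely mass-minimizing non-orientable chain $S$ — is the one that actually drives Young's and White's arguments and is what you should develop: for every integral $T$ with $\partial T=\a{\gamma}$ the reduction $T\bmod 2$ has boundary $\gamma\ (\mathrm{mod}\ 2)$ and $\mass(T\bmod 2)\le\mass(T)$, and uniqueness plus strict stability of $S$ give a uniform gap once one verifies that no integral $T$ can satisfy $T\equiv S\ (\mathrm{mod}\ 2)$, while $m(\gamma)\le\mass(S)$ because $2\a{\gamma}$ bounds an orientable doubling of $S$.

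One factual correction on Step~1. The caveat ``cutting $P$ along a seam produces an integer rectifiable disk with boundary $\a{\gamma}$ and area $\cH^2(P)$'' is false. If $P$ is a M\"obius band with core circle $c$ then $H_1(P;\Z)\cong\Z$ is generated by $[c]$ and $[\gamma]=2[c]\neq 0$, so $\gamma$ bounds \emph{no} integral $2$-chain inside $P$; pushing forward the cut-open rectangle yields a current whose boundary is a $1$-cycle involving the seam with multiplicity $2$, not $\a{\gamma}$. The genuine reason a thin M\"obius band around a round circle $c$ needs extra care is different: such a $\gamma$ is flat-close to $2c$, so both $M(\gamma)$ and $m(\gamma)$ lie within $O(\epsilon)$ of $2\pi r^2$, and any gap is hidden in the error terms rather than ruled out by an explicit cheap orientable filling.
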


Theorem \ref{t:FFcomp} does not exhaust the major results of the foundational paper of Federer and Fleming. Indeed we wish to mention three other important cornerstones. First of all, the rectifiability of the boundary can be recovered from that of the current under the only assumption that the boundary has finite mass.

\begin{theorem}[Boundary rectifiability, Federer-Fleming, cf. {\cite[Theorem 30.3]{Simon}}]\label{t:FFrect}
If $T$ is integer rectifiable and $\partial T$ has locally finite mass, then $T$ is integral.
\end{theorem}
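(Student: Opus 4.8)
\emph{Proof proposal.} The plan is to combine two ingredients: the Federer--Fleming \emph{Deformation Theorem}, which approximates an integer rectifiable current of finite mass, modulo a boundary and a mass error proportional to the grid size, by an \emph{integer} polyhedral current whose boundary mass is controlled by $\mass(\partial T)$, and the Compactness Theorem \ref{t:FFcomp}. The idea is to deform $T$ onto finer and finer cubical grids: the resulting polyhedral pieces have boundaries that are automatically integer rectifiable, are uniformly bounded in mass, and have vanishing boundary, so the Compactness Theorem forces their weak limit $\partial T$ to be integer rectifiable.

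First I would reduce to the case $\mass(T)+\mass(\partial T)<\infty$. Fixing a point $p$ and setting $u(x):=|x-p|$, the slicing theory for integer rectifiable currents gives, for a.e. $r>0$, that the slice $\langle T,u,r\rangle$ is an $(m-1)$-dimensional integer rectifiable current of finite mass and that
\[
(\partial T)\res B_r(p)=\partial\big(T\res B_r(p)\big)-\langle T,u,r\rangle\, .
\]
For a.e. $r$ the current $T\res B_r(p)$ is integer rectifiable with $\mass(T\res B_r(p))+\mass\big(\partial(T\res B_r(p))\big)<\infty$, so if the theorem is known in the finite-mass case then $\partial\big(T\res B_r(p)\big)$, and hence $(\partial T)\res B_r(p)$ as a difference of two integer rectifiable currents, is integer rectifiable for a.e. $r$; letting $r\to\infty$ along a good sequence and using that integer rectifiability can be checked on bounded open sets then yields the statement.

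In the finite-mass case, applying the Deformation Theorem with grid size $\rho>0$ produces a decomposition $T=P_\rho+\partial R_\rho+S_\rho$ with $P_\rho$ an integer polyhedral $m$-current (here one uses that $T$ itself is integer rectifiable, so that the coefficients on the $m$-faces of the grid are integers), with $\mass(\partial P_\rho)\le c(m,n)\,\mass(\partial T)$ and $\mass(S_\rho)\le c(m,n)\,\rho\,\mass(\partial T)$. Taking boundaries, $\partial T=\partial P_\rho+\partial S_\rho$; since $\mass(S_\rho)\to 0$ we have $S_\rho\rightharpoonup 0$, hence $\partial S_\rho\rightharpoonup 0$ and therefore $\partial P_\rho\rightharpoonup\partial T$. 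Each $\partial P_\rho$ is an integer polyhedral $(m-1)$-current with $\partial(\partial P_\rho)=0$, hence an integral current; all the $\partial P_\rho$ are supported in a fixed bounded open set, and $\sup_{\rho}\big(\mass(\partial P_\rho)+\mass(\partial\partial P_\rho)\big)\le c(m,n)\,\mass(\partial T)<\infty$. Theorem \ref{t:FFcomp} then gives a subsequence of $\{\partial P_{1/k}\}$ converging to an integral current, which must equal $\partial T$; thus $\partial T$ is integer rectifiable and $T$ is integral.

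The main obstacle is clearly the Deformation Theorem itself: one must build the retraction of $T$ onto the $m$-skeleton of the grid through a finite sequence of radial projections from suitably chosen centres of faces, prove the homotopy identity $T-P_\rho=\partial R_\rho+S_\rho$, and establish the mass bounds $\mass(\partial P_\rho)\le c\,\mass(\partial T)$ and $\mass(S_\rho)\le c\,\rho\,\mass(\partial T)$ together with the integrality of the coefficients of $P_\rho$. A point one must be careful about is to invoke only the version of the Deformation Theorem that assumes $T$ integer rectifiable with $\mass(\partial T)<\infty$, and \emph{not} a version presupposing $\partial T$ rectifiable, since that is precisely the conclusion sought.
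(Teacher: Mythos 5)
The paper states this result without proof, citing \cite[Theorem~30.3]{Simon}, so there is no internal argument to compare against; I will assess your proposal on its own terms. Your route --- reduce to finite mass and compact support by spherical slicing, apply the Deformation Theorem to write $T=P_\rho+\partial R_\rho+S_\rho$, take boundaries to see that the integer polyhedral cycles $\partial P_\rho$ converge weakly to $\partial T$ with uniformly bounded mass, and invoke compactness --- is correct and is essentially the classical Federer--Fleming argument. You are also right, and it is the key subtlety, that the Deformation Theorem must be invoked in the form that assumes only that $T$ is integer rectifiable with $\mass(T)+\mass(\partial T)<\infty$: the estimates $\mass(\partial P_\rho)\le c\,\mass(\partial T)$ and $\mass(S_\rho)\le c\,\rho\,\mass(\partial T)$, and the integrality of the coefficients of $P_\rho$, are all available under exactly these hypotheses, without any rectifiability assumption on $\partial T$.

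There is a second, symmetric caveat that you should record explicitly. When you invoke Theorem~\ref{t:FFcomp} to conclude that the weak limit $\partial T$ of the integral cycles $\partial P_{1/k}$ is integral, you need to know that the compactness theorem does not itself rely on boundary rectifiability, or the argument would be circular. This is in fact harmless: the closure theorem for integer-multiplicity currents (proved by $0$-dimensional slicing and induction on dimension, independently of Theorem~\ref{t:FFrect}), applied once to the currents $T^k$ and once to the cycles $\partial T^k$, already yields Theorem~\ref{t:FFcomp}. But since in the cited reference boundary rectifiability is established \emph{before} the compactness theorem, you should make this logical independence explicit, just as you did for the Deformation Theorem, to preempt any suspicion of circularity.
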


Secondly, any integral current can be suitably approximated by a sequence of ``polyhedral chains'' with integer coefficients. This is the content of the so-called Deformation Lemma (see \cite[Theorem 29.1 and Corollary 29.3]{Simon}), whose precise statement would require the introduction of some terminology and goes beyond the scopes of this note. The Deformation Lemma shows that the space of integeral currents is the closure of the space of classical chains with integer coefficients in a topology which is very natural for the Plateau's problem.

We record here a rather useful corollary of the Deformation Lemma.

\begin{theorem}[Isoperimetric inequality, Federer-Fleming, cf. {\cite[Theorem 30.1]{Simon}}]\label{t:FFiso}
There are constants $C (m,n)$ with the following property.
Assume $S$ is an integer rectifiable $m$-dimensional current in $\R^{m+n}$ with $\partial S = 0$. Then there is an integral current $T$ with $\partial T = S$ and $\mass (T) \leq C (\mass (S))^{(m+1)/m}$.
\end{theorem}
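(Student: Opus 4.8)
The plan is to derive the isoperimetric inequality from the Deformation Lemma, which we are allowed to invoke. Recall that the Deformation Lemma says that given an integer rectifiable $m$-dimensional current $S$ and a scale $\rho>0$, one can write $S = P + \partial R + (\text{something involving }\partial S)$, where $P$ is a polyhedral chain supported on the $m$-skeleton of a cubical grid of size $\rho$, $R$ is an integral $(m+1)$-dimensional current, and one has the mass bounds $\mass(P) \le C\,\mass(S)$, $\mass(R) \le C\rho\,\mass(S)$, together with the crucial estimate that $\mass(P) \le C\rho^m \cdot(\text{number of cubes})$. Since in our situation $\partial S = 0$, the term involving $\partial S$ disappears and the decomposition simplifies to $S = P + \partial R$.

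First I would fix the scale $\rho$ by the scaling-invariant choice $\rho := c\,\mass(S)^{1/m}$ for a dimensional constant $c$ to be determined. The key point is that with this choice the polyhedral piece $P$ is forced to vanish: each $m$-face of the grid that appears in $P$ with nonzero (integer) multiplicity contributes at least $\rho^m$ to $\mass(P)$, so the number $N$ of such faces satisfies $N\rho^m \le \mass(P) \le C\,\mass(S) = C c^{-m}\rho^m$; choosing $c$ small (depending only on $m,n$) makes $Cc^{-m} < 1$, hence $N = 0$ and $P = 0$. Therefore $S = \partial R$ with $R$ an integral $(m+1)$-dimensional current and $\mass(R) \le C\rho\,\mass(S) = Cc\,\mass(S)^{1+1/m} = C'\,\mass(S)^{(m+1)/m}$. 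Setting $T := R$ gives $\partial T = S$ and the desired bound $\mass(T) \le C(m,n)\,\mass(S)^{(m+1)/m}$, and by the boundary rectifiability theorem (Theorem \ref{t:FFrect}), or directly from the Deformation Lemma, $T$ is integral.

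There is one standard technical wrinkle: the Deformation Lemma as usually stated requires $S$ to have finite mass (which is assumed) but produces a grid adapted to $\mass(S)$ only after possibly translating and rotating the grid to avoid bad alignment; this is a routine part of the statement and I would simply quote it. One should also note that $\mass(S)$ could a priori be infinite, in which case the inequality is vacuous, or zero, in which case $S = 0$ and $T = 0$ works; so we may assume $0 < \mass(S) < \infty$, which is exactly the regime where the scaling argument above applies.

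\textbf{Main obstacle.} The genuinely hard input — the Deformation Lemma itself, with its sharp mass bounds on the polyhedral approximation and the remainder — is being used as a black box here, so within the present framework there is no real obstacle: the only thing to be careful about is the bookkeeping of constants, namely checking that the constant $C$ in $\mass(P)\le C\,\mass(S)$ depends only on $m$ and $n$ (not on $\rho$ or $S$) so that the choice of $c$ making $Cc^{-m}<1$ is legitimate. The whole proof is then two lines of algebra once the Deformation Lemma is in hand; the conceptual content is entirely the observation that the free scaling parameter $\rho$ can be tuned to kill the polyhedral part.
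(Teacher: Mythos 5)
Your proof is correct. The paper does not actually prove this theorem; it states it as a corollary of the Deformation Lemma and refers to \cite[Theorem 30.1]{Simon} for details, which is precisely the argument you give: choose the grid scale $\rho$ so that the mass bound $\mass(P)\le C\,\mass(S)$ forces the polyhedral piece to have mass below $\rho^m$, hence to vanish (since each face of the $m$-skeleton has $\cH^m$-measure exactly $\rho^m$ and carries integer multiplicity), and then read off the bound on $\mass(R)$. One small imprecision in your setup: the relevant inequality from the Deformation Lemma is $\mass(P)\ge \rho^m\cdot(\text{number of faces with nonzero multiplicity})$, not ``$\le$''; you in fact use the correct direction in the next paragraph, so the argument goes through unchanged.
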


We finish this section by introducing a couple of other objects which will be very convenient in the rest of the note.

\begin{lemma}[Density]\label{l:density}
If $T$ is an integer rectifiable current, then the number
\begin{equation}\label{e:density}
\Theta (T, p) := \lim_{r\downarrow 0} \frac{\|T\| (\bB_r (p))}{\omega_m r^m}
\end{equation}
exists and it is a positive integer for $\|T\|$-a.e. $p$. 
\end{lemma}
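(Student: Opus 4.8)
The plan is to reduce the statement to two classical facts: that a $C^1$ submanifold has $m$-dimensional density exactly one at each of its points, and that mutually singular Radon measures are infinitesimally invisible to one another. By Definition \ref{d:integral} write $\|T\| = \sum_i k_i\,\cH^m\res K_i$, where the $K_i$ are pairwise disjoint closed subsets of $C^1$ $m$-dimensional submanifolds $\Sigma_i\subset\R^{m+n}$, the $k_i$ are positive integers, and the local finiteness of the mass gives $\sum_i k_i\,\cH^m(K_i\cap\bB_R(0))<\infty$ for every $R>0$. Set $M:=\bigcup_i K_i$ and $\theta:=\sum_i k_i\,\mathbf{1}_{K_i}$; by disjointness $\theta$ is a well defined, everywhere positive, integer-valued Borel function on $M$, $\cH^m\res M$ is a Radon measure and $\|T\| = \theta\,\cH^m\res M$. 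Since $\|T\|$ is concentrated on $M$, it is enough to prove that for each fixed $j$ the limit in \eqref{e:density} exists and equals $k_j$ at $\cH^m$-a.e. point of $K_j$.

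First one records that every piece has density one along itself. Fix $i$ and $p\in\Sigma_i$; after a rotation, near $p$ the submanifold $\Sigma_i$ is the graph over $T_p\Sigma_i$ of a $C^1$ map whose differential vanishes at the point of $T_p\Sigma_i$ below $p$, so the area formula gives $\cH^m(\Sigma_i\cap\bB_r(p)) = \omega_m r^m + o(r^m)$ as $r\downarrow 0$, i.e. $\Theta(\cH^m\res\Sigma_i,p)=1$ for \emph{every} $p\in\Sigma_i$. Applying the Lebesgue density theorem to the $\cH^m$-measurable set $K_i$ with respect to the Radon measure $\cH^m\res\Sigma_i$ one gets $\cH^m(K_i\cap\bB_r(p))/\cH^m(\Sigma_i\cap\bB_r(p))\to1$ for $\cH^m$-a.e. $p\in K_i$, hence
\[
\lim_{r\downarrow 0}\frac{\cH^m(K_i\cap\bB_r(p))}{\omega_m r^m}=1\qquad\text{for }\cH^m\text{-a.e. }p\in K_i.
\]

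Next one shows that the remaining pieces are negligible on $K_j$. Fix $j$ and let $\nu:=\sum_{i\neq j}k_i\,\cH^m\res K_i$, a Radon measure (by the summability above) which is concentrated on $\bigcup_{i\neq j}K_i$ and hence mutually singular with $\cH^m\res K_j$, the $K_i$ being pairwise disjoint. The differentiation theorem for Radon measures then yields $\nu(\bB_r(p))/\cH^m(K_j\cap\bB_r(p))\to 0$ as $r\downarrow 0$ for $\cH^m$-a.e. $p\in K_j$; since at such points $\cH^m(K_j\cap\bB_r(p)) = \omega_m r^m + o(r^m)$ by the previous step, this forces $\nu(\bB_r(p)) = o(\omega_m r^m)$. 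For $\cH^m$-a.e. $p\in K_j$ one may now split $\|T\|(\bB_r(p)) = k_j\,\cH^m(K_j\cap\bB_r(p)) + \nu(\bB_r(p))$ (rearrangement of a convergent series of nonnegative terms) and divide by $\omega_m r^m$ to conclude that $\Theta(T,p)$ exists and equals $k_j = \theta(p)$, a positive integer; since $j$ is arbitrary this proves the lemma.

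The only genuinely delicate point is the third step, where the superposition of infinitely many pieces must be controlled: one has to exclude positive density of $\nu$ at almost every point of $K_j$, which I would do via the differentiation theorem for mutually singular Radon measures as above. A slightly heavier alternative is to invoke the structure theorem asserting that an $m$-rectifiable set with locally finite $\cH^m$-measure has density one at $\cH^m$-a.e. of its points, and to combine it with the fact that $\cH^m\res M$-a.e. point is a Lebesgue point of $\theta$. The remaining ingredients — the area-formula computation for $\Sigma_i$, the density theorem for the Radon measure $\cH^m\res\Sigma_i$, and the final decomposition of $\|T\|(\bB_r(p))$ — are routine.
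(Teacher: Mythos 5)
The paper states Lemma \ref{l:density} as a classical fact of the Federer--Fleming theory without giving a proof, so there is no argument in the text to compare against; your proposal is essentially the standard textbook argument and its overall structure is sound.

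There is, however, one step that is not justified as written and can genuinely fail. You assert that $\cH^m\res\Sigma_i$ is a Radon measure and apply the Lebesgue density theorem to $K_i$ with respect to it. For an arbitrary $C^1$ submanifold $\Sigma_i$ this need not be locally finite -- an embedded curve that oscillates like $\sin(1/x)$ near a boundary point already has infinite $\cH^1$ measure in a compact neighbourhood -- so the Besicovitch differentiation theorem cannot be invoked with $\cH^m\res\Sigma_i$ as reference measure. By contrast, $\cH^m\res K_j$ and $\nu=\sum_{i\neq j}k_i\,\cH^m\res K_i$ \emph{are} Radon, because both are dominated by $\|T\|$; that part of your argument is fine. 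The first step needs to be localized: for each $p\in K_i$ there is $r_p>0$ with $\Sigma_i\cap\bB_{r_p}(p)$ a single $C^1$ graph of finite $\cH^m$ measure, one applies the density theorem to $\cH^m\res(\Sigma_i\cap\bB_{r_p}(p))$ there, and then exhausts $K_i$ by a countable such cover ($K_i$ being closed in $\R^{m+n}$, hence $\sigma$-compact). Alternatively -- and this is the cleaner route, which you yourself mention as a ``slightly heavier alternative'' -- use directly the fact that a countably $m$-rectifiable set of locally finite $\cH^m$ measure has $m$-density one $\cH^m$-a.e., applied to $K_i$, whose restricted Hausdorff measure is Radon by the domination $\cH^m\res K_i\leq\|T\|$. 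With either repair the rest of the proof -- mutual singularity of $\nu$ and $\cH^m\res K_j$, Besicovitch differentiation to kill $\nu(\bB_r(p))/(\omega_m r^m)$, and the final split $\|T\|(\bB_r(p))=k_j\cH^m(K_j\cap\bB_r(p))+\nu(\bB_r(p))$ -- is correct and gives the conclusion.
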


As for the density we introduce the notion of tangent planes via a ``blow-up procedure''.  In order to give the corresponding statement we introduce two conventions:
\begin{itemize}
\item First of all, we will consider all $m$-dimensional planes $\pi$ as oriented. Thus, for each $\pi$ we have a unique integral current $\a{\pi}$.
\item Given a current $T$ we will denote by $T_{p,r}$ the result of translating it so that $p$ becomes the origin and enlarging it of a factor $r^{-1}$. Formally, if $\iota_{p,r}$ denotes the map $x\mapsto (x-p)/r$, then $T_{p,r} := (\iota_{p,r})_\sharp T$. Note that when $T= \a{\Gamma}$ for some smooth surface $\Gamma$ then $T_{p,r} = \a{\iota_{p,r} (\Gamma)}$.
\end{itemize}

\begin{lemma}[Canonical representation of integer rectifiable currents]\label{l:t_planews}
If $T$ is an integer rectifiable current, then for $\|T\|$-a.e. $p$ there is a unique plane $\pi (p)$ such that
\begin{equation}
T_{p,r} \to \Theta (T, p) \a{\pi (p)} \qquad \mbox{as $r\downarrow 0$.}
\end{equation}
$\pi (p)$ will then be called the tangent plane to $T$ at $p$.
If the sets $K_i$ and the submanifolds $\Sigma_i$ are as in Definition \ref{d:integral}, then $\pi (p) = T_p \Sigma_i$ (the classical oriented tangent to $\Sigma_i$ at $p$) for $\cH^m$-a.e. $p\in K_i$. Moreover, if $\vec{T}\|T\|$ is the Radon-Nykodim decomposition of $T$, then we can find $m$ Borel vector fields $e_1, \ldots e_m$ with the properties that
\begin{itemize}
\item $e_1 (p), \ldots , e_m (p)$ is a (positively oriented) orthonormal basis of $\pi (p)$ for $\|T\|$-a.e. $p$;
\item $\vec{T} (p) = e_1 (p) \wedge \ldots \wedge e_m (p)$ for $\|T\|$-a.e. $p$.
\end{itemize}
\end{lemma}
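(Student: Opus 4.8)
The plan is to deduce everything from the structure afforded by Definition \ref{d:integral} together with the density Lemma \ref{l:density}. Write $T = \sum_i k_i \a{K_i}$ with the $C^1$ surfaces $\Sigma_i$, the pairwise disjoint closed sets $K_i \subset \Sigma_i$ and the positive integers $k_i$ as in the definition. Since $\|T\| = \sum_i k_i \cH^m \res K_i$ and the $K_i$ are pairwise disjoint, it is enough to prove the assertions for $\|T\|$-a.e. $p$ in each fixed $K_i$; fix such an index and call the surface $\Sigma := \Sigma_i$, the set $K := K_i$, the multiplicity $k := k_i$. By standard density theory for Radon measures (Besicovitch, or Federer \cite[2.9.11]{Fed}), $\cH^m$-a.e. point $p \in K$ is a point of density one of $K$ relative to $\Sigma$, i.e. $\cH^m((K \setminus B) \cap \bB_r(p)) = o(r^m)$ where $B$ is a small coordinate ball of $\Sigma$ around $p$, and is also a point where the remaining $K_j$ ($j \neq i$) contribute nothing at scale $r$, i.e. $\|T - k\a{K}\|(\bB_r(p)) = o(r^m)$. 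At such a $p$ we define $\pi(p) := T_p\Sigma$, the classical oriented tangent plane of $\Sigma$ at $p$, which exists and varies continuously since $\Sigma$ is $C^1$.

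Next I would establish the blow-up convergence $T_{p,r} \to k\,\a{\pi(p)}$. The rescaled current $(\a{K})_{p,r} = \a{\iota_{p,r}(K)}$ is integer rectifiable with mass $\cH^m(\iota_{p,r}(K) \cap \bB_R(0)) = r^{-m}\cH^m(K \cap \bB_{rR}(p))$, which by Lemma \ref{l:density} tends to $\omega_m R^m \Theta(\a{K},p) = \omega_m R^m$ for every $R$; so these currents have locally uniformly bounded mass. Their boundaries are controlled because $\partial T$ has locally finite mass and, away from $\supp(\partial T)$, one checks that $\partial(\a{K})_{p,r}$ has mass $o(R^{m-1})$ at the relevant scales — concretely, since $p$ is not in $\supp(\partial T)$ for $\|T\|$-a.e. $p$ (the boundary measure is supported on an $(m-1)$-dimensional set), we may assume $\partial T_{p,r} \res \bB_R = 0$ for $r$ small. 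Hence Theorem \ref{t:FFcomp} applies and along subsequences $T_{p,r}$ converges to some integer rectifiable cycle $C$ in $\bB_R(0)$. Because the $C^1$ surface $\Sigma$ rescaled around a point of density one of $K$ converges in $C^1_{loc}$ to the plane $\pi(p)$ and the density-one property of $K$ passes to the limit, the support of $C$ lies in $\pi(p)$; since $C$ is an integer rectifiable $m$-cycle supported on an $m$-plane it must equal $q\,\a{\pi(p)}$ for some integer $q$ (constancy of integer rectifiable cycles on a plane, e.g. via slicing or \cite[Section 26]{Simon}), and comparing masses with the computation above, $q\,\omega_m R^m = \omega_m R^m \Theta(T,p)/k_i \cdot k_i$ forces $q = \Theta(T,p) = k$. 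Uniqueness of the limit over all $r \downarrow 0$ follows because the candidate limit $k\,\a{\pi(p)}$ does not depend on the subsequence: every subsequential limit equals it. This also establishes $\pi(p) = T_p\Sigma_i$ $\cH^m$-a.e. on $K_i$, and the uniqueness of the plane in the statement.

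Finally I would extract the orthonormal frame. At $\cH^m$-a.e. $p \in K_i$ the approximate tangent plane of the rectifiable measure $\|T\|$ coincides with $T_p\Sigma_i = \pi(p)$, and by the Radon–Nikodym / polar decomposition (equation \eqref{e:RN}) the orientation $\vec{T}(p)$ is an $\|T\|$-measurable unit-comass $m$-vector. For a rectifiable current one knows $\vec T(p)$ is a simple unit $m$-vector spanning the approximate tangent plane for $\|T\|$-a.e. $p$ — this is part of the rectifiable structure and follows from the blow-up just performed, since $T_{p,r} \to k\a{\pi(p)}$ forces, by the lower-semicontinuity / convergence of the associated vector measures, that $\vec T(p)$ orients $\pi(p)$. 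Thus $\vec T(p) = \pm\, v_1 \wedge \ldots \wedge v_m$ for any orthonormal basis $v_1,\dots,v_m$ of $\pi(p)$; choosing the sign so that the frame is positively oriented with respect to $\vec T(p)$, and then selecting a Borel version of such a frame (possible by measurable-selection arguments, since the Grassmannian bundle over $K_i$ is separable metric and $p \mapsto \pi(p)$, $p \mapsto \vec T(p)$ are Borel), produces the desired Borel vector fields $e_1,\dots,e_m$. Piecing together the (pairwise disjoint) $K_i$ gives the global Borel frame on $\supp\|T\|$.

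The main obstacle is the blow-up step: one must simultaneously control the masses and boundary masses of $T_{p,r}$ uniformly (so that Theorem \ref{t:FFcomp} yields a limit) and identify that limit as exactly $k\,\a{\pi(p)}$ rather than some other integer multiple or some cycle with extra pieces. Ruling out extra pieces uses that $p$ is a density-one point of $K_i$ and a Lebesgue point for the ``tail'' $\sum_{j\neq i} k_j \cH^m\res K_j$ in the sense that it contributes $o(r^m)$ mass at scale $r$; pinning down the multiplicity uses Lemma \ref{l:density}; and the constancy theorem for cycles on a plane is what converts ``a cycle supported in $\pi(p)$'' into ``an integer times $\a{\pi(p)}$''. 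Everything else — the existence of $T_p\Sigma_i$, measurability of selections, passage of density-one to the limit — is routine.
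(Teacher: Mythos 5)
Your overall plan (reduce to a single $K_i$ via mutual singularity, identify the tangent plane with $T_p\Sigma_i$ at density-one points, then deduce the orientation claim) is sound, but the central blow-up step has a genuine gap. You deduce the convergence $T_{p,r}\to k\,\a{\pi(p)}$ by first extracting a subsequential limit via the Federer--Fleming compactness theorem and then identifying it with the constancy theorem. To invoke Theorem \ref{t:FFcomp} you need uniform bounds on $\mass(\partial T_{p,r}\res\bB_R)$, and you assert this by claiming that ``$\partial T$ has locally finite mass'' and that ``$p\notin\supp(\partial T)$ for $\|T\|$-a.e.\ $p$.'' Neither claim is available: the lemma assumes only that $T$ is \emph{integer rectifiable}, not \emph{integral}, so $\partial T$ need not have locally finite mass at all (take, e.g., $\a{E}$ for an open $E\subset\R^2$ of finite area whose topological boundary has infinite $\cH^1$-measure). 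Even granting integrality, $\supp(\partial T)$ is a closed set which can very well have positive $\cH^m$-measure --- what is true is the weaker density statement $\|\partial T\|(\bB_r(p))=o(r^{m-1})$ for $\|T\|$-a.e.\ $p$, which holds by a density-comparison argument using the $(m-1)$-rectifiability of $\partial T$, but that requires $T$ integral and is not what you wrote. So as stated the compactness step does not go through in the generality of the lemma.

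The way to close this is to drop compactness entirely and run the blow-up directly, which is in fact the standard route. Rectifiability of $R=\bigcup_i K_i$ gives, for $\cH^m$-a.e.\ $p\in K_i$, an approximate tangent plane $\pi(p)=T_p\Sigma_i$ and the weak-$*$ convergence of \emph{measures} $\|T\|_{p,r}\weaks\Theta(T,p)\,\cH^m\res\pi(p)$; this is purely about the Radon measure $\|T\|=\sum_i k_i\cH^m\res K_i$ and makes no reference to $\partial T$. Then, since $\vec{T}$ is $\|T\|$-measurable with unit comass, for $\|T\|$-a.e.\ $p$ the point $p$ is a Lebesgue point of $\vec{T}$ with respect to $\|T\|$; expanding $T_{p,r}(\omega)=\int\langle\omega(z),\vec{T}(p+rz)\rangle\,d\|T\|_{p,r}(z)$ and splitting off $\vec{T}(p)$, the Lebesgue-point property and the measure convergence give $T_{p,r}\to\vec{T}(p)\,\Theta(T,p)\,\cH^m\res\pi(p)$. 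Finally, on $K_i$ the polar decomposition forces $\vec{T}(p)$ to coincide $\cH^m$-a.e.\ with the orienting unit simple $m$-vector of $\Sigma_i$, so the limit is exactly $\Theta(T,p)\a{\pi(p)}$, and the orthonormal frame is produced exactly as in your last paragraph. This route uses nothing beyond the rectifiability structure and avoids any boundary hypothesis, whereas your argument silently upgrades ``integer rectifiable'' to ``integral.''
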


The outcome of Lemma \ref{l:density} and Lemma \ref{l:t_planews} is that
\[
T = \vec{T}\, \Theta\, \cH^m \res R\, ,
\]
where:
\begin{itemize}
\item $\Theta$ is integer valued,
\item $R$ is a countably $m$-rectifiable set (namely the union, up to an $\cH^m$negligible set, of countably many compact subsets of $C^1$ submanifolds) 
\item and $\vec{T}$ is an orientation for the tangent space to $R$ at $\cH^m$-a.e. point.
\end{itemize}

\subsection{The codimension $1$ case: sets of finite perimeter} Integral currents of codimension $1$ have a special feature: they can be seen, locally, as boundaries of integral currents of ``top dimension''. By definition, integer rectifiable currents $T$ of dimension $m+n$ are represented by $\sum k_i \a{E_i}$, where the $E_i$'s are pairwise disjoint closed subsets of $\R^{m+n}$: the action of $\a{E_i}$ on a ``top form'' $f dx_1 \wedge \ldots \wedge dx_{m+n}$ is then given by the standard Lebesgue integral
\[
\a{E_i} (f dx_1\wedge \ldots \wedge dx_{m+n}) = \int_{E_i} f (x)\, dx\, .
\] 

\begin{definition}[Caccioppoli sets, De Giorgi, cf. {\cite[Definition 3.35]{AFP}}]\label{d:CS}
A measurable set $E\subset \R^{m+1}$ with finite Lebesgue measure is a set of finite perimeter if its indicator function ${\bf 1}_E$ is a function of bounded variation, namely if
\[
{\bf P} (E) : = \sup \left\{\int_E {\rm div}\, X : \mbox{ $X \in C^\infty_c (\R^{m+1}, \R^{m+1})$ and $\|X\|_{C^0}\leq 1$}\right\} < \infty\, .
\]
${\bf P} (E)$ is called the {\em perimeter} of $E$.
\end{definition}

Like the mass, the perimeter can be localized to define a Radon measure. In fact such measure coincides with the total variation of the distributional derivative $D {\bf 1}_E$, which is usually denoted by $|D{\bf 1}_E|$ (cf. \cite[Theorem 3.3.6]{AFP}). If $E$ is a smooth set and $\Omega$ is an open set, then $\|D{\bf 1}_E\| (\Omega)$ is the $m$-dimensional volume of that portion of $\partial E$ which lies in $\Omega$. For this reason it is customary to use the notation ${\bf P} (E, \Omega)$ for the {\em relative perimeter} $\|D{\bf 1}_E\| (\Omega)$ when $\Omega$ is an open set and $E$ is a Caccioppoli set (again cf. \cite[Definition 3.3.5]{AFP}).

The fundamental link between the two theories is then given by the following

\begin{proposition}[Currents and sets of finite perimeter, cf. {\cite[Theorem 27.6 and Corollary 27.8]{Simon}}]\label{p:currents-sets}
Let $E$ be a measurable subset of $\R^{m+1}$ with finite Lebesgue measure. $\a{E}$ is then an integral current if and only if $E$ is a Caccioppoli set. Moreover, in this case $\mass (\partial \a{E}) = {\bf P} (E)$. 

Consider next an  $m$-dimensional integer rectifiable current $T$ with finite mass in $\R^{m+1}$ and let $\bB_\rho (x)\subset \R^{m+1}$ be such that $\supp (\partial T)\cap \bB_\rho (x) = 0$. Then there are countably many sets of finite perimeter $E_i$ and positive integer $k_i$ such that
\begin{itemize}
\item[(i)] $\|T\| (\bB_{\rho} (x)) = \sum_i k_i {\bf P} (E_i, \bB_\rho (x))$;
\item[(ii)] $T = \sum k_i \partial \a{E_i}$ on $\bB_\rho (x)$.
\end{itemize}
\end{proposition}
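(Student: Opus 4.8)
The plan is to reduce everything to the known correspondence for top-dimensional currents and the Federer-Fleming boundary rectifiability and slicing machinery. The first assertion, $\a{E}$ integral $\iff$ $E$ of finite perimeter, is essentially a restatement of the definitions: if $E$ has finite Lebesgue measure then $\a{E}$ has finite mass with $\|\a{E}\|=\cL^{m+1}\res E$, and testing $\partial\a{E}$ against forms $\omega=\sum (-1)^{j-1} X_j\, dx_1\wedge\cdots\widehat{dx_j}\cdots\wedge dx_{m+1}$ gives $\partial\a{E}(\omega)=\a{E}(d\omega)=\int_E\dv X$, so $\mass(\partial\a{E})={\bf P}(E)$ directly from Definition \ref{d:mass} and Definition \ref{d:CS}. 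When this is finite, $\partial\a{E}$ has finite mass, hence by Theorem \ref{t:FFrect} (boundary rectifiability) $\a{E}$ is integral; conversely if $\a{E}$ is integral then $\mass(\partial\a{E})<\infty$, which is exactly ${\bf P}(E)<\infty$. So this part is just bookkeeping with the definitions.

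For the second assertion I would argue by slicing in the single ``extra'' direction. Fix the ball $\bB_\rho(x)$ on which $\partial T=0$. The idea is: an integer rectifiable $m$-current in $\R^{m+1}$ with no boundary in an open set is locally a sum of boundaries of Caccioppoli sets. Concretely, after a rotation assume we are working with the projection $\pi\colon\R^{m+1}\to\R$ onto the last coordinate $x_{m+1}$. One integrates $T$ against the $1$-form $dx_{m+1}$ in the slicing sense: for a.e. level $t$ the slice $\langle T,\pi,t\rangle$ is an integer rectifiable $0$-current, i.e. a finite integer combination of points. Because $\partial T=0$ in $\bB_\rho(x)$, the ``number of sheets'' counted with orientation is locally constant in $t$ away from the boundary, and one can integrate the slices back up. More efficiently, I would use the standard fact (a direct consequence of the constancy theorem / the structure theorem for codimension-one integral currents) that $T\res\bB_\rho(x)=\partial\bigl(\sum_i k_i\,\a{E_i}\bigr)$ for pairwise disjoint sets $E_i$ obtained as superlevel sets of the integer-valued multiplicity function, $E_i=\{x\in\bB_\rho(x): u(x)\ge i\}$ where $u$ is built from the oriented intersection number of $T$ with vertical rays. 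Each $E_i$ then has finite perimeter in $\bB_\rho(x)$ because $\partial\a{E_i}$ has locally finite mass, using Theorem \ref{t:FFrect} again together with Proposition's first part.

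The mass identity (i) then follows from the fact that the rectifiable sets carrying $T$ and the reduced boundaries $\partial^*E_i$ coincide $\cH^m$-a.e. with the correct multiplicities: once $T=\sum_i k_i\,\partial\a{E_i}$ as currents on $\bB_\rho(x)$ and the supports of the $\partial\a{E_i}$ ``stack'' (the superlevel sets are nested), at $\cH^m$-a.e. point exactly $k$ of the reduced boundaries pass through, with consistent orientation, where $k=\Theta(T,\cdot)$; integrating gives $\|T\|(\bB_\rho(x))=\sum_i k_i\,{\bf P}(E_i,\bB_\rho(x))$. The nesting is what prevents cancellation and guarantees additivity of the masses rather than just subadditivity.

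The main obstacle is the construction of the multiplicity function $u$ and the verification that its superlevel sets are Caccioppoli sets with $T=\sum_i\partial\a{E_i}$ and no loss of mass; this is exactly where slicing theory and the constancy theorem enter, and it is the one genuinely nontrivial ingredient. Everything else — the finite-perimeter $\iff$ integral equivalence, the mass/perimeter equality, and the bookkeeping with orientations — is a routine translation between the language of currents and that of BV functions. I would present the slicing argument in the forward direction (slice, observe the sliced boundary vanishes, reconstruct $u$ as a cumulative oriented intersection number, then take superlevel sets), and only at the end invoke boundary rectifiability to upgrade ``$\partial\a{E_i}$ has locally finite mass'' to ``$E_i$ has finite perimeter.''
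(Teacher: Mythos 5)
The paper itself gives no proof of this statement; it simply cites Simon's lecture notes (Theorem 27.6 and Corollary 27.8). Your sketch reconstructs essentially the standard argument: the first part is, as you say, just the observation that $m$-forms of comass $\le 1$ correspond to vector fields with $\|X\|_{C^0}\le1$, so $\partial\a{E}(\omega)=\int_E\dv X$ and $\mass(\partial\a{E})={\bf P}(E)$, with the integral/Caccioppoli equivalence then coming from boundary rectifiability; and the second part rests on producing an integer-valued $BV$ ``primitive'' $u$ with $T=\partial(u\,\cL^{m+1})$ on the ball, decomposing by levels $E_k=\{u\ge k\}$, and invoking the coarea formula $|Du|=\sum_k|D{\bf 1}_{\{u\ge k\}}|$ to get the mass identity~(i) rather than a mere inequality. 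So the plan is sound.

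Two slips are worth flagging. First, you write that slicing $T$ by the projection $\pi:\R^{m+1}\to\R$ onto the last coordinate gives integer-rectifiable \emph{zero}-dimensional slices; that is false for $m>1$, since slicing an $m$-current by a map to $\R^k$ produces $(m-k)$-dimensional slices. The ``oriented intersection number with vertical rays'' you invoke a few lines later is the right object, but it requires slicing by the projection onto the first $m$ coordinates (codomain $\R^m$), not by $x_{m+1}$; or, more cleanly, one first fills in $T$ with an $(m+1)$-dimensional integral current $R$ via the constancy theorem (possible since $\partial T=0$ on a ball), writes $R=u\,\cL^{m+1}$, and works with $u$ directly. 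Second, you describe the superlevel sets $E_i=\{u\ge i\}$ as ``pairwise disjoint'' and, two sentences later, as ``nested''; they are nested, not disjoint, and it is precisely the nesting (via coarea) that rules out cancellation and yields the exact mass equality~(i). Neither slip affects the viability of the approach, but both should be corrected if you write this out in full.
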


Theorem \ref{t:FFcomp}, Corollary \ref{c:existence}, Theorem \ref{t:FFrect} and Theorem \ref{t:FFiso} are all generalizations of theorems proved by De Giorgi for sets of finite perimeter (see \cite[Sections 3.3 and 3.5]{AFP}).

\subsection{Slicing} The final tool which we recall in this section is the slicing theory of currents. 

\begin{proposition}[Slicing, {cf. \cite[Section 4.3]{Fed}}]
Let $T$ be a current of dimension $m$ in $U\subset \R^{m+n}$ with $\mass (T) + \mass (\partial T) < \infty$. Consider $\pi: U\to \R^k$ Lipschitz with $k\leq m$ and the $m-k$-dimensional current $T\res d\pi$ defined by 
\[
T\res d\pi (\omega) := T (d\pi_1\wedge \ldots \wedge d\pi_k \wedge \omega)\, .
\]
Then there exists a map $\R^k \ni t \mapsto \langle T, \pi, t\rangle$ taking values in the space of $m-k$-dimensional currents which is (weakly) measurable and satisfies the following properties:
\begin{itemize}
\item $\langle T, \pi, t\rangle$ is supported in $\pi^{-1} (\{t\})$;
\item $T\res d\pi (\omega) = \int \langle T, \pi, t\rangle (\omega)\, dt$ for every test form $\omega$;
\item $\|T\res d\pi\| = \int \|\langle T, \pi, t\rangle\|\, dt$ (and thus $\langle T, \pi, t\rangle$ has finite mass for a.e. $t$).
\end{itemize}
Moreover, such slicing map satisfies the following additional properties
\begin{itemize}
\item $\partial \langle T, \pi, t \rangle = (-1)^k \langle \partial T, \pi, t\rangle$;
\item If $T$ is integer rectifiable, then $\langle T, \pi, t\rangle$ is integer rectifiable for a.e. $t$.
\end{itemize}
When $T = \Theta \vec{T} \cH^m \res R$ is integer rectifiable and $m=k$ then we have the explicit formula
\begin{equation}\label{e:slice}
\langle T, \pi, t \rangle = \sum_{x\in R \cap \pi^{-1} (\{t\})} \epsilon (x) \Theta (x) \a{x}
\end{equation}
where $\epsilon (x)$ takes the value $\pm 1$ depending on whether the restriction of $ d \pi|_x$ to the tangent plane $T_x R$ oriented by $\vec{T}$ preserves or reverses orientation ($\epsilon (x) =0$ when such restriction is not injective; for a.e. $t$, none of the $\epsilon (x)$ appearing in the sum \eqref{e:slice} vanishes). 
\end{proposition}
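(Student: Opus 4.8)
The plan is to establish everything first for a single Lipschitz function (the case $k=1$) and then to obtain the general statement by iterating the one-dimensional construction $k$ times, slicing successively by $\pi_1,\dots,\pi_k$. For $k=1$ write $f:=\pi:U\to\R$. Since $T$ has finite mass, $T_{<t}:=T\res\{f<t\}$ is a well-defined current of finite mass for every $t$, and we set
\[
\langle T,f,t\rangle:=\partial T_{<t}-(\partial T)\res\{f<t\}
\]
for those $t$ at which the right-hand side has finite mass. The work then consists in showing that this occurs for a.e.\ $t$, that the resulting object is genuinely $(m-1)$-dimensional (i.e.\ of finite mass), and that it enjoys all the listed properties; the general slice $\langle T,\pi,t\rangle$, $t\in\R^k$, is defined as the $k$-fold iterate of this operation, up to sign.

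The analytic core is the coarea-type mass bound
\[
\int_a^b\mass\big(\langle T,f,t\rangle\big)\,dt\ \le\ \Lip(f)\,\|T\|(\{a<f<b\})\,,
\]
which forces $\langle T,f,t\rangle$ to have finite mass for a.e.\ $t$. To prove it I would replace the sharp cut $\mathbf 1_{\{f<t\}}$ by Lipschitz cutoffs $\gamma_{t,\varepsilon}\circ f$, where $\gamma_{t,\varepsilon}$ decreases linearly from $1$ to $0$ on $[t,t+\varepsilon]$, use the identity
\[
\partial\big(T\res(\gamma_{t,\varepsilon}\circ f)\big)=(\gamma_{t,\varepsilon}\circ f)\,\partial T+\tfrac1\varepsilon\,\big(T\res\{t<f<t+\varepsilon\}\big)\res df\,,
\]
integrate in $t$ and apply Fubini to the last term (whose mass is $\le\Lip(f)\,\|T\|(\{t<f<t+\varepsilon\})$), and finally let $\varepsilon\downarrow0$, invoking lower semicontinuity of mass. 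The same limiting procedure, tested against a form $\omega\in\mathcal D^{m-1}$, simultaneously yields the decomposition identity $T\res d\pi\,(\omega)=\int\langle T,f,t\rangle(\omega)\,dt$, and the companion equality $\|T\res d\pi\|=\int\|\langle T,f,t\rangle\|\,dt$ then follows by combining the inequality just obtained with the reverse bound $\ge$ coming from the decomposition identity and lower semicontinuity of total variation.

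The remaining properties are comparatively soft. If $\omega$ is supported in $\{f<t\}$ then $T_{<t}(d\omega)=T(d\omega)=\partial T(\omega)=(\partial T)\res\{f<t\}(\omega)$, so $\langle T,f,t\rangle(\omega)=0$, while if $\omega$ is supported in $\{f>t\}$ both terms vanish; hence $\supp\langle T,f,t\rangle\subset f^{-1}(\{t\})$. The boundary formula is immediate, $\partial\langle T,f,t\rangle=-\partial\big((\partial T)\res\{f<t\}\big)=-\langle\partial T,f,t\rangle$, which is the claim for $k=1$; iterating produces the sign $(-1)^k$. Weak measurability holds because $t\mapsto T_{<t}(\eta)$ is of bounded variation, hence Borel, for each fixed $\eta$. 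Finally, when $T=\Theta\,\vec T\,\cH^m\res R$ is integer rectifiable, slicing by $f$ is exactly the Lipschitz coarea formula applied on $R$: for a.e.\ $t$ the set $R\cap f^{-1}(\{t\})$ is $\cH^{m-1}$-rectifiable, carries the multiplicity $\Theta$, and is oriented by contracting $\vec T$ with the tangential gradient of $f$, so $\langle T,f,t\rangle$ is integer rectifiable; iterating $k$ times and specializing to $m=k$, where $\pi^{-1}(\{t\})\cap R$ is a.e.\ finite and the area formula counts preimages with the Jacobian sign of $d\pi|_{T_xR}$, gives precisely formula \eqref{e:slice}.

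The main obstacle is the coarea mass estimate in the $k=1$ step: once one knows that subtracting the ``trivial'' piece $(\partial T)\res\{f<t\}$ from $\partial(T\res\{f<t\})$ leaves a current of finite mass for a.e.\ $t$, with the quantitative control above, everything else is essentially bookkeeping. A secondary point that is easy to underestimate is checking that the $k$-fold iteration is well posed — independence of the order of the $\pi_j$'s up to the sign $(-1)^k$, and joint measurability of the slice in $t\in\R^k$ — which is what is needed to make sense of the statement as phrased for a vector-valued $\pi$.
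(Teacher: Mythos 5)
The paper states this proposition without proof, citing \cite[Section 4.3]{Fed}, so there is no argument in the text to compare against; your proposal reproduces what is essentially Federer's own construction (also the one in Simon's lecture notes). The main pieces are all in place and correct: the definition of the slice as the boundary defect $\partial T_{<t}-(\partial T)\res\{f<t\}$, the Leibniz identity $\partial(T\res g)=(\partial T)\res g - T\res dg$ specialized to $g=\gamma_{t,\varepsilon}\circ f$, the Fubini-plus-lower-semicontinuity argument for the coarea mass bound, the verification of $\supp\langle T,f,t\rangle\subset f^{-1}(t)$, the sign $(-1)^k$ under iteration, and the reduction of the explicit formula for $m=k$ to the area formula on the rectifiable carrier. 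Two small points deserve tightening. First, the crude inequality $\int\mass(\langle T,f,t\rangle)\,dt\le \Lip(f)\,\mass(T)$ is not quite the $\le$ half of the stated identity $\|T\res df\|=\int\|\langle T,f,t\rangle\|\,dt$: for that you should estimate the mass of $\frac1\varepsilon(T\res\{t<f<t+\varepsilon\})\res df$ by $\frac1\varepsilon\|T\res df\|(\{t<f<t+\varepsilon\})$ rather than by $\frac{\Lip(f)}{\varepsilon}\|T\|(\{t<f<t+\varepsilon\})$, since the two differ wherever $|df|<\Lip(f)$; the $\ge$ half then comes, as you say, from the decomposition identity, and localizing to open sets upgrades the mass equality to equality of measures. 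Second, the assertion that $R\cap f^{-1}(t)$ is $(m-1)$-rectifiable for a.e.\ $t$ and carries the multiplicity $\Theta$ is the genuinely nontrivial input (it uses the coarea formula for rectifiable sets, ultimately the structure theory); your sketch correctly identifies it but treats it as known, which is acceptable in a proof outline but is where the real work hides if one wanted a self-contained proof.
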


The slicing map generalizes the ``intersection'' of a smooth submanifold with the counterimages of a smooth map.

\section{First considerations in the regularity theory}\label{s:first}

Consider  a current $T$ as in the second statement of Corollary \ref{c:existence}, and assume that the set $K$ is a smooth complete submanifold $\Sigma$ without boundary.  Then $T$ must have the following minimality property. 

\begin{definition}\label{d:AM}
Let $\Omega\subset \R^{m+n}$ be open and $\Sigma\subset \R^{m+n}$ be a smooth complete submanifold without boundary of dimension $m+\bar{n}$. We say that an $m$-dimensional integer rectifiable current $T$ with finite mass is {\em area minimizing} in $\Sigma\cap \Omega$ (cf. \cite[Definition 33.1]{Simon}) if
\begin{itemize}
\item $\supp (T)\subset \Sigma$;
\item $\mass (T+\partial S) \geq \mass (T)$ for every $(m+1)$-dim. integral $S$ with $\supp (S)\subset \Sigma\cap \Omega$.
\end{itemize} 
The above definition can easily be extended to the case when $T$ has locally finite mass, by requiring $\|T+\partial S\| (U) \geq \|T\| (U)$ whenever $U$ is a bounded open set contanining $\supp (S)$.
\end{definition}

In the rest of the paper $\Sigma$ will always be sufficiently smooth and the number $\bar{n}$ will be called the {\em codimension} of $T$. All our efforts will now be dedicated to the following question: what kind of regularity is implied by the area minimizing condition? In order to make our discussion simpler, we define precisely the (interior) singular and regular sets of the current $T$:

\begin{definition}\label{e:regular_set}
We say that $p\in \supp (T)\setminus \supp (\partial T)$ is an interior regular point if there is a positive radius $r>0$, a smooth embedded submanifold $\Gamma$ of $\Sigma$ and a positive integer $Q$ such that $T\res \bB_r (p) = Q\a{\Gamma}$. The set of interior regular points, which of course is relatively open in $\supp (T)\setminus \supp (\partial T)$, is denoted by $\reg (T)$. Its complement $\supp (T)\setminus (\supp (\partial T) \cup \reg (T))$ is the interior singular set of $T$ and will be denoted by $\sing (T)$.
\end{definition}

Observe that, since we will always argue at the local level and the problem is scaling invariant, we can assume all sorts of nice properties upon $\Sigma$ (for instance that it is a global graph of a smooth function with good bounds on its $C^k$ norms). 

\medskip

 We next state the best theorems proved so far concerning the regularity of area minimizing currents.
The first theorem summarizes the achievements of several outstanding mathematicians from the end of the sixties till the nineties: De Giorgi, Almgren, Fleming, Simons, Federer, Bombieri, Giusti and Simon. It is fair to say that, with the notable exception of Simon's rectifiability result, the  various aspects of the following theorem have been well digested in the mathematical communities of elliptic PDEs 
and geometric analysis.

\begin{theorem}[Regularity in codimension 1]\label{t:cod1}
Assume that $\Omega$, $\Sigma$ and $T$ are as in Definition \ref{d:AM} and that $\bar{n}=1$. Then
\begin{itemize}
\item[(i)] For $m \leq 6$ $\sing (T)\cap \Omega$ is empty (Fleming \& De Giorgi (m=2), Almgren (m=3), Simons ($4\leq m \leq 6$), see \cite{DeGiorgi4,Fleming, DeGiorgi5,Alm2,Simons} and also \cite{Reifenberg2,Triscari});
\item[(ii)] For $m =7$ $\sing (T)\cap \Omega$ consists of isolated points (Federer, see \cite{Federer});
\item[(iii)] For $m \geq 8$ $\sing (T)\cap \Omega$ has Hausdorff dimension at most $m-7$ (Federer, \cite{Federer}) and it is countably $(m-7)$-rectifiable  (Simon, \cite{Simon2});
\item[(iv)] For every $m\geq 7$ there are area minimizing integral currents $T$ in the euclidean space $\R^{m+1}$ for which $\cH^{m-7} (\sing (T))>0$ (Bombieri-De Giorgi-Giusti, \cite{BDG}).
\end{itemize}
\end{theorem}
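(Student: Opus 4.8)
The plan is to reduce all four statements to the analysis of area minimizing \emph{cones}, through three classical tools: the monotonicity formula, the compactness Theorem~\ref{t:FFcomp}, and Federer's dimension reduction argument. For $T$ as in the statement and $p\in\supp(T)\setminus\supp(\partial T)$, the rescaled mass ratio $r\mapsto e^{\Lambda r}\,\|T\|(\bB_r(p))/(\omega_m r^m)$ is monotone nondecreasing, with $\Lambda$ controlled by the second fundamental form of $\Sigma$; hence $\Theta(T,p)$ exists, is upper semicontinuous in $p$, and every sequence of rescalings $T_{p,r_k}$ with $r_k\downarrow0$ subconverges (by Theorem~\ref{t:FFcomp} together with the monotonicity) to an area minimizing current $C$ in $\R^{m+1}$ which is dilation invariant --- a \emph{tangent cone}, with $\Theta(C,0)=\Theta(T,p)$. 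The second basic tool is De Giorgi's $\varepsilon$-regularity theorem: there is $\varepsilon(m)>0$ such that if the one-sided excess of $T$ in some ball $\bB_r(p)$ is below $\varepsilon$, then $T\res\bB_{r/2}(p)$ is a smooth multiplicity $Q$ graph over an $m$-plane; in particular, if some tangent cone to $T$ at $p$ is supported in an $m$-plane $\pi$, then the constancy theorem forces it to equal $Q\a{\pi}$ for an integer $Q$ and $\varepsilon$-regularity places $p$ in $\reg(T)$. Consequently $\sing(T)$ is precisely the set of points all of whose tangent cones are genuinely singular area minimizing hypercones, and the entire theorem is governed by the dimensions in which such cones can exist.

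\textbf{Items (i) and (ii).} Federer's dimension reduction is the combinatorial engine: if there is no singular area minimizing hypercone of dimension $m'\le m_0$ in $\R^{m'+1}$, then $\dim_{\cH}\sing(T)\le m-m_0-1$ for every $m$-dimensional codimension-one area minimizing $T$, with the conventions that a negative bound means $\sing(T)=\emptyset$ and that a bound equal to $0$ additionally forces $\sing(T)$ to be discrete. The input that starts the induction is Simons' theorem: for $m'\le6$ every area minimizing (indeed, every stable minimal) hypercone in $\R^{m'+1}$ is a plane, the first singular example being the Simons cone $\{x\in\R^8:x_1^2+\cdots+x_4^2=x_5^2+\cdots+x_8^2\}$, of dimension $7$. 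This is proved by inserting test functions built from the second fundamental form $A$ of the link of the cone into the stability inequality and combining with the Simons identity for $\Delta|A|^2$, which in dimension $m'\le6$ is compatible only with $A\equiv0$. Taking $m_0=6$, dimension reduction gives $\dim_{\cH}\sing(T)\le m-7$: this set is empty for $m\le6$, which is (i); for $m=7$ it is $0$-dimensional and discrete, i.e.\ a set of isolated points, which is (ii).

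\textbf{Item (iii).} For $m\ge8$ the same dimension reduction yields $\dim_{\cH}\sing(T)\le m-7$. The countable $(m-7)$-rectifiability is Simon's theorem, and it is the only genuinely new layer beyond the cone analysis. One first uses the stratification of $\sing(T)$ to show that at $\cH^{m-7}$-a.e.\ singular point $p$ some tangent cone \emph{splits off} an $(m-7)$-plane, i.e.\ has the form $C_0\times\a{\pi}$ with $C_0$ a $7$-dimensional area minimizing cone with isolated singularity and $\pi$ an $(m-7)$-plane (were this to fail on a set of positive $\cH^{m-7}$ measure, a further blow-up would strictly lower the dimension of the singular set, contradicting Simons in dimension $6$). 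Simon's argument then upgrades this local splitting: via a {\L}ojasiewicz--Simon type inequality on the link --- this is where the isolated singularity of $C_0$ is essential --- the cylindrical tangent cone at $p$ is \emph{unique}, with a definite decay rate, and a Reifenberg-type construction shows that near $p$ the set $\sing(T)$ is contained in a single Lipschitz (indeed $C^{1,\alpha}$) graph over $\pi$. Covering $\sing(T)$ by countably many such neighbourhoods gives countable $(m-7)$-rectifiability.

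\textbf{Item (iv).} It remains to produce examples saturating the bound $m-7$. Bombieri, De Giorgi and Giusti proved that the $7$-dimensional Simons cone $C\subset\R^8$ is in fact area minimizing: on the two sides of $C$ they construct two ordered one-parameter families of smooth minimal hypersurfaces that foliate $\R^8$ and are asymptotic to $C$ --- obtained by solving the ODE for the rotationally symmetric profile and trapping its solution between explicit sub- and supersolutions of the minimal surface equation --- and a smooth foliation by minimal leaves forces its central leaf to be area minimizing through the standard calibration/comparison argument. Hence $\a{C}$ is an area minimizing integral current in $\R^8$ with $\sing(\a{C})=\{0\}$, so $\cH^0(\sing(\a{C}))>0$; and for each $m\ge7$ the product $\a{C}\times\a{\pi}$ with $\pi$ an $(m-7)$-plane is area minimizing in $\R^{m+1}$ with singular set $\{0\}\times\pi$, whence $\cH^{m-7}(\sing)>0$.

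\textbf{Main obstacle.} The monotonicity/compactness/dimension-reduction scaffolding is by now routine. The genuinely hard analytic inputs, essentially independent of one another, are: De Giorgi's $\varepsilon$-regularity theorem; Simons' sharp stability estimate on the link of the cone (the heart of (i)--(ii)); Simon's cylindrical-tangent-cone and rectifiability argument (the heart of (iii), and --- as remarked in the introduction --- the part least assimilated by the community); and the Bombieri--De Giorgi--Giusti barrier construction for (iv). I expect Simon's rectifiability step to be the most delicate single ingredient, since it needs a \emph{quantitative uniqueness} of tangent cones rather than merely their classification.
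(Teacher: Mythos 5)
Your proposal is correct and follows essentially the same route that the paper surveys: Simons' classification of stable minimal hypercones combined with De Giorgi's $\eps$-regularity (via Corollary~\ref{c:DG}) for (i), Federer's dimension reduction together with persistence of singularities (Proposition~\ref{p:persiste}) for (ii) and (iii), Simon's rectifiability theorem for the second half of (iii), and the Bombieri--De Giorgi--Giusti foliation argument for (iv). The one point worth being a little more careful about is your statement of $\eps$-regularity: in the form of Theorem~\ref{t:DG} it requires the density assumption (b), and that this hypothesis can always be arranged in codimension one is precisely what the decomposition into boundaries of Caccioppoli sets (Proposition~\ref{p:currents-sets}) provides --- which is the whole reason the conclusion of Corollary~\ref{c:DG} holds for $\bar n=1$ but fails for $\bar n\ge 2$.
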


As already mentioned, after discussing the features of the codimension $1$ case, the rest of the note will be devoted to the understanding of the higher codimension, i.e. $\bar{n} \geq 2$. For this case the best results are the following.

\begin{theorem}[Regularity in codimension $\bar n \geq 2$]\label{t:cod>1}
Assume that $\Omega$, $\Sigma$ and $T$ are as in Definition \ref{d:AM} and that $\bar{n}\geq 2$. Then
\begin{itemize}
\item[(i)] For $m=1$ $\sing (T)\cap \Omega$ is empty;
\item[(ii)] For $m\geq 2$ $\sing (T)\cap \Omega$ has Hasudorff dimension at most $m-2$ (Almgren, \cite{Alm});
\item[(iii)] For every $m\geq 2$ there are area minimizing integral currents $T$ in $\R^{m+2}$ for which $\cH^{m-2} (\sing (T))>0$  (Federer, \cite{Federer3}).
\end{itemize}
\end{theorem}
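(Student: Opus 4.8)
The plan is to treat the three statements separately, since they are of completely different nature: (i) is an elementary consequence of the monotonicity formula together with the structure of $1$--dimensional currents; (iii) is proved by exhibiting explicit singular minimizers coming from complex analytic geometry; and (ii) is Almgren's deep theorem, whose proof --- following \cite{DS1,DS2,DS3,DS4,DS5}, which build on \cite{Alm} --- occupies essentially the rest of this note. For (i), fix $p\in\supp(T)\setminus\supp(\partial T)$: by the classical monotonicity formula (not among the results recalled above) the density $\Theta(T,p)$ exists and is a positive integer, and every sequence of rescalings $T_{p,r_k}$ with $r_k\downarrow 0$ subconverges, by the compactness Theorem \ref{t:FFcomp}, to an area--minimizing cone $C$ in the tangent space $T_p\Sigma\cong\R^{m+\bar n}$ with $\partial C=0$ and $\Theta(C,0)=\Theta(T,p)$. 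When $m=1$, such a $C$ is supported on finitely many rays issuing from the origin; the condition $\partial C=0$ forces a balancing of the multiplicities at the origin, and comparing with the competitor obtained by rounding off the corners shows that a cone of this type can be area minimizing only if its support is a single line, so that $C=Q\a{\ell}$ with $Q=\Theta(T,p)$. Thus every interior point has a flat tangent cone, and a direct argument (the multiplicity is then locally constant and the support cannot branch, for otherwise $T$ could be shortened) shows that near $p$ the current equals $Q$ times a geodesic of $\Sigma$; hence $p\in\reg(T)$ and $\sing(T)\cap\Omega=\emptyset$.

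For (ii), the first step is to check that $\sing(T)$ is relatively closed in $\supp(T)\setminus\supp(\partial T)$ and to set up the stratification $\sing(T)=\bigcup_{k=0}^{m}\sing^{(k)}(T)$ by the dimension of the spines of tangent cones, where $\sing^{(k)}(T)$ is the set of singular points all of whose tangent cones are invariant under translations along a plane of dimension at most $k$. Federer's dimension--reduction lemma --- which uses only the monotonicity formula, Theorem \ref{t:FFcomp}, the fact that blow--ups of area--minimizing currents are area--minimizing cones, and the upper semicontinuity of the density --- yields $\dim_{\cH}\sing^{(k)}(T)\le k$. It therefore suffices to prove that $\sing(T)=\sing^{(m-2)}(T)$, i.e. that no tangent cone at a singular point is invariant along an $(m-1)$--dimensional plane. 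Indeed, if the spine of a tangent cone $C$ at $p$ contained an $(m-1)$--plane $V$, then $C=C_0\times\a{V}$ with $C_0$ a $1$--dimensional area--minimizing cone with zero boundary in the orthogonal $\R^{\bar n+1}$; by the structure obtained in (i), $C_0=Q\a{\ell}$, so $C=Q\a{V\oplus\ell}$ is an $m$--plane with multiplicity. But the core of Almgren's theorem is exactly the $\eps$--regularity statement \emph{``if some tangent cone to $T$ at an interior point $p$ is an integer multiple of an $m$--plane, then $p\in\reg(T)$''}, which contradicts $p\in\sing(T)$ (cf. Definition \ref{e:regular_set}). Hence $\sing(T)\cap\Omega=\sing^{(m-2)}(T)\cap\Omega$ has Hausdorff dimension at most $m-2$, proving (ii).

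The whole difficulty thus sits in the $\eps$--regularity statement above, and this is where I expect the main obstacle. Its proof requires a long chain of constructions: the theory of multiple valued functions minimizing the Dirichlet energy, including the bound $\dim_{\cH}\sing\le m-2$ for the singular sets of such minimizers; a strong $W^{1,2}$ approximation of $T$, near a point with flat tangent cone, by the graph of an almost Dirichlet--minimizing $Q$--valued function over the approximating plane; the construction of a $C^{3,\alpha}$ \emph{center manifold} $\cM$ that approximates to very high order the ``average of the sheets'' of $T$, together with a \emph{normal approximation} of $T$ by a $Q$--valued section $N$ of the normal bundle of $\cM$; the monotonicity of Almgren's \emph{frequency function} associated with $N$, guaranteeing that the rescalings of $N$ around a putative large set of singularities converge to a nontrivial homogeneous Dirichlet--minimizing $Q$--valued function; and a final blow--up argument in which the assumption that $\sing(T)$ is too large near a flat point forces this limiting minimizer to have a singular set of dimension exceeding $m-2$, a contradiction. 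Carrying this out is the subject of the remainder of the note.

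For (iii), I would exploit the fact that every holomorphic subvariety of $\C^{N}$, regarded as an integer rectifiable current with its canonical orientation, is area minimizing because it is calibrated by a suitable power of the K\"ahler form $\omega$. Taking $N=2$, the reducible variety $\{(z,w)\in\C^2:zw=0\}=\{w=0\}\cup\{z=0\}$ gives the current $T_0=\a{\{w=0\}}+\a{\{z=0\}}$, which is calibrated by $\omega$, hence area minimizing in $\R^4$, and whose origin is an interior singular point: $\Theta(T_0,0)=2$ while no neighbourhood of $0$ is a constant integer multiple of a single $C^1$ surface (its tangent cone at $0$ is the union of two distinct planes, not a multiple of one plane). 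For general $m\ge 2$ I would then take $T=T_0\times\a{\R^{m-2}}\subset\R^4\times\R^{m-2}=\R^{m+2}$: this is still area minimizing (it is calibrated by $\omega\wedge dx_1\wedge\cdots\wedge dx_{m-2}$), has locally finite mass, and satisfies $\sing(T)=\{0\}\times\R^{m-2}$, so $\cH^{m-2}(\sing(T))=+\infty>0$; restricting $T$ to a ball yields a current with compact boundary whose singular set still carries positive $\cH^{m-2}$ measure. This is Federer's example from \cite{Federer3}.
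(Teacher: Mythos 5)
Your outline of parts (i) and (iii) is essentially sound and consistent with the paper's approach: for (i) the key observation, as the paper itself remarks in passing, is that $1$-dimensional area-minimizing cones without boundary are single lines with multiplicity, and for (iii) the product of a pair of transverse complex lines in $\C^2$ with $\R^{m-2}$, calibrated by the K\"ahler form wedged with the volume form of the factor, is precisely the kind of example Federer constructed.

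However, the logical skeleton you propose for (ii) contains a genuine error. You state that after ruling out $(m-1)$-dimensional spines, ``it therefore suffices to prove that $\sing(T)=\sing^{(m-2)}(T)$,'' and you base this on what you call ``the core of Almgren's theorem,'' namely the $\eps$-regularity statement that a flat tangent cone at an interior point forces regularity. That statement is \emph{false} when $\bar n\geq 2$: Example \ref{e:vbad}, the holomorphic curve $\{z^2=w^3\}\subset\C^2$, is an area-minimizing $2$-current whose unique tangent cone at $0$ is $2\a{\pi}$ (a multiplicity-two plane), yet $0$ is a singular point. So singular points \emph{do} belong to the top stratum $\mathcal{S}_m(T)$ in higher codimension, and the identity $\reg(T)=\mathcal{S}_m(T)$, which in codimension one is Corollary \ref{c:DG}, simply fails. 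If your claimed $\eps$-regularity held, the argument you outline would prove $\sing(T)\cap\Omega=\emptyset$, directly contradicting part (iii) of the very theorem you are proving. What Almgren actually proves — and what the machinery of multiple-valued $\D$-minimizers, the center manifold, the normal approximation and the frequency function is designed to establish — is the dimension estimate
\[
\dim_{\cH}\bigl(\mathcal{S}_m(T)\setminus\reg(T)\bigr)\leq m-2,
\]
i.e.\ that the set of \emph{branch points}, where the tangent cone is flat but the current fails to be a smooth $Q$-fold graph, is small, not that it is empty. The correct reduction is: $\sing(T)=\bigl(\mathcal{S}_0\cup\cdots\cup\mathcal{S}_{m-2}\bigr)(T)\;\cup\;\bigl(\mathcal{S}_m(T)\setminus\reg(T)\bigr)$ after Lemma \ref{l:strat} kills $\mathcal{S}_{m-1}$; Almgren's stratification (Theorem \ref{t:strat}) controls the first union, and the entire blow-up program is needed for the branch set. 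Your own closing description of the blow-up argument — producing a nontrivial $\D$-minimizer whose singular set would otherwise have dimension exceeding $m-2$ — is in fact the proof of the dimension bound on the branch set, not of the false $\eps$-regularity statement, so the two halves of your sketch are mutually inconsistent.
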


Almgren's result was subsequently sharpened by Chang (cf. \cite{Chang}) for $2$-dimensional area minimizing currents.

\begin{theorem}[$m=2$, $\bar n \geq 2$]\label{t:Chang}
Assume that $\Omega$, $\Sigma$ and $T$ are as in Definition \ref{d:AM}, that $\bar{n}\geq 2$ and $m=2$. Then
$\sing (T)\cap \Omega$ consists of isolated points.
\end{theorem}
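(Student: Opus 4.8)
The plan is to combine the general stratification/dimension bound of Almgren (Theorem \ref{t:cod>1}(ii)), which already gives $\dim_{\cH}\sing(T)\leq 0$ for $m=2$, with a finer analysis at each singular point to rule out accumulation. Since $\cH^0$–a.e. point of $\sing(T)$ is isolated once the dimension is zero, the whole content is to show that $\sing(T)$ has no accumulation points inside $\Omega$; equivalently, that around any $p\in\sing(T)$ there is a punctured ball containing only regular points. By the scaling and translation conventions ($T_{p,r}$) I may assume $p=0$ and, after a blow-up, that $\Sigma$ is as flat as desired. The first step is the standard one: pass to tangent cones. Any sequence $T_{0,r_k}$ subconverges (by Theorem \ref{t:FFcomp}, using the monotonicity formula to bound masses) to an area-minimizing cone $C$ in a plane of dimension $m+\bar n$; because $m=2$, such a cone is a cone over a finite collection of geodesics (great circles) in $\s^{m+\bar n-1}$, i.e.\ a sum of half-planes meeting along the origin, and area-minimality forces strong constraints on it.

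The heart of the argument, following Chang, is an \emph{improvement-of-uniqueness / decay} statement for $2$-dimensional area-minimizing currents. The key new ingredient over Almgren's $Q$-valued theory is the construction of a \emph{branched center manifold}: near a singular point of density $Q$ one builds a $2$-dimensional minimal surface $\cM$ (roughly the ``average'' of the sheets of $T$) which is allowed to have an isolated branch point at $p$, together with a multivalued Dirichlet-minimizing approximation $N$ of $T-Q\a{\cM}$ on $\cM$. Because the domain is two-dimensional, the fine structure of $Q$-valued Dirichlet minimizers is essentially complete: their singular set is discrete and near a point they are asymptotic, in a precise Hölder sense, to a finite sum of \emph{homogeneous} multivalued harmonic functions with rational frequency (this is where the $2$-dimensionality is essential, via the classical theory of holomorphic-type expansions for $\R^2$). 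One then transfers this expansion through the center-manifold construction to get that $T_{0,r}$ converges, with a rate, to a unique tangent cone $C$; uniqueness of the tangent cone plus a suitable ``no-hole'' / excess-decay argument then yields that in a punctured neighbourhood of $p$ the current is a $C^{1,\alpha}$ (in fact, by Almgren–Morrey type estimates, smooth away from $p$) single minimal graph over the cone, hence consists of regular points, or else a branched immersed disk which — by the two-dimensional branch-point removability — is still a single smooth submanifold away from $p$. In either case $p$ is isolated in $\sing(T)$.

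I would carry this out in the following order. First, recall the monotonicity formula and the classification of $2$-dimensional area-minimizing tangent cones, and reduce to a normalized situation with $\Sigma$ flat to high order. Second, develop (or cite from \cite{DS1,DS2,DS3,DS4,DS5}) the Lipschitz/strong approximation of $T$ by $Q$-valued graphs together with the Dirichlet-energy compactness, in the version adapted to the branched center manifold. Third, establish the branched center manifold $\cM$ and the normal approximation $N$, with the frequency function of $N$ well-defined and the unique-continuation/asymptotics of $2$-dimensional $Q$-valued Dir-minimizers giving a decay rate for the cylindrical/spherical excess of $T$ around $p$. Fourth, iterate the decay to obtain uniqueness of the tangent cone and a $C^{1,\alpha}$-parametrization of $T$ over that cone on a punctured neighbourhood of $p$, conclude regularity there, and finally combine with Almgren's $\cH^0$ bound to conclude $\sing(T)\cap\Omega$ is discrete, hence — $\Omega$ being arbitrary and $\supp(T)$ compact in the relevant regions — locally finite, i.e.\ consists of isolated points.

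The main obstacle, and the genuinely hard part, is the construction of the \emph{branched} center manifold and the control of the frequency function in its presence: unlike Almgren's (unbranched) center manifold, here $\cM$ itself is singular at $p$, so the interval-of-flatness stopping-time construction, the Whitney-type decomposition, and the estimates on the approximating $Q$-valued map must all be carried out in coordinates adapted to a possibly high-multiplicity branch point, and one must show the frequency function is monotone (or almost monotone) up to the branch point. Equivalently, one must rule out the ``infinite-order of contact'' / persistent non-uniqueness scenario; this is exactly where the special holomorphic structure of two-dimensional harmonic multivalued functions is indispensable, and it is the step that does not generalize to $m\geq 3$.
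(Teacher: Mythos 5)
The paper does not actually contain a proof of Theorem \ref{t:Chang}. It states the theorem, attributes it to Chang \cite{Chang}, observes that Chang's paper itself omits the key step (the existence of a \emph{branched} center manifold, whose construction ``requires the understanding of 4/5 of Almgren's monograph and a suitable modification of its most obscure and involved part''), and defers the first complete proof to the forthcoming works \cite{DSS1,DSS2,DSS3,DSS4}, saying explicitly ``That proof (and Chang's theorem) will however not be discussed in this survey.'' So there is no internal proof to measure yours against.

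That said, your outline matches precisely the strategy the paper indicates is the right one: begin from Almgren's $\cH^{m-2}$ bound (which for $m=2$ gives dimension $0$ but, as you then note, not discreteness), construct a \emph{branched} center manifold $\cM$ together with an $\cM$-normal $Q$-valued approximation $N$, exploit the frequency function and the fine structure of planar $\D$-minimizers (uniqueness and homogeneity of blow-ups, cf.\ Theorem \ref{t:finite} and the discussion after it) to derive an excess-decay rate and hence uniqueness of the tangent cone, and finally conclude that the singular point is isolated. You also correctly single out the branch-point-adapted center manifold, and the frequency monotonicity up to the branch point, as the genuinely hard and genuinely two-dimensional ingredient that has no analogue for $m\geq 3$ — this is exactly the gap in the literature the paper flags. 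Two small points worth correcting in your sketch: a $2$-dimensional area-minimizing cone with $\partial C=0$ is a finite \emph{sum of $2$-planes} with integer multiplicities, not a union of half-planes (half-planes would carry boundary, or collapse pairwise into full planes by comparison); and your opening phrase ``$\cH^0$-a.e.\ point of $\sing(T)$ is isolated once the dimension is zero'' is a non sequitur — Hausdorff dimension $0$ is compatible with accumulation — although you immediately correct course by saying the real content is ruling out accumulation via the decay/uniqueness argument.
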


In Section \ref{s:branching} we will discuss extensively the difficulties that any argument for (ii) must face. Almgren's original typewritten proof was more than 1700 pages long and was published posthumously thanks to the efforts of his students Scheffer and Taylor in a book of almost 1000 pages. In this note we will describe the main steps of Almgren's program following the papers \cite{DS1,DS2,DS3,DS4,DS5} by Emanuele Spadaro and the author. 

Chang's result builds heavily on Almgren's book. Moreover Chang's paper \cite{Chang} does not provide the proof of one major step of the argument, the existence of a ``branched center manifold'': the construction of such object requires the understanding of 4/5 of Almgren's monograph and a suitable modification of its most obscure and involved part, which gives the construction of the ``non-branched center manifold'' (cf. Sections \ref{s:cm}, \ref{s:cm2} and \ref{s:N} below). Building upon \cite{DS1,DS2,DS3,DS4,DS5}, in forthcoming joint papers with Emanuele Spadaro and Luca Spolaor we will give the first proof of the existence of a ``branched center manifold'' and extend Chang's theorem to a large class of objects which are almost minimizing in a suitable sense, cf. \cite{DSS1, DSS2, DSS3, DSS4}.
That proof (and Chang's theorem) will however not be discussed in this survey.

\medskip

In the rest of this section we will recall the preliminaries of the regularity theory, most notably the monotonicity formula and its consequences. 

\subsection{Compactness} A first basic fact about area minimizing currents is that, under the same assumptions of the compactness theorem of Federer and Fleming, they are also a compact class.

\begin{theorem}[Compactness of area minimizing currents, cf. {\cite[Theorem 34.5]{Simon}}]\label{t:AM-cpt}
Let $\Sigma_k$ be a sequence of $C^2$ submanifolds of $\R^{m+n}$ of dimension $m+\bar{n}$ which converge in $C^2$ to $\Sigma$ and let $T_k$ be a sequence of integer rectifiable area minimizing currents in $\Sigma_k$ of dimension $m$ with $\sup_k \mass (T_k) < \infty$. Assume that $\partial T_k = 0$ on some open set $\Omega$ and that $T_k\res \Omega \to T$. Then
\begin{itemize}
\item $T$ is area minimizing in $\Omega\cap \Sigma$;
\item $\|T_k\|\res \Omega \weaks \|T\|$ in the sense of Radon measures.
\end{itemize}
\end{theorem}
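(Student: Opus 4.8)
\emph{Proof proposal.} The plan is to deduce everything from the Federer--Fleming compactness theorem together with a comparison argument in which competitors are transplanted back and forth between $\Sigma$ and the $\Sigma_k$ by means of near-identity diffeomorphisms. Since all the assertions are local I will fix a ball $\bB_\rho(\xi)\subset\subset\Omega$ and use the slicing theory to select ``good'' radii $\rho$ at which the boundary terms on $\partial\bB_\rho$ are negligible. First, because $\|T_k\|(\Omega)\le\sup_k\mass(T_k)<\infty$ and $\partial T_k=0$ on $\Omega$, for a.e.\ $\rho$ the slices of $T_k$ by $r:=|\,\cdot-\xi\,|$ have uniformly bounded mass, so $\mass(\partial(T_k\res\bB_\rho))$ is bounded and Theorem \ref{t:FFcomp} shows that the weak limit $T$ is integer rectifiable with $\partial T=0$ on $\Omega$. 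Next, $\supp(T)\subset\Sigma$ follows since $C^2$ convergence of $\Sigma_k$ to $\Sigma$ gives local Hausdorff convergence: any ball disjoint from the closed set $\Sigma$ is eventually disjoint from every $\Sigma_k$, hence carries no mass of $T_k$ and none of $T$. Finally, the definition of $\|\cdot\|(U)$ as a supremum immediately gives lower semicontinuity, $\|T\|(U)\le\liminf_k\|T_k\|(U)$ for $U$ open, which is already half of the measure convergence.

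Second, I would set up the transplantation device. Writing $\Sigma$ and $\Sigma_k$ as graphs of $g$ and $g_k$ with $g_k\to g$ in $C^2$, the vertical shear $\Phi_k(x,y):=(x,\,y+g_k(x)-g(x))$ is a proper $C^2$ diffeomorphism of $\R^{m+n}$ with $\Phi_k(\Sigma)=\Sigma_k$ and $\Phi_k\to\mathrm{id}$ in $C^2$; hence for any fixed integral current $R$ with compact support, $\Phi_{k\sharp}R\to R$, $\mass(\Phi_{k\sharp}R)\le(1+o(1))\mass(R)$, and $\supp(\Phi_{k\sharp}R)\subset\Sigma_k$ whenever $\supp(R)\subset\Sigma$. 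Dually, the nearest-point retraction $\pi_k$ onto $\Sigma_k$ is $C^1$ and $C^1$-close to the identity on a fixed tubular neighbourhood, uniformly in $k$, so $\pi_{k\sharp}$ distorts mass by $1+o(1)$ and forces supports onto $\Sigma_k$; together with the isoperimetric inequality (Theorem \ref{t:FFiso}) this lets me fill any small-mass integral cycle lying near $\Sigma_k$ by a chain of small mass \emph{inside} $\Sigma_k$.

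Third, for minimality I would argue by contradiction: if $T$ were not area minimizing in $\Sigma\cap\Omega$ there would be $\bB_\rho(\xi)\subset\subset\Omega$, $\delta>0$ and an integral $S$ with $\supp(S)\subset\subset\bB_\rho(\xi)\cap\Sigma$ and $\|T+\partial S\|(\bB_\rho)<\|T\|(\bB_\rho)-3\delta$. Choosing $\rho$ good so that, along a subsequence, the flat-norm remainder of $\Phi_{k\sharp}T-T_k$ near $\partial\bB_\rho$ tends to $0$ (using flat convergence $T_k\to T$ from Federer--Fleming and a Fatou argument in the radial variable), I would take $\tilde S_k:=\Phi_{k\sharp}S+C_k$, where $C_k$ is a small-mass filling in $\Sigma_k$ of the radial mismatch cycle supplied by the device above. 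Then $\tilde S_k$ is admissible for $T_k$, $T_k+\partial\tilde S_k=T_k$ off $\bB_\rho$, and on $\bB_\rho$ it equals $\Phi_{k\sharp}\bigl((T+\partial S)\res\bB_\rho\bigr)$ up to mass $o(1)$, so
\[
\|T_k+\partial\tilde S_k\|(\bB_\rho)\le(1+o(1))\|T+\partial S\|(\bB_\rho)+o(1)<\|T\|(\bB_\rho)-2\delta\le\|T_k\|(\bB_\rho)
\]
for $k$ large (the last step by lower semicontinuity), contradicting minimality of $T_k$. The same construction with $S$ chosen so that $T+\partial S$ is $T\res\bB_\rho$ capped off over the slice $\langle T,r,\rho\rangle$ gives a competitor for $T_k$ of mass $\le(1+o(1))\|T\|(\bB_\rho)+o(1)$, whence $\|T_k\|(\bB_\rho)\le(1+o(1))\|T\|(\bB_\rho)+o(1)$; combined with lower semicontinuity and the density of good radii this yields $\|T_k\|\res\Omega\weaks\|T\|\res\Omega$.

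The hard part will be the transplantation \emph{without loss of mass}: one must simultaneously (a) push the competitor into $\Sigma_k$ and keep it there, (b) absorb the mismatch $\Phi_{k\sharp}T-T_k$ — which converges only weakly, not in mass — into a boundary of vanishing mass, and (c) prevent mass from escaping to $\partial\bB_\rho$ in the limit. My plan is to treat (b) by Federer--Fleming flat convergence followed by an isoperimetric filling of the small cycle produced, (a) by post-composing every such filling with $\pi_{k\sharp}$ and using that $C^2$ proximity of $\Sigma_k$ to $\Sigma$ makes all relevant pushforwards distort mass by $1+o(1)$, and (c) by the slicing choice of good radii; everything else is soft.
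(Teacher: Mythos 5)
The paper does not give its own proof of this statement; it cites \cite[Theorem 34.5]{Simon}, whose argument is exactly the one you have reconstructed: Federer--Fleming compactness for integer rectifiability and boundarylessness of the limit, lower semicontinuity of mass for one half of the measure convergence, and a patching argument at good radii --- using flat convergence of $T_k$ to $T$ plus an isoperimetric filling of the resulting small-mass slice mismatch --- for both the minimality of $T$ and the reverse mass inequality. The only genuine addition needed here over the unconstrained Euclidean case in Simon is handling the varying ambient manifolds $\Sigma_k$, and your vertical shear $\Phi_k$ plus nearest-point retraction $\pi_k$ is the standard device for that, so the overall route matches.

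One place where you should be careful and which you correctly flag as ``the hard part'': to retract the isoperimetric filling onto $\Sigma_k$ \emph{without altering its boundary}, you need the mismatch cycle to already be supported on $\Sigma_k$ (true: it is built from slices of $T_k$ and $\Phi_{k\sharp}T$, both supported in $\Sigma_k$) \emph{and} the filling to lie in the tubular neighbourhood where $\pi_k$ is defined. The isoperimetric inequality by itself gives no control on the location of the filling, so you must either quote the version of the deformation/isoperimetric theorem that keeps the filling within a controlled distance of the cycle, or first cut off and re-fill inside the tube. This is fixable with the tools you already invoke, but it is the step that would collapse if left purely to ``isoperimetric inequality + retraction'' as stated. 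Everything else is indeed soft.
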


\subsection{Stationarity and stability} Given a current $T$ supported in a submanifold $\Sigma$, we define the first and second variations along a smooth compactly supported vector field $X$ tangent to $\Sigma$ in the usual fashion (cf. \cite[Section 5.1.7]{Fed}): if we denote by $\Phi_t$ the one-parameter family of diffeomorphisms of $\Sigma$ generated by $X$, then
\[
\delta^k T (X) := \left. \frac{d^k}{dt^k}\right|_{t=0} \mass ((\Phi_t)_\sharp T)\, ,
\]
whenever $T$ has finite mass. If $T$ has only locally finite mass, we can still define its variations as
\[
\delta^k T (X) := \left. \frac{d^k}{dt^k}\right|_{t=0} \|(\Phi_t)_\sharp T\| (U)\, ,
\]
where $U$ is any bounded open set which contains $\supp (X)$. $\delta^1 T$ is usually denoted by $\delta T$ and is called first variation. $\delta^2 T$ is called second variation.
As usual we introduce stationary and stable currents as follows.

\begin{definition}
If $\Sigma$ is a complete smooth submanifold of $\R^{m+n}$ and $T$ an integer rectifiable current with $\supp (T)\subset \Sigma$ we then say that
\begin{itemize}
\item $T$ is stationary in $\Sigma$ if $\delta T (X) =0$ for every smooth compactly supported vector field $X$ tangent to $\Sigma$;
\item $T$ is stable in $\Sigma$ if it is stationary and $\delta^2 T (X) \geq 0$ for every smooth compactly supported vector field tangent to $\Sigma$.
\end{itemize}
\end{definition}

Of course area minimizing currents are stable. 

\subsection{The monotonicity formula} Testing the first variation condition with radial vector fields leads to the classical monotonicity formula (cf. \cite[Section 17]{Simon}).

\begin{theorem}[Monotonicity formula]\label{t:monot}
Let $T$ be an integer rectifiable $m$-dimensional current supported in a $C^2$ Riemannian manifold $\Sigma \subset \R^{m+n}$. If $T$ is stationary in $\Sigma$, then the following formula holds for every $x\not \in \supp (\partial T)$ and any $0<\sigma<\rho< \dist (x, \supp (\partial T))$:
\begin{align}
&\frac{\|T\| (\bB_\rho (x))}{\rho^m} - \frac{\|T\| (\bB_\sigma (x))}{\sigma^m} = \int_{\bB_\rho (x)\setminus \bB_\sigma (x)} \frac{|\nabla^\perp r|^2}{r^m}\, d\|T\|\nonumber\\
&\qquad\qquad + \frac{1}{m} \int_{\bB_\rho (x)} (y-x)\cdot H \left((\max \{r, \sigma\})^{-n} - \rho^{-n}\right)\, d\|T\| (y)\, ,
\end{align}
where
\begin{itemize}
\item $r (y) := |y-x|$;
\item $\nabla^\perp f (y)$ denotes the projection of $\nabla f (y)$ on the orthogonal complement of the tangent plane to $T$ at $y$;
\item $H (y) = \sum_i A_\Sigma (e_i, e_i)$, where $e_i$ is an orthonormal basis of the tangent plane to $T$ at $y$ and $A_\Sigma$ is the second fundamental form of $\Sigma$ (namely $A_\Sigma (\xi, \eta) = - (\nabla_\xi \eta)^\perp$, where $X^\perp$ denotes the projection of the vector field $X$ on the orthogonal complement of the tangent to $\Sigma$). 
\end{itemize}
\end{theorem}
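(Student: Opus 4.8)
The plan is to derive the monotonicity formula by testing the first variation identity $\delta T (X) = 0$ with a family of carefully chosen radial vector fields and then integrating the resulting ODE. First I would recall the first variation formula for a current supported in $\Sigma$: for a compactly supported $X$ tangent to $\Sigma$ one has
\[
\delta T (X) = \int \dv_{\vec T} X \, d\|T\| = - \int X \cdot \vec H_T \, d\|T\|\, ,
\]
where $\dv_{\vec T} X (y) = \sum_i \langle \nabla_{e_i} X, e_i\rangle$ is the tangential divergence along the approximate tangent plane oriented by $\vec T (y) = e_1 \wedge \ldots \wedge e_m$, and $\vec H_T$ is the generalized mean curvature. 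Since $T$ is stationary \emph{in $\Sigma$} but $X$ need only be tangent to $\Sigma$, the mean curvature $\vec H_T$ is not zero: decomposing a general ambient vector field and using stationarity one finds $\vec H_T (y) = H(y) = \sum_i A_\Sigma(e_i,e_i)$, the trace of the second fundamental form of $\Sigma$ over the tangent plane to $T$. This is the standard fact that a stationary current in $\Sigma$ has mean curvature prescribed by the geometry of $\Sigma$.

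Next I would fix the base point, which by translation we may take to be $x = 0$, write $r(y) = |y|$, and choose $X(y) = \gamma(r(y))\, y$ where $\gamma: [0,\infty) \to \R$ is a smooth nonincreasing cutoff that is $\equiv 1$ on $[0,\sigma]$ and vanishes on $[\rho,\infty)$; eventually $\gamma$ will be taken to approximate the indicator of $[0,t]$. The key computation is the tangential divergence of this $X$: one gets
\[
\dv_{\vec T} X = m\, \gamma(r) + r\, \gamma'(r)\, \bigl(1 - |\nabla^\perp r|^2\bigr)\, ,
\]
using that $\sum_i \langle e_i, y\rangle e_i$ is the projection of $y$ onto the tangent plane and $|\nabla^\top r|^2 + |\nabla^\perp r|^2 = |\nabla r|^2 = 1$ away from the origin. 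Substituting into the first variation identity and using $X \cdot H = \gamma(r)\, y \cdot H$ yields
\[
\int \Bigl( m\, \gamma(r) + r\, \gamma'(r)\bigr(1 - |\nabla^\perp r|^2\bigl) \Bigr) d\|T\| = - \int \gamma(r)\, y \cdot H \, d\|T\|\, .
\]
Now I would specialize $\gamma$ to the Lipschitz function $\gamma_\lambda(s)$ equal to $1$ for $s \le \lambda$, equal to $0$ for $s \ge \rho$, and interpolating linearly, so that $\gamma_\lambda'(s) = -(\rho-\lambda)^{-1} \mathbf 1_{(\lambda,\rho)}(s)$; a routine approximation justifies using Lipschitz cutoffs in the variation identity. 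Abbreviating $f(r) = \|T\|(\bB_r)$ and $g(r) = \int_{\bB_r} \frac{|\nabla^\perp r|^2}{r^m}\, d\|T\|$, and letting the interpolation interval shrink ($\lambda \uparrow \rho$, i.e. differentiating in $\rho$), the identity becomes the differential relation
\[
\frac{d}{d\rho}\Bigl(\frac{f(\rho)}{\rho^m}\Bigr) = \frac{d}{d\rho} g(\rho) + \frac{1}{\rho^{m+1}} \int_{\bB_\rho} (y\cdot H)\, d\|T\|(y)\, ,
\]
valid for a.e.\ $\rho$; both sides are monotone-plus-absolutely-continuous so this is legitimate. Integrating from $\sigma$ to $\rho$ produces the first and the bulk of the error term, and an integration by parts in the radial variable on the $\int_{\bB_\rho}(y\cdot H)\,d\|T\|$ term (writing $\rho^{-m-1} = \frac{1}{m}\frac{d}{d\rho}(-\rho^{-m})$ and using Fubini, splitting according to whether $r(y) \le \sigma$ or $r(y) > \sigma$) converts it into precisely the stated factor $(\max\{r,\sigma\})^{-n} - \rho^{-n}$ — here one must be slightly careful that the exponent in the problem is $n$, reflecting the codimension-$\bar n$ geometry of $\Sigma$ via the bound $|H| \le C$; I would double-check the bookkeeping of this integration by parts so that the weight comes out exactly as written.

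The main obstacle, and the step deserving the most care, is the identification $\vec H_T = \sum_i A_\Sigma(e_i,e_i)$ together with the rigorous justification of plugging non-smooth (merely Lipschitz, compactly supported) radial vector fields into the first variation — the formula $\delta T(X) = \int \dv_{\vec T} X\, d\|T\|$ is first established for smooth $X$ and then extended by density, using $\mass(T) < \infty$ locally and dominated convergence, since $\dv_{\vec T} X$ is bounded and the approximants can be taken with uniformly bounded supports and gradients. A secondary technical point is the a.e.\ differentiability in $\rho$ of $f(\rho)/\rho^m$ and the legitimacy of the fundamental-theorem-of-calculus step; this follows because $f$ is the distribution function of the Radon measure $\|T\|$ and hence has bounded variation on compact subintervals of $(0,\dist(0,\supp\partial T))$, so the ODE identity, once known a.e., integrates up to the claimed equality. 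Everything else — the tangential divergence computation, the Fubini manipulation, the final integration by parts — is a direct calculation.
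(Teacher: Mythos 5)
Your proposal is correct, and it is the standard proof — the one in \cite[Section 17]{Simon} which the paper cites without reproducing: test the first variation with $X=\gamma(r)(y-x)$, identify the generalized mean curvature with $H=\sum_i A_\Sigma(e_i,e_i)$ by decomposing into components tangent and normal to $\Sigma$ and using stationarity for the tangential part, compute $\dv_{\vec T}X=m\,\gamma(r)+r\,\gamma'(r)\bigl(1-|\nabla^\perp r|^2\bigr)$, sharpen the cutoff, and integrate. Your one point of hesitation is in fact pointing at a typo in the printed theorem: the integral $\int_a^\rho t^{-m-1}\,dt=\tfrac1m\bigl(a^{-m}-\rho^{-m}\bigr)$, which you set up correctly, produces the weight $(\max\{r,\sigma\})^{-m}-\rho^{-m}$ with exponent $m$ (the dimension of the current, exactly as in Simon's (17.5)), not $n$; the codimension of $T$ or $\Sigma$ plays no role in this exponent, the bound $|H|\le C$ is irrelevant to it, and no amount of re-bookkeeping will turn $m$ into $n$, so you should trust your computation and read $m$ for $n$ in the statement.
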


A first obvious consequence of the monotonicity formula is that the density of an area minimizing current is in fact defined at {\em every} point: thus from now on we will use $\Theta (T, p)$ (cf. \eqref{e:density}) as a well defined quantity for every $p\not\in \supp (\partial T)$. Indeed it is a simple exercise to show that $\Theta$ is an upper semicontinuous function (and this because, modulo some small technicalities, the map $p\to \|T\| (\bB_r (p))$ is continuous at each fixed $r$). 

We next describe another crucial consequence of the monotonicity formula. Let $T$ be an area minimizing current (in the euclidean space, to simplify our discussion) and $p\in \supp (T)\setminus \supp (\partial T)$. Recall the homothetic rescalings $T_{p,r}$. It is obvious that each $T_{p,r}$ is an area minimizing current in $\iota_{p,r} (\Sigma)$. Observe also that:
\begin{itemize}
\item For each bounded open set $\Omega$, we clearly have $\supp (\partial T_{p,r}) \cap \Omega = \emptyset$, provided $r$ is small enough;
\item For each $R>0$ we have a uniform bound for $\|T_{p,r}\| (\bB_R (0))$: the latter is indeed the number $r^{-m} \|T\| (\bB_{Rr} (p))$, which is bounded independently of $r$ thanks to the monotonicity formula;
\item $\iota_{p,r} (\Sigma)$ converges, in $C^2$, to the tangent plane $T_p\Sigma$ to $\Sigma$ at $p$. 
\end{itemize}
Thus, by Theorem \ref{t:AM-cpt}, for every fixed bounded open $\Omega$ we can extract a subsequence $\{T_{p, r_k}\}_{r_k\downarrow 0}$ which converges in $\Omega$ to an area minimizing current $T_0$. Actually, by a standard diagonal argument we can find a ``global'' limit current $T_0$ which is an integral current on each bounded open subset of $\R^{m+n}$, which has no boundary
and which is area minimizing in $T_p \Sigma$. Since the latter is a linear subspace of $\R^{m+n}$ it is not difficult to see that $T_0$ is then area-minimizing in $\R^{m+n}$.

Now, it is also obvious that $r\mapsto r^{-m} \|T_0\| (\bB_r)$ is a constant and equals $\omega_m \Theta (T, p)$. The right hand side of the monotonicity formula implies that this is possible if and only if $T_0$ is a {\em cone} with vertex $0$, namely $(\iota_{0,r})_\sharp T_0 = T_0$. The standard terminology for $T_0$ is {\em tangent cone to $T$ at $p$}: note however that its uniqueness (i.e. the convergence of every sequence $T_{p, r_k}$ to the same cone) is not guaranteed and in fact it is one of the major unsolved problems in the field. The uniqueness has been proved for $2$-dimensional currents $T$ in {\em any} codimension by White in his remarkable paper \cite{White2} and it has been shown by Simon in codimension $1$ at any isolated singularity in the fundamental work \cite{Simon3}. The latter result is indeed a consequence of a remarkably general approach, which applies to other variational problems (such as the uniqueness of tangent maps to energy minimizing maps) but also to the study of the asymptotic behaviour of solutions to parabolic equations, see \cite{Simon3}.

\medskip

This motivates the following

\begin{definition}[Tangent cones]\label{d:tcones}
An area minimizing cone of dimension $m$ in $\R^{m+n}$ is an area-miminimizing integer rectifiable current $S$ of dimension $m$  such that
$\partial S=0$ and $S_{0,r} = S$ for every positive $r$.
Next, if $T$ and $S$ are two currents such that, for some $p\in \supp (T)$ and some $r_k\downarrow 0$, $T_{p, r_k}$ converges to $S$, we then say that $S$ is {\em tangent to $T$ at $p$}. 
\end{definition}

We summarize in the following theorem the most important consequences of the monotonicity formula:

\begin{theorem}[Tangent cones, {cf. \cite[Section 7.3]{Simon}}]\label{t:cones}
Let $T$ be an area minimizing integral current of dimension $m$ in a $C^2$ submanifold $\Sigma$ of dimension $m+\bar{n}$ in $R^{m+n}$. Then
\begin{itemize}
\item[(i)] $r\mapsto e^{C\bA r} r^{-m} \|T\| (\bB_r (p))$ is a monotone function for each $p\not\in \supp (\partial T)$ on the interval $]0, \dist (p, \supp (\partial T)[$, where $C$ is a suitable dimensional constant and $\bA$ is the $C^0$ norm of the second fundamental form of $\Sigma$;
\item[(ii)] The density $\Theta (T, p)$ is well defined at every $p\not\in \supp (\partial T)$, it is at least $1$ at each point $p\in \supp (T)\setminus \supp (\partial T)$ and it is upper semicontinuous;
\item[(iii)] For every $p\not\in \supp (\partial T)$ and every sequence $r_k\downarrow 0$ there is a subsequence, not relabeled, and an area minimizing cone $T_0$ such that $T_{p, r_k}\to T_0$; $T_0\neq 0$ if and only if $p\in \supp (T)$;
\item[(iv)] If $T_k$ and $\Sigma_k$ are as in Theorem \ref{t:AM-cpt}, then $\supp (T_k)$ converges, locally in $\Omega$ in the sense of Hausdorff, to $\supp (T)$.
\end{itemize}
\end{theorem}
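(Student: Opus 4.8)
The plan is to obtain all four items from the three tools already available: the monotonicity formula (Theorem~\ref{t:monot}), the compactness theorem for area minimizing currents (Theorem~\ref{t:AM-cpt}), and the isoperimetric inequality (Theorem~\ref{t:FFiso}). I would prove them in the order (i), a crude lower density bound, (iii), then the sharp bound in (ii), and finally (iv).

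\emph{Item (i) and the well-definedness of $\Theta$.} In the identity of Theorem~\ref{t:monot} the only term with an indefinite sign is the curvature one, and since $|H(y)|\le m\bA$ we have $|(y-x)\cdot H|\le m\bA\, r(y)$; a routine manipulation (multiplying through by an integrating factor) rewrites the formula as the assertion that $\rho\mapsto e^{C\bA\rho}\rho^{-m}\|T\|(\bB_\rho(x))$ is non-decreasing on $(0,\dist(x,\supp(\partial T)))$, for a suitable dimensional $C$, with a non-negative ``defect''. Monotonicity gives the limit as $\rho\downarrow 0$, and because $e^{C\bA\rho}\to 1$ this limit equals $\omega_m\Theta(T,x)$; hence $\Theta(T,x)$ is well defined at every $x\notin\supp(\partial T)$. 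Upper semicontinuity of $\Theta$ follows by writing $\omega_m\Theta(T,x)=\inf_{\rho}e^{C\bA\rho}\rho^{-m}\|T\|(\overline{\bB_\rho(x)})$, which one checks coincides with the previous limit, and observing that each $x\mapsto\|T\|(\overline{\bB_\rho(x)})$ is upper semicontinuous (outer regularity of the Radon measure $\|T\|$), so that the infimum over $\rho$ is too.

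\emph{Crude lower bound and item (iii).} For $x\in\supp(T)\setminus\supp(\partial T)$ set $g(s):=\|T\|(\bB_s(x))>0$. Slicing $T$ by $r(\cdot)=|\cdot-x|$, for a.e.\ small $s$ the slice $\langle T,r,s\rangle$ is an integral cycle with $\mass(\langle T,r,s\rangle)\le g'(s)$, and comparing $T\res\bB_s(x)$ with an isoperimetric filling of that slice (Theorem~\ref{t:FFiso}, pushed back onto $\Sigma$, which for small $s$ only costs a factor close to $1$) and using area minimality gives $g(s)\le C\,g'(s)^{m/(m-1)}$; integrating the resulting differential inequality for $g^{1/m}$ from $0$ yields $g(s)\ge c(m,n)\,s^m$, hence $\Theta(T,x)\ge c(m,n)/\omega_m>0$. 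For (iii), each $T_{x,r_k}$ is area minimizing in $\iota_{x,r_k}(\Sigma)$, has masses $\|T_{x,r_k}\|(\bB_R(0))=r_k^{-m}\|T\|(\bB_{Rr_k}(x))$ bounded uniformly in $k$ by (i), and $\iota_{x,r_k}(\Sigma)\to T_x\Sigma$ in $C^2$; Theorem~\ref{t:AM-cpt} plus a diagonal argument produces a subsequence with $T_{x,r_k}\to T_0$, where $T_0$ is integral on every ball, $\partial T_0=0$, and area minimizing in the $m$-plane $T_x\Sigma$, hence in $\R^{m+n}$ (project competitors orthogonally onto $T_x\Sigma$). Since $\|T_{x,r_k}\|\weaks\|T_0\|$ and the rescaled density ratios converge, $s^{-m}\|T_0\|(\bB_s(0))\equiv\omega_m\Theta(T,x)$ is constant; plugging $T_0$ (with $H\equiv 0$) into Theorem~\ref{t:monot} the left-hand side vanishes identically, forcing $\int|\nabla^\perp r|^2 r^{-m}\,d\|T_0\|=0$, i.e.\ the position vector field $y\mapsto y$ is tangent to $T_0$ at $\|T_0\|$-a.e.\ point; the rigidity (equality case) in the monotonicity formula then gives $(\iota_{0,\lambda})_\sharp T_0=T_0$ for all $\lambda>0$, so $T_0$ is an area minimizing cone. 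Finally $T_0=0$ iff $\Theta(T,x)=0$ iff $x\notin\supp(T)$, the last equivalence by the crude lower bound just proved.

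\emph{Sharp lower bound (rest of (ii)) and item (iv).} To upgrade the bound to $\Theta(T,x)\ge1$ on $\supp(T)\setminus\supp(\partial T)$, take a tangent cone $T_0$ at $x$, which is non-zero; one computes $\Theta(T_0,0)=\Theta(T,x)$ directly from the constancy of the density ratio, and, $T_0$ being a cone with $H\equiv0$, monotonicity yields $\Theta(T_0,y)\le\Theta(T_0,0)$ for every $y$; by Lemma~\ref{l:density} there is a $y$ with $\Theta(T_0,y)$ a positive integer, so $\Theta(T,x)=\Theta(T_0,0)\ge1$. For (iv), with $T_k,\Sigma_k$ as in Theorem~\ref{t:AM-cpt}: if $x\in\supp(T)\cap\Omega$ and $\bB_r(x)\subset\subset\Omega$ then $\|T\|(\bB_r(x))>0$, so $\liminf_k\|T_k\|(\bB_r(x))>0$ by $\|T_k\|\res\Omega\weaks\|T\|\res\Omega$, whence $\supp(T_k)$ eventually meets $\bB_r(x)$; conversely the bound $g(s)\ge c\,s^m$ is uniform in $k$ (as $\|A_{\Sigma_k}\|_{C^0}$ is bounded), so if $x_k\in\supp(T_k)$, $x_k\to x\in\Omega$ but $x\notin\supp(T)$, choose $r$ with $\|T\|(\overline{\bB_r(x)})=0$ and $\bB_r(x)\subset\subset\Omega$: for large $k$, $\bB_{r/2}(x_k)\subset\bB_r(x)$ gives $\|T_k\|(\bB_r(x))\ge c(r/2)^m$, contradicting $\limsup_k\|T_k\|(\overline{\bB_r(x)})\le\|T\|(\overline{\bB_r(x)})=0$. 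The two inclusions are exactly Hausdorff convergence of $\supp(T_k)$ to $\supp(T)$ on $\Omega$.

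\emph{Main obstacle.} The soft parts---semicontinuity, the diagonal extraction, the weak-$*$ convergence of the mass measures---are routine. The two points that carry the real content are the density lower bound (the differential-inequality argument resting on the isoperimetric inequality, and its \emph{uniform} version needed for (iv)) and the rigidity statement inside the monotonicity formula: that a stationary integral current whose density ratio is constant---equivalently, whose radial vector field is tangent $\|T_0\|$-a.e.---must be dilation invariant as a current, not merely as a measure. I expect this last step to be the genuinely delicate one, because it requires controlling the push-forward of $T_0$ under the (non-proper) dilation flow at the level of currents.
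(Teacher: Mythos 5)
Your proposal is correct and follows essentially the same route the paper sketches in the paragraph preceding the theorem (and which is carried out in Simon's lecture notes, the cited reference): absorb the mean-curvature term into an exponential integrating factor for (i); deduce existence of $\Theta$ and its upper semicontinuity from monotonicity together with semicontinuity of $p\mapsto\|T\|(\bB_\rho(p))$; extract tangent cones via Theorem~\ref{t:AM-cpt} plus a diagonal argument, using the rigidity case of the monotonicity formula to get the cone property; obtain $\Theta\geq1$ by passing to a tangent cone and invoking Lemma~\ref{l:density}; and prove (iv) from weak-$*$ convergence of the masses together with the uniform density lower bound. You also correctly identify the two nontrivial points — the density lower bound and the rigidity/dilation-invariance step — as the places where genuine work is needed.
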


One technical detail that plays a crucial role in the proof of Theorem \ref{t:cod>1} is the following ``improvement'' of Theorem \ref{t:cones}(i)

\begin{theorem}[{Cf.  \cite[Lemma A.1]{DS3}}]\label{t:monot_imp}
There is a constant $C (m,\bar{n}, n)$ with the following properties.
Let $T$, $\Sigma$ and $\bA$ be as in Theorem \ref{t:cones}. Then
\[
r\mapsto e^{C \bA^2 r^2} \frac{\|T\| (\bB_r (p))}{r^m} \quad \mbox{is monotone}
\]
on the interval $]0, \dist (p, \supp (\partial T))[$ for every $p\not \in \supp (\partial T)$. 
\end{theorem}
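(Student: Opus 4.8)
The plan is to revisit the standard derivation of the monotonicity formula and sharpen the single place where the curvature of $\Sigma$ enters as an error. As a preliminary reduction it suffices to treat $p\in\supp(T)$: if $p\notin\supp(T)$ then $\|T\|(\bB_r(p))=0$ for all small $r$ and the statement is vacuous there, while for the remaining $p$ the conclusion follows by continuity; so assume $p\in\supp(T)\subset\Sigma$. Recall from Theorem \ref{t:monot} that, $T$ being stationary in $\Sigma$, its mean curvature vector in $\R^{m+n}$ is $H(y)=\sum_i A_\Sigma(e_i,e_i)$ with $\{e_i\}$ an orthonormal basis of the tangent plane to $T$ at $y$; hence $|H(y)|\le m\bA$ and, decisively, $H(y)$ is orthogonal to the whole tangent space $T_y\Sigma$.

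The crux is the following geometric estimate: \emph{there is a constant $C=C(m,\bar n,n)$ such that, for every $p,y\in\Sigma$, the component $(y-p)^\perp$ of $y-p$ orthogonal to $T_y\Sigma$ satisfies}
\[
|(y-p)^\perp|\ \le\ C\,\bA\,|y-p|^2\, .
\]
When $|y-p|$ is below a fixed dimensional multiple of $1/\bA$ one writes $\Sigma$ near $y$ as a graph over $T_y\Sigma$ of a function vanishing together with its differential at the base point and with Hessian bounded by $C\bA$ (this is where, if convenient, one invokes the reduction recalled after Definition \ref{e:regular_set} to $\Sigma$ being a graph with controlled $C^2$ norm); a Taylor expansion then gives the bound, in fact with no dependence on the first derivatives of the graphing function. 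When $|y-p|$ is larger, the trivial bound $|(y-p)^\perp|\le|y-p|$ already exceeds $C\bA|y-p|^2$. Combining this with $H(y)\perp T_y\Sigma$ yields
\[
|(y-p)\cdot H(y)|\ =\ |(y-p)^\perp\cdot H(y)|\ \le\ C\,\bA^2\,|y-p|^2\, .
\]

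Now set $g(\rho):=\rho^{-m}\|T\|(\bB_\rho(p))$. The first-variation computation behind Theorem \ref{t:monot} (cf.\ \cite[Section 17]{Simon}) gives, after discarding the nonnegative $|\nabla^\perp r|^2$ term, the differential inequality
\[
g'(\rho)\ \ge\ -\,\rho^{-m-1}\int_{\bB_\rho(p)}|(y-p)\cdot H(y)|\,d\|T\|(y)\, .
\]
Inserting the estimate above and bounding $|y-p|^2\le\rho^2$ under the integral sign, we obtain $g'(\rho)\ge -C\bA^2\rho\,g(\rho)$, hence
\[
\frac{d}{d\rho}\Big(e^{C\bA^2\rho^2/2}\,g(\rho)\Big)\ =\ e^{C\bA^2\rho^2/2}\big(g'(\rho)+C\bA^2\rho\,g(\rho)\big)\ \ge\ 0\, ,
\]
which is the asserted monotonicity (the constant in the statement being $C/2$). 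Had one used only $|H(y)|\le m\bA$, i.e.\ $|(y-p)\cdot H(y)|\le m\bA|y-p|$, the same scheme would give $g'(\rho)\ge -C\bA g(\rho)$ and recover merely the factor $e^{C\bA\rho}$ of Theorem \ref{t:cones}(i); the whole improvement comes from the orthogonality of $H$ to $T_y\Sigma$, which trades one power of $|y-p|$ for one power of $\bA|y-p|$.

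The routine parts are the rigorous version of the differential inequality for $g$ — handling the fact that $\rho\mapsto\|T\|(\bB_\rho(p))$ is merely monotone and of bounded variation, and the $\max\{r,\sigma\}$ truncation in the formula of Theorem \ref{t:monot}, exactly as in Simon — together with the concluding Gronwall step. I expect the only point calling for real care to be the uniform geometric estimate on $(y-p)^\perp$: one must make sure the graph representation of $\Sigma$ over $T_y\Sigma$ is available on a ball whose radius is comparable to $1/\bA$, and that the resulting constant depends on nothing but $m$, $\bar n$ and $n$.
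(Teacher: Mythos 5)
Your proposal is correct and takes essentially the same approach as \cite[Lemma A.1]{DS3}: the gain from $e^{C\bA r}$ to $e^{C\bA^2 r^2}$ comes precisely from combining the orthogonality of $H(y)$ to $T_y\Sigma$ with the quadratic chord estimate $|(y-p)^\perp|\le C\bA|y-p|^2$ for $p,y\in\Sigma$, after which the Gronwall step is routine. (Your one-line treatment of the case $p\notin\supp(T)$ — ``follows by continuity'' — is hand-wavy as written, but this is immaterial: the statement is only of interest, and in the cited lemma only asserted, for $p$ lying on $\Sigma$, which your reduction correctly isolates.)
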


\section{The regularity theory in codimension $1$}\label{s:cod-1}

The first breakthrough in the regularity theory is due to De Giorgi: he realized in his fundamental work \cite{DeGiorgi4} that in codimension $1$
the existence of one flat tangent plane at $p$ is enough to conclude that $p$ is a regular point. His theorem was then extended to any codimension by Almgren in \cite{Alm3} (see also \cite{SS}) under an important assumption on the density which we will discuss extensively in a moment (indeed the latter theorem can be suitably generalized to Hilbert spaces, cf. \cite{ADS}). In fact Almgren's statement covers many more geometric functionals, which satisfy an appropriate ellipticity assumption. In the framework of minimal (i.e. only {\em stationary}) surfaces the most important generalization of De Giorgi's $\eps$-regularity theorem is due to Allard in \cite{Allard} (cf. also \cite[Chapter 4]{Simon} and \cite{DLAllard}): his theorem, valid for a far reaching generalization of classical stationary surfaces (namely integer rectifiable varifolds with sufficiently summable generalized mean curavature) is the starting point of a variety of applications of the minimal surface theory to geometric and topological problems. 

\medskip

We will first recall the De Giorgi-Almgren $\eps$-regularity theorem in all dimensions and codimensions, after introducing the key parameter of ``flatness''

\begin{definition}[Spherical excess]\label{d:eccesso}
Let $T$ be an integer rectifiable $m$-dimensional current and $\pi$ be an $m$-dimensional plane, oriented by the unit simple $m$-vector $\vec{\pi}$. The (spherical) excess of $T$ in the ball $\bB_\rho (p)$  with respect to $\pi$ is the quantity
\begin{equation}\label{e:eccesso_pi}
\bE (T, \bB_\rho (p), \pi) := \frac{1}{\omega_m \rho^m} \int_{\bB_\rho (p)} |\vec{T} (x) - \vec{\pi}|^2 \, d\|T\| (x)\, .
\end{equation}
The {\em excess} in $\bB_\rho (p)$ is 
\begin{equation}\label{e:eccesso}
\bE (T, \bB_\rho (p)) :=\min \{\bE (T, \bB_\rho (p), \pi): \pi \mbox{ is an oriented $m$-plane}\}\, . 
\end{equation}
If $\pi$ achieves the minimum in the right hand side of \eqref{e:eccesso} we then say that $\pi$ {\em optimizes} the excess.
\end{definition}

Since we will often deal with $m$-dimensional balls in $m$-dimensional planes $\pi$, we introduce here the notation $B_r (p, \pi)$ for the set $\bB_r (p) \cap (p+\pi)$. 

\begin{theorem}[$\varepsilon$-regularity]\label{t:DG}
Let $T$ be an $m$-dimensional integer rectifiable area minimizing current in a $C^2$ submanifold $\Sigma$ of dimension $m+\bar{n}$. There are constants $\alpha>0$, $\eps>0$ and $C$, depending only upon $m$ and $\bar{n}$, such that the following holds. Assume that for some $\rho>0$ and some $m$-dimensional plane $\pi$ we have
\begin{itemize}
\item[(a)] $\partial T \res \bB_{2\rho }(p) = 0$;
\item[(b)] $\Theta (T, p) =Q$ and $\Theta = Q$ $\|T\|$-a.e. on $\bB_{2\rho} (p)$, for some positive integer $Q$;
\item[(c)] $\|T\| (\bB_{2\rho} (p)) \leq (Q \omega_m +\eps) (2\rho)^m$;
\item[(d)] $E := \bE (T, \bB_{2\rho} (p), \pi) < \eps$ and $\rho \bA := \rho \max_{\Sigma\cap \bB_{2\rho} (p)} |A_\Sigma| < \eps$.
\end{itemize}
Then $T\res \bB_\rho (p) = Q \a{\Gamma}$ for a surface $\Gamma$ which is the graph of a suitable $C^{1,\alpha}$ function $u: B_r (p, \pi)\to \pi^\perp$. Moreover $[Du]_{0, \alpha} \leq C (E^{\sfrac{1}{2}} + \rho \bA) \rho^{-\alpha}$. 
\end{theorem}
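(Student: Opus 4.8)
The plan is to reduce the statement to a single \emph{excess decay estimate} and then run a Campanato-type iteration. After rescaling and rotating one may assume $p=0$, $\rho=1$ and $\pi=\R^m\times\{0\}$, all of (a)--(d) being preserved under such a normalization; the monotonicity formula (Theorems~\ref{t:monot} and \ref{t:cones}) provides once and for all the uniform bounds $\|T\|(\bB_s(q))\le (Q\omega_m+C\eps)\,\omega_m s^m$ on every sub-ball the argument will use, and also shows that $\Theta\equiv Q$ is inherited at every scale. The heart of the matter is to prove that there exist dimensional constants $\theta,\alpha\in(0,1)$ such that, whenever (a)--(d) hold on $\bB_{2\rho}(q)$, one has
\[
\bE\big(T,\bB_{\theta\rho}(q)\big)\;\le\;\tfrac12\,\theta^{2\alpha}\,\bE\big(T,\bB_{\rho}(q)\big)\;+\;C\,\theta^{2\alpha}(\rho\bA)^2 .
\]

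\textbf{Step 1: Lipschitz approximation and no separation of sheets.} Using the smallness of the excess $E$, the mass bound, the BV slicing theory and a maximal-function estimate applied to the excess measure $\mu:=|\vec T-\vec\pi|^2\,\|T\|$, one constructs a closed set $K\subset B_1(0,\pi)$ with $|B_1\setminus K|\le C\,E^{1-2\beta}$ and a Lipschitz map $w:B_1(0,\pi)\to\pi^\perp$ with $\Lip(w)\le C\,E^{\beta}$ and $\int_{B_1}|Dw|^2\le C(E+(\rho\bA)^2)$, such that $T$ coincides with $Q\,\bG_w$ over $K\times\pi^\perp$ while $\|T\|\big((B_1\setminus K)\times\pi^\perp\big)\le C\,E^{1-2\beta}$. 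The decisive use of hypotheses (b)--(c) is made here: constant density $\Theta\equiv Q$ together with the near-minimal mass forces the $Q$ sheets of $T$ lying over $K$ to coincide, so that the approximation is a \emph{single-valued} graph of multiplicity $Q$, not a genuinely $Q$-valued object — which is exactly what keeps the subsequent comparison inside the realm of classical harmonic functions. (This step also bridges the gap between the tilt excess $\bE$ and the height of $T$ over $\pi$.)

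\textbf{Step 2: harmonic comparison, decay, and iteration.} Comparing $T$ with the graph competitors $\bG_{w+\varphi}$ and Taylor-expanding the area integrand — whose leading quadratic part is the Dirichlet energy, the correction coming from the second fundamental form of $\Sigma$ being $O(\bA)$ — shows that $w$ is \emph{almost harmonic}: $\big|\int_{B_{1/2}}Dw\cdot D\varphi\big|\le (o(1)\,E^{1/2}+C\bA)\,\|D\varphi\|_{L^2}$ as $E,\bA\to0$. By the harmonic approximation lemma there is a harmonic $h$ on $B_{1/2}$ with $\int_{B_{1/2}}|Dw-Dh|^2\le \sigma(E,\bA)\,(E+(\rho\bA)^2)$, $\sigma\to0$, and interior estimates for harmonic functions give quadratic decay of the $L^2$-oscillation of $Dh$ at small scales. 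Feeding this back through Step~1, with the graph of the linear map $Dh(0)$ as new reference plane, yields the displayed decay estimate. Iterating it over the scales $\theta^k$, both at $0$ and at nearby points $q\in\supp(T)$ — hypotheses (a)--(d) persisting at each step, (a) trivially, (b)--(c) by $\Theta\equiv Q$ and monotonicity, (d) by the decay itself — one obtains $\bE(T,\bB_r(q))\le C(E+(\rho\bA)^2)\,r^{2\alpha}$ and the Cauchy estimate $|\vec\pi_{q,r}-\vec\pi_{q,r/2}|\le C(E^{1/2}+\rho\bA)\,r^{\alpha}$ for the optimal planes. A Morrey--Campanato argument converts this into the statement that $\supp(T)\cap\bB_r(0)$ is the graph $\Gamma=\gr(u)$ of a $C^{1,\alpha}$ function $u:B_r(0,\pi)\to\pi^\perp$ with $[Du]_{0,\alpha}\le C(E^{1/2}+\rho\bA)\rho^{-\alpha}$, and the constancy of the density forces $T\res\bB_r=Q\a{\Gamma}$. (Schauder theory for the minimal surface system then even gives $\Gamma\in C^{2,\alpha}$, but this is not needed.)

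\textbf{Main obstacle.} The hard part is Step~1 together with the ``no separation of sheets'' assertion. Producing the Lipschitz approximation with the sharp powers of $E$ is already technical — it needs the slicing theory and a careful use of monotonicity to exclude hidden mass off the graph — but the genuinely load-bearing point is the passage from the pinching hypotheses (b)--(c) to single-valuedness of the approximation, since that is precisely what permits linearization against the \emph{scalar} Laplace operator. When (b)--(c) are dropped the Lipschitz (or, better, $W^{1,2}$) approximation is irreducibly multi-valued and its linearization is a $Q$-valued Dir-minimizer; recovering regularity in that generality is exactly Almgren's program — multi-valued functions, the center manifold, the frequency function — to which the remainder of this survey is devoted.
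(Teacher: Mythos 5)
Your proposal follows the same De Giorgi excess-decay strategy that the paper sketches in Section~\ref{ss:DG}: Lipschitz graphical approximation (made single-valued by hypotheses (b)--(c)), Taylor expansion of the mass to a Dirichlet energy, comparison with a harmonic competitor, geometric decay of the spherical excess at a fixed fraction of the radius, iteration together with the persistence of (a)--(d) via the monotonicity formula, and finally a Morrey--Campanato argument plus the constancy theorem to conclude $T\res\bB_\rho=Q\a{\Gamma}$. The reasoning and the identification of (b)--(c) as the load-bearing assumptions match the paper's account; no essential divergence.
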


From now on we will often encounter oriented graphs of (at least Lipschitz) functions $u$ and the corresponding integer rectifiable currents: the latter will be denoted by $\bG_u$, whereas we will use $\gr (u)$ for the set-theoretic graph.

\subsection{De Giorgi's idea}\label{ss:DG} The essential point in the proof of Theorem \ref{t:DG} is that, under the above assumptions, the current $T$ is close to the graph of an harmonic function $v$ and hence the excess in a ball is close to the $L^2$ mean oscillation of the gradient of $Dv$: since the latter quantity has nice decay properties for harmonic functions, it is possible to show that the spherical excess of $T$ decays suitably. We illustrate roughly this well-known idea because we want to stress that the H\"older exponent $\alpha$ in Theorem \ref{t:DG} can be made arbitrarily close to $1$. Presently this is a very minor observation because any $C^{1, \alpha}$ graph which is stationary for the area functional enjoys higher regularity by the classical Schauder estimates. However we will see later that the remark is crucial in the proof of Theorem \ref{t:cod>1}. 

More precisely, assume a-priori that $T$ is the graph $\bG_u$ of a Lipschitz function $u: p+\pi \to \pi^\perp$. For every $\Omega \subset \pi$ we can compute the mass of $T$ in the cylinder $\bC:= \Omega \times \pi^\perp$ using the area formula:
\[
\mass (T\res \bC) = \int_\Omega \sqrt{1 + |Du|^2 + \sum_{k\geq 2} \sum [\det (M^k (Du))]^2}
\]
where with $M^k (Du)$ we denote an arbitrary $k\times k$ minor of $Du$.

The smallness of the excess $\bE (T, \bB_{2\rho} (p), \pi)$ indicates that most of the tangents to $\bG_u$ are close to the horizontal plane $\pi$: if this control  were uniform we could make a Taylor expansion of the integrand to achieve
\[
\mass (T\res \bC) = |\Omega| + \frac{1}{2} \int_\Omega |Du|^2 + O (|Du|^4)\, .
\]
Thus we can assume that $u$ is rather close to a minimizer of the Dirichlet energy, i.e. that it is close to an harmonic function $v$.

Recall moreover $\mass (T\res \bC)-|\Omega| = \frac{1}{2} \int_{\bC} |\vec{T}-\vec{\pi}|^2\, d\|T\|$. Similarly,
 we can compare $\bE (T, \bB_{2\rho}, \pi)$ to the average integral 
\[
\mint_{B_{2\rho} (p, \pi)} |Du|^2\, 
\]
and analogous computations show that $\bE (T, \bB_\rho (p))$ is comparable to a similar ``optimized'' quantity
\[
\min_A \mint_{B_\rho (p, \pi)} |Du-A|^2 = \mint_{B_\rho (p, \pi)} \Big|Du- \mint_{B_\rho (p, \pi)} Du\Big|^2\, .
\]
For harmonic functions $v$ we have the following decay estimate, which could be proved using the expansion of the trace $v|_{\partial B_\rho (p, \pi)}$ in spherical harmonics
(see \cite[Chapter 5, Section 2]{SW})
\begin{equation}\label{e:decay_arm}
\mint_{B_\rho (p, \pi)} \Big|Dv- \mint_{B_\rho (p, \pi)} Dv\Big|^2 \leq \frac{1}{4}\, \mint_{B_{2\rho} (p, \pi)} |Dv|^2\, .
\end{equation}
We could then hope to transfer such decay to the current in the form
\begin{equation}\label{e:decay}
\bE (T, \bB_\rho (p)) \leq 2^{-2+2\delta} \bE (T, \bB_{2\rho} (p), \pi)\, ,
\end{equation}
where the constant $\delta> 0$ takes into account (quite a few) error terms, which must be carefully estimated. 

Note however that we could optimize on the plane in the right hand of \eqref{e:decay} to achieve 
\begin{equation}\label{e:decay_2}
\bE (T, \bB_\rho (p)) \leq 2^{-2+2\delta} \bE (T, \bB_{2\rho} (p))\, .
\end{equation}
In turn this latter estimate would imply that the assumption (d) of Theorem \ref{t:DG} holds also in the ball $\bB_\rho (p)$. Since all other assumptions are automatically satisfied at any scale smaller than $2\rho$ (the monotonicity formula plays a crucial role here), we could then iterate the argument to obtain the decay
\begin{equation}\label{e:decay_3}
\bE (T, \bB_r (p)) \leq C r^{2-2\delta}\, .
\end{equation}
However, if we choose $\varepsilon$ sufficiently small, this process can be applied replacing $\bB_r (p)$ with any ball $\bB_r (q)$ such that $q\in \bB_{\rho/2} (p)$ and $r\leq \rho$. We have thus achieved a ``Morrey-type'' estimate which easily shows that $\supp (T)\cap \bB_{r/2}$ is contained in the graph of a $C^{1,\alpha}$ function $u$, where $\alpha= 1-\delta$. The Constanty theorem (cf. \cite[Theorem 26.27]{Simon}) implies that $T$ must be (an integer multiple of) $\bG_u$ in $\bB_{r/2} (p)$. 

\begin{remark}\label{r:alfa=1}
As already mentioned, the argument sketched above leads to the conclusion that $\alpha$ in Theorem \ref{t:DG} can be taken arbitrarily close to $1$, at the price of making the threshold $\eps$ suitably small and the constant $C$ fairly large. For a detailed proof we refer the reader to \cite[Corollary 2.4 and Appendix A]{DS-cm}. 
The subsequent generalizations of  Almgren \cite{Alm3,Alm4}, Allard \cite{Allard} and other authors (cf. for instance \cite{Bombieri} and \cite{SS}) of De Giorgi's $\varepsilon$-regularity statement do not imply the same decay of the excess by simply halving the radius, but seems to require a more subtle adjustment of the parameters: a (small) sacrifice to flexibility, since the latter results can be applied to much more general objects and situations. 
\end{remark}

\subsection{First consequences of the $\eps$-regularity theorem} It is rather simple to see that the conditions (a), (c) and (d) will be met at a sufficiently small radius $\rho$ as soon as $p\in \supp (T)\setminus \supp (\partial T)$ and there is at least {\em one} flat tangent cone at $p$. However condition (b) discriminates severely between the codimension $1$ case ($\bar n=1$) and the higher codimensions. Indeed, Propostion \ref{p:currents-sets} shows that in codimension $1$ a current without boundary can be described as a ``superposition'' of boundaries of finitely many Caccioppoli sets $E_i$. In particular, we can reduce the regularity question for area-minimizing currents $T$ under the additional assumption that
\begin{equation}\label{e:bordo}
T = \partial \a{E} \qquad\qquad \mbox{for some measurable $E\subset \Sigma$ with $\cH^{m+1} (E)<\infty$.}
\end{equation}
Under the additional assumption \eqref{e:bordo} the existence of one flat tangent cone at a point $p$ guarantees that $\Theta (T, p)=1$, which
in turn implies condition Theorem \ref{t:DG}(b) for a sufficiently small $\rho$, because of the upper semicontinuity of the density.
We then achieve the following corollary (cf. \cite[Section 37]{Simon}). 

\begin{corollary}\label{c:DG}
If $T$ is an area minimizing current of dimension $m$ in a $C^2$ submanifold $\Sigma$ of dimension $m+1$, then any point $p$ at which there is a flat tangent cone is a regular point.
\end{corollary}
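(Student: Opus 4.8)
The plan is to verify the four hypotheses of the $\eps$-regularity Theorem \ref{t:DG} at a sufficiently small scale, with multiplicity $Q=1$, and then read off the conclusion from that theorem together with Schauder theory. Two of the hypotheses are essentially free: since $p\notin\supp(\partial T)$ we have $\partial T\res\bB_{2\rho}(p)=0$ for every small $\rho$, which is (a); and since $\Sigma$ is $C^2$ we have $\rho\bA\to 0$ as $\rho\downarrow 0$, which is the curvature half of (d). The heart of the matter is to prove that the flat tangent cone has multiplicity one, and this is precisely the step that exploits the codimension-one structure.

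So pick $r_k\downarrow 0$ with $T_{p,r_k}\to S$, where by hypothesis $S$ is a flat tangent cone; by the constancy theorem $S=Q\a{\pi}$ with $\pi$ an oriented $m$-plane and $Q:=\Theta(T,p)$ a positive integer. As recorded around \eqref{e:bordo}, the local regularity question reduces to the case $T=\partial\a{E}$ for a single Caccioppoli set $E$, so that $T_{p,r_k}=\partial\a{E_{p,r_k}}$ on balls of radius $\to\infty$. The mass bounds supplied by the monotonicity formula translate, via Proposition \ref{p:currents-sets}, into uniform local perimeter bounds for the sets $E_{p,r_k}$, so by De Giorgi's compactness theorem a further subsequence converges in $L^1_{\mathrm{loc}}$ to a set $F$ of locally finite perimeter; passing to the limit in $\partial$ gives $\partial\a{F}=S=Q\a{\pi}$. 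But the boundary current of a \emph{single} set satisfies $\|\partial\a{F}\|=\cH^m\res\partial^*F$, and comparing this with $\|Q\a{\pi}\|=Q\,\cH^m\res\pi$ forces $\partial^*F=\pi$ up to $\cH^m$-null sets and $Q=1$. (Equivalently, $F$ is, up to a Lebesgue-null set, one of the two half-spaces bounded by $\pi$, so $\partial\a{F}=\pm\a{\pi}$.)

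Once $Q=1$ is known, the remaining hypotheses follow routinely from the blow-up. By Theorem \ref{t:cones}(ii) the density $\Theta(T,\cdot)$ is upper semicontinuous and $\geq 1$ on $\supp(T)\setminus\supp(\partial T)$, while by Lemma \ref{l:density} it is a positive integer $\|T\|$-a.e.; since $\Theta(T,p)=1$, there is $\rho>0$ with $\Theta(T,q)<2$ for every $q\in\bB_{2\rho}(p)$, hence $\Theta(T,\cdot)=1$ $\|T\|$-a.e. there, which is (b) with $Q=1$. Shrinking $\rho$, the monotonicity formula (Theorem \ref{t:cones}(i)) gives $\|T\|(\bB_{2\rho}(p))\leq(\omega_m+\eps)(2\rho)^m$, which is (c). Finally, since $S=\a{\pi}$ we have $T_{p,r_k}\to\a{\pi}$, hence also $T_{p,2r_k}\to\a{\pi}$; combining this convergence of currents with the convergence of masses from Theorem \ref{t:AM-cpt} and the scaling identity $\bE(T,\bB_{2\rho}(p),\pi)=\bE(T_{p,2\rho},\bB_1(0),\pi)$ yields $\bE(T,\bB_{2r_k}(p),\pi)\to 0$, so (d) holds for $k$ large. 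Theorem \ref{t:DG} then gives $T\res\bB_{\rho'}(p)=\a{\Gamma}$ for a $C^{1,\alpha}$ graph $\Gamma\subset\Sigma$; since $\Gamma$ is stationary for the area functional in $\Sigma$, classical Schauder estimates bootstrap it to a smooth embedded submanifold, so $p\in\reg(T)$ by Definition \ref{e:regular_set}.

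The main obstacle is the multiplicity-one step. In higher codimension the conclusion $\Theta(T,p)=1$ genuinely fails — flat tangent cones of multiplicity $\geq 2$ do occur and cannot be excluded by the $\eps$-regularity theorem alone — and it is exactly the representation of codimension-one cycles as boundaries of sets of finite perimeter, together with De Giorgi's structure theorem, that rules it out here. Everything after that is a matter of feeding the blow-up data into Theorem \ref{t:DG}.
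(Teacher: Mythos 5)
Your argument is correct and follows essentially the same route the paper takes: reduce via Proposition \ref{p:currents-sets} and \eqref{e:bordo} to $T=\partial\a{E}$ for a Caccioppoli set, use the blow-up together with De Giorgi's structure theorem (a boundary current of a single set has multiplicity one on its reduced boundary) to force the flat tangent cone to have multiplicity $Q=1$, deduce condition (b) of Theorem \ref{t:DG} from upper semicontinuity of the density, and then check (a), (c), (d) from the monotonicity formula and the convergence $T_{p,r_k}\to\a{\pi}$. The paper only sketches this step (asserting that \eqref{e:bordo} plus a flat tangent cone gives $\Theta(T,p)=1$ and referring to \cite[Section 37]{Simon}); you have supplied the missing details faithfully.
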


Indeed the ``sheeted structure'' given to codimension $1$ integral currents by Proposition \ref{p:currents-sets} has a much stronger consequence, which we record in the following

\begin{proposition}\label{p:convergence}
Let $T$, $\Sigma$, $T_k$ and $\Sigma_k$ be as in Theorem \ref{t:AM-cpt} and assume that the codimension $\bar n$ is $1$. If $T$ is regular in $\Omega$, then for any open set $\Gamma\subset\subset \Omega$, $T_k$ is regular in $\Gamma$ for $k$ large enough. 
\end{proposition}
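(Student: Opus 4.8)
The plan is to localize around each point of $\supp(T)\cap\overline{\Omega'}$ and there combine the $\eps$-regularity theorem with the codimension-one structure. First I would record that, by Theorem \ref{t:AM-cpt}, $T$ is area minimizing in $\Omega\cap\Sigma$ with $\|T_k\|\res\Omega\weaks\|T\|$, and, by Theorem \ref{t:cones}(iv), $\supp(T_k)\to\supp(T)$ locally in $\Omega$ in the Hausdorff sense. Fixing $\Gamma\subset\subset\Omega'\subset\subset\Omega$ and using the compactness of $\supp(T)\cap\overline{\Omega'}$, it then suffices to prove: every $p\in\supp(T)\cap\overline{\Omega'}$ admits $\rho_p>0$ and $k_p$ with $T_k$ regular in $\bB_{\rho_p}(p)$ for $k\ge k_p$. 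Indeed, covering $\supp(T)\cap\overline{\Omega'}$ by finitely many balls $\bB_{\rho_{p_j}/2}(p_j)$ of this type and invoking the Hausdorff convergence to place $\supp(T_k)\cap\overline\Gamma$ inside their union for $k$ large yields that $T_k$ is regular at every point of $\supp(T_k)\cap\overline\Gamma$.

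Fix such a $p$. Since $T$ is regular in $\Omega$ and $\partial T=0$ there, $T\res\bB_r(p)=Q\a{V}$ for some $r>0$, a positive integer $Q$, and a smooth embedded $V\subset\Sigma$; set $\pi:=T_pV$. As $V$ is $C^1$ and tangent to $\pi$ at $p$, for $\rho$ small (and $\rho<r/10$) the current $T$ satisfies hypotheses (a)--(d) of Theorem \ref{t:DG} in $\bB_{2\rho}(p)$, with this $\pi$ and this $Q$, with strict inequalities: $\|T\|(\bB_{2\rho}(p))=Q\,\cH^m(V\cap\bB_{2\rho}(p))=(Q\omega_m+o(1))(2\rho)^m$, $\bE(T,\bB_{2\rho}(p),\pi)\to0$ as $\rho\downarrow0$, $\rho\bA$ is small, and $\|T\|=Q\,\cH^m\res V$ on $\bB_{2\rho}(p)$. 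Next I would transfer (a), (c), (d) to $T_k$: choosing $\rho$ with $\|T\|(\partial\bB_{2\rho}(p))=0$, weak-$*$ convergence gives $\|T_k\|(\bB_{2\rho}(p))\to\|T\|(\bB_{2\rho}(p))$, hence (a), (c); testing $T_k\to T$ against a fixed cutoff times the constant covector dual to $\vec\pi$, and writing $|\vec S-\vec\pi|^2=2-2\langle\vec S,\vec\pi\rangle$ on unit tangent $m$-vectors, gives $\bE(T_k,\bB_{2\rho}(p),\pi)\to\bE(T,\bB_{2\rho}(p),\pi)$; and $\rho\bA_k\to\rho\bA$ because $\Sigma_k\to\Sigma$ in $C^2$. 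So (a), (c), (d) hold for $T_k$ once $k$ is large.

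The remaining, and truly codimension-one, point is the multiplicity condition (b), and this is where Proposition \ref{p:currents-sets} is used. On $\bB_{2\rho}(p)$ I would write $T_k=\sum_i k^{(k)}_i\,\partial\a{E^{(k)}_i}$ with $\{E^{(k)}_i\}$ a nested family of Caccioppoli subsets of $\Sigma\cap\bB_{2\rho}(p)$ (the ordered decomposition): the nesting makes all the normals aligned, so there is no cancellation and $\|T_k\|=\sum_i k^{(k)}_i\,\|\partial\a{E^{(k)}_i}\|$ as measures. Varying a single layer on a compact subset of $\bB_{2\rho}(p)$, the minimality of $T_k$ together with this mass additivity forces each $\partial\a{E^{(k)}_i}$ to be area minimizing in $\bB_{2\rho}(p)\cap\Sigma$; it has no boundary there, inherits the smallness of both the excess and the ambient curvature, and, as a reduced boundary, has density $1$ at $\|\partial\a{E^{(k)}_i}\|$-a.e.\ point. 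A covering argument based on Theorem \ref{t:DG}, applied with $Q=1$ at the density-one points, then shows that $\partial\a{E^{(k)}_i}\cap\bB_\rho(p)$ is a finite disjoint union of smooth embedded graphs over $B_\rho(p,\pi)$, the number of them being controlled by the mass bound $\|T_k\|(\bB_{2\rho}(p))=(Q\omega_m+o(1))(2\rho)^m$ (the Hausdorff convergence of the supports forcing each graph to span $B_\rho(p,\pi)$, hence to carry mass $\gtrsim\omega_m\rho^m$).

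Finally I would assemble the local picture. Two distinct smooth minimal graphs among the $\partial\a{E^{(k)}_i}$ cannot touch at an interior point of $\bB_\rho(p)$, by the strong maximum principle for the minimal-surface equation, and---being reduced boundaries of nested sets---they cannot cross; hence $\supp(T_k)\cap\bB_\rho(p)$ is a finite disjoint union of smooth embedded submanifolds of $\Sigma$. On a small ball about any of its points one sees exactly one of these, so by the Constancy Theorem $T_k$ equals an integer multiple of it there, i.e.\ $T_k$ is regular in $\bB_\rho(p)$, which closes the argument. The main obstacle is exactly the structural step of the third paragraph: it is where the hypothesis $\bar n=1$ is essential, since in higher codimension $T_k$ admits no ordered decomposition into sheets, genuine branching occurs, and none of the monotone ``sheeting'' exploited here survives.
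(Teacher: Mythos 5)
The paper states this proposition without proof, as a recorded consequence of the sheeted structure given by Proposition \ref{p:currents-sets}; your argument is precisely the natural one, and it is correct. Two details are left somewhat implicit, though both are standard and do not affect the soundness of the proof. First, the phrase ``applied with $Q=1$ at the density-one points'' conceals the step that closes condition (b) of Theorem \ref{t:DG}: once each sheet $\partial\a{E^{(k)}_i}$ has small excess in $\bB_{2\rho}(p)$, \emph{every} point $q\in\supp(\partial\a{E^{(k)}_i})\cap\bB_\rho(p)$ has density exactly $1$ (not merely a.e.\ points). This is exactly the remark the paper makes just before Corollary \ref{c:DG}: at such a point every tangent cone has small excess, hence is flat, and a flat minimizing cone of the form $\partial\a{F}$ must be a multiplicity-one plane, which forces $\Theta(\partial\a{E^{(k)}_i},q)=1$ and, by upper semicontinuity, pins down condition (b) in a small ball around $q$. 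With that in hand your covering argument is clean. Second, the bound on the number of sheets is most cleanly phrased via slicing/constancy: the projection of each $\partial\a{E^{(k)}_i}\res\bB_\rho(p)$ onto $\pi$ is a positive integer multiple of $\a{B_\rho(p,\pi)}$, so each sheet carries mass at least $\omega_m\rho^m$ (this is what your Hausdorff-convergence remark is really invoking), and the mass bound caps the total count at $Q$. Finally, you are right to flag the step proving that each layer is individually minimizing; the no-cancellation identity is already item (i) of Proposition \ref{p:currents-sets}, and the nested decomposition for minimizers is standard in the literature. Your closing sentence correctly identifies exactly where $\bar n=1$ is indispensable.
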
 

For the sake of our future discussions we will rephrase the proposition above in the following equivalent way, underlying that ``singularities persist in the limit'': we will stress later on that this persistence can be seen as the major difference between the
codimension $1$ and the higher codimension.

\begin{proposition}[Persistence of singularities in codimension 1]\label{p:persiste}
Let $\Omega$, $T$, $\Sigma$, $T_k$ and $\Sigma_k$, be as in Theorem \ref{t:AM-cpt} and assume that the codimension $\bar n$ is $1$. If $p_k\in \sing (T_k)$ and $p_k\to p\in \Omega$, then $p\in \sing (T)$.
\end{proposition}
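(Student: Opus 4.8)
The statement is exactly the contrapositive of Proposition \ref{p:convergence}, localized at the point $p$, so the plan is to reduce to that proposition. Assume $p_k\in\sing(T_k)$, $p_k\to p\in\Omega$, but, contrary to the claim, $p\notin\sing(T)$. First note that $p\in\supp(T)$: otherwise Theorem \ref{t:cones}(iv) (Hausdorff convergence of the supports) would give $p_k\notin\supp(T_k)$ for $k$ large, contradicting $p_k\in\sing(T_k)\subset\supp(T_k)$. Since $\partial T=0$ on $\Omega$ we have $p\notin\supp(\partial T)$, hence $p\in\reg(T)$, and since $\reg(T)$ is relatively open there is $r>0$ with $\bB_{2r}(p)\subset\Omega$ and $T\res\bB_{2r}(p)=Q\a{\Gamma}$ for a smooth embedded $\Gamma$ and an integer $Q\ge1$. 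Applying Proposition \ref{p:convergence} with $\Omega$ replaced by $\bB_{2r}(p)$ and $\Gamma$ replaced by $\bB_r(p)\subset\subset\bB_{2r}(p)$ — the hypotheses hold because $\partial T_k=0$ on $\bB_{2r}(p)$, the restrictions $T_k\res\bB_{2r}(p)$ are still area minimizing and converge to $T\res\bB_{2r}(p)$ — we conclude that $T_k$ is regular in $\bB_r(p)$ for all large $k$. Since $p_k\to p$, we have $p_k\in\bB_r(p)$ eventually, hence $p_k\in\reg(T_k)$ for $k$ large, a contradiction. (Conversely, one checks just as easily that Proposition \ref{p:persiste} implies Proposition \ref{p:convergence}, so the two are genuinely equivalent.)

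For completeness I sketch the content actually hidden in Proposition \ref{p:convergence}. Fix $p\in\reg(T)$ with $T\res\bB_{2r}(p)=Q\a{\Gamma}$, $\Gamma$ smooth, and after a rotation set $\pi:=T_p\Gamma$. Since $\Gamma$ is smooth, at every sufficiently small fixed scale $\rho$ and every $q\in\supp(T)\cap\bB_r(p)$ the current $T$ satisfies hypotheses (a), (c) and (d) of Theorem \ref{t:DG} in $\bB_{2\rho}(q)$ with this $Q$ and this $\pi$: (a) because $\partial T=0$ there, and (c), (d) because at small scale $Q\a{\Gamma}$ is nearly flat, so its mass ratio is close to $Q\omega_m$ and its excess is tiny. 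Now invoke Theorem \ref{t:AM-cpt}: $\|T_k\|\weaks\|T\|$, by Theorem \ref{t:cones}(iv) $\supp(T_k)\to\supp(T)$ locally in Hausdorff distance, and $\Sigma_k\to\Sigma$ in $C^2$ keeps $\bA_k$ bounded. Since the mass ratio, the excess and the curvature term all pass to the limit, for $k$ large every $q\in\supp(T_k)\cap\bB_r(p)$ — which lies in a thin neighbourhood of $\Gamma$ — satisfies (a), (c), (d) in $\bB_{2\rho}(q)$ as well. Applying Theorem \ref{t:DG} at all such $q$ and iterating as in the Morrey-type argument of Section \ref{ss:DG} shows that $T_k\res\bB_{r/2}(p)$ is a smooth multiplicity-$Q$ graph, i.e.\ $T_k$ is regular in $\bB_{r/2}(p)$.

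The one hypothesis of Theorem \ref{t:DG} that does \emph{not} transfer to $T_k$ by soft convergence arguments is (b), the requirement that the density of $T_k$ be a fixed integer throughout the ball, and verifying it is the crux — this is precisely where the codimension-one assumption $\bar n=1$ enters. Upper semicontinuity of the density under the convergence only yields $\Theta(T_k,\cdot)\le Q$ near $p$, and a priori $T_k$ could approach $Q\a{\Gamma}$ through a stack of sheets of smaller multiplicities collapsing onto $\Gamma$, for which the density is not locally constant at any fixed scale. To rule this out one uses the sheeted structure of codimension-one integral currents from Proposition \ref{p:currents-sets}: on a small ball, $T_k=\sum_j k_{k,j}\,\partial\a{E_{k,j}}$ is a superposition of (nestable) boundaries of Caccioppoli sets, and minimality together with the convergence $T_k\to Q\a{\Gamma}$ forces exactly $Q$ of these sheets to survive, each converging to $\Gamma$, so that $\Theta(T_k,\cdot)\equiv Q$ on a slightly smaller ball, giving (b). This is the argument of \cite[Section 37]{Simon}. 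In higher codimension there is no analogous decomposition into ordered sheets of boundaries, the ``sheets'' may branch, and both this step and the conclusion of Proposition \ref{p:persiste} genuinely fail — a failure that is the central difficulty addressed in the rest of the note.
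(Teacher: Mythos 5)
Your first paragraph is exactly the paper's argument: the paper states Propositions \ref{p:convergence} and \ref{p:persiste} as equivalent reformulations (with the real content deferred to \cite[Section 37]{Simon}), and your reduction via the contrapositive, using Theorem \ref{t:cones}(iv) to place $p$ in $\supp(T)$ and then localizing Proposition \ref{p:convergence} to a small ball around $p$, is correct and complete.

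Your second paragraph (the ``for completeness'' sketch of Proposition \ref{p:convergence} itself, which the paper does not prove) contains one genuine imprecision worth flagging. You claim that minimality and $T_k\to Q\a{\Gamma}$ ``forces exactly $Q$ of these sheets to survive, each converging to $\Gamma$, so that $\Theta(T_k,\cdot)\equiv Q$.'' That last implication does not hold: $T_k$ may well consist of $Q$ \emph{distinct} multiplicity-one ordered sheets, each a small graphical perturbation of $\Gamma$, in which case $\Theta(T_k,\cdot)\equiv 1$ on $\supp(T_k)$ away from crossings (and for area minimizers in codimension one such sheets cannot cross without coinciding, by the maximum principle). The correct conclusion of the sheeted-structure argument is that hypothesis (b) of Theorem \ref{t:DG} is verified \emph{on each sheet separately} with the appropriate integer multiplicity (which may be $1$ rather than $Q$), and the $\eps$-regularity theorem is then applied sheet by sheet; alternatively all sheets coincide and (b) holds with $Q$. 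Either way the conclusion of Proposition \ref{p:convergence} is the same, so the reduction in your first paragraph is unaffected, but the stated density identity $\Theta(T_k,\cdot)\equiv Q$ is not what the argument produces.
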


In higher codimension there is no analog of Proposition \ref{p:currents-sets} which allows to bypass Assumption (b) in Theorem \ref{t:DG} and we will see that indeed Corollary \ref{c:DG} fails, together with the stronger propositions stated above. 

The upper semicontinuity of the density and an elementary argument implies only that Assumption (b) of Theorem \ref{t:DG} holds in a dense set, leading to the well-known

\begin{corollary}[Density of $\reg (T)$, cf. {\cite[Theorem 36.2]{Simon}}]\label{c:AE}
If $T$ is an area minimizing current of dimension $m$ in a $C^2$ submanifold $\Sigma$ of dimension {\em larger} than $m+1$, then $\reg (T)$ is dense in $\supp (T)\setminus \supp (\partial T)$.
\end{corollary}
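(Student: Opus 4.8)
The goal is Corollary \ref{c:AE}: if $T$ is area minimizing of dimension $m$ in a submanifold $\Sigma$ of dimension strictly larger than $m+1$, then $\reg(T)$ is dense in $\supp(T)\setminus\supp(\partial T)$. The strategy is to show that near \emph{any} point of $\supp(T)\setminus\supp(\partial T)$ one can find another point where \emph{all four} hypotheses of the $\eps$-regularity Theorem \ref{t:DG} are met; by that theorem such a point is regular, and since it was found in an arbitrarily small neighbourhood, density follows.

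First I would fix $q\in\supp(T)\setminus\supp(\partial T)$ and a small ball $\bB_s(q)$ with $\bB_{2s}(q)\cap\supp(\partial T)=\emptyset$; by rescaling we may take $s$ as small as we wish so that $s\bA<\eps$, taking care of the curvature part of (d). Next, the key mechanism is to \emph{maximize the density}: among all points $p\in\supp(T)\cap\overline{\bB_s(q)}$ consider $Q:=\max\Theta(T,\cdot)$, which is attained because $\Theta$ is upper semicontinuous (Theorem \ref{t:cones}(ii)) and the set is compact, and pick $p_0$ realizing this maximum, well inside $\bB_s(q)$. The point of maximal density is the natural candidate for regularity: by upper semicontinuity together with the fact that densities are integers $\|T\|$-a.e. (Lemma \ref{l:density}), in a sufficiently small ball $\bB_{2\rho}(p_0)$ one has $\Theta(T,\cdot)\le Q$ everywhere and $\Theta(T,\cdot)=Q$ for $\|T\|$-a.e.\ point — this is precisely hypothesis (b). Hypothesis (c), the mass bound $\|T\|(\bB_{2\rho}(p_0))\le(Q\omega_m+\eps)(2\rho)^m$, then follows from the monotonicity formula (Theorem \ref{t:cones}(i)): since $\Theta(T,p_0)=Q$, the monotone quantity $e^{C\bA r}r^{-m}\|T\|(\bB_r(p_0))$ converges to $\omega_m Q$ as $r\downarrow 0$, so for $\rho$ small it is below $(Q+\eps')\omega_m$, and choosing the various thresholds consistently gives (c). Hypothesis (a), $\partial T\res\bB_{2\rho}(p_0)=0$, is automatic for $\rho$ small since $p_0\notin\supp(\partial T)$.

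The remaining hypothesis (d), smallness of the excess $\bE(T,\bB_{2\rho}(p_0),\pi)$ for some plane $\pi$, is the step that needs real input, and it is where the density maximization pays off again. Pass to a tangent cone: by Theorem \ref{t:cones}(iii), along a subsequence $\rho_k\downarrow 0$ the rescalings $T_{p_0,\rho_k}$ converge to an area minimizing cone $T_0$ with $\partial T_0=0$ and $\Theta(T_0,0)=Q$. Because $p_0$ maximizes the density, the density of $T_0$ cannot exceed $Q$ anywhere, and in particular by monotonicity applied to $T_0$ (whose density quotient is constant, hence the cone has constant density quotient $=\omega_m Q$ at $0$ but $\le\omega_m Q$ elsewhere) one forces $\Theta(T_0,x)=Q$ for $\|T_0\|$-a.e.\ $x$ in a neighbourhood of $0$; combined with the constancy of the density quotient of a cone this makes $T_0$ a plane with multiplicity $Q$, i.e.\ $T_0=Q\a{\pi_0}$ for an oriented $m$-plane $\pi_0$. (Concretely: a cone with everywhere-density $Q$ and $\partial T_0=0$ whose density quotient at the vertex equals $Q$ must, by the monotonicity identity being an equality, be translation invariant along every ray \emph{and} have no concentration off a plane — so it is flat.) Then $\bE(T_{p_0,\rho_k},\bB_2,\pi_0)\to 0$ by the $C^2_{loc}$ convergence of $\|T_{p_0,\rho_k}\|$ to $\|T_0\|=Q\cH^m\res\pi_0$ and of the orienting vectors, which rescales back to $\bE(T,\bB_{2\rho_k}(p_0),\pi_0)\to 0$; so for $k$ large, $\rho=\rho_k$ makes $E<\eps$.

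The main obstacle is exactly the flatness-of-the-tangent-cone argument: showing that a nonzero area minimizing cone of dimension $m$ with $\partial T_0=0$ and density identically equal to its vertex density must be a multiple of a plane. This is where one uses that the cone structure ($S_{0,r}=S$) forces the monotonicity inequality to be an equality on \emph{every} annulus, hence $\nabla^\perp r\equiv 0$ ($\|T_0\|$-a.e.) so $\supp(T_0)$ is radially invariant, and that the constancy of $r\mapsto r^{-m}\|T_0\|(\bB_r(x))$ at \emph{every} point $x$ of the support (which is what density $\equiv Q$ plus upper semicontinuity plus the maximality gives, via a second application of monotonicity to conclude all these quotients are $\ge Q$ too, hence $=Q$) forces $T_0$ to be a cone with vertex at every point of its support — i.e.\ a plane, by a dimension-reduction/translation argument. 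Once this is in place, everything else is bookkeeping with the monotonicity formula and the thresholds in Theorem \ref{t:DG}. This is the content of \cite[Theorem 36.2]{Simon} and I would organize the write-up along those lines.
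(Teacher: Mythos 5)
Your overall plan — find, arbitrarily close to $q$, a point where all four hypotheses of Theorem \ref{t:DG} hold — is the right one, and your handling of (a), (c) and (d) once (b) is available is fine. The genuine gap is in the mechanism you use to secure (b): \emph{maximizing} the density does not produce a point near which $\Theta$ is $\|T\|$-a.e.\ constant. Upper semicontinuity makes the sublevel sets $\{\Theta<\lambda\}$ open, i.e.\ it propagates \emph{small} density to a neighbourhood; it gives nothing in the direction your argument needs. For a concrete counterexample take $T=\a{\pi_1}+\a{\pi_2}$, the sum of two transversal complex lines in $\C^2\cong\R^4$, which is area minimizing by Theorem \ref{t:FW}. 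At the intersection point $p_0$ one has $\Theta(T,p_0)=2$, the maximum, yet $\Theta\equiv 1$ at every other point of the support; in every ball around $p_0$ the density is $1$ at $\|T\|$-a.e.\ point, the tangent cone at $p_0$ is the non-flat union of the two planes, and both your assertion that $\Theta=Q$ a.e.\ near a maximizer and your later claim that the blow-up $T_0$ has density $Q$ a.e.\ are false.

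The ``elementary argument'' the paper alludes to (and which is carried out in \cite[Theorem 36.2]{Simon}) runs in the opposite direction: one works near the essential \emph{minimum} of the density (equivalently, one runs a Baire-category argument on the u.s.c., integer-a.e.-valued function $\Theta$). Sketch: set $\lambda_0:=\inf_{\bB_{s/2}(q)\cap\supp(T)}\Theta\geq 1$. By upper semicontinuity, $U:=\{\Theta<\lambda_0+\tfrac12\}\cap\bB_{s/2}(q)$ is relatively open in $\supp(T)$ and nonempty, hence has positive $\|T\|$-measure. On $U$ one has $\lambda_0\leq\Theta<\lambda_0+\tfrac12$ pointwise, and since $\Theta$ is an integer $\|T\|$-a.e.\ (Lemma \ref{l:density}), there must be an integer $Q_0\in[\lambda_0,\lambda_0+\tfrac12)$ with $\Theta=Q_0$ for $\|T\|$-a.e.\ point of $U$. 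Now choose $p_0\in U$ with $\Theta(T,p_0)=Q_0$ — such points are dense in $U$ — and $\rho>0$ small enough that $\bB_{2\rho}(p_0)\cap\supp(T)\subset U$; this gives hypothesis (b). From there the rest of your argument does go through: passing to $S:=T/Q_0$, which is again an integer rectifiable area minimizing current near $p_0$ with $\Theta(S,p_0)=1$, the tangent cone at $p_0$ is a multiplicity-one plane (this is Corollary \ref{c:densita}), and then (a), (c), (d) follow at a small enough scale from the monotonicity formula and convergence to the tangent cone, so Theorem \ref{t:DG} applies.
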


Indeed this statement has been recently extended to any Hilbert space, cf. \cite{ADS}. Remarkably, Corollary \ref{c:AE} was the best regularity result available before the appearance of Almgren's manuscript \cite{Alm} with its proof of Theorem \ref{t:cod>1}. In fact this is the current situation for {\em stationary} integer rectifiable $m$-dimensional varifolds: Allard's theorem gives the regularity up to a meager closed set, even in codimension $1$, and this is up to now the best regularity result available in the literature for stationary objects. In particular it is not known that the singular set is $\cH^m$-negligible, not even in the simplest setting of stationary $2$-dimensional varifolds in $3$ dimensions. For stable hypersurfaces a rather satisfactory theory is instead available thanks to the pioneering works of Schoen - Simon - Yau \cite{SSY} and Schoen - Simon \cite{SS2}  and to the recent ones of Wickramasekera \cite{Wickramasekera}.

\subsection{Full regularity for $m\leq 6$ and $\bar{n}=1$: Simons' theorem} Let us now focus on the case of codimension $\bar{n}=1$. Corollary \ref{c:DG} naturally leads to discuss the existence of area minimizing (hyper-)cones which are not flat. 
The investigations upon this question were started by De Giorgi and Fleming who could show full regularity for $m=2$, cf. \cite{Fleming} and \cite{DeGiorgi5}. Moreover, De Giorgi showed that the problem of deciding whether every codimension $1$ area minimizing cone in $\R^{m+1}$ is flat is equivalent to decide whether any entire minimal (hyper-) graph in $\R^{m+2}$ is affine, the so-called Bernstein problem. The result of De Giorgi and Fleming was subsequently improved by Almgren (\cite{Alm2}, $m=3$) and finally by Simons in \cite{Simons} to show full regularity for $m\leq 6$.

Observe that if we know full regularity for area-minimizing (hyper)currents of dimension $m\leq m_0$ then there is no singular area minimizing hypercone in $\R^{m_0+1}$. As a consequence any area minimizing hypercone in $\R^{m_0+2}$ is necessarily regular except possibly at the origin: thus the cross-section is a minimal (i.e. stationary) embedded hypersurface of the standard $(m_0+1)$-dimensional sphere. In fact the cone $0\cone \a{\Gamma}$ over a closed submanifold $\Gamma$ of $\partial \bB_1 (0)$ is stationary in $\bB_1 (0)$ if and only if $\Gamma$ is stationary in $\partial \bB_1 (0)$. However the minimizing property is a stronger information: in particular $0\cone \a{\Gamma}$ is necessarily  
{\em stable}. The famous theorem of Simons is the following statement.

\begin{theorem}[Simons]\label{t:simons}
Let $2\leq m\leq 6$.
Any {\em stable} minimal hypersurface of $\R^{m+1}$ which is a cone over a minimal submanifold of $\partial \bB_1 (0)$ is necessarily an $m$-dimensional plane.
\end{theorem}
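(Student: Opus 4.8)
The statement is the classical theorem of Simons, and the plan is to run the now-standard argument that passes to the link of the cone and combines the stability inequality with Simons' identity. Since $\Sigma$ is a cone over a smooth closed minimal hypersurface $\Gamma^{m-1}$ of $\partial\bB_1(0)$, removing the vertex leaves a smooth embedded minimal hypersurface of $\R^{m+1}$, and because $m\geq 2$ the vertex carries no capacity, so stability furnishes $\int_\Sigma|A_\Sigma|^2\varphi^2\leq\int_\Sigma|\nabla\varphi|^2$ for every $\varphi\in C^\infty_c(\Sigma\setminus\{0\})$, where $A_\Sigma$ denotes the second fundamental form of $\Sigma$. The conical structure gives the scaling identity $|A_\Sigma|^2(r\theta)=r^{-2}|A_\Gamma|^2(\theta)$, with $A_\Gamma$ the second fundamental form of $\Gamma$ as a hypersurface of $\partial\bB_1(0)$, and the goal is to show $A_\Gamma\equiv 0$: then $\Gamma$ is an equatorial sphere and $\Sigma$ a hyperplane. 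First I would separate variables. Inserting $\varphi(r\theta)=f(r)\psi(\theta)$ with $f\in C^\infty_c(0,\infty)$ and $\psi\in C^\infty(\Gamma)$ into the stability inequality and integrating in polar coordinates turns it into
\[
\Bigl(\int_\Gamma|A_\Gamma|^2\psi^2\Bigr)\int_0^\infty f^2r^{m-3}\,dr\;\leq\;\Bigl(\int_\Gamma\psi^2\Bigr)\int_0^\infty (f')^2r^{m-1}\,dr+\Bigl(\int_\Gamma|\nabla_\Gamma\psi|^2\Bigr)\int_0^\infty f^2r^{m-3}\,dr.
\]
Dividing by $\int_0^\infty f^2r^{m-3}\,dr$ and letting $f$ run through a minimizing sequence of the sharp one-dimensional Hardy inequality $\int_0^\infty(f')^2r^{m-1}\,dr\geq\bigl(\tfrac{m-2}{2}\bigr)^2\int_0^\infty f^2r^{m-3}\,dr$ (whose formal extremal is $r^{-(m-2)/2}$), one obtains the spectral estimate on the link
\[
\int_\Gamma|A_\Gamma|^2\psi^2\,d\cH^{m-1}\;\leq\;\Bigl(\tfrac{m-2}{2}\Bigr)^2\int_\Gamma\psi^2\,d\cH^{m-1}+\int_\Gamma|\nabla_\Gamma\psi|^2\,d\cH^{m-1}\qquad\text{for every }\psi\in C^\infty(\Gamma).
\]

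The second ingredient is Simons' identity for the minimal hypersurface $\Gamma^{m-1}$ of the round sphere, which because of the unit ambient sectional curvature reads $\tfrac12\Delta_\Gamma|A_\Gamma|^2=|\nabla A_\Gamma|^2-|A_\Gamma|^4+(m-1)|A_\Gamma|^2$. Since $\Gamma$ is closed, integrating this identity and using the elementary Kato inequality $|\nabla A_\Gamma|^2\geq|\nabla|A_\Gamma||^2$ gives $\int_\Gamma|A_\Gamma|^4\geq\int_\Gamma|\nabla|A_\Gamma||^2+(m-1)\int_\Gamma|A_\Gamma|^2$. On the other hand, testing the spectral estimate above with $\psi=|A_\Gamma|$, which is Lipschitz and hence admissible, yields $\int_\Gamma|A_\Gamma|^4\leq\bigl(\tfrac{m-2}{2}\bigr)^2\int_\Gamma|A_\Gamma|^2+\int_\Gamma|\nabla|A_\Gamma||^2$. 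Subtracting the two inequalities, the gradient terms cancel and one is left with
\[
(m-1)\int_\Gamma|A_\Gamma|^2\,d\cH^{m-1}\;\leq\;\Bigl(\tfrac{m-2}{2}\Bigr)^2\int_\Gamma|A_\Gamma|^2\,d\cH^{m-1}.
\]
If $A_\Gamma\not\equiv 0$ the integral is strictly positive and can be cancelled, forcing $m-1\leq(m-2)^2/4$, i.e. $m^2-8m+8\geq 0$; but $m^2-8m+8<0$ precisely on the interval $(4-2\sqrt 2,\,4+2\sqrt 2)$, which contains $\{2,3,4,5,6\}$. This contradiction shows $A_\Gamma\equiv 0$, hence $\Sigma$ is an $m$-dimensional plane.

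The concluding arithmetic is immediate, so I expect the real work to lie in cleanly assembling its two inputs. One must establish the scaling identity $|A_\Sigma|^2(r\theta)=r^{-2}|A_\Gamma|^2(\theta)$ and carry out the polar-coordinate reduction of the cone's stability to the Hardy-type inequality on $\Gamma$, verifying along the way that the vertex imposes no genuine constraint when $m\geq 2$ and that the one-dimensional Hardy constant $\bigl(\tfrac{m-2}{2}\bigr)^2$ is sharp and is approached in the limit by suitable truncations of $r^{-(m-2)/2}$; and one must derive Simons' identity on $\Gamma$ with the correct sphere-curvature term $(m-1)|A_\Gamma|^2$. It is exactly the comparison between $m-1$ and $(m-2)^2/4$ encoded in the last display that makes the theorem sharp: the argument breaks down at $m=7$, where $7^2-8\cdot 7+8=1>0$, consistently with the fact that the Simons cone over $\mathbb{S}^3\times\mathbb{S}^3$ is a non-flat stable minimal hypercone in $\R^8$. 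A minor technical point, handled by a routine approximation, is the admissibility of the Lipschitz function $\psi=|A_\Gamma|$ across the (closed, lower-dimensional) zero set of $A_\Gamma$.
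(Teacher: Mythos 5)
The paper states Theorem \ref{t:simons} and attributes it to Simons' original paper \cite{Simons} but does not reproduce a proof, so there is no ``paper's proof'' against which to compare. What you have written is, however, a correct and complete reconstruction of the standard \emph{link} version of Simons' argument, and all the constants work out as you claim. To spell out the checks: the scaling $|A_\Sigma|^2(r\theta)=r^{-2}|A_\Gamma|^2(\theta)$ follows from the fact that the unit normal is radially parallel, so $A_\Sigma(\partial_r,\cdot)=0$ and the remaining components of $A_\Sigma$ at $r\theta$ in a Euclidean orthonormal frame are $r^{-1}$ times those of $A_\Gamma$ at $\theta$; the weighted one-dimensional Hardy inequality $\int_0^\infty (f')^2 r^{m-1}\,dr\geq \bigl(\frac{m-2}{2}\bigr)^2\int_0^\infty f^2 r^{m-3}\,dr$ with sharp constant follows from the substitution $g=f\,r^{(m-2)/2}$; the Simons identity $\tfrac12\Delta_\Gamma|A_\Gamma|^2=|\nabla A_\Gamma|^2-|A_\Gamma|^4+(m-1)|A_\Gamma|^2$ is the general formula $\tfrac12\Delta|A|^2=|\nabla A|^2-|A|^4+nc|A|^2$ for an $n$-dimensional minimal hypersurface of a space form of curvature $c$, with $n=m-1$ and $c=1$; and the final chain of inequalities indeed forces $m-1\leq(m-2)^2/4$, i.e. $m^2-8m+8\geq 0$, which fails precisely for $2\leq m\leq 6$ and first holds at $m=7$, consistent with the Simons cone. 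The only implicit hypotheses you are using and should perhaps say aloud are that the link $\Gamma$ is a smooth closed embedded (or at least two-sided) minimal hypersurface of $S^m$ --- two-sidedness is needed so that the scalar second fundamental form is globally defined and the stability inequality takes the quoted scalar form --- but this is exactly the situation in which the theorem is applied in the Federer reduction argument, where the cross-section is automatically smooth, closed and a boundary, hence two-sided. For historical accuracy it is worth noting that Simons' original paper does not pass to the link: he runs the Simons identity and the stability inequality directly on the cone $\Sigma$ with cutoffs $\varphi=|A_\Sigma|\,\eta$ and controls the resulting singular integrals near the vertex by hand; the separation-of-variables reduction to a spectral inequality on $\Gamma$, which you use, is the now-standard streamlined form of the same argument.
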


Theorem \ref{t:simons} and Corollary \ref{c:DG} settle the regularity of area minimizing currents of dimension $m\leq 6$ in codimension $1$.

\subsection{Simons' cone and the Theorem of Bombieri-De Giorgi-Giusti} In his celebrated paper \cite{Simons} Simons provided also an example which showed the optimality of his theorem. More precisely  he showed that the cone over $\mathbb S^3\times \mathbb S^3 \subset \mathbb S^7$, namely
\begin{equation}\label{e:simons}
S:= \{x\in \mathbb R^8 : x_1^2 +x_2^2 +x_3^2 + x_4^2 = x_5^2 + x_6^2 + x_7^2 + x_8^2\}
\end{equation}
is stationary and {\em stable}. The surface is usually called Simons' cone in the literature which followed \cite{Simons}. Later Bombieri, De Giorgi and Giusti in \cite{BDG} showed that $S$ is indeed an area minimizing cone and were thus able to settle the Bernstein problem in all dimensions.

\begin{theorem}[Bombieri-De Giorgi-Giusti]\label{t:BDG}
$S$ in \eqref{e:simons} is an area minimizing current in $\R^8$ and therefore for any $n\geq 8$ there are functions $u: \R^n \to \R$ which satisfy the minimal surface equation and are not affine.
\end{theorem}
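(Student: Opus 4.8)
The plan is to prove the current–theoretic statement by producing a calibration for $S$ built out of a foliation by minimal hypersurfaces, and then to read off the Bernstein claim. Since $S$ is a cone and the problem is scale invariant, by Definition \ref{d:AM} it suffices to verify the minimality in every ball $\bB_\rho$, i.e. that $\mass(S\res\bar\bB_\rho)\le \mass(S\res\bar\bB_\rho+\partial R)$ for every $8$–dimensional integral current $R$ with $\supp(R)\subset\bB_\rho$. The structural remark underlying everything is that $S$ is invariant under the group $G=O(4)\times O(4)$ acting diagonally on $\R^4\times\R^4=\R^8$, and that a $G$–invariant hypersurface is the preimage of a curve $\gamma$ in the closed first quadrant $Q=\{(r,s):r,s\ge0\}$ under $(x,y)\mapsto(|x|,|y|)$, with area $(2\pi^2)^2\int_\gamma(rs)^3\,d\cH^1$; thus $G$–invariant minimal hypersurfaces correspond to critical curves of the weighted length $\int(rs)^3\,d\cH^1$ in $Q$, and $S$ corresponds to the bisecting ray $\ell=\{r=s\}$.

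The core of the argument is a \textbf{calibration furnished by a foliation of $\R^8\setminus\{0\}$ into minimal hypersurfaces}. Suppose one can produce two one–parameter families $\{\Sigma^+_t\}_{t>0}$, $\{\Sigma^-_t\}_{t>0}$ of smooth, $G$–invariant, embedded minimal hypersurfaces filling, respectively, the two open sectors $\{|x|>|y|\}$ and $\{|x|<|y|\}$ of $\R^8\setminus S$, each asymptotic to $S$ at infinity, so that together with $S$ (the ``$t=0$'' leaf) they foliate $\R^8\setminus\{0\}$. Let $\nu$ be the corresponding unit normal field, cooriented consistently across the family, and let $\omega$ be the $7$–form of comass one dual to $\nu$ (so $\omega$ restricts to the volume form on every leaf and $\iota_\nu\omega=0$). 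Minimality of the leaves means exactly $d\omega=0$ on $\R^8\setminus(S\cup\{0\})$; since the leaves converge in $C^1$ to $S$ as $t\downarrow0$, $\nu$ and hence $\omega$ is continuous, so $\omega$ has no jump across $S$ and $d\omega=0$ holds distributionally on $\R^8\setminus\{0\}$; a standard removable–singularity argument (a bounded, distributionally closed form off a point, in dimension $\ge2$, is closed everywhere) then gives $d\omega=0$ on all of $\R^8$. Fix $\bB_\rho$; on it $\omega$ is exact, $\omega=d\beta$ (after a harmless mollification at the isolated singularity). For $R$ as above set $T':=S\res\bar\bB_\rho+\partial R$; then $\partial T'=\partial(S\res\bar\bB_\rho)$, $\|\omega\|_c\le1$, and $T'-S\res\bar\bB_\rho=\partial R$, whence
\[
\mass(T')\;\ge\;T'(\omega)\;=\;(S\res\bar\bB_\rho)(\omega)\;=\;\mass(S\res\bar\bB_\rho)\,,
\]
the middle equality because $(\partial R)(\omega)=(\partial R)(d\beta)=0$, and the last because $\omega$ calibrates $S$. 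As $\rho$ is arbitrary, $S$ is area minimizing in $\R^8$.

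The \textbf{main obstacle} — and the technical heart of \cite{BDG} — is constructing the foliation, i.e. the families $\Sigma^\pm_t$. By the reduction above this is an ODE problem in the quadrant $Q$: one must solve the Euler–Lagrange equation of $\int(rs)^3\,d\cH^1$, a second order ODE singular on the axes $\{r=0\}$ and $\{s=0\}$, and exhibit in the sector $\{r>s\}$ a monotone family of solution curves $\gamma_t$ ($t>0$) lying strictly below $\ell$, asymptotic to $\ell$ as $r\to\infty$, meeting the axis $\{s=0\}$ orthogonally (so that the lifted $\Sigma^+_t\subset\R^8$ is a smooth embedded hypersurface), and sweeping out the whole sector, together with the mirror construction in $\{r<s\}$. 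The classical way to do this is to trap $\gamma_t$ between explicit sub– and supersolutions of the ODE and then close the construction with comparison and continuation arguments; the delicate point is the asymptotic behaviour near $\ell$, governed by the indicial roots of the Jacobi operator of $S$. This is exactly where the dimension enters: for $\mathbb S^p\times\mathbb S^q\subset\mathbb S^{p+q+1}$ the relevant Hardy–type inequality — equivalently, real indicial roots, equivalently the existence of a positive, properly decaying Jacobi field on the cone — holds precisely when $p+q\ge6$, the same threshold that forces the cones of Theorem \ref{t:simons} to be flat for $m\le6$; in lower dimensions the perturbed minimal surfaces spiral around and cross $S$, and no foliation exists. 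Carrying out these barrier estimates and checking the non–crossing and filling properties is the bulk of the work.

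Finally, the Bernstein statement follows from De Giorgi's equivalence — used in the direction that a non–flat area minimizing hypercone in $\R^{m+1}$ produces a non–affine entire solution of the minimal surface equation on $\R^{m+1}$ — applied with $m+1=8$ to the cone $S$ (cf. \cite{BDG} and \cite{Giusti}); this yields a non–affine entire solution $u:\R^8\to\R$. For $n>8$ one sets $\tilde u(x_1,\dots,x_n):=u(x_1,\dots,x_8)$, which again solves the minimal surface equation on $\R^n$ and is not affine, giving the claim for every $n\ge8$.
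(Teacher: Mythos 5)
The paper does not actually contain a proof of Theorem \ref{t:BDG}: it states the result of \cite{BDG} and refers the reader to \cite{Guido} for ``an elegant and simple proof'', so there is no in-paper argument to measure your attempt against. On its own terms your sketch takes the foliation--to--calibration route, and the outline is sound: a foliation of $\R^8\setminus\{0\}$ by $C^1$ minimal hypersurfaces, with $S$ as the $t=0$ leaf, does produce a comass-one $7$-form $\omega$ closed off $\{0\}$; your removable-singularity step is correct (with $\chi_\eps$ a standard cutoff annihilating $B_\eps$, the error term $\int\phi\,\omega\wedge d\chi_\eps$ is $O(\eps^{7})$ because $\omega$ is bounded); and the exactness/calibration argument, the Bernstein deduction, and the reduction of the foliation problem to the weighted ODE $\int(rs)^3\,d\cH^1$ in the quadrant are all correctly stated. \emph{But the one thing you do not do is construct the foliation}, and you yourself flag it as ``the bulk of the work''. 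That construction \emph{is} the theorem. Note also that \cite{BDG} do not actually build a foliation: they build $O(4)\times O(4)$-invariant super- and sub-solutions and run a direct comparison argument; the smooth minimal foliation near a strictly stable, strictly minimizing cone comes later (Hardt--Simon). So what you have is a correct strategy modulo the hard estimate, not a proof.

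It is also worth knowing that the proof the paper points to as ``elegant and simple'' sidesteps the foliation entirely. In codimension one, writing $S=\partial\a{E}$ with $E$ one of the two components of $\R^8\setminus S$, one does not need a closed calibration at all: by Proposition \ref{p:currents-sets} it suffices to exhibit a single Lipschitz unit vector field $\xi$ with $\xi=\nu_E$ on $\partial E$, $\dv\,\xi\geq 0$ on $E$ and $\dv\,\xi\leq 0$ on $\R^8\setminus E$. Integrating by parts against $\mathbf{1}_F-\mathbf{1}_E$ then yields $\mathbf{P}(E,B)\leq\mathbf{P}(F,B)$ for every competitor $F$ coinciding with $E$ outside a compact subset of the ball $B$. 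Such a ``sub-calibration'' $\xi$ can be written down in closed form (normalize the gradient of a suitable degree-four homogeneous polynomial vanishing on $S$ and verify the sign of the divergence by a direct computation), so no ODE analysis, barriers, or indicial-root asymptotics are needed. That is the real reason the minimality of $S$ can be dispatched as briefly as the paper's citation suggests, and it is a genuinely shorter route than the one you chose.
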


For an elegant and simple proof we refer the reader to \cite{Guido}.

\subsection{Federer's reduction argument}\label{ss:Fedred}  In this section we review the basic idea behind Federer's partial regularity theorem for $m\geq 7$, namely Theorem \ref{t:cod1}(ii)\&(iii). The crucial (elementary!) observation is that, if $S$ is an area-mininizing cone (with vertex at the origin) in $\R^{m+\bar n}$ and $S_0$ is a tangent cone to $S$ at a point $p\neq 0$, then $S_0$ is invariant under translations in direction $p$. It is then not difficult to conclude that $S_0$ ``splits off a line''.

\begin{lemma}[Splitting off lines, cf. {\cite[Lemma 35.5 and proof of Theorem 35.3]{Simon}}]\label{l:split}
If $S_0$ is a tangent cone to an area minimizing $m$-dimensional cone $S$ in $\R^{m+\bar n}$ at a point $p\neq 0$, then $S_0 = \a{\R}\times Z$ for some $(m-1)$-dimensional area minimizing cone $Z$ in $\R^{m-1+\bar n}$. 
\end{lemma}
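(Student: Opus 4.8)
The plan is to exploit the conical invariance of $S$ together with the fact that tangent cones to area minimizing currents are themselves cones (Theorem \ref{t:cones}(iii)), and show that for $p\neq 0$ the tangent cone $S_0$ at $p$ is invariant under the full line $\R p$, hence (after a rotation) under $\R e_1$. First I would record the two symmetries we have for free: since $S$ is a cone, $(\iota_{0,\lambda})_\sharp S = S$ for every $\lambda>0$, and by Theorem \ref{t:cones}(iii) some sequence $S_{p,r_k}\to S_0$ with $S_0$ an area minimizing cone (here we use that $S$ is area minimizing in all of $\R^{m+\bar n}$, so no boundary issues arise near $p$). The key elementary observation is that dilating $S$ about $p$ and dilating it about $0$ commute in a way that, in the limit, produces translation invariance: concretely, $\iota_{p,r}\circ\iota_{0,\lambda}^{-1}$ maps to $\iota_{p,r}$ composed with a translation, and letting $r\to 0$ the dilation about $0$ becomes invisible at scale $r$ while the translation by $(\lambda-1)p$ survives. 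The cleanest way I would phrase this: for fixed $t\in\R$, the translate $\tau_{tp}{}_\sharp S_0$ is again a limit of $S_{p,r_k}$ along a (reparametrized) sequence, because $\tau_{tp}\circ\iota_{p,r} = \iota_{p,r}\circ\tau_{trp}$ and $\tau_{trp}{}_\sharp S$ differs from $S$ only through the cone scaling, which is controlled. Hence $\tau_{tp}{}_\sharp S_0 = S_0$ for all $t$, i.e. $S_0$ is invariant under translations in the direction $p$.

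Next I would invoke the standard structure theorem for currents that are simultaneously cones (with vertex $0$) and invariant under translation by a nonzero vector $v$: such a current splits as a product $\a{\R v}\times Z$ where $Z$ is a cone in the orthogonal hyperplane $v^\perp$. This is purely a matter of the product structure of currents (the $\times$ operation recalled in the preliminaries) together with the uniqueness of the canonical representation in Lemma \ref{l:t_planews}: the rectifiable set carrying $S_0$ is a union of lines parallel to $v$, so it is $\R v\times R'$ for a rectifiable $R'\subset v^\perp$, and the orientation and multiplicity descend accordingly. After a rotation we may take $v=e_1$, so $S_0=\a{\R}\times Z$ with $Z$ an $(m-1)$-dimensional integer rectifiable cone in $\R^{m-1+\bar n}$. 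Finally, that $Z$ is itself area minimizing follows from the fact that $S_0$ is area minimizing (slicing $S_0$ by the hyperplanes $\{x_1=t\}$, or equivalently observing that a competitor for $Z$ would, after multiplying by $\a{\R}$, yield a competitor for $S_0$); and $\partial Z=0$ since $\partial S_0=0$ and the slicing formula for $\partial(\a{\R}\times Z)$ forces $\partial Z=0$.

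The main obstacle, and the only step requiring genuine care, is the first one: making rigorous the claim that $\tau_{tp}{}_\sharp S_0=S_0$. One has to track the rescalings precisely — writing $S_{p,r_k}=(\iota_{p,r_k})_\sharp S$, using $(\iota_{0,\lambda})_\sharp S=S$ to replace $S$ by a rescaled copy, and checking that the composition of an inner dilation about $0$ with an outer dilation about $p$ equals a dilation about $0$ followed by a fixed translation — and then verify the convergence $S_{p,r_k}\to S_0$ upgrades to convergence of the translates, using the mass bounds from the monotonicity formula (Theorem \ref{t:monot}) to justify passing to the limit in the pushforward under the (converging) affine maps. All of this is routine once set up correctly, but it is where the argument actually lives; the splitting and the inheritance of the minimizing property are formal consequences. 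I expect the whole proof to be a few lines in the paper, citing \cite[Lemma 35.5 and proof of Theorem 35.3]{Simon} for the translation-invariance computation and the product splitting.
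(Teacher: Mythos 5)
The paper does not actually prove this lemma — it simply cites \cite[Lemma 35.5 and proof of Theorem 35.3]{Simon}, whose argument establishes the translation invariance of $S_0$ via the density function: one shows $\Theta(S,\cdot)$ is constant on rays from the origin, then uses upper semicontinuity of density under convergence together with the rigidity part of the monotonicity formula (density at a point equal to the density at the vertex forces the cone to also be a cone with respect to that point) to conclude $\tau_{tp\sharp} S_0 = S_0$. Your route is genuinely different: you get translation invariance by a direct algebra of rescalings, using only the dilation invariance of $S$ and the mass bounds from monotonicity (not its rigidity), which is arguably more elementary. Your plan works, but the specific intermediate step as written is off: you claim $\tau_{tp}\circ\iota_{p,r}=\iota_{p,r}\circ\tau_{trp}$ (true) and then that $(\tau_{trp})_\sharp S$ ``differs from $S$ only through the cone scaling,'' which is not the case — $(\tau_{trp})_\sharp S$ is a cone with vertex $trp$, not related to $S$ by any dilation about $0$. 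The correct computation is one step further: $\tau_{tp}\circ\iota_{p,r}=\iota_{(1-tr)p,\,r}$, and only \emph{then} does the cone property of $S$ (i.e.\ $(\iota_{0,1/(1-tr)})_\sharp S = S$) give $S_{(1-tr)p,\,r}=S_{p,\,r/(1-tr)}$; since $r_k/(1-tr_k)$ is asymptotic to $r_k$, the mass bounds let you pass to the limit and conclude $\tau_{tp\sharp}S_0=S_0$. The remaining steps (the product splitting of a translation-invariant rectifiable cone via Lemma~\ref{l:t_planews} or \cite[Lemma 35.5]{Simon}, inheritance of minimality by $Z$ via a truncation/cut-and-paste argument, and $\partial Z=0$) are correctly identified and standard. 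In short: correct approach, minor misstatement in the commutation step, and a valid alternative to the density-based route of the cited reference.
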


Thus, if we fix an area-minimizing cone $S$ of dimension $7$ in $\R^8$ we conclude from Theorem \ref{t:simons} that any cone tangent to $S_0$ at a point which is not the origin must necessarily be flat and Corollary \ref{c:DG} implies that $S$ is smooth except, possibly, at the origin. Let now $T$ be an area-minimizing current of dimension $7$ in a Riemannian submanifold $\Sigma\subset \R^{7+n}$ of dimension $8$ and fix a point $p\in \sing (T)$. 
If $p$ were not an isolated singularity, there would be a sequence of singular points $\{p_k\}$ converging to $p$ and, upon setting $r_k:= |p_k-p|$ we could consider the rescaled currents $T_{p, r_k}$. These currents would have a singularity at $0$ and a singularity at $q_k = \frac{p-p_k}{r_k}$. Up to subsequences we could assume that $q_k$ converges to some $q$ with $|q|=1$ and that $T_{p, r_k}$ converges to some tangent cone $S$. $S$ is an area-minimizing cone in $T_p \Sigma$, which is an $8$-dimensional euclidean space. The above discussion gives that $S$ is regular outside the origin, but Proposition \ref{p:persiste} would imply that the point $q$ is a singular point for $S$. 

A repeated application of Lemma \ref{l:split} allows to infer a similar conclusion for any $m\geq 8$. In particular, assume that for some $m\geq 8$ there is an area-minimizing current of dimension $m$ in a Riemannian manifold $\Sigma$ of dimension $m+1$ such that $\cH^{m-7+\alpha} (\sing (T)) > 0$. By the classical results on densities (cf. \cite[Theorem 3.2]{Simon}) there is a point $p\in \supp (T)\setminus \supp (\partial T)$ such that
\[
\limsup_{r\downarrow 0} \frac{\cH^{m-7+\alpha} (\sing (T)\cap \bB_r (p))}{r^{m-7+\alpha}} \geq 2^{-(m-7+\alpha)}\, .
\]
We can thus assume the existence of an area minimizing cone $S$ of dimension $m$ in $T_p \Sigma$ and of a subsequence of rescalings $T_{p, r_k}$ converging to it for which 
\begin{equation}\label{e:LB_haus}
\limsup_{k\to\infty} \cH^{m-7+\alpha} (\sing (T_{p, r_k})\cap \bar\bB_1)\geq 2^{-(m-7-\alpha)}\, .
\end{equation}
In addition, if $\Gamma$ is the limit, in the Hausdorff sense, of $\sing (T_{p, r_k}) \cap \bar \bB_1$, we conclude from Proposition \ref{p:persiste} that $\Gamma \subset \sing (S)$. It is well known that the Hausdorff measures are not upper semicontinuous under Hausdorff convergence and thus we cannot use \eqref{e:LB_haus} to conclude $\cH^{m-7+\alpha} (\sing (S)) >0$. On the other hand the $\cH^{\beta}_\infty$ measures are upper semicontinuous, (cf. \cite[Appendix A]{Simon}). Hence if we choose a point $\bar{p}$ such that
\[
\limsup_{r\downarrow 0} \frac{\cH_\infty^{m-7+\alpha} (\sing (T)\cap \bB_r (\bar p))}{r^{m-7+\alpha}} \geq 2^{-(m-7+\alpha)}\, 
\]
(which is possible, cf. \cite[Theorem 3.6]{Simon}) and argue as above with $\bar{p}$ in place of $p$, we then find an area-minimizing cone $S$ with the property that $\cH^{m-7+\alpha}_\infty (\sing (S))>0$. We can then repeat the procedure to find a second cone $S_0$, tangent to $S$ at some $\bar{q}\neq 0$, with the property that $\cH^{m-7+\alpha}_\infty (\sing (S_0))>0$. Since $S_0$ has the product structure given by Lemma \ref{l:split}, we infer the existence of an area-minimizing current $S_1$ of dimension $m-1$ in $\R^m$ such that $\cH^{(m-1)-7+\alpha}_\infty (\sing (S_1))>0$. Repeating this argument $(m-7)$ times we arrive at an area minimizing current $S_{m-7}$ of dimension $7$ in $\R^8$ which has $\cH^\alpha_\infty (\sing (S_{m-7}))>0$, contradicting our previous conclusion that $\sing (S_{m-7})$ is a discrete set. 

\subsection{Simon's rectifiability result} We complete our survey of the regularity results in codimension $1$ by mentioning Simon's spectacular achievement: combining his fundamental theorem about the uniqueness of tangent cones at isolated singularities with several additional innovative ideas, he has been able in the nineties to show that, when $\bar{n}=1$, $\sing (T)$ can be covered, up to a set of $\cH^{m-7}$-measure zero, by a countable collection of $C^1$ $(m-7)$-dimensional submanifolds, cf. \cite{Simon2}.  A new proof of Simon's theorem, which avoids the discussion of the uniqueness of tangent cones at isolated singularities, has been very recently found by Naber and Valtorta, see \cite{NV}. This is till now the best description available for the behavior of the singular set in codimension $1$.

\section{Federer's theorem and the failure of $\eps$-regularity in codimension $\bar{n}\geq 2$}\label{s:branching}

\subsection{Holomorphic subvarieties as area minimizing currents}
We start by recalling that holomorphic subvarieties of ${\mathbb C}^{k+j}$, namely zeros of holomorphic maps $u: {\mathbb C}^{k+j}\to {\mathbb C}^j$ ($k$ and $j$ being, respectively, the complex dimension and codimension of the variety) can be given a natural orientation. In what follows we identify $\mathbb C^{k+j}$ with $\R^{2k+2j}$ in the usual way: if $z_1, \ldots , z_{k+j}$ are complex coordinates and $x_j = {\rm Re}\, z_j$, $y_j = {\rm Im}\, z_j$, we let $x_1, y_1, \ldots , x_{k+j}, y_{k+j}$ be the standard coordinates of $\R^{2k+2j}$. 
Recall then that an holomorphic subvariety $\Gamma$ of $\mathbb C^{k+j}$ of complex dimension $k$ is a (real analytic) submanifold of $\R^{2k+2j}\setminus \sing (\Gamma)$ of (real) dimension $m = 2k$, where $\sing (\Gamma)$ is an holomorphic subvariety of complex dimension $k-1$.

Furthermore, at each point $p\in \Gamma\setminus \sing (\Gamma)$, the (real) tangent $2k$-dim. plane $T_p \Gamma$ can be identified with a complex $k$-dimensional plane of $\mathbb C^n$. If $v_1, \ldots, v_k$ is a complex basis of $T_p \Gamma$, we can then define a canonical orientation for $T_p \Gamma$ using the simple $2k$-vector
\[
{\rm Re}\, v_1 \wedge {\rm Im}\, v_1 \wedge \ldots \wedge {\rm Re}\, v_k\wedge {\rm Im}\, v_k\, .
\]
This allows us to define the current $\a{\Gamma}$ by integrating forms over the oriented submanifold $\Gamma\setminus \sing (\Gamma)$. It is also easy to check that $\partial \a{\Gamma} =0$, the reason being that  the ``singular set'' $\sing (\Gamma)$ is a set of (locally) finite $\cH^{2k-2}$ measure. 

\medskip

The discussion can be ``localized'' to holomorphic subvarieties in open subsets $\Omega$ of $\mathbb C^{k+j}$ (and more generally in complex hermitian manifolds). Note also that, if $\Omega'$ is a bounded open subset of the domain $\Omega$ where $\Gamma$ is defined, then $\a{\Gamma}$ has finite mass in $\Omega'$ and it is thus an integer rectifiable current. The following fundamental observation is due to Federer and is based on a classical computation of Wirtinger (\cite{Wirtinger}).

\begin{theorem}[Federer, cf. {\cite[Section 5.4.19]{Fed}}]\label{t:FW}
Let $\Gamma_1, \ldots \Gamma_N$ be holomorphic subvarities of complex dimension $k$ in $\Omega\subset \mathbb C^{k+j}$ and let $k_1, \ldots , k_N$ be positive integers. Then the current $T:= k_1 \a{\Gamma_1} + \ldots + k_N \a{\Gamma_N}$ is area minimizing in $\Omega$.
\end{theorem}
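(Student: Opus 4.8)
The plan is to prove the statement by the \emph{calibration method}. Identify $\C^{k+j}$ with $\R^{2k+2j}$ as in the discussion above (so $m=2k$), let $\kappa:=\sum_{\ell=1}^{k+j}dx_\ell\wedge dy_\ell$ be the standard K\"ahler $2$-form, and consider the $2k$-form $\beta_0:=\tfrac{1}{k!}\,\kappa^{\wedge k}$. Two properties of $\beta_0$ are decisive. First, $\beta_0$ is \emph{closed}: since $d\kappa=0$ we get $d(\kappa^{\wedge k})=0$. Second, \emph{Wirtinger's inequality}: for every oriented $2k$-plane $\pi$ with unit simple $2k$-vector $\vec\pi$ one has $\langle\beta_0,\vec\pi\rangle\le 1$, with equality \emph{if and only if} $\pi$ is a complex $k$-dimensional subspace carrying its canonical complex orientation (and in that case $\beta_0$ restricted to $\pi$ is exactly the Euclidean volume form of $\pi$). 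In particular $\|\beta_0(p)\|_c\le 1$ at every $p$.

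I expect Wirtinger's inequality to be the one genuinely substantive step, and it is pure linear algebra. Given an oriented $2k$-plane $V$ with unit simple vector $\vec V$, restrict the bilinear form $\kappa$ to $V$ and bring it to normal form: there is an orthonormal basis $f_1,\dots,f_{2k}$ of $V$ and reals $\lambda_1,\dots,\lambda_k\ge 0$ with $\kappa|_V=\sum_{i=1}^k\lambda_i\,f_{2i-1}^\ast\wedge f_{2i}^\ast$. If $J$ denotes the standard complex structure of $\R^{2k+2j}$, then $\lambda_i=\kappa(f_{2i-1},f_{2i})=\langle Jf_{2i-1},f_{2i}\rangle\le|Jf_{2i-1}|\,|f_{2i}|=1$ by Cauchy--Schwarz, whence $\tfrac{1}{k!}\kappa^{\wedge k}|_V=(\lambda_1\cdots\lambda_k)\,f_1^\ast\wedge\cdots\wedge f_{2k}^\ast$ and $\langle\beta_0,\vec V\rangle=\pm\lambda_1\cdots\lambda_k\le 1$. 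Equality forces the $+$ sign (i.e.\ $f_1\wedge\cdots\wedge f_{2k}=\vec V$) and every $\lambda_i=1$; the corresponding equality in Cauchy--Schwarz then gives $f_{2i}=Jf_{2i-1}$, i.e.\ $V$ is $J$-invariant and $\vec V$ is its complex orientation. (One may also just quote \cite[1.8.2]{Fed}.)

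Next comes the comparison argument. As recalled before the statement, $T:=k_1\a{\Gamma_1}+\dots+k_N\a{\Gamma_N}$ is an integer rectifiable current of locally finite mass with $\de T=0$, and for $\|T\|$-a.e.\ $p$ the tangent plane to $T$ at $p$ is the tangent space of one of the $\Gamma_i$, carrying its canonical complex orientation; by the equality case of Wirtinger, $\langle\beta_0,\vec T(p)\rangle=1$ for $\|T\|$-a.e.\ $p$, so $T$ is \emph{calibrated} by $\beta_0$. Let $S$ be an $(2k+1)$-dimensional integral current with $\supp(S)\subset\Om$, put $R:=T+\de S$, and fix a bounded open set $W$ with $\supp(S)\subset W\subset\subset\Om$. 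Choose $\chi\in C^\infty_c(\Om)$ with $0\le\chi\le 1$ and $\chi\equiv 1$ on a neighbourhood of $\overline W$, and set $\beta:=\chi\,\beta_0\in\mathcal D^{2k}$; then $\|\beta(p)\|_c=\chi(p)\|\beta_0(p)\|_c\le 1$. Three elementary facts conclude the proof. First, $(\de S)(\beta)=S(d\beta)=S(d\chi\wedge\beta_0)=0$, since $d\beta_0=0$ and $d\chi$ vanishes near $\supp(S)$. Second, since $\langle\beta_0,\vec T\rangle=1$ $\|T\|$-a.e.\ and $\chi\equiv 1$ on $W$,
\[
T(\beta)=\int\chi\,\langle\beta_0,\vec T\rangle\,d\|T\|=\int\chi\,d\|T\|=\|T\|(W)+\int_{\R^{2k+2j}\setminus W}\chi\,d\|T\|\,.
\]
Third, using $\|\beta\|_c\le 1$, $\chi\le 1$, and the fact that $R=T$ outside $\supp(S)\subset W$,
\[
R(\beta)\le\int\chi\,d\|R\|\le\|R\|(W)+\int_{\R^{2k+2j}\setminus W}\chi\,d\|T\|\,.
\]
Since $R(\beta)=T(\beta)+(\de S)(\beta)=T(\beta)$, cancelling the common (finite) tail integral gives $\|R\|(W)\ge\|T\|(W)$. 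Because $R=T$ on $\R^{2k+2j}\setminus\supp(S)$, this propagates to $\|R\|(U)\ge\|T\|(U)$ for every open $U\supseteq\supp(S)$, which is exactly the minimality required by Definition \ref{d:AM} (and, with $W$ a large ball, its finite-mass version as well).

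A final word on where holomorphicity is used: only holomorphic subvarieties are calibrated by $\beta_0$. For a generic real-analytic $2k$-submanifold one has $\langle\beta_0,\vec T\rangle<1$ on a set of positive $\|T\|$-measure by the strict part of Wirtinger, and then the chain $\mass(T)=T(\beta_0)=R(\beta_0)\le\mass(R)$ fails already at the first equality. It is precisely the fact that the tangent planes of the $\Gamma_i$ are complex and canonically oriented --- which is how $\a{\Gamma_i}$ was defined --- that makes $T$ calibrated, hence mass minimizing among all competitors with the same boundary.
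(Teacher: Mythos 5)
Your proof is correct and follows the same calibration route the paper takes in Section~\ref{s:branching}: introduce $\omega^k = \frac{1}{k!}\kappa^{\wedge k}$, prove Wirtinger's inequality so that it is a calibration with equality precisely on canonically oriented complex $k$-planes (making $T$ calibrated), and then run the standard comparison $\mass(T)=T(\omega)=(T+\partial S)(\omega)\leq\mass(T+\partial S)$. Your use of a cutoff $\chi$ to deal with the fact that $\omega^k$ is not compactly supported and $T$ may only have locally finite mass is a more careful rendering of the paper's unqualified Lemma on calibrated currents, but it is a refinement of the same argument rather than a different route.
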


Indeed the above theorem holds in general K\"ahler manifolds, cf. \cite[5.4.19]{Fed}.

\subsection{Branching phenomena} Before giving an idea of why Theorem \ref{t:FW} holds we want to illustrate the deep consequences that it has in the regularity theory for area minimizing currents in codimension higher than $1$. Holomorphic subvarieties give easy counterexamples to Corollary \ref{c:DG} when $\bar{n}>1$: assumption (b) in Theorem \ref{t:DG} is absolutely crucial in this case. As a byproduct even Proposition \ref{p:persiste} fails and singularities might disappear in the limit when we deal with sequences of area minimizing currents in codimension higher than $1$: in the rest of this note we will see that the core difficulty in the proof of Theorem \ref{t:cod>1} is precisely this phenomenon of ``disappearance of singularities''. We illustrate these points with three explicit examples.

\begin{example}\label{e:bad}
Let $\delta>0$ be a small number and consider the holomorphic curve 
\[
\Gamma_\delta := \{(z,w)\in \mathbb C^2: z^2 = \delta w\}
\]
and the plane 
\begin{equation}\label{e:orizzontale}
\pi:= \{(z,w)\in \mathbb C^2 : z= 0\}\, .
\end{equation}
There is no neighborhood of $0$ where $\Gamma_\delta$ is the graph of a function $z= f (w)$, in spite of the fact that $\bE (\a{\Gamma_\delta}, \bB_1 (0), \pi)$ converges to $0$ as $\delta\downarrow 0$. In fact the conclusion of Theorem \ref{t:DG} does not apply: although each $\Gamma_\delta$ is smooth and it is graphical in $\bB_\rho (0)$ for any $\rho$, there is no uniform control of the $C^{1, \alpha}$ norm of the graph in terms of the excess. Observe that the $\Gamma_\delta$ do not satisfy the condition (c) in Theorem \ref{t:DG}, although they satisfy (a), (b) and (d).
\end{example}

\begin{example}\label{e:vbad}
Consider the holomorphic curve 
\[
\Gamma := \{(z,w)\in \mathbb C^2: z^2 = w^3\}\, .
\]
The origin belongs to $\sing (\a{\Gamma})$. On the other hand:
\begin{itemize}
\item The unique tangent cone at $0$ is given by $2\a{\pi}$ for $\pi$ as in \eqref{e:orizzontale}. 
\item The density of $\a{\Gamma}$ equals $2$ at $0$;
\item 
\[
\lim_{r\downarrow 0} \bE (\a{\Gamma}, \bB_r (0), \pi) = 0\, .
\]
\end{itemize}
Therefore:
\begin{itemize}
\item Corollary \ref{c:DG} is false for $2$-dimensional area minimizing currents in $\R^4$: $\Gamma$ is singular at the origin in spite of the existence of a flat tangent cone there.
\item Again Theorem \ref{t:DG} does not apply in any ball $\bB_{2\rho} (0)$. Note however that the only missing assumption is (b): the density $\Theta (\a{\Gamma}, p)$ equals $1$ at every point $p\in \Gamma \setminus \{0\}$ and equals $2$ at $p=0$.
\item Proposition \ref{p:persiste} fails for $2$-dimensional area minimizing currents in $\R^4$. Indeed $0$ is a singular point for $\a{\Gamma}_{0,r}$ for every positive $r>0$. On the other hand $\a{\Gamma}_{0,r}\to 2 \a{\pi}$ and thus $0$ is {\em not} a singular point of the limit: the singularity ``has disappeared''.
\end{itemize}
\end{example}

\begin{example}\label{e:vvbad}
Consider finally the holomorphic curve
\[
\Xi := \{(z,w)\in \mathbb C^2: (z- w^2)^2 = w^{2015}\}\, .
\]
All the considerations valid for the holomorphic curve $\Gamma$ of Example \ref{e:vbad} are also valid for $\Xi$. $\Xi$ does not add much for the moment to our discussion, but it will play a crucial role later: observe that $0$ is a singular point in spite of the fact that $\Xi$ is an almost imperceptible perturbation of the smooth current $2\a{\{z=w^2\}}$.
\end{example}

We close this section by remarking that Theorem \ref{t:FW} gives also a great abundance of singular area minimizing  cones in higher codimension: the zero set of any homogeneous polynomial $P (z_1, \ldots, z_{k+1})$ in $k+1$ complex variables is an area minimizing cone of dimension $2k$ in $\R^{2k+2}$. More generaly, for any projective subvariety of $\mathbb P^k \mathbb C$ with complex dimension $j$ we can construct a corresponding area-minimizing cone in $\R^{2k+2}$ of dimension $2j+2$. These cones are singular except when the corresponding algebraic subvarieties are affine. The easiest example of a singular area minimizing cone is thus the union of an arbitrary number of complex lines in $\mathbb C^2$. Such cones might however be considered ``mildly'' singular: in $\mathbb C^3$ the generic cone associated to a projective curve of $\mathbb P^2 \mathbb C$ has a singular set which behaves in rather complicated way. 

\subsection{Calibrations and the proof of Theorem \ref{t:FW}} We illustrate here the simple, yet deep, principle lying behind Theorem \ref{t:FW}. Recall first the notion of comass of a form, given in Definition \ref{d:comass}.

\begin{definition}[Calibrations, cf. \cite{HL}]\label{d:calibrations}
A {\em calibration} $\omega$ is a closed $m$-form such that $\|\omega\|_c\leq 1$. An integer rectifiable current $T$ is said to be calibrated by a calibration $\omega$ if $\langle \omega_p, \vec{T} (p)\rangle = 1$ for $\|T\|$-a.e. $p$. 
\end{definition}

Observe in particular that $\mass (T) \geq T (\omega)$ whenever $\omega$ is a calibration and that the equality sign holds if and only if $T$ is calibrated by $\omega$. The following is then a trivial fact

\begin{lemma}
If $T$ is calibrated by a calibration $\omega$, then $T$ is an area minimizing current.
\end{lemma}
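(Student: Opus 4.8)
The plan is to show that any competitor of the form $T + \partial S$ (with $S$ an $(m+1)$-dimensional integral current, appropriately supported) has mass at least $\mass(T)$. The key observation is that, since $\omega$ is closed and of comass at most $1$, for any integer rectifiable current $R$ of compact support we have $R(\omega) \le \mass(R)$ by the very definition of mass and comass, with the remark already recorded before the lemma that equality holds precisely when $R$ is calibrated by $\omega$.

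\begin{proof}
Let $S$ be an $(m+1)$-dimensional integral current with compact support (and, in the constrained setting, with $\supp(S)\subset \Sigma\cap\Omega$). Since $\omega$ is closed, $d\omega = 0$, and therefore, by the very definition of boundary,
\[
\partial S (\omega) = S (d\omega) = 0\, .
\]
Consequently $(T+\partial S)(\omega) = T(\omega)$. Now, because $\|\omega\|_c \le 1$, the definition of mass in \eqref{e:mass} gives
\[
\mass (T + \partial S) \ge (T+\partial S)(\omega) = T (\omega)\, .
\]
On the other hand, since $T$ is calibrated by $\omega$, i.e. $\langle \omega_p, \vec{T}(p)\rangle = 1$ for $\|T\|$-a.e.\ $p$, the Radon--Nikod\'ym representation \eqref{e:RN} yields
\[
T(\omega) = \int \langle \omega (p), \vec{T}(p)\rangle\, d\|T\|(p) = \|T\|(\R^{m+n}) = \mass (T)\, .
\]
Combining the last two displays we obtain $\mass (T+\partial S) \ge \mass (T)$ for every admissible $S$, which is precisely the area minimizing property of $T$ (in the sense of Definition \ref{d:AM}; when $T$ has only locally finite mass one argues identically on any bounded open set containing $\supp(S)$, replacing $\omega$ by a version multiplied by a cutoff, or simply noting that $d\omega=0$ still forces $\partial S(\omega)=0$ and using \eqref{e:tot_var}).
\end{proof}

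There is essentially no obstacle here: the only point requiring a word of care is the passage to the locally-finite-mass case and the observation that $\partial S(\omega)=0$ uses nothing beyond closedness of $\omega$ and Definition \ref{d:bordo}. The argument is the textbook ``calibration bounds area from below and the calibrated current attains the bound'' principle, and its simplicity — as emphasized in the paragraph preceding the lemma — is exactly why Theorem \ref{t:FW} follows so cleanly once one knows that holomorphic subvarieties are calibrated by (powers of) the K\"ahler form.
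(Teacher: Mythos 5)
Your proof is correct and is essentially the same as the paper's one-line argument, namely the chain $\mass(T) = T(\omega) = T(\omega) + S(d\omega) = (T+\partial S)(\omega) \leq \mass(T+\partial S)$; you merely unpack each equality/inequality and add a short remark on the locally-finite-mass case that the paper leaves implicit.
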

\begin{proof}
Let $S$ be an $(m+1)$-dimensional integral current. Then
\begin{align*}
\mass (T)= & T (\omega) = T (\omega) + S(d\omega) = (T+\partial S) (\omega) \leq \mass (T+\partial S)\, .\qedhere
\end{align*}
\end{proof}

Holomorphic subvarieties are the primary example of calibrated currents and this observation dates back essentially to Wirtinger. More precisely, if $z_\ell = x_\ell + i y_\ell$ are the standard coordinates in $\mathbb C^{k+j}$, consider the K\"ahler form
\[
\omega := dx_1 \wedge dy_1 + \ldots + dx_{k+j} \wedge dy_{k+j}\, .
\]
Wirtinger's theorem can then be stated in the following form

\begin{theorem}[Wirtinger, cf. \cite{Wirtinger}]
If $\omega$ is the K\"ahler form and 
\[
\omega^k = \frac{1}{k!} \underbrace{\omega \wedge \ldots \wedge \omega}_{\mbox{$k$ times}}\, 
\]
then $\omega^k$ is a calibration. Moreover, $\langle \omega^k, v_1 \wedge \ldots \wedge v_{2k}\rangle = |v_1\wedge \ldots \wedge v_{2k}|$ if and only if $v_1, \ldots , v_{2k}$ is a positively oriented ($\R$-)base of a complex plane.
\end{theorem}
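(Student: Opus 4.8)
The plan is to separate the two assertions. Since the Kähler form $\omega=dx_1\wedge dy_1+\dots+dx_{k+j}\wedge dy_{k+j}$ has constant coefficients, so does $\omega^k$, whence $d\omega^k=0$ automatically and $\|\omega^k\|_c$ coincides with the single algebraic comass $\|\omega^k\|$. Thus ``$\omega^k$ is a calibration'' reduces to the purely algebraic bound $\langle\omega^k,\xi\rangle\le|\xi|$ for every simple $2k$-vector $\xi$, and by homogeneity it suffices to check this for unit simple $\xi=v_1\wedge\dots\wedge v_{2k}$.

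First I would record the case $k=1$ by hand. Writing $J$ for the complex structure on $\mathbb C^{k+j}\cong\R^{2k+2j}$, one has $\omega(u,w)=\langle Ju,w\rangle$ for all vectors $u,w$; hence for a unit simple $2$-vector $u\wedge w$ (so $u\perp w$, $|u|=|w|=1$) Cauchy--Schwarz gives $\langle\omega,u\wedge w\rangle=\langle Ju,w\rangle\le|Ju|\,|w|=1$, with equality if and only if $w=Ju$, i.e.\ if and only if $\mathrm{span}(u,w)$ is a complex line oriented by $u\wedge Ju$. In particular $\|\omega\|=1$.

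The key step is a linear-algebra normal form. Given a unit simple $\xi=v_1\wedge\dots\wedge v_{2k}$, set $V=\mathrm{span}(v_1,\dots,v_{2k})$, a $2k$-dimensional Euclidean subspace, and regard $\omega|_V$ as a skew-symmetric bilinear form. Applying the spectral theorem to the skew-symmetric operator $A_V\colon V\to V$ defined by $\omega(u,w)=\langle A_Vu,w\rangle$, one obtains an orthonormal basis $f_1,g_1,\dots,f_k,g_k$ of $V$ and reals $\lambda_1,\dots,\lambda_k\ge 0$ with
\[
\omega|_V=\sum_{i=1}^k\lambda_i\,f_i^*\wedge g_i^* .
\]
Since the $2$-forms $\eta_i:=f_i^*\wedge g_i^*$ pairwise commute and $\eta_i\wedge\eta_i=0$, only the top term survives in the wedge power: $(\omega|_V)^{\wedge k}=k!\,\lambda_1\cdots\lambda_k\,\eta_1\wedge\dots\wedge\eta_k$, so $\omega^k|_V=\lambda_1\cdots\lambda_k\,(f_1^*\wedge g_1^*\wedge\dots\wedge f_k^*\wedge g_k^*)$ and therefore $\langle\omega^k,\xi\rangle=\epsilon\,\lambda_1\cdots\lambda_k$ with $\epsilon=\pm1$ according to whether $f_1\wedge g_1\wedge\dots\wedge f_k\wedge g_k$ equals $+\xi$ or $-\xi$. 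Feeding the $k=1$ estimate into the coordinate plane $\mathrm{span}(f_i,g_i)$ gives $0\le\lambda_i=\omega(f_i,g_i)\le 1$ for each $i$, hence $|\langle\omega^k,\xi\rangle|\le 1=|\xi|$. This establishes $\|\omega^k\|\le 1$, so $\omega^k$ is a calibration.

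Finally, for the equality locus: if $\langle\omega^k,\xi\rangle=|\xi|=1$ then $\epsilon=+1$ and $\lambda_1\cdots\lambda_k=1$, forcing $\lambda_i=1$ for all $i$; the equality case of the $k=1$ estimate then gives $g_i=Jf_i$ for every $i$, so $V=\mathrm{span}(f_1,Jf_1,\dots,f_k,Jf_k)$ is a complex $k$-plane and $\xi=f_1\wedge Jf_1\wedge\dots\wedge f_k\wedge Jf_k$ is exactly the canonical orientation $\mathrm{Re}\,f_1\wedge\mathrm{Im}\,f_1\wedge\dots\wedge\mathrm{Re}\,f_k\wedge\mathrm{Im}\,f_k$. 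Conversely, if $v_1,\dots,v_{2k}$ is a positively oriented real basis of a complex plane $V$, a complex orthonormal basis $w_1,\dots,w_k$ of $V$ together with $f_i:=w_i$, $g_i:=Jw_i$ realizes the normal form with all $\lambda_i=1$ and $\epsilon=+1$, so $\langle\omega^k,\xi\rangle=|\xi|$. I expect the crux to be the orthonormal normal form for $\omega|_V$ (via the spectral theorem for $A_V$) and the careful bookkeeping of the orientation sign $\epsilon$ through it; everything else is a direct consequence of the elementary $k=1$ computation.
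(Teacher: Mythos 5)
The paper does not actually give a proof of Wirtinger's theorem; it only cites the original reference. Your argument is correct and is in fact the classical proof of Wirtinger's inequality that one finds, e.g., in Federer or in Harvey--Lawson: (1) reduce calibration to a pointwise algebraic bound because $\omega^k$ is closed with constant coefficients; (2) restrict $\omega$ to the $2k$-plane $V$ spanned by the competitor simple vector and diagonalize the skew form $\omega|_V$ in an orthonormal basis, obtaining $\omega|_V=\sum_i\lambda_i f_i^*\wedge g_i^*$ with $0\le\lambda_i$; (3) compute $\omega^k|_V=\lambda_1\cdots\lambda_k\,f_1^*\wedge g_1^*\wedge\cdots\wedge f_k^*\wedge g_k^*$; (4) bound each $\lambda_i=\omega(f_i,g_i)=\langle Jf_i,g_i\rangle\le 1$ by Cauchy--Schwarz; (5) trace equality back to $g_i=Jf_i$, i.e., $V$ complex with the canonical orientation. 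All the steps check out, including the orientation bookkeeping with $\epsilon=\pm1$ and the observation that $\eta_i\wedge\eta_i=0$ kills all repeated indices in the expansion of $(\omega|_V)^{\wedge k}$.

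Two small points worth making explicit if you wanted to turn this into a complete write-up. First, you use repeatedly that $\langle\omega^k,\xi\rangle=\langle\omega^k|_V,\xi\rangle$ for $\xi\in\Lambda_{2k}V$ and that restriction to $V$ is an algebra homomorphism so $(\omega^k)|_V=\tfrac{1}{k!}(\omega|_V)^{\wedge k}$; both are standard but deserve a sentence. Second, in the converse direction one should note that the canonical orientation of a complex $k$-plane is independent of the chosen complex basis (the real determinant of a complex change of basis is $|\det_{\C}|^2>0$), so ``positively oriented'' is unambiguous. Neither of these is a gap, just bookkeeping to make the argument airtight.
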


Calibrations and calibrated submanifolds are a rich source of interesting geometries: we refer the reader to \cite{HL} for several important examples.

\section{Almgren's stratification}\label{s:strat}

From now on we will focus on area minimizing currents in codimension $\bar{n}>1$ and we proceed in describing the proof of Theorem \ref{t:cod>1}. The first step in the analysis is the elegant generalization of Federer's reduction argument known as Almgren's stratification and widely used to analyze singularities in geometric analysis. 

Given an area minimizing $m$-dimensional cone $S$ we define its {\em spine} as the vector space $V$ of maximal dimension for which $S$ can be written as $S'\times \a{V}$, where $S'$ is an area minimizing cone of dimension $m- {\rm dim}\, (V)$. Equivalently, $V$ is the subset of those vectors $v$ such that $S$ is invariant under translations in direction $v$ and it is a simple exercise (using the monotonicity formula) to show that $V$ can be characterized as the subset of those points $p\in \supp (S)$ such that $\Theta (S, p) = \Theta (S, 0)$ or, equivalently, as the subset of those points $q$ such that $S_{q,1}$ is also an area minimizing cone (cf. \cite[Proof of Lemma 35.5]{Simon}). 

At the intuitive level it is clear that $S$ must have a certain ``asymmetry'' in the directions which are transversal to $V$. The dimension of the spine of $S$ is called the {\em building dimension of the cone $S$} (cf. \cite{White4}). Note
that such building dimension equals $m$ if and only if $S$ is an integer multiple of an $m$-dimensional plane, namely if and only if $S$ is flat.

\begin{definition}
A point $p\in \supp (T)$ belongs to the stratum $\mathcal{S}_k (T)$ if every tangent cone to $T$ at $p$ has building dimension at most $k$ and if there is {\em at least one} tangent cone to $T$ at $p$ with building dimension $k$.
\end{definition}

Almgren's generalization of Federer's argument can then be stated in the following theorem.

\begin{theorem}[Almgren's stratification, cf. {\cite{White4}}]\label{t:strat}
For any given area minimizing current $T$ the stratum $\mathcal{S}_k (T)$ has Hausdorff dimension at most $k$
and $\mathcal{S}_0 (T)$ is a discrete set.
\end{theorem}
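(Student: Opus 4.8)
The plan is to run Federer's classical dimension-reduction machine, now in the form given by Almgren--White, with the ``building dimension'' playing the role that the dimension of the translation-invariance space played in the codimension-one argument of Section \ref{ss:Fedred}. The key geometric inputs are: (i) tangent cones exist at every point $p \notin \supp(\partial T)$ and are themselves area minimizing cones (Theorem \ref{t:cones}(iii)); (ii) a tangent cone $S_0$ to an area minimizing \emph{cone} $S$ at a point $q \neq 0$ splits off the line $\R q$, i.e. $S_0 = \a{\R q} \times Z$ with $Z$ area minimizing of one lower dimension, and more precisely $S_0$ is invariant under all translations in the spine of $S$ together with the direction $q$, so the spine of $S_0$ strictly contains that of $S$ (this is Lemma \ref{l:split} together with the monotonicity-formula characterization of the spine recalled just before the statement); (iii) upper semicontinuity of density and the $C^2$-compactness of area minimizing currents (Theorems \ref{t:cones} and \ref{t:AM-cpt}), which guarantee that the relevant quantities pass to limits under blow-up. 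The conclusion that $\mathcal{S}_m(T) \setminus \mathcal{S}_{m-1}(T) = \reg(T)$ (at least when $\Theta = $ integer and a flat tangent cone exists) is not needed for the stratification bound itself, only the structure of the spine.

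First I would record the elementary but crucial fact that the spine $V(S)$ of an area minimizing cone $S$ is exactly $\{p \in \supp(S) : \Theta(S,p) = \Theta(S,0)\}$, that it is a linear subspace, and that $\dim V(S)$ equals the building dimension; this follows from the monotonicity formula exactly as in \cite[Proof of Lemma 35.5]{Simon}. Combined with Lemma \ref{l:split} this yields the ``descent'' step: if $S$ is a cone of building dimension $k$ and $q \notin V(S)$, then any tangent cone $S_0$ to $S$ at $q$ has building dimension strictly greater than $k$. Next I would prove, by a now-standard blow-up/contradiction scheme, the quantitative statement that for each $k$ the set $\mathcal{S}_k(T)$ is $\cH^{k+\alpha}$-null for every $\alpha > 0$; this immediately gives $\dim_{\mathcal H} \mathcal{S}_k(T) \le k$, and the case $k = 0$ (together with a separate argument, below) gives discreteness of $\mathcal{S}_0(T)$.

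For the quantitative step the argument is: suppose $\cH^{k+\alpha}_\infty(\mathcal{S}_k(T)) > 0$ for some $\alpha \in (0,1)$; pick via the density theorems for $\cH^{k+\alpha}_\infty$ (cf. \cite[Theorem 3.6]{Simon}) a point $p$ at which the $(k+\alpha)$-dimensional upper density of $\cH^{k+\alpha}_\infty \res \mathcal{S}_k(T)$ is bounded below, blow up at $p$ along a suitable sequence $r_j \downarrow 0$, obtain an area minimizing cone $S$ as limit, and use upper semicontinuity of $\cH^{k+\alpha}_\infty$ under Hausdorff convergence of the supports (Theorem \ref{t:cones}(iv)) together with the fact that the stratification level cannot increase in the limit --- here one needs the semicontinuity of the building dimension along blow-up sequences, which is where upper semicontinuity of density enters --- to conclude $\cH^{k+\alpha}_\infty(\mathcal{S}_k(S)) > 0$ for the cone $S$. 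Now iterate: since $S$ is a cone of building dimension $\le k$, its stratum $\mathcal{S}_k(S)$ is contained in $V(S)$ union a lower-dimensional set, and one can reduce modulo the $k$-plane $V(S)$ and pass to a one-lower-dimensional area minimizing current. After finitely many such reductions one arrives at a cone of building dimension $0$ with a nontrivial $\cH^{\alpha}_\infty$-amount of points in $\mathcal{S}_0$; but a cone of building dimension $0$ has $\mathcal{S}_0 = \{0\}$ by the spine characterization, a contradiction.

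The main obstacle, as always in dimension reduction, is the interchange of limits: one must ensure that when $T_{p,r_j} \to S$ the stratum structure is respected, i.e. that $\mathcal{S}_k$ is ``upper semicontinuous'' along blow-ups and that the fractional measures $\cH^{k+\alpha}_\infty$ do not jump up. The semicontinuity of $\cH^\beta_\infty$ under Hausdorff convergence of compact sets is classical (cf. \cite[Appendix A]{Simon}) and is precisely the reason one works with the Hausdorff \emph{pre}-measure rather than $\cH^\beta$ itself. The semicontinuity of the building dimension requires care: if $p_j \in \mathcal{S}_k(T_j)$ and $p_j \to p$, $T_j \to T$, then any tangent cone to $T$ at $p$ is a limit of rescalings that can be compared, via a diagonal argument and Theorem \ref{t:AM-cpt}, to tangent cones of the $T_j$'s at $p_j$, whence its density equals $\Theta(T,p)$ on a subspace of dimension $\le k$ by upper semicontinuity of $\Theta$; this pins the building dimension $\le k$. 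Once these two semicontinuity facts are in hand, the combinatorics of the descent (strictly increasing spine dimension, bounded by $m$) close the argument in finitely many steps, and the abstract version is exactly \cite{White4}, to which I would ultimately refer for the bookkeeping.
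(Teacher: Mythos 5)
Your overall framework --- Federer--Almgren dimension reduction with $\cH^{k+\alpha}_\infty$, the spine characterization, and the splitting-off step from Lemma~\ref{l:split} --- is the right one, and you correctly flag the ``semicontinuity of the stratification'' as the delicate point. But the fix you propose for it (diagonal extraction plus upper semicontinuity of $\Theta$) does not close the gap, and the naive semicontinuity you are after is simply false. Take area-minimizing cones $C_j=\a{\pi_1^j}+\a{\pi_2^j}$ in $\R^4$ with $\pi_1^j,\pi_2^j$ a pair of distinct complex lines through $0$, so that $\pi_1^j\cap\pi_2^j=\{0\}$, and let $\pi_1^j,\pi_2^j\to\pi$. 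For every $j$ the spine is $V(C_j)=\{0\}$, so $0\in\mathcal{S}_0(C_j)$; yet $C_j\to 2\a{\pi}$, whose spine is all of $\pi$, so $0\in\mathcal{S}_2$ for the limit. Thus $p_j\in\mathcal{S}_k(T_j)$, $p_j\to p$, $T_j\to T$ does \emph{not} force $p\in\bigcup_{l\le k}\mathcal{S}_l(T)$. Upper semicontinuity of $\Theta$ only gives $\limsup_j\Theta(T_j,p_j)\le\Theta(T,p)$, which is the wrong direction: it cannot prevent a tangent cone at the limit point from being \emph{more} symmetric than those of the approximating currents. This is exactly the disappearance-of-singularities phenomenon of Section~\ref{s:branching} that makes Proposition~\ref{p:persiste} unavailable in higher codimension, and your paragraph silently assumes a version of it.

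The repair --- which is the actual content of \cite{White4}, not mere bookkeeping --- is to replace $\mathcal{S}_k$ by a countable union of quantitative strata $\mathcal{S}_{k,\eta,r}(T)$, consisting of those $p$ for which $T_{p,\rho}$ stays $\eta$-far, in a suitable flat metric on $\bB_1$, from every area-minimizing cone with spine dimension $\ge k+1$, for all $\rho\in(0,r)$. Each $\mathcal{S}_{k,\eta,r}$ is closed and genuinely behaves upper-semicontinuously under the convergence of Theorem~\ref{t:AM-cpt}, because the defining condition is imposed at definite scales rather than asymptotically; the dimension-reduction scheme is then run on each quantitative stratum, and the countable union preserves the Hausdorff-dimension bound. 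Separately, your promised ``separate argument, below'' for discreteness of $\mathcal{S}_0(T)$ never appears: $\cH^\alpha$-nullity alone does not give discreteness. Once the quantitative semicontinuity is in place, discreteness does follow by rescaling at rate $|p_j-p|$ around a would-be accumulation point $p$ and using $\mathcal{S}_0(C)\subset\{0\}$ for cones $C$ (immediate from Lemma~\ref{l:split}), but as written your proposal omits this step.
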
 

Observe that the discussion of Section \ref{ss:Fedred} proves that:
\begin{itemize}
\item[(F)] Given any area minimizing cone $S$ of codimension $\bar{n}=1$, either such cone is a multiple of an $m$-dimensional plane, or its building dimension is at most $m-7$.
\end{itemize}
As a corollary we conclude that for $\bar{n}=1$ the strata 
\[
\mathcal{S}_{m-1} (T), \mathcal{S}_{m-2} (T),\quad \ldots\quad  , \mathcal{S}_{m-6} (T)
\] 
are all empty. Next, at any point $p$ in the top stratum $\mathcal{S}_m (T)$ there is a flat tangent cone and thus, by Corollary \ref{c:DG}, we actually know that $\mathcal{S}_m (T) = \reg (T)$ (we stress again that this holds only under the assumption that $\bar{n}=1$: Example \ref{e:vbad} shows its failure as soon as $\bar{n}=2$ and $m=2$). We therefore conclude that $\sing (T) = \mathcal{S}_0 (T) \cup \ldots \cup \mathcal{S}_{m-7} (T)$ and thus Theorem \ref{t:cod1}(ii)\&(iii) is a corollary of Theorem \ref{t:strat}.

\medskip

Unfortunately from Section \ref{s:branching} we know that the identity $\reg (T) = \mathcal{S}_m (T)$ does not hold anymore when the codimension $\bar{n} > 1$. On the other hand we surely have $\reg (T) \subset \mathcal{S}_m (T)$. We could call ``branch points'' for $T$ those points $p\in \mathcal{S}_m (T)\setminus \reg (T)$. The major concern in the rest of the note will be to estimate the Hausdorff dimension of $\mathcal{S}_m (T)\setminus \reg (T)$. A simple consequence of Theorem \ref{t:strat} is that, in order to prove Theorem \ref{t:cod>1}(ii), the ``only'' concern is truly to bound the Hausdorff dimension of the set of branch points by $m-2$, because of the following lemma.

\begin{lemma}[cf. {\cite[Theorem 35.3]{Simon}}]\label{l:strat}
The stratum $\mathcal{S}_{m-1} (T)$ is empty in any codimension $\bar{n}$. 
\end{lemma}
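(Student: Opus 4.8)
The plan is to reduce the statement to a classification of one-dimensional area minimizing cones and then to run a dimension-count contradiction. First I would argue by contradiction: suppose $p\in\mathcal{S}_{m-1}(T)$, so that there is a tangent cone $S$ to $T$ at $p$ of building dimension exactly $m-1$. Letting $V$ denote its spine (a vector space of dimension $m-1$), the very definition of the spine gives a splitting $S = S'\times\a{V}$ in which $S'$ is a one-dimensional area minimizing cone living in the orthogonal complement of $V$; and since tangent cones have vanishing boundary (Definition \ref{d:tcones}) and $\partial\a{V}=0$, one also gets $\partial S'=0$.

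The core of the argument would be the claim that \emph{every one-dimensional area minimizing cone $S'$ with $\partial S'=0$ equals $Q\a{L}$ for some line $L$ through the origin and some positive integer $Q$}. I would prove this as follows. Being $1$-rectifiable and dilation invariant, $\supp(S')$ is a union of rays issuing from the origin, on each of which the orienting vector and the (integer) density of $S'$ are constant; hence $S' = \sum_i k_i\a{R_i}$ for finitely many distinct outward rays $R_i$ in directions $v_i$ on the unit sphere and integers $k_i\neq 0$, the sum being finite since monotonicity gives $\mass(S'\res\bB_1)=\sum_i|k_i|=2\,\Theta(S',0)<\infty$, and with $\sum_i k_i=0$ (this is $\partial S'=0$ read off at the origin). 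Writing $M:=\sum_{k_i>0}k_i=\sum_{k_i<0}(-k_i)$, one has $\mass(S'\res\bB_1)=2M$ and $\partial(S'\res\bB_1)=\sum_i k_i\a{v_i}$. If the $v_i$ were not all contained in a single line, I would exhibit a strictly cheaper integral competitor supported in $\overline{\bB_1}$ with the same boundary: namely, an integer-weighted collection of chords joining the $v_i$ that realizes a rearrangement (``corner-cutting'', i.e.\ essentially a $1$-Wasserstein transport) of the positive atoms to the negative atoms in which at least one pair $(v,w)$ of a positive atom $v$ and a negative atom $w$ with $w\neq -v$ occurs, so that the total length drops strictly below $2M$ because $|v-w|<2$ for unit vectors that are not antipodal; when $S'$ is not a multiple of a line such a pairing always exists and strictly decreases the cost. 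Gluing this competitor to $S'$ outside $\bB_1$ would contradict area minimality. Hence all $v_i$ lie on a line $L$, the inward and outward multiplicities coincide (again by $\sum_i k_i=0$), and $S'=Q\a{L}$ with $Q\geq 1$.

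Granting the claim, $S = Q\a{L}\times\a{V} = Q\a{L\oplus V}$ is an integer multiple of the $m$-plane $L\oplus V$; its spine is then all of $L\oplus V$, so its building dimension is $m$, contradicting the assumption that it equals $m-1$. Therefore no point of $\supp(T)$ can lie in $\mathcal{S}_{m-1}(T)$, i.e.\ $\mathcal{S}_{m-1}(T)=\emptyset$, in every codimension. The only genuinely non-formal step is the one-dimensional classification, and the delicate point there is the construction of the competitor: one must check that the cheaper rearrangement can be realized by an \emph{integral} current, that it is supported in $\overline{\bB_1}$ (chords of the sphere are), and that its boundary matches $\partial(S'\res\bB_1)$ exactly — all elementary, but worth spelling out, and involving a short case distinction according to whether the negative atoms are the antipodal image of the positive ones. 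Note also that $\bar n$ enters only through the dimension of the sphere on which the $v_i$ live, which is why the statement is insensitive to the codimension.
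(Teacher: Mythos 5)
Your proposal is correct and follows the same reduction as the paper: passing to a tangent cone of building dimension $m-1$, splitting off the $(m-1)$-dimensional spine, and invoking the classification of one-dimensional area-minimizing cones as integer multiples of a line. The paper merely asserts that this last classification "has a rather trivial proof," and the chord-cutting competitor you construct (with the case distinction on whether the negative atoms are exactly the antipodes of the positive ones) is a correct and complete way to supply it.
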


The proof is very elementary: given the definition of the strata we just need to show that the building dimension of an area-minimizing $m$-dimensional cone can never be $m-1$: in turn this is equivalent to show that $1$-dimensional area-minimizing cones are never singular, namely they are always single straight lines counted with integer multiplicity. The latter statement has a rather trivial proof. 

\medskip

The proof of Theorem \ref{t:strat} is relatively elementary and ``soft''. In spite of this the idea is powerful and can be applied to several different problems in geometric analysis; for instance, we refer the reader to Simon's work on the singularities of harmonic maps, \cite{Simon6}, to White's far-reaching generalization of Theorem \ref{t:strat} and its applications to the mean-curvature flow, \cite{White4}, and to recent results about Riemannian manifolds with one-sided curvature bounds, see for instance \cite{CN1}. Recently, in a series of works (cf. \cite{CN2, CHN, CNV}), the method of Almgren has been extended to deal with the Minkowski content, see also \cite{FMS} for an abstract general version of this.

We finally mention that the cones with building dimension $m-2$ can be actually further characterized: it is not difficult to see that such cones are necessarily unions of multiples of $m$-dimensional planes. The spines of such cones are $(m-2)$-dimensional subspaces. Due to the remarkable work of White, \cite{White2}, when $m=2$ there is one such unique tangent cone at every point $p\in \mathcal{S}_0 (T)$. However, for $m\geq 3$ the same uniqueness result is not yet proved and in fact it is not even known whether at points $p\in \mathcal{S}_m (T)\setminus \reg (T)$ the flat tangent cone is the unique one!

\section{Multiple valued functions and the Dirichlet energy}\label{s:multi_1}

As already noticed, in codimension $1$ the regularity in a neighborhood of a point with integer multiplicity $Q$
where at least one tangent cone is flat can be reduced to the case of multiplicity $Q=1$, whereas the discussions of Section \ref{s:branching}
show that this reduction is impossible in codimension
larger than $1$. Indeed, in the Examples \ref{e:vbad} and \ref{e:vvbad} even the starting point of De Giorgi's strategy as described in Section \ref{ss:DG} fails
dramatically: no matter how small is the neighborhood $U$ of the origin that we choose, it is simply not possible to approximate efficiently the corresponding current $T$ in $U$ with the graph of a (single valued) function. However, in each of these examples the current turns out to be a ``multivalued'' graph, where the number of values is in fact determined by the multiplicity $Q=\Theta (T, 0)$.
This discussion motivates the starting idea of Almgren's monograph:
in order to go beyond an Allard's type statement (namely regularity in a dense relatively open subset of $\supp (T)\setminus \supp (\partial T)$) we need to develop an efficient theory for  ``multiple valued functions'' minimizing a suitable generalization of the Dirichlet energy, where we can (and we will) consider the multiplicity to be a constant preassigned positive integer $Q$.

\subsection{The metric space of unordered $Q$-tuples}
The obvious model case to keep
in mind is the following. Given two integers $k, Q$ with ${\rm MCD}
(k, Q)=1$, look at the set valued map which assigns to each point $z\in \C$
the set $M (z):=\{w^k: w^Q=z\}\subset \C$. Obviously for each
$z$ we can choose some arbitrary ordering $\{u_1 (z), \ldots, u_Q (z)\}$ of the elements of the set $M(z)$.
However, it is not possible to do it in such a way
that the resulting ``selection maps'' $z\mapsto u^i (z)$ are continuous: even at the local level, this is impossible in
every neighborhood of the origin.

Our example motivates the following definition. Given an integer $Q$ we 
define a $Q$-valued map from a set $E\subset \R^{m}$ into $\R^{n}$
as a function which to each point $x\in E$ associates an unordered
$Q$-tuple of vectors in $\R^{n}$. Following Almgren, we consider the group
$\Pe_Q$ of permutations of $Q$ elements and we let $\Iqs$ be
the set $(\R^{n})^Q$ modulo the equivalence relation
$$
(v_1, \ldots, v_Q) \equiv (v_{\pi (1)}, \ldots, v_{\pi (Q)})
\qquad \forall \pi\in \Pe\, .
$$
Hence a multiple valued map is simply a map taking values in $\Iqs$.
There is a fairly efficient formulation
of this definition which will play a pivotal role in our discussion, because the set $\Iqs$ can be naturally identified with a subset of the set
of measures (cf. \cite{Alm} and \cite[Definition 0.1]{DS1}).

\begin{definition}[Unordered $Q$-tuples]\label{d:IQ}  
Denote by $\a{P_i}$ the Dirac mass in $P_i\in \R^{n}$. Then,
\begin{equation*}
\Iqs :=\left\{\sum_{i=1}^Q\a{P_i}\,:\,P_i\in\R^{n}\;\textrm{for every  }i=1,\ldots,Q\right\}\, .
\end{equation*}
\end{definition}

Observe that with this definition each element of $\Iqs$ is in fact a $0$-dimensional integral current.
This set has also a natural metric structure; cf. \cite{Alm} and
\cite[Definition 0.2]{DS1} (the experts will
recognize the well-known Wasserstein $2$-distance, cf. \cite{villani}).

\begin{definition}\label{d:FeG} For every
$T_1, T_2\in\Iqs$, with $T_1=\sum_i\a{P_i}$ 
and $T_2=\sum_i\a{S_i}$, we set
\begin{equation}\label{e:defG}
\cG(T_1,T_2)\;:=\;\min_{\sigma\in\Pe_Q}\sqrt{\sum_i\abs{P_i-S_{\sigma(i)}}^2}\, .
\end{equation}
\end{definition}

\begin{remark}\label{r:modulo}
Since we will often need to compute $\cG (T, Q\a{0})$ we introduce the special notation $|T|$ for the latter quantity. Observe, however, that $\Iqs$ is 
{\em not} a linear space except for the special case $Q=1$: the map $T\to |T|$ {is not a norm}. 
\end{remark}

\subsection{$Q$-valued maps} Using the metric structure on $\Iqs$ one defines obviously
measurable, Lipschitz and H\"older maps from subsets of $\R^{m}$
into $\Iqs$. One important point to be made is about the existence of ``selections''. A selection for a $Q$-valued function $u$ is given by $Q$ classical single valued functions $u_1, \ldots , u_Q$ such that $u (x) = \sum_{i=1}^Q \a{u_i (x)}$, cf. \cite[Definition 1.1]{DS1}. If the $u_i$ are measurable, continuous, Lipschitz, etc. the selection will be called measurable, continuous, Lipschitz, etc. It is rather easy to show that a measurable selection exists for any measurable $u$, cf. \cite[Proposition 0.4]{DS1}. Incidentally, this will be used repeatedly as we write
\[
\sum_i \a{u_i}
\]
for any given measurable $Q$-valued map $u$, tacitly assuming to have chosen some measurable selection.

However continuous maps (resp. Sobolev, Lipschitz) do not possess in general selections which are continuous (resp. Sobolev, Lipschitz): the primary examples are the maps 
stemming from holomorphic subvarieties already discussed at length. If, however, they do, the corresponding selection will be called {\em regular}. Only maps defined on $1$-dimensional intervals are a notable exception, since they always have regular selections.

\subsection{The generalized Dirichlet energy: geometric definitions} If we want to approximate area minimizing
currents with multiple valued functions and ``linearize'' the area
functional in the spirit
of De Giorgi, we need to define a suitable concept of 
Dirichlet energy. We will now show how this can be done naturally, proposing three different approaches. 

Consider the model case of $Q=2$ and assume $u:\Omega \to 
\Is{2}$ is a Lipschitz map. If, at some point $x$, $u(x) = \a{P_1} + \a{P_2}$ is
``genuinely $2$-valued'', i.e. 
$P_1\neq P_2$, then there exist obviously a ball $B_r (x)\subset \Omega$
and a regular (Lipschitz) selection, namely Lipschitz classical maps $u_1, u_2: B_r (x) \to \R^{n}$ such that
$u (y) = \a{u_1 (y)} + \a{u_2 (y)}$ for every $y\in B_r (x)$. On the other hand on the closed set where the values of $u$ are ``collapsed'' we can find a single Lipschitz map $v$ such that $u = 2 \a{v}$. It is easy to generalize this to the $Q$-valued case and to maps defined on a manifold $\Sigma$:

\begin{lemma}[Decomposition, {cf. \cite[Lemma 1.1]{DS2}}]\label{l:chop}
Let $M \subset \Sigma$ be measurable and $F: M\to \Iqs$ Lipschitz. Then 
there are a countable partition of $M$ in bounded measurable subsets $M_i$ ($i\in \N$)
and Lipschitz functions $f^j_i: M_i\to \R^{n}$ ($j\in \{1, \ldots, Q\}$) such that
\begin{itemize}
\item[(a)] $F|_{M_i} = \sum_{j=1}^Q \a{f^j_i}$ for every $i\in \N$ and $\Lip (f^j_i)\leq
\Lip (F)$ $\forall i,j$;
\item[(b)] $\forall i\in \N$ and $j, j' \in \{1, \ldots ,Q\}$,
either $f_i^j \equiv f_i^{j'}$ or
$f_i^j(x) \neq f_i^{j'}(x)$ $\forall x \in M_i$. 
\end{itemize}
\end{lemma}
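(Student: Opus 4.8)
The plan is to build the partition and the Lipschitz selections by a countable exhaustion argument, localizing around points where the "multiplicity pattern" of $F$ is locally constant. First I would reduce to the case where $M$ is bounded: cover $\Sigma$ (or rather $M$) by countably many bounded pieces, work on each separately, and at the end relabel the resulting countable families into a single countable family. So assume $M$ is bounded.

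The core step is a local statement: for each $x_0\in M$ there is a relative neighborhood $U$ of $x_0$ in $M$ on which $F$ admits a regular selection $F|_U=\sum_{j=1}^Q\a{f^j}$ with $\Lip(f^j)\le\Lip(F)$ and with the "either identically equal or never equal" dichotomy (b) holding on $U$. To see this, write $F(x_0)=\sum_{j=1}^Q\a{P_j}$ and let $\rho>0$ be smaller than one third of the minimal distance between distinct points among the $P_j$. Grouping the $P_j$ into clusters $C_1,\dots,C_L$ that are at pairwise distance $>3\rho$, each cluster $C_\ell$ consists of $Q_\ell$ coincident points at a common location $\bar P_\ell$ (with $\sum_\ell Q_\ell=Q$). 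Since $F$ is Lipschitz, for $x$ in a small enough ball $B_r(x_0)\cap M$ the mass $\cG$ moves by at most $\Lip(F)r<\rho$, so the $Q$-tuple $F(x)$ splits canonically as $F(x)=\sum_\ell F_\ell(x)$, where $F_\ell(x)$ is the sub-$Q_\ell$-tuple of points of $F(x)$ within distance $\rho$ of $\bar P_\ell$; moreover $F_\ell$ is $Q_\ell$-valued, Lipschitz with the same constant, and takes values near the single point $\bar P_\ell$. Iterating the splitting argument — each $F_\ell$ may itself split further as $x$ varies, but on an interval where it does not split we may be forced to treat it as genuinely multi-valued — one needs to pass to a selection. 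Here the cleanest route is the measurable-selection result \cite[Proposition 0.4]{DS1} already quoted in the excerpt: it produces measurable $f^1,\dots,f^Q$ with $F=\sum\a{f^j}$; what must be upgraded is their regularity. On a piece where, by the clustering, each group is a genuinely $Q_\ell$-valued map whose values never collide among different groups, one shows by an induction on $Q$ that a Lipschitz selection exists: the base case $Q=1$ is trivial, and in the inductive step one uses that on the (closed) coincidence set the map is $Q\a{v}$ for a single Lipschitz $v$ by the metric structure, while on its complement one locally peels off one sheet.

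Once the local statement is in hand, I would run a covering/stopping argument: by Lindel\"of (or since $M$ is bounded, a countable subcover of the cover by such neighborhoods $U$), extract a countable family $\{U_i\}$ covering $M$ on each of which a good regular selection with the dichotomy exists; then disjointify, $M_1:=U_1\cap M$, $M_i:=(U_i\cap M)\setminus\bigcup_{k<i}U_k$, which are measurable, bounded, and partition $M$. On each $M_i$ restrict the selection $f^j$ produced on the ambient $U_i$ to $M_i$; the Lipschitz bound $\Lip(f^j_i)\le\Lip(F)$ and property (b) are inherited from $U_i$, since both "$f^j\equiv f^{j'}$ on $U_i$" and "$f^j(x)\ne f^{j'}(x)$ for all $x\in U_i$" pass to any subset.

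\textbf{Main obstacle.} The delicate point is producing a \emph{Lipschitz} (not merely measurable) selection on each local piece together with the strict dichotomy (b). Measurable selections come for free from \cite[Proposition 0.4]{DS1}, but regularity is exactly the phenomenon that fails globally for $Q$-valued maps (the holomorphic-curve examples in the excerpt), so the entire content is in exploiting that, \emph{after} the clustering has separated the sheets into groups whose values stay near distinct base points and never collide between groups, each group is effectively a $Q_\ell$-valued map concentrated near a single point, and one can induct on $Q$ to peel off a Lipschitz sheet on the complement of the (closed, hence well-behaved) coincidence set while handling the coincidence set via a single Lipschitz map $v$ with multiplicity. Making the induction and the disjointification bookkeeping precise — and checking that the local neighborhoods can be chosen so that (b) holds on all of $U_i$, not just near $x_0$ — is where the real work lies; everything else is routine.
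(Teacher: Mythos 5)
Your overall strategy---cluster the values of $F(x_0)$, split $F$ into lower-$Q$ pieces, induct on $Q$---is the right spirit and is close to what \cite[Lemma 1.1]{DS2} actually does. But the local statement you build the covering argument on is false, and the failure is precisely at the point you flag as ``the real work'': it cannot be repaired by shrinking the neighborhoods. You claim that every $x_0\in M$ has a relative neighborhood $U$ on which a Lipschitz selection $F|_U=\sum_j\a{f^j}$ with the dichotomy (b) exists. This already fails without (b): a Lipschitz $Q$-valued map need not admit any \emph{continuous} (let alone Lipschitz) selection on any neighborhood of a point where its values collapse. Take $Q=2$, $M=B_1\subset\C$, $n=2$, and
\[
F(x)\;=\;\a{w(x)}+\a{-w(x)},\qquad w(re^{i\theta}):=r\,e^{i\theta/2}.
\]
Here $w$ is defined only up to sign, but the unordered pair $\{w,-w\}$ is single-valued, and $F$ is Lipschitz on $B_1$ with $F(0)=2\a{0}$ (the map $re^{i\theta}\mapsto re^{i\theta/2}$ contracts the angular part of the metric). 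Yet on any circle $\{|x|=r\}$, as $\theta$ runs from $0$ to $2\pi$ the two branches $\pm re^{i\theta/2}$ each travel from $\pm r$ to $\mp r$, never vanish, and never cross, so no continuous $f^1$ with $f^1(\theta)\in\{re^{i\theta/2},-re^{i\theta/2}\}$ and $f^1(0)=f^1(2\pi)$ can exist. Hence there is no continuous selection on any neighborhood of $0$, and your Lindel\"of-and-disjointify step cannot even begin.

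The clustering idea must therefore drive a genuinely \emph{measurable} decomposition adapted to the separation scale of $F$, not a cover by relative neighborhoods. The proof in \cite{DS2} proceeds by induction on $Q$ using a Lipschitz quantity such as $d(x):=\diam\big(\supp F(x)\big)$. The closed set $\{d=0\}$, where $F=Q\a{\etaa\circ F}$ with $\etaa\circ F$ Lipschitz (of constant $\Lip(F)/\sqrt{Q}$), is its own piece of the partition --- this is where the collapse point $0$ of the example above is sent. On $\{d>0\}$ one partitions into dyadic bands $\{2^{-k-1}<d\le 2^{-k}\}$ and then chops each band into countably many measurable subsets of diameter $\lesssim 2^{-k}/\Lip(F)$. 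On such a piece the cluster structure of $\supp F$ is stable: the $\cG$-oscillation of $F$ over the piece is much smaller than the mutual separation of the clusters (which is at least $2^{-k-1}/(Q-1)$ by pigeonhole), so the optimal matching in $\cG$ is cluster-preserving, $F$ splits canonically into at least two Lipschitz pieces of strictly smaller cardinality, and the inductive hypothesis applies to each. None of these pieces is a neighborhood of a point of $\{d=0\}$, which is exactly how the obstruction above is sidestepped; and on each final piece the sheets are either pairwise separated or literally identical, which gives (b), while the cluster-preserving matching gives $\Lip(f^j_i)\le\Lip(F)$.
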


The Dirichlet energy can be defined for Lipschitz maps $F$ as above by 
\begin{equation}\label{e:Dir_1}
\D (F, \Sigma) := \sum_{i,j} \int_{M_i} |Df^j_i|^2\, .
\end{equation}
$W^{1,2}$ maps and their Dirichlet energy can then be defined by relaxation: assuming that $\Omega$ is open
$W^{1,2} (\Omega, \Iqs)$ consists of those
measurable maps $v: \Omega \to \Iqs$ for which there is a sequence
of Lipschitz maps $u_k$ converging to $v$ a.e. and
enjoying a uniform bound $\D (\Omega, u_k) \leq C$.
The Dirichlet energy $\D (\Omega, v)$ is the infimum
of all constants $C$ for which there is a sequence with the
properties above.

\medskip

Another possible definition of the Dirichlet energy follows more closely Almgren's original
idea: for $Q$-valued maps we can introduce a notion of differentiability in the following way

\begin{definition}[$Q$-valued differential, {cf. \cite[Definition 1.9]{DS1}}]\label{d:diff}
Let $f:\Om\to\Iq$ and $x_0\in\Om$. We say that $f$ is differentiable
at $x_0$ if there exist $Q$ matrices $L_i$ satisfying:
\begin{itemize}
\item[(i)]$\cG(f(x),T_{x_0} f)=o(\abs{x-x_0})$, where
\begin{equation}\label{e:taylor1st}
T_{x_0} f(x):=\sum_i\a{L_i\cdot(x-x_0)+f_i(x_0)};
\end{equation}
\item[(ii)] $L_i=L_j$ if $f_i(x_0)=f_j(x_0)$.
\end{itemize}
The $Q$-valued map $T_{x_0} f$
will be called the {\em first-order approximation} of $f$ at $x_0$.
The point $\sum_i \a{L_i} \in \Iq (\R^{n\times m})$
will be called the differential of $f$ at $x_0$ and is
denoted by $Df (x_0)$. 
\end{definition}

A Rademacher's type theorem shows that Lipschitz $Q$-valued maps are differentiable almost everywhere and that the Dirichlet
energy defined above corresponds to the integral of $|Df| = \cG (Df, Q\a{0})$ (our notation is consistent, cf. Remark \ref{r:modulo}). 
The proof is in fact a straightforward corollary of
Lemma \ref{l:chop} and elementary measure theory.

\begin{proposition}[$Q$-valued Rademacher, {cf. \cite[Theorem 1.13]{DS1} and \cite[Lemma 1.1]{DS2}}]
Let $f:\Om\to\Iq$ be a Lipschitz function. Then, $f$ is differentiable
almost everywhere in $\Om$ and 
\[
\D (f, \Omega) = \int_\Omega |Df|^2\, ,
\]
where the left hand side is understood in the sense of \eqref{e:Dir_1}. In particular the expression in \eqref{e:Dir_1}
is independent of the decomposition given by Lemma \ref{l:chop}.
\end{proposition}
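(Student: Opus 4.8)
The plan is to derive the $Q$-valued statement from the classical Rademacher theorem by means of the Decomposition Lemma \ref{l:chop}, the only real work being to pass from differentiability of the single-valued sheets along each piece $M_i$ to differentiability of $f$ on all of $\Omega$ in the sense of Definition \ref{d:diff}.

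First I would apply Lemma \ref{l:chop} to obtain a countable measurable partition $\Omega=\bigcup_i M_i$ together with Lipschitz sheets $f_i^j\colon M_i\to\R^{n}$, $j=1,\dots,Q$, with $f|_{M_i}=\sum_j\a{f_i^j}$, $\Lip(f_i^j)\le\Lip(f)$, and the dichotomy that on each $M_i$ either $f_i^j\equiv f_i^{j'}$ or $f_i^j(x)\ne f_i^{j'}(x)$ for all $x\in M_i$. After extending each $f_i^j$ to a Lipschitz map on $\R^m$ (e.g.\ by Kirszbraun) and invoking classical Rademacher, I would let $G\subset\Omega$ be the set of points $x_0$ that are Lebesgue density points of the (unique) $M_i$ to which they belong and at which all the $f_i^j$ are differentiable; then $|\Omega\setminus G|=0$ by the density theorem, the disjointness of the $M_i$, and Rademacher. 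For $x_0\in G\cap M_i$ the natural candidate is $Df(x_0):=\sum_j\a{Df_i^j(x_0)}\in\Iq(\R^{n\times m})$, which is Borel measurable and has $|Df_i^j(x_0)|\le\Lip(f)$; condition (ii) of Definition \ref{d:diff} is then immediate, since if $f_i^j(x_0)=f_i^{j'}(x_0)$ the dichotomy forces $f_i^j\equiv f_i^{j'}$ on $M_i$, hence $Df_i^j(x_0)=Df_i^{j'}(x_0)$.

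The heart of the argument is checking condition (i), namely $\cG(f(x),T_{x_0}f(x))=o(|x-x_0|)$ as $x\to x_0$ in $\Omega$ (not merely in $M_i$), where $T_{x_0}f$ is given by \eqref{e:taylor1st}. For $x\in M_i$ this is immediate from $\cG(f(x),T_{x_0}f(x))^2\le\sum_j|f_i^j(x)-Df_i^j(x_0)(x-x_0)-f_i^j(x_0)|^2$. For $x\notin M_i$ I would first use that a density point $x_0$ of $M_i$ satisfies $\dist(x,M_i)=o(|x-x_0|)$ as $x\to x_0$ — otherwise a ball $B_{\delta|x-x_0|}(x)\subset B_{(1+\delta)|x-x_0|}(x_0)$ would avoid $M_i$, contradicting density — and then pick $y=y(x)\in M_i$ with $|x-y|\le 2\dist(x,M_i)=o(|x-x_0|)$, so that $|y-x_0|\le(1+o(1))|x-x_0|$, and split
\[
\cG(f(x),T_{x_0}f(x))\le\cG(f(x),f(y))+\cG(f(y),T_{x_0}f(y))+\cG(T_{x_0}f(y),T_{x_0}f(x)).
\]
The first and last terms are bounded by $C(Q)\Lip(f)\,|x-y|$ (using $\Lip(f)$ and $\max_j|Df_i^j(x_0)|\le\Lip(f)$ to bound $\Lip(T_{x_0}f)$) and hence are $o(|x-x_0|)$, while the middle term is $o(|y-x_0|)=o(|x-x_0|)$ by the case already treated. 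This yields differentiability of $f$ at every point of $G$, hence almost everywhere, with differential as above.

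Finally, Remark \ref{r:modulo} gives $|Df(x_0)|=\cG(Df(x_0),Q\a{0})=(\sum_j|Df_i^j(x_0)|^2)^{1/2}$ on $G\cap M_i$, so
\[
\int_\Omega|Df|^2=\sum_i\int_{M_i}\sum_j|Df_i^j|^2=\sum_{i,j}\int_{M_i}|Df_i^j|^2=\D(f,\Omega)
\]
by the definition \eqref{e:Dir_1}; since the left-hand side does not see the decomposition, neither does $\D(f,\Omega)$, which gives the last assertion. I expect the density-point/Lipschitz step that upgrades sheetwise differentiability on $M_i$ to $Q$-valued differentiability on $\Omega$ to be the one genuinely nontrivial point; the remaining pieces are routine once Lemma \ref{l:chop} and classical Rademacher are in hand.
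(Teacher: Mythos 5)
Your proof is correct and takes essentially the same approach as the paper, which itself remarks that the result is ``a straightforward corollary of Lemma \ref{l:chop} and elementary measure theory'': the Kirszbraun extension, classical Rademacher on the sheets, and the Lebesgue density-point argument you use to upgrade sheetwise differentiability on $M_i$ to $Q$-valued differentiability on $\Omega$ are exactly the intended ingredients. The only tacit point, worth noting but not a gap, is that the derivative of a Lipschitz extension of $f_i^j$ at a density point of $M_i$ is independent of the chosen extension (the tangent cone to a density-$1$ set is all of $\R^m$), so the resulting $Df$ and the formula $\D(f,\Omega)=\int_\Omega|Df|^2$ are unambiguous.
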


Almgren's definition of Sobolev map does not follow, however, a ``relaxation procedure'' but uses the (biLipschitz) embedding of $\Iqs$ in 
a large Euclidean space, see below.

\subsection{$W^{1,2}$ and the generalized Dirichlet energy: metric analysis definition} Although the definition above is certainly very natural and gives a good geometric
intuition for the Dirichlet energy, it turns out that it is rather complicated to work with it, in
particular if one wants to recover the usual statements of the Sobolev space theory for
classical functions.

Instead, a rather efficient way to achieve such statements is to rely on a more
abstract definition of Dirichlet energy and Sobolev functions, as proposed in \cite{DS1}. A very general
theory has been developed in the literature for Sobolev maps taking values in abstract metric spaces, following the
pioneering works  of Ambrosio \cite{ambrosio} and Reshetnyak \cite{reshetnyak,reshetnyak2}. The careful
reader will notice, however, that there is a crucial difference between
the definition of Dirichlet energy in \cite{reshetnyak} and the one
given below.

\begin{definition}[Sobolev $Q$-valued functions, cf. {\cite[Definition 0.5]{DS1}}]
\label{d:W1p}
A measurable 
$f:\Omega\to\Iq$ is in the Sobolev class
$W^{1,p}$ ($1\leq p\leq\infty$) if there exist $m$ functions
$\varphi_j\in L^p(\Omega;\R^{+})$
 such that 
\begin{itemize}
\item[(i)] $x\mapsto\cG (f(x),T)\in W^{1,p}(\Omega)$ for all $T\in \Iq$;
\item[(ii)] $\abs{\de_j\, \cG (f, T)}\leq\varphi_j$ a.e. in $\Omega$
for all $T\in \Iq$ and
for all $j\in\{1, \ldots, m\}$.
\end{itemize}
\end{definition}

It is not difficult to show the existence 
of minimal functions $\tilde\varphi_j$ fulfilling (ii),
i.e. such that, for any other $\varphi_j$ satisfying (ii),
$\tilde\varphi_j\leq\varphi_j$ a.e. (cf. \cite[Proposition 4.2]{DS1}).
Such ``minimal bounds'' will be denoted by $|\de_j f|$ and we note that
they are characterized by the following
property
(see again \cite[Proposition 4.2]{DS1}): for every countable
dense subset $\{T_i\}_{i\in\N}$ of $\Iq$ and
for every $j=1,\ldots,m$,
\begin{equation}\label{e:def|D_jf|}
\abs{\de_j f}\;=\;\sup_{i\in\N}\abs{\de_j\,\cG(f,T_i)}
\quad\textrm{almost everywhere in } \Om.
\end{equation}

We are now ready to give an abstract characterization of the Dirichlet energy.

\begin{proposition}[Cf. {\cite[Proposition 2.17]{DS1}}]\label{d:dirichlet}
If $u\in W^{1,2}$ is Lipschitz then 
\begin{equation}\label{e:identita}
|\partial_j u|^2 (x) = \sum_i |Du_i (x) \cdot e_j|^2
\end{equation}
at a.e. point $x$ of differentiability of $u$, where $\sum_i \a{Du_i (x)}$ is the $Q$-valued differential in the sense of Definition \ref{d:diff} and $|\partial_j u|$ is as in \eqref{e:def|D_jf|}.
\end{proposition}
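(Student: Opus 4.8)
The plan is to verify the identity $|\partial_j u|^2(x) = \sum_i |Du_i(x)\cdot e_j|^2$ at a point $x$ where $u$ is differentiable in the sense of Definition \ref{d:diff}. I would work locally and reduce to the clean situation produced by Lemma \ref{l:chop}: since the statement is about a.e.\ points of differentiability, it suffices to check it on each measurable piece $M_i$ of the decomposition, where $u = \sum_{j=1}^Q \a{f^j_i}$ with Lipschitz selections that are either identically equal or nowhere equal. On such a piece, at a point of differentiability the matrices $L_j = Df^j_i$ coincide exactly when $f^j_i(x) = f^{j'}_i(x)$, which is forced by condition (ii) in Definition \ref{d:diff}. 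Thus locally $u$ is honestly a sum of differentiable classical maps, and $T_x u(y) = \sum_i \a{u_i(x) + Du_i(x)\cdot(y-x)}$.

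The core computation is then to evaluate both sides. For the right-hand side, the Taylor expansion of $u$ along the direction $e_j$ reads, to first order,
\[
\cG\big(u(x+te_j),\, T\big)^2 - \cG\big(T_x u(x+te_j),\, T\big)^2 = o(t)
\]
for each fixed $T \in \Iq$, because $\cG(u(x+te_j), T_x u(x+te_j)) = o(|t|)$ by differentiability, combined with the triangle inequality for $\cG$ and the Lipschitz bound. For the left-hand side, $|\partial_j u|$ is characterized by \eqref{e:def|D_jf|} as $\sup_i |\partial_j \cG(u,T_i)|$ over a countable dense family. So I would first compute $\partial_j \cG(u, T)$ for a fixed $T$: at a point of differentiability, using the explicit matching-of-sheets structure, the function $y \mapsto \cG(u(y),T)^2$ is differentiable and one gets, after choosing the optimal permutation $\sigma$ realizing the minimum in \eqref{e:defG} (which near $x$ can be taken locally constant when the optimal coupling is unique),
\[
\partial_j\, \tfrac12 \cG(u,T)^2 = \sum_i (u_i(x) - P_{\sigma(i)}) \cdot (Du_i(x)\cdot e_j),
\]
whence $|\partial_j \cG(u,T)| = \left| \sum_i (u_i(x)-P_{\sigma(i)})\cdot(Du_i(x)\cdot e_j) \right| / \cG(u(x),T)$. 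Taking the supremum over $T$ in a dense set: the Cauchy–Schwarz inequality on the vector $\big(u_i(x)-P_{\sigma(i)}\big)_i$ against $\big(Du_i(x)\cdot e_j\big)_i$ gives the upper bound $\big(\sum_i |Du_i(x)\cdot e_j|^2\big)^{1/2}$, and equality is approached by choosing the $P_i$ so that the displacement vectors $u_i(x) - P_i$ are (nearly) parallel to $Du_i(x)\cdot e_j$ with the same proportionality constant, which one can do with $T$ arbitrarily close to $u(x)$; by density of $\{T_i\}$ the supremum in \eqref{e:def|D_jf|} still attains the bound in the limit. Squaring yields exactly $\sum_i |Du_i(x)\cdot e_j|^2$.

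I expect the main obstacle to be handling the points where sheets collide, i.e.\ making the differentiation of $y \mapsto \cG(u(y),T)^2$ rigorous when the optimal permutation $\sigma$ in \eqref{e:defG} is not locally unique. Here the argument should be: the squared distance is a minimum of finitely many smooth functions, hence has a well-defined derivative along $e_j$ at a.e.\ point; condition (ii) in Definition \ref{d:diff} (the constraint $L_i = L_j$ when the sheets agree) guarantees that the one-sided derivatives coming from competing optimal permutations actually agree, so no genuine corner forms in the direction of differentiation. A cleaner route, which I would probably follow to avoid this case analysis entirely, is to quote the first-order approximation $T_x u$ directly and observe that $\cG(u(\cdot),T)$ and $\cG(T_x u(\cdot),T)$ have the same partial derivatives at $x$ (their difference is $o$ of the increment), and then do the whole computation with the \emph{linear} object $T_x u$, where everything is explicit. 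The density reduction in \eqref{e:def|D_jf|} then finishes the proof, with \cite[Proposition 4.2]{DS1} supplying the characterization and \cite[Theorem 1.13]{DS1} supplying differentiability a.e.
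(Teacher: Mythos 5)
Your overall strategy---compute $\partial_j \cG(u,T)$ at a point of differentiability via the first-order approximation $T_xu$, apply Cauchy--Schwarz for the upper bound, and approach equality by picking $T$ close to $\sum_i\a{u_i(x)-\lambda\,Du_i(x)\cdot e_j}$ and invoking density in \eqref{e:def|D_jf|}---is the right one and essentially matches what \cite[Proposition 2.17]{DS1} does. The ``cleaner route'' you describe (replace $u$ by the affine object $T_xu$, using $\cG(u(y),T)-\cG(T_xu(y),T)=O(\cG(u(y),T_xu(y)))=o(|y-x|)$) is in fact the route to take, and with it the argument is sound.

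One sentence in your discussion of the first route is wrong, though it is not load-bearing. You assert that condition (ii) of Definition \ref{d:diff} guarantees that one-sided derivatives coming from competing optimal permutations in \eqref{e:defG} agree. That is false: condition (ii) only resolves the ambiguity when \emph{sheets of $u(x)$ collide}, whereas the minimum in $\cG(u(x),T)$ can be attained by several permutations even when the $u_i(x)$ are all distinct (e.g.\ when $T$ is symmetric with respect to the configuration $u(x)$), and then the one-sided derivatives along $e_j$ genuinely differ. What saves the argument is not condition (ii) but Rademacher: for each fixed $T_i$ in the countable dense family, $\cG(u(\cdot),T_i)$ is Lipschitz, hence classically differentiable a.e., and at such points the formula is automatically permutation-independent among optimizers; intersecting over $i\in\N$ leaves a full-measure set. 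You should also be slightly more explicit at the ``by density'' step: for $T$ near the optimizer $T^*=\sum_i\a{u_i(x)-\lambda\,Du_i(x)\cdot e_j}$ (with $\lambda$ small) the optimal matching to $u(x)$ is unique up to permuting collapsed sheets --- and \emph{there} condition (ii) does apply and makes the formula $\sigma$-independent --- so $\partial_j\cG(T_xu(\cdot),T)(x)$ is continuous in $T$ near $T^*$ and the one-sided derivatives coincide; this is what lets you pass from $T^*$ to a nearby $T_i$ in the dense set.
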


The main feature of the above proposition is that essentially all
the conclusions of the usual Sobolev space theory for single valued functions can be now reduced to
routine modifications of the usual arguments: among them we mention Sobolev and Morrey embeddings, compact embeddings,
Poincar\'e inequalities, semicontinuity results, trace properties (cf. \cite[Chapter 4]{DS1}). 

We list here some of these facts and refer to \cite{DS1} for their proof.

\begin{definition}[Trace of Sobolev $Q$-functions]
\label{d:Dirichlet problem}
Let $\Omega\subset \R^{m}$ be a Lipschitz bounded open set
and $f\in W^{1,p} (\Omega, \Iq)$. A function $g$ belonging to
$L^p (\partial \Omega , \Iq)$ is said to be the trace of $f$ at $\partial \Omega$
(and we denote it by  $f|_{\partial\Omega}$) if,
for every $T\in \Iq$, the trace of the real-valued Sobolev function
$\cG (f, T)$ coincides with $\cG (g, T)$.
\end{definition}

\begin{definition}[Weak convergence]\label{d:weak convergence}
Let $f_k, f\in W^{1,p}(\Om,\Iq)$. We say 
that $f_k$ converges weakly 
to $f$ for $k \to \infty$, 
(and we write $f_k\rightharpoonup f$) in $W^{1,p}(\Om,\Iq)$,
if
\begin{itemize}
\item[(i)] $\int\cG(f_k,f)^p\to 0$, for $k\to\infty$;
\item[(ii)] there exists a constant
$C$ such that $\int |Df_k|^p\leq C<\infty$ for every $k$.
\end{itemize}
\end{definition}

\begin{proposition}[Weak sequential closure]\label{p:trace}
Let $f\in W^{1,p}(\Om,\Iq)$. Then, there is a unique function
$g\in L^p(\de \Om,\Iq)$ such that $f\vert_{\de\Om}=g$ 
in the sense of Definition 
\ref{d:Dirichlet problem}.
Moreover, $f\vert_{\de\Om}=g$ if and only if
$\cG (f,T)\vert_{\de\Om}= \cG (g, T)|_{\partial \Omega}$ for every $T$
in the usual sense, and the set of mappings
\begin{equation}\label{e:trace g}
W^{1,2}_g(\Omega, \Iq) := 
\left\{ f\in W^{1,2} (\Omega, \Iq)\,:\,
f|_{\partial \Omega}
= g\right\}
\end{equation}
is sequentially weakly closed in $W^{1,2}$.
\end{proposition}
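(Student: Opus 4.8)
The plan is to reduce every assertion to the corresponding statement for the real-valued Sobolev functions $\cG(f,T)$, exploiting the characterization of $|\partial_j f|$ in \eqref{e:def|D_jf|} together with a countable dense subset $\{T_i\}_{i\in\N}$ of $\Iq$. First I would establish existence and uniqueness of the trace $g$. Uniqueness is immediate: if $g,g'$ both satisfy Definition \ref{d:Dirichlet problem}, then $\cG(g,T_i)=\cG(f,T_i)|_{\partial\Omega}=\cG(g',T_i)$ in $L^p(\partial\Omega)$ for every $i$, and since $\{T_i\}$ is dense this forces $\cG(g,T)=\cG(g',T)$ for all $T$, hence $g=g'$ a.e. (one recovers a $Q$-point from its distances to a dense set of $Q$-points). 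For existence, fix the dense set $\{T_i\}$; each $\cG(f,T_i)\in W^{1,p}(\Omega)$ has a classical trace $\gamma_i\in L^p(\partial\Omega)$. The family $\{\gamma_i\}$ satisfies, $\cH^{m-1}$-a.e. on $\partial\Omega$, the same compatibility relations that $\{\cG(h,T_i)\}_i$ satisfies for a point $h\in\Iq$ (e.g. the triangle inequality $|\gamma_i-\gamma_j|\le \cG(T_i,T_j)$, inherited from the interior and passed to the boundary by continuity of the trace operator on the relevant inequalities). A measurable selection argument — of exactly the type used for Lemma \ref{l:chop} and \cite[Proposition 0.4]{DS1} — then produces a measurable $g:\partial\Omega\to\Iq$ with $\cG(g,T_i)=\gamma_i$ a.e.; the bound $\int_{\partial\Omega}\cG(g,Q\a 0)^p<\infty$ comes from the trace inequality applied to $\cG(f,Q\a 0)\in W^{1,p}$. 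By density of $\{T_i\}$ this upgrades to $\cG(g,T)=\cG(f,T)|_{\partial\Omega}$ for every $T\in\Iq$, which is precisely the asserted "if and only if" characterization.

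Next I would prove that $W^{1,2}_g(\Omega,\Iq)$ is sequentially weakly closed. Let $f_k\rightharpoonup f$ in $W^{1,2}(\Omega,\Iq)$ with $f_k|_{\partial\Omega}=g$ for all $k$; I must show $f|_{\partial\Omega}=g$. Fix $T\in\Iq$. From Definition \ref{d:weak convergence}(i), $\int_\Omega \cG(f_k,f)^2\to 0$, hence by the elementary inequality $|\cG(f_k,T)-\cG(f,T)|\le \cG(f_k,f)$ we get $\cG(f_k,T)\to\cG(f,T)$ strongly in $L^2(\Omega)$. From Definition \ref{d:weak convergence}(ii) and \eqref{e:def|D_jf|}, $\sup_k\int_\Omega |\nabla \cG(f_k,T)|^2\le \sup_k\int_\Omega\sum_j|\partial_j f_k|^2\le \sup_k\int_\Omega|Df_k|^2<\infty$. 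Therefore $\cG(f_k,T)\rightharpoonup \cG(f,T)$ in $W^{1,2}(\Omega)$ (in the classical sense), and since each $\cG(f_k,T)$ has trace $\cG(g,T)$, continuity of the trace operator $W^{1,2}(\Omega)\to L^2(\partial\Omega)$ under weak convergence (the trace operator is bounded linear, hence weakly continuous) gives $\cG(f,T)|_{\partial\Omega}=\cG(g,T)$. As $T\in\Iq$ was arbitrary, $f|_{\partial\Omega}=g$, which is exactly weak sequential closedness of $W^{1,2}_g$.

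The main obstacle is the existence half of the first assertion: one cannot simply "take the trace pointwise" because $\Iq$ is not linear and there is no canonical selection of $f$ near $\partial\Omega$. The real content is verifying that the candidate boundary data — encoded by the compatible family $\{\gamma_i\}_i$ of scalar traces — actually comes from a genuine $\Iq$-valued measurable function, and that the single measurable selection works simultaneously against the whole dense set $\{T_i\}$ and then, by density, against all $T$. This is where one invokes the measurable-selection machinery of \cite{DS1}; everything else (uniqueness, the "iff" characterization, weak closedness) then follows by transferring the corresponding scalar facts through the functions $\cG(f,T)$ and using only the triangle-type inequalities for $\cG$ and the boundedness/weak-continuity of the classical trace operator.
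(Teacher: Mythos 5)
Your uniqueness argument and your argument for weak sequential closedness are both correct; for the latter, transferring to the scalar maps $\cG(f_k,T)$, using $|\nabla\cG(f_k,T)|\le|Df_k|$, and invoking weak continuity of the bounded linear trace operator $W^{1,2}(\Omega)\to L^2(\partial\Omega)$ is exactly right, and this is the part of the proposition that its name emphasizes.

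The existence of the trace, however, has a genuine gap. You collect the scalar traces $\gamma_i=\cG(f,T_i)\vert_{\partial\Omega}$, observe that they inherit a.e.\ the pointwise inequalities of the family $T\mapsto\cG(h,T)$, such as $|\gamma_i-\gamma_j|\le\cG(T_i,T_j)$, and then want a measurable selection to produce $g$. But those algebraic inequalities are not sufficient: the constant family $\gamma_i\equiv 0$ satisfies your triangle inequality yet is realized by no $h\in\Iq$ as soon as the dense set contains two points. What must be shown is that for a.e.\ $y\in\partial\Omega$ the sequence $\big(\gamma_i(y)\big)_i$ actually lies in the Kuratowski image of $\Iq$, i.e.\ that there exists $h\in\Iq$ with $\cG(h,T_i)=\gamma_i(y)$ for every $i$; this is a closedness and attainability property, not a term-by-term inequality one can transport through the trace operator. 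Nor does \cite[Proposition 0.4]{DS1} help here: it decomposes an already-constructed $\Iq$-valued measurable map into single-valued selections, it does not manufacture the map. The missing ingredient is the absolute continuity of $f$ in the normal direction. After locally flattening $\partial\Omega$ to $B'\times\,]0,1[$, Fubini and the estimate $\int_{B'}\cG(f_s,f_t)^p\le |s-t|^{p-1}\int_{B'\times[s,t]}|\partial_\nu f|^p$ show that the slices $f_t:=f(\cdot,t)$ are Cauchy in $L^p(B',\Iq)$ as $t\downarrow 0$; their limit in the complete space $L^p(B',\Iq)$ (here properness of $\Iq$ enters) is the trace, and a.e.\ convergence along a subsequence gives $\cG(g(y),T)=\gamma_T(y)$ for every $T$. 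Alternatively one pushes $f$ forward through $\xii$ of Theorem \ref{t:xi}, takes a classical trace of $\xii\circ f\in W^{1,p}(\Omega,\R^N)$, and uses closedness of $\cQ=\xii(\Iq)$ to see the trace lands a.e.\ in $\cQ$; in either route it is the convergence of interior slices, not a compatibility inequality, that does the work, and your proposal skips precisely this step.
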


\begin{proposition}[Sobolev Embeddings]\label{p:Sembeddings}
The following embeddings hold:
\begin{itemize}
\item[$(i)$] if $p<m$, then $W^{1,p}(\Om,\Iq)
\subset L^{q}(\Om,\Iq)$ for
every $q\in [1,p^*]$, and the inclusion is compact
when $q<p^*$;
\item[$(ii)$] if $p=m$, then $W^{1,p}(\Om,\Iq)
\subset L^{q}(\Om,\Iq)$, for
every $q\in [1,+\infty)$, with compact inclusion; 
\item[$(iii)$] if $p>m$, then $W^{1,p}(\Om,\Iq)
\subset C^{0,\alpha}(\Om,\Iq)$, for
$\alpha=1-\frac{m}{p}$, with compact inclusion.
\end{itemize}
\end{proposition}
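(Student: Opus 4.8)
The plan is to reduce every assertion to the classical single-valued Sobolev, Rellich--Kondrachov and Morrey theorems, applied to the family of scalar comparison functions $x\mapsto\cG(f(x),T)$. The whole point of Definition \ref{d:W1p} is that all of these lie in $W^{1,p}(\Om)$ and that their weak derivatives admit a common $L^p$ majorant $\varphi=(\varphi_j)$, independent of $T$; this is the handle that lets one transfer the scalar estimates uniformly.

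For the plain inclusions I would argue as follows. Put $h:=\cG(f,Q\a{0})=|f|\in W^{1,p}(\Om)$. The classical Sobolev (resp.\ Morrey) embedding gives $h\in L^{p^*}(\Om)$ in case $(i)$, $h\in L^q(\Om)$ for every $q<\infty$ in case $(ii)$, and $h\in C^{0,\alpha}(\Om)$ in case $(iii)$; since $\|f\|_{L^q(\Om,\Iq)}=\|h\|_{L^q(\Om)}$, this already yields $f\in L^q(\Om,\Iq)$ in the first two cases. For case $(iii)$ one also needs H\"older continuity of $f$ itself: fixing a countable dense set $\{T_i\}\subset\Iq$, the Morrey estimate gives $[\cG(f,T_i)]_{C^{0,\alpha}}\le C(m,p,\Om)\,\|\varphi\|_{L^p}$ with a constant not depending on $i$, and since $\cG(a,b)=\sup_i|\cG(a,T_i)-\cG(b,T_i)|$ (take $T_i\to a$), one concludes $\cG(f(x),f(y))\le C\|\varphi\|_{L^p}\,|x-y|^\alpha$, i.e.\ $f\in C^{0,\alpha}(\Om,\Iq)$.

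The compactness statements require an extra idea. Let $\{f_k\}$ be bounded in $W^{1,p}(\Om,\Iq)$, with gradient majorants $\varphi^{(k)}$ bounded in $L^p$. For each fixed $i$ the sequence $\{\cG(f_k,T_i)\}_k$ is bounded in $W^{1,p}(\Om)$, so by a diagonal extraction we obtain a subsequence (not relabeled) along which $\cG(f_k,T_i)\to g_i$ in $L^q$ and a.e.\ in $\Om$, simultaneously for all $i$. I then reconstruct a $\Iq$-valued limit pointwise: for a.e.\ $x$ the points $f_k(x)$ stay in a bounded subset of $\Iq$ (as $|f_k(x)|=\cG(f_k(x),Q\a{0})$ converges), and $(\Iq,\cG)$ is a proper metric space, hence $\{f_k(x)\}_k$ is precompact; any two subsequential limits $P,P'$ satisfy $\cG(P,T_i)=\cG(P',T_i)=g_i(x)$ for every $i$, whence $\cG(P,P')=\sup_i|\cG(P,T_i)-\cG(P',T_i)|=0$. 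Thus $f_k(x)$ has a unique limit $f(x)$, with $\cG(f(x),T_i)=g_i(x)$, and $\cG(f_k,f)\to0$ a.e.; $f\in L^q$ by Fatou (and $f\in W^{1,p}$ by lower semicontinuity of the $W^{1,p}$ seminorm applied to each $\cG(f_k,T_i)$). It remains to upgrade the a.e.\ convergence: in $(i)$ and $(ii)$, $\cG(f_k,f)\le|f_k|+|f|$ is bounded in $L^{p^*}$ (resp.\ in every $L^r$), hence equi-integrable, so Vitali's theorem gives $\cG(f_k,f)\to0$ in $L^q$ for every $q<p^*$ (resp.\ every $q<\infty$); in $(iii)$ the uniform H\"older bound above makes $\{f_k\}$ equibounded and equicontinuous as maps into the proper space $\Iq$, so Arzel\`a--Ascoli yields uniform convergence, and interpolating with the uniform $C^{0,\alpha}$-bound gives convergence in $C^{0,\alpha'}$ for every $\alpha'<\alpha$.

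The main obstacle is precisely the reconstruction of the limiting $\Iq$-valued map from the separate scalar limits $g_i$, together with the a.e.\ convergence $\cG(f_k,f)\to0$. This rests on two elementary but indispensable structural facts about the target: that $P\mapsto(\cG(P,T_i))_i$ is an isometric embedding of $\Iq$ into $\ell^\infty$ with closed image, and that $(\Iq,\cG)$ is proper; everything else is bookkeeping around the classical scalar theory. An alternative, shorter route, once Almgren's biLipschitz embedding $\xii\colon\Iqs\hookrightarrow\R^N$ and its Lipschitz retraction are available: $f\mapsto\xii\circ f$ identifies $W^{1,p}(\Om,\Iq)$ with the space of $W^{1,p}(\Om,\R^N)$-maps taking values in the closed set $\xii(\Iqs)$, and then all three statements follow at once from the classical vector-valued embeddings composed with the Lipschitz maps $\xii$ and $\xii^{-1}$; the argument above is the self-contained version relying only on the metric definition \ref{d:W1p}.
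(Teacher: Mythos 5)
Your proof is correct and proceeds along the same route the paper (following \cite{DS1}) adopts: every assertion is reduced to the scalar single-valued theory through the comparison functions $\cG(f,T_i)$ supplied by Definition \ref{d:W1p}, and for the compactness statements the properness of $(\Iq,\cG)$ together with the Kuratowski-type identity $\cG(P,P')=\sup_i\left|\cG(P,T_i)-\cG(P',T_i)\right|$ is used to reassemble a measurable limit from the scalar limits, with Vitali (resp.\ Arzel\`a--Ascoli) doing the final upgrade; the $\xii$-based shortcut you mention at the end is precisely the alternative (Almgren's original) route the paper also records. One small remark: what your argument actually delivers in case $(iii)$ is compactness into $C^{0,\alpha'}$ for every $\alpha'<1-\frac{m}{p}$, exactly as in the classical scalar theory, and the proposition's phrase ``with compact inclusion'' should be read accordingly.
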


\begin{proposition}[Poincar\'e inequality]\label{p:poincare'}
Let $M$ be a connected
bounded Lipschitz open set of an $m$-dimensional
Riemannian manifold and let $p<m$.
There exists a constant $C=C(p,m,n,Q,M)$ with the following
property:
for every $f\in W^{1,p}(M,\Iq)$,
there exists a point $\overline f\in \Iq$ such that
\begin{equation}\label{e:poincare'}
\left(\int_{M} \cG \big(f, \overline f\big)^{p^*}\right)^{\frac{1}{p^*}}
\leq C \left(\int_{M}|Df|^{p}\right)^{\frac{1}{p}}.
\end{equation}
\end{proposition}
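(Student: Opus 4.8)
The plan is to reduce the inequality to the classical Poincar\'e--Sobolev inequality for Euclidean-valued Sobolev functions by means of Almgren's biLipschitz embedding of $\Iqs$ into a fixed Euclidean space. Recall that there are $N=N(Q,n)\in\N$, a biLipschitz embedding $\xii\colon\Iqs\to\R^N$ and a Lipschitz retraction $\ro\colon\R^N\to\xii(\Iqs)$ (i.e.\ $\ro$ is Lipschitz and restricts to the identity on $\xii(\Iqs)$), all with constants depending only on $Q,n$; in particular $\cG(\xii^{-1}(z_1),\xii^{-1}(z_2))\le c(Q,n)\,|z_1-z_2|$ for $z_1,z_2\in\xii(\Iqs)$. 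I will also use the elementary fact (cf.\ \cite{DS1}) that post-composing a Sobolev $Q$-valued map with a Lipschitz map produces a classical Sobolev map with the natural gradient bound; thus $u:=\xii\circ f\in W^{1,p}(M,\R^N)$ with $|Du|\le\Lip(\xii)\,|Df|$ a.e.\ in $M$.

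First I would apply the classical Poincar\'e--Sobolev inequality to $u$. Covering $M$ by finitely many biLipschitz charts and using that Riemannian volumes and gradients are comparable to the Euclidean ones on each chart, the hypotheses that $M$ is connected, bounded and Lipschitz and that $p<m$ yield a constant $C_0=C_0(p,m,M)$ such that, with $v:=\mint_M u\,d\cH^m$,
\[
\left(\int_M|u-v|^{p^*}\right)^{\frac{1}{p^*}}\le C_0\left(\int_M|Du|^p\right)^{\frac1p}\, .
\]
Since in general $v\notin\xii(\Iqs)$, I would project it back and set $\overline f:=\xii^{-1}(\ro(v))\in\Iqs$, which is meaningful because $\ro(v)\in\xii(\Iqs)$. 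As $\xii(f(x))\in\xii(\Iqs)$ and $\ro$ fixes $\xii(\Iqs)$, for a.e.\ $x\in M$
\[
\cG\big(f(x),\overline f\big)\le c(Q,n)\,\big|\xii(f(x))-\ro(v)\big|=c(Q,n)\,\big|\ro(\xii(f(x)))-\ro(v)\big|\le c(Q,n)\,\Lip(\ro)\,|u(x)-v|\, .
\]
Raising to the power $p^*$, integrating, chaining with the previous display and with $|Du|\le\Lip(\xii)\,|Df|$, and taking $p^*$-th roots gives
\[
\left(\int_M\cG\big(f,\overline f\big)^{p^*}\right)^{\frac{1}{p^*}}\le C\left(\int_M|Df|^p\right)^{\frac1p}\, ,\qquad C=C(p,m,n,Q,M)\, ,
\]
which is the assertion.

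Granting the embedding $\xii$, its retraction $\ro$, and the compatibility of the intrinsic Sobolev class with composition by Lipschitz maps, the argument is entirely routine; there is no serious obstacle. The one genuinely $Q$-valued point — which I would flag as the "heart" of the proof — is the use of $\ro$: the naive candidate $\overline f=\xii^{-1}(v)$ is not even well defined, precisely because $\xii(\Iqs)$ fails to be convex and the mean $v$ of $u$ need not lie on it, and it is the bounded Lipschitz retraction onto $\xii(\Iqs)$ that repairs this while enlarging distances to points of $\xii(\Iqs)$ only by a controlled factor. A proof bypassing the embedding would instead require running the Gagliardo--Nirenberg--Sobolev argument directly on the distance functions $x\mapsto\cG(f(x),T)$, which is considerably more cumbersome and yields nothing extra.
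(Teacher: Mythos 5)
Your proof is correct and follows essentially the route taken in \cite{DS1}: embed $\Iqs$ into $\R^N$ via Almgren's biLipschitz map $\xii$, apply the classical Poincar\'e--Sobolev inequality to $\xii\circ f$, and use the Lipschitz retraction $\ro$ onto $\xii(\Iqs)$ to convert the (generically inadmissible) Euclidean average $v$ into an admissible base point $\overline f = \xii^{-1}(\ro(v))$. You have also correctly identified the retraction as the genuinely $Q$-valued ingredient, since $\xii(\Iqs)$ is not convex and hence the plain average has no meaning in the target.
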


\begin{proposition}[Campanato-Morrey]\label{p:Campanato Morrey}
Let $f\in W^{1,2}(B_1,\Iq)$, with $B_1\subset \R^m$, and $\alpha\in(0,1]$ be such that
\begin{equation*}
\int_{B_r (y)}|Df|^2
\leq A\; r^{m-2+2\alpha}\quad
\mbox{for every $y\in B_1$ and a.e. $r\in ]0,1-|y|[$.}
\end{equation*}
Then, for every $0<\delta<1$,
there is a constant $C=C(m,n,Q,\delta)$ with
\begin{equation}\label{e:Campanato Morrey}
\sup_{x,y\in
\overline{B_{\delta}}}\frac{\cG(f(x),f(y))}{\abs{x-y}^\alpha}
=: \left[f\right]_{C^{0,\alpha}(\overline{B_{\delta}})}
\leq C\,\sqrt{A}.
\end{equation}
\end{proposition}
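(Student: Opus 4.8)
The plan is to reduce the $Q$-valued estimate to the classical (scalar) Campanato--Morrey embedding by testing the distance function $\cG$ against a fixed element of $\Iq$. First I would fix an arbitrary $T\in\Iq$ and consider the real-valued function $g_T:=\cG(f(\cdot),T)$. By Definition \ref{d:W1p} and the characterization \eqref{e:def|D_jf|} of the minimal weak derivatives one has $g_T\in W^{1,2}(B_1)$ with $|\partial_j g_T|\le|\partial_j f|$ a.e., hence, by the definition of $|Df|$ (consistent with Proposition \ref{d:dirichlet}), $|Dg_T|^2=\sum_j|\partial_j g_T|^2\le\sum_j|\partial_j f|^2=|Df|^2$ a.e. Consequently the hypothesis yields $\int_{B_r(y)}|Dg_T|^2\le A\,r^{m-2+2\alpha}$ for every $y\in B_1$ and a.e. $r\in\,]0,1-|y|[$, and the classical Poincar\'e inequality on balls gives $\int_{B_r(y)}|g_T-(g_T)_{y,r}|^2\le C(m)\,A\,r^{m+2\alpha}$. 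Thus $g_T$ lies in the Campanato space $\mathcal{L}^{2,m+2\alpha}(B_1)$ with seminorm controlled by $C(m)\sqrt A$, and since $0<\alpha\le1$ the classical Campanato isomorphism theorem furnishes a representative of $g_T$ that is $\alpha$-H\"older on $\overline{B_\delta}$ with $[g_T]_{C^{0,\alpha}(\overline{B_\delta})}\le C(m,\delta)\sqrt A$. The key point — evident from this derivation — is that the bound is \emph{uniform in $T$}.

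Next I would upgrade this family of scalar estimates to a pointwise H\"older bound for $f$ itself. Pick a countable dense set $\{T_i\}_{i\in\N}\subset\Iq$, let $\bar g_i$ be the H\"older representatives of $g_{T_i}$ on $\overline{B_\delta}$, and let $E\subset\overline{B_\delta}$ be the full-measure set on which $f$ is defined and $g_{T_i}=\bar g_i$ for every $i$. For $x,y\in E$ the triangle inequality gives $|\cG(f(x),T_i)-\cG(f(y),T_i)|\le\cG(f(x),f(y))$ for every $i$, while choosing $T_i\to f(y)$ shows $\cG(f(x),f(y))=\lim_i\big(\cG(f(x),T_i)-\cG(f(y),T_i)\big)$; hence $\cG(f(x),f(y))\le\sup_i|\bar g_i(x)-\bar g_i(y)|\le C(m,\delta)\sqrt A\,|x-y|^\alpha$. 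Therefore $f|_E$ is uniformly $\alpha$-H\"older and, $\Iq$ being complete, extends to a unique continuous representative $\tilde f$ on $\overline{B_\delta}$ with the same bound, which agrees with $f$ a.e. Working with $\tilde f$, for any $x,y\in\overline{B_\delta}$ we may now legitimately take $T=\tilde f(y)\in\Iq$ in the scalar estimate and use $\cG(\tilde f(y),\tilde f(y))=0$ to conclude
\[
\cG(\tilde f(x),\tilde f(y))=g_{\tilde f(y)}(x)-g_{\tilde f(y)}(y)\le[g_{\tilde f(y)}]_{C^{0,\alpha}(\overline{B_\delta})}\,|x-y|^\alpha\le C(m,\delta)\sqrt A\,|x-y|^\alpha,
\]
which is \eqref{e:Campanato Morrey} (the stated dependence on $n$ and $Q$ is harmless, being a weaker claim).

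I expect the main obstacle to be the bookkeeping in the second step: producing a genuinely continuous representative of $f$ and justifying that the evaluation $\tilde f(y)$ may be inserted as the base point $T$, while making sure the exceptional null set does not interfere and that the scalar H\"older constant is truly independent of $T$. An essentially equivalent but more streamlined route avoids this by composing with Almgren's biLipschitz embedding $\xii\colon\Iq\to\R^{N(Q,n)}$: since $|D(\xii\circ f)|\le C(Q,n)\,|Df|$, the classical vector-valued Campanato--Morrey estimate applies componentwise to $\xii\circ f$, giving a H\"older representative valued (by closedness of $\im\xii$) in $\im\xii$; composing back with the Lipschitz inverse of $\xii$ on its image then yields the claim, transparently with $C=C(m,n,Q,\delta)$. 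Apart from this, every ingredient is either the scalar Campanato--Morrey theorem or elementary metric-space manipulation, so no further serious difficulty is anticipated.
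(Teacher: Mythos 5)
Your proposal is correct, and your primary route — testing the scalar functions $\cG(f(\cdot),T)$ against a countable dense family $\{T_i\}$, applying the classical Poincar\'e/Campanato argument to each, and passing to $f$ by the triangle inequality and density — is essentially the proof given in \cite{DS1} (this is precisely what ``routine modifications of the usual arguments'' via Definition~\ref{d:W1p} means there). Your alternative via Almgren's embedding $\xii$ also works, but it is the extrinsic ``Almgren-style'' reduction that \cite{DS1} deliberately avoids in favor of the intrinsic metric-space definition.
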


\begin{lemma}[Interpolation Lemma]\label{l:technical}
There is a constant $C=C(m,n,Q)$ with the following property.
Let $r>0$, $g\in W^{1,2}(\de B_r,\Iq)$ and 
$f\in W^{1,2}(\partial B_{r(1-\eps)},\Iq)$.
Then, there exists $h\in W^{1,2}(B_r\setminus B_{r(1-\eps)},\Iq)$ 
such that $h|_{\de B_r} = g$, $h|_{\de B_{r(1-\eps)}}=f$ and 
\begin{multline}
\D(h,B_r\setminus B_{r(1-\eps)})\leq
 C\,\eps\,r\, 
\big[\D(g,\de B_r)+\D(f,\de B_{r(1-\eps)})\big]+\\
+\frac{C}{\eps\,r}\int_{\de B_r}
\cG\left(g(x),f\left((1-\eps)\,x\right)\right)^2 dx.
\label{e:joining}
\end{multline}
\end{lemma}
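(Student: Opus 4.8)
The plan is to produce $h$ by the familiar radial interpolation between the two boundary data, along the rays joining $\partial B_{r(1-\eps)}$ to $\partial B_r$; the only twist is that, since $\Iqs$ is not a linear space (Remark \ref{r:modulo}), the linear interpolation must be performed after composing with Almgren's bi-Lipschitz embedding $\xii\colon\Iqs\to\R^N$, and one then returns to $\Iqs$ via the associated Lipschitz retraction $\ro\colon\R^N\to\xii(\Iqs)$. Recall that $\mathrm{Lip}(\xii)\le1$, while $\mathrm{Lip}(\ro)$ and $\mathrm{Lip}(\xii^{-1}|_{\xii(\Iqs)})$ depend only on $n$ and $Q$. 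Concretely, writing a point of the annulus as $y=tx$ with $x\in\partial B_r$ and $t=|y|/r\in[1-\eps,1]$, and setting $\chi(t):=\eps^{-1}(t-(1-\eps))$, I would define
\begin{equation*}
\bar h(y):=\chi(t)\,\xii(g(x))+(1-\chi(t))\,\xii\big(f((1-\eps)x)\big),\qquad h(y):=\xii^{-1}\big(\ro(\bar h(y))\big).
\end{equation*}

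First I would check the boundary conditions. For $|y|=r$ one has $t=1$ and $x=y$, hence $\bar h(y)=\xii(g(y))\in\xii(\Iqs)$, so $\ro$ leaves it fixed and $h(y)=g(y)$; for $|y|=r(1-\eps)$ one has $t=1-\eps$ and $x=y/(1-\eps)$, hence $\bar h(y)=\xii(f(y))$ and $h(y)=f(y)$. Thus $h|_{\partial B_r}=g$ and $h|_{\partial B_{r(1-\eps)}}=f$ in the sense of Definition \ref{d:Dirichlet problem}. That $h\in W^{1,2}$ follows because $\xii\circ h=\ro\circ\bar h$ is a classical $W^{1,2}(\R^N)$ map (the explicit Sobolev map $\bar h$ composed with the Lipschitz $\ro$), using the characterization of Sobolev $Q$-valued maps through the embedding (cf. \cite{DS1}).

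Next the energy estimate, which is the bulk of the verification. Composition with Lipschitz maps gives $|Dh|\le \mathrm{Lip}(\xii^{-1})\,\mathrm{Lip}(\ro)\,|D\bar h|$ a.e., so it suffices to bound $\int_{B_r\setminus B_{r(1-\eps)}}|D\bar h|^2$. I would split $D\bar h$ into its radial and spherical parts. Along each ray the map is affine in $t$, so its radial derivative is $\eps^{-1}r^{-1}\big[\xii(g(x))-\xii(f((1-\eps)x))\big]$, whence $|\partial_\rho\bar h(y)|\le \eps^{-1}r^{-1}\,\cG\big(g(x),f((1-\eps)x)\big)$. On each sphere $\partial B_{|y|}$ the map is a convex combination (weights $\chi(t),1-\chi(t)$) of rescalings of $\xii\circ g$ and $\xii\circ f$, so its tangential gradient is controlled, up to the factor $|y|^{-1}\le((1-\eps)r)^{-1}$, by $|\nabla(\xii\circ g)|+|\nabla(\xii\circ f)|$ at the corresponding points. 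Integrating over the annulus with Fubini in the ray coordinate, using $1-(1-\eps)^m\le m\eps$ to carry out the radial integration and the homogeneities $\D(\cdot,\partial B_\rho)\sim\rho^{m-2}$, $\int_{\partial B_\rho}\sim\rho^{m-1}$ to match powers of $r$, and finally $\D(\xii\circ g,\partial B_r)\le\D(g,\partial B_r)$ and $\D(\xii\circ f,\partial B_{r(1-\eps)})\le\D(f,\partial B_{r(1-\eps)})$ (since $\mathrm{Lip}(\xii)\le1$), produces exactly \eqref{e:joining}; the harmless factors $(1-\eps)^{-1}$ get absorbed into $C$ in the regime $\eps\le\tfrac12$, which is the only one needed later.

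The only genuinely non-elementary ingredient is thus not the interpolation itself — which is the standard radial construction — but the fact that it must be carried out in the non-linear target $\Iqs$: one relies on the bi-Lipschitz embedding $\xii$ together with its Lipschitz retraction $\ro$ and on the compatibility of this embedding with the $Q$-valued Sobolev/Dirichlet calculus (composition with Lipschitz maps, comparison of energies up to constants), i.e. precisely the machinery of \cite{DS1}. Everything else, namely the identification of the traces and the estimate of $\int|D\bar h|^2$, is routine.
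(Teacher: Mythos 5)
Your construction is correct and is essentially the standard one that the paper defers to in \cite{DS1}: radial interpolation performed in $\R^N$ after applying Almgren's bi-Lipschitz embedding $\xii$, followed by the Lipschitz retraction $\ro$ onto $\cQ=\xii(\Iqs)$ and then $\xii^{-1}$, with the Lipschitz constants of $\ro$ and $\xii^{-1}|_{\cQ}$ absorbed into $C(m,n,Q)$. The radial/tangential splitting of $D\bar h$ and the Fubini computation you sketch do yield exactly \eqref{e:joining} (with the implicit and harmless restriction to, say, $\eps\le \tfrac12$ so that the factors $t^{m-3}$ and $(1-\eps)^{-1}$ stay bounded, which is the only regime in which the lemma is invoked).
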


\subsection{Lipschitz approximation and approximate differentiability} An important feature of classical Sobolev maps is the existence of suitable smooth approximations. Since the space $\Iqs$ is itself rather singular and lacks any linear structure, the usual approximation results are indeed much more subtle. However a robust way to approximate Sobolev maps is to ``truncate them'' along the level sets of the Hardy-Littlewood maximal function of the modulus of their gradient. This is possible in the setting of $Q$-valued maps as well and will play a crucial role in the sequel. 

\begin{proposition}[Lipschitz approximation, {cf. \cite[Proposition 4.4]{DS1}}]\label{p:lipapprox metric}
There exists a constant $C=C(m,\Om,Q)$ with the
following property. For every $f\in W^{1,p}(\Om,\Iq)$
and every $\lambda>0$, there exists a $Q$-function
$f_\lambda$ such that ${\rm Lip}\, (f_\lambda)\leq C \,\lambda$,
\begin{equation}\label{e:approx2}
|E_\lambda| = \abs{\big\{x\in\Om\,:\,f(x)\neq
f_\lambda(x)\big\}}\leq \frac{C\||Df|\|^p_{L^p}}{\lambda^p}\, .
\end{equation}
$\Omega\setminus E_\lambda$ can be assumed to contain $\{x\in \Omega : M (|Df|) \leq \lambda\}$, where $M$ is the maximal function operator. 
\end{proposition}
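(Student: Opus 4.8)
The plan is to obtain $f_\lambda$ in two moves: first show that $f$ is \emph{already} Lipschitz, with the correct constant, on the complement of the superlevel set of the maximal function of $\abs{Df}$; then extend that restriction to all of $\Om$ by means of a Lipschitz extension theorem for $\Iq$-valued maps. For the first move, set $g:=\abs{Df}\in L^p(\Om)$ and let $Mg$ be the (restricted Hardy--Littlewood) maximal function of $g$, i.e. the supremum of the averages $\mint_{\bB_r(x)}g$ over balls $\bB_r(x)\subset\Om$. Fix a countable dense set $\{T_i\}_{i\in\N}\subset\Iq$. For each $i$ the real-valued function $x\mapsto\cG(f(x),T_i)$ lies in $W^{1,p}(\Om)$ with $\abs{\de_j\,\cG(f,T_i)}\le\abs{\de_j f}\le g$ a.e.\ (cf.\ Definition \ref{d:W1p} and \eqref{e:def|D_jf|}), so it obeys the classical maximal-function oscillation bound
\begin{equation}\label{e:osc}
\abs{\cG(f(x),T_i)-\cG(f(y),T_i)}\le C\abs{x-y}\big(Mg(x)+Mg(y)\big)
\end{equation}
for all $x,y$ outside some Lebesgue-null set $N_i\subset\Om$. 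Since $\{T_i\}$ is dense and $\cG(P,S)=\sup_i\abs{\cG(P,T_i)-\cG(S,T_i)}$ for all $P,S\in\Iq$ (the inequality $\ge$ is the triangle inequality, and equality follows letting $T\to P$ together with continuity), taking the supremum over $i$ in \eqref{e:osc} gives, for every $x,y$ outside the null set $N:=\bigcup_i N_i$,
\begin{equation}\label{e:osc2}
\cG(f(x),f(y))\le C\abs{x-y}\big(Mg(x)+Mg(y)\big)\,.
\end{equation}

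Now define $E_\lambda:=\{x\in\Om:Mg(x)>\lambda\}$, so that $\Om\setminus E_\lambda\supset\{x:M(\abs{Df})\le\lambda\}$ as required. By \eqref{e:osc2}, for all $x,y\in(\Om\setminus E_\lambda)\setminus N$ we have $\cG(f(x),f(y))\le 2C\lambda\abs{x-y}$; modifying $f$ on the null set $N\cap(\Om\setminus E_\lambda)$ (harmless, and absorbable into $E_\lambda$) we conclude that $f$ restricted to the measurable set $\Om\setminus E_\lambda$ is Lipschitz with constant $2C\lambda$. The size of $E_\lambda$ is controlled by the maximal-function theorem: for $p=1$ the weak type $(1,1)$ of $M$ gives $\abs{E_\lambda}\le C\lambda^{-1}\norm{g}{L^1}$, while for $p>1$ Chebyshev together with the $L^p$-boundedness of $M$ gives $\abs{E_\lambda}\le\lambda^{-p}\norm{Mg}{L^p}^p\le C\lambda^{-p}\norm{g}{L^p}^p$; in all cases $\abs{E_\lambda}\le C\lambda^{-p}\norm{\abs{Df}}{L^p}^p$.

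It remains to extend $f|_{\Om\setminus E_\lambda}$ to a Lipschitz $Q$-valued map $f_\lambda$ on all of $\Om$ with $\Lip(f_\lambda)\le C\lambda$. Here one invokes the Lipschitz extension theorem for $\Iq$-valued maps: using Almgren's bi-Lipschitz embedding $\xii\colon\Iq\hookrightarrow\R^N$ one turns $\xii\circ f|_{\Om\setminus E_\lambda}$ into a classical $\R^N$-valued Lipschitz map, extends it to $\Om$ componentwise (for instance by the McShane--Whitney formula), composes with a fixed Lipschitz retraction $\ro\colon\R^N\to\xii(\Iq)$, and sets $f_\lambda:=\xii^{-1}\circ\ro\circ(\text{extension})$. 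This produces the desired extension with $\Lip(f_\lambda)\le C(m,Q)\,\lambda$, and since $f_\lambda=f$ on $\Om\setminus E_\lambda$ we get $\{f\ne f_\lambda\}\subset E_\lambda$, whence \eqref{e:approx2}. The constant depends only on $m$, $Q$, and (through the restricted maximal operator) on $\Om$.

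The delicate point is the extension step: unlike for maps into a Hilbert space, a Lipschitz extension theorem for $\Iq$-valued maps is far from automatic, because $\Iq$ is neither convex nor non-positively curved; it rests on the non-trivial construction of Almgren's embedding $\xii$ and of a Lipschitz retraction onto $\xii(\Iq)$. A secondary, purely technical subtlety is the passage from the almost-everywhere bound \eqref{e:osc2} to a genuine Lipschitz estimate on $\Om\setminus E_\lambda$, which requires discarding the further null set $N$ and working with the \emph{restricted} maximal operator so that only balls contained in $\Om$ enter \eqref{e:osc}.
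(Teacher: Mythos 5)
Your proof is correct and follows essentially the same route as the paper's (cf.\ \cite[Proposition 4.4]{DS1}): you reduce to the scalar Sobolev functions $\cG(f,T_i)$ to get the maximal-function oscillation bound, conclude that $f$ has a Lipschitz representative on $\{M(|Df|)\le\lambda\}$ with constant $C\lambda$, and extend to all of $\Om$ via Almgren's embedding $\xii$ and the Lipschitz retraction $\ro$. The two points you flag as delicate --- the nontrivial Lipschitz extension theorem for $\Iq$-valued maps and the passage from the a.e.\ bound \eqref{e:osc2} to a genuine Lipschitz estimate (by choosing the Lipschitz representative of $f$ on the good set rather than ``absorbing'' $N$ into $E_\lambda$, which would break the requested containment) --- are exactly the ingredients the cited proof relies on.
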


A simple corollary of the previous proposition is that Sobolev maps are ``approximate differentiable'' in the following sense:

\begin{definition}[Approximate Differentiability]\label{d:appdiff}
A $Q$-valued function $f$ is approximately differentiable in
$x_0$ if there exists a measurable subset $\tilde{\Omega}
\subset \Omega$ containing $x_0$
such that $\tilde{\Omega}$ has
density $1$ at $x_0$ and $f|_{\tilde{\Omega}}$ is differentiable
at $x_0$. 
\end{definition}

\begin{corollary}\label{c:appdiffae}
Any $f\in W^{1,p} (\Omega ,\Iq)$ is approximately differentiable a.e.
\end{corollary}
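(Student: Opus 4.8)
The plan is to mimic the classical proof that single-valued Sobolev functions are approximately differentiable a.e., replacing the usual Rademacher theorem by its $Q$-valued counterpart (the Proposition stated right after Definition \ref{d:diff}) and handling the density-$1$ requirement through the Hardy--Littlewood maximal function.

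First I would apply Proposition \ref{p:lipapprox metric} with $\lambda = k$ for every $k\in\N$, obtaining Lipschitz maps $f_k\colon\Omega\to\Iq$ with $\Lip(f_k)\le Ck$ such that, writing $E_k:=\{x\in\Omega: f(x)\neq f_k(x)\}$, one has $|E_k|\le C\,\| |Df| \|_{L^p(\Omega)}^p\, k^{-p}$ and, crucially, $\Omega\setminus E_k\supseteq\{x: M(|Df|)(x)\le k\}$, where $M$ denotes the maximal operator. By the $Q$-valued Rademacher theorem each $f_k$ is differentiable (in the sense of Definition \ref{d:diff}) outside a Lebesgue-negligible set $N_k$; set $N_1:=\bigcup_k N_k$, still negligible.

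Next I would consider the decreasing family of superlevel sets $F_k:=\{x\in\Omega: M(|Df|)(x)>k\}$. Since $|Df|\in L^p(\Omega)$ and $\Omega$ is bounded, the weak-$(1,1)$ bound for $M$ gives $|F_k|\to 0$, so $Z:=\bigcap_k F_k=\{M(|Df|)=\infty\}$ is negligible. By the Lebesgue density theorem, for each $k$ almost every point of $\Omega\setminus F_k$ is a density point of $\Omega\setminus F_k$; collecting the exceptional sets over all $k$ into a negligible set $N_2$, I may now fix an arbitrary point $x_0\in\Omega\setminus(N_1\cup N_2\cup Z)$. Choosing $k$ with $M(|Df|)(x_0)\le k$ we have $x_0\in\tilde\Omega:=\Omega\setminus F_k$; by the choice of $x_0$ the set $\tilde\Omega$ is measurable, contains $x_0$ and has density $1$ there, while $\tilde\Omega\subseteq\{M(|Df|)\le k\}\subseteq\Omega\setminus E_k$ forces $f|_{\tilde\Omega}=f_k|_{\tilde\Omega}$. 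Since $x_0\notin N_1$, $f_k$ is differentiable at $x_0$, and the restriction of a differentiable $Q$-valued map to a subset is plainly differentiable at that point with the same first-order approximation; hence $f|_{\tilde\Omega}$ is differentiable at $x_0$, which by Definition \ref{d:appdiff} means $f$ is approximately differentiable at $x_0$. As $\Omega\setminus(N_1\cup N_2\cup Z)$ has full measure, this proves the corollary.

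The only point requiring care is the density-$1$ conclusion: the set $E_k$ on which $f$ and $f_k$ disagree is controlled only in measure, not pointwise, so it cannot serve directly as $\tilde\Omega$. The remedy is exactly the explicit inclusion $\Omega\setminus E_k\supseteq\{M(|Df|)\le k\}$ in Proposition \ref{p:lipapprox metric}: it lets one replace $E_k$ by the larger but \emph{monotone} bad set $F_k$, whose intersection is negligible and at almost every point of whose complement one can invoke the Lebesgue density theorem. Everything else is a routine transcription of the single-valued argument, relying on the $Q$-valued Sobolev and Rademacher theory already in place.
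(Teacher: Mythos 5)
Your argument is correct and follows the approach the paper intends when it calls this a ``simple corollary of the previous proposition'': take the Lipschitz truncations $f_\lambda$ of Proposition \ref{p:lipapprox metric}, exploit the inclusion $\{M(|Df|)\le\lambda\}\subset\Omega\setminus E_\lambda$ to recognize that $f$ agrees with a Lipschitz map on the (open-complement, hence measurable) sublevel set of the maximal function, apply the $Q$-valued Rademacher theorem to each $f_\lambda$, and use the Lebesgue density theorem on the nested sets $\{M(|Df|)\le k\}$ together with the negligibility of $\{M(|Df|)=\infty\}$. Your handling of the density-$1$ requirement --- replacing the merely measure-controlled bad sets $E_k$ by the monotone sublevel sets of $M(|Df|)$ --- is exactly the point that makes the argument go through, and the observation that differentiability of $f_k$ at $x_0$ passes to the restriction $f|_{\tilde\Omega}=f_k|_{\tilde\Omega}$ correctly closes the loop with Definition \ref{d:appdiff}.
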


\subsection{Chain rule formulas: $Q$-valued calculus} The latter property is very useful to extend classical computations like the chain rule
to Sobolev maps. Indeed, it is rather easy to extend such formulas to Lipschitz maps using the multivalued differentiability: Proposition \ref{p:lipapprox metric} can then be used to routinely justify the same formulas for general Sobolev maps. 

Consider a function $f:\Om \to \Iqs$.
For every $\Phi: \tilde{\Omega}\to \Omega$,
the right composition $f\circ \Phi$ defines
a $Q$-valued function on $\tilde{\Omega}$. 
On the other hand, given a map $\Psi: \Omega \times \R^{n} \to \R^{k}$, we
can consider the left composition,
$x \mapsto \sum_i \a{\Psi (x,f_i (x))}$,
which defines a $Q$-valued function denoted, with
a slight abuse of notation, by $\Psi (x,f)$. 

\begin{proposition}[Chain rules, {cf. \cite[Proposition 1.12]{DS1}}]\label{p:chain}
Let $f:\Omega\to \Iqs$ be differentiable at $x_0$.
\begin{itemize}
\item[$(i)$] Consider $\Phi: \tilde{\Omega}\to \Omega$
such that $\Phi (y_0)=x_0$ and assume that
$\Phi$ is differentiable at $y_0$. 
Then, $f\circ \Phi$ is differentiable at $y_0$ and
\begin{equation}\label{e:interna}
D (f\circ \Phi) (y_0) = \sum_i 
\a{D f_i (x_0)\cdot D\Phi (y_0)}.
\end{equation}
\item[$(ii)$] Consider $\Psi: \Omega_x\times \R^{n}_u\to \R^{k}$
such that $\Psi$ is differentiable at $(x_0, f_i (x_0))$
for every $i$. Then, $\Psi (x,f)$ is differentiable at $x_0$ and
\begin{equation}\label{e:esterna}
D\Psi (x,f)) (x_0)=\sum_i 
\a{D_u\Psi (x_0, f_i (x_0))\cdot Df_i (x_0)
+ D_x \Psi (x_0, f_i (x_0))}.
\end{equation}
\end{itemize}
\end{proposition}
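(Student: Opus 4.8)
The plan is to establish both identities directly from the pointwise definition of differentiability in Definition \ref{d:diff}: in each case I write down the natural candidate for the first-order approximation, check that it satisfies the compatibility condition (ii) of that definition (so that, after differentiation, it really is an element of $\Iq(\R^{n\times m})$), and then control the Taylor remainder with the metric $\cG$ and the triangle inequality. Throughout I abbreviate $L_i:=Df_i(x_0)$, so that $T_{x_0}f(x)=\sum_i\a{L_i(x-x_0)+f_i(x_0)}$ and $\cG(f(x),T_{x_0}f(x))=o(|x-x_0|)$.

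For part $(i)$ the candidate is $g(y):=\sum_i\a{L_iD\Phi(y_0)(y-y_0)+f_i(x_0)}$. Its linear parts satisfy the compatibility condition: since $(f\circ\Phi)_i(y_0)=f_i(x_0)$, if two of these values coincide then the corresponding $L_i$ coincide, hence so do the $L_iD\Phi(y_0)$. For the remainder I would use that differentiability of $\Phi$ at $y_0$ (with $\Phi(y_0)=x_0$) gives both $|\Phi(y)-x_0|\le C|y-y_0|$ and $|\Phi(y)-x_0-D\Phi(y_0)(y-y_0)|=o(|y-y_0|)$ near $y_0$, and then split
\[
\cG\big(f(\Phi(y)),g(y)\big)\le\cG\big(f(\Phi(y)),T_{x_0}f(\Phi(y))\big)+\cG\big(T_{x_0}f(\Phi(y)),g(y)\big)\, .
\]
The first term is $o(|\Phi(y)-x_0|)=o(|y-y_0|)$ by differentiability of $f$ together with the linear bound on $|\Phi(y)-x_0|$; the second is at most $(\max_i|L_i|)\sqrt{Q}\,|\Phi(y)-x_0-D\Phi(y_0)(y-y_0)|=o(|y-y_0|)$. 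This yields \eqref{e:interna}.

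For part $(ii)$, set $p_i:=f_i(x_0)$, $A_i:=D_u\Psi(x_0,p_i)$, $B_i:=D_x\Psi(x_0,p_i)$, $q_i:=\Psi(x_0,p_i)$, and take $T:=\sum_i\a{(A_iL_i+B_i)(x-x_0)+q_i}$. Compatibility holds because $p_i=p_j$ forces $L_i=L_j$, while $A_i,B_i,q_i$ depend only on the point $(x_0,p_i)$. For the remainder, fix $\eps>0$; since $\Psi$ is differentiable at the \emph{finitely many} points $(x_0,p_i)$ there is a single $\delta>0$ with $|\Psi(x,u)-q_i-B_i(x-x_0)-A_i(u-p_i)|\le\eps(|x-x_0|+|u-p_i|)$ whenever $|x-x_0|+|u-p_i|<\delta$. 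The crucial device is to read the selection of $f$ through the \emph{optimal} matching permutation $\tau_x\in\Pe_Q$ realizing $\cG(f(x),T_{x_0}f(x))$: for this choice $|f_{\tau_x(i)}(x)-p_i|\le\cG(f(x),T_{x_0}f(x))+|L_i|\,|x-x_0|=O(|x-x_0|)$, so for $x$ near $x_0$ the point $(x,f_{\tau_x(i)}(x))$ lies inside the ball where the expansion of $\Psi$ about $(x_0,p_i)$ is valid. Then, bounding $\cG(\Psi(x,f(x)),T)^2$ by the index-by-index sum along $\tau_x$, inserting and removing $A_iL_i(x-x_0)$ in each term, and using $\sum_i|f_{\tau_x(i)}(x)-p_i-L_i(x-x_0)|^2=\cG(f(x),T_{x_0}f(x))^2=o(|x-x_0|^2)$, one obtains $\cG(\Psi(x,f(x)),T)^2\le C\eps^2|x-x_0|^2+o(|x-x_0|^2)$ with $C$ depending only on $Q$ and the $|L_i|,|A_i|$ (not on $\eps$). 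Letting $x\to x_0$ and then $\eps\downarrow0$ shows $\Psi(x,f)$ is differentiable at $x_0$ with first-order approximation $T$, i.e. \eqref{e:esterna}.

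The main obstacle is precisely in part $(ii)$: because $\Psi$ is only assumed differentiable at the finitely many base points $(x_0,p_i)$, and is in particular not Lipschitz, one cannot linearize it along an arbitrary measurable selection of $f$ near $x_0$; the estimate goes through only after one observes that the optimal permutation $\tau_x$ pins each value $f_{\tau_x(i)}(x)$ to within $O(|x-x_0|)$ of the correct base value $p_i$. Collapsed values $p_i=p_j$ are harmless, being absorbed by the compatibility condition of Definition \ref{d:diff}. Finally, the Lipschitz and Sobolev versions of both formulas — which are the ones actually used in the sequel — follow by combining these pointwise statements with the $Q$-valued Rademacher theorem and the Lipschitz approximation of Proposition \ref{p:lipapprox metric}, as indicated in the text.
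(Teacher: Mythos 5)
The paper itself does not give a proof of this proposition; it cites \cite[Proposition 1.12]{DS1}. Your pointwise argument --- writing down the natural candidate, checking Definition~\ref{d:diff}(ii), and controlling the $\cG$-remainder, with the optimal matching permutation $\tau_x$ doing the real work in part~$(ii)$ --- is exactly the approach that \cite{DS1} takes, and the remainder estimates in both parts are correct. In particular the device of using $\tau_x$ to pin each selection value $f_{\tau_x(i)}(x)$ to within $O(|x-x_0|)$ of $p_i$, so that the pointwise Taylor expansion of $\Psi$ at the finitely many base points $(x_0,p_i)$ applies, is precisely the right idea, and your splitting of each summand and use of $\sum_i a_i^2=o(|x-x_0|^2)$ are sound.

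There is, however, a genuine gap in your compatibility check for part~$(ii)$. You argue that if $p_i=p_j$ then $A_iL_i+B_i=A_jL_j+B_j$; that is correct but is not what Definition~\ref{d:diff}(ii) demands for the composed map. The values of $\Psi(x,f)$ at $x_0$ are $q_i=\Psi(x_0,p_i)$, so condition~(ii) for the candidate $T$ requires $A_iL_i+B_i=A_jL_j+B_j$ whenever $q_i=q_j$, and $q_i=q_j$ can occur with $p_i\neq p_j$. In that case your argument gives nothing, and indeed the conclusion can fail as literally stated: with $Q=2$, $m=n=k=1$, $f(x)=\a{1+x}+\a{-1+x}$ (differentiable at $0$ with $L_1=L_2=1$, $p_1=1$, $p_2=-1$) and $\Psi(x,u)=u^2$, one has $\Psi(x,f(x))=\a{1+2x+x^2}+\a{1-2x+x^2}$, whose two sheets pass through the common value $1$ at $x=0$ with slopes $+2$ and $-2$; no system of matrices satisfying both~(i) and~(ii) of Definition~\ref{d:diff} exists. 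So either one must add the hypothesis that $p_i\neq p_j$ implies $\Psi(x_0,p_i)\neq\Psi(x_0,p_j)$ (automatic for the diffeomorphisms to which the proposition is actually applied in the sequel), or one must drop the normalization~(ii) from the conclusion; you should note this rather than assert compatibility ``holds.''
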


\subsection{Almgren's extrinsic maps}\label{ss:extrinsic}
The metric $\cG$ on $\Iqs$ is ``locally euclidean'' at most of the points.
Consider for instance the model case $Q=2$ and a point $P= \a{P_1} 
+ \a{P_2}$ with $P_1\neq P_2$. Then, obviously, in a sufficiently small
neighborhood of $P$, the metric space $\Is{2}$ is isometric to the
Euclidean space $\R^{2n}$. This fails instead in any neighborhood
of a point of type $P= 2\a{P_1}$. On the
other hand, if we restrict our attention to the closed subset
$\{2 \a{X} : X\in \R^{n}\}$, we obtain the metric structure of 
$\R^{n}$. 

A remarkable observation of Almgren is that
$\Iqs$ is biLipschitz equivalent to a deformation retract of the Euclidean
space (cf. \cite[Section 1.3]{Alm}). For a simple presentation of 
this fact we refer the reader to \cite[Section 2.1]{DS1}.

\begin{theorem}[Almgren's embedding and retraction]\label{t:xi}
There exists $N=N(Q,n)$ and an injective
$\xii:\Iqs\to\R^{N}$ such that:
\begin{itemize}
\item[$(i)$] $\Lip(\xii)\leq1$;
\item[$(ii)$] if $\cQ= \xii(\Iq)$, then
$\Lip(\xii^{-1}|_\cQ)\leq C(n,Q)$.
\end{itemize}
Moreover there exists a Lipschitz map $\ro:\R^{N}\to\cQ$
which is the identity on $\cQ$.
\end{theorem}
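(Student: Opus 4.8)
I would probe an unordered $Q$-tuple in $\R^n$ by its one-dimensional ``shadows''. In dimension one the map $\mathbf s\colon{\mathcal A}_Q(\R)\to\R^Q$ sending $\sum_i\a{t_i}$ to the increasingly rearranged vector $(t_{(1)},\dots,t_{(Q)})$ is, by the elementary rearrangement inequality (the optimal permutation in the definition of $\cG$ sorts both tuples in the same order), an isometry onto the closed convex polyhedral cone $\{t_1\le\dots\le t_Q\}$. For general $n$ I would fix a sufficiently fine finite subset $\{w_1,\dots,w_h\}\subset\mathbb S^{n-1}$, set $\pi_w\bigl(\sum_i\a{P_i}\bigr):=\sum_i\a{\langle P_i,w\rangle}$, and define
\[
\xii(T):=\tfrac1{\sqrt h}\bigl(\mathbf s(\pi_{w_1}T),\dots,\mathbf s(\pi_{w_h}T)\bigr)\in\R^N,\qquad N:=hQ.
\]
Since each $\pi_{w_j}$ is $1$-Lipschitz from $\Iqs$ to ${\mathcal A}_Q(\R)$ (as $|w_j|=1$) and $\mathbf s$ is an isometry, $\Lip(\xii)\le1$, which is (i). Note that $\pi_w$ and $\mathbf s$ commute with positive dilations, so $\xii(\lambda T)=\lambda\,\xii(T)$ for $\lambda>0$ and $\xii(Q\a 0)=0$.

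\textbf{Step 2: injectivity and the lower bound (ii).} It is enough to establish the quantitative estimate $\cG(T,S)^2\le C(n,Q)\sum_{j=1}^h|\mathbf s(\pi_{w_j}T)-\mathbf s(\pi_{w_j}S)|^2$, since this immediately yields injectivity of $\xii$ and $\Lip(\xii^{-1}|_\cQ)\le C(n,Q)$. I would prove it by a far/near dichotomy. Writing $\eta$ for the square root of the right-hand side, I would pick a ``gap scale'' $\rho$ comparable to $\eta$ such that none of the finitely many mutual distances of the $2Q$ points of $T$ and $S$ lies in $(\rho,\Lambda\rho)$ for a large constant $\Lambda=\Lambda(Q)$, and declare the \emph{clusters} to be the $\rho$-connected components of this point set: distinct clusters are then $\ge\Lambda\rho$ apart, while each has diameter $\lesssim\rho$. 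Because the $w_j$ form a net, any two clusters are separated in at least one shadow direction, and one then shows that (a) every cluster contains equally many points of $T$ and of $S$, and (b) the monotone matchings realizing $\cG(\pi_{w_j}T,\pi_{w_j}S)$ can be reconciled into a single permutation $\sigma$ matching $T$ to $S$ cluster by cluster, with $\cG(T,S)^2\le\sum_i|P_i-S_{\sigma(i)}|^2\lesssim\eta^2$. Checking (a) and (b) — i.e.\ that finitely many shadows really do pin the configuration down up to $C\eta$ — is the delicate part of the construction of $\xii$.

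\textbf{Step 3: the set $\cQ$ and the retraction $\ro$.} By the dilation property $\cQ$ is a cone with vertex at the origin, and fixing for each $j$ the order type of $(\langle P_i,w_j\rangle)_i$ partitions $(\R^n)^Q$ into finitely many convex cones on each of which $\xii$ is linear; hence $\cQ$ is a finite, dilation-invariant union of convex polyhedral cones meeting along common faces. To produce $\ro$ I would induct on $Q$, writing $\cQ=\cQ_Q\subset\R^{N_Q}$ to record the dependence on $Q$. For $Q=1$, $\cQ_1=\R^n$ and $\ro=\mathrm{id}$ works. For the inductive step let $\mathcal D:=\xii(\{Q\a P:P\in\R^n\})$, an $n$-dimensional linear subspace of $\cQ_Q$, and take a Whitney decomposition of $\R^{N_Q}\setminus\mathcal D$; on each Whitney cube the nearest point of $\cQ_Q$ corresponds to a configuration whose $Q$ points split into at least two genuine clusters, so nearby $\cQ_Q$ is biLipschitz — uniformly on the scale of the cube — to a product of a Euclidean factor with copies of $\cQ_{Q_i}$, $Q_i<Q$. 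I would transplant the retractions given by the inductive hypothesis onto these local pieces, patch them by a partition of unity subordinate to the Whitney covering, and finally verify that the patched map extends Lipschitz-continuously onto $\mathcal D$ — this last point being where the cone invariance $\xii(\lambda T)=\lambda\xii(T)$ is used, to control the behaviour as $\dist(x,\mathcal D)\downarrow0$ — and restricts to the identity on $\cQ_Q$.

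\textbf{Main obstacle.} Estimate (ii) requires the far/near case analysis sketched above, but the genuine difficulty — and the reason Almgren's original argument is long — is the retraction: maintaining a uniform Lipschitz constant through the patching of the lower-order retractions across the overlaps of the Whitney covering and in the limit onto the diagonal $\mathcal D$. That gluing, rather than any single inequality, is the technical heart of Theorem~\ref{t:xi}.
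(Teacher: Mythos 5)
Your Steps 1--2 are essentially the route taken in the reference cited by the paper ([DS1, Section~2.1]): project the unordered $Q$-tuple onto a fixed finite family of directions, sort each one-dimensional shadow, stack, and obtain (ii) by a quantitative clustering/separation argument (pick a multiplicative gap in the distance spectrum by pigeonhole, show each $\rho$-cluster carries equal numbers of $T$- and $S$-points because a suitable direction keeps the cluster shadows disjoint while the sorted shadows of $T$ and $S$ agree to within $\eta$, then match within clusters). This is the same construction of $\xii$ and the same kind of lower bound; the constants are ugly but that is irrelevant.

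Step 3 contains a genuine gap, and it is not merely a matter of unfinished bookkeeping. Your claim that on each Whitney cube $W$ (for the decomposition of $\R^{N}\setminus\mathcal{D}$) ``the nearest point of $\cQ$ corresponds to a configuration whose $Q$ points split into at least two genuine clusters, so nearby $\cQ$ is biLipschitz --- uniformly on the scale of the cube --- to a product'' is false. First, the nearest point of $\cQ$ to $W$ can lie on $\mathcal{D}$ itself: already for $n=1$, $Q=2$, where $\cQ=\{(a,b):a\le b\}$, the nearest $\cQ$-point to $(1,-1)$ is $(0,0)\in\mathcal D$, so there is nothing to split. Second, even when the nearest configuration $T$ does have distinct points, the local product chart $\cQ\sim \R^{a}\times\prod_i\cQ_{Q_i}$ is valid only at scales small compared to the \emph{minimum} cluster separation of $T$, which can be arbitrarily small compared to $\ell(W)\sim\dist(W,\mathcal D)$: take $T$ with spread of order $\ell$ but two of its points $\eps$-close, $\eps\ll\ell$. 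A Whitney decomposition relative to $\mathcal{D}$ alone only controls the distance to the \emph{deepest} stratum of $\cQ$; the retraction has to be built by decomposing with respect to \emph{all} of the partial-diagonal strata (equivalently, by working through the faces of the polyhedral cone $\cQ$), and this is exactly what makes the argument in [DS1] (and in Almgren's monograph) longer than a one-paragraph induction. A separate, smaller, issue: patching $\cQ$-valued maps by a partition of unity lands in the convex hull of $\cQ$, not in $\cQ$, so one must re-project and estimate the resulting loss --- a difficulty the paper isolates in its Proposition~\ref{p:ro*} on the almost-projections $\ro^\star_\delta$. So the embedding half of your proposal matches the paper's reference; the retraction half needs to be re-founded on the full stratification of $\cQ$ rather than on $\mathcal{D}$.
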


In fact much more can be said: the set $\cQ$ is  
a cone and a polytope. On each separate face of the
polytope the metric structure induced by $\cG$ is euclidean,
essentially for the reasons outlined a few paragraphs above
(cf. again \cite[Section 1.3]{Alm} or \cite[Section 6.1]{DS3}). A simple, yet important, observation of White
is that the map $\xii$ can be easily constructed so that the Dirichlet energy of $\xii\circ u$ (as clssical Euclidean map) coincides with that
of $u$ (as multivalued map) for any $u\in W^{1,2}$. 

\medskip

Sobolev maps were initially defined by Almgren using the map $\xii$. With this artifact a lot of the theory outlined in the previous paragraphs can be recovered from the usual (single valued) theory using the maps $\xii$ and $\ro$. Presently such maps could be avoided for essentially all the arguments. However a more complicated version of the map $\ro$ will play a rather important
role at a certain point later. As already mentioned, for $Q>1$ the space $\Iqs$ is not linear and
we cannot regularize $Q$-valued maps by convolution. Nonetheless we will need a way to smooth $W^{1,2}$ maps suitably
with a procedure which retains some of the basic estimates available for convolutions with a standard mollifier (in particular when computing the energy of the regularizations). A possible approach is to smooth the euclidean map $\xii \circ u$
and then ``project'' it back onto $\cQ$ using $\ro$. However, projecting back might be rather costly in terms of the energy since the Lipschitz constant
of $\ro$ is indeed rather far from $1$.

To bypass this problem,
we follow Almgren and prove the existence of 
``almost'' projections, denoted by $\ro^\star_\delta$, which are
$(1+\mu)$-Lipschitz in the 
$\delta$-neighborhood of $\xii (\Iqs)$. These maps cannot be the identity on $\cQ$, but they are at a uniform distance $\eta$ from it. Almgren's original proof is rather complicated. In \cite[Proposition 6.2]{DS3} we have proposed a different proof which uses heavily Kirszbraun's extension theorem and seems to yield a better estimate of $\mu$ and $\eta$ in terms of $\delta$ (in particular in the version of \cite{DS3} these are suitable positive powers of $\delta$). 

\begin{proposition}[Almost projection, {cf. \cite[Proposition 6.2]{DS3}}]\label{p:ro*}
For every $\bar{n}, Q\in \N\setminus \{0\}$ there are geometric constants $\delta_0, C>0$ with
the following property. 
For every $\delta\in ]0, \delta_0[$
there is $\ro^\star_\delta:\R^{N(Q,\bar n)}\to \cQ=\xii(\Iq(\R^{\bar n}))$ such that $|\ro^\star_\delta(P)-P|\leq C\,\delta^{8^{-\bar n Q}}$ for all $P \in \cQ$
and, for every $u\in W^{1,2}(\Omega,\R^{N})$, the following holds:
\begin{equation}\label{e:ro*1}
\int |D(\ro^\star_\delta\circ u)|^2\leq
\left(1+C\,\delta^{8^{-\bar{n}Q-1}}\right)\int_{\left\{\dist(u,\cQ)\leq \delta^{\bar nQ+1}\right\}} |Du|^2+
C\,\int_{\left\{\dist(u,\cQ)> \delta^{\bar nQ+1}\right\}} |Du|^2\, .
\end{equation}
\end{proposition}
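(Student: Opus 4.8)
**Proof proposal for Proposition \ref{p:ro*} (Almost projection).**

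The plan is to construct $\ro^\star_\delta$ by first producing, for each small $\delta$, a single Lipschitz map $\sigma_\delta\colon \cQ\to\cQ$ which is $(1+\mu)$-Lipschitz when $\cQ$ is endowed with its \emph{extrinsic} Euclidean metric and which moves each point of $\cQ$ by at most a small amount $\eta=C\delta^{8^{-\bar nQ}}$, and then extending this map to all of $\R^N$ using Kirszbraun's theorem. The key geometric input is the structure recalled in the paragraph before the statement: $\cQ=\xii(\Iqs)$ is a polyhedral cone, with finitely many faces, and on each face the metric inherited from $\cG$ agrees with the ambient Euclidean metric. The ``curvature'' of $\cQ$ is thus entirely concentrated on the lower-dimensional skeleton where faces meet, and near that skeleton $\cQ$ looks (up to biLipschitz equivalence with controlled constants) like a product of a lower-dimensional cone of the same type with a Euclidean factor. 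This gives an inductive structure in $Q$ (equivalently in the number of faces), which is exactly the source of the iterated exponent $8^{-\bar nQ}$.

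First I would set up the induction. For $Q=1$ (or more generally on any single face) $\cQ$ is flat and one can take $\sigma_\delta=\Id$, $\mu=0$, $\eta=0$; the statement is then classical since $\ro^\star_\delta$ is just the nearest-point projection onto an affine subspace, which is $1$-Lipschitz. For the inductive step, cover a neighborhood of the singular skeleton of $\cQ$ by finitely many charts in each of which, after a biLipschitz change of coordinates with constant $\le C$, $\cQ$ is $\{(y,z)\in \R^k\times \cQ'\}$ with $\cQ'$ a cone associated to strictly fewer faces. Apply the inductive hypothesis in the $\cQ'$ factor at a scale $\delta'$ chosen as a fixed power of $\delta$ (this is where a factor like $8$ enters: one loses a definite power each time one descends one level of the skeleton and also when patching the charts together). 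Away from this neighborhood $\cQ$ is flat, so $\sigma_\delta$ is the identity there. The maps produced on the different charts are then glued by a partition of unity subordinate to the cover; because each piece is $(1+\mu')$-Lipschitz with $\mu'$ a small power of $\delta'$ and the pieces differ from the identity by at most $\eta'$, a standard convex-combination patching (done in the ambient Euclidean space, then composed with the nearest-point projection onto the relevant face, which is $1$-Lipschitz there) yields a global $\sigma_\delta\colon\cQ\to\cQ$ with $\Lip(\sigma_\delta)\le 1+\mu$ and $\sup_{\cQ}|\sigma_\delta-\Id|\le\eta$, with $\mu,\eta\le C\delta^{8^{-\bar nQ}}$. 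Kirszbraun's theorem then extends $\sigma_\delta$ to a $(1+\mu)$-Lipschitz map $\tilde\sigma_\delta\colon\R^N\to\R^N$ (for the target, use that $\cQ$ sits in $\R^N$ and that a $(1+\mu)$-Lipschitz extension into $\R^N$ exists; the image will be within $C\mu$ of $\cQ$ but composing with the nearest-point retraction $\ro$ of Theorem \ref{t:xi} onto $\cQ$ only multiplies the Lipschitz constant by $\Lip(\ro)$ on that thin neighborhood, which one absorbs by a further small power of $\delta$). Set $\ro^\star_\delta:=\tilde\sigma_\delta$, so that $\ro^\star_\delta(\R^N)\subset\cQ$ and $|\ro^\star_\delta(P)-P|\le C\delta^{8^{-\bar nQ}}$ for $P\in\cQ$.

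It remains to prove the energy estimate \eqref{e:ro*1}. For $u\in W^{1,2}(\Omega,\R^N)$ one has, at a.e.\ point of approximate differentiability, $|D(\ro^\star_\delta\circ u)(x)|\le \Lip_{u(x)}(\ro^\star_\delta)\,|Du(x)|$, where $\Lip_{p}(\ro^\star_\delta)$ denotes the local Lipschitz constant near $p$. On the set $\{\dist(u,\cQ)\le\delta^{\bar nQ+1}\}$ one uses that $\ro^\star_\delta$ is $(1+C\delta^{8^{-\bar nQ-1}})$-Lipschitz on the $\delta^{\bar nQ+1}$-neighborhood of $\cQ$ — this is the point of choosing the thickness of the neighborhood a strictly higher power of $\delta$ than the Lipschitz defect, so that the estimate degrades only to the next exponent in the tower. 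On the complement one uses only the global bound $\Lip(\ro^\star_\delta)\le C$. Squaring and integrating gives \eqref{e:ro*1}. The main obstacle — and the part that really requires care rather than routine bookkeeping — is the inductive patching near the singular skeleton: one must choose the hierarchy of scales $\delta\gg\delta'\gg\cdots$ and the thickness of the gluing collars so that (a) the biLipschitz chart constants (which are \emph{fixed}, not close to $1$) do not compound into the Lipschitz constant of $\sigma_\delta$, only into its support and displacement, and (b) the partition-of-unity patching, which a priori can destroy near-isometry, is performed only across regions where the maps being patched are \emph{both} close to the identity, so that the resulting defect is still a small power of $\delta$. Tracking these losses is precisely what produces the iterated power $8^{-\bar nQ}$, and it is why Almgren's original argument is long; the streamlined version via Kirszbraun in \cite[Proposition 6.2]{DS3} is what I would follow for the details.
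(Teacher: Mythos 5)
The core difficulty this proposition resolves — and the reason the paper does not simply use the retraction $\ro$ of Theorem \ref{t:xi} — is the need for a map with \emph{values in $\cQ$} whose Lipschitz constant near $\cQ$ is close to $1$. Your construction produces a $(1+\mu)$-Lipschitz $\sigma_\delta\colon\cQ\to\cQ$; Kirszbraun then gives a $(1+\mu)$-Lipschitz $\tilde\sigma_\delta\colon\R^N\to\R^N$, but the extension has no reason to take values in $\cQ$: Kirszbraun controls the Lipschitz constant, not the image. So the step ``Set $\ro^\star_\delta:=\tilde\sigma_\delta$, so that $\ro^\star_\delta(\R^N)\subset\cQ$'' is simply false. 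The parenthetical remedy you offer — post-composing with $\ro$ — is precisely the trap the surrounding text (Section \ref{ss:extrinsic}) warns against: $\Lip(\ro)$ is a fixed geometric constant strictly greater than $1$, so the composite map is $\Lip(\ro)(1+\mu)$-Lipschitz near $\cQ$, not $(1+\mu')$-Lipschitz for any small $\mu'$. A multiplicative Lipschitz defect of size independent of $\delta$ cannot be ``absorbed by a further small power of $\delta$''; only additive errors in position, such as the displacement $\eta$, can. This is a genuine gap, not bookkeeping.

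A fix has to run in the other direction: build $\ro^\star_\delta$ \emph{directly} on a thin tubular neighborhood of $\cQ$, with values in $\cQ$ and Lipschitz constant $1+\mu$ there; the hard part is the patching near the singular skeleton of the polytope, where nearest-point projection onto $\cQ$ is not even single-valued. The correct role for the displacement you denote $\sigma_\delta$ is to push points into the interior of individual faces so that nearest-point projections onto those faces (which are $1$-Lipschitz, being projections onto convex sets) can take over; Kirszbraun is then invoked to extend nearly-isometric maps from the skeleton to the nearby parts of a face (where the target is an affine face, hence convex, so the Kirszbraun image lies where you need it), and the crude retraction $\ro$ enters only \emph{outside} the tubular neighborhood, where \eqref{e:ro*1} tolerates the large constant $C$. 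Your inductive strategy in $Q$, the tower of exponents $8^{-\bar nQ}$, and the final chain-rule computation to obtain \eqref{e:ro*1} all point in the right direction, but as written the Kirszbraun-then-retract step does not produce a map into $\cQ$ with the required near-isometry near $\cQ$, which is precisely the content of the proposition.
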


\section{$\D$-minimizers and their regularity}

We are now ready to state the main results in the theory of $\D$-minimizing maps. In what follows, $\Om$ is always
assumed to be a bounded open set with a sufficiently regular
boundary.

\begin{theorem}[Existence for the Dirichlet Problem, cf. {\cite[Theorem 0.8]{DS1}}]
\label{t:existence}
Let $g\in W^{1,2} (\Om;\Iq)$. Then there exists a $\D$-minimizing
$f\in W^{1,2}(\Om;\Iq)$ such that $f|_{\partial\Omega}=g|_{\partial\Omega}$.
\end{theorem}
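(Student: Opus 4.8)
The plan is the direct method of the calculus of variations, carried out after composing everything with Almgren's embedding $\xii$ of Theorem \ref{t:xi}, normalized (following White) so that $\D(\xii\circ u,\Omega)=\int_\Omega|D(\xii\circ u)|^2$ for every $u\in W^{1,2}(\Omega,\Iq)$. First I would set $m:=\inf\{\D(h,\Omega):h\in W^{1,2}_g(\Omega,\Iq)\}$, which is finite because $g$ is itself admissible, and fix a minimizing sequence $(f_k)\subset W^{1,2}_g(\Omega,\Iq)$ with $\D(f_k,\Omega)\to m$. Put $u_k:=\xii\circ f_k\in W^{1,2}(\Omega,\R^N)$: these are ordinary vector-valued Sobolev maps taking values in the closed set $\cQ=\xii(\Iq)$, with $\int_\Omega|Du_k|^2=\D(f_k,\Omega)$ uniformly bounded and with the common boundary trace $\xii\circ g|_{\partial\Omega}$ (by Definition \ref{d:Dirichlet problem}). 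Since $u_k-\xii\circ g\in W^{1,2}_0(\Omega,\R^N)$, the classical Poincar\'e inequality (cf.\ also Proposition \ref{p:poincare'}) bounds $\|u_k\|_{L^2}$, so $(u_k)$ is bounded in $W^{1,2}(\Omega,\R^N)$.

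Next I would extract a compactness-limit. By the Rellich--Kondrachov theorem (the classical counterpart of Proposition \ref{p:Sembeddings}) a subsequence of $(u_k)$ converges in $L^2(\Omega,\R^N)$ to some $u$, and $u_k\rightharpoonup u$ weakly in $W^{1,2}(\Omega,\R^N)$. The limit $u$ takes values in $\cQ$, since $\cQ$ is closed and one may pass to a further a.e.\ convergent subsequence; hence $f:=\xii^{-1}\circ u$ is a well-defined measurable $\Iq$-valued map, and $f_k\to f$ in $L^2(\Omega,\Iq)$ because $\xii^{-1}|_\cQ$ is Lipschitz. Together with the uniform gradient bound this is exactly weak convergence $f_k\rightharpoonup f$ in the sense of Definition \ref{d:weak convergence}, so the weak sequential closure of $W^{1,2}_g(\Omega,\Iq)$ in Proposition \ref{p:trace} gives $f|_{\partial\Omega}=g|_{\partial\Omega}$; in particular $f$ is an admissible competitor.

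It then remains to prove lower semicontinuity of the energy. Because $u_k\rightharpoonup u$ weakly in the ordinary space $W^{1,2}(\Omega,\R^N)$ and $v\mapsto\int_\Omega|Dv|^2$ is convex in the gradient, classical weak lower semicontinuity yields $\int_\Omega|Du|^2\le\liminf_k\int_\Omega|Du_k|^2$, i.e.\ $\D(f,\Omega)=\int_\Omega|D(\xii\circ f)|^2\le\liminf_k\D(f_k,\Omega)=m$. By minimality $\D(f,\Omega)=m$, so $f$ is the desired $\D$-minimizer. An embedding-free variant of this last step would instead work with the scalar functions $\cG(f_k,T_i)$ for $\{T_i\}$ a countable dense subset of $\Iq$: each converges in $L^2$ with uniformly bounded $W^{1,2}$ norm, classical lower semicontinuity bounds $\int|\partial_j\cG(f,T_i)|^2$ by $\liminf_k\int|\partial_j f_k|^2$, and one then passes to the supremum over $i$ using the supremum-characterization of $|\partial_j f|$.

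The step I expect to be the genuine obstacle is precisely this lower semicontinuity, since the $Q$-valued Dirichlet energy is not the integral of a convex integrand over a linear space; the efficient remedy is exactly the energy-preserving embedding $\xii$, which reduces the matter to the classical vector-valued statement. The honest work behind the scenes — all established in \cite{DS1} — is verifying that $\xii$ can be normalized to preserve the energy exactly, that the compactness and trace results of Proposition \ref{p:Sembeddings} and Proposition \ref{p:trace} hold, and that the energy appearing in the statement (Definition \ref{d:W1p}) coincides with the geometric one of \eqref{e:Dir_1}.
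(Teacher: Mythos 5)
Your main argument is sound and is the standard direct-method proof, essentially the one taken in \cite{DS1}: compose with White's energy-preserving normalization of $\xii$, obtain a bounded sequence $u_k=\xii\circ f_k$ in $W^{1,2}(\Omega,\R^N)$ (the $L^2$ bound can be seen most cleanly by observing that $\cG(f_k,g)\in W^{1,2}_0(\Omega)$ and applying the scalar Poincar\'e inequality, rather than arguing with traces of the $\R^N$-valued maps), extract a $W^{1,2}$-weak/$L^2$-strong limit $u$ taking values in the closed set $\cQ$, recover $f=\xii^{-1}\circ u\in W^{1,2}(\Omega,\Iq)$ since $\xii^{-1}|_\cQ$ is Lipschitz, invoke the weak sequential closure (Proposition \ref{p:trace}) for the boundary condition, and close with classical weak lower semicontinuity of $v\mapsto\int|Dv|^2$.

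Your ``embedding-free variant'' of the last step, though, is too quick and as written has a gap. From
\[
\int_\Omega|\partial_j\cG(f,T_i)|^2\;\le\;\liminf_k\int_\Omega|\partial_j f_k|^2\qquad\text{for each fixed }i,
\]
``passing to the supremum over $i$'' only yields $\sup_i\int|\partial_j\cG(f,T_i)|^2\le\liminf_k\int|\partial_j f_k|^2$, whereas what you need is
\[
\int_\Omega|\partial_j f|^2=\int_\Omega\sup_i|\partial_j\cG(f,T_i)|^2\;\le\;\liminf_k\int_\Omega|\partial_j f_k|^2,
\]
and the elementary inequality $\sup_i\int\le\int\sup_i$ goes in the wrong direction. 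To repair this intrinsically one has to localize before taking the supremum: fix $\delta>0$, choose a finite measurable partition $\{A_i\}_{i\le N}$ of (most of) $\Omega$ on which $|\partial_j f|\le|\partial_j\cG(f,T_i)|+\delta$, apply weak $L^2(A_i)$ lower semicontinuity to $\partial_j\cG(f_k,T_i)\rightharpoonup\partial_j\cG(f,T_i)$ on each $A_i$ separately, sum over $i$ (and then over $j$, using $\sum_j\liminf_k\le\liminf_k\sum_j$), and finally let $\delta\to 0$. The embedding route bypasses exactly this bookkeeping, which is why it is the cleaner of your two proposals; if you want the intrinsic version to stand on its own, the partition step needs to be made explicit.
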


\begin{theorem}[H\"older regularity, cf. {\cite[Theorem 0.9]{DS1}}]\label{t:hoelder} 
There is a positive constant $\alpha=\alpha (m,Q)$ with the
following property.
If $f\in W^{1,2} (\Om;\Iq)$ is $\D$-minimizing,
then $f\in C^{0,\alpha} (\Omega')$ 
for every $\Omega'\subset\subset\Omega\subset \R^{m}$.
For two-dimensional domains, we have the explicit
constant $\alpha (2, Q)=1/Q$.
\end{theorem}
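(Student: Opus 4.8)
\emph{The plan.} The plan is to reduce the statement, via Proposition \ref{p:Campanato Morrey}, to a decay estimate for the Dirichlet energy: there are $\alpha=\alpha(m,Q)>0$ and $C$ such that every $\D$-minimizer $u\colon B_1\to\Iq$ satisfies $\int_{B_r}|Du|^2\le C\,r^{m-2+2\alpha}\int_{B_1}|Du|^2$ for $r\le\tfrac12$. Here one uses that the restriction of a $\D$-minimizer to a smaller ball is again $\D$-minimizing, and that $\D$-minimality is invariant under domain translations and under the target translations $\sum_i\a{y_i}\mapsto\sum_i\a{y_i+P}$ with $P\in\R^n$ (these are isometries of $\Iq$ that preserve $|Du|$); the decay applied on balls $B_\rho(y)$ with $y$ ranging over a compact subdomain then supplies exactly the hypothesis of Proposition \ref{p:Campanato Morrey}, whence $u\in C^{0,\alpha}_{\mathrm{loc}}$.

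\emph{First variations and monotonicity.} The engine for the decay is Almgren's frequency function. From the minimality of $u$ I would first derive the two Euler--Lagrange identities: the inner variation, obtained by deforming the domain along the flow of $\varphi\in C^\infty_c(B_1,\R^m)$, and the outer variation, obtained by perturbing the values $u\mapsto\sum_i\a{u_i+t\,\psi(x,u_i)}$, both manipulations being legitimate thanks to the chain rules of Proposition \ref{p:chain} together with the Lipschitz approximation. Testing the inner variation with radial fields already shows that $r\mapsto r^{2-m}\int_{B_r(x_0)}|Du|^2$ is monotone non-decreasing, and combining the inner and outer variations with radial test objects plus Cauchy--Schwarz on $\partial B_r(x_0)$ shows that the frequency $I_{x_0}(r):=r\,D_{x_0}(r)/H_{x_0}(r)$, with $D_{x_0}(r)=\int_{B_r(x_0)}|Du|^2$ and $H_{x_0}(r)=\int_{\partial B_r(x_0)}\cG(u,Q\a{0})^2$, is monotone non-decreasing once the target has been re-centered at the value of $u$ at $x_0$ (which is well defined once continuity is known — itself a by-product of the same analysis). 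Integrating the ODE encoded by the monotonicity of $I$ yields $\int_{B_r(x_0)}|Du|^2\lesssim r^{m-2+2\lambda(x_0)}$, where $\lambda(x_0):=I_{x_0}(0^+)$.

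\emph{A uniform lower bound on the frequency, and the case $m=2$.} It remains to bound $\lambda(x_0)$ below by a positive $\alpha(m,Q)$ whenever $u$ is not locally constant at $x_0$, and this is where compactness enters. One shows that $\D$-minimizers with uniformly bounded energy converge, along subsequences, strongly in $W^{1,2}_{\mathrm{loc}}$ to a $\D$-minimizer with convergence of the energies: lower semicontinuity gives one inequality, while the interpolation Lemma \ref{l:technical}, used to splice an arbitrary competitor to the minimizer's own boundary values on a thin annulus, upgrades this to equality and to strong convergence. If $\lambda(x_0)$ could be made arbitrarily small, then blowing up (rescaling and normalizing the spherical $L^2$ norm) would produce, by the monotonicity of $I$, a nonconstant homogeneous $\D$-minimizer $v(x)=|x|^\lambda g(x/|x|)$ of degree $\lambda<\alpha$; but such a $v$ must also minimize the Dirichlet energy among the homogeneous competitors $|x|^\mu g(x/|x|)$, which pins $\lambda$ to a root of a quadratic with coefficients $\int_{\s^{m-1}}\cG(g,Q\a{0})^2$ and $\int_{\s^{m-1}}|Dg|^2_{\mathrm{tang}}$, and a Poincar\'e inequality on $\s^{m-1}$ (after centering the target) bounds their ratio, forcing $\lambda\ge\alpha(m,Q)>0$ unless $g$ is constant, a contradiction. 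Together with the previous step this gives the energy decay, hence the theorem. For $m=2$ one has more: the homogeneous $\D$-minimizers $\R^2\to\Iq$ can be classified explicitly — they are superpositions of pieces $z\mapsto\mathrm{Re}(a\,z^{k/Q})$ over the $Q$ branches of $z^{1/Q}$ — so the smallest nonzero frequency is exactly $1/Q$ (modelled on $z\mapsto\sum_{\zeta^Q=z}\a{\zeta}$); hence at every point either $u$ is locally constant or $\lambda(x_0)\ge 1/Q$, giving the explicit exponent $\alpha(2,Q)=1/Q$.

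\emph{Main obstacle.} The delicate point is the compactness of $\D$-minimizers used above: the absence of a linear structure on $\Iq$ prevents the usual ``subtract the limit'' comparison, so strong $W^{1,2}$ convergence with convergence of the energies has to be engineered through the gluing Lemma \ref{l:technical}; the accompanying lower bound for the frequency of nonconstant homogeneous $\D$-minimizers — and, for $m=2$, their complete classification — is the other genuinely substantial ingredient.
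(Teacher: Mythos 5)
Your proposal takes a genuinely different route from the paper's. In Section~\ref{ss:Hoelder} the survey derives the H\"older decay by a De Giorgi--type comparison: it bounds $\int_{B_1(x)}|Du|^2$ by $\tfrac{1}{m-2}\int_{\partial B_1(x)}|Du|^2$ using the $0$-homogeneous extension of the trace as a competitor, then improves the constant to $\tfrac{1}{m-2+2\eps}$ with $\eps=\eps(m,Q)>0$ via a concentration-compactness/splitting argument (the difficulty flagged there is precisely that energy bounds alone do not give $L^2$ compactness of $Q$-valued maps), and finally iterates and invokes Proposition~\ref{p:Campanato Morrey}. You instead aim to get the decay from the monotonicity of Almgren's frequency, with a compactness/classification argument to produce a uniform lower bound for $\lambda(x_0)=I_{x_0}(0^+)$. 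In this survey, the frequency function is reserved for Theorem~\ref{t:structure} and uses Theorem~\ref{t:hoelder} as an input (for the Poincar\'e-type bound \eqref{e:poinc}); a potential advantage of your route is that it handles $m=2$ and $m\geq 3$ uniformly and yields the exponent $1/Q$ directly from the classification of homogeneous planar minimizers.

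There is, however, a concrete gap in your argument which you acknowledge but do not resolve, and which the paper's comparison approach is specifically designed to sidestep. The decay $D_{x_0}(r)\lesssim r^{m-2+2\lambda(x_0)}$ is informative only when $\lambda(x_0)>0$, and this requires the frequency to be computed with the target re-centered so that $u(x_0)=Q\a{0}$: otherwise (e.g.\ for $u=c+x_1$ with $c\neq 0$) one has $H_{x_0}(r)\sim r^{m-1}$ and $\lambda(x_0)=0$, and the frequency gives nothing beyond the trivial $r^{m-2}$. But re-centering at the value $u(x_0)$ presupposes a continuous (or at least well-defined Lebesgue) representative of $u$ at every point of a compact subdomain, which is the statement being proved. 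Moreover your claim that ``$\lambda(x_0)\geq\alpha$ whenever $u$ is not locally constant'' is false at points where $u(x_0)=\sum_i\a{q_i}$ has separated sheets: no single-vector target translation makes $u(x_0)=Q\a{0}$, hence $\lambda(x_0)=0$ there even though $u$ is non-constant. The standard cure is an induction on $Q$ over the splitting strata together with a closedness argument for the set $\{u=Q\a{\etaa\circ u}\}$, and you do not mention it. Finally, even at well-centered points, Proposition~\ref{p:Campanato Morrey} needs the decay constant to be \emph{uniform} in the center $y$, i.e.\ one must control $I_{x_0}(\rho_0)$ and $H_{x_0}(\rho_0)$ uniformly in $x_0$ at some fixed scale; this step is not addressed. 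The paper's comparison argument avoids all of this by bounding $\int_{B_r(y)}|Du|^2$ directly with no reference to the value $u(y)$: the only serious loss-of-compactness issue there is at the level of the contradiction sequence, handled by the sheet-splitting mechanism (cf.\ Proposition~\ref{p:cc}), rather than at each individual center point.
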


For the second regularity theorem we need the definition of
the singular set of $f$.

\begin{definition}[Regular and singular points, cf. {\cite[Definition 0.10]{DS1}}]\label{d:regular}
A $\D$-minimizing $f$ is regular at a point
$x\in \Omega$ if there exists a neighborhood $B$ of $x$
and $Q$ analytic functions $f_i:B\to \R^{n}$
such that
\begin{equation}\label{e:reg}
f (y)\;=\; \sum_i \a{f_i (y)}
\qquad \mbox{for every $y\in B$}
\end{equation}
and either $f_i (y)\neq f_j (y)$ for every $y\in B$, or $f_i\equiv f_j$.
The singular set $\sing (f)$ is the complement
of the set of regular points.
\end{definition}

\begin{theorem}[Estimate of the singular set, cf. {\cite[Theorem 0.11]{DS1}}]\label{t:structure}
Let $f$ be $\D$-minimizing. Then, the singular set $\sing (f)$
is relatively closed in $\Omega$. Moreover, if $m=2$,
then $\sing (f)$ is at most countable, and if
$m\geq 3$, then the Hausdorff dimension of $\sing (f)$ is at most
$m-2$. 
\end{theorem}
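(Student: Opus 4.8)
The plan is to follow the classical Almgren–Federer stratification scheme, now run inside the world of $Q$-valued $\D$-minimizers. The three assertions—closedness of $\sing(f)$, countability when $m=2$, and $\dim_{\cH}\sing(f)\le m-2$ when $m\ge 3$—all rest on two analytic ingredients: a monotonicity formula for the rescaled Dirichlet energy, yielding homogeneous "tangent maps", and an $\eps$-regularity statement at points where a tangent map is a single affine $Q$-tuple (the analogue of a "flat tangent plane"). I would first establish that for a $\D$-minimizer $f$ the normalized frequency function
\[
I_{x_0}(r) := \frac{r\,\int_{B_r(x_0)}|Df|^2}{\int_{\partial B_r(x_0)}\cG(f,f(x_0))^2}
\]
is monotone nondecreasing in $r$ (Almgren's frequency function); this is the substitute for the mass-ratio monotonicity of currents. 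Monotonicity gives that the blow-ups $f_{x_0,r}(y):=\cG(f,f(x_0))$-normalized rescalings converge, along subsequences, to nonzero $\alpha$-homogeneous $\D$-minimizers on $\R^m$ with $\alpha=I_{x_0}(0^+)>0$; homogeneity forces either $\alpha=1$ with an affine tangent map, or $\alpha>1$.

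Next I would isolate the $\eps$-regularity result: there are $\eps,\alpha>0$ such that if at some scale $f$ is $L^2$-close to a single affine $Q$-tuple $\sum_i\a{L_i\cdot x + c_i}$ with small normalized energy, then $f$ is a regular (classically decomposable, analytic) map in a smaller ball. This is the $Q$-valued analogue of Theorem~\ref{t:DG}, proved exactly by De Giorgi's harmonic-approximation-plus-decay iteration sketched in Section~\ref{ss:DG}, but now comparing $f$ to a $\Iq$-valued harmonic function on each region of the decomposition Lemma~\ref{l:chop}; the decay of the excess halves at each dyadic scale, giving the Campanato–Morrey bound of Proposition~\ref{p:Campanato Morrey} and hence $C^{0,\alpha}$, after which interior analytic regularity of genuinely multi-sheeted minimizers follows from classical elliptic theory applied sheet by sheet. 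The complement, $\sing(f)$, is then relatively closed because its complement is characterized by the open condition "some blow-up is affine with small energy", and upper semicontinuity of the frequency under the convergence in Theorem~\ref{t:existence}'s compactness makes this condition stable.

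For the dimension bound I would set up the stratification $\sing(f)=\bigcup_{k=0}^{m-1}\cS_k$, where $p\in\cS_k$ iff the spine (the maximal translation-invariance subspace) of every tangent map at $p$ has dimension $\le k$ and at least one has dimension exactly $k$. As in Section~\ref{ss:Fedred}, Federer's dimension-reduction lemma—if $S_0$ is a tangent map at a point $q\neq0$ of a homogeneous minimizer $S$, then $S_0$ splits off the line $\R q$—shows $\dim_{\cH}\cS_k\le k$ and $\cS_0$ is discrete, using the $\cH^\beta_\infty$ upper-semicontinuity trick to pass to the limit. Finally one checks $\cS_{m-1}=\emptyset$: a homogeneous minimizer with an $(m-1)$-dimensional spine would be, after splitting, a $1$-dimensional homogeneous $\D$-minimizer, i.e.\ a $Q$-tuple of lines through the origin; such a map is decomposable and regular at $0$ by the $1$-dimensional "regular selection always exists" fact noted after Definition~\ref{d:regular}, so the point is not singular. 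Hence $\sing(f)=\bigcup_{k\le m-2}\cS_k$, giving $\dim_{\cH}\sing(f)\le m-2$, and $\sing(f)=\cS_0$ is countable (indeed discrete) when $m=2$.

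\textbf{Main obstacle.} The delicate step is the $\eps$-regularity statement, and within it the harmonic approximation: one must show that a $\D$-minimizer with small frequency is $L^2$-close to a $\Iq$-valued Dirichlet-minimizing (hence harmonic-on-sheets) map, with quantitative control, even across the "collapsed" set where sheets come together. This requires the decomposition Lemma~\ref{l:chop}, careful use of the compactness/semicontinuity from Proposition~\ref{p:trace}, and a reverse-Poincaré/Caccioppoli estimate for $\D$-minimizers to start the dyadic decay; the algebra of which planes optimize the excess at the next scale, and control of the error terms $\delta>0$ in \eqref{e:decay}, is where the real work lies. Everything downstream—stratification, dimension reduction, $\cS_{m-1}=\emptyset$—is then "soft" in the sense of Section~\ref{s:strat}.
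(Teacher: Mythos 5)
Your overall blueprint---frequency-function monotonicity producing $\alpha$-homogeneous tangent maps, a Federer-type dimension reduction, and a vanishing top stratum---is indeed the route the paper takes (cf.\ Sections~\ref{ss:monot_freq} and~\ref{ss:fundamental}). The main gap is the $\eps$-regularity theorem you insert as the engine. You posit that if $f$ is, after rescaling, $L^2$-close to an affine $Q$-tuple $\sum_i\a{L_i\cdot x+c_i}$ with small normalized energy then $f$ is regular, to be proved by De~Giorgi's harmonic-approximation and dyadic excess decay. As stated this cannot be the right lemma: when the affine tuple is collapsed (all $L_i,c_i$ equal --- which is exactly the situation at a multiplicity-$Q$ point), closeness to it says nothing about regularity, and in the De~Giorgi iteration the comparison maps you would decay against are themselves $Q$-valued $\D$-minimizers, which may themselves be branched, so no scale-invariant excess gain is available. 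The paper uses no such $\eps$-regularity. Instead, at a point where the support of $f(p)$ contains two distinct values the map decomposes locally and one inducts on $Q$; at a collapsed point $f(p)=Q\a{q}$ one subtracts the classically harmonic average $\etaa\circ f$, so that after normalization $\bar f(p)=Q\a{0}$ and $\etaa\circ\bar f\equiv0$. The two operative consequences of Theorem~\ref{t:frequency} are then (i) the reverse Poincar\'e $rD(r)\le\bar C\,H(r)$, which together with~\eqref{e:poinc} gives strong $W^{1,2}$ compactness and prevents energy from escaping to $\partial B_1$, and (ii) constancy of the frequency in the limit, which forces the blow-up $\bar u$ to be $\alpha$-homogeneous with $\D(\bar u, B_1)=1$. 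Homogeneity plus nonzero energy then forces $0\in\sing(\bar u)$: this \emph{persistence of the singularity} replaces your $\eps$-regularity step and is what drives the dimension reduction.

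Two further imprecisions. Your parenthetical ``(indeed discrete)'' for $m=2$ overclaims: the reduction argument gives only countability, and the paper is explicit that it cannot give more, because singular points of lower collapsing multiplicity $Q'<Q$ can accumulate on a $Q$-sheeted branch point and are invisible to the top-level blow-up. Discreteness is Theorem~\ref{t:finite} and requires uniqueness of tangent maps, a much harder fact; for the same reason the stratification must really be run multiplicity-by-multiplicity (decomposing $f$ wherever its values separate) rather than naively on the full $Q$-valued map. Finally, ``regular selections exist in one dimension'' does not by itself kill $\mathcal{S}_{m-1}$: the map $\a{t}+\a{-t}$ has analytic selections and is still singular at $0$ in the sense of Definition~\ref{d:regular}. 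What closes this stratum is that a one-dimensional $\alpha$-homogeneous $\D$-minimizer with $\etaa\circ f\equiv0$ must equal $Q\a{0}$ --- any genuinely multivalued collection of lines through the origin can be recoupled to strictly lower the Dirichlet energy --- which is the $\D$-minimizing analogue of ``one-dimensional area-minimizing cones are straight lines with multiplicity.''
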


Note in particular the striking similarity between the estimate
of the size of the singular set in the case of multiple valued $\D$-minimizers
and in that of area minimizing currents. It will be discussed later that,
even in the case of $\D$-minimizers, there are singular solutions
(which are no better than H\"older continuous).

\medskip

Complete and self-contained proofs of these theorems can be found
in \cite{DS1}. The key tool for the estimate of the dimension of the singular set is the celebrated
frequency function (cp. with \cite[Section 3.4]{DS1}), which has been indeed used in a variety of different
contexts in the theory of unique continuation of elliptic partial differential
equations (see for instance the papers \cite{GL1}, \cite{GL2}). This is the central tool of our proofs as well. However,
our arguments manage much more efficiently the technical 
intricacies of the problem and some aspects of the theory
are developed in further details. For instance, we present in \cite[Section 3.1]{DS1}
the Euler-Lagrange conditions derived from first variations in a
rather general form. This is to our knowledge the first time that
these conditions appear somewhere in this generality.

\medskip

Largely following ideas of \cite{Chang} and of White, we improved the second
regularity theorem to the following optimal statement for planar maps.

\begin{theorem}[Improved $2$-dimensional estimate, cf. {\cite[Theorem 0.12]{DS1}}]
\label{t:finite}
Let $f$ be $\D$-minimizing and $m=2$. Then $\sing (f)$ is discrete.
\end{theorem}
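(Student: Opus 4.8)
\emph{Strategy and reductions.} The plan is to prove a local finiteness statement: every $x_0\in\sing(f)$ has a neighbourhood $U$ with $\sing(f)\cap U$ finite. Since $\sing(f)$ is relatively closed by Theorem~\ref{t:structure}, this forces $\sing(f)$ to be discrete. Two preliminary reductions are harmless. First, the barycenter $\etaa\circ f:=\frac1Q\sum_i f_i$ is a single--valued harmonic (hence analytic) map, and subtracting it from every sheet yields a $\D$--minimizer with the same singular set and vanishing barycenter; so we may assume $\etaa\circ f\equiv0$. Second, by the elementary decomposition of a continuous $Q$--valued map into its irreducible components, near $x_0$ we may write $f=\sum_{l=1}^{L}f^{(l)}$ with each $f^{(l)}$ a $Q_l$--valued $\D$--minimizer that is irreducible at $x_0$ (its monodromy around $x_0$ acts transitively on its $Q_l$ sheets) and $\sum_l Q_l=Q$. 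Components with $Q_l=1$ are single analytic maps and produce no singular point, while collisions between two distinct components happen only where their analytic sheets meet --- a discrete set, as will be clear below. Thus it suffices to treat an $f$ that is irreducible at $x_0$ with multiplicity $Q\geq2$ and zero barycenter.

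\emph{Blow--up and classification of planar models.} Introduce Almgren's frequency $I_{x_0}(r):=r\,\D(f,B_r(x_0))\big/H_{x_0}(r)$, with $H_{x_0}(r):=\int_{\partial B_r(x_0)}|f|^2$. Its monotonicity (cf.\ \cite[Section~3.4]{DS1}) gives the limit $\alpha:=I_{x_0}(0^+)>0$, the asymptotics $H_{x_0}(r)\approx r^{1+2\alpha}$, and the fact that the normalised rescalings $y\mapsto H_{x_0}(r)^{-1/2}\,f(x_0+ry)$ converge, up to subsequences and strongly in $W^{1,2}_{\mathrm{loc}}\cap C^{0}_{\mathrm{loc}}(\R^2)$, to a nontrivial $\alpha$--homogeneous $\D$--minimizer $g$ on $\R^2$, still irreducible of multiplicity $Q$ and with zero barycenter. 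Identifying $\R^2\cong\C$, one classifies such $g$: irreducibility forces $\alpha=k/Q$ with $\gcd(k,Q)=1$, and, after an orthogonal change of coordinates in the target and a rotation of the plane, $g$ is assembled from the $Q$ branches of the monomial $z^{k/Q}$; in particular $\sing(g)=\{0\}$, and a White--type argument shows that the blow--up is actually unique.

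\emph{From the model to a local expansion, and the main difficulty.} The heart of the matter is to upgrade this infinitesimal picture to a genuine statement about $f$ near $x_0$: in suitable coordinates $f$ admits a convergent Puiseux--type expansion
\[
f(z)\;=\;\sum_{\zeta^Q=1}\a{\Gamma(\zeta\,z^{1/Q})}\,,
\]
where $z^{1/Q}$ is any fixed branch and $\Gamma$ is a single--valued map which is holomorphic (in the complex coordinate adapted to the problem) on a small disc. Granting this, on a punctured neighbourhood $f$ equals its $Q$ selections $z\mapsto\Gamma(\zeta^j z^{1/Q})$, each a genuine holomorphic function on a slit disc; two distinct selections cannot coincide on a set with an accumulation point unless they coincide identically, and identical coincidence would allow one to split off a nontrivial factor, contradicting irreducibility of multiplicity $Q$. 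Hence the collision set is discrete, $\sing(f)\cap U\subset\{x_0\}\cup(\text{collisions})$ is finite, and reassembling the components (collisions between distinct components being again discrete, for the same holomorphic reason) we conclude that $\sing(f)$ is locally finite, hence discrete. The difficult step is precisely this expansion --- Chang's analysis --- which needs, beyond monotonicity of $I_{x_0}$, sharp decay of $I_{x_0}(r)$ to $\alpha$, an almost--monotone ``distance from the homogeneous model'' quantity controlling the rate of blow--up convergence, the uniqueness of $g$, and, crucially, the complex--analytic rigidity of planar $\D$--minimizers --- the linearised shadow of Wirtinger's calibration (Theorem~\ref{t:FW}). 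It is exactly this holomorphic rigidity that prevents branch points from accumulating; in dimension $m\geq3$ it is unavailable and one can only expect $\dim\sing(f)\leq m-2$.
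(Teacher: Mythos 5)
Your outline correctly identifies the main ingredients that go into the DS1 proof --- frequency monotonicity, blow-up, classification of planar homogeneous $\D$-minimizers, and, crucially, the uniqueness of the tangent function (this is \cite[Theorem~5.3]{DS1}, which Section~\ref{ss:fundamental} of this survey singles out as the extra input needed to upgrade countability of $\sing(f)$ to discreteness, and which is still open for $m\geq 3$). The reduction to irreducible components with vanishing barycenter is also a sound first step.

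The gap is in the step that carries the whole weight of the argument: the assertion that the Puiseux germ $\Gamma$ is \emph{holomorphic} and the ensuing identity-theorem conclusion. Away from branch points a planar $\D$-minimizer splits into $Q$ harmonic selections; these are real-analytic, but there is no Cauchy--Riemann constraint on them. The Dirichlet energy is the linearization of the \emph{area} functional, not of the holomorphicity condition, and Wirtinger's calibration imposes no holomorphy on $\D$-minimizers (holomorphic subvarieties yield $\D$-minimizing $Q$-graphs, but the implication does not reverse). Two distinct harmonic maps from a planar disc into $\R^n$ may agree along an entire curve --- e.g.\ $(x,y)\mapsto x$ and $(x,y)\mapsto -x$ agree on $\{x=0\}$ --- so the identity theorem gives nothing, and neither the discreteness of the collision set within one irreducible component nor the discreteness of collisions between distinct components follows ``for the same holomorphic reason.'' The actual argument in \cite{DS1} extracts discreteness from the uniqueness of the tangent function combined with a quantitative rate of blow-up convergence; it is a frequency-function argument in which no holomorphic structure on $\D$-minimizers is used.
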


This result was announced in \cite{Chang}. However, to our knowledge the
proof has never appeared before \cite{DS1}. 

A new addition to the regularity theory, which will have a lot of importance in the
subsequent discussions, is the following higher integrability result.

\begin{theorem}[Higher integrability of $\D$-minimizers, cf. {\cite[Theorem 5.1]{DS3}}]
\label{t:hig fct}
Let $\Omega'\subset\subset\Omega \subset\subset\R^{m}$ be open domains.
Then, there exist $p>2$ and $C>0$ such that
\begin{equation}\label{e:hig fct}
\norm{Du}{L^p(\Omega')}\leq C\,\norm{Du}{L^2(\Omega)}\quad
\text{$\forall\, \D$-minim. }\, u\in W^{1,2}(\Omega,\Iqs).
\end{equation}
\end{theorem}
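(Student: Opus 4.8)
The plan is to prove a reverse H\"older inequality for $|Du|$ on balls and then invoke Gehring's lemma. Fix $\bB_r(x)\subset\subset\Omega$ and put $2_*:=\tfrac{2m}{m+2}$, so that $W^{1,2_*}$ embeds into $L^2$ with a scale-invariant constant (the case $m=1$ is trivial, since $\D$-minimisers on an interval have Lipschitz selections, and for $m=2$ one has $2_*=1$, which still makes the argument below work). The target is the estimate
\begin{equation}\label{e:revH}
\mint_{\bB_{r/2}(x)}|Du|^2\le C\Big(\mint_{\bB_r(x)}|Du|^{2_*}\Big)^{2/2_*}
\qquad\text{for every ball }\bB_r(x)\subset\subset\Omega,
\end{equation}
from which the classical Gehring--Giaquinta self-improvement lemma for reverse H\"older inequalities (applied to $|Du|^{2_*}\in L^{2/2_*}_{\mathrm{loc}}$ with exponent $2/2_*>1$) produces $p>2$ with $|Du|\in L^p_{\mathrm{loc}}$; a finite covering of $\Omega'$ by balls whose doubles lie in $\Omega$, together with the super-additivity of $t\mapsto t^{p/2}$, then upgrades the local bound to \eqref{e:hig fct}.

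The only point where $\D$-minimality really enters is a Caccioppoli-type inequality: with $\bar u_r\in\Iqs$ a minimiser of $T\mapsto\int_{\bB_r(x)}\cG(u,T)^2$ (which exists since this functional is continuous and coercive on the locally compact space $\Iqs$), I claim
\begin{equation}\label{e:caccio}
\int_{\bB_{r/2}(x)}|Du|^2\le\frac{C}{r^2}\int_{\bB_r(x)}\cG(u,\bar u_r)^2 .
\end{equation}
Since $\Iqs$ is not linear, the competitor cannot be obtained by multiplying $u$ by a cutoff; instead one uses the Interpolation Lemma \ref{l:technical}. For concentric balls $\bB_\sigma\subset\bB_\tau$ with $r/2\le\sigma<\tau\le r$, slicing gives $u|_{\de\bB_\rho}\in W^{1,2}(\de\bB_\rho,\Iqs)$ for a.e.\ $\rho\in(\sigma,\tau)$, together with
\[
\int_\sigma^\tau\D\big(u|_{\de\bB_\rho},\de\bB_\rho\big)\,d\rho\le\D(u,\bB_\tau\setminus\bB_\sigma),\qquad
\int_\sigma^\tau\Big(\int_{\de\bB_\rho}\cG(u,\bar u_r)^2\Big)\,d\rho=\int_{\bB_\tau\setminus\bB_\sigma}\cG(u,\bar u_r)^2 ,
\]
so by Chebyshev one may fix $\rho$ on which both spherical integrals are at most $\tfrac{C}{\tau-\sigma}$ times the corresponding solid integral. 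On $\bB_\rho$ one replaces $u$ by the map $w$ that equals the constant $\bar u_r$ on $\bB_{(1-\eps)\rho}$, equals $u$ on $\de\bB_\rho$, and interpolates on $\bB_\rho\setminus\bB_{(1-\eps)\rho}$ as in Lemma \ref{l:technical} with $\eps$ a small fixed multiple of $(\tau-\sigma)/r$; since $w$ matches $u$ across $\de\bB_\rho$ it is an admissible competitor, and $\D$-minimality together with Lemma \ref{l:technical} (the inner datum being constant) give
\[
\D(u,\bB_\sigma)\le\D(u,\bB_\rho)\le\D(w,\bB_\rho)\le C\eps\rho\,\D\big(u|_{\de\bB_\rho},\de\bB_\rho\big)+\frac{C}{\eps\rho}\int_{\de\bB_\rho}\cG(u,\bar u_r)^2 .
\]
With the above choices the right-hand side is bounded by $\theta\,\D(u,\bB_\tau)+\frac{C}{(\tau-\sigma)^2}\int_{\bB_r}\cG(u,\bar u_r)^2$ for some $\theta<1$ independent of $\sigma,\tau$, and the standard iteration lemma for such inequalities yields \eqref{e:caccio}.

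Finally, one combines \eqref{e:caccio} with the Sobolev--Poincar\'e inequality: Proposition \ref{p:poincare'} with $p=2_*<m$ (so that $p^*=2$), its scale invariance on balls, and the optimality of $\bar u_r$ give
\[
\Big(\int_{\bB_r(x)}\cG(u,\bar u_r)^2\Big)^{1/2}\le C\Big(\int_{\bB_r(x)}|Du|^{2_*}\Big)^{1/2_*}.
\]
Inserting this into \eqref{e:caccio}, dividing by $|\bB_{r/2}(x)|$, and using $\tfrac{2m}{2_*}=m+2$, one obtains exactly \eqref{e:revH}, and the proof is finished by Gehring's lemma and the covering argument described above. I expect the Caccioppoli inequality \eqref{e:caccio} to be the main obstacle: the admissible competitor has to be built through Lemma \ref{l:technical} on a sphere chosen by a Chebyshev argument, and the tangential Dirichlet energy on that sphere --- which is controlled only by $\int_{\bB_r}|Du|^2$, not by the $L^2$ oscillation --- must be absorbed by the iteration; the remaining ingredients (Sobolev--Poincar\'e for $Q$-valued maps, Gehring's lemma, and the covering) are either recorded in the excerpt or entirely classical.
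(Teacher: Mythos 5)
Your proposal is correct and follows essentially the same strategy as the paper's cited proof in \cite{DS3}: a reverse H\"older inequality derived via a comparison argument (constant competitor built with the Interpolation Lemma, a Chebyshev choice of sphere, and a hole-filling iteration to obtain a Caccioppoli-type bound, then the $Q$-valued Sobolev--Poincar\'e inequality with exponent $2_*$), followed by Gehring's lemma. The paper itself singles out exactly this route --- ``deriving a suitable reverse H\"older inequality and using \dots\ Gehring's Lemma'' with ``a comparison argument \dots\ relying heavily on the minimality'' --- so your argument matches it in substance.
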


We believe that several intricate 
arguments and complicated constructions
in the third chapter of Almgren's monograph can be reinterpreted as rather particular
cases of this key observation (see for instance \cite[Section 3.20]{Alm}).
Surprisingly, this higher integrability can be proved in a very
simple way by deriving
a suitable reverse H\"older inequality and using a (nowadays) very
standard version of the classical Gehring's Lemma. 

Theorem \ref{t:hig fct} has been stated and proved for the first time in \cite{DS3}. The relevant reverse
H\"older inequality has been derived using a comparison argument
and hence relying heavily on the minimality of the ${\rm Dir}$-minimizers.
A second proof, exploiting the Euler-Lagrange conditions to give
a Caccioppoli-type inequality, has been given in \cite{spadaro}. This last proof still
uses the regularity theory for $\D$-minimizers. However, this occurs only at one step:
one could hope to remove this restriction
and generalize the higher integrability
to ``critical'' points of the Dirichlet energy.

In \cite{spadaro} a yet different proof for the planar case is proposed, yielding
the optimal range of exponents $p$ for which \eqref{e:hig fct} holds.
The optimality of this result, as well as the optimality of 
Theorems \ref{t:hoelder} and \ref{t:finite}, is shown by another remarkable observation
of Almgren. Besides giving area minimizing currents, holomorphic varieties
are locally graphs of $\D$ minimizing $Q$-valued functions.
In \cite[Section 2.20]{Alm} Almgren proves this statement appealing to his powerful approximation
results for area minimizing current. 
However this is unnecessary and
a rather elementary proof can be found in \cite{spadaro}.

\subsection{H\"older continuity of $\D$-minimizers}\label{ss:Hoelder} The entire Section \ref{s:frequency} will be dedicated to the proof of Theorem
\ref{t:structure}, since it contains, in a simplified setting, several of the themes of the proof of Theorem \ref{t:cod>1}. 

In this paragraph we will instead discuss briefly the ideas behind the proof of Theorem \ref{t:hoelder}. We first assume that $u$ is a classical
(single valued) function and, for simplicity, that $m\geq 3$ (the case $m=2$ is somewhat special and can be handled in a
simpler way). Let $u\in W^{1,2} (B_1, \Iqs)$ be a minimizer and compare its energy to the energy of
the $0$-homogeneous extension $v$ of its trace on $\partial B_1 (x)$: we achieve the following crude inequality
\begin{equation}\label{e:comp-0}
\int_{B_1 (x)} |Du|^2 \leq \int_{B_1 (x)} |Dv|^2 \leq \frac{1}{m-2} \int_{\partial B_1 (x)} |Du|^2\, .
\end{equation}
A scaling-invariant version of the above inequality can be combined with Fubini's theorem to give the following differential inequality:
\begin{equation}\label{e:comp-1}
\int_{B_r(x)} |Du|^2 \leq \frac{r}{m-2} \frac{d}{dr} \int_{B_r(x)} |Du|^2\, ,
\end{equation}
which in turn gives the bound $\D (u, B_r(x)) \leq C r^{m-2}$. If we could improve the constant in \eqref{e:comp-1} to $\frac{1}{m-2+2\varepsilon}$,
the same reasoning would give the estimate $\D (u, B_r(x)) \leq C r^{m-2 + 2 \varepsilon}$ and Proposition \ref{p:Campanato Morrey} would imply the $\eps$-H\"older continuity of $u$. Now, for a single valued function $u$ the first inequality in \eqref{e:comp-0} is certainly strict, since $v$ does not satisfy the Euler-Lagrange conditions of a minimizer. It is not difficult to see that the very same conclusion can be drawn in the multivalued setting. The problem is to gain, in the factor of the right hand side of \eqref{e:comp-0}, a constant $\eps>0$ which is independent of the function (and, more importantly, of the central point $x$). 

\medskip

We can therefore focus on improving the constant in the right hand side of \eqref{e:comp-0} and without loss of generality we can assume $x=0$.
It is easy to see that we can assume, again without loss of generality, that the Dirichlet energy in $B_1 (0)$ is normalized to $1$.
When $u$ is single valued we can also assume that $u$ has average $0$ after subtracting a second suitable constant: the ``uniform gain'' from $\frac{1}{m-2}$ to $\frac{1}{m-2+2\varepsilon}$ in \eqref{e:comp-1} is then a simple consequence of the standard compactness of Sobolev maps
(via Poincar\'e inequality). 
However, the multivalued Poincar\'e inequality (namely Proposition \ref{p:poincare'}) does not give the same normalization property. The only well defined operation is the subtraction of the {\em same} value $p$ from {\em all} $Q$ sheets, namely given $u= \sum_i \a{u_i}$ we can set
\[
v(x) = \sum_{i=1}^Q \a{u_i (x) -p}\, .
\]
But the ``generalized mean'' of Proposition \ref{p:poincare'} is just a generic point in $\Iqs$. 
In particular we cannot expect compactness when we only control the Dirichlet energy: for a general Sobolev map some sheets might  be very far apart on a large subset and be very close on another, very small, subset. However, it can be shown that if the average separation between some sheets of a $\D$-minimizer $v$ is too large compared to its Dirichlet energy on a given domain, then $v$ must split into simpler functions in a smaller domain. This allows to prove that there is a uniform gain in the constant of the right hand side of the inequality of \eqref{e:comp-0}. The gain will depend upon $Q$, but this is not an artifact of the proof: as already pointed out, the H\"older exponent in Theorem \ref{t:hoelder} does deteriorate to $0$ as $Q\to\infty$. 

\medskip

The above discussion suggests that, although a control on the Dirichlet energy is not enough to ensure compactness for a general sequence of $Q$-valued $W^{1,2}$ functions, a suitable form of compactness can be recovered after modifying the sequence on a small
set. This principle plays a key role in our future discussions and has been formalized in \cite{DS3} in the following way.

\begin{definition}[Translating sheets, cf. {\cite[Definition 3.1]{DS3}}]\label{d:pacchetti}
Let $\Omega\subset\R^m$ be a bounded open set. A sequence of maps $\{h_k\}_{k\in \N}\subset W^{1,2}(\Omega,\Iqs)$ is called a sequence of {\em translating sheets} if there are:
\begin{itemize}
\item[(a)] integers $J\geq1$ and $Q_1, \ldots, Q_J\geq1$
satisfying $\sum_{j=1}^J Q_j = Q$,
\item[(b)] vectors $y^j_k\in \R^{n}$ (for  $j\in \{1, \ldots, J\}$ and $k\in \N$) with
\begin{equation}\label{e:separazione}
\lim_k |y^j_k - y^i_k|= + \infty\qquad \forall i\neq j,
\end{equation}
\item[(c)] and maps $\zeta^j\in W^{1,2}(\Omega, \I{Q_j})$ for $j\in \{1, \ldots, J\}$,
\end{itemize}
such that $h_k=\sum_{j=1}^J\llbracket \zeta_j - y^j_k\rrbracket$.
\end{definition}

\begin{proposition}[Concentration compactness, cf. {\cite[Proposition 3.3]{DS3}}]\label{p:cc}
Let $\Omega\subset \R^m$ be a Lipschitz bounded open set
and $(g_k)_{k\in\N} \subset W^{1,2}(\Omega,\Iq)$ a sequence of functions
with $\sup_k \int_\Omega |Dg_k |^2 < \infty$.
Then, there exist a subsequence (not relabeled) and a sequence of translating sheets $h_k$
such that $\norm{\cG(g_k,h_k)}{L^2}\to 0$ and the following inequalities
hold for every open $\Omega'\subset \Omega$ and any sequence of measurable sets
$J_k$ with $|J_k|\to 0$:
\begin{gather}
\liminf_{k\to+\infty} \left(\int_{\Omega'\setminus J_k} |Dg_k|^2 -\int_{\Omega'} |Dh_k|^2\right)\geq 0\label{e:cc2}\\
\limsup_{k\to+\infty} \int_\Omega
\left(|Dg_k|-|Dh_k|\right)^2 \leq \limsup_k \int_\Omega \left(|Dg_k|^2 - |D h_k|^2\right)\, .\label{e:cc3}
\end{gather}
\end{proposition}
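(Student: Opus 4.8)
I would argue by induction on $Q$, reducing the statement to a dichotomy: either (after one global translation) the $g_k$ concentrate around a single $\Iqs$-valued limit, or the $Q$ sheets break into two mutually diverging clusters, each of which is handled by the inductive hypothesis. For $Q=1$ the $g_k$ are classical $W^{1,2}$ maps; subtracting the means $p_k:=\mint_\Omega g_k$ and combining Poincar\'e with Rellich, a subsequence satisfies $g_k-p_k\to\zeta$ strongly in $L^2$ and $Dg_k\weak D\zeta$ in $L^2(\Omega)$, so I set $J=1$, $Q_1=1$, $y_k:=-p_k$, $h_k:=\zeta-y_k$, whence $\cG(g_k,h_k)=|g_k-p_k-\zeta|\to0$ in $L^2$ and $Dh_k=D\zeta$. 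Writing $Dg_k=D\zeta+r_k$ with $r_k\weak0$, for any open $\Omega'\subset\Omega$ and any $J_k$ with $|J_k|\to0$ one has
\[
\int_{\Omega'\setminus J_k}|Dg_k|^2=\int_{\Omega'\setminus J_k}|D\zeta|^2+2\int_{\Omega'\setminus J_k}D\zeta\cdot r_k+\int_{\Omega'\setminus J_k}|r_k|^2 .
\]
The first term tends to $\int_{\Omega'}|D\zeta|^2$ by absolute continuity of $|D\zeta|^2\mathcal L^m$; the second tends to $0$ (over $\Omega'$ by weak convergence, over $J_k$ by Cauchy--Schwarz, since $\|r_k\|_{L^2}$ is bounded and $\int_{J_k}|D\zeta|^2\to0$); the third is non-negative. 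This gives \eqref{e:cc2}, and \eqref{e:cc3} follows by expanding the square and using $|Dg_k|\,|D\zeta|\ge Dg_k\cdot D\zeta\to|D\zeta|^2$ together with $\limsup(X_k+Y_k)\le\limsup X_k+\limsup Y_k$. The same computation establishes \eqref{e:cc2}--\eqref{e:cc3} whenever the decomposition one produces consists of a single block ($J=1$).

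\textbf{The compact alternative.} For the inductive step, assume the statement for all $Q'<Q$. Let $\bar g_k:=\tfrac1Q\sum_i(g_k)_i$ be the canonical single-valued average and $\tilde g_k:=\sum_i\a{(g_k)_i-\bar g_k}$ the centered map, which still has equibounded Dirichlet energy (by the chain rule of Proposition \ref{p:chain}). If, along a subsequence, $\|\tilde g_k\|_{L^2}$ stays bounded, then $\tilde g_k$ is bounded in $W^{1,2}(\Omega,\Iqs)$; using White's energy-preserving version of Almgren's embedding $\xii$ (Theorem \ref{t:xi}), $\xii\circ\tilde g_k$ is bounded in $W^{1,2}(\Omega,\R^N)$, so by Rellich and closedness of $\cQ=\xii(\Iqs)$ a subsequence gives $\tilde g_k\to\zeta$ in $L^2$ for some $\zeta\in W^{1,2}(\Omega,\Iqs)$, while $\bar g_k-\mint_\Omega\bar g_k\to\beta$ in $L^2$ by Rellich again. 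Set $J=1$, $y_k:=-\mint_\Omega\bar g_k$, $h_k:=\sum_i\a{\zeta_i+\beta-y_k}$. Then $\cG(g_k,h_k)\to0$ in $L^2$, and since $|Dg_k|=|D(\xii\circ g_k)|$ while $\xii\circ g_k\weak\xii\circ(\sum_i\a{\zeta_i+\beta})$ weakly in $W^{1,2}$, inequalities \eqref{e:cc2}--\eqref{e:cc3} follow exactly as in the single-block computation above applied to the classical maps $\xii\circ g_k$.

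\textbf{The splitting alternative.} If instead $\|\tilde g_k\|_{L^2}\to\infty$, the map genuinely spreads out and I split it. The diameter function $D_k(x):=\diam\supp\tilde g_k(x)$ satisfies $|D_k(x)-D_k(y)|\le C_Q\,\cG(\tilde g_k(x),\tilde g_k(y))$, hence $D_k\in W^{1,2}(\Omega)$ with $\|DD_k\|_{L^2}\le C$; since $|(\tilde g_k)_i|\le D_k$ and $\|\tilde g_k\|_{L^2}\to\infty$, Poincar\'e forces $a_k:=\mint_\Omega D_k\to\infty$ and then $D_k\ge a_k/2$ outside a set $S_k$ with $|S_k|\to0$. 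A pigeonhole over $O(\log Q)$ dyadic scales in a window $[s_k,2^Ls_k]$ with $s_k\to\infty$, $s_k=o(a_k)$ --- bounding, via the coarea formula and $\|D\tilde g_k\|_{L^2}\le C$, the measure of the ``transition set'' where some pair of sheet-values sits at a medium distance --- yields a threshold $t_k\to\infty$ for which the partition of the $Q$ sheet-values into $t_k$-clusters is globally well-defined and stable outside a set $\Sigma_k\supset S_k$ with $|\Sigma_k|\to0$; the diameter lower bound $D_k\ge a_k/2\gg Qt_k$ guarantees at least two clusters. Recording the combinatorial type of this partition along a further subsequence, one writes $g_k=g'_k+g''_k$ with $g'_k\in W^{1,2}(\Omega,\I{Q'})$, $g''_k\in W^{1,2}(\Omega,\I{Q''})$, $Q'+Q''=Q$, $Q',Q''\ge1$, the two clusters being $\ge t_k$ apart on $\Omega\setminus\Sigma_k$; recentering each group by its mean $c'_k,c''_k$ one gets $|c'_k-c''_k|\to\infty$. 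Applying the inductive hypothesis separately to $g'_k-c'_k$ and $g''_k-c''_k$ and concatenating the two translating-sheets decompositions produces $h_k$: condition (b) holds within each group by the inductive hypothesis and across groups because $c'_k-c''_k$ diverges, while $\cG(g_k,h_k)\to0$ in $L^2$ by the triangle inequality. Finally \eqref{e:cc2}--\eqref{e:cc3} are inherited: off $\Sigma_k$ one has the pointwise splittings $|Dg_k|^2=|Dg'_k|^2+|Dg''_k|^2$ and $|Dh_k|^2=|Dh'_k|^2+|Dh''_k|^2$ (separated clusters do not interact), so the inequalities for $g_k$ follow by adding those for $g'_k,g''_k$ with the enlarged sets $J_k\cup\Sigma_k$ (still of vanishing measure), the leftover $\int_{\Omega'\cap\Sigma_k}|Dg_k|^2$ entering \eqref{e:cc2} with a favorable sign and being harmless in \eqref{e:cc3}.

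\textbf{Main obstacle.} The technical heart is the uniform splitting in the last case: separating sheets ``on average'' is easy, but producing a single threshold $t_k\to\infty$ along which the clustering is \emph{globally} stable up to a vanishing-measure set --- so that $g'_k,g''_k$ genuinely lie in $W^{1,2}(\Omega,\I{Q'})$, $W^{1,2}(\Omega,\I{Q''})$ and, crucially, the recentered cluster centers diverge --- requires using the Dirichlet bound quantitatively through the coarea/pigeonhole mechanism, and one must check that the bad set $\Sigma_k$ destroys neither the $L^2$-closeness $\cG(g_k,h_k)\to0$ nor the divergence $|c'_k-c''_k|\to\infty$. I remark that, since \eqref{e:cc2}--\eqref{e:cc3} are established for a \emph{fixed} $\Omega'$ and an \emph{arbitrary} sequence $J_k$, no diagonalization over $\Omega'$ is needed, and a single diagonal extraction suffices to terminate the induction on $Q$.
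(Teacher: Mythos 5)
Your overall architecture — induction on $Q$ with a dichotomy between a compact alternative (after subtracting the sheet-average and one global translation) and a splitting alternative — is exactly the concentration–compactness mechanism behind \cite[Proposition~3.3]{DS3}, and your treatment of $Q=1$ and of the compact alternative is sound. The splitting alternative, however, which you correctly flag as the technical heart, contains genuine gaps that the pigeonhole as sketched does not close.

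First, the clustering you produce at threshold $t_k$ is only defined on $\Omega\setminus\Sigma_k$, yet you immediately write $g_k=g'_k+g''_k$ with $g'_k\in W^{1,2}(\Omega,\I{Q'})$ and $g''_k\in W^{1,2}(\Omega,\I{Q''})$. This requires extending the decomposition across $\Sigma_k$ to Sobolev maps on \emph{all} of $\Omega$ without creating jumps and without injecting uncontrolled energy; neither the extension nor the energy of the extension is addressed. Moreover, even on $\Omega\setminus\Sigma_k$ the two clusters are distinguished only by their cardinalities; when $Q'=Q''$ nothing prevents the two clusters from being permuted across a component of $\Sigma_k$ or around a loop in $\Omega\setminus\Sigma_k$, in which case there is no globally well-defined $g'_k$. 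Some mechanism (e.g.\ passing first to the Lipschitz truncations of Proposition~\ref{p:lipapprox metric}, where the clustering is continuous and the bad set is exactly $\{g_k\neq f_k\}$ with controllable energy) is needed here, and you do not supply one.

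Second, the deduction of \eqref{e:cc3} is not the routine bookkeeping you claim. Off $\Sigma_k$ one has $|Dg_k|^2=|Dg'_k|^2+|Dg''_k|^2$ and, by AM--GM, $(|Dg_k|-|Dh_k|)^2\le(|Dg'_k|-|Dh'_k|)^2+(|Dg''_k|-|Dh''_k|)^2$; adding the inductive instances of \eqref{e:cc3} and comparing with $\int_\Omega(|Dg_k|^2-|Dh_k|^2)$ leaves a remainder of order
\[
\int_{\Sigma_k}|Dg_k|^2+\int_{\Sigma_k}\big(|Dg'_k|^2+|Dg''_k|^2\big)
\]
on the \emph{wrong} side of the inequality. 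This does not vanish merely because $|\Sigma_k|\to0$; one must build into the pigeonhole the stronger requirement that the chosen threshold also annihilates the Dirichlet energy on the transition set (and on the extension region), which you never impose. Relatedly, pigeonholing over $O(\log Q)$ dyadic scales yields only a fixed bound $|\Sigma_k|\lesssim 1/\log Q$, not $|\Sigma_k|\to0$: a number of scales $L_k\to\infty$ with $2^{L_k}s_k=o(a_k)$ is needed already for the measure estimate, let alone for the energy one. Until these points are repaired, the inductive step does not go through.
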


\section{The frequency function}\label{s:frequency}

In this section we review the ideas behind the proof of Theorem \ref{t:structure}. As already mentioned the argument will serve as a prototype for the argument of Theorem \ref{t:cod>1} and for this reason we will be quite detailed. 

\subsection{First variations} There are two natural types of variations that can
be used to perturb $\D$-minimizing $Q$-valued
functions. The first ones, which we
call inner variations, are generated by right compositions
with diffeomorphisms of the domain.
The second, which we call
outer variations, correspond to ``left compositions''.
More precisely, let $f$ be a $\D$-minimizing
$Q$-valued map. 
\begin{itemize}
\item[(IV)] Given $\varphi\in C^\infty_c (\Omega,
\R^{m})$, for $\eps$ sufficiently small,
$x\mapsto \Phi_\eps (x)=x +\eps \varphi (x)$ is
a diffeomorphism of $\Omega$ which leaves $\partial
\Om$ fixed. Therefore,
\begin{equation}\label{e:IV}
0= \left.\frac{d}{d\eps}\right|_{\eps=0}
\int_\Omega |D (f \circ \Phi_\eps)|^2.
\end{equation}
\item[(OV)] Given $\psi\in C^\infty (\Omega\times
\R^{n}, \R^{n})$ such that $\supp(\psi)\subset \Om'\times \R^{n}$ for
some $\Om'\subset\subset\Om$, we set $\Psi_\eps (x)
=\sum_i \a{f_i (x) + \eps \psi (x, f_i (x))}$ and
derive
\begin{equation}\label{e:OV}
0= \left.\frac{d}{d\eps}\right|_{\eps=0}
\int_\Omega |D\, \Psi_\eps|^2.
\end{equation}
\end{itemize}
Using the multivalued chain rules we can turn the conditions \eqref{e:IV} and \eqref{e:OV} into the following identities:

\begin{proposition}[First variations. cf. {\cite[Proposition 3.1]{DS1}}]\label{p:first}
Let $f:\Omega\to\Iqs$ be $\D$-minimizing.
For every $\varphi\in C^\infty_c (\Omega, \R^{m})$, we have
\begin{equation}\label{e:IVbis}
2\int \sum_i \big\langle Df_i  : 
Df_i \cdot D\varphi \big\rangle\,
- \int |Df|^2 \,{\rm div}\, \varphi = 0.
\end{equation}
For every $\psi\in C^\infty (\Omega_x\times \R^{n}_u,
\R^{n})$ such that
$$
\supp(\psi)\subset \Om'\times \R^{n}\quad
\mbox{for some $\Om'\subset\subset\Om$,}
$$
and
\begin{equation}\label{e:hp}
|D_u\psi|\leq C<\infty\quad\textrm{and}\quad
|\psi|+|D_x\psi|\leq C\left(1+|u|\right)\,,
\end{equation}
we have
\begin{align}
&\int \sum_i \big\langle D f_i (x) : 
D_x \psi (x, f_i (x))\big\rangle\, dx
+ \int \sum_i \big\langle Df_i (x) : 
D_u \psi (x, f_i (x))\cdot
Df_i (x)\rangle\, d x= 0. \label{e:OVbis}
\end{align}
\end{proposition}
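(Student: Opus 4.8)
The plan is to derive the two identities \eqref{e:IVbis} and \eqref{e:OVbis} by differentiating the energy along the admissible families of competitors and, crucially, justifying that differentiation under the integral sign is legitimate. Since $f$ is $\D$-minimizing, it is in particular a critical point, so the derivatives in \eqref{e:IV} and \eqref{e:OV} vanish; the content of the proposition is purely the computation of those derivatives using the $Q$-valued chain rules of Proposition \ref{p:chain}. Throughout, one works with a measurable selection $f = \sum_i \a{f_i}$ and uses that $f$ is approximately differentiable a.e. (Corollary \ref{c:appdiffae}), so all pointwise computations make sense off a null set.

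\textbf{Inner variations.} First I would fix $\varphi \in C^\infty_c(\Omega,\R^m)$, set $\Phi_\eps(x) = x + \eps\varphi(x)$, and observe that for $|\eps|$ small $\Phi_\eps$ is a diffeomorphism of $\Omega$ fixing $\partial\Omega$, so $f\circ\Phi_\eps$ is an admissible competitor. By the chain rule \eqref{e:interna}, $D(f\circ\Phi_\eps)(y) = \sum_i \a{Df_i(\Phi_\eps(y))\cdot D\Phi_\eps(y)}$, hence $|D(f\circ\Phi_\eps)|^2(y) = \sum_i |Df_i(\Phi_\eps(y))\cdot D\Phi_\eps(y)|^2$. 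Changing variables $x = \Phi_\eps(y)$ introduces the Jacobian factor $J\Phi_\eps^{-1}(x) = \det(D\Phi_\eps(y))^{-1}$, and one obtains
\begin{equation*}
\int_\Omega |D(f\circ\Phi_\eps)|^2 = \int_\Omega \sum_i \big|Df_i(x)\cdot D\Phi_\eps(\Phi_\eps^{-1}(x))\big|^2 \, \det\big(D\Phi_\eps(\Phi_\eps^{-1}(x))\big)^{-1}\, dx .
\end{equation*}
Now $D\Phi_\eps = \Id + \eps D\varphi$, so $D\Phi_\eps(\Phi_\eps^{-1}(x)) = \Id + \eps D\varphi(x) + O(\eps^2)$ and $\det(D\Phi_\eps)^{-1} = 1 - \eps\,\dv\varphi(x) + O(\eps^2)$, uniformly in $x$ on the compact support of $\varphi$. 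Differentiating at $\eps = 0$ — justified by dominated convergence, since the integrand has an $\eps$-derivative bounded by $C(\varphi)|Df|^2 \in L^1$ — gives $\frac{d}{d\eps}\big|_0 \int |D(f\circ\Phi_\eps)|^2 = 2\int \sum_i \langle Df_i : Df_i\cdot D\varphi\rangle - \int |Df|^2\,\dv\varphi$, which together with \eqref{e:IV} yields \eqref{e:IVbis}.

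\textbf{Outer variations.} For $\psi \in C^\infty(\Omega\times\R^n,\R^n)$ with $\supp(\psi)\subset\Omega'\times\R^n$, $\Omega'\subset\subset\Omega$, satisfying the growth bounds \eqref{e:hp}, I would set $\Psi_\eps(x) = \sum_i \a{f_i(x) + \eps\psi(x,f_i(x))}$; this is a well-defined $Q$-valued Sobolev map (the growth condition $|\psi| \le C(1+|u|)$ ensures $\Psi_\eps \in W^{1,2}$ and $|D_u\psi|$ bounded ensures the differential is controlled), and it agrees with $f$ near $\partial\Omega$, so it is admissible. Applying the chain rule \eqref{e:esterna} with $\Psi(x,u) = u + \eps\psi(x,u)$ gives $D\Psi_\eps(x) = \sum_i \a{Df_i(x) + \eps D_u\psi(x,f_i(x))\cdot Df_i(x) + \eps D_x\psi(x,f_i(x))}$ at points of approximate differentiability. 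Squaring and expanding,
\begin{equation*}
|D\Psi_\eps|^2 = |Df|^2 + 2\eps\sum_i\big\langle Df_i : D_u\psi(x,f_i)\cdot Df_i + D_x\psi(x,f_i)\big\rangle + \eps^2\,(\cdots),
\end{equation*}
where the $\eps^2$-term is integrable and bounded uniformly by \eqref{e:hp} (it involves only $|D_u\psi|^2|Df|^2$, $|D_x\psi|^2 \le C(1+|f|)^2$, and cross terms, all in $L^1$ since $f\in W^{1,2}\subset L^2$ locally and the support in $x$ is compact). Differentiating at $\eps=0$ under the integral, again legitimized by a uniform $L^1$ bound on the $\eps$-derivative of the integrand, and using \eqref{e:OV}, produces \eqref{e:OVbis}.

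\textbf{Main obstacle.} The computations above are, on their face, the same as in the single-valued case; the genuine issues are (i) ensuring the competitors $f\circ\Phi_\eps$ and $\Psi_\eps$ are genuinely admissible in $W^{1,2}_{g}(\Omega,\Iq)$ — for the outer variation this is exactly where the growth hypotheses \eqref{e:hp} enter, to control both the $L^2$ norm and the energy of $\Psi_\eps$ — and (ii) justifying the interchange of $\frac{d}{d\eps}$ with $\int$. For the latter one cannot simply invoke smoothness of the integrand in $\eps$ pointwise; one needs a dominating function, and here the quadratic structure is essential: the difference quotients $\eps^{-1}(|D\Psi_\eps|^2 - |Df|^2)$ are bounded, for $|\eps|\le 1$, by an $\eps$-independent $L^1$ function built from $|Df|^2$, $|D_u\psi|^2|Df|^2$, $(1+|f|)^2$ and their products, all locally integrable. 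The approximate differentiability of $f$ (Corollary \ref{c:appdiffae}) is what makes the pointwise chain-rule identities valid a.e., and the decomposition Lemma \ref{l:chop} (via the Lipschitz approximation of Proposition \ref{p:lipapprox metric}) is the tool that reduces the verification of the chain rules for Sobolev $f$ to the Lipschitz case treated in Proposition \ref{p:chain}. I would present the inner-variation computation in full and then indicate that the outer-variation one is entirely parallel, flagging only the role of \eqref{e:hp} in the admissibility and domination steps.
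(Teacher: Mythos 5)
Your proof is correct and follows essentially the same route as the proof of \cite[Proposition 3.1]{DS1}, which the survey cites but does not reproduce: expand the energy of the perturbed competitor via the $Q$-valued chain rules of Proposition~\ref{p:chain} (valid a.e.\ by Corollary~\ref{c:appdiffae}), differentiate under the integral sign by dominated convergence, and use the growth bounds \eqref{e:hp} to verify that the outer competitor $\Psi_\eps$ lies in $W^{1,2}$ with the right trace and that the $\eps^2$ remainder is dominated. The points you single out as the genuine obstacles — admissibility of the competitors, domination of the difference quotients, and the reduction of the a.e.\ chain rule from Lipschitz to Sobolev $f$ via Lemma~\ref{l:chop} and Proposition~\ref{p:lipapprox metric} — are exactly the non-routine steps.
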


\subsection{The monotonicity of the frequency function}\label{ss:monot_freq}
\eqref{e:IVbis} and \eqref{e:hp} give particularly interesting identities when tested with functions which depend on $|x|$. In what follows, $\nu$ will always denote the outer unit normal
on the boundary $\partial B$ of a given ball. The following proposition gives the relevant identities when we test with the singular functions $\varphi (y) = {\bf 1}_{B_r (x)} (y) y$ and $\psi (x,u) = u {\bf 1}_{B_r (x)} (y)$ (the proof follows from a standard regularization of these $\varphi$ and $\psi$). 

\begin{proposition}[cf. {\cite[Proposition 3.1]{DS1}}]\label{p:import}
Let $x\in \Omega$ and $f:\Omega \to \Iqs$ be $\D$-minimizing.
Then, for a.e. $0< r<
\dist(x, \partial\Om)$, we have
\begin{equation}\label{e:cono}
(m-2) \int_{B_r (x)} |Df|^2 =
r \int_{\partial B_r (x)} |Df|^2 - 2\,
r \int_{\partial B_r (x)} \sum_i |\partial_\nu f_i|^2,
\end{equation}
\begin{equation}\label{e:perparti}
\int_{B_r (x)} |Df|^2 = \int_{\partial B_r (x)}
\sum_i \langle \partial_\nu f_i, f_i\rangle.
\end{equation}
\end{proposition}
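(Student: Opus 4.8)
The plan is to derive \eqref{e:cono} and \eqref{e:perparti} by inserting into the first variation identities of Proposition \ref{p:first} the two singular fields $\varphi(y)=\mathbf{1}_{B_r(x)}(y)\,(y-x)$ and $\psi(y,u)=\mathbf{1}_{B_r(x)}(y)\,u$, after replacing them by admissible smooth approximations. Normalise $x=0$ and write $\rho=|y|$, $\nu=y/|y|$, so that $\partial_\nu f_i=Df_i\cdot\nu$ is the radial derivative and $Df_i\cdot y=\rho\,\partial_\nu f_i$ on $\partial B_\rho$. Throughout, fix $\chi\in C^\infty([0,\infty);[0,1])$ with $\chi\equiv 1$ on $[0,r]$ and $\chi\equiv0$ on $[r+\eps,\infty)$.

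For \eqref{e:cono} I would test \eqref{e:IVbis} with $\varphi(y)=\chi(|y|)\,y$, which is a legitimate field in $C^\infty_c(\Omega,\R^{m})$ for $\eps$ small. A direct computation gives $D\varphi=\chi(|y|)\,\Id+\chi'(|y|)\,|y|^{-1}\,y\otimes y$ and $\mathrm{div}\,\varphi=m\,\chi(|y|)+|y|\,\chi'(|y|)$, and since $\langle Df_i:Df_i\cdot(y\otimes y)\rangle=|Df_i\cdot y|^2=|y|^2\,|\partial_\nu f_i|^2$, identity \eqref{e:IVbis} becomes
\begin{equation*}
(2-m)\int\chi(|y|)\,|Df|^2\,dy+\int|y|\,\chi'(|y|)\,\Big(2\sum_i|\partial_\nu f_i|^2-|Df|^2\Big)\,dy=0\, .
\end{equation*}
Letting $\eps\downarrow 0$, the first integral converges to $(2-m)\int_{B_r}|Df|^2$, while for every $g\in L^1$ one has $\int|y|\,\chi'(|y|)\,g(y)\,dy=\int_0^\infty\rho\,\chi'(\rho)\big(\int_{\partial B_\rho}g\big)\,d\rho\to-r\int_{\partial B_r}g$ at every $r$ where $\rho\mapsto\int_{\partial B_\rho}g$ is approximately continuous, i.e.\ for a.e.\ $r$; this yields \eqref{e:cono}.

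For \eqref{e:perparti} I would test \eqref{e:OVbis} with $\psi(y,u)=\chi(|y|)\,u$. This field is admissible: it is smooth, supported in $B_{r+\eps}\times\R^n$ with $B_{r+\eps}\subset\subset\Omega$ once $\eps$ is small, and since $\chi,\chi'$ are bounded it satisfies \eqref{e:hp} ($|D_u\psi|\le C$ and $|\psi|+|D_x\psi|\le C(1+|u|)$). One computes $(D_x\psi(y,u))_{k\alpha}=\chi'(|y|)\,|y|^{-1}\,u_k\,y_\alpha$ and $D_u\psi(y,u)=\chi(|y|)\,\Id$, hence $\langle Df_i:D_x\psi(y,f_i)\rangle=\chi'(|y|)\,\langle\partial_\nu f_i,f_i\rangle$ and $\langle Df_i:D_u\psi\cdot Df_i\rangle=\chi(|y|)\,|Df_i|^2$, so \eqref{e:OVbis} reads
\begin{equation*}
\int\chi'(|y|)\sum_i\langle\partial_\nu f_i,f_i\rangle\,dy+\int\chi(|y|)\,|Df|^2\,dy=0\, .
\end{equation*}
Passing to the limit $\eps\downarrow 0$ exactly as before produces $\int_{B_r}|Df|^2=\int_{\partial B_r}\sum_i\langle\partial_\nu f_i,f_i\rangle$, which is \eqref{e:perparti}.

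The only genuinely delicate point — and the reason the identities are stated only for a.e.\ $r$ — is this passage to the limit: one must know that the boundary integrands make sense and that $\chi'(|y|)$ acts, in the limit, as the surface measure on $\partial B_r$. Both follow from Fubini's theorem for $W^{1,2}$, available for $Q$-valued maps in \cite{DS1}: for a.e.\ $r$ the slice $f|_{\partial B_r}$ belongs to $W^{1,2}(\partial B_r,\Iqs)$, so $\sum_i|\partial_\nu f_i|^2$ and $\sum_i\langle\partial_\nu f_i,f_i\rangle$ are integrable on $\partial B_r$; combined with the Lebesgue differentiation theorem applied to $\rho\mapsto\int_{\partial B_\rho}(\cdot)$ this gives the stated limits. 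Everything else is the elementary algebra of \eqref{e:IVbis}--\eqref{e:OVbis} displayed above, the validity of those two identities being already guaranteed by Proposition \ref{p:first}.
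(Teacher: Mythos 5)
Your proof is correct and follows precisely the route the paper indicates just before Proposition~\ref{p:import}: regularize the singular test fields $\varphi(y)=\mathbf{1}_{B_r(x)}(y)\,(y-x)$ and $\psi(y,u)=\mathbf{1}_{B_r(x)}(y)\,u$, insert them into the first-variation identities \eqref{e:IVbis} and \eqref{e:OVbis}, and pass to the limit using polar coordinates and Lebesgue points for a.e.\ $r$. The algebra, the verification of the admissibility hypothesis \eqref{e:hp}, and the treatment of the limit $\varepsilon\downarrow 0$ are all in order.
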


We next introduce Almgren's frequency function and
state his celebrated monotonicity estimate, which is a straightforward consequence of the identities \eqref{e:cono} and \eqref{e:perparti}. Recall the notation $|f|$ for the function $\cG (f, Q\a{0})$. 

\begin{definition}[The frequency function, cf. {\cite[Definition 3.13]{DS1}}]\label{d:frequency}
Let $f$ be a $\D$-minimizing function, $x\in \Omega$
and $0<r<\dist (x, \partial \Omega)$. We define the
functions
\begin{equation}\label{e:frequency}
D_{x,f} (r)=\int_{B_r (x)} |Df|^2, \quad
H_{x,f} (r)=\int_{\partial B_r} |f|^2\quad\textrm{and}\quad
I_{x,f} (r)=\frac{r D_{x,f} (r)}{H_{x,f} (r)}. 
\end{equation}
$I_{x,f}$ is called the {\em frequency function}.
\end{definition}

When $x$ and $f$ are clear from the context, we will often
use the shorthand notation $D(r)$, $H(r)$ and $I(r)$.

\begin{theorem}[Monotonicity of the frequency function, cf. {\cite[Theorem 3.15]{DS1}}]\label{t:frequency}
Let $f$ be $\D$-minimizing and $x\in \Omega$.
Either there exists $\varrho > 0$ such that $f|_{B_\varrho (x)}\equiv 0$
or $I_{x,f} (r)$ is an absolutely continuous nondecreasing positive function
on $]0, \dist (x, \partial \Omega)[$. This function takes a constant value $\alpha$ if and only
if $f (y)$ is $\alpha$-homogeneous in $y-x$.
\end{theorem}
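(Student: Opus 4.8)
The plan is to reduce the statement to differentiating the ratio $I = rD/H$ and applying the two integral identities of Proposition~\ref{p:import} together with a Cauchy--Schwarz inequality. First I would record the elementary formula $H'(r) = \tfrac{m-1}{r}H(r) + 2D(r)$: writing $H(r) = r^{m-1}\int_{\partial B_1}|f(x+r\omega)|^2\, d\cH^{m-1}(\omega)$, differentiating under the integral sign and using $\partial_r|f|^2 = 2\sum_i\langle\partial_\nu f_i,f_i\rangle$ together with \eqref{e:perparti} yields this identity, and the $W^{1,1}$ regularity of $|f|^2$ makes $H$ locally absolutely continuous. As $H\ge 0$ this forces $H'\ge 0$, so $H$ is nondecreasing; hence if $H(\varrho)=0$ for some $\varrho$ then $H\equiv 0$ on $]0,\varrho[$ and, integrating in $r$, $f\equiv Q\a{0}$ on $B_\varrho(x)$, which is the first alternative. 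Henceforth I assume $H>0$ on the whole interval, so that $I$ is well defined and (being a quotient of locally absolutely continuous functions with denominator locally bounded away from zero) locally absolutely continuous; it is visibly nonnegative, and positive away from the degenerate case in which $f$ is a non-zero constant near $x$.

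The core step is then to differentiate $\log I(r) = \log r + \log D(r) - \log H(r)$. Rewriting $D'(r) = \int_{\partial B_r}|Df|^2$ (coarea) by means of \eqref{e:cono} as $D'(r) = \tfrac{m-2}{r}D(r) + 2\int_{\partial B_r}\sum_i|\partial_\nu f_i|^2$, and using the formula for $H'/H$ above, the purely dimensional terms $\tfrac1r+\tfrac{m-2}{r}-\tfrac{m-1}{r}$ cancel and one obtains, for a.e.\ $r$,
\[
\frac{I'(r)}{I(r)} = 2\left(\frac{\int_{\partial B_r}\sum_i|\partial_\nu f_i|^2}{D(r)} - \frac{D(r)}{H(r)}\right).
\]
By \eqref{e:perparti} one has $D(r) = \int_{\partial B_r}\sum_i\langle\partial_\nu f_i,f_i\rangle$, so the Cauchy--Schwarz inequality in $L^2(\partial B_r;(\R^n)^Q)$ gives $D(r)^2 \le \big(\int_{\partial B_r}\sum_i|\partial_\nu f_i|^2\big)\,H(r)$; hence the bracket is $\ge 0$ for a.e.\ $r$, and $I'\ge 0$ a.e.\ together with the absolute continuity of $I$ shows that $I$ is nondecreasing.

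For the rigidity claim I would examine the equality cases. If $I\equiv\alpha$ on some interval then $I'=0$ there a.e., so equality holds in the above Cauchy--Schwarz inequality for a.e.\ $r$: the $(\R^n)^Q$-valued maps $(\partial_\nu f_i)_i$ and $(f_i)_i$ are proportional on $\partial B_r$ through a \emph{single} scalar $\lambda(r)$, i.e.\ $\partial_\nu f_i = \lambda(r)\,f_i$ for every $i$ and $\cH^{m-1}$-a.e.\ point; pairing with $f_i$ and integrating gives $D(r)=\lambda(r)H(r)$, so $\lambda(r)=D(r)/H(r)=I(r)/r=\alpha/r$. Since the proportionality factor is common to all sheets, the radial relation descends to the unordered $Q$-tuple and integrating it produces $f(x+r\omega)=r^\alpha g(\omega)$ in $\Iqs$, i.e.\ $f$ is $\alpha$-homogeneous in $y-x$; conversely, if $f$ is $\alpha$-homogeneous then $\partial_\nu f_i=\tfrac\alpha r f_i$ and \eqref{e:perparti} gives $D(r)=\tfrac\alpha r H(r)$, whence $I\equiv\alpha$. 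I expect this last direction to be the only genuinely delicate point: one must make sure the Cauchy--Schwarz equality --- a priori only an integrated identity on each sphere --- upgrades to the pointwise relation with a sheet-independent factor, and that integrating it in $r$ is legitimate for a $Q$-valued (quotient-space) map; granted Proposition~\ref{p:import}, the monotonicity itself is bookkeeping plus Cauchy--Schwarz and the standard radial regularity of $D$ and $H$.
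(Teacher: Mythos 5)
Your argument is correct and is essentially the proof in \cite{DS1} (Theorem~3.15): both rest on the two first-variation identities \eqref{e:cono} and \eqref{e:perparti} of Proposition~\ref{p:import}, the coarea formula for $D'$, the identity $H'=\tfrac{m-1}{r}H+2D$, and Cauchy--Schwarz on $\partial B_r$ (with the $(\R^n)^Q$-valued inner product, so that the proportionality factor in the equality case is a single scalar independent of the sheet index, which is exactly what lets you descend the radial ODE to the $Q$-valued map and upgrade to homogeneity via one-dimensional selections along rays). The only small slip is the phrase ``As $H\ge 0$ this forces $H'\ge 0$'': the intended (and correct) reasoning is that $H'=\tfrac{m-1}{r}H+2D$ is a sum of two nonnegative terms, not that nonnegativity of $H$ alone implies monotonicity.
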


This monotonicity is the main ingredient in the proof of both Theorems \ref{t:cod>1} and \ref{t:structure}. An important observation, which was first made in \cite{DS5}, is that the frequency function can be thought as a ``singular limit'' of smoother objects, i.e. of regularized frequency functions, which are also monotone. This simple remark (which is not present in Almgren's monograph) gives an important advantage: the regularized frequency functions enjoy better continuity properties in terms of $f$. 

\begin{definition}[Regularized requency functions]\label{d:frequency_r}
Assume $\phi$ is a Lipschitz nonnegative nonincreasing compactly supported function on $[0,1[$ which is constant and positive in a neighborhood of $0$ and define
\begin{align*}
\bD_{0,f} (r) := &\int \phi\left(\frac{|x|}{r}
\right)\,|D f|^2(x)\, dx\\
\bH_{0,f} (r) := &- \int \phi'\left(\frac{|x|}{r}\right)\,\frac{|f|^2(x)}{|x|}\, dx\\
\bI_{0,f} (r) := &\frac{r\,\bD_{0,f} (r)}{\bH_{0,f} (r)}\, . 
\end{align*}
\end{definition}

\begin{theorem}\label{t:frequency2}
Let $f$ be $\D$-minimizing and $0\in \Omega$.
Either there exists $\varrho$ such that $f|_{B_\varrho (0)}\equiv 0$
or $\bI_{0,f} (r)$ is an absolutely continuous nondecreasing positive function
on $]0, \dist (x, \partial \Omega)[$. This function takes a constant value $\alpha$ if and only
if $f (x)$ is $\alpha$-homogeneous in $x$.
\end{theorem}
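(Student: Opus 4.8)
The plan is to repeat the proof of Theorem~\ref{t:frequency} almost verbatim, replacing the sharp cut-off ${\bf 1}_{B_r(x)}$ by the smooth profile $\phi(|\cdot|/r)$ in the test functions for the first variations. Assume $x=0$ and fix $0<r<\dist(0,\partial\Omega)$, so that all test functions below are compactly supported in $\Omega$; I use freely that $\phi\geq 0$ and $\phi'\leq 0$ a.e., and I set
\[
\cE(r):=-\int \phi'\!\left(\tfrac{|y|}{r}\right)\tfrac{|y|}{r}\,\sum_i|\partial_\nu f_i(y)|^2\,dy\geq 0 .
\]

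First I would test the inner variation \eqref{e:IVbis} with the Lipschitz field $\varphi(y)=\phi(|y|/r)\,y$ (after the standard smooth regularization used in passing from \eqref{e:IVbis} to \eqref{e:cono}). Since $D\varphi(y)=\phi(|y|/r)\,\mathrm{Id}+\phi'(|y|/r)\,\frac{y\otimes y}{r|y|}$ and $\sum_a\partial_a f_i^b(y)\,y_a=|y|\,\partial_\nu f_i^b(y)$, the same computation that yields \eqref{e:cono} now produces
\begin{equation}\label{e:cono-reg}
(m-2)\,\bD_{0,f}(r)=r\,\bD_{0,f}'(r)-2\,\cE(r),
\end{equation}
where one uses $r\,\bD_{0,f}'(r)=-\int\phi'(|y|/r)\,\tfrac{|y|}{r}\,|Df|^2\,dy$. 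Then I would test the outer variation \eqref{e:OVbis} with $\psi(y,u)=\phi(|y|/r)\,u$, which satisfies \eqref{e:hp} with constants depending on the fixed $r$; using again $\sum_a\partial_a f_i^b\,y_a=|y|\,\partial_\nu f_i^b$ this gives the analogue of \eqref{e:perparti},
\begin{equation}\label{e:perparti-reg}
\bD_{0,f}(r)=-\frac1r\int\phi'\!\left(\tfrac{|y|}{r}\right)\sum_i\langle\partial_\nu f_i,f_i\rangle\,dy .
\end{equation}
Next, after the change of variables $y\mapsto ry$, I would differentiate $\bH_{0,f}$ under the integral sign --- this is the only step that uses the absolute continuity on spheres of the $L^2$ norm of a $W^{1,2}$ map, and it is where the smooth profile is genuinely more convenient than the sharp cut-off --- obtaining $\bH_{0,f}'(r)=\frac{m-1}{r}\bH_{0,f}(r)+2\,\bD_{0,f}(r)$, the term $2\bD_{0,f}(r)$ coming from \eqref{e:perparti-reg}.

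Combining \eqref{e:cono-reg} with this formula for $\bH_{0,f}'$, the explicit $\tfrac1r$-contributions cancel and
\begin{equation}\label{e:logder}
\frac{d}{dr}\log\bI_{0,f}(r)=\frac1r+\frac{\bD_{0,f}'(r)}{\bD_{0,f}(r)}-\frac{\bH_{0,f}'(r)}{\bH_{0,f}(r)}=\frac{2\,\cE(r)}{r\,\bD_{0,f}(r)}-\frac{2\,\bD_{0,f}(r)}{\bH_{0,f}(r)} .
\end{equation}
To see that the right-hand side is nonnegative I would apply Cauchy--Schwarz twice to \eqref{e:perparti-reg}: pointwise in the index $i$, $\sum_i\langle\partial_\nu f_i,f_i\rangle\leq(\sum_i|\partial_\nu f_i|^2)^{1/2}(\sum_i|f_i|^2)^{1/2}$, and then in $L^2\big((-\phi'(|y|/r))\,dy\big)$, splitting the weight as $1=(|y|/r)^{1/2}\cdot(r/|y|)^{1/2}$ so as to reconstruct $\cE(r)$ and $r\,\bH_{0,f}(r)$; this yields $r\,\bD_{0,f}(r)^2\leq\cE(r)\,\bH_{0,f}(r)$, hence $\tfrac{d}{dr}\log\bI_{0,f}\geq 0$ and $\bI_{0,f}$ is nondecreasing. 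Absolute continuity then follows from the local Lipschitz continuity in $r$ of $\bD_{0,f}$ (since $\phi$ is Lipschitz) and of $\bH_{0,f}$ (via the formula for $\bH_{0,f}'$), while the dichotomy ``$f$ vanishes near $0$'' versus ``$\bI_{0,f}$ is well defined and positive'' is obtained exactly as in Theorem~\ref{t:frequency}, using that $\bD_{0,f}$ and $\bH_{0,f}$ are nonnegative averages of $D_{x,f}$ and $H_{x,f}$.

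Finally, for the rigidity statement: if $\bI_{0,f}\equiv\alpha$ on an interval then all the inequalities above are equalities for a.e. $r$, and the equality cases of the two Cauchy--Schwarz steps force $\partial_\nu f_i=\tfrac{\alpha}{|y|}\,f_i$ on $\partial B_\rho$ for a.e. $\rho$; since as $r$ varies the supports of $y\mapsto\phi'(|y|/r)$ exhaust all radii in $]0,\dist(0,\partial\Omega)[$, this makes $f$ $\alpha$-homogeneous in $y$. Conversely, if $f$ is $\alpha$-homogeneous the substitution $\rho=rs$ turns $\bD_{0,f}(r)$ and $\bH_{0,f}(r)$ into explicit powers of $r$ (the normalisation of $\phi$ ensures convergence of the resulting $s$-integrals, and the identity $I_{x,f}\equiv\alpha$ for homogeneous maps recorded in Theorem~\ref{t:frequency} pins down the ratio), so $\bI_{0,f}\equiv\alpha$. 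I expect the only delicate points to be the rigorous justification of \eqref{e:cono-reg}--\eqref{e:perparti-reg} with Lipschitz rather than smooth test functions and the differentiation producing $\bH_{0,f}'$; both are routine adaptations of the corresponding steps behind Proposition~\ref{p:import} and Theorem~\ref{t:frequency}, and there is no new conceptual obstacle --- the point of the statement is precisely that this bookkeeping becomes cleaner once the cut-off is smoothed.
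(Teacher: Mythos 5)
Your proposal is correct and follows precisely the route the paper indicates: the paper does not write out a proof of Theorem \ref{t:frequency2} but states that it "follows from a straightforward adaptation of the arguments used in the proof of Theorem \ref{t:frequency}", and your argument is exactly that adaptation --- testing the inner variation \eqref{e:IVbis} with $\phi(|y|/r)\,y$ and the outer variation \eqref{e:OVbis} with $\phi(|y|/r)\,u$, obtaining the regularized versions of \eqref{e:cono} and \eqref{e:perparti}, computing $\bH'$, and concluding by the two-step Cauchy--Schwarz with the weight split $(|y|/r)^{1/2}\cdot(r/|y|)^{1/2}$. The computations in \eqref{e:cono-reg}--\eqref{e:logder} and the rigidity discussion all check out.
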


We do not have a reference for the latter theorem, which follows from a straightforward adaption of the arguments used in the proof of Theorem \ref{t:frequency}. A special case of Theorem \ref{t:frequency2}, namely for a special choice of the cut-off $\phi$, is hidden in the computations of
\cite[Theorem 3.2]{DS5} (cf. in particular \cite[Eq. (3.13)]{DS5}).

\subsection{The two fundamental consequences of the monotonicity formula}\label{ss:fundamental} Theorem \ref{t:frequency} has two crucial consequences, when ``blowing-up'' a given $\D$-minimizing function. More precisely, consider a $\D$-minimizing $f$ taking $Q>1$ values and a point $p$ in its domain. Without loss of generality we can assume that $p=0$. If the support of $f (0)$ contains two different points, then, by continuity, in a neighborhood $U$ of $0$ $f$ splits into two separate functions $u_1$ and $u_2$ which are both $W^{1,2}$ and continuous. It is simple to see that both must be minimizers of the Dirichlet energy in $U$. $0$ is then a good point, where we have reduced the complexity of the problem. For instance, if $Q$ were $2$ we would know that $u_1$ and $u_2$ are two classical (single valued) harmonic functions. The ``problematic points'' are then those $p$ where $f (p) = Q\a{q}$. 

\medskip

We can therefore assume that $f (0) = Q\a{q}$ for some $q\in \R^n$. Now, according to our definition of the singular set $\sing (f)$, we have two possibilities:
\begin{itemize}
\item[(a)] $f$ equals $Q$ copies of a classical harmonic function in a neighborhood of $0$;
\item[(b)] $0$ is a singular point for $f$.
\end{itemize}
In general, an interesting object to look at is the average of the sheets of $f= \sum_i \a{f_i}$, namely $\frac{1}{Q} \sum_i f_i$. For this average we fix the notation $\etaa\circ f$. It is not difficult to see that $\etaa\circ f$ is a classical harmonic function. Indeed, if we define
\[
\bar{f} := \sum_i \a{f_i -\etaa \circ f}\, ,
\]
it is immediate to see that $\D (f) = \D (\bar{f}) + Q \D (\etaa\circ f)$. In particular it is not difficult to conclude that $\bar{f}$ is also a $\D$-minimizer, cf. \cite[Lemma 3.23]{DS1}. Looking at the latter function we can thus restate the alternative as: either $\bar{f}\equiv Q \a{0}$ in a neighborhood of the origin, or $0$ is a singular point for $\bar{f}$ (and thus a singular point of $f$!).

\medskip

The discussion above leads to the consideration that, without loss of generality, we can assume $\etaa\circ f \equiv 0$. Assume further that the (more interesting!) alternative (b) above holds. Then $f$ does not vanish identically and therefore both $D_{0,f} (r)$ and $H_{0,f} (r)$ are positive for some $r$.
Using Theorem \ref{t:hoelder} it is not difficult to see that, under the assumption $f (0)= Q \a{0}$, we have a uniform bound of the form
\begin{equation}\label{e:poinc}
H_{0,f} (r)\leq C r D_{0,f} (r)\qquad\qquad \forall r\in \left]0, \textstyle{\frac{\dist (0, \partial \Omega)}{2}}\right[\, ,
\end{equation}
where the constant $C$ is independent of $f$.
The obvious consequence of Theorem \ref{t:frequency} is that there is also a reverse control
\begin{equation}\label{e:reverse}
r D_{0,f} (r) \leq \bar{C} H_{0,f} (r)
\end{equation}
although the latter constant $\bar{C}$ depends upon the point ($0$ in this case) and the function $f$. Indeed such constant approaches, for $r\downarrow 0$,  the limit
$I_0 (f) := \lim_{\rho\downarrow 0} I_{0,\rho} (f)$, which by \eqref{e:poinc} is bounded away from $0$ and by Theorem \ref{t:frequency} is finite: on the other hand we have no explicit (neither universal!) upper bound: we insist that $I_0 (f)$ depends upon $f$ and the particular point ($0$ in this case) where we are ``blowing-up''. 

\medskip

Consider now the rescaled functions $f_{0,r} (x) := f (r x)$ and their renormalized versions
\[
u_{0,r} (x) := \frac{f_{0,r}}{\D (f_{0,r}, B_1)^{\sfrac{1}{2}}}\, .
\]
In particular the energy of $u_{0,r}$ is $1$. However the $L^2$ norm of $|u_{0,r}|$ is also under control because of \eqref{e:poinc}.  We then have compactness for the family $\{u_{0,r}\}_r$. Fix a map $\bar{u}$ which is the limit of any subsequence $u_{0, r_k}$ with $r_k\downarrow 0$. It is not difficult to see that a sequence of minimizers with such uniform controls converge {\em strongly} in $W^{1,2}$ in any compact subset: namely the Dirichlet energy of the limiting function is the limit of the Dirichlet energy of the corresponding functions on any subdomain $\Omega$ which is compactly contained in $B_1 (0)$, cf. \cite[Proposition 3.20]{DS1}. However the minimizing property alone does not guarantee strong convergence on the {\em whole} domain $B_1 (0)$. 

To understand the latter statement, consider for instance the planar (single valued!) harmonic functions
\[
f_k (x_1, x_2) = {\rm Re}\, (x_1+ix_2)^k
\]
and their normalizations 
\[
u_k:= f_k/ \D (f_k, B_1 (0))\, .
\] 
It is very elementary to see that $u_k$ converges to $0$ in $B_1 (0)$: in fact most of the Dirichlet energy of $u_k$ lies in a thin layer around the boundary $\partial B_1 (0)$. For $k$ large the layer becomes thinner and thinner and all the energy is ``pushed'' towards the boundary $\partial B_1 (0)$. On the other hand it is easy to see that the ratio
\[
\frac{D_{0,u_k} (1)}{H_{0,u_k} (1)} = \frac{1}{H_{0, u_k} (1)}
\]
explodes, namely that the $L^2$ norm of $u_k$ on $\partial B_1 (0)$ converges to $0$.

\medskip

This highlights the first important consequence of the frequency function: the ``reverse Poincar\'e'' inequality \eqref{e:reverse} excludes that the energy of $u_{0,r}$ concentrates towards the boundary. Any limit $\bar{u}$ of a sequence $u_{0,r_k}$ must therefore have energy equal to $1$. Since Theorem \ref{t:hoelder} guarantees uniform convergence, we also conclude that $\bar{u} (0) = Q\a{0}$. Moreover, $\etaa\circ \bar u \equiv 0$ because $\etaa\circ u_{0,r}\equiv 0$. 

Thus $0$ must be a singular point of $u$ as well: the only way $\bar{u}$ could be regular around $0$ would be to take the value $Q\a{0}$ identically in a neighborhood of $0$. 
However notice that $I_{0, \bar{u}} (r) = I_{0,f} (0) =: \alpha$ for every $r$. But then Theorem \ref{t:frequency} implies that $\bar{u}$ is $\alpha$-homogeneous, and if $\bar{u}$ would vanish in a neighborhood of $0$, then it would vanish on the entire ball $B_1 (0)$, contradicting the fact that the Dirichlet energy of $u$ is indeed $1$.

The conclusion is that the singularity has {\em persisted} in the limit. Recalling that our main concern in proving Theorem \ref{t:cod>1} was the disappearance of singular points along sequences of converging currents, the reader will understand why the monotonicity of the
frequency function is such an exciting discovery. It must also be noticed that the monotonicity of the frequency function was unknown even for classical single valued harmonic functions before \cite{Alm}: the shear observation that Almgren was able to discover a new fundamental fact for classical harmonic functions around 1970 gives in my opinion the true measure of his genius.

\medskip

The second fundamental consequence of the monotonicity of the frequency function is that $I_{0, \bar{u}} (r)$ is indeed constant in $r$ and equals $\alpha := I_{0,f} (0)$, which, as already noticed, gives that We have $\bar{u}$ is $\alpha$-homogeneous. In particular when the domain is $2$-dimensional, it is not difficult to classify all $\alpha$-homogeneous $\D$-minimizers and to show that their only singularity is at the origin, cf. \cite[Proposition 5.1]{DS1}. 

\medskip

The careful reader will recognize the formal analogy with the two ingredients of Federer's reduction argument illustrated in Section \ref{ss:Fedred}: pretty much the same reasoning gives the proof of Theorem \ref{t:structure}. There is however one important difference: for a $Q$-valued minimizer $f$ on a $2$-dimensional connected domain we do not conclude the discreteness of $\sing (f)$, but rather the weaker statement that 
\begin{itemize}
\item either  ``multiplicity $Q$ points'' of $f$ are isolated;
\item or $f$ collapses to $Q\a{\etaa\circ f}$.
\end{itemize}
Only in the case $Q=2$ the statement above is equivalent to discreteness of the singular set of $f$. When, for instance, $Q=3$, we have not ruled out that singular points with ``$2$-sheeted branching'' could converge towards a singular point with a ``3-sheeted'' branching. 

Thus, the argument sketched above gives, in the $2$-dimensional case, that $\sing (f)$ is countable, but it does not imply its discreteness. The proof of Theorem \ref{t:finite} needs much more work and in particular it passes through the important additional conclusion that the tangent functions $\bar{u}$ analyzed above are unique, namely the renormalized blown-up functions $u_{0,r}$ have a unique limit as $r\downarrow 0$, cf. \cite[Theorem 5.3]{DS1}. At present this uniqueness is an open problem when the dimension of the domain is higher than $2$.

\section{Approximation with multiple valued graphs}

Following the intuition that a ``sufficiently flat'' area minimizing current is  close to the graph of a $\D$-minimizing multivalued function, we wish now to use the theory above to infer some interesting informations upon area minimizing currents in a region where they are rather flat, i.e. the tangent planes are almost parallel to a given one (at least in an average sense). For this reason we introduce the notion of cylindrical excess. We will denote by $\bC_r (p)$ the cylinder $B_r (x)\times \R^n$ when $p= (x,y)\in \R^m\times \R^n$. In fact in the future we wish to consider cylinders with bases parallel to different $m$-dimensional planes: having fixed an $m$-dimensional plane $\pi$, we set $B_r (p, \pi):= \bB_r (p) \cap (p+\pi)$ and $\bC_r (p, \pi) = B_r (p, \pi) + \pi^\perp$. The notation $\pi_0$ will be reserved for the ``horizontal plane'' $\R^m\times \{0\}$ and we will use $\p_{\pi}$ and $\p^\perp_\pi$ for the orthogonal projections onto $\pi$ and $\pi^\perp$. 

\begin{definition}[Cylindrical excess]\label{d:c_excess}
Given an integer rectifiable $m$-dimensional current $T$ in $\mathbb R^{m+n}$ with finite mass and compact support and $m$-planes $\pi, \pi'$, we define the {\em excess} of $T$ in the cylinder $\bC_r (x, \pi)$ compared to $\pi'$ as 
\begin{align}
\bE (T, \bC_r (x, \pi), \pi') &:= \left(2\omega_m\,r^m\right)^{-1} \int_{\bC_r (x, \pi)} |\vec T - \vec \pi'|^2 \, d\|T\|\, .
\end{align}
If $\pi= \pi'$, then we write $\bE (T, \bC_r (x, \pi))$.

The {\it height function} in a set $A \subset \R^{m+m}$ with respect to an $m$-dimensional plane $\pi$ is
\[
\bh(T,A,\pi) := \sup_{x,y\,\in\,\supp(T)\,\cap\, A} |\p_{\pi^\perp}(x)-\p_{\pi^\perp}(y)|\, .
\]
\end{definition}

\subsection{Multivalued push-forwards, graphs, the area formula and the Taylor expansion of the mass} One first technical detail that we have to tackle concerns the currents which are naturally induced by multivalued maps. Assume therefore to have fixed a Lipschitz map $F: \R^m \supset \Omega \to \Iq (\R^{m+n})$ on a bounded open set $\Omega$. Consider the regions $M_i$ and the functions $f_i^j$ of Lemma \ref{l:chop}. Through them we can define the ``multivalued'' pushforward 
\[
\bT_F := \sum_i (f_i^j))_\sharp \a{M_i}\, .
\]
In a similar fashion we can define multivalued pushforwards when the domain $\Omega$ is a Riemannian manifold with finite volume. Moreover the current naturally carried by the graph of a multivalued function $u$ can be defined using the pushforward through the map $x\mapsto \sum_i \a{(x, u_i (x))}$. The corresponding current will be denoted by $\bG_u$, whereas for the set-theoretic objects we will use the notation $\im (F)$ and $\gr (u)$. 

The currents introduced above are well-defined (namely they are independent of the decomposition chosen in Lemma \ref{l:chop}) and in fact the assumption of boundedness of $\Omega$ and finiteness of the volume of $M$ can be removed if $F$ satisfies a suitable ``properness'' assumption, cf. \cite[Definition 1.2 \& Definition 1.3]{DS2}. Moreover, the usual formulas and conclusions valid in the classical-valued setting holds in the multivalued case as well. We record here some important conclusions.

\begin{lemma}[Bilipschitz invariance, {cf. \cite[Lemma 1.8]{DS2}}]\label{l:biLipschitz_inv} Let $F: \Sigma\to \Iqs$ 
be a Lipschitz and proper map, $\Phi: \Sigma'\to \Sigma$ a
bilipschitz homeomorphism and $G:= F\circ \Phi$. Then, $\mathbf{T}_F = \mathbf{T}_G$.
\end{lemma}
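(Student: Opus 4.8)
The plan is to unwind the definition of the multivalued push-forward and then invoke the functoriality of the push-forward of currents together with the change of variables (area) formula for bilipschitz maps. First I would fix a decomposition of $F$ as in Lemma \ref{l:chop}: a countable partition $\Sigma = \bigcup_i M_i$ into bounded measurable sets and Lipschitz functions $f_i^j : M_i\to\R^{m+n}$ with $F|_{M_i} = \sum_{j=1}^Q \a{f_i^j}$, so that by definition $\bT_F = \sum_{i,j} (f_i^j)_\sharp \a{M_i}$ (the series converges in mass on bounded sets by the area formula and the uniform bound on $\Lip(f_i^j)$).

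Next I would check that the family $\{\Phi^{-1}(M_i)\}_i$ together with the functions $g_i^j := f_i^j\circ\Phi$ is an admissible decomposition of $G$. Indeed, since $\Phi$ is a bilipschitz homeomorphism it carries Borel sets to Borel sets and bounded sets to bounded sets in both directions, so $\{\Phi^{-1}(M_i)\}_i$ is a countable partition of $\Sigma'$ into bounded measurable sets; each $g_i^j$ is Lipschitz; on $\Phi^{-1}(M_i)$ one has $G = F\circ\Phi = \sum_j \a{g_i^j}$; and property (b) of Lemma \ref{l:chop} is inherited from that of the $f_i^j$ because $\Phi$ is a bijection of $\Phi^{-1}(M_i)$ onto $M_i$. (If one insists on the precise normalization of the Lipschitz constants in Lemma \ref{l:chop}, one simply passes to the common refinement of $\{\Phi^{-1}(M_i)\}$ with a genuine Lemma \ref{l:chop} decomposition of $G$; on each refined piece the two lists of selections coincide up to a permutation, so nothing changes.) By the well-definedness of the multivalued push-forward this gives $\bT_G = \sum_{i,j} (f_i^j\circ\Phi)_\sharp \a{\Phi^{-1}(M_i)}$. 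It then remains to compute each summand: using the functoriality of the push-forward under composition of maps that are Lipschitz and proper on the relevant bounded sets, $(f_i^j\circ\Phi)_\sharp \a{\Phi^{-1}(M_i)} = (f_i^j)_\sharp\bigl(\Phi_\sharp\a{\Phi^{-1}(M_i)}\bigr)$; and the change of variables formula for bilipschitz maps yields $\Phi_\sharp\a{\Phi^{-1}(M_i)} = \a{M_i}$, whence $\bT_G = \sum_{i,j} (f_i^j)_\sharp\a{M_i} = \bT_F$.

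The argument is thus essentially a bookkeeping exercise, and the two points requiring genuine care are: first, checking that $\{\Phi^{-1}(M_i)\}$ equipped with the selections $f_i^j\circ\Phi$ is a legitimate decomposition whose defining series still converges in mass, so that the well-definedness statement truly applies; and second, the identity $\Phi_\sharp\a{\Phi^{-1}(M_i)} = \a{M_i}$ for a merely bilipschitz (not smooth) $\Phi$, which is the content of the Lipschitz change of variables formula and where the orientation convention on $\Phi$ is actually used (for an orientation-reversing $\Phi$ one would instead obtain $\bT_G = -\bT_F$). Neither point is a serious obstacle, but the orientation bookkeeping in the second is the crux of the matter.
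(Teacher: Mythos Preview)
Your argument is correct and is exactly the natural one: pull back a Lemma~\ref{l:chop} decomposition of $F$ through the bilipschitz homeomorphism, invoke the well-definedness of $\bT_F$ (stated immediately after the definition of the multivalued push-forward), and reduce to the single-valued identity $\Phi_\sharp\a{\Phi^{-1}(M_i)}=\a{M_i}$. This survey does not actually include a proof of the lemma---it simply records the statement and cites \cite[Lemma~1.8]{DS2}---so there is no ``paper's own proof'' to compare against here; your outline is precisely the argument given in that reference.

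Your two flagged points are the right ones. The convergence of the defining series is handled by the area formula together with boundedness of the $M_i$ and the uniform Lipschitz bound on the selections, and survives the pull-back because $\Phi^{-1}$ is Lipschitz. The orientation remark is also well taken: in the framework of \cite{DS2} the domains are Lipschitz \emph{oriented} submanifolds and $\Phi$ is tacitly orientation-preserving; without that convention one indeed picks up a sign.
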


\begin{lemma}[$Q$-valued area formula, {cf. \cite[Lemma 1.9]{DS2}}]\label{l:area}
Let $\Sigma \subset\R^N$ be a Lipschitz oriented submanifold, $M\subset \Sigma$ a
measurable subset and $F:M\to\Iqs$ a proper Lipschitz map.
For any bounded Borel function $h: \R^n \to [0, \infty[$, we have
\begin{gather}\label{e:mass}
\int h (p)\,  d \|\mathbf{T}_F\| (p) \leq \int_M \sum_j h (F^j (x))\, \bJ F^j (x) \, d\cH^m(x)  \, ,
\end{gather}
where
\[
\bJ F^j (x) = \left|DF^j (x)_\sharp \vec{e}\,\right| = \sqrt{\det ((DF^j(x))^T \cdot DF^j (x))}
\]
Equality holds in \eqref{e:mass} if there is a set $M'\subset M$ 
of full measure for which 
\begin{equation}\label{e:no_cancellation}
\langle DF^j (x)_\sharp \vec{e} (x), DF^i (y)_\sharp \vec{e} (y) \rangle \geq 0 \qquad 
\forall x,y\in M' \; \mbox{and}\; i, j \;\mbox{with}\; F^i (x) = F^j (y)\, .
\end{equation}
If \eqref{e:no_cancellation} holds the formula is valid also for
bounded {\em real}-valued Borel $h$ with compact support.  
\end{lemma}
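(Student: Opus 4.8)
The plan is to reduce the statement to the classical single-valued area formula by exploiting the decomposition of Lemma \ref{l:chop}. First I would invoke Lemma \ref{l:chop} to write $M$ as a countable disjoint union $M = \bigsqcup_i M_i$ together with Lipschitz selections $f_i^j : M_i \to \R^n$ ($j = 1, \dots, Q$) with $\Lip(f_i^j) \le \Lip(F)$ and with the dichotomy property (b). By definition $\bT_F = \sum_i (f_i^j)_\sharp \a{M_i}$, where on each piece $M_i$ we push forward via the $Q$ sheets $x \mapsto (x, f_i^j(x))$. One should first check that this current is well defined, i.e.\ independent of the chosen decomposition: this follows from Lemma \ref{l:biLipschitz_inv} applied piecewise, since any two decompositions differ by bilipschitz identifications of the pieces and permutations of the sheets. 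The essential point is that $\bh$-type or mass estimates then only need to be tracked on each $M_i$ separately and summed.

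Next, on a fixed piece $M_i$, the graph map $\gr_i^j(x) := (x, f_i^j(x))$ is bilipschitz onto its image inside the Lipschitz submanifold $\Sigma$, so $(\gr_i^j)_\sharp \a{M_i}$ is an integer rectifiable current of multiplicity one supported on $\gr(f_i^j)$. For any bounded Borel $h \ge 0$ the classical area formula gives
\[
\int h \, d\|(\gr_i^j)_\sharp \a{M_i}\| \le \int_{M_i} h(\gr_i^j(x))\, \bJ(\gr_i^j)(x)\, d\cH^m(x),
\]
with equality when the map $\gr_i^j$ restricted to $M_i$ is injective — which it is, being a graph map. The Jacobian $\bJ(\gr_i^j)$ is exactly $\sqrt{\det((DF^j)^T DF^j)}$ with $F^j = \gr_i^j$ in the ambient sense; the inequality (rather than equality) in \eqref{e:mass} comes only from the summation over $i$ and $j$, i.e.\ from the fact that different sheets — on the same or on different pieces — may have overlapping images, and the mass of a sum of currents can drop below the sum of the masses because of cancellation. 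Summing over $i$ and $j$ and using that $\|\bT_F\| \le \sum_{i,j} \|(\gr_i^j)_\sharp \a{M_i}\|$ yields \eqref{e:mass}.

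For the equality case one argues that \eqref{e:no_cancellation} — nonnegativity of the pairing of the oriented tangent $k$-vectors whenever two sheets meet — is precisely the condition ruling out cancellation: wherever two sheets $F^j(x)$ and $F^i(y)$ coincide as points, their (oriented) approximate tangent planes, pushed forward, agree up to a nonnegative factor, so the multiplicities add rather than subtract. Then $\|\bT_F\| = \sum_{i,j}\|(\gr_i^j)_\sharp\a{M_i}\|$ as measures, and the piecewise equalities assemble to equality in \eqref{e:mass}. The extension to bounded real-valued $h$ with compact support follows by writing $h = h^+ - h^-$ and applying the nonnegative case to each part, the finiteness of all integrals being guaranteed by compact support together with $\Lip(F^j) \le \Lip(F)$ and the bounded Jacobians. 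I expect the only genuinely delicate point to be the rigorous bookkeeping in the equality case: one must verify that \eqref{e:no_cancellation}, stated $\cH^m$-a.e.\ on a full-measure set $M'$, really propagates to the statement that the Radon–Nikodym densities of the $(\gr_i^j)_\sharp\a{M_i}$ add up without loss on the rectifiable set underlying $\bT_F$; this is a measure-theoretic argument using the a.e.\ differentiability of Lipschitz $Q$-valued maps (the $Q$-valued Rademacher theorem) and the locality of approximate tangent planes, and it is exactly the place where the hypothesis is used in full strength.
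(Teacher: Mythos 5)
The overall plan — invoke Lemma~\ref{l:chop}, apply the classical area formula on each piece $M_i$ and sheet $j$, sum, and invoke~\eqref{e:no_cancellation} for the equality case — is indeed the right skeleton and matches what one finds in \cite{DS2}. However, there is a genuine confusion in the middle of your argument that needs to be flagged, because it hides exactly the phenomenon the hypothesis~\eqref{e:no_cancellation} is designed to control.

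You introduce the ``graph maps'' $\gr_i^j(x) := (x, f_i^j(x))$ and work with the currents $(\gr_i^j)_\sharp\a{M_i}$. But $\bT_F$ is defined as the sum of the pushforwards via the sheet maps $f_i^j : M_i \to \R^n$ themselves, not via their graph maps; the object you are describing is the graph current $\bG_F$, not $\bT_F$. This is not a harmless relabeling. First, the Jacobian in the statement is $\bJ F^j(x) = |DF^j(x)_\sharp \vec e\,|=\sqrt{\det\big((DF^j)^T DF^j\big)}$, the \emph{intrinsic} Jacobian of $F^j:\Sigma\to\R^n$ computed against the orientation of $\Sigma$; the graph map would instead produce the Jacobian $\sqrt{\det\big(\mathrm{Id}+(DF^j)^T DF^j\big)}$, which is a different quantity. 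Second, and more importantly, graph maps are automatically injective, so your claim that the classical area formula gives \emph{equality} on each single piece and sheet, with the inequality in~\eqref{e:mass} attributed ``only to the summation over $i$ and $j$'', is wrong: a sheet map $f_i^j$ need not be injective, and even a single $(f_i^j)_\sharp\a{M_i}$ can already lose mass if the sheet folds over itself with opposite orientation, i.e.\ if $f_i^j(x_1)=f_i^j(x_2)$ while $Df_i^j(x_1)_\sharp\vec e(x_1)$ and $Df_i^j(x_2)_\sharp\vec e(x_2)$ point in opposite directions. This is precisely why~\eqref{e:no_cancellation} is phrased for all pairs $(i,j)$ with $F^i(x)=F^j(y)$, \emph{including} $i=j$ and $x\neq y$; by silently replacing $f_i^j$ by an injective graph map you have thrown away that case and made the role of the hypothesis invisible in the argument. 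Also, incidentally, $(x,f_i^j(x))$ does not lie in the domain manifold $\Sigma$, so ``bilipschitz onto its image inside $\Sigma$'' does not parse.

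The fix is to drop the graph maps entirely and apply the classical area formula directly to each Lipschitz map $f_i^j:M_i\subset\Sigma\to\R^n$: for nonnegative Borel $h$ one has
\[
\int h\,d\|(f_i^j)_\sharp\a{M_i}\| \;\leq\; \int_{M_i} h(f_i^j(x))\,\bJ f_i^j(x)\,d\cH^m(x),
\]
where the inequality already encodes possible within-sheet cancellation; summing over $i$ and $j$, using disjointness of the $M_i$, and $\|\bT_F\|\leq\sum_{i,j}\|(f_i^j)_\sharp\a{M_i}\|$ gives~\eqref{e:mass}. For the equality case, \eqref{e:no_cancellation} guarantees that at $\cH^m$-a.e.\ point of the image the oriented tangent $m$-vectors of all contributing preimages (across all $i,j$ and across all folds of a single sheet) pair nonnegatively, hence the multiplicities add instead of cancel; the $Q$-valued Rademacher theorem justifies the almost-everywhere reasoning. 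The extension to signed bounded compactly supported $h$ via $h=h^+-h^-$, as you wrote, is fine once the equality case has been established.
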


\begin{corollary}[Area formula for $Q$-graphs, {cf. \cite[Corollary 1.11]{DS2}}]\label{c:massa_grafico}
Let $\Sigma = \R^m$, $M\subset \R^m$ and $f: \Sigma \to \Iqs$ be a proper Lipschitz map. Then,
for any bounded compactly supported Borel $h: \R^{m+n}\to \R$, we have
\begin{equation}\label{e:massa_grafico}
\int h(p)\, d\|\bG_f\| (p) = \int_M \sum_i h (x, f_i (x)) 
\Big(1 + \sum_{k=1}^m \sum_{A \in M^k (Df^i)} (\det A)^2\Big)^{\frac{1}{2}}\,dx .
\end{equation}
\end{corollary}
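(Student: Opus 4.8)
The plan is to deduce \eqref{e:massa_grafico} directly from the $Q$-valued area formula of Lemma \ref{l:area}, applied to the graph map
\[
F\colon M\to \Iq(\R^{m+n}),\qquad F(x):=\sum_i\a{(x,f_i(x))}\, .
\]
First I would record the bookkeeping: since $f$ is proper and Lipschitz on $\R^m$, the map $F$ is again proper and Lipschitz, and by the very definition of $\bG_f$ as the multivalued push-forward $\bT_F$ (which, by the discussion preceding the corollary, is independent of the Lipschitz decomposition of Lemma \ref{l:chop}) we have $\|\bG_f\|=\|\bT_F\|$. Hence Lemma \ref{l:area}, applied with target $\R^{m+n}$, immediately gives the inequality $\int h\, d\|\bG_f\|\le\int_M\sum_j h(x,f_j(x))\,\bJ F^j(x)\,d\cH^m(x)$ for nonnegative Borel $h$; the remaining content of the corollary is to identify $\bJ F^j$, to upgrade the inequality to an equality, and to pass to general real compactly supported $h$.

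For the Jacobian, at an $\cH^m$-a.e. point $x$ at which $f$ is differentiable (such points are of full measure by the $Q$-valued Rademacher theorem) the sheet $F^j$ is differentiable with $DF^j(x)=\bigl(\begin{smallmatrix}\mathrm{Id}_m\\ Df^j(x)\end{smallmatrix}\bigr)$, an $(m+n)\times m$ matrix. By the Cauchy--Binet formula,
\[
\bJ F^j(x)^2=\det\bigl((DF^j)^T DF^j\bigr)=\sum_{S}\bigl(\det (DF^j)_S\bigr)^2\, ,
\]
the sum ranging over the $m$-element subsets $S$ of the $m+n$ rows. Expanding the minor indexed by $S$ by Laplace along the rows of $S$ belonging to the identity block, one sees that a subset choosing $k$ of its rows from the $Df^j$ block produces $\pm$ a $k\times k$ minor of $Df^j$, with each such minor arising from exactly one $S$, and that the unique $S$ with $k=0$ contributes $\det\mathrm{Id}_m=1$. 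Thus $\bJ F^j(x)^2=1+\sum_{k=1}^m\sum_{A\in M^k(Df^j)}(\det A)^2$, which is precisely the factor appearing in \eqref{e:massa_grafico}.

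It remains to verify the no-cancellation condition \eqref{e:no_cancellation}, which is what turns the inequality into an equality and, together with compact support, licenses real-valued $h$. I would take $M'$ to be the full-measure set of differentiability points of $f$. If $F^i(x)=F^j(y)$ with $x,y\in M'$, then comparing the first $m$ coordinates forces $x=y$, whence $f_i(x)=f_j(x)$; but condition (ii) in the definition of the $Q$-valued differential (Definition \ref{d:diff}) says exactly that $Df^i(x)=Df^j(x)$ whenever $f_i(x)=f_j(x)$. Therefore $DF^i(x)=DF^j(x)$, so $DF^i(x)_\sharp\vec e=DF^j(x)_\sharp\vec e$ and
\[
\langle DF^i(x)_\sharp\vec e,\,DF^j(y)_\sharp\vec e\rangle=\bigl|DF^i(x)_\sharp\vec e\bigr|^2=\bJ F^i(x)^2\ge 0\, ,
\]
so \eqref{e:no_cancellation} holds, and the last assertion of Lemma \ref{l:area} yields \eqref{e:massa_grafico} for every bounded compactly supported Borel $h$. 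The only genuine subtlety — the step I would single out as the crux — is precisely this propagation from the matching of \emph{points} $F^i(x)=F^j(y)$ to the matching of \emph{differentials}: it is not automatic from the Lipschitz structure and is guaranteed only because the compatibility $f_i(x)=f_j(x)\Rightarrow Df^i(x)=Df^j(x)$ was built into Definition \ref{d:diff}. Everything else is the elementary linear algebra of Cauchy--Binet and a direct invocation of the already established $Q$-valued area formula.
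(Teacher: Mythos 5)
Your proof is correct and follows exactly the route that the paper (and \cite[Corollary 1.11]{DS2}) takes: specialize Lemma~\ref{l:area} to the graph map $F(x)=\sum_i\a{(x,f_i(x))}$, identify $\bJ F^j$ via Cauchy--Binet (each $k\times k$ minor of $Df^j$ corresponding to the unique $m$-row selection that takes those $k$ rows from the lower block and the complementary $m-k$ rows from the identity block), and verify \eqref{e:no_cancellation} by noting that $F^i(x)=F^j(y)$ forces $x=y$ and then $f_i(x)=f_j(x)\Rightarrow Df^i(x)=Df^j(x)$ by Definition~\ref{d:diff}(ii) (equivalently by Lemma~\ref{l:chop}(b)). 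You also correctly isolate the genuine content: for general Lipschitz $f$ the orientation vectors of two graphical sheets can have negative inner product, so the no-cancellation really does hinge on the compatibility condition built into the $Q$-valued differential, not merely on the graphs being graphs.
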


\begin{theorem}[Boundary of the push-forward, {cf. \cite[Theorem 2.1]{DS2}}]\label{t:commute}
Let $\Sigma$ be a Lipschitz submanifold of $\R^N$ with Lipschitz boundary,
$F:\Sigma \to \Iqs$ a proper Lipschitz function and $f= F\vert_{\de\Sigma}$.
Then, $\partial \mathbf{T}_F = \mathbf{T}_f$.
\end{theorem}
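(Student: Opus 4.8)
The plan is to bootstrap from the classical single‑valued fact $\partial\,(F_\sharp\a{\Sigma})=F_\sharp\,\partial\a{\Sigma}$, which holds for any proper Lipschitz $F$ (approximate $F$ by smooth maps in the Lipschitz topology and use the homotopy formula; here $\partial\a{\Sigma}=\a{\de\Sigma}$ with the induced orientation). First I would dispose of the geometry of $\Sigma$: using a locally finite cover of $\Sigma$ by bi‑Lipschitz charts onto (half‑)balls of $\R^m$, a subordinate Lipschitz partition of unity, the locality of $\partial$ and of the push‑forward construction, and the bilipschitz invariance of $\bT_F$ (Lemma \ref{l:biLipschitz_inv}), I reduce to the case in which $\Sigma$ is a bounded open set of $\R^m$ with Lipschitz boundary. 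This is pure bookkeeping and I would not dwell on it.

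The next step handles the ``easy'' multivalued case: if $F$ admits a \emph{global} Lipschitz selection $F=\sum_{j=1}^Q\a{F^j}$ on $\overline{\Sigma}$, then $\bT_F=\sum_j (F^j)_\sharp\a{\Sigma}$ and, by linearity of $\partial$ and the single‑valued commutation formula, $\partial\bT_F=\sum_j (F^j)_\sharp\a{\de\Sigma}=\bT_f$. For a general Lipschitz $F$ I would apply the decomposition Lemma \ref{l:chop} to write $\Sigma=\bigcup_i M_i$ (countable, pairwise disjoint up to $\cH^m$‑null sets) with Lipschitz selections $f_i^j$ on each $M_i$; after a routine refinement — using that on $M_i$ the sheets $f_i^j$ are either identically equal or everywhere distinct, and slicing the ``collision sets'' at generic levels of $\min_{j\ne j'}|f_i^j-f_i^{j'}|$ via the coarea formula — I may assume each $M_i$ is a set of locally finite perimeter and that each $f_i^j$ extends Lipschitz‑ly to $\R^m$ with $\Lip(f_i^j)\le\Lip(F)$. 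The area formula for $Q$‑graphs (Corollary \ref{c:massa_grafico}) together with these Lipschitz bounds and properness of $F$ give, on every bounded set, a uniform estimate $\sum_i\big(\mass(\bT_{F|_{M_i}})+\mass(\partial\bT_{F|_{M_i}})\big)<\infty$, so $\bT_F=\sum_i\bT_{F|_{M_i}}$ converges in mass and $\partial\bT_F=\sum_i\partial\bT_{F|_{M_i}}$. Since each $f_i^j$ is globally Lipschitz and $M_i$ has finite perimeter, $\partial\bT_{F|_{M_i}}=\sum_j (f_i^j)_\sharp\,\partial\a{M_i}=\sum_j (f_i^j)_\sharp\a{\de^* M_i}$ (De Giorgi's structure theorem), hence $\partial\bT_F=\sum_i\bT_{F|_{\de^* M_i}}$.

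It then remains to match the right‑hand side with $\bT_f$, and this cancellation of the ``internal'' boundaries is the step I expect to be the real point. Writing $\de^* M_i=(\de^* M_i\cap\de\Sigma)\cup(\de^* M_i\cap\mathrm{int}\,\Sigma)$ up to $\cH^{m-1}$‑null sets, the traces on $\de\Sigma$ form a partition of $\de\Sigma$ with consistent boundary orientations, so $\sum_i\bT_{F|_{\de^* M_i\cap\de\Sigma}}=\bT_{F|_{\de\Sigma}}=\bT_f$. For the interior interfaces I would invoke the standard fact about Caccioppoli partitions: $\cH^{m-1}$‑a.e.\ point of $\bigcup_i\de^* M_i$ lying in $\mathrm{int}\,\Sigma$ belongs to exactly two of the reduced boundaries, say $\de^* M_i$ and $\de^* M_{i'}$, with opposite measure‑theoretic outer normals. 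On such a common piece $N$ the restriction $F|_N$ is a well‑defined $Q$‑valued Lipschitz map, and $\bT_{F|_{\de^* M_i}}\res N$, $\bT_{F|_{\de^* M_{i'}}}\res N$ are $\bT_{F|_N}$ computed with the two opposite orientations of $N$; invoking the already‑established well‑definedness of the multivalued push‑forward (its independence of the chosen Lipschitz decomposition) identifies both with $\pm$ the \emph{same} current, so they cancel in the sum and $\sum_i\bT_{F|_{\de^* M_i\cap\mathrm{int}\,\Sigma}}=0$. This yields $\partial\bT_F=\bT_f$. The delicate ingredients are therefore the coarea refinement turning the $M_i$ into finite‑perimeter pieces and the clean identification of the interior interfaces with their opposite orientations; the rest is an organized combination of the single‑valued commutation formula, Lemma \ref{l:chop}, and the $Q$‑valued area formula.
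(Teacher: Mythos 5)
Your cancellation idea at the end --- each interior interface is counted twice with opposite orientations, and the contributions annihilate because $\bT_F$ is independent of the chosen decomposition --- captures the right geometric picture, but there is a genuine gap earlier in the argument. The decomposition Lemma \ref{l:chop} only gives \emph{measurable} pieces $M_i$, and the coarea refinement you sketch can at best make each $M_i$ individually a set of locally finite perimeter. What your argument actually needs is that $\{M_i\}$ be a \emph{Caccioppoli partition}, i.e.\ $\sum_i {\bf P}(M_i, K) < \infty$ for every compact $K$ in the interior of $\Sigma$. This is a strictly stronger requirement, and the coarea formula does not deliver it: it guarantees ${\bf P}(\{d > t\}) < \infty$ for a.e.\ $t$ but gives no control on $\sum_k {\bf P}(\{d > t_k\})$ along a chosen sequence of levels, and the refinement has to be run over each of the countably many $M_i$'s. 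Without this summability two things fail at once: the series $\sum_i \partial\bT_{F|_{M_i}}$ is not mass-convergent, so you only know its distributional limit and cannot reorganize the terms to exhibit the pairwise cancellations; and the statement that $\cH^{m-1}$-a.e.\ interior point of $\bigcup_i \partial^* M_i$ lies in exactly two reduced boundaries with opposite normals is precisely the structure theorem for Caccioppoli partitions, whose hypothesis is exactly the local summability of perimeters. A further, smaller inconsistency: by Lemma \ref{l:chop}(b) the function $\min_{j\neq j'}|f_i^j - f_i^{j'}|$ is either identically $0$ or bounded away from $0$ on each $M_i$, so slicing its level sets does not refine the $M_i$'s that the lemma hands you --- you would need to rebuild the decomposition from scratch.

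The proof of \cite[Theorem~2.1]{DS2} sidesteps this circle of difficulties by an approximation argument rather than a partition one. After reducing to a Lipschitz domain of $\R^m$, one approximates $F$ uniformly, with uniformly bounded Lipschitz constants, by Lipschitz $Q$-valued maps $F_k$ admitting \emph{finite} decompositions into finite-perimeter pieces; for such $F_k$ the identity $\partial\bT_{F_k} = \bT_{F_k|_{\partial\Sigma}}$ follows from the single-valued case and Gauss--Green exactly as in your ``easy'' step, and summability is automatic because there are only finitely many pieces. The $Q$-valued area formula of Lemma \ref{l:area} then gives mass (hence weak) convergence $\bT_{F_k}\to\bT_F$ and $\bT_{f_k}\to\bT_f$, and the identity passes to the limit since $\partial$ is weakly continuous. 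If you want to salvage your structure, the missing ingredient is a strengthening of Lemma \ref{l:chop} producing a genuine Caccioppoli partition; this is not supplied by the refinement as you describe it and would have to be proved separately.
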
 

One crucial point in our discussions is the Taylor expansion of the mass of a multivalued graph. 

\begin{corollary}[Expansion of $\mass (\bG_f)$, {cf. \cite[Corollary 3.3]{DS2}}] \label{c:taylor_area}
Assume $\Omega\subset \R^m$ is an open set with bounded measure and $f:\Omega
\to \Iqs$ a Lipschitz map with $\Lip (f)\leq \bar{c}$. Then,
\begin{equation}\label{e:taylor_grafico}
\mass (\mathbf{G}_f) = Q |\Omega| + \frac{1}{2} \int_\Omega |Df|^2 + \int_\Omega \sum_i \bar{R}_4 (Df_i)\, ,
\end{equation}
where $\bar{R}_4\in C^1$ satisfies $|\bar{R}_4 (D)|= |D|^3\bar{L} (D)$ for $\bar{L}$ with $\Lip (\bar{L})\leq C$ and
$\bar L(0) = 0$.
\end{corollary}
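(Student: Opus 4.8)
The plan is to reduce the statement to a pointwise Taylor expansion of the Jacobian of a single graph and then integrate sheet by sheet. First I would apply Corollary \ref{c:massa_grafico} with $h\equiv1$ — legitimate by monotone approximation with compactly supported $h$, since $\mass(\bG_f)=\|\bG_f\|(\R^{m+n})$ and $\Omega$ has finite measure — to get
\[
\mass(\bG_f)=\int_\Omega\sum_i \Phi(Df_i)\,dx,\qquad \Phi(D):=\Big(1+\sum_{k=1}^m\sum_{A\in M^k(D)}(\det A)^2\Big)^{\frac12},
\]
where $D$ ranges over $\R^{n\times m}$ and $M^k(D)$ denotes the collection of $k\times k$ submatrices of $D$. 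This is a clean identity rather than an inequality because for graphs the no-cancellation condition \eqref{e:no_cancellation} holds trivially: two sheets meeting at a point coincide there as graphs over $\Omega$, so \eqref{e:no_cancellation} only ever involves $\det(\Id_m+(Df_i)^T Df_i)\ge1$ — and this is already built into the statement of Corollary \ref{c:massa_grafico}. Thus everything reduces to the pointwise claim that $\Phi(D)=1+\tfrac12|D|^2+\bar R_4(D)$ with $\bar R_4$ of the asserted form on $\{D:|D|\le\bar c\}$.

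For the pointwise claim, write $g(D):=1+\sum_{k=1}^m\sum_A(\det A)^2$, a polynomial in the entries of $D$. Its part of degree $\le2$ is exactly $1+|D|^2$: the $k=1$ term equals $\sum_{i,j}D_{ij}^2=|D|^2$, while for $k\ge2$ each $k\times k$ minor is homogeneous of degree $k$, so its square is homogeneous of degree $2k\ge4$. Hence $g(D)=1+|D|^2+h(D)$ with $h$ a polynomial all of whose monomials have degree $\ge4$, so $|h(D)|\le C|D|^4$ on $\{|D|\le\bar c\}$. Since $g\ge1>0$, $\Phi=\sqrt g$ is smooth (in fact real-analytic), and substituting $t=|D|^2+h(D)\ge0$ into $\sqrt{1+t}=1+\tfrac t2+t^2\theta(t)$ (with $\theta$ smooth on $(-1,\infty)$, $\theta(0)=-\tfrac18$) yields
\[
\bar R_4(D):=\Phi(D)-1-\tfrac12|D|^2=\tfrac12\,h(D)+\big(|D|^2+h(D)\big)^2\,\theta\!\big(|D|^2+h(D)\big),
\]
which is smooth, vanishes to order $\ge4$ at $D=0$, and obeys $|\bar R_4(D)|\le C|D|^4$ on $\{|D|\le\bar c\}$.

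To upgrade this to the precise factorisation $|\bar R_4(D)|=|D|^3\bar L(D)$ with $\bar L$ Lipschitz and $\bar L(0)=0$, I would invoke Hadamard's lemma: since $\bar R_4$ is smooth and vanishes to order $4$, one has $\bar R_4(D)=\sum_{|\alpha|=4}D^\alpha H_\alpha(D)$ with $H_\alpha$ smooth, so $\bar R_4(D)/|D|^3$ (set to $0$ at the origin) is a finite sum of terms $(D^\alpha/|D|^3)H_\alpha(D)$; each factor $D^\alpha/|D|^3$ with $|\alpha|=4$ is continuous, positively $1$-homogeneous and smooth away from the origin, hence Lipschitz, and multiplying by the smooth $H_\alpha$ keeps the product Lipschitz on $\{|D|\le\bar c\}$, so $\bar L(D):=|\bar R_4(D)|/|D|^3$ (with $\bar L(0):=0$) is Lipschitz there and vanishes at $0$. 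Plugging the expansion back in, summing over $i$ and integrating over $\Omega$, the term $\int_\Omega\sum_i\bar R_4(Df_i)$ converges absolutely because $|Df_i|\le\Lip(f)\le\bar c$ gives $|\bar R_4(Df_i)|\le C|Df_i|^2$ and $\int_\Omega|Df|^2<\infty$ ($f$ is Lipschitz on a set of finite measure); the same bound controls the quadratic term and $\int_\Omega\sum_i1=Q|\Omega|$ the leading one, giving \eqref{e:taylor_grafico}.

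The one genuinely delicate point is the structural claim just used: that dividing by $|D|^3$ — rather than the naive $|D|^4$, which would already destroy continuity at the origin — together with the outer absolute value still leaves a genuinely Lipschitz $\bar L$ with $\bar L(0)=0$. Everything else is the classical Cauchy--Binet/Taylor computation for the area of a graph, now carried out sheet by sheet via Lemma \ref{l:chop}; all the constants (and the Lipschitz constant of $\bar L$) depend only on $m$, $n$, $Q$ and $\bar c$.
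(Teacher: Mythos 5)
Your argument is correct and follows essentially the same route as the paper: apply the $Q$-valued area formula (Corollary \ref{c:massa_grafico} with $h\equiv 1$) to reduce to the pointwise Taylor expansion of the Jacobian $\Phi(D)=\big(1+\sum_{k\geq 1}\sum_{A}(\det A)^2\big)^{\sfrac12}$, observe that the $k\geq 2$ minor contributions start at degree $4$ so that $\Phi=1+\tfrac12|D|^2+\bar R_4$ with $\bar R_4$ vanishing to order $4$, and then integrate sheet by sheet. Your Hadamard-lemma step, making the Lipschitz factorisation $|\bar R_4(D)|=|D|^3\bar L(D)$ with $\bar L(0)=0$ explicit, is a clean rendering of the structural claim that \cite[Corollary 3.3]{DS2} records in the same Taylor expansion.
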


\subsection{The main approximation theorem}
If $T$ is an integral current without boundary in $\bC_r (0)$, a Lipschitz $u: B_r (0)\to \Iqs$ 
is an efficient approximation if $\mass (T- \bG_u)$ is small (compared to $r^m$). Since $\bG_u$ is, in a ``loose'' sense a $Q$-fold cover of $B_r (0)$, namely $(\p_{\pi_0})_\sharp \bG_u = Q \a{B_r (0)}$, this condition must hold for a well-approximated current $T$ as well. We are now ready to summarize the main assumptions under which we wish to find a good Lipschitz multivalued approximation, which is achieved in Theorem \ref{t:app_main}. To simplify our notation $\p_{\pi_0}$ and $\p^\perp_{\pi_0}$ will be denoted by $\p$ and $\p^\perp$. 

\begin{ipotesi}\label{ipotesi_base_app}
$\Sigma\subset\R^{m+n}$ is a $C^2$
submanifold of dimension $m + \bar n = m + n - l$, which is the graph of an entire
function $\Psi: \R^{m+\bar n}\to \R^l$ and satisfies the bounds
\begin{equation}\label{e:Sigma}
\|D \Psi\|_0 \leq c_0 \quad \mbox{and}\quad \bA := \|A_\Sigma\|_0
\leq c_0,
\end{equation}
where $c_0$ is a positive (small) dimensional constant.
$T$ is an integral current of dimension $m$ with bounded support contained in $\Sigma$ and which, for some open cylinder $\bC_{4r} (x)$ 
(with $r\leq 1$)
and some positive integer $Q$, satisfies
\begin{equation}\label{e:(H)}
\p_\sharp T\res \bC_{4r} (x) = Q\a{B_{4r} (x)}\quad\mbox{and}\quad
\de T \res \bC_{4r} (x) =0\, .
\end{equation}
\end{ipotesi}

\begin{theorem}[Strong approximation, cf. {\cite[Theorem 1.4]{DS3}}]\label{t:app_main}
There exist constants $C, \gamma_1,\eps_1>0$ (depending on $m,n,\bar n,Q$)
with the following property. Assume that $T$ is area minimizing, satisfies Assumption \ref{ipotesi_base_app} in the cylinder $\bC_{4\, r} (x)$
and 
$E =\bE(T,\bC_{4\,r} (x)) < \eps_1$.
Then, there is a map $f: B_r (x) \to \Iqs$, with $\gr (f) \subset \Sigma$,
and a closed set $K\subset B_r (x)$ such that
\begin{gather}\label{e:main(i)}
\Lip (f) \leq C E^{\gamma_1} + C \bA r, \\
\label{e:main(ii)}
\bG_f\res (K\times \R^n)=T\res (K\times\R^{n})\quad\mbox{and}\quad
|B_r (x)\setminus K| \leq
 C \, E^{\gamma_1} \left(E+ r^2\,\bA^2\right)\, r^m,\\
\label{e:main(iii)}
\left| \|T\| (\bC_{{\sigma\,r}} (x)) - Q \,\omega_m\,({\sigma\,r})^m -
{\textstyle{\frac{1}{2}}} \int_{B_{{\sigma\,r}} (x)} |Df|^2\right| \leq
 C \, E^{\gamma_1} \left(E+ r^2\,\bA^2\right)\, r^m \quad \forall\,{0<\sigma \leq 1}.
\end{gather}
If in addition $\bh (T, \bC_{4r} (x), \pi_0) \leq r$, then
\begin{equation}\label{e:L-infty_est}
{\rm osc}\, (f) \leq C \bh (T, \bC_{4r} (x), \pi_0) + C (E^{\sfrac{1}{2}} + r\,\bA)\, r\, ,
\end{equation}
where ${\rm osc}\, (f) := \sup \{|p-q| : p\in \supp (f(x)), q\in \supp (f(y)), x,y\in B_r (x)\}$. 
\end{theorem}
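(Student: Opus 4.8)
The plan is to follow the strategy pioneered by De Giorgi and Almgren for the Lipschitz approximation of almost flat minimizers, but to push it so as to obtain the \emph{superlinear} control on the size of the bad set which gives the statement its name. By translating and rescaling we reduce to $x=0$, $r=1$, and we set $E:=\bE(T,\bC_4)$. Since $E$ is small, any plane optimizing the excess in $\bB_4$ lies at distance $O(E^{1/2})$ from $\pi_0$, so after a rotation we may treat $\pi_0$ as an almost optimal plane throughout. Introduce the excess density $\mathbf{e}_T:=\tfrac12|\vec T-\vec\pi_0|^2\|T\|$ and its noncentered maximal function $\mathbf{m}_T(y):=\sup_{0<s<2}\omega_m^{-1}s^{-m}\,\mathbf{e}_T(\bC_s(y))$ on $B_2$; the global bound $\mathbf{e}_T(\bC_2)\le CE$ together with a Besicovitch covering argument controls the measure of the superlevel sets $\{\mathbf{m}_T>\lambda\}$.

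First I would record the two classical ingredients. The \textbf{height bound}: minimality of $T$ and smallness of $E$ force $\supp(T)\cap\bC_2$ into a vertical slab of width $\le C(E^{1/2}+\bA)$, respectively $\le\bh(T,\bC_4,\pi_0)+C(E^{1/2}+\bA)$ under the additional hypothesis, proved by a blow-up argument against the monotonicity formula of Theorem \ref{t:monot_imp} or by a De Giorgi iteration; this will yield \eqref{e:L-infty_est} once $f$ is constructed. The \textbf{first Lipschitz approximation}: on the good set $\{\mathbf{m}_T\le\lambda_0\}$ for a fixed dimensional $\lambda_0$, slicing $T$ by $\p$ and using the height bound at every scale shows that the slices $\langle T,\p,z\rangle$ consist of $Q$ points depending in a $C$-Lipschitz way on $z$; a $Q$-valued Lipschitz extension then produces $f_0\colon B_1\to\Iqs$ with $\Lip(f_0)\le C$, $\gr(f_0)\subset\Sigma$, $\bG_{f_0}\res(K_0\times\R^n)=T\res(K_0\times\R^n)$, together with the crude bounds $|B_1\setminus K_0|\le CE$, $\int_{B_1}|Df_0|^2\le CE$ and $\bigl|\,\|T\|(\bC_1)-Q\omega_m-\tfrac12\int_{B_1}|Df_0|^2\,\bigr|\le CE$ via Corollary \ref{c:taylor_area}.

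The heart of the argument is upgrading the exponent of $E$ in the bad-set estimate. One first proves a \textbf{harmonic approximation}: comparing the mass of $T$ with that of competitors built from $\bG_{f_0}$, and exploiting the Taylor expansion of the mass (Corollary \ref{c:taylor_area}) together with the concentration-compactness Proposition \ref{p:cc}, shows $f_0$ is $W^{1,2}$-close on $B_{3/2}$ to a $\D$-minimizer, with $L^2$ error on the gradients $\le o(E)+C\bA^2$. Feeding this back into a comparison scheme --- gluing the harmonic competitor to the slices of $T$ on concentric cylinders (an interpolation of the type in Lemma \ref{l:technical}) and using minimality --- produces a \textbf{reverse H\"older inequality} for the density of $\mathbf{e}_T$; Gehring's lemma then gives $\mathbf{e}_T\in L^p_{\rm loc}$ for some $p=p(m,n,\bar n,Q)>1$ with a bound of the form $\int_{B_{3/2}}|\mathbf{e}_T|^p\le C\,E\,(E+\bA^2)^{p-1}$. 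This is exactly the analogue for currents of the higher integrability theorem for $\D$-minimizers (Theorem \ref{t:hig fct}) and is where the input of \cite{DS3} is genuinely needed. Choosing $\gamma_1$ small in terms of $p$ (so that $\lambda:=E^{\gamma_1}\gg E$ and the slice construction still applies at level $\lambda$) and repeating the Lipschitz truncation on $\{\mathbf{m}_T\le\lambda\}$ yields the map $f$ with $\Lip(f)\le CE^{\gamma_1}+C\bA$ and $|B_1\setminus K|\le\lambda^{-p}\int_{B_{3/2}}|\mathbf{e}_T|^p\le CE^{\gamma_1}(E+\bA^2)$, i.e. \eqref{e:main(i)}--\eqref{e:main(ii)}.

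Finally, for the mass expansion \eqref{e:main(iii)} I would sandwich $\|T\|(\bC_\sigma)$. For the upper bound, compare $T\res\bC_\sigma$ with $\bG_f\res\bC_\sigma$ corrected by a small patch matching the boundary slices, whose mass is $\le CE^{\gamma_1}(E+\bA^2)$ by the slice estimates; minimality and Corollary \ref{c:taylor_area} then give $\|T\|(\bC_\sigma)\le Q\omega_m\sigma^m+\tfrac12\int_{B_\sigma}|Df|^2+CE^{\gamma_1}(E+\bA^2)$, the cubic remainder $\int\sum_i\bar R_4(Df_i)$ being absorbed thanks to $\Lip(f)\le CE^{\gamma_1}+C\bA$. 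For the lower bound, $\|T\|(\bC_\sigma)\ge\|\bG_f\|((K\cap B_\sigma)\times\R^n)=\mass(\bG_f\res\bC_\sigma)-\mass(\bG_f\res((B_\sigma\setminus K)\times\R^n))$ and the last term is $\le C\int_{B_\sigma\setminus K}(Q+|Df|^2)\le CE^{\gamma_1}(E+\bA^2)$ by \eqref{e:main(ii)} and the energy bound. \textbf{The main obstacle} is precisely the higher-integrability/reverse-H\"older step: converting the naive $|B_1\setminus K|\le CE$ into the superlinear $CE^{\gamma_1}(E+\bA^2)$ forces one through the full $\D$-minimizing theory (harmonic approximation plus Gehring), and it is this improvement --- not the classical height bound or the first truncation --- that makes the theorem strong enough to feed the construction of the center manifold in the sequel.
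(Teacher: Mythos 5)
Your outline captures the right scaffolding (first Lipschitz truncation on superlevel sets of the maximal function of the excess measure, harmonic approximation via concentration--compactness, higher integrability), but the passage from higher integrability to the superlinear bad--set estimate has a genuine gap. You claim a reverse H\"older inequality plus Gehring produces an unrestricted $L^p$ bound $\int_{B_{3/2}}|\mathbf{e}_T|^p\le C E(E+\bA^2)^{p-1}$ and then control $|B_1\setminus K|$ by Markov. But $\mathbf{e}_T$ is a priori only a measure, and the higher integrability that can actually be extracted from the harmonic approximation is the restricted statement of Theorem~\ref{t:higher1}, $\int_{\{\bd_T\le 1\}\cap B_2}\bd_T^{p_1}\le C E^{p_1-1}(E+\bA^2)$; the unrestricted inequality you want is precisely \eqref{e:reverse-1}, about which the text is explicit: ``We do not know whether \eqref{e:reverse-1} is correct under the additional assumption that $T$ be area minimizing.'' Nothing in your comparison scheme rules out a large part of the excess sitting on a small set of very high slope (or on a singular part of $\mathbf{e}_T$), and on such a set $\lambda^{-p}\int|\mathbf{e}_T|^p$ gives you only the trivial $E/\lambda$, not the superlinear bound.

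The missing ingredient is Theorem~\ref{t:higher} (Almgren's strong excess estimate), which bounds $\e_T(A)\le C(E^{\gamma_{11}}+|A|^{\gamma_{11}})(E+\bA^2)$ for \emph{every} Borel $A$, thereby handling both the high-slope region and a possible singular part of $\e_T$; combined with Proposition~\ref{p:max} (whose estimate \eqref{e:max1} controls $|B_r\setminus K|$ by $\e_T$ of a superlevel set, not by an $L^p$ Markov inequality), this delivers \eqref{e:main(ii)}--\eqref{e:main(iii)}. The proof of Theorem~\ref{t:higher} is not a gluing/interpolation argument \`a la Lemma~\ref{l:technical} but a regularization-by-convolution comparison using the biLipschitz embedding $\xii$ and the almost-projection $\ro^\star_\delta$ of Proposition~\ref{p:ro*} to build the competitor; the dangerous error term in \eqref{e:ro*1} is then controlled by the restricted $L^{p_1}$ bound of Theorem~\ref{t:higher1}, and it is this loop---restricted higher integrability, then $\ro^\star$ comparison to the strong excess estimate, then back to the bad set---that produces the superlinear gain. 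Separately, a smaller point: the first Lipschitz truncation (Proposition~\ref{p:max}) is not established by ``the height bound at every scale'' but by the Jerrard--Soner/Ambrosio--Kirchheim $BV$ estimate on the slicing map $z\mapsto\langle T,\p,z\rangle$, which is what gives directly that the slices vary in a Lipschitz way on the good set.
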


We note that the theorem is scaling invariant and thus it suffices to prove it in the case $r=1$. Moreover, for simplicity we will mostly ignore $\Sigma$ and often assume that $T$ is area minimizing in the whole euclidean space: this will be of great help in illustrating the main ideas behind the proof, avoiding some technicalities.

An elementary computation shows that, under Assumption \ref{ipotesi_base_app}, 
\[
\bE(T,\bC_r (x))= \frac{\|T\| (\bC_r (x))}{\omega_m r^m} - Q\, .
\]
It is then natural to introduce the following ``excess measure'':

\begin{definition}[Excess measure, cf. {\cite[Definition 1.2]{DS3}}]\label{d:excess_meas}
For a current $T$ as in Assumption \ref{ipotesi_base_app} we define
the \textit{excess measure} $\e_T$ and its {\em density} $\bd_T$:
\begin{gather*}
\e_T (A) := \|T\| (A\times\R^{n})  - Q\,|A| \qquad \text{for every Borel }\;A\subset B_r (x),\\
\bd_T(y) := \limsup_{s\to 0} \frac{\e_T (B_s (y))}{\omega_m\,s^m}= \limsup_{s\to 0} \bE (T,\bC_s (y))\, .
\end{gather*}
\end{definition}

\subsection{$BV$ estimate for slices and first approximation}\label{ss:BVslice} It is rather clear that the smallness of the cylindrical excess prevents the tangent plane to $T$ at $p$ to have negative intersection with $\{\p (p)\}\times \R^n$ at most points $p$ in $\supp (T)$. In fact this is a simple measure-theoretic fact: even without assuming that $T$ is area minimizing, it remains true that, under Assumption \ref{ipotesi_base_app}, most slices $\langle T, \p, y\rangle$ will be elements of $\Iqs$. The points $y$ which violate the latter property will form a set of small measure. 

It is instructive to see what happens if $Q=1$ and $T$ is assumed to be a-priori the graph of a classical map $v$, assuming a Lipschitz bound like $\Lip (v)\leq 1$. 
The cylindrical excess $E$ is then comparable, up to constants, to the $L^2$ norm of $Dv$. It is a classical statement for a (single valued) Sobolev map that a Lipschitz control holds on the restriction of the map on a fairly large closed set, cf. for instance \cite[Section 6.6.3]{EG}. Indeed a way to identify a good set on which such Lipschitz bound holds is to look at those points where the Hardy-Littlewood maximal function of $|Dv|$ is suitably small. Under our idealized situation, $|Dv|^2$ is comparable to the excess density $\bd_T$ introduced above. This motivates the introduction of a maximal function in our setting

\begin{definition}[Maximal function of the excess measure, cf. {\cite[Definition 2.1]{DS3}}]\label{d:maximal}
Given a current $T$ as in Assumption \ref{ipotesi_base_app} we introduce the ``non-centered'' maximal function of $\e_T$:
\begin{equation*}
\bmax\be_T (y) := \sup_{y \in B_s (w)\subset B_{4r} (x)} \frac{\be_T (B_s (w))}{\omega_m\, s^m} = \sup_{y\in B_s (w)\subset B_{4r} (x)}
\bE (T,\bC_s (w)) .
\end{equation*}
\end{definition}

Going on with our Sobolev space analogy, if we denote by $E$ the square of the $L^2$ norm of $|Dv|$ (normalized by $r^m$) and we let $K$ be the set where the maximal function of $|Dv|^2$ lies below the threshold $E^{2\gamma_1}$, then the restriction of $v$ to $K$ will have Lipschitz constant $E^{\gamma_1}$ and the size of the complement of $K$ can be estimated with $r^m E^{1-2\gamma_1}$. Of course we can then extend $v|_K$ outside $K$ to a Lipschitz function with essentially the same Lipschitz bound. Neglecting the effect of $\Psi$, it is then clear that, only relying on Assumption \ref{ipotesi_base_app} we can hope for estimate \eqref{e:main(i)} if we replace the superlinear $E^{1+\gamma_1}$ in \eqref{e:main(ii)} and \eqref{e:main(iii)} with, respectively, $E^{1-2\gamma_1}$ and $E$. 

\medskip

This heuristic discussion can be in fact made rigorous in a very direct way relying on some recent developments in geometric measure theory. Regarding the slicing map $\langle T, \p, \cdot\rangle$ as a map taking values into the space of $0$-dimensional currents (endowed with a suitable metric) and using the formalism introduced by Ambrosio in \cite{ambrosio} for BV maps with metric targets, Jerrard and Soner have given in \cite{JS} a rather elementary way to prove that such map is a function of bounded variation, with norm which can be controlled with the mass of $T$ and the mass of its boundary. Ambrosio and Kirchheim used then this idea in \cite{AK} to develop part of their general theory of metric currents and give a rather efficient  and general approach to the Federer-Fleming compactness theorem.  The resulting computations must be suitably adjusted to our setting. However the theory allows a quite direct proof of the following 

\begin{proposition}[Lipschitz approximation, cf. {\cite[Proposition 2.2]{DS3}}]\label{p:max}
There exists a constant $C>0$ with the following property.
Let $T$ and $\Psi$ be as in Assumption \ref{ipotesi_base_app} in the cylinder $\bC_{4s} (x)$.
Set $E=\bE(T,\bC_{4s}(x))$, let $0<\delta_{11}<1$ be such that $16^m E < \delta_{11}$,
and define 
\[
K := \big\{\bmax\be_T<\delta_{11}\big\}\cap B_{3s}(x)\, .
\]
Then, there is $u\in \Lip (B_{3s}(x), \Iqs)$ such that
$\gr (u) \subset \Sigma$ and
\[
\Lip (u)\leq C\,\big(\delta_{11}^{\sfrac{1}{2}} + {\|D\Psi\|_0}\big),
\qquad {\rm osc}\, (u) \leq C {\bh (T, \bC_{4s} (x), \pi_0)} + C s \|D \Psi\|_0\, ,
\]
\[
\bG_u \res (K\times \R^{n})= T\res (K\times \R^{n}),
\]
\begin{equation}\label{e:max1}
|B_{r}(x)\setminus K|\leq \frac{10^m}{\delta_{11}}\,\e_T \Big(\{\bmax\be_T > 2^{-m} \delta_{11} \}\cap B_{r+r_0s}(x)\Big) \quad\forall\;
r\leq 3\,s ,
\end{equation}
where $r_0 = 16 \sqrt[m] {E/\delta_{11}} < 1$.
\end{proposition}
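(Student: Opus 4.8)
The plan is to adapt to the $Q$-valued metric setting the classical Lipschitz approximation of Sobolev functions along super-level sets of a maximal function (cf.\ \cite[Section 6.6.3]{EG}): I would slice $T$ along the projection $\p$, recognize the slicing map $y\mapsto\langle T,\p,y\rangle$ as a $BV$ map into $\Iqs$ whose ``derivative'' is dominated by the excess measure $\e_T$, truncate it on the good set $K=\{\bmax\e_T<\delta_{11}\}\cap B_{3s}(x)$, and then extend the truncation to all of $B_{3s}(x)$ by a Lipschitz extension theorem for $\Iqs$-valued maps.

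First I would analyze the slices. By the slicing theorem, for a.e.\ $y$ the slice $\langle T,\p,y\rangle$ is an integer rectifiable $0$-current supported in $\{y\}\times\R^n$, and the identity $\p_\sharp T\res\bC_{4s}(x)=Q\a{B_{4s}(x)}$ forces its algebraic mass to be $Q$. The key elementary point is that the portion of $\|T\|$ over a ball $B_\rho(w)$ on which the approximate tangent plane to $T$ is not a positive graph over $\pi_0$ has measure at most $C\,\e_T(B_\rho(w))$, since $|\vec T-\vec\pi_0|^2$ is bounded below on such a set; hence $\bmax\e_T(y)<\delta_{11}$ already rules out cancellation in $\langle T,\p,y\rangle$, so the slice is an element of $\Iqs$. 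Viewing $y\mapsto\langle T,\p,y\rangle$ as a map into the metric space of $0$-dimensional currents, the Jerrard--Soner estimate \cite{JS} (adapted to the cylindrical normalization, see also \cite{AK}) shows that this map is $BV$ with total-variation measure $\lesssim\e_T$; since $|\vec T-\vec\pi_0|^2$ bounds the squared slope of the graphical part of $T$, the relevant difference quotients of the slicing map are controlled by $(\bmax\e_T)^{1/2}$, so that on $K$ the corresponding truncation is Lipschitz with constant $\le C\,\delta_{11}^{1/2}$ by the standard maximal-function Lipschitz criterion.

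On $K$ I define $u(y):=\langle T,\p,y\rangle\in\Iqs$; then $\bG_u\res(K\times\R^n)=T\res(K\times\R^n)$ because the two currents have the same slices along $\p$ over a.e.\ point of $K$ (compare the explicit formula \eqref{e:slice}) and neither of them has a vertical part there. To enforce $\gr(u)\subset\Sigma=\gr(\Psi)$ I would pass to graphical coordinates: splitting the fibre as $\R^n=\R^{\bar n}\times\R^l$, the map is determined by its free part $y\mapsto\sum_i\a{u_i'(y)}\in\mathcal{A}_Q(\R^{\bar n})$ through $u_i''=\Psi(\cdot,u_i')$. I extend the free part off $K$ using Almgren's bi-Lipschitz embedding $\xii$ and Lipschitz retraction $\ro$ of Theorem \ref{t:xi} (or directly via Kirszbraun's theorem, as in \cite{DS2}), and then recover the $\R^l$-component through $\Psi$; this yields a Lipschitz $u:B_{3s}(x)\to\Iqs$ with $\gr(u)\subset\Sigma$ and $\Lip(u)\le C(\delta_{11}^{1/2}+\|D\Psi\|_0)$, the second term accounting for the tilt of $\Sigma$ with respect to $\pi_0$. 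The oscillation bound is then immediate: by definition of $\bh$ the map has oscillation at most $\bh(T,\bC_{4s}(x),\pi_0)$ on $K$, and the Lipschitz extension spreads this over $B_{3s}(x)$ at cost $Cs(\delta_{11}^{1/2}+\|D\Psi\|_0)$, which by $16^mE<\delta_{11}$ reorganizes into $C\,\bh(T,\bC_{4s}(x),\pi_0)+Cs\|D\Psi\|_0$.

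Finally, $B_r(x)\setminus K\subset\{\bmax\e_T\ge\delta_{11}\}$, so \eqref{e:max1} follows from a Besicovitch/Vitali covering argument for the non-centered Hardy--Littlewood maximal function of the measure $\e_T$, giving $|B_r(x)\setminus K|\le\frac{10^m}{\delta_{11}}\,\e_T(\{\bmax\e_T>2^{-m}\delta_{11}\}\cap B_{r+r_0s}(x))$; the dilation $r_0=16(E/\delta_{11})^{1/m}$ is the radius scale of the competitor balls in the covering, and the hypothesis $16^mE<\delta_{11}$ ensures $r_0<1$, so all those balls remain inside $\bC_{4s}(x)$, where the excess is under control. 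I expect the genuine difficulty to be the step in the second paragraph: establishing rigorously that the slicing map is $BV$ with derivative controlled by the \emph{excess} measure $\e_T$ rather than by the full mass of $T$, and extracting from this the pointwise Lipschitz bound on $K$ --- this is precisely where the metric-current machinery of Jerrard--Soner and Ambrosio--Kirchheim must be retuned to the excess normalization, and where the square root in $\delta_{11}^{1/2}$ originates.
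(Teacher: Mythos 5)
Your proposal takes the same route the paper sketches in Section~\ref{ss:BVslice}: slice $T$ along $\p$, regard $y\mapsto\langle T,\p,y\rangle$ as a $BV$ map into $\Iqs$ in the sense of Jerrard--Soner and Ambrosio--Kirchheim, obtain the Lipschitz bound on $K$ by truncating along a sublevel set of $\bmax\e_T$, extend off $K$ by a Kirszbraun-type theorem, and prove \eqref{e:max1} by a Vitali covering of $\{\bmax\e_T\ge\delta_{11}\}$. You also correctly single out the genuinely delicate step, namely refining the $BV$ estimate so that the variation of the slicing map is governed by the excess measure $\e_T$ rather than by the full mass of $T$.

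However, your argument for the oscillation bound does not work. You estimate the oscillation of the extension by ${\rm osc}(u|_K)+Cs\,(\delta_{11}^{1/2}+\|D\Psi\|_0)$ and then claim that $16^mE<\delta_{11}$ lets you ``reorganize'' the term $Cs\,\delta_{11}^{1/2}$ into $C\,\bh(T,\bC_{4s}(x),\pi_0)$. This is false: the hypothesis $16^mE<\delta_{11}$ only gives a \emph{lower} bound on $\delta_{11}$ in terms of $E$, so it provides no control of $s\,\delta_{11}^{1/2}$ from above; moreover the target $\bh(T,\bC_{4s}(x),\pi_0)$ is independent of $\delta_{11}$, which may be taken arbitrarily close to $1$ while $\bh$ is tiny. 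The correct mechanism is a truncation, not a comparison: after Lipschitzly extending the free part $\sum_i\a{u_i'}$, post-compose sheet-wise with the radial $1$-Lipschitz retraction of $\R^{\bar n}$ onto a ball of radius comparable to $\bh(T,\bC_{4s}(x),\pi_0)$ containing the image of $u|_K$. This leaves the Lipschitz constant unchanged, caps the oscillation of the free part at $C\,\bh$, and only then is the $\R^l$-component recovered through $\Psi$, contributing the remaining $Cs\,\|D\Psi\|_0$.
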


From Proposition \ref{p:max} one derives immediately a version of Theorem \ref{t:app_main} where the bound \eqref{e:main(i)} is correct, whereas in the bound \eqref{e:main(ii)} the factor $E^{1+\gamma_1}$ must be replaced by $E^{1-2\gamma_1}$ and in the bound \eqref{e:main(iii)} $E^{1+\gamma_1}$ must be replaced by $E$. In the rest of this section we will discuss why the area minimizing assumption, which so far we have not yet used, allows to improve the bounds to achieve Theorem \ref{t:app_main}. 

\subsection{Superlinear gain} Going back to our heuristic idea, in which $T$ is replaced by the graph of a single valued function $v$ and the excess by the square of the  $L^2$ norm, the ``maximal function truncation'' described in the previous paragraph would deliver the desired superlinear estimates  if we knew that the $L^{2+\beta}$ norm of $Dv$ were controlled by $E^{\sfrac12}$, namely the $L^2$ norm of $Dv$, for some $\beta>0$. This amounts to a reverse H\"older inequality of the form
\begin{equation}\label{e:reverse-0}
\|Dv\|_{L^p} \leq C \|Dv\|_{L^2} \qquad \mbox{for some $p>2$}\, .
\end{equation}
In our setting one possible translation would be: the excess measure $\be_T$ is absolutely continuous and its density  $\bd_T$ enjoys the estimate 
\begin{equation}\label{e:reverse-1}
\|\bd_T \|_{L^{1+\varepsilon} (B_{2r} (x))} \leq C \be_T (B_{4r} (x)) \leq C r^m E\, .
\end{equation}
This is certainly not correct under the only Assumption \ref{ipotesi_base_app}: it is clear that in order to hope for such a bound we need to use the hypothesis that $T$ is area minimizing. We do not know whether \eqref{e:reverse-1} is correct under the additional assumption that $T$ be area minimizing: even if it is, we expect that its proof is rather difficult, see the discussion below. However, the cornerstone to our approach to Theorem \ref{t:app_main} is that the following slightly weaker form of \eqref{e:reverse-1} is correct and can be achieved with a moderate effort. 

\begin{theorem}[Gradient $L^p$ estimate, cf. {\cite[Theorem 1.3]{DS3}}]\label{t:higher1}
There exist constants $p_1 >1$ and $C, \eps_{10}>0$ (depending on $m,n,\bar n, Q$)
with the following property. Let $T$ be as in Assumption \ref{ipotesi_base_app} in the cylinder $\bC_4$.
If $T$ is area minimizing and $E=\bE (T,\bC_4)< \eps_{10}$, then
\begin{equation}\label{e:higher1}
\int_{\{\bd\leq1\}\cap B_2} \bd_T^{p_1} \leq C\, E^{p_1-1} \left(E + \bA^{2}\right).
\end{equation}
\end{theorem}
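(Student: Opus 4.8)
The plan is to use the area minimizing condition to upgrade the ``linear'' information of Proposition \ref{p:max} to a genuine reverse Hölder inequality for $\bd_T$, and then close by a Gehring-type iteration. First I would reduce, by scaling, to the case $r=1$ (base cylinder $\bC_4$, target ball $B_2$). Fix a threshold $\delta$ comparable to a small power of $E$, set $K:=\{\bmax\,\e_T<\delta\}\cap B_3$, and let $f\in\Lip(B_3,\Iqs)$ be the $Q$-valued map of Proposition \ref{p:max}: then $\gr(f)\subset\Sigma$, $\Lip(f)\le C(\delta^{1/2}+\bA)$, $\bG_f\res(K\times\R^n)=T\res(K\times\R^n)$, and $|B_3\setminus K|\le C\delta^{-1}\e_T(\{\bmax\,\e_T>c\delta\})$. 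Combining the Taylor expansion of $\mass(\bG_f)$ (Corollary \ref{c:taylor_area}, with the cubic remainder absorbed via the pointwise bound on $\Lip(f)$) with the fact that $T=\bG_f$ over $K$, one gets for every ball $B_s(y)\subset B_3$
\[
\Bigl|\,\e_T(B_s(y))-\tfrac12\!\int_{B_s(y)}\!|Df|^2\,\Bigr|\le C\,\delta^{1/2}\!\!\int_{B_{2s}(y)}\!\!|Df|^2+C\,\e_T\bigl(\{\bmax\,\e_T>c\delta\}\cap B_{2s}(y)\bigr)+C\,\bA^2 s^{m+2}\, .
\]
Thus, modulo errors I still have to dispose of, $\bd_T$ is the density of $\tfrac12|Df|^2$ and it suffices to prove a reverse Hölder inequality for $|Df|^2$.

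Next comes the heart of the matter, a harmonic comparison exploiting minimality. Fix $B_s(y)\subset B_3$ with $\bE(T,\bC_{2s}(y))$ small. By a Fubini argument on the spherical shells one selects a radius $\rho\in(s,2s)$ for which the slice of $T$ over $\partial B_\rho(y)$ is controlled and $\int_{\partial B_\rho(y)}|Df|^2\le\frac{C}{s}\int_{B_{2s}(y)}|Df|^2$. Let $w$ be the $\D$-minimizer on $B_\rho(y)$ with $w|_{\partial B_\rho}=f|_{\partial B_\rho}$ (Theorem \ref{t:existence}); as a competitor to $T$ over $\bC_\rho(y)$ take $\bG_w$ glued to $T$ across a thin boundary shell through the $Q$-valued interpolation of Lemma \ref{l:technical}, and corrected over the bad set $B_\rho\setminus K$ by swapping $\bG_f$ back to $T$ there. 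Theorem \ref{t:commute} ensures this competitor has the same boundary as $T$ in $\bC_\rho(y)$, so minimality of $T$, the mass expansion of Corollary \ref{c:taylor_area} applied to $\bG_f$ and $\bG_w$, and the shell estimate of Lemma \ref{l:technical} give
\[
\int_{B_\rho(y)}|Df|^2\le\int_{B_\rho(y)}|Dw|^2+C\,\delta^{1/2}\!\!\int_{B_{2s}(y)}\!\!|Df|^2+C\,\e_T\bigl(\{\bmax\,\e_T>c\delta\}\cap B_{2s}(y)\bigr)+C\,\bA^2 s^{m+2}\, .
\]
A variant of the same construction, comparing $\bG_f$ over $\bC_\rho(y)$ with the interpolation between $f|_{\partial B_\rho}$ and the collapsed multiplicity-$Q$ plane over a generic point $\bar f\in\Iqs$, yields a Caccioppoli-type inequality $\int_{B_{s}(y)}|Df|^2\le\frac{C}{s^2}\int_{B_{2s}(y)}\cG(f,\bar f)^2+C\,\bA^2 s^{m+2}+(\text{error})$, where again the error is the excess of $T$ over $\{\bmax\,\e_T>c\delta\}$ plus the $\delta^{1/2}$-term.

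Now I feed in the $Q$-valued Sobolev–Poincaré inequality of Proposition \ref{p:poincare'}: choosing $\bar f$ the ``mean value'' it provides, $\frac{1}{s^2}\mint_{B_{2s}(y)}\cG(f,\bar f)^2\le C\bigl(\mint_{B_{2s}(y)}|Df|^{2m/(m+2)}\bigr)^{(m+2)/m}$, i.e.\ a fractional average with exponent $\sigma=m/(m+2)<1$. Together with the identification of $\bd_T$ with $\tfrac12|Df|^2$ from the first paragraph, and after rewriting the maximal-function error (via the weak $(1,1)$ estimate for $\bmax\,\e_T$ and a Vitali covering) as a small multiple $\theta\mint_{B_{2s}(y)}\bd_T$ — this is the precise point where area minimality is genuinely used, with no circular appeal to the estimate being proved — I obtain, for all $B_s(y)\subset B_3$,
\[
\mint_{B_{s}(y)}\bd_T\;\le\;C\Bigl(\mint_{B_{2s}(y)}\bd_T^{\sigma}\Bigr)^{1/\sigma}+C\,\bA^2 s^{2}+\theta\mint_{B_{2s}(y)}\bd_T\, ,
\]
a reverse Hölder inequality with forcing $\bA^2$ once the small $\theta$-term is absorbed. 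The Giaquinta–Modica version of Gehring's lemma then produces $p_1=p_1(m,n,\bar n,Q)>1$ and $C$ with $\bigl(\mint_{B_2}\bd_T^{p_1}\bigr)^{1/p_1}\le C\,\mint_{B_3}\bd_T+C\,\bA^2$. Restricting to $\{\bd_T\le1\}$ (which discards the possible singular part of $\e_T$ and the region of large excess) and using $\int_{B_3}\bd_T\le C\,\e_T(B_4)\le C E$, keeping track of the homogeneities gives $\int_{\{\bd\le1\}\cap B_2}\bd_T^{p_1}\le C\,E^{p_1-1}(E+\bA^2)$, which is \eqref{e:higher1}.

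The main obstacle is the bookkeeping of the error terms in the two paragraphs above, and in particular avoiding circularity: the excess $\e_T(\{\bmax\,\e_T>c\delta\})$ off the good set is a priori only bounded \emph{sublinearly} in $E$ — the superlinear bound is exactly what we want — so one must tie $\delta$ to a fixed small power of $E$ (or, more robustly, work at dyadic scales with scale-dependent thresholds) and convert that error, through the weak-type bound for the excess maximal operator, into a $\theta\mint\bd_T$ term with $\theta$ as small as needed. A second genuine difficulty is that $\D$-minimizers are only Hölder, not Lipschitz, because of branching: the comparison map $w$ carries no pointwise gradient bound, so all the gain must come from minimality of $T$ against the interpolated competitor (and from the $Q$-valued Poincaré inequality), not from interior elliptic estimates on $w$; verifying that the glued competitor is admissible (correct boundary, controlled mass through Lemma \ref{l:technical}) and that the interpolation and bad-set corrections do not destroy the estimates is where most of the technical work lies.
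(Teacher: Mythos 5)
Your plan is to run Caccioppoli $+$ Sobolev--Poincar\'e $+$ Gehring \emph{directly on} $\bd_T$, using the minimality of $T$ against graphical competitors built from a $\D$-minimizer $w$. That is a genuinely different route from \cite{DS3}: there one first proves Theorem \ref{t:hig fct} (higher integrability for $\D$-\emph{minimizers}) by a clean Caccioppoli--Gehring argument with no bad set and no error terms (the competitor, the interpolation to the trace, is always admissible), then shows by a \emph{compactness} argument (the mechanism behind Theorem \ref{t:harmonic_final}, relying on Proposition \ref{p:cc}) that the Lipschitz approximation $f$ of Proposition \ref{p:max} is ``almost $\D$-minimizing'' at the relevant scales, and only afterwards transfers the higher integrability to $\bd_T$. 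So Gehring is never applied to $\bd_T$; it is applied to $\D$-minimizers, where there are no $T$-versus-$f$ discrepancy errors to absorb.

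The concrete gap in your route sits precisely where you flagged the ``main obstacle'': the absorption of $\e_T\bigl(\{\bmax\,\e_T > c\delta\}\cap B_{2s}(y)\bigr)$ into $\theta\,\mint_{B_{2s}(y)}\bd_T$ with $\theta$ small. The weak $(1,1)$ bound for $\bmax\,\e_T$ controls the \emph{Lebesgue} measure of the bad set, $|\{\bmax\,\e_T>c\delta\}\cap B_{2s}|\le C\,\delta^{-1}\e_T(B_{2s})$, but it says nothing about $\e_T$ restricted to that set. A priori $\e_T$ could be concentrated entirely on the bad set (it can even have a singular part there), so $\e_T(\{\bmax\,\e_T>c\delta\}\cap B_{2s})$ is only bounded by $\e_T(B_{2s})$, i.e.\ $\theta\sim 1$; there is no smallness to be extracted. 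Choosing $\delta$ as a power of $E$ (or scale by scale) does not help: it shrinks the bad set in Lebesgue measure, not in $\e_T$-measure. And asking for a definite fraction of the excess to live on the \emph{good} set is exactly a weak form of the conclusion, so absorbing the error here would be circular. Contrary to your remark, area minimality is not freshly available at this point --- you already spent it in the competitor comparison; what is missing is an independent, non-circular input making the bad-set excess superlinear, and that is precisely what the paper's ``almost $\D$-minimizing by compactness'' step provides (yielding an $o(E)$ gain that does not come from Gehring). A secondary issue, which you note but do not resolve, is that $\D$-minimizers are only $C^{0,\alpha}\cap W^{1,2}$, so $\bG_w$ is not a priori a current of controlled mass and Corollary \ref{c:taylor_area} does not apply to it; the competitor must be built from a Lipschitz truncation of $w$ (Proposition \ref{p:lipapprox metric} applied to $w$) pushed into $\Sigma$, which introduces yet another layer of errors to track.
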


From Theorem \ref{t:higher1} and Proposition \ref{p:max} we cannot conclude directly Theorem \ref{t:app_main} because we lack control on the set where $\bd_T$ is rather high (and on the singular part of the measure $\be_T$!). We would rather need an estimate for the regions where the tangent to $T$ has high slope, compared to $\pi_0$. Theorem \ref{t:higher1} can be indeed used to prove something of that kind:

\begin{theorem}[Almgren's strong excess estimate, cf. {\cite[Theorem 6.1]{DS3}}]\label{t:higher}
There are constants $\eps_{11},\gamma_{11}, C> 0$ (depending on $m,n,\bar n,Q$)
with the following property.
Assume $T$ satisfies Assumption \ref{ipotesi_base_app} in $\bC_4$ and is area minimizing.
If $E =\bE(T,\bC_4) < \eps_{11}$, then
\begin{equation}\label{e:higher2}
\e_T (A) \leq C\, \big(E^{\gamma_{11}} + |A|^{\gamma_{11}}\big) \left(E+\bA^2\right)
\quad \text{for every Borel }\; A\subset B_{{\frac98}}.
\end{equation}
\end{theorem}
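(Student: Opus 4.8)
The plan is to deduce Theorem \ref{t:higher} (Almgren's strong excess estimate) from the gradient $L^{p_1}$ estimate of Theorem \ref{t:higher1} together with the Lipschitz approximation of Proposition \ref{p:max}. The basic idea is that the excess measure $\e_T(A)$ naturally splits into two contributions: the part coming from the region where $T$ coincides with the graph of the Lipschitz approximation $u$, which is controlled by $\int |Du|^2$ and hence (via H\"older and Theorem \ref{t:higher1}) has a superlinear bound; and the part coming from the ``bad set'' $B\setminus K$ where $T$ differs from $\bG_u$, whose measure is small by \eqref{e:max1} and on which one uses a crude linear bound for $\e_T$.

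First I would apply Proposition \ref{p:max} with a threshold $\delta_{11} = E^{2\gamma}$ for a small exponent $\gamma>0$ to be fixed, obtaining the Lipschitz map $u$ on $B_3$, the good set $K=\{\bmax\e_T<\delta_{11}\}\cap B_3$, and the estimates $\Lip(u)\le C(E^{\gamma}+\bA)$, $\bG_u\res(K\times\R^n)=T\res(K\times\R^n)$, and $|B_r\setminus K|\le C\delta_{11}^{-1}\e_T(\{\bmax\e_T>c\delta_{11}\}\cap B_{r+r_0})$. Writing $A = (A\cap K)\cup(A\setminus K)$ for any Borel $A\subset B_{9/8}$, on $A\cap K$ we have $\e_T(A\cap K) = \|\bG_u\|((A\cap K)\times\R^n) - Q|A\cap K|$; by the area formula for $Q$-graphs (Corollary \ref{c:massa_grafico}) and the elementary inequality $\sqrt{1+\sum(\det M^k)^2}-1\le C|Du|^2$ valid when $\Lip(u)\le 1$, this is bounded by $\tfrac12\int_{A\cap K}|Du|^2 + C\int_{A\cap K}|Du|^4 \le C\int_{B_3}|Du|^2$. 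Now on $K$ the slope is controlled, $|Du|^2\le C\bmax\e_T \le C\bd_T$ a.e. on $K$ (more precisely one identifies $|Du|^2$ with the density $\bd_T$ on the set where $T$ is graphical with small slope), so by H\"older and Theorem \ref{t:higher1},
\[
\int_{A\cap K}|Du|^2 \le |A|^{1-1/p_1}\Big(\int_{\{\bd\le 1\}\cap B_2}\bd_T^{p_1}\Big)^{1/p_1}\le C\,|A|^{1-1/p_1}\,E^{1-1/p_1}\big(E+\bA^2\big)^{1/p_1}.
\]
Since $(E+\bA^2)^{1/p_1}\le (E+\bA^2)$ once $E,\bA$ are small, and $E^{1-1/p_1}\le E^{\gamma_{11}}$ for suitable $\gamma_{11}$, this term has the desired form with the factor $|A|^{\gamma_{11}}$. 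For the bad part, the monotonicity formula (Theorem \ref{t:monot} / Theorem \ref{t:cones}) gives the crude linear bound $\e_T(A\setminus K) \le \e_T((B_{9/8}\setminus K)) \le C\,|B_{9/8}\setminus K|^{?}$ — actually one argues: $\e_T$ restricted to a set of small measure is small because, by the $BV$ slicing estimate underlying Proposition \ref{p:max} together with the strong approximation heuristics, $\e_T(B\setminus K)\le C(E+\bA^2)|B\setminus K|^{\gamma}$; combining with $|B_r\setminus K|\le C\delta_{11}^{-1}\e_T(B_{r+r_0})\le C\delta_{11}^{-1}(E+\bA^2)=C E^{1-2\gamma}(E+\bA^2)/(E+\bA^2)\cdot(E+\bA^2)$, one sees $|B\setminus K|\le C E^{1-2\gamma}$ (up to $\bA$ terms), so $\e_T(A\setminus K)\le C E^{\gamma'}(E+\bA^2)$ for a suitable $\gamma'>0$. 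Choosing $\gamma$ small and $\gamma_{11}=\min\{\gamma',1-1/p_1\}$ gives \eqref{e:higher2}.

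The main obstacle — and the point requiring genuine care rather than routine estimation — is the treatment of the singular part of $\e_T$ and of the region where $T$ is \emph{not} graphical or where the slope is large. Theorem \ref{t:higher1} only controls $\bd_T$ on $\{\bd_T\le 1\}$, so on the complement one cannot use the $L^{p_1}$ bound and must instead absorb the contribution through the smallness of $|B\setminus K|$ combined with a local, scale-invariant version of the excess bound obtained by covering the bad set with controlled-overlap balls on which $\e_T$ is itself small (a Vitali/Besicovitch covering argument feeding back into the definition of $\bmax\e_T$). Making the bookkeeping of the radii $r_0$, the thresholds $\delta_{11}$, and the exponents $\gamma_1,p_1,\gamma_{11}$ mutually consistent — so that every error term is genuinely superlinear in $E+\bA^2$ — is the delicate part; this is precisely the content of \cite[Section 6]{DS3}.
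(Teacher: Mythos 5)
Your estimate on the good set $A\cap K$ is essentially sound: once $T$ agrees with $\bG_u$ over $K$, the Taylor expansion of the area and the identification of $\bd_T$ with $\tfrac12|Du|^2$ on $K$ reduce $\e_T(A\cap K)$ to $\int_{A\cap K}|Du|^2$, and H\"older together with Theorem~\ref{t:higher1} gives a bound of the form $C|A|^{1-1/p_1}(E+\bA^2)$. (One small remark: the step ``$(E+\bA^2)^{1/p_1}\le E+\bA^2$'' is false for small quantities, since $1/p_1<1$; what actually saves you is $E\le E+\bA^2$, which gives $E^{1-1/p_1}(E+\bA^2)^{1/p_1}\le E+\bA^2$.)

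The genuine gap is the bad-set term, and it cannot be filled along the lines you indicate. Nothing in your argument shows that $\e_T(A\setminus K)$ is \emph{superlinear} in $E+\bA^2$. Proposition~\ref{p:max} controls the \emph{measure} $|B_r\setminus K|$ through the weak-type inequality \eqref{e:max1}, not the excess $\e_T$ \emph{of} the bad set, and without further input the only bound available is the trivial linear one $\e_T(A\setminus K)\le\e_T(B_{9/8})\le CE$. Your intermediate claim ``$\e_T(B\setminus K)\le C(E+\bA^2)|B\setminus K|^{\gamma}$'' has precisely the same shape as \eqref{e:higher2} and is therefore circular. The monotonicity formula gives mass ratios on balls but does not localize the excess density on sets of small measure; and Theorem~\ref{t:higher1} only sees $\bd_T$ on $\{\bd_T\le1\}$, hence is silent on exactly the region that is the source of the difficulty. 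Nor does a Vitali/Besicovitch covering of the bad set by balls with small excess help: those balls can overlap, and summing their (linear-in-$E$) excesses gives back at best a linear total.

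The missing ingredient is a genuine comparison argument, which is where the minimality of $T$ must be used globally. This is what the paper does: regularize $\xii\circ u$ by convolution at a small scale, project back into $\cQ$ via the almost-projection $\ro^\star_\delta$ of Proposition~\ref{p:ro*}, and compare $\mass(T)$ with the mass of the resulting competitor graph. Convolution averages out the regions of high gradient and is the source of the superlinear gain; the price is the second integral in \eqref{e:ro*1}, which is controlled precisely by Theorem~\ref{t:higher1}. Until you insert some such competitor construction, the bad-set contribution remains linear in $E$ and the statement does not follow.
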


Actually, in the case of a classical single valued map Theorem \ref{t:higher} could be concluded directly by comparing the mass of the current $T$ with that of a suitable convolution of the approximating Lipschitz map. The effect of the convolution is to smear high gradients and show that they are energetically not favorable. A surrogate of this regularization process can be achieved using the map $\ro^\star$ of Proposition \ref{p:ro*}: to regularize a map $u$ we can first regularize $\xii \circ u$ by convolution and then gain a multivalued map composing the regularization with $\xii^{-1} \circ \ro^\star$. We can compare the Dirichlet energy of this map
to that of the regularization of $\xii\circ u$. Note however that if we want a small error we have to choose the parameter $\delta$ in Proposition \ref{p:ro*} small and yet hope that the second integrand in the right hand side of \eqref{e:ro*1} remains small. Theorem \ref{t:higher1} allows to control that dangerous term.

\subsection{Higher integrability and harmonicity} Going back to our analogy, we know that if $T$ were the graph of a function, the minimality assumption and the smallness of the excess should imply that $v$ is close to an harmonic function. Of course for single valued harmonic functions then the reverse H\"older inequality \eqref{e:reverse-0} is true for any exponent $p>2$. On the other hand we already discussed that, for a suitable choice of $p$, the same reverse H\"older inequality does hold in the multivalued setting as well, cf. Theorem \ref{t:higher}. This suggests that in order to prove Theorem \ref{t:higher1} we could first show that the Lipschitz map of Proposition \ref{p:max} is almost $\D$-minimizing. Looking at Theorem \ref{t:app_main} it is rather intuitive that the ``almost $\D$-minimality'' of $f$ should correspond to have a $o (E)$ in place of $E^{1+\gamma_1}$ in \eqref{e:main(iii)}, where $o (E)$ is any function of $E$ which vanishes faster than $E$ at $0$. Now, using an energetic comparison, such a gain would correspond to show that 
\[
\int_{B_r (x)\setminus K} |Df|^2 = r^m o (E)\, .
\]
If this were not true we could run a contradiction argument over a sequence of currents $T_k$ with vanishing excess $E_k$ and look at the normalized  approximations $u_k:= f_k/ E_k^{\sfrac{1}{2}}$. We could also rescale the corresponding balls to have radius $1$ and center $0$. The $m$-dimensional volume of the corresponding bad sets $B_1\setminus K_k $ is converging to $0$ and in spite of that 
\[
\liminf_k \int_{B_1\setminus K_k} |Du_k|^2 \geq \eta 
\] 
for some positive $\eta$. If we assume that $u_k$ is converging in $L^2$ to some $u$, the Dirichlet energy of $u$ would then satisfy
\[
\liminf_k \int_{B_1} |Du_k|^2 \geq \int_{B_1} |Du|^2 + \eta\, .
\]
But then the graph of $E_k^{\sfrac{1}{2}} u$ must have less mass than $T_k$ and we could hope to modify it and gain a comparison current which would contradict the minimality of $T_k$, at least for $k$ sufficiently large. 

Recalling Section \ref{ss:Hoelder} there is a delicate point to address, namely that for multiple valued functions a uniform control on the Dirichlet energy of a sequence does not imply directly compactness, since the separation between sheets could explode along the sequence. This is achieved with Proposition \ref{p:cc}. Incidentally the analysis above shows also that the approximation of Theorem \ref{t:app_main} is close to a $\D$-minimizer, which we record in the following theorem. 

\begin{theorem}[Harmonic approximation]\label{t:harmonic_final}
Let $\gamma_1$ be the constant of Theorem \ref{t:app_main}.
Then, for every $\bar{\eta}, \bar{\delta}>0$, there is a positive constant $\bar{\eps}_1$ with the following property. 
Assume that $T$ is as in Theorem \ref{t:app_main},
$E := \bE(T,\bC_{4\,r}(x)) < \bar{\eps}_1$
and $r \bA \leq E^{\sfrac{1}{4}+\bar\delta}$. If $f$ is the map in Theorem~\ref{t:app_main} and we fix suitable coordinates, then there exists a $\D$-minimizing function $u: B_r (x) \to \Iq (\R^{\bar{n}})$ such that $w := (u, \Psi (y, u))$ satisfies 
\begin{equation}\label{e:harmonic_final}
r^{-2} \int_{B_r (x)} \cG (f, w)^2 + \int_{B_r(x)} \left(|Df|-|Dw|\right)^2 + \int_{B_r(x)} |D(\etaa\circ f)-
D(\etaa \circ w)|^2\leq \bar{\eta}\, E \, r^m\, .
\end{equation}
\end{theorem}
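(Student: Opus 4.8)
The plan is to prove the statement by a compactness-contradiction argument, following the heuristic discussion in the paragraph preceding the statement. First I would fix $\bar\eta,\bar\delta>0$ and suppose, aiming for a contradiction, that no admissible $\bar\eps_1$ exists. This produces a sequence of currents $T_k$ satisfying Assumption \ref{ipotesi_base_app} in cylinders $\bC_{4r_k}(x_k)$, with $E_k:=\bE(T_k,\bC_{4r_k}(x_k))\downarrow 0$ and $r_k\bA_k\le E_k^{\sfrac14+\bar\delta}$, for which the conclusion \eqref{e:harmonic_final} fails for every choice of competitor $\D$-minimizer $u$. By the scaling invariance already noted for Theorem \ref{t:app_main} I may assume $x_k=0$ and $r_k=1$, and I would likewise translate vertically so that the approximations are centered. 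Let $f_k\colon B_1\to\Iqs$ and $K_k$ be the Lipschitz approximation and good set given by Theorem \ref{t:app_main}; by \eqref{e:main(i)} and the hypothesis on $\bA_k$, $\Lip(f_k)\le C(E_k^{\gamma_1}+\bA_k)\to 0$, and by \eqref{e:main(ii)}, $|B_1\setminus K_k|\le C E_k^{\gamma_1}(E_k+\bA_k^2)$ is negligible compared with $E_k$.

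Next I would introduce the renormalized maps $g_k:=E_k^{-\sfrac12}(f_k-\etaa\circ f_k)$ (or, after a suitable rotation, the maps obtained by flattening $\Sigma$), which satisfy a uniform Dirichlet bound: indeed $\int_{B_1}|Df_k|^2\le C E_k$ follows by combining \eqref{e:main(iii)} with $\sigma=1$ and the elementary identity $\bE(T,\bC_r(x))=\|T\|(\bC_r(x))/(\omega_m r^m)-Q$ noted after Theorem \ref{t:app_main}, so $\int_{B_1}|Dg_k|^2\le C$. Here the subtlety flagged in Section \ref{ss:Hoelder} enters: a uniform Dirichlet bound does not by itself give $W^{1,2}$ compactness for $Q$-valued maps. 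I would therefore apply Proposition \ref{p:cc} (concentration compactness) to extract a subsequence, a sequence of translating sheets $h_k$ with $\|\cG(g_k,h_k)\|_{L^2}\to 0$, and a limiting map; by subtracting the diverging translations one reduces to maps that do converge, and the inequalities \eqref{e:cc2}, \eqref{e:cc3} guarantee that no Dirichlet energy is lost on sets of vanishing measure and that convergence of $|Dg_k|$ in $L^2$ follows from convergence of the energies. One then shows the limit $g$ is $\D$-minimizing on every $B_\rho\subset\subset B_1$ by the comparison argument sketched in Section \ref{ss:Hoelder}, using the Taylor expansion Corollary \ref{c:taylor_area} to relate the mass of graphs to the Dirichlet energy up to errors of order $\Lip(f_k)\int|Df_k|^2=o(E_k)$, together with the area-minimality of $T_k$ and the interpolation Lemma \ref{l:technical} to glue competitors near $\partial B_\rho$.

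The contradiction is then extracted as follows. Set $u$ (pulled back through the graph and the flattening of $\Sigma$) to be this limit $g$ — or rather a genuine $\D$-minimizer on $B_1$ with the correct boundary trace, obtained via Theorem \ref{t:existence}, and shown to coincide with $g$ by the minimality and the strong convergence. By construction $w:=(u,\Psi(y,u))$ is the competitor in \eqref{e:harmonic_final}. The first term $r^{-2}\int\cG(f_k,w)^2$ vanishes from $L^2$ convergence and $\|\cG(g_k,h_k)\|_{L^2}\to0$; the third term, $\int|D(\etaa\circ f_k)-D(\etaa\circ w)|^2$, vanishes because $\etaa\circ f_k$ is (up to the $\Psi$-correction, which is controlled by $\bA_k$) close to a harmonic function with converging traces, so it converges strongly; and the middle term $\int(|Df_k|-|Dw|)^2$ vanishes by \eqref{e:cc3} combined with strong $L^2$-convergence of $|Dg_k|$ toward $|Dg|=|Du|$, after rescaling back by $E_k$. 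Hence $\eqref{e:harmonic_final}$ holds with right-hand side $o(E_k)\le\bar\eta E_k$ for $k$ large, contradicting the assumed failure.

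The main obstacle is precisely the step establishing that the limit is $\D$-minimizing while simultaneously controlling the energy so that the middle term closes up — that is, upgrading the weak convergence delivered by Proposition \ref{p:cc} to the strong energy convergence needed to kill $\int(|Df_k|-|Dw|)^2$, and doing so in the presence of the translating-sheets decomposition, where the several $\zeta_j$ must each be shown to be $\D$-minimizing and their energies must be added up without loss. This requires a careful competitor construction: one perturbs the graph of $E_k^{\sfrac12}u$ (reassembled from the sheets) into a current with the same boundary as $T_k$ in $\bC_1$ and strictly smaller mass if the energies did not match, using Theorem \ref{t:commute} to control boundaries of pushforwards, Corollary \ref{c:massa_grafico} for the area formula, and the interpolation Lemma \ref{l:technical} for the boundary layer, while keeping all error terms $o(E_k)$ via the smallness of $\Lip(f_k)$ and the hypothesis $r\bA\le E^{\sfrac14+\bar\delta}$. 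Everything else — the scaling reduction, the a priori energy bound, the vanishing of the first and third terms — is routine given Theorems \ref{t:app_main}, \ref{t:existence}, \ref{t:frequency} and Proposition \ref{p:cc}.
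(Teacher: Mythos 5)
Your proposal is correct and takes essentially the same route as the paper: a scaling-plus-contradiction argument, normalization of the Lipschitz approximations, Proposition~\ref{p:cc} as the replacement for the missing $W^{1,2}$-compactness of $Q$-valued maps, and a Taylor-expansion comparison with the mass of $T_k$ (via Corollary~\ref{c:taylor_area}, Theorem~\ref{t:commute} and Lemma~\ref{l:technical}) to establish $\D$-minimality of the limit and strong energy convergence, from which the three terms of~\eqref{e:harmonic_final} follow. You also correctly identify the genuinely delicate step — upgrading the weak convergence from Proposition~\ref{p:cc} to strong energy convergence in the presence of translating sheets, each of which must be proved $\D$-minimizing by a competitor construction with errors $o(E_k)$.
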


\section{A first attempt to prove Theorem \ref{t:cod>1}}\label{s:tentativo}

In this section we summarize what we have achieved so far and propose a first strategy to show Theorem \ref{t:cod>1}. After resolving the first important issues, we will have to face a major obstacle: more than half of Almgren's monograph is in fact dedicated to overcome this point and even in the proof given by \cite{DS1,DS2,DS3,DS4,DS5} the same obstacle is responsible for roughly one quarter of the combined length, namely paper \cite{DS4}.

\medskip

 The strategy to prove Theorem \ref{t:cod>1} starts similarly to the Federer's reduction argument. Assume that there is an area minimizing current $T$ of dimension $m\geq 2$, in a sufficiently smooth Riemannian manifold $\Sigma$, which has a large singular set $\sing (T)$: more precisely we assume that, for some $\alpha>0$, $\cH^{m-2+\alpha} (\sing (T))>0$.

From Theorem \ref{t:strat} and Lemma \ref{l:strat} we conclude immediately that at $\cH^{m-2+\alpha}$-a.e. $p\in \sing (T)$ there is one flat tangent plane and the multiplicity is integral. Let us introduce the notation $\rD_Q (T)$ for those points in $\supp (T)\setminus \supp (\partial T)$ where the density of $T$ is the positive integer $Q$. Similarly, we set $\sing_Q (T) := \rD_Q (T)\cap \sing (T)$. We then know that $\sing_1 (T)$ is empty. Indeed the assumptions (a), (b) and (c) in Theorem \ref{t:DG} follow from the monotonicity formula when $\rho$ is sufficiently small. The second assumption in (d) is also fulfilled: since we can assume that the second fundamental form of $\Sigma$ is bounded, for $\rho$ sufficiently small we obviously have $\rho\bA < \varepsilon$. It would remain to prove that the excess with respect to some plane is suitably small at a sufficiently small scale. This is however not difficult since all tangent cones at a point $p$ with $\Theta (T, p)=1$ must be necessarily flat: it can be shown that the only area minimizing $m$-dimensional cones $S$ with $\|S\| (\bB_1 (0)) = \omega_m$ are $m$-dimensional planes counted with multiplicity $1$. 

\medskip

We stop for a moment to observe the following interesting consequence of the above discussion. Let $p$ be a point in $\supp (T)$ where the multiplicity is $Q$ and assume that the surrounding points in $\supp (T)$ have the same multiplicity at a sufficiently small scale, say in $\bB_\rho (p)$. Then $S:= T/Q$ is a well defined integer-rectifiable area minimizing current  in $\bB_\rho (p)$ and moreover $p\in \rD_1 (S)$. Thus $S$ is regular in a neighborhood of $p$. We summarize the outcome of the latter discussion in the following

\begin{corollary}\label{c:densita}
If $S$ is an area minimizing cone with $\Theta (S, 0) =1$, then $S$ is a flat plane with multiplicity $1$. 

Let $T$ be an area minimizing current in a $C^2$ Riemannian manifold $\Sigma$. If $p\in \rD_Q (T)$ and there is a neighborhood $U$ of $p$ where the density is $Q$ at $\|T\|$-a.e. point, then $p$ is a regular point.
\end{corollary}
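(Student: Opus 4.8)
The plan is to prove the two assertions in sequence, the first being essentially a warm-up for the second. For the statement about cones: let $S$ be an area minimizing cone with $\Theta(S,0)=1$. By the monotonicity formula (Theorem \ref{t:monot}, or rather Theorem \ref{t:cones}(ii)) combined with upper semicontinuity of the density and the fact that $S$ is $0$-homogeneous, one has $\Theta(S,q)\le \Theta(S,0)=1$ for every $q\in\supp(S)$, while $\Theta(S,q)\ge 1$ at every point of $\supp(S)$; hence $\Theta(S,q)\equiv 1$ on $\supp(S)$. In particular, for the rescalings $S_{0,r}=S$ we get $\|S\|(\bB_1(0))=\omega_m$, so $\bE(S,\bB_1(0),\pi)$ is small whenever $S$ is close to a plane; more to the point, any tangent cone to $S$ at a point $q\neq 0$ is, by Lemma \ref{l:split}, of the form $\a{\R}\times Z$ with $Z$ an area minimizing cone of one dimension less, still of density $1$ everywhere; iterating, one reduces to showing that a $1$-dimensional area minimizing cone of density $1$ is a line with multiplicity $1$, which is elementary (a $1$-dimensional cycle which is a cone must be a sum of half-lines, and minimality forces it to be a single line). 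Therefore every point of $\supp(S)\setminus\{0\}$ has a flat tangent cone of multiplicity $Q=1$, and Corollary \ref{c:DG}'s higher-codimension analogue is not available, so instead we invoke Theorem \ref{t:DG} directly: assumptions (a), (c), (d) hold at small scale by monotonicity and density $1$, and assumption (b) holds because $\Theta\equiv 1$ a.e.; hence $S$ is regular away from the origin. A regular minimal cone whose cross-section is a smooth embedded minimal hypersurface-type submanifold and which has density $1$ at the origin must be a plane: indeed the density at the vertex of a regular cone equals $1$ only if the cross-section is a totally geodesic sphere $\mathbb{S}^{m-1}$, i.e. the cone is an $m$-plane; alternatively, one can observe directly that $\|S\|(\bB_1(0))=\omega_m$ forces equality in the isoperimetric/monotonicity comparison, which is rigid.

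For the second assertion, let $T$ be area minimizing in $\Sigma$, $p\in\rD_Q(T)$, and $U$ a neighborhood of $p$ with $\Theta(T,\cdot)=Q$ at $\|T\|$-a.e. point of $U$. Pick $\rho>0$ with $\bB_\rho(p)\subset U$ and set $S:=\frac1Q\, T\res\bB_\rho(p)$. First I would check that $S$ is an integer rectifiable current: since $\Theta(T,x)=Q$ for $\|T\|$-a.e. $x$ and $T=\vec T\,\Theta\,\cH^m\res R$ by Lemmas \ref{l:density} and \ref{l:t_planews}, we have $T\res\bB_\rho(p)=Q\,\vec T\,\cH^m\res R$, so $S=\vec T\,\cH^m\res R$ has integer (indeed unit) multiplicity a.e.; moreover $\partial S=0$ in $\bB_\rho(p)$ because $\partial T=0$ there (as $p\notin\supp(\partial T)$, shrinking $\rho$ if needed). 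Next, $S$ is area minimizing in $\Sigma\cap\bB_\rho(p)$: if $S'$ is a competitor with $\partial S'=\partial S$ supported in $\Sigma\cap\bB_\rho(p)$, then $QS'$ is a competitor for $T\res\bB_\rho(p)$ with the same boundary, and $\mass(QS')\ge\mass(T\res\bB_\rho(p))$ by minimality of $T$ together with $\|T\|(\partial\bB_\rho(p))=0$ for a.e. $\rho$; dividing by $Q$ gives $\mass(S')\ge\mass(S)$. Finally, $\Theta(S,p)=\frac1Q\Theta(T,p)=1$, so by the density statement we already proved (applied to any tangent cone of $S$ at $p$, which is an area minimizing cone of density $1$, hence a plane by the first part) $S$ has a flat tangent cone at $p$, and now Theorem \ref{t:DG} applies to $S$: (a) holds since $\partial S=0$ near $p$; (b) holds since $\Theta(S,\cdot)=1$ $\|S\|$-a.e. near $p$; (c) and (d) hold at sufficiently small scale because the tangent cone at $p$ is flat with multiplicity $1$ and $\rho\bA$ is small. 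Hence $S\res\bB_r(p)=\a{\Gamma}$ for a smooth embedded $\Gamma$, and so $T\res\bB_r(p)=Q\a{\Gamma}$, i.e. $p\in\reg(T)$.

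The part requiring the most care is the reduction "density $1$ at the vertex of an area minimizing cone $\Rightarrow$ flat plane," since it is exactly here that one is genuinely using the monotonicity formula's rigidity rather than a soft argument: one must argue that the density being identically $1$ on the support, combined with $r\mapsto r^{-m}\|S\|(\bB_r(x))$ being constant at the vertex, forces $S$ to be smooth (via Theorem \ref{t:DG}, whose hypothesis (b) is what density $\equiv 1$ buys us) and then that a smooth area minimizing cone through the origin with unit density must be a multiplicity-one plane. The remaining steps — that $S=T/Q$ is integer rectifiable, that it inherits minimality, and that its boundary vanishes — are routine once one is careful about null sets and about choosing a radius $\rho$ for which $\|T\|(\partial\bB_\rho(p))=0$.
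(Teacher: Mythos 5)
Your proof is correct and follows the route the paper itself indicates: reduce to the density-one cone statement and then apply it to $S := \tfrac1Q\, T \res \bB_\rho(p)$, checking that $S$ is integer rectifiable, boundaryless in $\bB_\rho(p)$, area minimizing, and $p \in \rD_1(S)$, so that Theorem \ref{t:DG} applies near $p$. Your care about a radius $\rho$ with $\|T\|(\partial \bB_\rho(p))=0$ is the right technical precaution.

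One remark on the cone statement. The route through Theorem \ref{t:DG}, dimensional induction, and analysis of the cross-section is heavier than needed, and the last step as you first phrase it (``density $1$ at the vertex of a regular cone forces the cross-section to be a totally geodesic $\mathbb S^{m-1}$'') is not a freestanding fact you can simply cite — it is \emph{proved} by exactly the monotonicity rigidity you offer as an ``alternative.'' That alternative is in fact the whole argument, and it is self-contained: once $\Theta(S,q)=1$ for every $q\in\supp(S)$ (which you correctly obtain from monotonicity at $q$ together with $\lim_{r\to\infty} r^{-m}\|S\|(\bB_r(q)) = \omega_m\,\Theta(S,0)=\omega_m$), the mass ratio at $q$ is constantly $1$, and the rigidity in Theorem \ref{t:monot} forces $S$ to be a cone with vertex $q$ as well, hence invariant under translation by $q$. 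Since $q\in\supp(S)$ was arbitrary, $\supp(S)$ is a linear subspace, necessarily of dimension $m$, and the constancy theorem together with $\Theta\equiv 1$ gives $S=\a{\pi}$. This disposes of the cone statement without $\eps$-regularity and without any induction on dimension.
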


We next recover our discussion and look at the current $T$ which should contradict Theorem \ref{t:cod>1}. We infer from Corollary \ref{c:densita} that there must be an integer $Q>1$ such that $\sing_Q (T)$ has positive $\cH^{m-2+\alpha}$-measure. Now, recalling the approach of Federer's reduction argument, we know that for $\cH^{m-2+\alpha}_\infty$-a.e. $p\in \sing_Q (T)$ we have
\begin{equation}\label{e:successione_1}
\liminf_{r\downarrow 0} \frac{\cH^{m-2+\alpha}_\infty (\sing (T) \cap \bB_r (p))}{r^{m-2+\alpha}} > 0\, .
\end{equation}

Moreover, by Theorem \ref{t:strat} we can assume that at least one tangent cone at $p$ is flat. We thus have a sequence of rescalings $T_{p, s_k}$ which are converging to a flat plane and a sequence of rescalings $T_{p, r_k}$ for which (by \eqref{e:successione_1})
\begin{equation}\label{e:successione_2}
\lim_{k\uparrow \infty} \cH^{m-2+\alpha}_\infty (\sing (T_{p, r_k})\cap \bB_1 (0)) = \eta >0\, .
\end{equation}
Of course the sequence $\{s_k\}$ does not necessarily coincide (or is comparable to) $\{r_k\}$. However, it can be shown that, w.l.o.g., the two sequences can be assumed to coincide (cf. \cite{DS5} and \cite[Section 4]{Ema}). More precisely

\begin{proposition}[cf. {\cite[Proposition 1.3]{DS5}}]\label{p:successione}
If Theorem \ref{t:cod>1} were false then there would be an area minimizing current $T$ in a smooth Riemannian manifold $\Sigma$, a point $p\in \sing_Q (T)$ and a sequence of rescalings $T_{p,r_k}$ converging to a flat plane of multiplicity $Q$ and such that \eqref{e:successione_2} holds. 
\end{proposition}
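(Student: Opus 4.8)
\medskip

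The plan is to follow the skeleton of Federer's reduction argument from Section~\ref{ss:Fedred}, but keeping track of the branch points. First I would use Almgren's stratification to isolate a single multiplicity $Q\ge 2$ at which the singular set is large and at which flat tangent cones occur; then pick a convenient base point by a density argument for the Hausdorff content $\cH^{m-2+\alpha}_\infty$; and finally --- this is the real work --- interleave the scales at which $T$ is close to a flat $Q$-plane with the scales at which the rescaled singular set carries a definite amount of content.

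For the first two steps: since $\sing(T)\subset\mathcal S_m(T)\cup\bigcup_{k\le m-2}\mathcal S_k(T)$ --- the stratum $\mathcal S_{m-1}(T)$ being empty by Lemma~\ref{l:strat} --- and since $\mathcal S_k(T)$ has Hausdorff dimension at most $k\le m-2$ by Theorem~\ref{t:strat}, the hypothesis $\cH^{m-2+\alpha}(\sing(T))>0$ forces $\cH^{m-2+\alpha}(\sing(T)\cap\mathcal S_m(T))>0$; thus $\cH^{m-2+\alpha}$-a.e. singular point carries at least one flat tangent cone, whose multiplicity $k$ is then the (integer) density of $T$ at that point, and $k\ge 2$ because $\sing_1(T)=\emptyset$ (Section~\ref{s:tentativo}). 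As the density takes countably many values, fix $Q\ge 2$ with $\cH^\beta(\sing_Q(T))>0$, where $\beta:=m-2+\alpha$. Then $\sing_Q(T)\setminus G$ is $\cH^\beta$-null, $G$ being the set of points of $\sing_Q(T)$ carrying a flat tangent cone, and since $\cH^\beta_\infty$ has the same null sets as $\cH^\beta$ the density theorem for $\cH^\beta_\infty$ (\cite[Theorem~3.6]{Simon}, applied after passing to a subset of finite measure) produces a point $p\in G$ with $\limsup_{r\downarrow0}r^{-\beta}\cH^\beta_\infty(\sing(T)\cap\bB_r(p))\ge 2^{-\beta}$. Translating and rotating we put $p=0$, take the flat tangent cone to be $Q\a{\pi_0}$, and (as in Section~\ref{s:tentativo}) assume $\Sigma$ as regular as we wish near $0$. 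By the scaling $\cH^\beta_\infty(\iota_{0,r}(E))=r^{-\beta}\cH^\beta_\infty(E)$ these two facts read:
\begin{itemize}
\item[(a)] there are $s_i\downarrow0$ with $\bE(T,\bB_{4s_i}(0))\to0$, and up to a subsequence $T_{0,s_i}\to Q\a{\pi_0}$, since the mass ratio in $\bB_4$ tends to $Q\,\omega_m$ by monotonicity while the excess with respect to $\pi_0$ vanishes;
\item[(b)] $\limsup_{r\downarrow0}r^{-\beta}\cH^\beta_\infty(\sing(T)\cap\bB_r(0))\ge 2^{-\beta}$, and since $\sing(T_{0,r})\cap\overline{\bB_1(0)}=\iota_{0,r}(\sing(T)\cap\overline{\bB_r(0)})$ this says the rescaled singular content is bounded below along a sequence of scales.
\end{itemize}

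It remains to interleave (a) and (b). With $\eps_1$ the threshold of Theorem~\ref{t:app_main}, set
\[
\mathcal F:=\{r\in(0,1):\bE(T,\bB_{4r}(0))<\eps_1\},\qquad
\mathcal G:=\{r\in(0,1):\cH^\beta_\infty(\sing(T)\cap\bB_r(0))\ge 2^{-\beta-1}r^\beta\}\,;
\]
by (a) and (b) both sets have infimum $0$, and it is enough to show $\inf(\mathcal F\cap\mathcal G)=0$: any $r_k\downarrow0$ in $\mathcal F\cap\mathcal G$ then works, because $\bE(T_{0,r_k},\bB_4)<\eps_1$ together with the convergence of the mass ratio to $Q\omega_m$ forces a subsequence of $T_{0,r_k}$ to converge to $Q\a{\pi_0}$, while $\cH^\beta_\infty(\sing(T_{0,r_k})\cap\overline{\bB_1})=r_k^{-\beta}\cH^\beta_\infty(\sing(T)\cap\bB_{r_k}(0))\ge 2^{-\beta-1}$, which after a further extraction is \eqref{e:successione_2}. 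The natural attempt at $\inf(\mathcal F\cap\mathcal G)=0$ is a stopping-time argument that, starting from a large-content scale, descends to the first flat scale below it and checks the two are comparable; but the genuine difficulty --- which I expect to be the technical heart of the proposition --- is that in codimension $\ge2$ singularities may disappear under blow-up (Example~\ref{e:vbad}) and the excess need not decay, so one can neither read flatness off a blow-up limit taken along the large-content scales, nor transfer flatness between merely comparable scales. I would try to circumvent this by a Federer-type dimension reduction: using the upper semicontinuity of $\cH^\beta_\infty$ under Hausdorff convergence (\cite[Appendix~A]{Simon}), the almost-monotonicity of Theorem~\ref{t:monot_imp}, and the elementary inward bound $\bE(T,\bB_\sigma(q))\le(\rho/\sigma)^m\bE(T,\bB_\rho(q))$ (which controls the only direction --- inward --- in which flatness is known to degrade at a definite rate), one should be able to replace the triple $(T,\Sigma,0)$ --- the statement only demands \emph{some} area minimizing current, manifold and point --- by an iterated blow-up at which an appropriate ``singular content'' functional is extremal and at which, consequently, the flat and the large-content scales interlace. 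The remaining ingredients --- the stratification input, the content-density selection of $p$, and the final extraction of $\{r_k\}$ --- are routine.
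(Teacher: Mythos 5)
Your setup (isolating a multiplicity $Q\ge 2$, choosing $p$ by a density argument for the content $\cH^{m-2+\alpha}_\infty$, and posing the problem as the need to interlace ``flat scales'' with ``large-content scales'') matches the paper, and you correctly diagnose the obstruction --- disappearance of singularities in higher codimension. But the paper does not resolve it by the route you gesture at (an extremality/stopping-time scheme to force $\inf(\mathcal F\cap\mathcal G)=0$); it uses a cleaner dichotomy that your proposal is missing, and without it your argument has a genuine gap.

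The paper's mechanism runs as follows. Take $r_k\downarrow 0$ realizing the large-content bound \eqref{e:successione_2} (i.e.\ $r_k\in\mathcal G$); no attempt is made to put these $r_k$ into $\mathcal F$. Pass to a tangent cone $S$ along this sequence. If $S$ is flat we are done. If not, the key observation is that $S$ \emph{still} has $\cH^{m-2+\alpha}_\infty(\sing_Q(S))>0$, for a reason you haven't supplied: what persists under blow-up is not the singular set but the density-$Q$ set. By upper semicontinuity of $\Theta$ and of $\cH^\beta_\infty$, $\cH^{m-2+\alpha}_\infty(\rD_Q(S))>0$; and then one invokes Corollary~\ref{c:densita} in the form of a contradiction argument: if $\cH^{m-2+\alpha}_\infty(\sing_Q(S))=0$, the relatively open set $\reg_Q(S)$ would exhaust $\rD_Q(S)$ in content, $S':=S\res\reg_Q(S)$ would be boundaryless (a nonempty boundary would contribute positive $\cH^{m-1}$-measure to $\sing_Q(S)$, impossible for $\alpha\le 1$), and the mass comparison $\|S'\|(\bB_1)=Q\omega_m=\|S\|(\bB_1)$ forces $S'=S$ to be a $Q$-fold cone of density $Q$ everywhere off the tip --- hence flat by Corollary~\ref{c:densita}, contradicting $S$ singular. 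Having transferred the large-content hypothesis to the singular cone $S$, one re-runs the argument at a point $p\neq 0$ of $\supp(S)$; the resulting tangent cone splits off a line (Lemma~\ref{l:split}), and by Lemma~\ref{l:strat} the iteration terminates in at most $m-2$ steps at a flat plane of multiplicity $Q$ with the content lower bound intact.

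Your proposal, by contrast, tries to avoid ever taking a non-flat blow-up: you want to arrange for $T_{0,r_k}$ itself to be close to flat at the same scales that carry content. This is exactly what the paper does not need, and it is not clear your extremality idea can make it work, precisely because $\bE(T,\bB_r)$ need not be monotone in $r$ and a Hausdorff-content extremal point gives no control on how $\mathcal F$ and $\mathcal G$ sit relative to each other. The missing ingredient in your sketch is the lemma-level fact (Corollary~\ref{c:densita}) that lets one \emph{accept} a non-flat blow-up and show it inherits the pathology, rather than trying to rule it out. Concretely: you never argue that the $\rD_Q$-points (as opposed to the singular points) persist, and you never produce the ``cone is flat if density is $Q$ a.e.'' step, which is where the actual leverage lies. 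Until those two points are supplied, the reduction you propose does not close.
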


We will see in a moment the simple idea behind Proposition \ref{p:successione}. Taking it for granted, one could at this point hope to carry the following program:
\begin{itemize}
\item[(A)] We apply Theorem \ref{t:app_main} to construct a sequence of Lipschitz maps $f_k$ whose graphs approximate efficiently $T_{p, r_k}$;
\item[(B)] After normalizing suitably $f_k$, we apply Theorem \ref{t:harmonic_final} and, up to extraction of a further subsequence, assume that it converges to a $\D$-minimizing multivalued map $f_\infty$;
\item[(C)] We finally use \eqref{e:successione_2} to show that $f_\infty$ has a singular set of positive $\cH^{m-2+\alpha}$ measure: this would contradict Theorem \ref{t:structure}.
\end{itemize}
(C) is again a ``persistence of singularity'' statement. Unfortunately it is not difficult to see that it is false in this form and thus the rough strategy outlined above must be suitably adjusted. After dealing with Proposition \ref{p:successione} we will discuss in detail why (C) fails and propose therefore a new strategy to prove Theorem \ref{t:cod>1}.

\subsection{The existence of a good sequence} The proof of Proposition \ref{p:successione} is still a suitable modification of Federer's reduction argument. By the discussion above, we first choose a point $p\in \sing_Q (T)$ and a sequence $r_k\downarrow 0$ where \eqref{e:successione_2} holds. Assume without loss of generality that $p=0$. If $T_{0, r_k}$ converges to a flat plane of multiplicity $Q$ we are done. Otherwise we can assume that it converges to some tangent cone $S$, which is singular and such that $\Theta (S, 0) = Q$. We now wish to show that $\cH^{m-2+\alpha}_\infty (\sing_Q (S)) > 0$. 

First of all, by the monotonicity formula, $\cH^{m-2+\alpha}_\infty (\rD_Q (S))>0$. Now, if 
\begin{equation}\label{e:no_bound}
\cH^{m-2+\alpha}_\infty (\sing_Q (S)) = 0
\end{equation}
then many of the points in $\rD_Q (S)$ should be regular: let us denote by $\reg_Q (S)$ the set of such points. Note that $\reg_Q (S)$ is relatively open. If $S\res (\reg_Q (S))$ has nonempty boundary, then the support of such boundary consists of elements in $\sing_Q (S)$ and it has positive $\cH^{m-1}$ measure. The latter statement is not compatible with \eqref{e:no_bound} for $\alpha\leq 1$. Thus $S' = S\res (\reg_Q (S))$ has no boundary. So it is an area minimizing cone with multiplicity $Q$ at every $p\in \supp (S')\setminus \{0\}$ and with multiplicity no larger than $Q$ at the tip $0$ (because $\|S'\|\leq \|S\|$). On the other hand the upper semicontinuity of the density implies that $\Theta (S', 0) \geq Q$: thus $\|S\| (\bB_1 (0)) = Q\omega_m = \|S'\| (\bB_1 (0))$. Hence $S$ and $S'$ must coincide. We thus conclude that Corollary \ref{c:densita} is applicable to $S$, which must be flat, contrary to our assumption.

\medskip

Having found that $S$ is another area minimizing current with large $\sing_Q (S)$, we can apply the discussion above to some point $p\in \supp (S)\setminus \{0\}$. We thus find a sequence $S_{p, r_k}$ such that $\lim_k \cH^{m-2+\alpha}_\infty (\sing_Q (S_{p, r_k})) > 0$. As above, $S_{p, r_k}$ can be assumed to be converging to some tangent cone $Z$: if it is flat, we then have achieved the conclusion of Proposition \ref{p:successione}. Otherwise $\cH^{m-2+\alpha} (\sing_Q (Z)) >0$ and we can restart with $Z$ in place of $S'$: this time, however, $Z$ splits off a line. Iterating this procedure we keep ``splitting off'' lines, until eventually we must reach a sequence as in the statement of Proposition \ref{p:successione}.

\subsection{Persistency of multiplicity $Q$ points} Having proved Proposition \ref{p:successione}, we are now in the position to carry on our first program. Point (A) is obvious and we have to face point (B). Let us fix a sequence as in Proposition \ref{p:successione} that it is converging to $Q\a{\pi_0}$ where $\pi_0 = \R^m\times \{0\}$. Thus, for a sufficiently large $k$, Theorem \ref{t:app_main} applies to $T_{0, r_k}$ in the cylinder $\bC_4 (0)$. Let $f_k$ be the corresponding approximating maps $f_k: B_1 (0, \pi_0) \to \Iqs$. It is not obvious, apriori, that we can apply Theorem \ref{t:harmonic_final}, since the excess $E_k := \bE (T_{0, r_k}, \bC_4 (0))$ might converge to zero too fast compared to $r_k \bA$. Let us leave this technical problem aside: we then could assume that $u_k := f_k/E_k^{\sfrac{1}{2}}$ is converging to a $\D$-minimizing map $u_\infty$. 

\medskip

Next, we can intuitively expect that $u_\infty$ has many points of multiplicity $Q$, in particular all the ones which are limits of sequences lying in $\p_{\pi_0} (\rD_Q (T_{0, r_k}))$. Namely, we expect that points in $\p_{\pi_0} (\rD_Q (T_{0, r_k}))$ cluster towards points where $u_\infty = Q\a{\etaa\circ u_\infty}$. This intuition is correct. In fact we can first prove the following 

\begin{theorem}[Persistence of $Q$-points, cf. {\cite[Theorem 1.7]{DS3}}]\label{t:persistence}
For every $\hat{\delta}, C^\star>0$, there is $\bar{s}\in ]0, \frac{1}{2}[$ such that, for every $s<\bar{s}$, there exists $\hat{\eps} (s,C^*,\hat{\delta})>0$ with the following property. If $T$ is as in Theorem \ref{t:app_main}, $E := \bE(T,\bC_{4\,r} (x)) < \hat{\eps}$,
$r^2 \bA^2 \leq C^\star E$ and $\Theta (T, (p,q)) = Q$ at some $(p,q)\in \bC_{r/2} (x)$, then the approximation
$f$ of Theorem~\ref{t:app_main} satisfies
\begin{equation}\label{e:persistence}
\int_{B_{sr} (p)} \cG (f, Q \a{\etaa\circ f})^2 \leq \hat{\delta} s^m r^{2+m} E\, .
\end{equation}
\end{theorem}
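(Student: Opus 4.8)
## Proof strategy for Theorem \ref{t:persistence}

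\textbf{Overall approach.} The plan is to argue by contradiction and compactness, exploiting the harmonic approximation of Theorem \ref{t:harmonic_final} together with the fact that a $\D$-minimizing limit $u$ with a point of multiplicity $Q$ (i.e. $u(p_\infty) = Q\a{\etaa\circ u(p_\infty)}$) must, by H\"older continuity of $\D$-minimizers (Theorem \ref{t:hoelder}) applied to the ``collapsed'' part $\bar u = \sum_i \a{u_i - \etaa\circ u}$, satisfy a quantitative decay of $\int_{B_s}\cG(\bar u, Q\a{0})^2$ as $s\downarrow 0$ — indeed $\bar u(p_\infty)=Q\a{0}$ forces $\cG(\bar u, Q\a 0)^2 \leq C s^{m+2\alpha}$ on $B_s(p_\infty)$ by Campanato--Morrey (Proposition \ref{p:Campanato Morrey}), which is $o(s^m)$ in the normalized scaling. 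Transferring this smallness back to the current's approximation $f$ via the $L^2$-closeness in \eqref{e:harmonic_final} yields \eqref{e:persistence}.

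\textbf{Key steps, in order.} First I would reduce to $r=1$, $x=0$ by scaling invariance, and fix $\hat\delta, C^\star$. Suppose the conclusion fails: there are a sequence $s_k$ not shrinking to $0$ fast enough — more precisely, it suffices to fix one $s$ and produce a contradicting sequence — so assume $s\in]0,1/2[$ is to be chosen, and there are currents $T_k$ with $E_k := \bE(T_k,\bC_4)\to 0$, $\bA_k^2 \leq C^\star E_k$, points $(p_k,q_k)\in\bC_{1/2}$ with $\Theta(T_k,(p_k,q_k))=Q$, and approximations $f_k$ from Theorem \ref{t:app_main} violating \eqref{e:persistence}, i.e. $\int_{B_s(p_k)}\cG(f_k, Q\a{\etaa\circ f_k})^2 > \hat\delta\, s^m E_k$. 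Second, since $\bA_k^2\leq C^\star E_k$, I can first handle the degenerate regime where $\bA_k$ is comparable to $E_k^{1/2}$: here one checks the statement directly from the height/tilt estimates, or observes that one may as well pass to the regime $\bA_k r \leq E_k^{1/4+\bar\delta}$ required by Theorem \ref{t:harmonic_final} after a further (harmless) rescaling argument, as in \cite{DS3}. Third, normalize $u_k := f_k/E_k^{1/2}$ and invoke Theorem \ref{t:harmonic_final}: there is a $\D$-minimizer $w_k = (v_k, \Psi(\cdot,v_k))$ with $\int_{B_1}\cG(u_k, w_k/E_k^{1/2})^2 + \int(|Du_k| - |Dw_k|/E_k^{1/2})^2 \to 0$. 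Using Proposition \ref{p:cc} (concentration-compactness), after translating sheets and passing to a subsequence, $v_k \to v_\infty$ strongly in $W^{1,2}_{loc}$ with $v_\infty$ $\D$-minimizing; also $p_k \to p_\infty \in \overline{B_{1/2}}$. Fourth, I would show $v_\infty(p_\infty) = Q\a{\etaa\circ v_\infty(p_\infty)}$: this is the ``persistence of the $Q$-point'' passed to the limit, and it follows because the multiplicity-$Q$ point of $T_k$, together with the graphical approximation and the uniform modulus of continuity from \eqref{e:L-infty_est} / Theorem \ref{t:hoelder}, forces the sheets of $u_k$ to be $C^0$-close to collapsing near $p_k$ at a fixed small scale — more carefully, one compares $\|T_k\|$ in a small cylinder around $(p_k,q_k)$ with $Q\omega_m$ using monotonicity at the $Q$-point, and via \eqref{e:main(iii)} deduces that $\int_{B_\sigma(p_k)}|Df_k|^2$ is small relative to $\sigma^m E_k$ for small $\sigma$, which forces the average separation of sheets of $u_k$ near $p_k$ to vanish in the limit. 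Fifth, apply Campanato--Morrey (Proposition \ref{p:Campanato Morrey}) to $\bar v_\infty := \sum_i\a{(v_\infty)_i - \etaa\circ v_\infty}$, which is also $\D$-minimizing with $\bar v_\infty(p_\infty)=Q\a 0$, to get $\int_{B_s(p_\infty)}\cG(\bar v_\infty, Q\a 0)^2 \leq C s^{m+2\alpha}\|D\bar v_\infty\|_{L^2}^2$. Choosing $s$ small makes the right side $< \frac{\hat\delta}{2} s^m$. Sixth, transfer back: $\int_{B_s(p_k)}\cG(f_k, Q\a{\etaa\circ f_k})^2 = E_k \int_{B_s(p_k)}\cG(u_k, Q\a{\etaa\circ u_k})^2$, and this converges (after the normalization/$L^2$-closeness and strong convergence) to $E_k \int_{B_s(p_\infty)}\cG(\bar v_\infty, Q\a 0)^2 \cdot (1+o(1)) < \hat\delta s^m E_k$ for $k$ large, contradicting the assumed violation.

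\textbf{Main obstacle.} The delicate step is the fourth one: establishing that the multiplicity-$Q$ point genuinely survives in the limit as a collapsed point of $v_\infty$, rather than disappearing (exactly the ``disappearance of singularities'' phenomenon that motivates the whole theory). The subtlety is that the $L^2$ convergence $u_k\to$ limit does not a priori control pointwise behavior at the single point $p_k$; one must use the monotonicity formula at $(p_k,q_k)$ to get an honest decay $\|T_k\|(\bC_\sigma(p_k)) - Q\omega_m\sigma^m \leq C\sigma^{m}\,o_\sigma(1)\,(\text{something})$, combined with the Taylor expansion \eqref{e:main(iii)} converting this into smallness of the rescaled Dirichlet energy of $u_k$ on small balls around $p_k$, and finally the splitting mechanism behind Theorem \ref{t:hoelder} (if the average sheet-separation were bounded below while the local Dirichlet energy is small, $u_k$ would split, contradicting $\Theta=Q$ at $p_k$). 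Making this quantitative and uniform — so that one genuinely controls a fixed scale $s$ independent of $k$ — is where the real work lies, and it is essentially the content of \cite[Theorem 1.7]{DS3}; I would follow that route, reducing everything else to the now-standard harmonic-approximation and concentration-compactness toolkit established above.
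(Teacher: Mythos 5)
Your overall strategy — contradiction, harmonic approximation (Theorem \ref{t:harmonic_final}), concentration-compactness (Proposition \ref{p:cc}), Campanato--Morrey decay for the limit $\D$-minimizer — is the right skeleton and matches the spirit of \cite[Theorem 1.7]{DS3}. But the delicate Step 4, which you correctly flag as the crux, contains a genuine gap, in two places.

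First, you never identify the \emph{improved} monotonicity formula of Theorem \ref{t:monot_imp} as the necessary tool; you only invoke ``the monotonicity formula''. This is not a cosmetic point. The ordinary formula (Theorem \ref{t:cones}(i)) has a multiplicative weight $e^{C\bA \rho}$, producing an additive error of order $\bA\rho$; under the hypothesis $r^2\bA^2 \leq C^\star E$ this is $O(E^{1/2})$, which swamps the conclusion $\eqref{e:persistence}$ (normalized, of order $E$) as $E\to 0$. Only the improved weight $e^{C\bA^2\rho^2}$ gives an additive error of order $\bA^2\rho^2 \leq C^\star E$. The hypothesis $r^2\bA^2\leq C^\star E$ is in fact tailored precisely to this refinement, and the paper explicitly says ``The argument relies on the improved monotonicity formula of Theorem \ref{t:monot_imp}.'' Without it, the mass bound $\|T_k\|(\bB_\sigma(p_k,q_k))/\sigma^m - Q\omega_m = O(E_k)$ (uniform in $\sigma$) — which is what feeds the compactness argument — fails.

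Second, the chain of deductions inside Step 4 is not correct as stated. The claim that ``$\int_{B_\sigma(p_k)}|Df_k|^2$ is small relative to $\sigma^m E_k$ for small $\sigma$'' is false: the density condition plus monotonicity yield only $O(\sigma^m E_k)$ (which \eqref{e:main(iii)} already gives), not $o(\sigma^m E_k)$; $\Theta(T,(p_k,q_k))=Q$ is an asymptotic statement and produces no extra smallness at a fixed scale $\sigma$. The subsequent deduction, that small local Dirichlet energy ``forces the average separation of sheets of $u_k$ near $p_k$ to vanish'', is a non sequitur: constant, widely separated sheets have zero Dirichlet energy, so the energy controls oscillation, not separation. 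Likewise, the appeal to \eqref{e:L-infty_est} for a uniform $C^0$ modulus on $u_k = f_k/E_k^{1/2}$ does not survive normalization, since $\bh/E_k^{1/2}\sim E_k^{1/(2m)-1/2}\to\infty$ for $m\geq 2$. What the proof actually needs — and your sketch does not supply — is the comparison between the mass of $T_k$ in the balls $\bB_\sigma(p_k,q_k)$ (pinched to $Q\omega_m\sigma^m(1+O(E_k))$ from below by $\Theta=Q$ and from above by the improved monotonicity at unit scale) and the mass in the cylinders $\bC_\sigma(p_k)$ (controlled through \eqref{e:main(iii)}): the difference bounds the mass of the portion of the graph of $f_k$ whose values stray far from $q_k$, and it is this geometric estimate, not a Poincaré/splitting heuristic, that converts the $Q$-point condition into the quantitative smallness of $\int_{B_{sr}(p)}\cG(f,Q\a{\etaa\circ f})^2$.
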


For the proof of the latter theorem we refer to \cite{DS3}. The argument relies on the improved monotonicity formula of Theorem \ref{t:monot_imp}.

Looking back at our $u_\infty$, which is the strong $L^2$ limit of $u_k = f_k/E_k^{\sfrac{1}{2}}$, when $p$ is a point in the domain of $u_\infty$ which is the limit of (the projections onto $\pi_0$ of) a sequence of $(p_k, q_k)\in \supp (T_k)$ with $\Theta (T_k, (p_k, q_k))= Q$ , we then have
\[
\lim_{r\to 0} \mint_{B_r (p)} \cG (u_\infty, Q \a{\etaa\circ u_\infty})^2 = 0\, .
\]
Since $u_\infty$ is $\D$-minimizing and, hence, continuous, we have $u_\infty (p) = Q \a{\etaa\circ u_\infty (p)}$. Now, we must have a set of points $p$ with positive $\cH^{m-2+\alpha}$ measures where this occurs.
Since Theorem \ref{t:structure} tells us that the singular set of $u_\infty$ has dimension at most $m-2$, the only alternative left is that $u_\infty$ is a classical harmonic single valued function counted $Q$ times. That is, once again the singularities have failed to survive in the limit. If we could exclude this disappearence of the singularities, we would have reached a contradiction and hence proved Theorem \ref{t:cod>1}.

\medskip

Let us look of what happens if we apply the analysis above when the current $T$ is the holomorphic curve of Example \ref{e:vvbad} in a neighborhood of $0$. It is obvious that (in complex coordinates) the procedure above will deliver the map $u_\infty (z) = 2 \a{z^2}$: although the currents $T_{0,r}$ are singular at the origin, $u_\infty$ is regular. If we compare our situation with the proof of Theorem \ref{t:structure} outlined in Section \ref{s:frequency}, it is quite obvious why we failed to capture the singularity in the limit: we have not subtracted the ``average of the sheets'', namely the regular part of our multiple valued function. The latter has much higher energy than the branching singularity, which is a very small perturbation: if we do not normalize our approximations in some way, we fail to capture the singular behavior in the limit. 

\section{The center manifold: construction algorithm}\label{s:cm}

Summarizing the discussion of the previous section, we are confronted with the following problem. Given a $Q$-valued $\D$-minimizing function $u= \sum_i \a{u_i}$, the average of its sheets, namely $\etaa \circ u := \frac{1}{Q} \sum_i u_i$, is a classical harmonic function and after subtracting it from $u$ we find a new $\D$-minimizing $Q$-valued function
\[
\bar{u} := \sum_i \a{u_i - \etaa\circ u}\, .
\] 
When $\bar{u}$ is nontrivial, a point $p$ where $\bar{u} (p) = Q\a{0}$ is necessarily singular. Loosely speaking we could say that $\bar{u}$ is the ``well-balanced part'' of $u$.
If an area minimizing current $T$ satisfies Assumption \ref{ipotesi_base_app} we would like to have a replacement for the average of the sheets $\etaa\circ u$ and a replacement for the well-balanced $Q$-valued map $\bar{u}$. 

One possibility would be to apply Theorem \ref{t:app_main}: we then gain a corresponding approximating Lipschitz map $f$: the average of its sheets, namely $\etaa\circ f$, and its well-balanced part $\bar f$ are both well defined. 
However, we wish to use these objects in a blow-up procedure: obviously $\etaa\circ f$ and $\bar{f}$ do not serve our purposes, since $f$ is a good approximation of the current only at the scale of a certain given cylinder

\medskip

We would rather like to localize the idea above. This is obviously only possible in those regions (and those scales) where the current is sufficiently flat. On the other hand we might not worry about those portions, or those scales, at which the current is not sufficiently flat: in the blow-up procedure we wish to capture the limiting behavior of the current around those points belonging to $\sing_Q (T)$ and we already saw in the previous section, namely in the proof of Proposition \ref{p:successione}, that when a lot of points $\rD_Q (T)$ cluster at a certain scale, the current is necessarily rather flat.

\subsection{Preliminary considerations and notation}\label{ss:constr_whit} Localizing the basic idea above is a very delicate issue, which involves several parameters. First of all, to fix ideas we will assume that our center manifold will be constructed in a cylinder $\bC$ of size comparable to $1$ (namely the radius will be a fixed geometric constant, certainly larger than $1$), centered at the origin and with basis parallel to $\pi_0 := \R^m \times \{0\}$. We will assume that in the cylinder $\bC$ the ambient manifold $\Sigma$ is very close to be flat: this ``almost flatness'' is measured in a suitable norm.

\begin{ipotesi}\label{ipotesi}
$\eps_0\in ]0,1]$ is a fixed constant and
$\Sigma \subset \bB_{7\sqrt{m}} \subset R^{m+n}$ is a $C^{3,\eps_0}$ $(m+\bar{n})$-dimensional submanifold with no boundary in $\bB_{7\sqrt{m}}$. We moreover assume that, for each $p\in \Sigma$,  $\Sigma$ is the graph of a
$C^{3, \eps_0}$ map $\Psi_p: T_p\Sigma\cap \bB_{7\sqrt{m}} \to T_p\Sigma^\perp$. We denote by $\mathbf{c} (\Sigma)$ the
number $\sup_{p\in \Sigma} \|D\Psi_p\|_{C^{2, \eps_0}}$. 
$T^0$ is an $m$-dimensional integral current of $\mathbb R^{m+n}$ with support in $\Sigma\cap \bar\bB_{6\sqrt{m}}$ and finite mass. It is area-minimizing and moreover
\begin{gather}
\Theta (0, T^0) = Q\quad \mbox{and}\quad \partial T^0 \res \B_{6\sqrt{m}} = 0,\label{e:basic}\\
\quad \|T^0\| (\B_{6\sqrt{m} \rho}) \leq \big(\omega_m Q (6\sqrt{m})^m + \eps_2^2\big)\,\rho^m
\quad \forall \rho\leq 1,\label{e:basic2}\\
\bE\left(T^0,\B_{6\sqrt{m}}\right)=\bE\left(T^0,\B_{6\sqrt{m}},\pi_0\right),\label{e:pi0_ottimale}\\
\bmo := \max \left\{\mathbf{c} (\Sigma)^2, \bE\left(T^0,\B_{6\sqrt{m}}\right)\right\} \leq \eps_2^2 \leq 1\, .\label{e:small ex}
\end{gather}
$\eps_2$ is a positive number whose choice will be specified in each statement.
\end{ipotesi}

In what follows we set $l:= n - \bar{n}$. To avoid discussing domains of definitions it is convenient to extend $\Sigma$ so
that it is an entire graph over all $T_p \Sigma$. Moreover
we will often need to parametrize $\Sigma$ as the graph of a map $\Psi: \mathbb R^{m+\bar n}\to \mathbb R^l$. Although
we do not assume that $\mathbb R^{m+\bar n}\times \{0\}$ is tangent to $\Sigma$ at any $p$, it is clear that, when $\eps_2$ is sufficiently small, $\Sigma$ is the graph of a function $\Psi$ over $\R^{m+\bar n}$ with $\|D\Psi\|_{C^{2,\eps_0}}\leq C_0 \bmo^{\sfrac{1}{2}}$.

The next lemma is a standard consequence of the theory of area-minimizing currents.

\begin{lemma}[Non-empty vertical slices, {cf. \cite[Lemma 1.6]{DS4}}]\label{l:tecnico1}
There are positive constants $C_0 (m,n, \bar{n},Q)$ and $c_0 (m,n,\bar{n}, Q)$ with the following property.
If $T^0$ is as in Assumption \ref{ipotesi}, $\eps_2 < c_0$ and $T:= T^0 \res \bB_{23\sqrt{m}/4}$, then:
\begin{align}
&\partial T \res \bC_{11\sqrt{m}/2} (0, \pi_0)= 0\, ,\quad (\p_{\pi_0})_\sharp T\res \bC_{11 \sqrt{m}/2} (0, \pi_0) = Q \a{B_{11\sqrt{m}/2} (0, \pi_0)}\label{e:geo semplice 1}\\
&\quad\qquad\qquad\mbox{and}\quad\bh (T, \bC_{5\sqrt{m}} (0, \pi_0)) \leq C_0 \bmo^{\sfrac{1}{2m}}\, .\label{e:pre_height}
\end{align}
In particular for each $x\in B_{11\sqrt{m}/2} (0, \pi_0)$ there is  a point
$p\in \supp (T)$ with $\p_{\pi_0} (p)=x$.
\end{lemma}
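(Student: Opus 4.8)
The plan is to derive all the conclusions from the monotonicity formula, the Constancy Theorem, and the classical height bound for area-minimizing currents with small cylindrical excess. First I would use that $\bmo\le\eps_2^2$ is small and that $\pi_0$ optimizes the excess of $T^0$ in $\bB_{6\sqrt m}$ (cf. \eqref{e:pi0_ottimale}) to localize $\supp(T^0)$: a routine consequence of the monotonicity formula (Theorem \ref{t:cones}), the qualitative forerunner of \eqref{e:pre_height}, is that after fixing $\eps_2=\eps_2(m,n,\bar n,Q)$ small enough the set $\supp(T^0)\cap\bar\bB_{23\sqrt m/4}$ lies in a slab $\{|\p_{\pi_0}^\perp(x)|\le\kappa\}$ with $\kappa\downarrow 0$ as $\bmo\downarrow 0$. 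Since $(11\sqrt m/2)^2+\kappa^2<(23\sqrt m/4)^2$ for $\kappa$ small, no point of $\supp(T^0)$ over $B_{11\sqrt m/2}(0,\pi_0)$ lies on $\partial\bB_{23\sqrt m/4}$, and as $23\sqrt m/4<6\sqrt m$ the current $T=T^0\res\bB_{23\sqrt m/4}$ agrees with $T^0$ there while $\partial T^0\res\bB_{6\sqrt m}=0$; hence $\partial T\res\bC_{11\sqrt m/2}(0,\pi_0)=0$, the first identity in \eqref{e:geo semplice 1}.

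Next, from $\partial T\res\bC_{11\sqrt m/2}(0,\pi_0)=0$ I get $\partial\big((\p_{\pi_0})_\sharp T\big)\res B_{11\sqrt m/2}(0,\pi_0)=0$, so the Constancy Theorem (cf. \cite[Theorem 26.27]{Simon}) gives $(\p_{\pi_0})_\sharp T\res\bC_{11\sqrt m/2}(0,\pi_0)=k\,\a{B_{11\sqrt m/2}(0,\pi_0)}$ for some positive integer $k$: non-vanishing follows from $\Theta(0,T^0)=Q>0$, and positivity from the smallness of the excess, which makes $\vec T$ $\|T\|$-close to $\vec\pi_0$, so that the signed slice count in \eqref{e:slice} coincides with the unsigned one and no cancellation occurs. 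To identify $k=Q$: on the one hand the same computation shows that a.e. vertical slice over $B_{11\sqrt m/2}$ has total multiplicity $k$, hence $\Theta(T,p)\le k$ for all $p\in\bC_{11\sqrt m/2}(0,\pi_0)\cap\supp T$, and taking $p=0$ gives $Q\le k$. On the other hand $k\,\omega_m(11\sqrt m/2)^m=\|T\res d\p_{\pi_0}\|(\bC_{11\sqrt m/2}(0,\pi_0))\le\|T\|(\bC_{11\sqrt m/2}(0,\pi_0))\le\|T^0\|\big(\bB_{(11\sqrt m/2)(1+o(1))}\big)$, and the almost-monotonicity of $\rho\mapsto e^{C\bA\rho}\rho^{-m}\|T^0\|(\bB_\rho)$ (Theorem \ref{t:cones}(i), with $\bA\le\bmo^{1/2}\le\eps_2$ and $\eps_2$ small depending on $m$) together with the mass bound \eqref{e:basic2} gives $\|T^0\|(\bB_\rho)\le\omega_m Q\rho^m(1+o(1))$ for $\rho\le 6\sqrt m$; hence $k\le Q(1+o(1))$, i.e. $k\le Q$ once the error is below $1/Q$. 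Thus $k=Q$, the second identity in \eqref{e:geo semplice 1}, and the last assertion of the lemma follows at once since $\supp\big((\p_{\pi_0})_\sharp T\big)=\bar B_{11\sqrt m/2}(0,\pi_0)\subset\p_{\pi_0}(\supp T)$.

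It then remains to prove the quantitative height estimate \eqref{e:pre_height}. Repeating the first two steps with the radius $6\sqrt m$ in place of $23\sqrt m/4$ (the margins still work, since $\supp(T^0)$ over $B_{6\sqrt m}$ stays in $\bar\bB_{6\sqrt m}$ up to the thin slab), I obtain $\partial T\res\bC_{6\sqrt m}(0,\pi_0)=0$, $(\p_{\pi_0})_\sharp T\res\bC_{6\sqrt m}(0,\pi_0)=Q\a{B_{6\sqrt m}(0,\pi_0)}$, and $\bE(T,\bC_{6\sqrt m}(0,\pi_0))\le C\bmo$. With this cylindrical structure at hand I would invoke the classical height bound for area-minimizing currents with small excess (the usual covering/iteration scheme built on the monotonicity formula and minimality), obtaining $\bh(T,\bC_{5\sqrt m}(0,\pi_0))\le C\,\bE(T,\bC_{6\sqrt m}(0,\pi_0))^{1/(2m)}\le C_0\,\bmo^{1/(2m)}$; the exponent $1/(2m)$ is exactly what that scheme produces when propagating the $L^2$-flatness measured by the excess down to the pointwise vertical oscillation, and the ambient curvature enters only through lower-order terms since $\mathbf{c}(\Sigma)^2\le\bmo$.

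The main obstacle is the mild bootstrap between these steps: the Constancy-Theorem identification requires an a priori control of where $\supp(T^0)$ sits and of the fact that the projection has Jacobian close to $1$, while the sharp exponent in \eqref{e:pre_height} is cleanest to extract only once the $Q\a{B}$ projection identity is available; moreover, transferring the mass bound \eqref{e:basic2} — stated at the fixed radius-ratio $6\sqrt m$ — to the slightly smaller radii $\tfrac{23}{4}\sqrt m$, $\tfrac{11}{2}\sqrt m$, $5\sqrt m$ costs a factor that is exponential in $m$ and must be absorbed by the almost-monotonicity factor $e^{C\bA\rho}$. I would handle both by the order above and by choosing $\eps_2=\eps_2(m,n,\bar n,Q)$ small enough that $\kappa(\bmo)$ and $e^{C\bA\rho}-1$ (with $\rho\sim\sqrt m$) are as small as needed to pin $k=Q$ and to make $C_0$ and $c_0$ come out as genuine geometric constants.
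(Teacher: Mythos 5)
Your proposal is essentially correct and follows the same strategy as the cited \cite[Lemma 1.6]{DS4}: a compactness/monotonicity argument (via Theorem \ref{t:cones}(iv) and the density hypothesis at $0$ plus the mass bound \eqref{e:basic2}) gives the qualitative slab, from which the boundary identity follows; then the Constancy Theorem plus a mass/density comparison pins the degree to $Q$; and finally the cylindrical structure and small excess feed into the Almgren/Allard-type height bound to produce the exponent $1/(2m)$.

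Two small points worth tidying. First, in your height-bound step you propose to ``repeat the first two steps with the radius $6\sqrt m$,'' but $T:=T^0\res\bB_{23\sqrt m/4}$ is cut off at radius $23\sqrt m/4 <6\sqrt m$, so $\partial T$ does \emph{not} vanish in $\bC_{6\sqrt m}(0,\pi_0)$ and the projection identity there is false for $T$; you should instead apply the layered height bound (Theorem \ref{t:height}) using the structure you have already secured in $\bC_{11\sqrt m/2}(0,\pi_0)$, whose radius margin $11/10$ over the target cylinder $\bC_{5\sqrt m}$ is enough. Second, the assertion ``$\Theta(T,p)\le k$ for all $p\in\bC_{11\sqrt m/2}\cap\supp T$'' is stated as immediate, but it needs a small argument: $\|T\|(\bB_r(p))\le\|T\|(\bC_r(\p(p)))=(k+\bE(T,\bC_r(\p(p))))\omega_m r^m$, and the cylindrical excess at a fixed moderate scale $r\sim\sqrt m$ is $\lesssim\bmo$ by the trivial bound $\bE(T,\bC_r)\le(6\sqrt m/r)^m\bE(T^0,\bB_{6\sqrt m})$; combined with the almost-monotonicity of $r\mapsto\|T\|(\bB_r(0))/r^m$ this gives $Q\le k+o(1)$, hence $Q\le k$ for $\eps_2$ small. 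With these adjustments the proof is complete and constants come out depending only on $(m,n,\bar n,Q)$ as required.
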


From now we will always work with the current $T$ of Lemma \ref{l:tecnico1}.
We specify next some useful notation.
For each $j\in \N$, $\sC^j$ denotes the family of closed cubes $L$ of $\pi_0$ of the form 
\begin{equation}\label{e:cube_def}
[a_1, a_1+2\ell] \times\ldots  \times [a_m, a_m+ 2\ell] \times \{0\}\subset \pi_0\, ,
\end{equation}
where $2\,\ell = 2^{1-j} =: 2\,\ell (L)$ is the side-length of the cube, 
$a_i\in 2^{1-j}\Z$ $\forall i$ and we require in
addition $-4 \leq a_i \leq a_i+2\ell \leq 4$. 
To avoid cumbersome notation, we will usually drop the factor $\{0\}$ in \eqref{e:cube_def} and treat each cube, its subsets and its points as subsets and elements of $\mathbb R^m$. Thus, for the {\em center $x_L$ of $L$} we will use the notation $x_L=(a_1+\ell, \ldots, a_m+\ell)$, although the precise one is $(a_1+\ell, \ldots, a_m+\ell, 0, \ldots , 0)$.
Next we set $\sC := \bigcup_{j\in \N} \sC^j$. 
If $H$ and $L$ are two cubes in $\sC$ with $H\subset L$, then we call $L$ an {\em ancestor} of $H$ and $H$ a {\em descendant} of $L$. When in addition $\ell (L) = 2\ell (H)$, $H$ is {\em a son} of $L$ and $L$ {\em the father} of $H$.

\begin{definition}\label{e:whitney} A Whitney decomposition of $[-4,4]^m\subset \pi_0$ consists of a closed set $\bGam\subset [-4,4]^m$ and a family $\mathscr{W}\subset \sC$ satisfying the following properties:
\begin{itemize}
\item[(w1)] $\bGam \cup \bigcup_{L\in \mathscr{W}} L = [-4,4]^m$ and $\bGam$ does not intersect any element of $\mathscr{W}$;
\item[(w2)] the interiors of any pair of distinct cubes $L_1, L_2\in \mathscr{W}$ are disjoint;
\item[(w3)] if $L_1, L_2\in \mathscr{W}$ have nonempty intersection, then $\frac{1}{2}\ell (L_1) \leq \ell (L_2) \leq 2\, \ell (L_1)$.
\end{itemize}
\end{definition}

Observe that (w1) - (w3) imply 
\begin{equation}\label{e:separazione}
{\rm sep}\, (\bGam, L) := \inf \{ |x-y|: x\in L, y\in \bGam\} \geq 2\ell (L)  \quad\mbox{for every $L\in \mathscr{W}$.}
\end{equation}
However, we do {\em not} require any inequality of the form 
${\rm sep}\, (\bGam, L) \leq C \ell (L)$, although this would be customary for what is commonly 
called a Whitney decomposition in the literature.

We next set up a refinement procedure: starting with an initial grid of dyadic cubes, denoted by $\sC^{N_0}$, with sidelength $2^{1-N_0}$, we subdivide each of them into $2^m$ cubes $H$ of sidelength $2 \ell (H) = \ell (L)$. To this purpose we will need to associate to each cube $L$ a corresponding ball $\bB_L$ which is centered on a point lying ``vertically'' over the center of the cube. Its definition will be specified later. We next proceed to give a rough outline of the refinement procedure.

Now, given our starting hypothesis, we know that both the excess and the height in $\bB_L$ are small compared to $\pi_0$ whenever $L$ belongs to the initial grid of dyadic cubes, which are taken with side-length comparable to $1$. We wish to keep refining the cubes $H$ as long as the height and the excess in the corresponding balls $\bB_H$ keep sufficiently small. However, the reference plane might tilt as we refine the scales and we wish to keep track of this. For this reason, we denote by $\pi_L$ a given plane for which $\bE (T, \bB_L) = \bE (T, \bB_L, \pi_L)$ and we define the height of $T$ in the ball $\bB_L$ as $\bh (T, \bB_L) := \bh (T, \bB_L, \pi_L)$. 

We then stop the refining procedure at some dyadic cube $L$ if
\begin{itemize}
\item[(EX)] The excess has become too large in $\bB_L$;
\item[(HT)] The height has become too large in $\bB_L$, although the excess has remained small;
\item[(NN)] The refinement stopped at some cube $J$ which touches $L$ and has double sidelength, although in $L$ itself both the height and the excess would be small enough to keep refining.
\end{itemize}
The reader familiar with Whitney's (or Calderon-Zygmund) decompositions will recognize that the latter condition is enforced to guarantee that, after all the steps of the refinement procedure  have been carried on, all nearby cubes have comparable sides. Unfortunately the conditions (EX) and (HT) taken alone do not guarantee this outcome (the troubles are indeed caused by condition (EX)) and the extra (NN) is a source of a few technical complications.

\subsection{Parameters} In order to detail the conditions (EX), (HT) and (NN) we need to introduce several parameters.

\begin{ipotesi}\label{parametri}
$C_e,C_h,\beta_2,\delta_2, M_0$ are positive real numbers and $N_0$ a natural number for which we assume
always
\begin{gather}
\beta_2 = 4\,\delta_2 = \min \left\{\frac{1}{2m}, \frac{\gamma_1}{100}\right\}, \quad
\mbox{where $\gamma_1$ is the constant of \cite[Theorem~1.4]{DS3},}\label{e:delta+beta}\\
M_0 \geq C_0 (m,n,\bar{n},Q) \geq 4\,  \quad
\mbox{and}\quad \sqrt{m} M_0 2^{7-N_0} \leq 1\, . \label{e:N0}
\end{gather}
\end{ipotesi}

As we can see, $\beta_2$ and $\delta_2$ are fixed. The other parameters are not fixed but are subject to further restrictions in the various statements, respecting the following ``hierarchy''. As already mentioned, ``geometric constants'' are assumed to depend only upon $m, n, \bar{n}$ and $Q$. The dependence of other constants upon the various parameters $p_i$ will be highlighted using the notation $C = C (p_1, p_2, \ldots)$.

\begin{ipotesi}[Hierarchy of the parameters]\label{i:parametri}
In all the statements of the paper
\begin{itemize}
\item[(a)] $M_0$ is larger than a geometric constant (cf. \eqref{e:N0}) or larger than a costant $C (\delta_2)$;
\item[(b)] $N_0$ is larger than $C (\beta_2, \delta_2, M_0)$;
\item[(c)] $C_e$ is larger than $C(\beta_2, \delta_2, M_0, N_0)$;
\item[(d)] $C_h$ is larger than $C(\beta_2, \delta_2, M_0, N_0, C_e)$;
\item[(e)] $\eps_2$ is smaller than $c(\beta_2, \delta_2, M_0, N_0, C_e, C_h)$ (which will always be positive).
\end{itemize}
\end{ipotesi}

The functions $C$ and $c$ will vary in the various statements: 
the hierarchy above guarantees however that there is a choice of the parameters for which {\em all} the restrictions required in the statements of the next propositions are simultaneously satisfied. In fact it is such a choice which is then made in \cite{DS5}. To simplify our exposition, for smallness conditions on $\eps_2$ as in (e) we will use the sentence ``$\eps_2$ is sufficiently small''.

\subsection{The Whitney decomposition} 
Thanks to Lemma \ref{l:tecnico1}, for every $L\in \sC$,  we may choose $y_L\in \pi_L^\perp$ so that $p_L := (x_L, y_L)\in \supp (T)$ (recall that $x_L$ is the center of $L$). $y_L$ is in general not unique and we fix an arbitrary choice.
A more correct notation for $p_L$ would be $x_L + y_L$. This would however become rather cumbersome later, when we deal with various decompositions of the
ambient space in triples of orthogonal planes. We thus abuse the notation slightly in using $(x,y)$ instead of $x+y$ and, consistently, $\pi_0\times \pi_0^\perp$ instead of $\pi_0 + \pi_0^\perp$.

\begin{definition}[Refining procedure, {cf. \cite[Definition 1.10]{DS4}}]]\label{d:refining_procedure}
For $L\in \sC$ we set $r_L:= M_0 \sqrt{m} \,\ell (L)$ and 
$\B_L := \bB_{64 r_L} (p_L)$. We next define the families of cubes $\sS\subset\sC$ and $\sW = \sW_e \cup \sW_h \cup \sW_n \subset \sC$ with the convention that
$\sS^j = \sS\cap \sC^j, \sW^j = \sW\cap \sC^j$ and $\sW^j_{\square} = \sW_\square \cap \sC^j$ for $\square = h,n, e$. We define $\sW^i = \sS^i = \emptyset $ for $i < N_0$. We proceed with $j\geq N_0$ inductively: if no ancestor of $L\in \sC^j$ is in $\sW$, then 
\begin{itemize}
\item[(EX)] $L\in \sW^j_e$ if $\bE (T, \B_L) > C_e \bmo\, \ell (L)^{2-2\delta_2}$;
\item[(HT)] $L\in \sW_h^j$ if $L\not \in \mathscr{W}_e^j$ and $\bh (T, \B_L) > C_h \bmo^{\sfrac{1}{2m}} \ell (L)^{1+\beta_2}$;
\item[(NN)] $L\in \sW_n^j$ if $L\not\in \sW_e^j\cup \sW_h^j$ but it intersects an element of $\sW^{j-1}$;
\end{itemize}
if none of the above occurs, then $L\in \sS^j$.
We finally set
\begin{equation}\label{e:bGamma}
\bGam:= [-4,4]^m \setminus \bigcup_{L\in \sW} L = \bigcap_{j\geq N_0} \bigcup_{L\in \sS^j} L.
\end{equation}
\end{definition}
Observe that, if $j>N_0$ and $L\in \sS^j\cup \sW^j$, then necessarily its father belongs to $\sS^{j-1}$.

\begin{proposition}[Whitney decomposition, {cf. \cite[Proposition 1.11]{DS4}}]\label{p:whitney}
Let Assumptions \ref{ipotesi} and \ref{parametri} hold and let $\eps_2$ be sufficiently small.
Then $(\bGam, \mathscr{W})$ is a Whitney decomposition of $[-4,4]^m \subset \pi_0$.
Moreover, for any choice of $M_0$ and $N_0$, there is $C^\star := C^\star (M_0, N_0)$ such that,
if $C_e \geq C^\star$ and $C_h \geq C^\star C_e$, then 
\begin{equation}\label{e:prima_parte}
\sW^{j} = \emptyset \qquad \mbox{for all $j\leq N_0+6$.}
\end{equation}
Moreover, the following 
estimates hold with $C = C(\beta_2, \delta_2, M_0, N_0, C_e, C_h)$:
\begin{gather}
\bE (T, \B_J) \leq C_e \bmo^{}\, \ell (J)^{2-2\delta_2} \quad \text{and}\quad
\bh (T, \B_J) \leq C_h \bmo^{\sfrac{1}{2m}} \ell (J)^{1+\beta_2}
\quad \forall J\in \sS\,, \label{e:ex+ht_ancestors}\\
 \bE (T, \B_L) \leq C\, \bmo^{}\, \ell (L)^{2-2\delta_2}\quad \text{and}\quad
\bh (T, \B_L) \leq C\, \bmo^{\sfrac{1}{2m}} \ell (L)^{1+\beta_2}
\quad \forall L\in \sW\, . \label{e:ex+ht_whitney}
\end{gather}
\end{proposition}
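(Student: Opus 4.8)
The plan is to verify the three Whitney properties (w1)--(w3) directly from the refining procedure of Definition \ref{d:refining_procedure}, then establish \eqref{e:prima_parte}, and finally derive the excess and height estimates \eqref{e:ex+ht_ancestors}--\eqref{e:ex+ht_whitney}. Property (w1) is essentially the definition of $\bGam$ in \eqref{e:bGamma}, together with the observation that every point of $[-4,4]^m$ either lies in some stopped cube or survives all refinements; the fact that $\bGam$ misses the interior of each $L\in\sW$ follows because a cube is placed in $\sW$ only if no ancestor is already in $\sW$, so distinct stopped cubes are never nested, and $\bGam$ is the complement of their union. Property (w2) is immediate from the dyadic structure: two cubes in $\sW$ with overlapping interiors would be nested, contradicting the stopping rule. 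Property (w3) is where condition (NN) does its job: if $L_1\in\sW^{j_1}$ and $L_2\in\sW^{j_2}$ touch with, say, $j_1>j_2+1$, then the father of $L_1$ (which is in $\sS^{j_1-1}$, hence has $j_1-1>j_2$) already touches $L_2\in\sW^{j_2}$, so by (NN) that father should have been stopped, a contradiction; the same argument iterated bounds $|j_1-j_2|\le 1$.

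The next step is \eqref{e:prima_parte}. Here one uses Assumption \ref{ipotesi}: for a cube $L$ in the initial grid $\sC^{N_0}$ or in the first few generations below it, both $\bE(T,\B_L)$ and $\bh(T,\B_L)$ are controlled by the global smallness $\bmo\le\eps_2^2$ and by the estimates of Lemma \ref{l:tecnico1} (in particular $\bh(T,\bC_{5\sqrt m}(0,\pi_0))\le C_0\bmo^{1/(2m)}$, together with the monotonicity/comparison bound on $\bE$). Since $\ell(L)$ is bounded below by $2^{-N_0-6}$ on these generations, the thresholds $C_e\bmo\,\ell(L)^{2-2\delta_2}$ in (EX) and $C_h\bmo^{1/(2m)}\ell(L)^{1+\beta_2}$ in (HT) become, once $C_e$ and then $C_h$ are chosen large enough in terms of $M_0,N_0$ (this is where $C^\star(M_0,N_0)$ enters and why $C_h\ge C^\star C_e$ is needed), strictly larger than the actual excess and height of $T$ in $\B_L$; hence no such $L$ can be stopped by (EX) or (HT). To rule out (NN) on these generations one argues inductively: if $\sW^j=\emptyset$ for $j\le j_0$ then no cube of generation $j_0+1$ can touch an element of $\sW^{j_0}$, so $\sW_n^{j_0+1}=\emptyset$, and combined with the previous point $\sW^{j_0+1}=\emptyset$; running this up to $j=N_0+6$ gives \eqref{e:prima_parte}. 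One must be a little careful that $\B_L=\bB_{64r_L}(p_L)$ with $r_L=M_0\sqrt m\,\ell(L)$ stays inside the region $\bB_{23\sqrt m/4}$ where $T$ is defined; this is exactly what the condition $\sqrt m\,M_0 2^{7-N_0}\le 1$ in \eqref{e:N0} (and its strengthening $N_0\ge C(\beta_2,\delta_2,M_0)$) secures.

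Finally, the estimates \eqref{e:ex+ht_ancestors} and \eqref{e:ex+ht_whitney}. For $J\in\sS$, by definition $J$ was never stopped, so neither (EX) nor (HT) triggered at $J$, which is precisely \eqref{e:ex+ht_ancestors}. For $L\in\sW$, write $L'$ for its father; since $L$ is stopped, $L'\in\sS$, so \eqref{e:ex+ht_ancestors} applies to $L'$. Because $L\subset L'$ and $\ell(L')=2\ell(L)$, one has $\B_L\subset C\B_{L'}$ with the enlargement factor a geometric constant (comparing the radii $64 r_L$ and $64 r_{L'}$ and the centers $p_L,p_{L'}$, using the height bound to control the vertical displacement), and $\|T\|$ on $\B_L$ is bounded by $\|T\|$ on the enlarged ball via the mass ratio bound \eqref{e:basic2}; hence $\bE(T,\B_L)\le C\,\bE(T,C\B_{L'})\le C\bmo\,\ell(L')^{2-2\delta_2}\le C\bmo\,\ell(L)^{2-2\delta_2}$, and similarly for the height, giving \eqref{e:ex+ht_whitney} with $C=C(\beta_2,\delta_2,M_0,N_0,C_e,C_h)$.

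The main obstacle I expect is the verification of (w3), i.e. that nearby stopped cubes have comparable side lengths: the conditions (EX) and (HT) alone are not ``local'' enough to force this (a cube can be stopped for excess reasons while an adjacent, much smaller cube is not), which is exactly why the auxiliary stopping condition (NN) was built in, and the delicate bookkeeping is to make sure (NN) genuinely propagates down the generations without itself creating new obstructions — in particular that a cube stopped only by (NN) has a father that is \emph{not} stopped, so that the refinement is well-defined and the inductive structure of Definition \ref{d:refining_procedure} is consistent. A secondary technical point is keeping track of all the enlargement constants when passing from $\B_L$ to $\B_{L'}$ (and ensuring these comparisons stay inside the domain of $T$), but this is routine given the hierarchy of parameters in Assumption \ref{i:parametri}.
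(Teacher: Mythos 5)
Your plan follows the standard argument and is essentially correct, but a few points deserve sharpening.

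For (w3), your phrasing is slightly off: the condition (NN) at generation $j$ checks intersection with $\sW^{j-1}$ only, one generation back. So if $L_1\in\sW^{j_1}$, $L_2\in\sW^{j_2}$ touch and $j_1 > j_2+1$, the father of $L_1$ sits in $\sC^{j_1-1}$ and the (NN) test it would have faced concerns $\sW^{j_1-2}$, not $\sW^{j_2}$; it does \emph{not} directly fire unless $j_1=j_2+2$. The clean argument is to take the ancestor $J$ of $L_1$ in $\sC^{j_2+1}$: $J$ is a proper ancestor of $L_1\in\sW$, hence $J\in\sS$, but $J$ contains $L_1$ and so intersects $L_2\in\sW^{j_2}=\sW^{(j_2+1)-1}$, so (NN) puts $J\in\sW^{j_2+1}_n$ --- contradiction. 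Your remark ``the same argument iterated'' gestures at this, but the single-step statement as written is not literally the (NN) condition.

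Two smaller remarks. In deriving \eqref{e:ex+ht_whitney} from the father $L'\in\sS$ you correctly enlarge the ball and handle the vertical shift of the centers, but the height $\bh(T,\B_L)$ is measured with respect to the excess-optimal plane $\pi_L$, which differs from $\pi_{L'}$. You must add a tilt term of order $r_L\,|\pi_L-\pi_{L'}|$, and bound the tilt by a multiple of $\sqrt{\bE(T,\B_{L'})}$; this is where a genuine dependence of the constant on $C_e$ enters (and is also why the hypothesis for \eqref{e:prima_parte} reads $C_h\geq C^\star C_e$ rather than simply $C_h\geq C^\star$). Finally, your worry at the end that a cube stopped only by (NN) might have a stopped father and thereby make the recursion ill-posed is unfounded: Definition \ref{d:refining_procedure} only tests (EX), (HT), (NN) for cubes no ancestor of which is already in $\sW$, so every stopped cube automatically has all its ancestors in $\sS$; the consistency you are concerned about is built in by construction and requires no separate verification.
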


\subsection{Interpolating functions} What do we expect from the decomposition above? Fix a point $x\in [-1, 1]^m$. Only two situations might occur:
\begin{itemize}
\item[(NS)] $x$ does not belong to any cube where the refinement procedure stopped;
\item[(S)] $x\in L$ for some cube $L$ where the refinement procedure stopped.
\end{itemize}
Let $\bGam$ be the set of points as in (NS). If the stopping conditions (EX) and (HT) are sufficiently severe, we can expect that $T\res (\bGam \times \pi_0^\perp)$ is a $Q$ multiple of a portion of a (rather smooth) single valued graph. 

Consider next a point $x\in L$ as in (S). Although we stopped the refinement at $L$, at a slightly larger scale we still have a very small excess and a very small height: both $\bE (T, \bB_L)$ and $\bh (T, \bB_L)$ are still rather small. We thus can hope to apply the approximation Theorem \ref{t:app_main} in a suitable cylinder $\bC_{32 r_L} (p_L, \pi_L)$: we can then construct a good Lipschitz $Q$-valued approximation $f_L: B_{8r_L} (p_L, \pi_L) \to \Iq (\pi_L^\perp)$, which will be called the {\em $\pi_L$-approximation in $\bB_L$}. Finally we take its average $\etaa \circ h_L$ and smooth it by convolution with a kernel (which we take to be radial, although the importance of this assumption will become clear only in the next section).

\medskip

In order to carry on our program we fix two important functions $\vartheta,\varrho: \R^m \to \R$.

\begin{ipotesi}\label{mollificatore}
$\varrho\in C^\infty_c (B_1)$ is radial, $\int \varrho =1$ and $\int |x|^2 \varrho (x)\, dx = 0$. For $\lambda>0$ $\varrho_\lambda$ 
denotes, as usual, $x\mapsto \lambda^{-m} \varrho (\frac{x}{\lambda})$.
$\vartheta\in C^\infty_c \big([-\frac{17}{16}, \frac{17}{16}]^m, [0,1]\big)$ is identically $1$ on $[-1,1]^m$.
\end{ipotesi}

$\varrho$ will be used as convolution kernel for smoothing maps $z$ defined on $m$-dimensional planes $\pi$ of $\mathbb R^{m+n}$. In particular, having fixed an isometry $A$ of $\pi$ onto $\mathbb R^m$, the smoothing will be given by $[(z \circ A) * \varrho] \circ A^{-1}$. Observe that since $\varrho$ is radial, our map does not depend on the choice of the isometry
and we will therefore use the shorthand notation $z*\varrho$.

\begin{definition}[$\pi$-approximations, {cf \cite[Definition 1.13]{DS4}}]\label{d:pi-approximations}
Let $L\in \sS\cup \sW$ and $\pi$ be an $m$-dimensional plane. If $T\res\bC_{32 r_L} (p_L, \pi)$ fulfills 
the assumptions of \cite[Theorem 1.4]{DS3} in the cylinder $\bC_{32 r_L} (p_L, \pi)$, then the resulting map $f: B_{8r_L} (p_L, \pi)  \to \Iq (\pi^\perp)$ given by \cite[Theorem 1.4]{DS3} is a {\em $\pi$-approximation of $T$ in $\bC_{8 r_L} (p_L, \pi)$}. 
The map $\hat{h}:B_{7r_L} (p_L, \pi) \to \pi^\perp$ given
by $\hat{h}:= (\etaa\circ f)* \varrho_{\ell (L)}$ will be called the {\em smoothed average of the $\pi$-approximation}, where we recall the notation $\etaa \circ f(x) := Q^{-1} \sum_{i=1}^Q f_i(x)$ for any $Q$-valued map $f = \sum_i \a{f_i}$. 
\end{definition}

\subsection{Gluing the interpolations}
For each $L\in \sS\cup \sW$ we let $\hat\pi_L$ be an optimal plane in $\bB_L$, namely so that $\bE (T, \bB_L, \pi_L) = \bE (T, \bB_L)$ and $\bh (T, \bB_L) = \bh (T, \bB_L, \pi_L)$, and choose an $m$-plane $\pi_L\subset T_{p_L} \Sigma$ which minimizes $|\hat\pi_L-\pi_L|$.

We wish to patch the graphs of the various tilted interpolating functions $h_L$ in a single submanifold (and we also hope that this submanifold will glue smoothly with the portion of the current lying over $\bGam$!). However, since the graphs of $h_L$ are relative to different systems of coordinates, we need to parametrize them as graphs in a common coordinate system. Given condition (EX), we can hope that along the refinement procedure the planes $\pi_L$ did not tilt much. If this is the case the graph of each $h_L$ can be described by the graph of some $g_L$ in the ``original coordinates'' $\pi_0\times \pi_0^\perp$. We could then glue the various $g_L$ together using a partition of unity. 

\begin{lemma}[{Cf. \cite[Lemma 1.15]{DS4}}]\label{l:tecnico2}
Let the assumptions of Proposition \ref{p:whitney} hold and assume $C_e \geq C^\star$ and $C_h \geq C^\star C_e$ (where $C^\star$ is the
constant of Proposition \ref{p:whitney}). For any choice of the other parameters,
if $\eps_2$ is sufficiently small, then $T\res \bC_{32 r_L} (p_L, \pi_L)$ satisfies the assumptions of
\cite[Theorem 1.4]{DS3} for any $L\in \sW\cup \sS$. 
Moreover, if $f_L$ is a $\pi_L$-approximation, denote by $\hat{h}_L$ its smoothed average and by $\bar{h}_L$ the map $\p_{T_{p_L}\Sigma} (\hat{h}_L)$,
which takes value in the plane $\varkappa_L := T_{p_L} \Sigma \cap \pi_L^\perp$, i.e. the orthogonal complement of $\pi_L$ in $T_{p_L} \Sigma$.
If we let $h_L$ be the map $x\mapsto h_L (x):= (\bar{h}_L (x), \Psi_{p_L} (x, \bar{h}_L (x)))\in \varkappa_L \times T_{p_L} \Sigma^\perp$, 
then there is a smooth map $g_L: B_{4r_L} (p_L, \pi_0)\to \pi_0^\perp$ such that 
$\bG_{g_L} = \bG_{h_L}\res \bC_{4r_L} (p_L, \pi_0)$.
\end{lemma}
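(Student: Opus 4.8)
The plan is to run the argument in three stages. First, one checks the hypotheses of the approximation theorem \cite[Theorem 1.4]{DS3} (Theorem \ref{t:app_main} in this survey) in the cylinder $\bC_{32 r_L}(p_L,\pi_L)$ for every $L \in \sW \cup \sS$. The key points here are the three items of Assumption \ref{ipotesi_base_app} together with the smallness of the cylindrical excess $\bE(T,\bC_{32 r_L}(p_L,\pi_L))$. The projection and boundary conditions \eqref{e:(H)} follow from Lemma \ref{l:tecnico1} after localizing to the relevant cylinder, and the submanifold bounds \eqref{e:Sigma} follow from Assumption \ref{ipotesi} and the parametrization of $\Sigma$ as a graph of $\Psi$ over $\R^{m+\bar n}$ with $\|D\Psi\|_{C^{2,\eps_0}} \leq C_0 \bmo^{\sfrac{1}{2}}$. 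For the excess smallness, one compares the excess in the cylinder $\bC_{32 r_L}(p_L,\pi_L)$ to the spherical excess $\bE(T,\B_L)$: since $\pi_L$ is (close to) the optimal plane for $\B_L$ and $\B_L$ has radius $64 r_L$, a covering/comparison estimate gives $\bE(T,\bC_{32 r_L}(p_L,\pi_L)) \leq C\, \bE(T,\B_L) \leq C\, C_e \bmo\, \ell(L)^{2-2\delta_2}$ by \eqref{e:ex+ht_whitney}--\eqref{e:ex+ht_ancestors}. Choosing $\eps_2$ sufficiently small (which by the hierarchy \ref{i:parametri} is done last, after $C_e, C_h$ are fixed) makes this smaller than $\eps_1$, the threshold of Theorem \ref{t:app_main}. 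This is the first assertion of the lemma.

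Next, with the $\pi_L$-approximation $f_L: B_{8r_L}(p_L,\pi_L) \to \Iq(\pi_L^\perp)$ in hand, one forms the smoothed average $\hat h_L := (\etaa \circ f_L) * \varrho_{\ell(L)}$ on $B_{7r_L}(p_L,\pi_L)$, projects it onto $\varkappa_L = T_{p_L}\Sigma \cap \pi_L^\perp$ to get $\bar h_L$, and then lifts via $\Psi_{p_L}$ to obtain $h_L(x) = (\bar h_L(x), \Psi_{p_L}(x,\bar h_L(x)))$, whose graph lies in $\Sigma$. The core geometric fact to establish is that this graph, expressed over $\pi_L \times \pi_L^\perp$, can be re-expressed as a graph over the original splitting $\pi_0 \times \pi_0^\perp$. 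This is a quantitative implicit-function-theorem statement: writing $x = \p_{\pi_0}(z, h_L(z))$ for $z$ ranging over $B_{4r_L}(p_L, \pi_L)$, one needs the map $z \mapsto \p_{\pi_0}(z, h_L(z))$ to be a diffeomorphism onto a set containing $B_{4r_L}(p_L,\pi_0)$, so that $g_L := \p_{\pi_0^\perp} \circ (\text{graph of } h_L) \circ (\text{that diffeomorphism})^{-1}$ is well-defined and smooth, with $\bG_{g_L} = \bG_{h_L} \res \bC_{4r_L}(p_L,\pi_0)$. The hypothesis that makes this work is that $|\pi_L - \pi_0|$ is small: indeed the tilting estimate controls $|\pi_L - \pi_0|$ by a sum of the form $\sum |\pi_{L_i} - \pi_{L_{i+1}}|$ over the chain of ancestors from the top-scale cube down to $L$, and each increment is bounded (via the standard "tilting of optimal planes" lemma) by $C\,\bE(T,\B_{L_i})^{\sfrac12} + C\bmo^{\sfrac12}\ell(L_i) \leq C(C_e \bmo)^{\sfrac12}\ell(L_i)^{1-\delta_2} + C\bmo^{\sfrac12}\ell(L_i)$; summing the geometric series in $\ell$ gives $|\pi_L - \pi_0| \leq C(C_e)\,\bmo^{\sfrac12}$. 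Since $\hat h_L$ has small Lipschitz constant (the $\pi_L$-approximation $f_L$ has $\Lip(f_L) \leq C E^{\gamma_1} + C\bA r_L$ by \eqref{e:main(i)}, and convolution does not increase it, and the projection onto $\varkappa_L$ and the lift by $\Psi_{p_L}$ add only $C\bmo^{\sfrac12}$), the map $z \mapsto \p_{\pi_0}(z,h_L(z))$ differs from the linear projection $\p_{\pi_0}|_{\pi_L}$ (which is invertible with controlled inverse since $|\pi_L-\pi_0|$ is small) by a Lipschitz perturbation of size $C(C_e)\bmo^{\sfrac12}$; taking $\eps_2$ small enough makes it a bi-Lipschitz diffeomorphism, and smoothness is inherited from $h_L$ and $\Psi_{p_L}$.

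The main obstacle I expect is the bookkeeping in the tilting estimate: one must be careful that the chain of ancestors used to control $|\pi_L - \pi_0|$ stays within $\sS$ (so that the good bounds \eqref{e:ex+ht_ancestors} apply at every intermediate scale) and that the balls $\B_{L_i}$ along the chain are comparable and nested enough that the change of optimal plane between consecutive scales is genuinely controlled by the excess at the larger scale — this is where the factor $M_0$ in the definition $r_L = M_0 \sqrt m\,\ell(L)$ and the "no-neighbour" stopping condition (NN) are used, to guarantee that nearby cubes have comparable sidelength so that the balls $\B_L$ of a cube and its father overlap substantially. Once the chain is set up correctly, the geometric summation is routine. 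The second, milder obstacle is verifying that the domain $B_{4r_L}(p_L,\pi_0)$ is actually contained in the image of $z \mapsto \p_{\pi_0}(z,h_L(z))$ for $z$ in $B_{4r_L}(p_L,\pi_L)$; this requires a small margin, and one uses that $h_L$ is in fact defined on the slightly larger ball $B_{7r_L}(p_L,\pi_L)$ from Definition \ref{d:pi-approximations}, giving room for the bi-Lipschitz distortion. Everything else — smoothness of $g_L$, the identity of currents $\bG_{g_L} = \bG_{h_L}\res \bC_{4r_L}(p_L,\pi_0)$ — follows from the change-of-variables / reparametrization invariance of graph currents, essentially Lemma \ref{l:biLipschitz_inv}.
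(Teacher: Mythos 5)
Your proposal follows the same scheme as \cite[Lemma 1.15]{DS4}: a tilting estimate $|\pi_L - \pi_0| \leq C(C_e)\,\bmo^{\sfrac12}$ obtained by telescoping along the chain of ancestors in $\sS$ (to which the bounds \eqref{e:ex+ht_ancestors} apply), a transfer of the excess, height and projection conditions from $\pi_0$ to $\pi_L$, and a perturbative bi-Lipschitz reparametrization from a $\pi_L$-graph to a $\pi_0$-graph. Two details deserve more care in your write-up. First, the statement that the projection identity in \eqref{e:(H)} ``follows from Lemma~\ref{l:tecnico1} after localizing'' is misleading: Lemma~\ref{l:tecnico1} concerns $\p_{\pi_0}$, and to conclude $(\p_{\pi_L})_\sharp T\res\bC_{32r_L}(p_L,\pi_L)=Q\a{B_{32r_L}(p_L,\pi_L)}$ you need the tilting estimate and the height bound \emph{first} (to ensure $\supp(T)\cap\bC_{32r_L}(p_L,\pi_L)\subset\B_L$), followed by a constancy-theorem argument — so the order of your steps must be reversed. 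Second, $\pi_L$ is only the closest plane in $T_{p_L}\Sigma$ to the true optimizer $\hat\pi_L$; to recover the factor $\ell(L)^{2-2\delta_2}$ in your cylindrical-excess estimate you must show $|\hat\pi_L-\pi_L|^2\leq C\big(\bE(T,\B_L)+\bA^2 r_L^2\big)$ rather than invoking the crude bound $|\hat\pi_L-\pi_L|\leq C\bmo^{\sfrac12}$, which is not small enough.
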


We are now ready to finally specify how we patch the interpolating functions. The center manifold will then be a graph which is obtained by a suitable limiting process. 

\begin{definition}[Interpolating functions, {cf. \cite[Definition 1.16]{DS4}}]\label{d:glued}
The maps $h_L$ and $g_L$ in Lemma \ref{l:tecnico2} will be called, respectively, the
{\em tilted $L$-interpolating function} and the {\em $L$-interpolating function}.
For each $j$ let $\sP^j := \sS^j \cup \bigcup_{i=N_0}^j \sW^i$ and
for $L\in \sP^j$ define $\vartheta_L (y):= \vartheta (\frac{y-x_L}{\ell (L)})$. Set
\begin{equation}
\hat\varphi_j := \frac{\sum_{L\in \sP^j} \vartheta_L g_L}{\sum_{L\in \sP^j} \vartheta_L} \qquad \mbox{on $]-4,4[^m$},
\end{equation}
let $\bar{\varphi}_j (y)$ be the first $\bar{n}$ components of $\hat{\varphi}_j (y)$ and define
$\varphi_j (y) = \big(\bar{\varphi}_j (y), \Psi (y, \bar{\varphi}_j (y))\big)$.
$\varphi_j$ will be called the {\em glued interpolation} at the step $j$.
\end{definition}

\begin{theorem}[Existence of the center manifold, {cf. \cite[Theorem 1.17]{DS4}}]\label{t:cm}
Assume that the hypotheses of Lemma \ref{l:tecnico2} hold and
let $\kappa := \min \{\eps_0/2, \beta_2/4\}$. For any choice of the other parameters,
if $\eps_2$ is sufficiently small, then
\begin{itemize}
\item[(i)] $\|D\varphi_j\|_{C^{2, \kappa}} \leq C \bmo^{\sfrac{1}{2}}$ and $\|\varphi_j\|_{C^0}
\leq C \bmo^{\sfrac{1}{2m}}$, with $C = C(\beta_2, \delta_2, M_0, N_0, C_e, C_h)$.
\item[(ii)] if $L\in \sW^i$ and $H$ is a cube concentric to $L$ with $\ell (H)=\frac{9}{8} \ell (L)$, then $\varphi_j = \varphi_k$ on $H$ for any $j,k\geq i+2$. 
\item[(iii)] $\varphi_j$ converges in $C^3$ to a map $\phii$ and $\cM:= \gr (\phii|_{]-4,4[^m}
)$ is a $C^{3,\kappa}$ submanifold of $\Sigma$.
\end{itemize}
\end{theorem}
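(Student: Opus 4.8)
The plan is to prove the three items in order, since (i) is the engine that drives (ii) and (iii). The key quantitative input is the pair of estimates \eqref{e:ex+ht_ancestors}-\eqref{e:ex+ht_whitney} from Proposition \ref{p:whitney}, together with the fact that the tilting of the optimal planes $\pi_L$ along a chain of ancestors is controlled by the excess. First I would establish the basic tilting estimate: if $L\in\sP^j$ and $H$ is its father (or more generally an ancestor with comparable side length), then $|\pi_L-\pi_H|\le C\bmo^{\sfrac12}\ell(L)^{1-\delta_2}$; this follows from the triangle inequality on the excess combined with the height bound (a standard ``tilted $L^2$-excess'' argument, i.e.\ comparing $\bE(T,\bB_L,\pi_L)$ and $\bE(T,\bB_L,\pi_H)$ and using that both balls see small excess relative to their own optimal plane). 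Summing along a chain of ancestors down to the initial grid at scale $2^{1-N_0}$, the geometric series in $\ell^{1-\delta_2}$ converges and yields $|\pi_L-\pi_0|\le C\bmo^{\sfrac12}$ for every $L$. This is what guarantees, via Lemma \ref{l:tecnico2}, that each tilted interpolation $h_L$ can in fact be reparametrised as a graph $g_L$ over $\pi_0$ with $\|Dg_L\|_{C^0}\le C\bmo^{\sfrac12}$ and, after using \cite[Theorem 1.4]{DS3} and the Schauder-type interior estimates for the smoothed average, with the full $C^{2,\kappa}$ bound $\|Dg_L\|_{C^{2,\kappa}}\le C\bmo^{\sfrac12}\ell(L)^{-2-\kappa}\cdot\ell(L)^{2-2\delta_2}\lesssim C\bmo^{\sfrac12}$ on the ball $B_{4r_L}(p_L,\pi_0)$ once $\kappa\le\beta_2/4$ is chosen so that the scaling exponent is non-negative.

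\textbf{Item (i).} Having the uniform $C^{2,\kappa}$ bounds on each $g_L$, the glued interpolation $\varphi_j$ is a partition-of-unity combination of the $g_L$ with $L$ ranging over $\sP^j$. The point is that the cutoffs $\vartheta_L$ are supported on $[-\tfrac{17}{16},\tfrac{17}{16}]^m$ scaled to $L$, and by property (w3) of the Whitney decomposition at most a bounded number (depending only on $m$) of cubes $L\in\sP^j$ of comparable size are relevant near a given point, with neighbouring sizes differing by at most a factor $2$. Thus the $C^{2,\kappa}$ norm of $\hat\varphi_j$ is estimated by the $C^{2,\kappa}$ norms of the $g_L$ plus error terms coming from derivatives hitting the cutoffs; each such error carries a factor $\ell(L)^{-k}$ against a difference $|g_L-g_H|$ for neighbouring cubes, and the crucial cancellation is that $|g_L-g_H|\le C\bmo^{\sfrac12}\ell(L)^{2-2\delta_2}$ (consequence of the tilting estimate and the height bound: both graphs lie within the height of $T$ in the common ball and have controlled gradient), so the $k$-th derivative error is $\lesssim\bmo^{\sfrac12}\ell(L)^{2-2\delta_2-k}$, which is bounded for $k\le 2$ and Hölder-$\kappa$ controlled for $k=2$ since $2\delta_2+\kappa<2-\kappa$ by \eqref{e:delta+beta} and the choice of $\kappa$. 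Finally $\varphi_j=(\bar\varphi_j,\Psi(\cdot,\bar\varphi_j))$ inherits these bounds because $\|D\Psi\|_{C^{2,\eps_0}}\le C_0\bmo^{\sfrac12}$ and $\kappa\le\eps_0/2$, by the chain rule. The $C^0$ bound $\|\varphi_j\|_{C^0}\le C\bmo^{\sfrac{1}{2m}}$ comes directly from the height estimate \eqref{e:pre_height} and \eqref{e:ex+ht_whitney}.

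\textbf{Items (ii) and (iii).} For (ii), observe that if $L\in\sW^i$ then by (w3) and the stopping rules no cube of $\sP^j$ with $j\ge i+2$ that meets the concentric cube $H$ with $\ell(H)=\tfrac98\ell(L)$ can have side length smaller than $\tfrac14\ell(L)$ — the refinement in that region has already terminated, so the collection of cubes whose cutoffs are active on $H$ stabilises for $j\ge i+2$, and hence $\varphi_j=\varphi_k$ there; this is a bookkeeping argument using (w1)-(w3) and the nesting remark after Definition \ref{d:refining_procedure}. For (iii), the uniform bounds of (i) give, via Arzelà--Ascoli applied to $\varphi_j$ and its derivatives up to order $2$ plus the Hölder equicontinuity of the second derivatives, a subsequence converging in $C^2_{loc}$ on $]-4,4[^m$ to some $\phii$; but in fact (ii) upgrades this to genuine local stabilisation away from $\bGam$, so $\varphi_j\to\phii$ in $C^3_{loc}$ on $]-4,4[^m\setminus\bGam$, while on $\bGam$ itself one uses that $\bGam$ is where the current is, uniformly at all scales, a $Q$-fold nearly-flat graph, giving $C^{3}$ control across $\bGam$ too by the $\eps$-regularity Theorem \ref{t:DG} (here the exponent $\eps_0$ of $\Sigma$ limits the regularity to $C^{3,\kappa}$). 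The limit $\cM=\gr(\phii)$ is then a $C^{3,\kappa}$ graph contained in $\Sigma$ because each $\varphi_j$ has its image in $\Sigma$ by construction and $\Sigma$ is closed.

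\textbf{Main obstacle.} The genuinely delicate step is the passage from the pointwise/interior Lipschitz and $C^{2,\kappa}$ control of each individual $g_L$ to the \emph{uniform} $C^{2,\kappa}$ bound on the glued $\varphi_j$, i.e.\ controlling the error terms produced when derivatives fall on the Whitney cutoffs. This is where the precise exponents enter: one needs the differences $|g_L-g_H|$ between neighbouring interpolations to beat the factors $\ell(L)^{-2}$ and $\ell(L)^{-2-\kappa}$ coming from two (Hölder-)derivatives of $\vartheta_L$, which forces the relation $\beta_2=4\delta_2=\min\{\tfrac{1}{2m},\tfrac{\gamma_1}{100}\}$ and $\kappa\le\beta_2/4$ in Assumption \ref{parametri}; getting these comparison estimates for $g_L-g_H$ sharp enough — tracking the tilting of $\pi_L$ versus $\pi_H$, the different smoothing scales $\ell(L)$ versus $\ell(H)$, and the reparametrisation onto $\pi_0$ — is the technical heart of the argument and is carried out in detail in \cite[Section 1]{DS4}.
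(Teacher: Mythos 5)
The core of your argument does not close: the exponent arithmetic in your quantitative claims is off by roughly a full derivative, and you are missing the PDE ingredient that actually supplies the gain.

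Concretely, look at your claimed bound
\[
\|Dg_L\|_{C^{2,\kappa}}\;\le\; C\bmo^{\sfrac12}\,\ell(L)^{-2-\kappa}\cdot\ell(L)^{2-2\delta_2}\;=\;C\bmo^{\sfrac12}\,\ell(L)^{-\kappa-2\delta_2}\,.
\]
The exponent $-\kappa-2\delta_2$ is strictly negative, so this quantity blows up as $\ell(L)\to0$; it is not $\lesssim \bmo^{\sfrac12}$. The same problem recurs when you glue: you assert $|g_L-g_H|\le C\bmo^{\sfrac12}\ell(L)^{2-2\delta_2}$ ``because both graphs lie within the height of $T$,'' but the height bound only gives order $\ell^{1+\beta_2}$, and in any case neither exponent can beat the $\ell^{-2-\kappa}$ coming from two H\"older-derivatives of the cutoff $\vartheta_L$. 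Your ``$k$-th derivative error $\lesssim \bmo^{\sfrac12}\ell^{2-2\delta_2-k}$'' is already unbounded at $k=2$. To control $\|D\varphi_j\|_{C^{2,\kappa}}$ one needs $\|D^k(g_L-g_H)\|_{C^0}\le C\bmo^{\sfrac12}\ell^{3+\kappa-k}$ for $k\le 3$ — i.e.\ a $C^0$ closeness of order $\ell^{3+\kappa}$, which is more than a derivative better than anything you can extract from the tilting estimate and the height bound alone.

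The mechanism you are missing is the one in Proposition \ref{p:pde}: the first variation (minimality) converts, after the graph reparametrisation, into the statement that $\Delta(\etaa\circ \bar f_{HL})$ is small in $W^{-1,1}$, i.e.\ $\etaa\circ f_L$ is almost harmonic in a quantitative sense, together with the $L^1$ estimate $\|\bar h_{HL}-\etaa\circ\bar f_{HL}\|_{L^1}\le C\bmo\, r_L^{m+3+\beta_2}$. The $C^3$ estimate then comes not from a direct scaling argument on each $g_L$ but from propagating estimates along the chain $L=L_i\subset L_{i+1}\subset\cdots\subset L_{N_0}$: one shows $\|D^k(h_{L_j}-h_{L_{j-1}})\|_{C^0}\le C\ell(L_j)^{3+\kappa-k}$ via the mean-value/Schauder inequality $\|D^k h\|_{C^0}\lesssim \ell^{-(m+k)}\|h\|_{L^1}$ (valid for almost-harmonic $h$ smoothed with a radial kernel) combined with the superlinear error bound from Theorem \ref{t:app_main}, namely $\|\etaa\circ f_H-\etaa\circ f_J\|_{L^1}\le C\ell^{m+3+\kappa}$ (measure of mismatch $\lesssim\ell^{m}E^{1+\gamma_1}$ times height $\lesssim\ell^{1+\beta_2}$). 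Summing the resulting geometric series gives the uniform $C^{3,\kappa}$ bound. The tilting estimate you wrote down is indeed needed — it ensures every $\pi_L$ stays uniformly close to $\pi_0$ so the reparametrisation makes sense — but by itself it is nowhere near strong enough to control third derivatives of the glued interpolation. Without the elliptic-system estimate, the geometric-series telescoping, and the gain of $\ell^{-m}$ from the mean-value formula, items (i) and (iii) do not follow.
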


\begin{definition}[Center manifold and contact set {cf. \cite[Definition 1.18]{DS4}}]\label{d:cm}
The manifold $\cM$ in Theorem \ref{t:cm} is called
{\em a center manifold of $T$ relative to $\pi_0$} and 
$(\bGam, \sW)$ the {\em Whitney decomposition associated to $\cM$}. 
Setting $\Phii(y) := (y,\phii(y))$, we call
$\Phii (\bGam)$ the {\em contact set}.
\end{definition}

\section{The $C^3$ estimate for the center manifold}\label{s:cm2}

Theorem \ref{t:cm} is probably the most complicated part of the proof of Theorem \ref{t:cod>1}. However it is important to notice that Theorem \ref{t:cm} alone does not encode  the full strength of the construction described above: we will see in the next section that the graph of $\phii$ is indeed a very good substitute for the ``average of the sheets of a $Q$-valued graph''. For instance, the algorithm can be applied even under the assumption that the density of $T$ equals $Q$ $\|T\|$-almost everywhere in $\bC$: in this case the refinement procedure never stops, $\bGam$ equals $[-1,1]^m$ and finally $\supp (T)\cap [-1,1]^m\times \pi_0^\perp \subset \gr (\phii)$. Namely, by the constancy theorem $T\res ([-1,1]^m\times \pi_0^\perp) = Q \bG_\phii$ and thus we have gained two more derivative in the conclusion of Theorem \ref{t:DG}.

The latter surprising conclusion, namely that with a ``purely geometric construction'' it is possible to improve the classical $\eps$-regularity theorem by $2$ derivatives, is already observed in the introduction of Almgren's monograph. It is however possible to find a rather fast shortcut to this conclusion since multiple valued functions are not needed. A self-contained ``elementary'' proof has in fact been given in \cite{DS-cm}. The latter reference contains, in a highly simplified setting, the most important estimates which hide behind Theorem \ref{t:cm}. In this paragraph we will give a rough idea of the $C^3$ regularity of $\phii$. 

\subsection{Propagation of the $C^k$ estimates along the scales} It is obvious that in order to show Theorem \ref{t:cm} we need to prove a uniform $C^{3,\kappa}$ estimate on $g_L$ for any $L\in \sW$. This alone will not suffice: as it is simple to conclude from the usual properties of a generic partition of unity, we also need to show that, when $L$ and $H$ are two neighboring cubes in $\sW$, then $\|D^l (g_L- g_H)\|_{C^0} \leq C \ell (H)^{3-l+\kappa}$, for every $l\in \{0,1,2,3\}$. However, leaving aside the ``interaction'' between nearby cubes, let us focus on $\|g_L\|_{C^3}$ and, to simplify the matter even further, let us in fact look at $\|h_L\|_{C^3}$. It is rather plausible that if we could prove a uniform bound on $\|h_L\|_{C^3}$ for the tilted interpolating functions $h_L$, this will not be destroyed by the change to the coordinates $\pi_0\times \pi_0^\perp$. 

\medskip

Let us therefore fix $L\in \sW$ and let $L= L_i \subset L_{i+1} \subset \ldots \subset L_{N_0}$ be a chain of dyadic cubes where $L_{j-1}$ is the father of $L_j$ for every $j$. Now, if we ignore the Riemannian manifold $\Sigma$ and assume that it coincides with $\R^{m+n}$, $h_{L_{N_0}}$ is the convolution of a Lipschitz function at a scale which is obviously comparable with $1$ (since $\ell (L_{N_0}) = 2^{1-N_0}$ and $N_0$ is a fixed ``constant'', although rather large). Thus $\|D^k h_{L_{N_0}}\|_{C^0}$ is in fact bounded a-priori with a constant depending only on $k$ (and on all the other parameters of the construction). We next want to study how the norm $\|D^k h_{L_j}\|_{C^0}$ might increase compared to $\|D^k h_{L_{j-1}}\|_{C^0}$: the hope is that this can be bounded by some power of $\ell (L_j)$, leading in turn to a convergent geometric series when $k\in \{0,1,2,3\}$. This would then give a uniform bound on $\|h_L\|_{C^3}$.

Ideally we would like to compute $\|D^k (h_{L_j} - h_{L_{j-1}})\|_{C^0}$. This is however not really possible, since the two functions are defined according to two different coordinate systems (namely $\pi_{L_j}\times \pi_{L_j}^\perp$ and $\pi_{L_{j-1}}\times \pi_{L_{j-1}}^\perp$). Let us however assume, for the sake of argument, that $\pi_{L_j} = \pi_{L_{j-1}} =: \pi$. Moreover, to simplify the notation let us denote $L_j$ by $J$ and $L_{j-1}$ by $H$.

Under this assumption, $h_J$ is the convolution of $\etaa\circ f_J$ and $h_H$ the convolution of $\etaa\circ f_H$. We know that both $f_H$ and $f_J$ approximate very accurately the area minimizing current $T$, at two scales which are comparable by a factor $2$. Thus, for both graphs $\bG_{f_J}$ and $\bG_{f_H}$ the first variation is close to $0$, which in turn, given the smallness of the excess of the current at that scale, should imply that both $\etaa\circ f_J$ and $\etaa\circ f_H$ are almost harmonic. 

\medskip

The latter discussion is correct but must be quantified. It is not difficult to see that it can be translated into an estimate for $\Delta (\etaa\circ f_J)$ and $\Delta (\etaa\circ f_H)$ in some negative Sobolev space (more precisely we use the $W^{-1,1}$ norm, cf. \cite[Proposition 5.1]{DS4}). To simplify the matter even further, let us assume that both $\etaa\circ f_H$ and $\etaa\circ f_J$ are in fact harmonic. The regularization by convolution will then not change the two functions, because the convolution kernel is radial. Thus we would have $\|D^k (h_H - h_J)\|_{C^0} =
\|D^k (\etaa\circ f_H - \etaa\circ f_J)\|_{C^0}$. On the other hand, again by the mean-value formula for harmonic function, we could estimate
\begin{equation}\label{e:media}
\|D^k (h_H - h_J)\|_{C^0} \leq \frac{C}{\ell (H)^{m+k}} \|\etaa\circ f_H - \etaa\circ f_J\|_{L^1}\, .
\end{equation}
Let us however recall that the graph of $f_H$ coincides with the current $T$, except for a set of measure $C \ell (H)^m \bE (T, \bB_H)^{1+\gamma_1}$. In turn the excess $\bE (T, \bB_H)$ is of size $\ell (H)^{2-2\delta_1}$. Since an analogous consideration holds for $J$ in place of $H$, we conclude that the two maps $f_H$ and $f_J$ coincides except for a set of measure at most $\ell (H)^{m+(2-2\delta_1)(1+\gamma_1)}$. On the other hand the ``heights of both maps'' is of order $\ell (H)^{1+\beta_2}$, i.e. the available estimate for the heights $\bh (T, \bB_H)$ and $\bh (T, \bB_L)$. Combining these observations, we conclude that   
\[
\|\etaa\circ f_H - \etaa\circ f_J\|_{L^1} \leq C \ell (H)^{m+3+\kappa}\, .
\]
Inserting the latter inequality in \eqref{e:media} we would then conclude 
\[
\|D^k (h_H-h_J)\|_{C^0} \leq C \ell (H)^{3+\kappa -k}\, .
\]

\subsection{The key PDE estimate} In order to carry on rigorously the program outlined in the previous paragraph, rather than comparing directly $h_{HJ}$ and $h_J$, we introduce an intermediate map constructed in the reference cube $L$ relative to a system of coordinates which is suitable for anbother $J$. The following proposition is the technical tool which ensures the existence of the relevant maps.

\begin{proposition}[Existence of interpolating functions, {cf. \cite[Proposition 4.2]{DS4}}]\label{p:gira_e_rigira}
Assume that the hypotheses of Assumptions \ref{ipotesi} and \ref{parametri} hold, that
$C_e \geq C^\star$ and $C_h \geq C^\star C_e$, where $C^\star (M_0, N_0)$ is a suitable constant.
The following facts are true provided $\eps_2$ is sufficiently small. 
Let $H, L\in \sW\cup \sS$ be such that either $H\subset L$ or $H\cap L \neq \emptyset$ and
$\frac{\ell (L)}{2} \leq \ell (H) \leq \ell (L)$. Then,
\begin{itemize}
\item[(i)] for $\pi= \pi_H, \hat{\pi}_H$, $(\p_{\pi})_\sharp T\res \bC_{32r_L} (p_L, \pi) = Q \a{B_{32r_L} (p_L, \pi))}$ and $T$ satisfies the assumptions of \cite[Theorem 1.4]{DS3} in the cylinder $\bC_{32 r_L} (p_L, \pi)$;
\item[(ii)] Let $f_{HL}$ be the $\pi_H$-approximation of $T$ in $\bC_{8 r_L} (p_L, \pi_H)$ and $h := (\etaa\circ f_{HL})*\varrho_{\ell (L)}$ be its smoothed average. Set $\varkappa_H := \pi_H^\perp \cap T_{p_H} \Sigma$ and consider the maps 
\begin{equation*}
\begin{array}{lll}
x\quad \mapsto\quad \bar{h} (x)  &:=  \p_{T_{p_H}\Sigma} (h)&\in \varkappa_H\\ 
x\quad \mapsto\quad h_{HL} (x) &:= (\bar{h} (x), \Psi_{p_H} (x, \bar{h} (x)))&\in \varkappa_H \times (T_{p_H} (\Sigma))^\perp\, .
\end{array}
\end{equation*} 
Then there is a smooth
$g_{HL} :  B_{4r_L} (p_L, \pi_0)\to \pi_0^\perp$ s.t. $\bG_{g_{HL}} = \bG_{h_{HL}}\res \bC_{4r_L} (p_L, \pi_0)$.
\end{itemize}
\end{proposition}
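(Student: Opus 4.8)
\emph{Strategy.} This proposition, together with Proposition~\ref{p:whitney}, is the technical engine of the whole construction: it certifies that at every cube reached by the refining procedure the current $T$ is flat enough --- with respect to \emph{enough} $m$-planes --- to run the strong approximation Theorem~\ref{t:app_main} (= \cite[Theorem~1.4]{DS3}), and that the resulting ``tilted'' multivalued graphs can be re-read as graphs over the fixed plane $\pi_0$. Since the case $H=L$ recovers Lemma~\ref{l:tecnico2}, it suffices to prove the general version. I would split the argument into three blocks: (a) a \emph{tilting estimate} bounding $|\pi_0-\pi_H|$, $|\pi_0-\hat\pi_H|$ (and the analogous quantities for $L$); (b) a \emph{change-of-projection-plane} step verifying, for $\pi\in\{\pi_H,\hat\pi_H\}$, that $T$ satisfies Assumption~\ref{ipotesi_base_app} and the smallness hypothesis $E<\eps_1$ of Theorem~\ref{t:app_main} in $\bC_{32r_L}(p_L,\pi)$ --- this is exactly (i); (c) a \emph{graphicality} step turning $h_{HL}$ into a smooth graph $g_{HL}$ over $\pi_0$ by the implicit function theorem --- this is (ii).

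\emph{Tilting.} Let $L=L_i\subset L_{i-1}\subset\cdots\subset L_{N_0}$ be the chain of dyadic ancestors of $L$ up to the initial grid; since $\sW^j=\emptyset$ for $j\le N_0+6$ by \eqref{e:prima_parte}, each $L_j$ lies in $\sS$, so \eqref{e:ex+ht_ancestors} gives $\bE(T,\B_{L_j})\le C_e\bmo\,\ell(L_j)^{2-2\delta_2}$, and \eqref{e:ex+ht_whitney} gives the same for $L$ itself (and for $H$ and its own chain). I would use the elementary fact that for comparable nested balls $\B',\B''$ with $\hat\pi'$ optimal for the excess in $\B'$,
\[
|\hat\pi'-\hat\pi''|^2\le C\big(\bE(T,\B',\hat\pi')+\bE(T,\B'',\hat\pi'')\big),
\]
which holds because the excess is a squared $L^2$-distance of $\vec T$ to a constant $m$-vector. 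Applying this to consecutive members of each chain and summing the geometric series (terms $\lesssim\bmo^{1/2}\ell(L_j)^{1-\delta_2}$) bounds $|\hat\pi_L-\hat\pi_{L_{N_0}}|$ and $|\hat\pi_H-\hat\pi_{H_{N_0}}|$ by $C\bmo^{1/2}$; for the topmost step the standard comparison of excesses at different scales gives $\bE(T,\B_{L_{N_0}},\pi_0)\le C(M_0,N_0)\,\bE(T,\bB_{6\sqrt m},\pi_0)\le C(M_0,N_0)\,\bmo$ --- the constant blows up in $N_0$, but $N_0$ is fixed \emph{before} $\eps_2$ --- so the same lemma yields $|\hat\pi_{L_{N_0}}-\pi_0|\le C(M_0,N_0)\bmo^{1/2}$, and likewise for $H_{N_0}$. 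Since $\pi_L\subset T_{p_L}\Sigma$ minimises $|\hat\pi_L-\pi_L|$ and $T_{p_L}\Sigma$ is within $C\,\mathbf{c}(\Sigma)\le C\bmo^{1/2}$ of containing $\hat\pi_L$, also $|\hat\pi_L-\pi_L|\le C\bmo^{1/2}$ (same for $H$). Altogether, each of $\pi_H,\hat\pi_H,\pi_L,\hat\pi_L$ is within
\[
C(M_0,N_0)\,\bmo^{1/2}\le C(M_0,N_0)\,\eps_2
\]
of $\pi_0$, which is as small as we wish once $\eps_2$ is fixed small, consistently with the hierarchy of Assumption~\ref{i:parametri}.

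\emph{Change of plane and graphicality.} Fix $\pi\in\{\pi_H,\hat\pi_H\}$. The hypotheses on $\Sigma$ in Assumption~\ref{ipotesi_base_app} follow from Assumption~\ref{ipotesi} for $\eps_2$ small. Because $r_L=M_0\sqrt m\,\ell(L)$ and $\sqrt m\,M_0\,2^{7-N_0}\le1$ by \eqref{e:N0}, one has $\bC_{32r_L}(p_L,\pi)\subset\bB_{33r_L}(p_L)\subset\bC_{11\sqrt m/2}(0,\pi_0)$, whence $\partial T\res\bC_{32r_L}(p_L,\pi)=0$ by \eqref{e:geo semplice 1}; and using $(\p_{\pi_0})_\sharp\big(T\res\bC_{11\sqrt m/2}(0,\pi_0)\big)=Q\a{B_{11\sqrt m/2}(0,\pi_0)}$, the height bound $\bh(T,\B_L)\le C_h\bmo^{\sfrac{1}{2m}}\ell(L)^{1+\beta_2}$ (tiny relative to $r_L$) and the smallness of $|\pi-\pi_0|$, no vertical $\pi$-fibre of $\bC_{32r_L}(p_L,\pi)$ can meet $\supp T$ with multiplicity different from the corresponding $\pi_0$-fibre, so by the constancy theorem $(\p_{\pi})_\sharp\big(T\res\bC_{32r_L}(p_L,\pi)\big)=Q\a{B_{32r_L}(p_L,\pi)}$. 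A standard computation converting spherical into cylindrical excess (the height bound absorbing the tilt of tangent planes off $\pi$), together with $\B_L=\bB_{64r_L}(p_L)$ and \eqref{e:ex+ht_whitney}, gives $\bE\big(T,\bC_{32r_L}(p_L,\pi)\big)\le C\,\bE(T,\B_L)+C|\pi-\pi_0|^2\le C\,\bmo<\eps_1$ once $\eps_2$ is small. This proves (i) and places us in the setting of Theorem~\ref{t:app_main} on $\bC_{8r_L}(p_L,\pi_H)$. For (ii), running that theorem produces $f_{HL}$ with $\Lip(f_{HL})\le C(\bmo^{\gamma_1/2}+\bmo^{1/2})$ by \eqref{e:main(i)}; convolution with the radial kernel $\varrho_{\ell(L)}$ keeps the Lipschitz bound and makes $h=(\etaa\circ f_{HL})*\varrho_{\ell(L)}$ smooth, while the orthogonal projection onto $\varkappa_H=\pi_H^\perp\cap T_{p_H}\Sigma$ and composition with the graph map of $\Sigma$ over $T_{p_H}\Sigma$ (differential $\le C\,\mathbf{c}(\Sigma)$) yield a smooth $h_{HL}$, with $\gr(h_{HL})\subset\Sigma$, whose $\|Dh_{HL}\|_0$ and $\mathrm{osc}\,(h_{HL})/r_L$ are both tiny. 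Therefore $\gr(h_{HL})$ is a smooth submanifold with tangent planes within a tiny angle of $\pi_0$; since $4r_L<7r_L$, the restriction of $\p_{\pi_0}$ to $\gr(h_{HL})$ is a diffeomorphism onto an open set which, by a degree argument, contains $B_{4r_L}(p_L,\pi_0)$, and its inverse composed with $h_{HL}$ is the desired smooth $g_{HL}\colon B_{4r_L}(p_L,\pi_0)\to\pi_0^\perp$. The set-theoretic identity $\gr(g_{HL})=\gr(h_{HL})\cap\bC_{4r_L}(p_L,\pi_0)$, with matching orientations (the two graph orientations agree because $\pi_0$ and $\pi_H$ are close), gives $\bG_{g_{HL}}=\bG_{h_{HL}}\res\bC_{4r_L}(p_L,\pi_0)$.

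\emph{Main obstacle.} The genuinely delicate part is the change-of-plane in blocks (a)--(b): everything hinges on the stopping thresholds in (EX)--(HT) of Definition~\ref{d:refining_procedure} being tuned --- via $\beta_2=4\delta_2$ and the constant hierarchy of Assumption~\ref{i:parametri} --- so that \emph{at every} $L\in\sS\cup\sW$ the excess relative to the nearby planes $\pi_H,\hat\pi_H$ stays below the \emph{universal} threshold $\eps_1$ of Theorem~\ref{t:app_main} and the projections onto those planes remain honest $Q$-sheeted covers. This is exactly why the order of quantifiers must be that of Assumption~\ref{i:parametri}, with $\eps_2$ chosen last (after $M_0,N_0,C_e,C_h$), so that constants depending on those parameters are harmless. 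Once the $C^0$ and $C^1$ bounds on $h_{HL}$ are in hand, block (c) is soft.
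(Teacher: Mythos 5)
Your proposal is correct and follows essentially the same route as the paper (more precisely, as \cite[Propositions 4.1--4.2]{DS4}, to which the survey delegates): telescoping the tilt estimate $|\hat\pi_{L_j}-\hat\pi_{L_{j-1}}|\lesssim\bmo^{1/2}\ell(L_j)^{1-\delta_2}$ along the chain of ancestors to control $|\pi_H-\pi_0|$ and $|\pi_H-\hat\pi_L|$, using the resulting tilt plus the height bound and the constancy theorem to verify Assumption~\ref{ipotesi_base_app} and the excess-smallness threshold in $\bC_{32r_L}(p_L,\pi)$, and then an implicit-function/degree argument to reparametrize $\gr(h_{HL})$ over $\pi_0$. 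The one place you are slightly coarser than \cite{DS4} is in bounding $\bE(T,\bC_{32r_L}(p_L,\pi))$ by $C\bE(T,\B_L)+C|\pi-\pi_0|^2\le C\bmo$ instead of by $C\bmo\,\ell(L)^{2-2\delta_2}$ via $|\pi-\hat\pi_L|$; this still clears the universal threshold $\eps_1$, so it is harmless here, though the finer rate is what is actually used downstream in the proof of the $C^{3,\kappa}$ estimate of Theorem~\ref{t:cm}.
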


\begin{definition}\label{d:mappe_h_HL}
$h_{HL}$ and $g_{HL}$ will be called, respectively, {\em tilted $(H,L)$-interpolating function} and {\em $(H,L)$-interpolating function}.
\end{definition}

Observe that the tilted $(L,L)$-interpolating function and the $(L,L)$-interpolating function correspond to the tilted $L$-interpolating function and to the $L$-interpolating function 
of Definition~\ref{d:glued}. Obviously, Lemma \ref{l:tecnico2} is just a particular case of Proposition \ref{p:gira_e_rigira}.
We are now ready to detail the main analytical reason behind the estimates claimed in the previous paragraph: the point is that the vanishing of the first variation of $T$ can be translated into a suitable elliptic system for the interpolating functions. However, since the current $T$ is constrained to belong to a given Riemannian submanifold $\Sigma$, the elliptic system necessarily involves only some
components of the maps

\begin{definition}[Tangential parts, {cf. \cite[Definition 5.1]{DS4}}]\label{d:tangential}
Having fixed $H\in \sP^j$ and $\pi:= \pi_H\subset T_{p_H} \Sigma$,
we let $\varkappa$ be the
orthogonal complement of $\pi$ in $T_{p_H} \Sigma$. 
For any given point $q\in \R^{m+n}$, any set $\Omega\subset \pi$ and any map 
$\xi: q+\Omega \to \pi^\perp$, 
 the map $\p_{\varkappa} \circ \xi$ will be called the {\em tangential part of $\xi$} and usually denoted by $\bar{\xi}$.
Analogous notation and terminology will be used for multiple-valued maps.
\end{definition}

\begin{proposition}[Elliptic system, {cf. \cite[Theorem 5.2]{DS4}}]\label{p:pde}
Assume $\eps_2$ is sufficiently small and the assumptions of Proposition \ref{p:gira_e_rigira} hold.
Let $H\in \sW^j\cup \sS^j$ and $L$ be either an ancestor or a cube of
$\sW^j\cap \sS^j$ with $H\cap L\neq \emptyset$
(possibly also $H$ itself). Let $f_{HL}: B_{8r_L} (p_L, \pi_H) \to \Iq (\pi_H^\perp)$ be the 
$\pi_H$-approximation of $T$ in $\bC_{8r_L} (p_L, \pi_H)$, $h_{HL}$ the tilted $(H,L)$-interpolating function
and $\bar{f}_{HL}$ and $\bar{h}_{HL}$ their tangential parts, according to Definition \ref{d:tangential}.
Then, there is a matrix $\bL$, which depends on $\Sigma$ and $H$ but not on $L$, such that
$|\bL|\leq C_0 \bA^2 \leq C_0 \bmo$ for a geometric constant $C_0$ and
(for $C= C(\beta_2, \delta_2, M_0, N_0, C_e, C_h)$) 
\begin{equation}\label{e:pde}
\left| \int \big( D (\etaa \circ \bar f_{HL}) : D \zeta + (\p_\pi (x-p_H))^t \cdot \bL \cdot\zeta\big)\right|
\leq C \bmo \, r_L^{m+1+\beta_2} \big(r_L\,\|\zeta\|_{C^1} + \|\zeta\|_{C^0}\big)\, 
\end{equation}
for every $\zeta\in C^\infty_c (B_{8r_L} (p_L, \pi_L), \varkappa)$.
Moreover (for $C= C(\beta_2, \delta_2, M_0, N_0, C_e, C_h)$)
\begin{equation}\label{e:L1_est}
\|\bar h_{HL} - \etaa \circ \bar f_{HL}\|_{L^1 (B_{7r_L} (p_L, \pi_L))} \leq C \bmo \, r_L^{m+3+\beta_2}\, .
\end{equation}
\end{proposition}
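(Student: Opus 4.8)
\emph{Strategy.} The system \eqref{e:pde} is the quantitative incarnation of the stationarity of $T$ in $\Sigma$, read on the multiple valued graph that approximates it, while \eqref{e:L1_est} merely measures that the mollification built into $h_{HL}$ is harmless once one knows the approximate harmonicity \eqref{e:pde}; so I would prove \eqref{e:pde} first. Being area minimizing, $T$ is stationary in $\Sigma$, i.e. $\int \dv_T X\, d\|T\| = 0$ for every $X\in C^\infty_c(\bC_{8r_L}(p_L,\pi_H),\R^{m+n})$ tangent to $\Sigma$. Given $\zeta\in C^\infty_c(B_{8r_L}(p_L,\pi_L),\varkappa)$ I produce such an $X$ from the ``vertical'' field $x\mapsto \zeta(\p_{\pi_H}(x))$ by projecting it onto the tangent spaces of $\Sigma$; writing $\Sigma$ near $p_H$ as the graph of $\Psi_{p_H}$ over $T_{p_H}\Sigma$ (so that $\Psi_{p_H}$ and $D\Psi_{p_H}$ vanish at the origin of $T_{p_H}\Sigma$) and using Assumption \ref{ipotesi}, the projection changes $X$ by an amount controlled by $\bA$ times the distance to $p_H$. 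On the good set $K$ of Theorem \ref{t:app_main} one has $T=\bG_{f_{HL}}$, and Corollary \ref{c:taylor_area} (the expansion \eqref{e:taylor_grafico} of the mass of a multiple valued graph) turns $\int \dv_{\bG_{f_{HL}}}X\,d\|\bG_{f_{HL}}\|$ into $\int\sum_i\langle D(f_{HL})_i:D\zeta\rangle$ modulo the cubic remainder $\bar R_4$; summing the $Q$ sheets, $\sum_i D(f_{HL})_i = Q\,D(\etaa\circ f_{HL})$, and taking $\varkappa$-components (Definition \ref{d:tangential}) produces the left hand side of \eqref{e:pde}.

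\emph{The curvature forcing and the error terms.} The $\varkappa$-component of the Euler--Lagrange equation inherits from the term $\int |D(\Psi_{p_H}(\cdot,\bar f_{HL}))|^2$ of the energy a forcing term which, after the Taylor expansion of $\Psi_{p_H}$ at the origin of $T_{p_H}\Sigma$, is \emph{quadratic} in the Hessian $D^2\Psi_{p_H}(0)=O(\bA)$ and, to leading order, \emph{linear} in $\p_{\pi_H}(x-p_H)$; this is exactly $(\p_{\pi_H}(x-p_H))^t\cdot\bL\cdot\zeta$, with a matrix $\bL$ depending only on $\Sigma$ and $p_H$ (hence only on $H$, not on $L$) and $|\bL|\le C_0\bA^2\le C_0\bmo$. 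The constant-in-$x$, order-$\bA$ contribution of the mean curvature of $\Sigma$ does \emph{not} appear, because it is normal to $\Sigma$ and is absorbed into the graphical ansatz $\varphi_j=(\bar\varphi_j,\Psi(\cdot,\bar\varphi_j))$ that defines the center manifold. The error in \eqref{e:pde} then collects: (a) the region $B_{8r_L}(p_L,\pi_H)\setminus K$, of measure $\lesssim E^{\gamma_1}(E+r_L^2\bA^2)r_L^m$ by \eqref{e:main(ii)}, over which the heights of $T$ and of $\gr(f_{HL})$ are $\lesssim\bmo^{1/(2m)}\ell(L)^{1+\beta_2}$; (b) $\int\sum_i|\bar R_4(D(f_{HL})_i)|\lesssim \Lip(f_{HL})\int|Df_{HL}|^2\lesssim (E^{\gamma_1}+\bA r_L)\,E\,r_L^m$; (c) the higher order remainder of the expansion of $\Psi_{p_H}$. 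Inserting the Whitney bounds \eqref{e:ex+ht_whitney} (so $E=\bE(T,\bB_L)\lesssim\bmo\,\ell(L)^{2-2\delta_2}$, $\bh(T,\bB_L)\lesssim\bmo^{1/(2m)}\ell(L)^{1+\beta_2}$), the relations \eqref{e:delta+beta} ($\gamma_1\ge 100\beta_2$, $4\delta_2=\beta_2\le\frac1{2m}$, $r_L\sim_{M_0}\ell(L)$) and $\bA^2\le\bmo\le1$, each of (a), (b), (c) is $\le C\bmo\,r_L^{m+1+\beta_2}(r_L\|\zeta\|_{C^1}+\|\zeta\|_{C^0})$, which is \eqref{e:pde}.

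\emph{The $L^1$ estimate.} Since $\p_\varkappa$ is linear, commutes with convolution, and agrees with $\p_{T_{p_H}\Sigma}$ on $\pi_H^\perp$, one has $\bar h_{HL}=(\etaa\circ\bar f_{HL})* \varrho_{\ell(L)}$, so \eqref{e:L1_est} asks that mollification at scale $\ell(L)\sim_{M_0} r_L$ barely moves $\etaa\circ\bar f_{HL}$ in $L^1$. Rewrite \eqref{e:pde} as $\int D(\etaa\circ\bar f_{HL}):D\zeta + \int(\p_{\pi_H}(x-p_H))^t\bL\zeta = \int \mathbf e_1:D\zeta + \int\mathbf e_0\cdot\zeta$ with $\|\mathbf e_1\|_{L^1}\lesssim\bmo r_L^{m+2+\beta_2}$ and $\|\mathbf e_0\|_{L^1}\lesssim\bmo r_L^{m+1+\beta_2}$, and let $w$ solve on a ball slightly smaller than $B_{8r_L}(p_L,\pi_L)$ the \emph{exact} system with the same boundary values: the standard $L^1$ elliptic estimates on a ball of radius $\sim r_L$ give $\|\etaa\circ\bar f_{HL}-w\|_{L^1}\lesssim r_L\|\mathbf e_1\|_{L^1}+r_L^2\|\mathbf e_0\|_{L^1}\lesssim\bmo r_L^{m+3+\beta_2}$. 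Finally $w=p+h$ with $p$ the cubic polynomial particular solution of $-\Delta w=\bL^t\p_{\pi_H}(x-p_H)$ and $h$ harmonic; by Assumption \ref{mollificatore} the kernel $\varrho$ is radial with $\int\varrho=1$ and $\int|x|^2\varrho=0$, hence it reproduces every polynomial of degree $\le 3$, so $p* \varrho_{\ell(L)}=p$ and $w* \varrho_{\ell(L)}-w=h* \varrho_{\ell(L)}-h$ is of order $\ell(L)^4 D^4 h$ in $L^1$; the interior estimates for the harmonic $h$, fed with the excess bound on $\|D(\etaa\circ\bar f_{HL})\|_{L^2}$ and the height bound on $\|\etaa\circ\bar f_{HL}\|$ (through $w$), bound this term by $C\bmo r_L^{m+3+\beta_2}$ as well. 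Combining the three contributions gives \eqref{e:L1_est}.

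\emph{Main obstacle.} The algebra of the first variation is routine; the real difficulty is the \emph{sharpness} of the bookkeeping. Every error has to come out with precisely the exponent $m+1+\beta_2$ (respectively $m+3+\beta_2$) in $r_L$ and exponent $1$ in $\bmo$, because in Theorem \ref{t:cm} these estimates are summed along an infinite chain of dyadic ancestors $L=L_i\subset L_{i+1}\subset\cdots$, so that a loss of even $r_L^{-\eps}$ or $\bmo^{-\eps}$ would destroy the convergence of the resulting geometric series and hence the $C^{3,\kappa}$ regularity of the center manifold. This is why one cannot afford the crude Lipschitz approximation and must exploit the super-linear gain $E^{\gamma_1}$ of Theorem \ref{t:app_main} (and, behind it, the higher integrability of Theorem \ref{t:higher1}), why the exact cancellation encoded in the vanishing moments of $\varrho$ is indispensable in the last step, and why one needs the full flexibility of Proposition \ref{p:gira_e_rigira}, which produces $f_{HL}$ over the \emph{tilted} plane $\pi_H$ on a cylinder centered at $p_L\neq p_H$ so that the same estimate can be compared across neighboring cubes of different sizes.
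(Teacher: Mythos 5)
Your overall strategy is the right one and closely parallels the actual argument in \cite[Theorem 5.2]{DS4}: test the stationarity of $T$ in $\Sigma$ with the projected ``vertical'' field built from $\zeta$, transfer the first variation from $T$ to $\bG_{f_{HL}}$ via Theorem \ref{t:app_main}, Taylor-expand the mass of the multiple-valued graph to quadratic order, take $\varkappa$-components, identify $\bL$ from the Hessian of $\Psi_{p_H}$ at the origin of $T_{p_H}\Sigma$, and close \eqref{e:L1_est} by comparing $\etaa\circ\bar f_{HL}$ to the exact solution $w$ of the associated linear system, using $L^1$ elliptic estimates and the moment conditions on $\varrho$. The error bookkeeping for \eqref{e:pde} and the $L^1$-elliptic comparison $\|\etaa\circ\bar f_{HL}-w\|_{L^1}\lesssim r_L\|\mathbf e_1\|_{L^1}+r_L^2\|\mathbf e_0\|_{L^1}\lesssim\bmo r_L^{m+3+\beta_2}$ are both sound.

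There is, however, a genuine gap in the last step of the $L^1$ estimate. You split $w=p+h$ with $p$ cubic and $h$ harmonic, correctly observe that $p*\varrho_{\ell(L)}=p$ because $\varrho$ kills all moments of degree $1,2,3$, and then assert that $w*\varrho_{\ell(L)}-w=h*\varrho_{\ell(L)}-h$ ``is of order $\ell(L)^4D^4h$'' and that interior estimates for $h$ bound this by $C\bmo r_L^{m+3+\beta_2}$. That bound does \emph{not} follow. The only size control you have on $h$ (through the boundary data of $w$ and the size of $p$) is the height bound $\|h\|_{C^0}\lesssim\bmo^{\sfrac{1}{2m}}\ell(L)^{1+\beta_2}$, so interior estimates give $\|D^4h\|_{C^0}\lesssim r_L^{-4}\bmo^{\sfrac{1}{2m}}\ell(L)^{1+\beta_2}$, hence $\ell(L)^4\|D^4h\|_{L^1(B_{7r_L})}\lesssim_{M_0}\bmo^{\sfrac{1}{2m}}r_L^{m+1+\beta_2}$, which exceeds the target $\bmo\, r_L^{m+3+\beta_2}$ by the unbounded factor $\bmo^{\sfrac{1}{2m}-1}r_L^{-2}$; feeding in the excess bound $\|Dh\|_{L^2}\lesssim\bmo^{\sfrac12}\ell(L)^{1-\delta_2}r_L^{\sfrac m2}$ does not help either. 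What actually closes the argument is the \emph{exact} identity $h*\varrho_{\ell(L)}=h$: since $\varrho$ is radial with $\int\varrho=1$, convolution with $\varrho_\lambda$ is a weighted spherical average, and by the mean value property this leaves every harmonic function unchanged on the shrunk ball. Together with $p*\varrho_{\ell(L)}=p$ this gives $w*\varrho_{\ell(L)}=w$ identically, so the full error in \eqref{e:L1_est} reduces to $2\|\etaa\circ\bar f_{HL}-w\|_{L^1}$, which your elliptic estimate already controls. This is exactly why Assumption \ref{mollificatore} insists $\varrho$ be radial (and why Section \ref{s:cm2} remarks that ``the regularization by convolution will then not change the two functions, because the convolution kernel is radial''): radiality plus $\int\varrho=1$ reproduces harmonic functions, while $\int|x|^2\varrho=0$ reproduces the inhomogeneous cubic $p$. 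Invoke the mean value property explicitly; do not pass through the Taylor remainder $\lambda^4D^4h$.
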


Observe that, if we ignor the effect of the ambient Riemannian manifold, \eqref{e:pde} is an estimate on the $W^{-1,1}$ norm of $\Delta (\etaa\circ f_{HL})$.

\section{The approximation on the normal bundle of the center manifold}\label{s:N}

To carry on our program for proving Theorem \ref{t:cod>1} by ``blow-up'', we now need to approximate again our area minimizing current in a cylinder where the excess is small, taking advantage of the center manifold. Let $\cM = \gr (\phii)$ be the center manifold constructed in the previous sections and let us make some first considerations. 

\medskip

First of all, by the $C^{3,\kappa}$ estimates, we know that in a sufficiently small neighborhood $\bU$ of $\cM$ there is a $C^{2,\kappa}$ orthogonal projection $\p: \bU \to \cM$ which to each $p\in \bU$ assigns the unique point $q = \p (p)$ such that $p-q$ is normal to $T_q \cM$. In fact, since $\|D\phii\|_{C^{2,\kappa}} \leq C \bmo^{\sfrac{1}{2}}$ and $\bmo$ can be chosen arbitrarily small, the ``thickness'' of $\bU$ can be assumed to be of any given size, say $1$. In turn, since the height of $T$ in $\bC$ is of order $\bmo^{\sfrac{1}{2m}}$, we can certainly assume that $\supp (T\res \bC_{1/2} (0))\subset \bU$. It is also not difficult to see that $T$ is a $Q$-fold covering of $\cM$, namely $(\p_\sharp (T\res \bC_{1/2}))\res \bC_{1/4}  = Q \a{\cM\cap \bC_{1/4}}$ (we need to restrict slightly the radius of the cylinder to avoid ``boundary effects''). We could define a ``curvilinear excess'' compared to $\cM$ with the following procedure: at each point $p\in \supp (T)$ we measure the distance between $\vec{T} (p)$ and the oriented tangent plane to $\cM$ at the projection $\p (p)$. We then integrate the square of this quantity over $\supp (T)\cap \bC_{1/2}$. The corresponding formula is
\[
\int_{\bC_{1/2}} |\vec{T} (p) - \vec{T_{\p (p)} \cM}|^2 \, d\|T\| (p)\, .
\]
It is no surprise that the latter is controlled by the ``straight excess'' in the cylinder $\bC$, simply because the tilt between $T_q \cM$ and $\pi_0$ is controlled by $\|D\phii\|_{C^0}$, for which in turn we have the bound $C \bmo^{\sfrac{1}{2}}$. Thus, as it happens with the ``straight excess'' we can expect to be able to approximate $T$ efficiently with a multivalued map $N$ defined on $\cM$ and taking values in the normal bundle of $\cM$. 

\subsection{Preliminaries} In order to state the corresponding theorem precisely we need however to introduce some suitable objects.

\begin{ipotesi}\label{intorno_proiezione}
We fix the following notation and assumptions.
\begin{itemize}
\item[(U)] $\bU :=\big\{x\in \R^{m+n} : \exists !\, y = \p (x) \in \cM \mbox{ with $|x- y| <1$ and
$(x-y)\perp \cM$}\big\}$.
\item[(P)] $\p : \bU \to \cM$ is the map defined by (U).
\item[(R)] For any choice of the other parameters, we assume $\eps_2$ to be so small that
$\p$ extends to $C^{2, \kappa}(\bar\bU)$ and
$\p^{-1} (y) = y + \overline{B_1 (0, (T_y \cM)^\perp)}$ for every $y\in \cM$.
\item[(L)] We denote by $\partial_l \bU := \p^{-1} (\de \cM)$ 
the {\em lateral boundary} of $\bU$.
\end{itemize}
\end{ipotesi}

The following is then a corollary of Theorem \ref{t:cm} and the construction algorithm, which summarizes the discussion of the previous paragraph.

\begin{corollary}\label{c:cover}
Under the hypotheses of Theorem \ref{t:cm} and of Assumption \ref{intorno_proiezione}
we have:
\begin{itemize}
\item[(i)] $\supp (\partial (T\res \bU)) \subset \partial_l \bU$, 
$\supp (T\res [-\frac{7}{2}, \frac{7}{2}]^m \times \R^n) \subset \bU$ 
and $\p_\sharp (T\res \bU) = Q \a{\cM}$;
\item[(ii)] $\supp (\langle T, \p, \Phii (q)\rangle) \subset 
\big\{y\, : |\Phii (q)-y|\leq C \bmo^{\sfrac{1}{2m}} 
\ell (L)^{1+\beta_2}\big\}$ for every $q\in L\in \sW$, where\\
$C= C(\beta_2, \delta_2, M_0, N_0,  C_e, C_h)$;
\item[(iii)]  $\langle T, \p, p\rangle = Q \a{p}$ for every $p\in \Phii (\bGam)$.
\end{itemize}
\end{corollary}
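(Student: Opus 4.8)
The plan is to deduce Corollary \ref{c:cover} as a fairly direct consequence of the construction summarized in Theorem \ref{t:cm} together with the stopping conditions of the refinement procedure (Definition \ref{d:refining_procedure}) and the basic slicing/covering properties recalled in Lemma \ref{l:tecnico1}. First I would fix notation: write $\Phii(y)=(y,\phii(y))$ for the parametrization of $\cM$, recall that $\|D\phii\|_{C^0}\leq C\bmo^{\sfrac1{2m}}$ and $\|D\phii\|_{C^{2,\kappa}}\leq C\bmo^{\sfrac12}$ (Theorem \ref{t:cm}(i)), and recall the height bound $\bh(T,\bC_{5\sqrt m}(0,\pi_0))\leq C_0\bmo^{\sfrac1{2m}}$ from Lemma \ref{l:tecnico1}. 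Since $\bmo\leq\eps_2^2$ can be taken arbitrarily small, both the thickness $1$ of the tubular neighborhood $\bU$ and the validity of the nearest-point projection $\p\in C^{2,\kappa}(\bar\bU)$ are guaranteed (this is Assumption \ref{intorno_proiezione}(R)); I would simply invoke the standard tubular-neighborhood theorem for $C^{3,\kappa}$ submanifolds with the uniform $C^{2,\kappa}$ bounds just quoted.

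For item (i) the argument splits into three assertions. The inclusion $\supp(T\res[-\tfrac72,\tfrac72]^m\times\R^n)\subset\bU$ follows because on that region $\supp(T)$ lies within distance $C_0\bmo^{\sfrac1{2m}}$ of $\pi_0$ (Lemma \ref{l:tecnico1}) while $\cM$ is a graph over $\pi_0$ with $\|\phii\|_{C^0}\leq C\bmo^{\sfrac1{2m}}$; hence every point of $\supp(T)$ there is within distance $C\bmo^{\sfrac1{2m}}<1$ of $\cM$, and one checks the projection is realized by a point of $\cM$ because the tilt of $T_q\cM$ relative to $\pi_0$ is $O(\bmo^{\sfrac12})$, so the ``vertical'' fibers of $\p$ stay nearly vertical. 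The statement $\supp(\partial(T\res\bU))\subset\partial_l\bU$ is then immediate from $\partial T\res\bC_{11\sqrt m/2}(0,\pi_0)=0$ (Lemma \ref{l:tecnico1}, eq. \eqref{e:geo semplice 1}): the only boundary of $T\res\bU$ can be created by cutting with $\bU$, i.e. it must sit in $\partial\bU$, and the ``top/bottom'' part of $\partial\bU$ is out of reach of $\supp(T)$ by the height estimate, leaving only $\partial_l\bU=\p^{-1}(\de\cM)$. Finally $\p_\sharp(T\res\bU)=Q\a{\cM}$: by Lemma \ref{l:tecnico1} we have $(\p_{\pi_0})_\sharp T\res\bC_{11\sqrt m/2}(0,\pi_0)=Q\a{B_{11\sqrt m/2}(0,\pi_0)}$; since $\p$ is homotopic through the obvious affine homotopy (within $\bU$, away from $\partial_l\bU$) to the vertical projection $\p_{\pi_0}$ followed by $\Phii$, and $\partial(T\res\bU)$ is supported in the lateral boundary, the pushforwards agree as currents, so $\p_\sharp(T\res\bU)$ is an integer multiple of $\a\cM$ with multiplicity read off from the $\pi_0$-projection, namely $Q$. (Alternatively one invokes the constancy theorem on $\cM$ after checking $\partial\p_\sharp(T\res\bU)$ is supported in $\de\cM$ and the multiplicity at one point is $Q$.)

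For item (iii), if $p=\Phii(q)$ with $q\in\bGam$, then by definition of $\bGam$ no cube of $\sW$ contains $q$, i.e. the refinement never stops over $q$; this means that at every scale the excess and height bounds \eqref{e:ex+ht_ancestors} hold in the balls $\bB_L$ along the chain of cubes shrinking to $q$, forcing $\bd_T(q)=0$ and in particular the unique tangent plane to $\supp(T)$ at $p$ to be $T_q\cM$. Then the slice $\langle T,\p,p\rangle$ is a $0$-dimensional integer rectifiable current supported at points that project to $p$; the vanishing curvilinear excess at $q$ together with the $Q$-cover property of (i) forces all $Q$ sheets to collapse at $p$, whence $\langle T,\p,p\rangle=Q\a{p}$. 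The cleanest route is to quote the explicit slicing formula \eqref{e:slice} with $\pi=\p$ on a thin cylinder around $p$ and the local graphicality coming from the $\eps$-regularity theorem (Theorem \ref{t:DG}) applied at scale comparable to the contact set — at contact points the hypotheses (a)--(d) of Theorem \ref{t:DG} are met with the right multiplicity $Q$, giving $T$ a $Q$-valued graph with all sheets meeting $\cM$, hence coinciding with $p$ in the fiber.

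Item (ii) is the one I expect to require the most care and will be the main obstacle. For $q\in L\in\sW$ one must bound the ``vertical spread'' of the slice $\langle T,\p,\Phii(q)\rangle$ by $C\bmo^{\sfrac1{2m}}\ell(L)^{1+\beta_2}$. The natural approach is: the refinement stopped at (an ancestor-free) cube $L$, so by \eqref{e:ex+ht_whitney} we still have $\bh(T,\bB_L)\leq C\bmo^{\sfrac1{2m}}\ell(L)^{1+\beta_2}$ in the ball $\bB_L=\bB_{64r_L}(p_L)$ with respect to the optimal plane $\pi_L$; since $\bB_L$ contains $\Phii(q)$ (because $q\in L$ and the graph $\cM$ is close to $p_L$ within this scale) and the fiber $\p^{-1}(\Phii(q))$ is nearly vertical, $\supp(T)\cap\p^{-1}(\Phii(q))$ lies within the corresponding height band. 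The delicate points are: (1) comparing the curvilinear fiber $\p^{-1}(\Phii(q))$ with the flat fiber over which the height is measured — this costs a term controlled by $\|D\phii\|_{C^0}\cdot(\text{diameter})\leq C\bmo^{\sfrac12}\ell(L)$, which is absorbed since $\bmo^{\sfrac12}\ell(L)\ll\bmo^{\sfrac1{2m}}\ell(L)^{1+\beta_2}$ for $\eps_2$ small; (2) making sure $\Phii(q)$ really is close enough to $p_L=(x_L,y_L)$, which follows from the interpolation construction — $\phii$ agrees with $g_L$ on a cube concentric to $L$ of side $\tfrac98\ell(L)$ by Theorem \ref{t:cm}(ii), and $g_L$ is built from the smoothed average of the $\pi_L$-approximation, so $|\phii(q)-y_L|\leq C\bmo^{\sfrac1{2m}}\ell(L)^{1+\beta_2}$ by the same height estimate; (3) the constant must be uniform in $L$, which is exactly why the whole hierarchy of parameters in Assumption \ref{i:parametri} is in place. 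Assembling these three estimates (curvilinear-to-flat comparison, $\Phii(q)$-to-$p_L$ comparison, and the stopping height bound on $\bB_L$) via the triangle inequality and absorbing lower-order terms gives the claimed bound with $C=C(\beta_2,\delta_2,M_0,N_0,C_e,C_h)$.
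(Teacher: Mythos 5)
Your treatment of items (i) and (ii) is essentially the right approach: for (i), the height bound of Lemma \ref{l:tecnico1} and the constancy theorem on $\cM$ (after locating $\supp(\partial \p_\sharp(T\res\bU))$ in $\de\cM$) give the $Q$-cover property; for (ii), the triangle inequality between the stopping height bound in $\bB_L$, the tilt of the curvilinear fiber, and the distance from $\Phii(q)$ to $p_L$ is exactly what is needed, and you correctly identify that the error terms are absorbed because $\bmo^{\sfrac12}\ell(L)\ll\bmo^{\sfrac1{2m}}\ell(L)^{1+\beta_2}$.

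Item (iii), however, contains a genuine gap. You invoke Theorem \ref{t:DG} at contact points, asserting that ``hypotheses (a)--(d) are met with the right multiplicity $Q$.'' Hypothesis (b) of Theorem \ref{t:DG} requires $\Theta=Q$ for $\|T\|$-a.e.\ point in a ball $\bB_{2\rho}(p)$, and this is precisely what fails in general: branch points of $T$ (where $\Theta=Q$ at the point but $\Theta<Q$ at nearby points) are expected to sit in $\Phii(\bGam)$, which is exactly why the center manifold is being built in the first place. If Theorem \ref{t:DG} applied at contact points, the entire program would be unnecessary. Similarly, the intermediate claim that the ``vanishing curvilinear excess at $q$ $\ldots$ forces all $Q$ sheets to collapse'' is not correct as stated: a current with arbitrarily small excess can consist of $Q$ parallel, well-separated planes. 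What forces the fiber to collapse is not the excess but the \emph{height} bound accumulating along the chain of ancestors: since $q\in\bGam$, every cube $L\in\sS$ containing $q$ satisfies $\bh(T,\bB_L)\leq C_h\bmo^{\sfrac1{2m}}\ell(L)^{1+\beta_2}$ with respect to a plane $\pi_L$ that converges to $T_{\Phii(q)}\cM$ (by the $C^1$ convergence of the interpolations). Any $z\in\supp(T)\cap\p^{-1}(\Phii(q))$ with $z\neq\Phii(q)$ lies on the normal fiber and hence at distance $\geq c\,|z-\Phii(q)|$ from $\pi_L+p_L$ for small $\ell(L)$, contradicting the height bound once $\ell(L)$ is small enough. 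This gives $\supp(\langle T,\p,\Phii(q)\rangle)=\{\Phii(q)\}$, and the multiplicity $Q$ then follows from $\p_\sharp(T\res\bU)=Q\a{\cM}$ (item (i)) together with the slicing identity $\int\langle T,\p,p\rangle(\omega)\,d\cH^m(p)=(\p_\sharp T)\res d\p(\omega)$; no $\eps$-regularity is needed or available.
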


We can now introduce multivalued maps on the normal bundle of $\cM$.

\begin{definition}[$\cM$-normal approximation, {cf. \cite[Definition 2.3]{DS4}}]\label{d:app}
An {\em $\cM$-normal approximation} of $T$ is given by a pair $(\cK, F)$ such that
\begin{itemize}
\item[(A1)] $F: \cM\to \Iq (\bU)$ is Lipschitz (with respect to the geodesic distance on $\cM$) and takes the special form 
$F (x) = \sum_i \a{x+N_i (x)}$, with $N_i (x)\perp T_x \cM$ and $x+N_i (x) \in \Sigma$
for every $x$ and $i$.
\item[(A2)] $\cK\subset \cM$ is closed, contains $\Phii \big(\bGam\cap [-\frac{7}{2}, \frac{7}{2}]^m\big)$ and $\bT_F \res \p^{-1} (\cK) = T \res \p^{-1} (\cK)$.
\end{itemize}
The map $N = \sum_i \a{N_i}:\cM \to \Iq (\R^{m+n})$ is {\em the normal part} of $F$.
\end{definition}

\subsection{Main approximation theorem} In the definition above it is not required that the map $F$ approximates efficiently the current
outside the set $\Phii \big(\bGam\cap [-\frac{7}{2}, \frac{7}{2}]^m\big)$. However, we need a map which approximates $T$ with a high degree of accuracy. Moreover, the accuracy must be coupled with the local information coming from the Whitney decomposition. For this region we introduce the ``Whitney regions'' on the center manifold $\cM$.

\begin{definition}[Whitney regions, {cf. \cite[Definition 1.18]{DS4}}]\label{d:cm}
To each $L\in \sW$ we associate a {\em Whitney region} $\cL$ on $\cM$ as follows:
\begin{itemize}
\item[(WR)] $\cL := \Phii (H\cap [-\frac{7}{2},\frac{7}{2}]^m)$, where $H$ is the cube concentric to $L$ with $\ell (H) = \frac{17}{16} \ell (L)$.
\end{itemize}
\end{definition}

\begin{theorem}[Local estimates for the $\cM$-normal approximation, {cf. \cite[Theorem 2.4]{DS4}}]\label{t:approx}
Let $\gamma_2 := \frac{\gamma_1}{4}$, with $\gamma_1$ the constant
of \cite[Theorem 1.4]{DS3}.
Under the hypotheses of Theorem \ref{t:cm} and Assumption~\ref{intorno_proiezione},
if $\eps_2$ is suitably small (depending upon all other parameters), then
there is an $\cM$-normal approximation $(\cK, F)$ such that
the following estimates hold on every Whitney region $\cL$ associated to
a cube $L\in \sW$, with constants $C = C(\beta_2, \delta_2, M_0, N_0, C_e, C_h)$:
\begin{gather}
\Lip (N|
_\cL) \leq C \bmo^{\gamma_2} \ell (L)^{\gamma_2} \quad\mbox{and}\quad  \|N|
_\cL\|_{C^0}\leq C \bmo^{\sfrac{1}{2m}} \ell (L)^{1+\beta_2},\label{e:Lip_regional}\\
|\cL\setminus \cK| + \|\bT_F - T\| (\p^{-1} (\cL)) \leq C \bmo^{1+\gamma_2} \ell (L)^{m+2+\gamma_2},\label{e:err_regional}\\
\int_{\cL} |DN|^2 \leq C \bmo \,\ell (L)^{m+2-2\delta_2}\, .\label{e:Dir_regional}
\end{gather}
Moreover, for any $a>0$ and any Borel $\cV\subset \cL$, we have (for $C=C(\beta_2, \delta_2, M_0, N_0, C_e, C_h)$)
\begin{equation}\label{e:av_region}
\int_\cV |\etaa\circ N| \leq 
C \bmo \left(\ell (L)^{m+3+\sfrac{\beta_2}{3}} + a\,\ell (L)^{2+\sfrac{\gamma_2}{2}}|\cV|\right)  + \frac{C}{a} 
\int_\cV \cG \big(N, Q \a{\etaa\circ N}\big)^{2+\gamma_2}\, .
\end{equation} 
\end{theorem}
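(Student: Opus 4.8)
The plan is to deduce Theorem~\ref{t:approx} (the local estimates for the $\cM$-normal approximation) by combining the strong Lipschitz approximation of Theorem~\ref{t:app_main} with the geometric control coming from the Whitney decomposition in Proposition~\ref{p:whitney}. First I would fix a cube $L\in\sW$ and work in the tilted cylinder $\bC_{8r_L}(p_L,\pi_L)$, where $r_L=M_0\sqrt m\,\ell(L)$ and $\pi_L\subset T_{p_L}\Sigma$ is the chosen optimal plane. By Lemma~\ref{l:tecnico2} (or Proposition~\ref{p:gira_e_rigira}) the current $T$ satisfies the hypotheses of Theorem~\ref{t:app_main} in this cylinder, and by \eqref{e:ex+ht_whitney} we have the key smallness $\bE(T,\bB_L)\le C\bmo\,\ell(L)^{2-2\delta_2}$ and $\bh(T,\bB_L)\le C\bmo^{\sfrac1{2m}}\ell(L)^{1+\beta_2}$. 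Applying Theorem~\ref{t:app_main} with $E=\bE(T,\bC_{8r_L}(p_L,\pi_L))\le C\bmo\,\ell(L)^{2-2\delta_2}$ produces a Lipschitz $\pi_L$-approximation $f_L$ on $B_{8r_L}(p_L,\pi_L)$ with $\Lip(f_L)\le C E^{\gamma_1}+C\bA r_L$ and a good set $K_L$; since $\bA^2\le\bmo$ and $r_L\simeq\ell(L)$, the Lipschitz bound becomes $C\bmo^{\gamma_1}\ell(L)^{(2-2\delta_2)\gamma_1}+C\bmo^{\sfrac12}\ell(L)$, which by the choice $\gamma_2=\gamma_1/4$ and $\beta_2=4\delta_2\le\gamma_1/25$ is $\le C\bmo^{\gamma_2}\ell(L)^{\gamma_2}$, giving the first half of \eqref{e:Lip_regional}; the $C^0$ bound on $N|_\cL$ follows from \eqref{e:L-infty_est} together with the height estimate. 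Then I would transfer $f_L$ from the graph over $\pi_L$ to a multivalued section $N$ of the normal bundle of $\cM$: one reparametrizes the graph of $f_L$ over the tangent plane $T_{\Phii(x)}\cM$ using the $C^{2,\kappa}$ projection $\p$ of Assumption~\ref{intorno_proiezione}, subtracting the center manifold $\phii$. Because $\|D\phii\|_{C^{2,\kappa}}\le C\bmo^{\sfrac12}$, this change of coordinates is a small $C^{2}$ perturbation and does not destroy the estimates; this is where one must be careful, but it is essentially the same bookkeeping as in Lemma~\ref{l:tecnico2}.

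Next I would glue the local pieces. The overlapping Whitney regions $\cL=\Phii(H\cap[-\tfrac72,\tfrac72]^m)$ with $\ell(H)=\tfrac{17}{16}\ell(L)$ have bounded overlap (by (w2)--(w3) and \eqref{e:prima_parte}), and on a cube $L'$ neighbouring $L$ the two local approximations $N_L,N_{L'}$ agree with $T$ on $\p^{-1}(\cK_L\cap\cK_{L'})$, whose complement is small; using the height bound one checks $\|N_L-N_{L'}\|_{C^0}\le C\bmo^{\sfrac1{2m}}\ell(L)^{1+\beta_2}$ and, via an interpolation/Lipschitz-extension argument (Kirszbraun for $\Iq$, as in Theorem~\ref{t:xi}), one patches into a single global Lipschitz $F:\cM\to\Iq(\bU)$ with $\cK:=\bigcap_L\cK_L\supset\Phii(\bGam)$; condition (A2) then holds by construction on $\p^{-1}(\cK)$, and (A1) because each piece already has $x+N_i(x)\in\Sigma$. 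Over the contact set $\Phii(\bGam)$ one uses Corollary~\ref{c:cover}(iii): the slice is exactly $Q\a{p}$, so $F$ coincides with $T$ there for free.

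For \eqref{e:err_regional} I would run the following chain of inequalities. The mass defect $\|\bT_F-T\|(\p^{-1}(\cL))$ is controlled by $|\cL\setminus\cK|$ plus the error in \eqref{e:main(ii)}, which gives a factor $E^{\gamma_1}(E+r_L^2\bA^2)r_L^m$; with $E\le C\bmo\,\ell(L)^{2-2\delta_2}$ and $\bA^2\le\bmo$, and using $\gamma_1=4\gamma_2$ together with $(2-2\delta_2)(1+\gamma_1)\ge 2+\gamma_2$, this becomes $\le C\bmo^{1+\gamma_2}\ell(L)^{m+2+\gamma_2}$ as required. The Dirichlet estimate \eqref{e:Dir_regional} follows from \eqref{e:main(iii)}: $\int_{\cL}|DN|^2$ is comparable, up to the curvature error of size $C\bmo\,\ell(L)^{m+2-2\delta_2}$, to $2(\|T\|(\bC_{8r_L}(p_L,\pi_L))-Q\omega_m r_L^m)$, i.e. to $2\,\e_T$ of the corresponding region, which is $\le CE\,r_L^m\le C\bmo\,\ell(L)^{m+2-2\delta_2}$. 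The remaining average estimate \eqref{e:av_region} is the most delicate point and the one I expect to be the real obstacle: $\etaa\circ N$ has much smaller size than $N$ itself (this was precisely the lesson of Example~\ref{e:vvbad} and the reason the center manifold was built), and squeezing out the extra power $\ell(L)^{m+3+\beta_2/3}$ requires more than the crude mass comparison. The natural route is to use the almost-harmonicity of $\etaa\circ f_{HL}$ quantified in Proposition~\ref{p:pde}, namely the $W^{-1,1}$ bound \eqref{e:pde} on $\Delta(\etaa\circ\bar f_{HL})$ and the $L^1$-closeness \eqref{e:L1_est} between $\bar h_{HL}$ and $\etaa\circ\bar f_{HL}$, to replace $\etaa\circ N$ on the good set by the smooth interpolating datum (which enters the definition of $\cM$ and therefore has small tangential part relative to $\cM$), and then to absorb the contribution of the bad set $\cL\setminus\cK$ by Young's inequality with the free parameter $a$, paying the higher-integrability term $\cG(N,Q\a{\etaa\circ N})^{2+\gamma_2}$ controlled via Theorem~\ref{t:hig fct}/Theorem~\ref{t:higher1}. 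Assembling the local estimates on the boundedly-overlapping Whitney regions then yields the theorem.
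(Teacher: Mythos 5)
Your overall strategy matches the paper's: fix $L\in\sW$, apply the strong Lipschitz approximation in the tilted cylinder $\bC_{8r_L}(p_L,\pi_L)$ using the Whitney bounds \eqref{e:ex+ht_whitney}, reparametrize the graph of $f_L$ over the normal bundle of $\cM$ (this is \cite[Theorem 5.1]{DS2}), and then glue. The arithmetic turning $E\le C\bmo\,\ell(L)^{2-2\delta_2}$ into the powers of $\ell(L)$ in \eqref{e:Lip_regional}--\eqref{e:Dir_regional} is also correct, given the hierarchy $\beta_2=4\delta_2\le \gamma_1/100$ and $\gamma_2=\gamma_1/4$. For the $\etaa\circ N$ estimate \eqref{e:av_region}, your plan of invoking Proposition~\ref{p:pde} (almost-harmonicity of $\etaa\circ \bar f_{HL}$ together with the $L^1$ closeness to the smoothed average that builds $\cM$) and paying the bad set with Young's inequality and the higher-integrability of Theorem~\ref{t:higher1} is the right mechanism.

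The genuine gap is the gluing step. You propose to patch the local reparametrizations ``via an interpolation/Lipschitz-extension argument (Kirszbraun for $\Iq$)'' into ``a single global Lipschitz $F$.'' A one-shot Lipschitz extension of this kind produces a Lipschitz constant governed by the \emph{worst} local datum and does not respect the scale of the Whitney cube; it cannot deliver the required \emph{scale-dependent} bound $\Lip(N|_\cL)\le C\bmo^{\gamma_2}\ell(L)^{\gamma_2}$, which must degrade as $\ell(L)\to 0$ (points near $\Phii(\bGam)$ must have $N$ decaying at rate $\ell(L)^{1+\beta_2}$). The paper explicitly flags this obstruction (``it does not serve our purposes to give a \emph{global} Lipschitz extension of $N$ which does not respect the local properties of the map'') and resolves it with a sequential extension over skeletons of increasing dimension: first vertex neighborhoods $U(A)$ of radius $\sim\ell(L)/4$, then edge neighborhoods $U(AB)$ of much smaller thickness so that $V(AB)=U(AB)\setminus(U(A)\cup U(B))$ are mutually separated by $c_0\ell(L)$, then faces. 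The separation is precisely what guarantees that the Lipschitz constant of the extension over a cube $L$ is controlled by local quantities at scale $\ell(L)$. Also note a small point: $\cK:=\bigcap_L\cK_L$ should be read as intersecting only over the (boundedly many) cubes meeting the given Whitney region, not over all $L\in\sW$; the paper removes, from each $\Phii(L)$, only the points where $N_L$ disagrees with the $N_H$ of \emph{neighboring} $H$.
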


Before proceeding, we wish to discuss why the estimate \eqref{e:av_region} is of crucial importance.
Our plan is to show that $N$ is close to a $\D$-minimizing $Q$-valued map and that a large singular set for $T$ induces a large singular set on the latter map. The first point was, in ``straight coordinates'', an effect of the Taylor expansion of the area functional of a graph. Thus, it makes sense to compare the mass of $\bT_F$ with that of $Q\a{\cM}$. 

\begin{theorem}[Expansion of $\mass (\bT_F)$, cf. {\cite[Theorem 3.2]{DS2}}] \label{t:taylor_area}
If $\cM$, $F$ and $N$ are as above and the Lipschitz constant of $N$ is sufficiently small, then
\begin{align}
\mass (\mathbf{T}_F) = {}&  Q \, \cH^m (\cM) - Q\int_\cM \langle H, \etaa\circ N\rangle
+ \frac{1}{2} \int_\cM |D N|^2 + {\rm H. O. T.},  \label{e:taylor_area}
\end{align}
where
$H$ is the mean curvature vector of $\cM$ and ${\rm H.O.T.}$ contains higher order terms, namely expressions that are at least trilinear in $N$. 
\end{theorem}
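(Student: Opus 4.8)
The statement to prove is Theorem~\ref{t:taylor_area}, the Taylor expansion of the mass of the push-forward $\mathbf{T}_F$ in terms of the center manifold geometry and the normal field $N$. Here is how I would proceed.

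\textbf{Setup via a tubular-neighborhood parametrization.} The plan is to pull everything back to $\mathcal{M}$. Since $N$ has small Lipschitz constant and small $C^0$ norm, and since $\mathcal{M}$ is a $C^{3,\kappa}$ submanifold with curvature controlled by $\mathbf{m}_0^{1/2}$, the map $x\mapsto x+N_i(x)$ is a well-defined Lipschitz map into the tubular neighborhood $\mathbf{U}$; by Lemma~\ref{l:chop} and the multivalued area formula (Lemma~\ref{l:area}, Corollary~\ref{c:massa_grafico}), and using the no-cancellation property (which for a small-Lipschitz normal graph holds because all the Jacobians $\mathbf{J}F^j$ are positive and the tangent planes stay near those of $\mathcal M$), we get
\[
\mathbf{M}(\mathbf{T}_F)=\int_{\mathcal M}\sum_i \mathbf{J}F^i(x)\,d\mathcal H^m(x),
\]
where $\mathbf{J}F^i(x)=\sqrt{\det\big((DF^i)^T DF^i\big)}$ and $DF^i$ is computed with respect to an orthonormal frame on $T_x\mathcal M$. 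The first step, then, is to write $DF^i = \mathrm{Id}_{T_x\mathcal M} + D N_i$, where $DN_i$ is the covariant-type differential of the normal field; more precisely, differentiating $x\mapsto x+N_i(x)$ along $T_x\mathcal M$ produces a linear map whose ``tangential--tangential'' block is $\mathrm{Id} - A_{\mathcal M}(\cdot, N_i)$ (the shape operator contracted with $N_i$) and whose ``tangential--normal'' block is the normal covariant derivative $\nabla^\perp N_i$. This is the classical first-variation-of-area computation for graphs over a curved base.

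\textbf{Pointwise expansion of the Jacobian.} Next I would Taylor-expand $\mathbf{J}F^i$ to second order in $N_i$. Writing the differential in block form and using $\sqrt{\det(\mathrm{Id}+M)} = 1 + \tfrac12\operatorname{tr} M + \tfrac12\big(\tfrac12(\operatorname{tr}M)^2 - \operatorname{tr}(M^2)\big)\cdot\tfrac12 + \cdots$ more carefully, $\det(\mathrm{Id}+M)^{1/2}=1+\tfrac12\operatorname{tr}M+\tfrac18(\operatorname{tr}M)^2-\tfrac14\operatorname{tr}(M^2)+O(|M|^3)$, one collects:
\begin{itemize}
\item the zeroth-order term $1$, giving $Q\,\mathcal H^m(\mathcal M)$ after summing over $i$ and integrating;
\item the linear-in-$N_i$ term, which comes only from the shape-operator block: $\operatorname{tr}\big(-A_{\mathcal M}(\cdot, N_i)\big) = -\langle \vec H, N_i\rangle$, where $\vec H$ is the mean curvature vector of $\mathcal M$. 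Summing over $i$ yields $-Q\int_{\mathcal M}\langle \vec H,\eta\circ N\rangle$ (note $\nabla^\perp N_i$ contributes nothing linear because the off-diagonal block enters the determinant only quadratically);
\item the quadratic term, whose leading piece is $\tfrac12|\nabla^\perp N_i|^2$ (from $-\tfrac14\operatorname{tr}(M^2)$ applied to the off-diagonal block, which is $-\tfrac14\cdot(-2|\nabla^\perp N_i|^2)=\tfrac12|\nabla^\perp N_i|^2$), while the purely shape-operator quadratic terms $\langle A(\cdot,N_i),\cdot\rangle^2$ are bounded by $\mathbf A^2|N_i|^2$ and can be absorbed into the error since $\mathbf A^2 \le C\mathbf{m}_0$. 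Here one must identify $|\nabla^\perp N_i|^2$ with the quantity written $|DN|^2$ in the statement — this is a matter of the convention for $DN$ on the normal bundle (the intrinsic differential), which matches \cite[Section~3]{DS2}.
\end{itemize}
Everything of order $\ge 3$ in $(N_i,\nabla N_i)$ jointly is collected into $\mathrm{H.O.T.}$; one checks it is at least trilinear in $N$, using that the remainder in the square-root expansion is $C^1$ and vanishes to third order, and that the curvature corrections multiply at least a quadratic expression in $N$ by a factor $|N|$ or $\mathbf A$. Summing over $i$ and integrating over $\mathcal M$ gives \eqref{e:taylor_area}.

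\textbf{Main obstacle.} The routine part is the algebra of the square-root-of-determinant expansion. The genuinely delicate point is bookkeeping the error terms: one must make precise in what norm and against what geometric weights the ``H.O.T.'' are controlled, so that later (in the blow-up) they are genuinely negligible compared to $\int_{\mathcal M}|DN|^2$ and to $\int_{\mathcal M}\langle \vec H,\eta\circ N\rangle$. This requires care because $\vec H$ itself is only of size $\mathbf{m}_0^{1/2}$ while $|N|$ can be as large as $\mathbf{m}_0^{1/2m}\ell(L)^{1+\beta_2}$ on a Whitney region, so the linear term and the quadratic term are not a priori comparable, and the interaction of curvature of $\mathcal M$ with $N$ (terms like $\mathbf A\,|N|\,|\nabla N|$ or $\mathbf A^2|N|^2$) must be shown to be subordinate to the retained quadratic term on each Whitney region using the estimates \eqref{e:Lip_regional}–\eqref{e:Dir_regional} of Theorem~\ref{t:approx}. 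I would handle this by first proving the expansion as an exact identity with an explicitly written integral remainder, and only afterwards invoking the normal-approximation estimates to bound that remainder region by region; the precise statement and its proof are carried out in \cite[Theorem~3.2]{DS2}.
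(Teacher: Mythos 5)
Your proposal is correct and follows essentially the same route as \cite[Theorem 3.2]{DS2}: the multivalued area formula reduces $\mass(\bT_F)$ to $\int_\cM \sum_i \bJ F^i$, the differential along $T_x\cM$ decomposes into a shape-operator (Weingarten) part and a normal-covariant-derivative part, and a Taylor expansion of $\sqrt{\det}$ produces the constant, linear and quadratic contributions exactly as you describe, with the remainder controlled region-by-region using the estimates from Theorem~\ref{t:approx}.

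Two small points of bookkeeping. First, with the Gram matrix written as $\mathrm{Id}+M$, $M = -2W_{N_i} + W_{N_i}^2 + (\nabla^\perp N_i)^T\nabla^\perp N_i$, the leading quadratic piece $\tfrac12|\nabla^\perp N_i|^2$ comes directly from $\tfrac12 \operatorname{tr} M$, not from $-\tfrac14\operatorname{tr}(M^2)$; the latter contributes $-\operatorname{tr}(W_{N_i}^2)+O(|N|^3)$, which together with $\tfrac18(\operatorname{tr} M)^2$ yields the curvature-quadratic remainder $\tfrac12\big(\langle H,N_i\rangle^2 - \operatorname{tr}(W_{N_i}^2)\big)$. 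Your final answer is correct but the attribution is muddled. Second, that curvature-quadratic remainder, of size $|A_\cM|^2|N|^2$, is genuinely \emph{quadratic} in $N$, so the survey's phrase ``at least trilinear in $N$'' is a simplification: the precise version in \cite[Theorem 3.2]{DS2} lists such terms separately (compare the structure of $\mathrm{Err}_2$ in Theorems~\ref{t:outer} and~\ref{t:inner}, which carries the $|A|^2|N|^2$ weight). You correctly note that these terms are eventually negligible because $\bA^2\lesssim \bmo$, but that is a smallness statement for the later application, not a degree count. Neither of these is a gap in the argument; the approach and the end result are right.
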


Notice in particular that there is a nonvanishing linear term in the expansion. In order to show that $N$ is quasi-harmonic, we therefore need $\langle H, \etaa\circ N\rangle$ to be much smaller than $|DN|^2$. 

\medskip

Before explaining the ideas behind the proof of Theorem \ref{t:approx} we wish to remark that
from \eqref{e:Lip_regional} - \eqref{e:Dir_regional} it is not difficult to infer analogous ``global versions'' of the estimates.

\begin{corollary}[Global estimates, {Cf. \cite[Corollary 2.5]{DS4}}]\label{c:globali} Let $\cM'$ be
the domain\\ $\Phii \big([-\frac{7}{2}, \frac{7}{2}]^m\big)$ and $N$ the map of Theorem \ref{t:approx}. Then,  for $C = C(\beta_2, \delta_2, M_0, N_0, C_e, C_h)$,
\begin{gather}
\Lip (N|_{\cM'}) \leq C \bmo^{\gamma_2} \quad\mbox{and}\quad \|N|_{\cM'}\|_{C^0}
\leq C \bmo^{\sfrac{1}{2m}},\label{e:global_Lip}\\ 
|\cM'\setminus \cK| + \|\bT_F - T\| (\p^{-1} (\cM')) \leq C \bmo^{1+\gamma_2},\label{e:global_masserr}\\
\int_{\cM'} |DN|^2 \leq C \bmo\, .\label{e:global_Dir}
\end{gather}
\end{corollary}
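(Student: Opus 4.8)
The plan is to derive Corollary~\ref{c:globali} from the local estimates of Theorem~\ref{t:approx} by summing over the Whitney cubes, exactly in the spirit of a standard Calder\'on--Zygmund patching argument. The point is that the Whitney regions $\cL$ cover the slightly enlarged domain $\cM'=\Phii([-\tfrac72,\tfrac72]^m)$ with bounded overlap (the concentric enlargement by a factor $\tfrac{17}{16}$ keeps (w1)--(w3) intact), so that $\sum_{L\in\sW}\ell(L)^m\leq C$ and, more generally, $\sum_{L\in\sW}\ell(L)^{m+\alpha}\leq C$ for any $\alpha\geq 0$. First I would fix the identification $\cM'=\bigcup_{L\in\sW}\cL\,\cup\,\Phii(\bGam\cap[-\tfrac72,\tfrac72]^m)$ coming from the Whitney decomposition of Proposition~\ref{p:whitney}, noting that on the contact set $\Phii(\bGam)$ we have $N\equiv 0$ by Corollary~\ref{c:cover}(iii) and the definition of the $\cM$-normal approximation, so all the global integrals reduce to sums over $\cL$'s.

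The $C^0$ and Lipschitz bounds in \eqref{e:global_Lip} are the most delicate of the three, since a Lipschitz (or sup) bound is not additive. For the $C^0$ estimate one uses $\|N|_\cL\|_{C^0}\leq C\bmo^{1/2m}\ell(L)^{1+\beta_2}\leq C\bmo^{1/2m}$ (as $\ell(L)\leq 1$) on each region, and on the complement $N$ vanishes; since any point of $\cM'$ lies in some $\cL$ or on the contact set, the global sup bound follows at once. For the Lipschitz bound one argues pointwise: given $p,q\in\cM'$, if they lie in the same Whitney region one uses \eqref{e:Lip_regional} directly; if they lie in different regions $\cL$, $\cL'$, one connects them by a geodesic, which (by (w3) and the comparability of neighbouring cubes) passes through a bounded chain of mutually adjacent regions of comparable size $\ell\lesssim \max(\ell(L),\ell(L'))$, and on each link one applies the local Lipschitz bound $C\bmo^{\gamma_2}\ell^{\gamma_2}\leq C\bmo^{\gamma_2}$; a point falling on the contact set is handled using $N=0$ there together with the $C^0$ bound and the fact that the distance to $\Phii(\bGam)$ from a point of $\cL$ is comparable to $\ell(L)$. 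Summing the finitely many increments along the chain gives $\Lip(N|_{\cM'})\leq C\bmo^{\gamma_2}$. This is the step I expect to be the main (though still routine) obstacle, because it requires care with the geodesic distance on the curved center manifold and with the interaction of the Whitney regions near the contact set.

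The two integral estimates \eqref{e:global_masserr} and \eqref{e:global_Dir} are then immediate. Summing \eqref{e:Dir_regional} over $L\in\sW$ and using the bounded overlap of the regions together with $\sum_{L}\ell(L)^{m+2-2\delta_2}\leq\sum_{L}\ell(L)^{m}\leq C$ yields
\[
\int_{\cM'}|DN|^2\leq \sum_{L\in\sW}\int_{\cL}|DN|^2 \leq C\bmo\sum_{L\in\sW}\ell(L)^{m+2-2\delta_2}\leq C\bmo,
\]
which is \eqref{e:global_Dir}. Likewise, summing \eqref{e:err_regional} and using that the sets $\p^{-1}(\cL)$ have bounded overlap (a consequence of the corresponding property of the $\cL$'s and the Lipschitz character of $\p$) gives
\[
|\cM'\setminus\cK| + \|\bT_F-T\|(\p^{-1}(\cM')) \leq \sum_{L\in\sW}\Big(|\cL\setminus\cK| + \|\bT_F-T\|(\p^{-1}(\cL))\Big) \leq C\bmo^{1+\gamma_2}\sum_{L\in\sW}\ell(L)^{m+2+\gamma_2}\leq C\bmo^{1+\gamma_2},
\]
where one also uses $\cM'\setminus\cK\subset\bigcup_L(\cL\setminus\cK)$ (valid since $\cK\supset\Phii(\bGam\cap[-\tfrac72,\tfrac72]^m)$ by (A2)) and that $\bT_F=T$ over $\p^{-1}(\Phii(\bGam))$. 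This establishes \eqref{e:global_masserr}. The only bookkeeping point worth stressing is that all the series that appear are dominated by $\sum_{L\in\sW}\ell(L)^m$, which converges with a geometric constant because the $\cL$'s tile $\cM'$ with finite multiplicity; every exponent occurring in \eqref{e:Lip_regional}--\eqref{e:Dir_regional} is $\geq m$, so no divergence can occur.
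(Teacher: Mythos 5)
Your overall approach — summing the local estimates of Theorem~\ref{t:approx} over the Whitney cubes and exploiting the bounded overlap of the regions $\cL$ — is the right one, and it is essentially what the paper does. The $C^0$ bound and the two integral estimates are argued correctly: $\sum_L \ell(L)^m\leq C$ because the $\cL$'s essentially tile $\cM'$ with finite multiplicity; all exponents in \eqref{e:err_regional} and \eqref{e:Dir_regional} are at least $m$, so the series converge; and the reduction of the global quantities to sums over the $\cL$'s by way of $\cM'\setminus\cK\subset\bigcup_L(\cL\setminus\cK)$ (resp.\ $\cM'\setminus\Phii(\bGam)\subset\bigcup_L\cL$) is sound.

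The Lipschitz bound is where your description does not stand up to scrutiny. You assert that the geodesic from $p$ to $q$ passes through ``a bounded chain of mutually adjacent regions of comparable size $\ell\lesssim\max(\ell(L),\ell(L'))$'' and then sum ``finitely many increments.'' This is false as stated: if $p$ lies in the region of a unit-scale cube and $q$ in the region of a tiny cube near the contact set, the geodesic crosses infinitely many regions whose side-lengths decrease dyadically; conversely, if $p,q$ both lie near $\bGam$ and the geodesic swings away from it, the intermediate cubes can be far larger than both $L$ and $L'$. Nonetheless the chaining argument works, simply because the triangle inequality for $\cG$ telescopes over a \emph{countable} chain $p=p_0,p_1,\ldots$ of points on the geodesic with consecutive pairs in a common region, and the total length adds up to $d(p,q)$. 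Your handling of the contact set is also suboptimal and potentially lossy: comparing $|N(q)|\leq C\bmo^{1/2m}\ell(L)^{1+\beta_2}$ with $d(q,\Phii(\bGam))\geq c\ell(L)$ produces the factor $C\bmo^{1/2m}\ell(L)^{\beta_2}$, which is dominated by $\bmo^{\gamma_2}$ only when $\gamma_2\leq\frac{1}{2m}$ — a comparison of constants you neither state nor verify. It is cleaner, and exponent-sharp, to push the chaining all the way to $\Phii(\bGam)$ using only the local Lipschitz bounds $\Lip(N|_{\cL_i})\leq C\bmo^{\gamma_2}\ell(L_i)^{\gamma_2}\leq C\bmo^{\gamma_2}$ together with the continuity of $N$ (which forces $N\to Q\a{0}$ along the chain): if the geodesic touches $\Phii(\bGam)$, split at the first and last touching points; the middle portion contributes zero since $N\equiv Q\a{0}$ there, and the two outer portions are handled by the countable chaining. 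This yields the bound $C\bmo^{\gamma_2}$ with no extraneous hypotheses on the constants.
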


\subsection{A sketch of the proof} Recall first that $\langle T, \p, p\rangle = Q \a{p}$ whenever $p\in \Phii (\bGam)$ and thus we set $N (p) = Q\a{0}$ for any such $p$: it remains therefore to define $N$ on each region of the form $\Phii (L)$ when $L\in \sW$. 

Let us fix $L\in \sW$.
Going back to the construction of the center manifold, we discover that we already have a rather accurate graphical approximation of $T$ in the region of our interest, since we already considered the $\pi_L$ approximation $f_L$. If $f_L$ were a classical single valued function, we could simply parametrize its graph on the normal bundle of $\cM$. Indeed the tangent planes to the graph of $f_L$ are certainly close to $\pi_L$ and on the other hand $\cM$ is constructed patching a suitable smoothing of the average $\etaa \circ f_L$. Thus, in the cylinder $\bC_{6r_L} (p_L, \pi_L)$, the angle between a generic tangent plane to the graph of $f_L$ and a generic tangent plane to $\cM$ is rather small. In the $Q$-valued setting ``reparametrizing'' graphs is a much more subtle issue than in the classical single valued setting . However it is not very hard to prove a suitable theorem (see \cite[Theorem 5.1]{DS2}) which allows to describe the graph of $f_L$ through a Lipschitz map $N_L$ on the normal bundle of $\cM$. Note moreover that $N_L$ can be defined on a much larger domain than $\Phii (L)$. 

\medskip

We next have to face a new difficulty: if $L$ and $H$ are two nearby cubes, the maps $N_L$ and $N_H$ do not necessarily agree on the intersection of their domains.
However, recall that the graphs of $f_L$ and $f_H$ coincide with the current $T$ except for two sets of small measure. Thus the values of $N_L$ and $N_H$ coincide on a very large portion of the set on which both are defined. In turn, the construction algorithm ensures that each $H$ intersects only a finite number of other cubes in $\sW$: such number is bounded a-priori by a geometric constant (for instance, when $m=2$ each square of $\sW$ can intersect at most 12 other squares of $\sW$). So, after removing from each $\Phii (L)$ all those points where $N_L$ does not coincide with all the $N_H$ related to neighboring cubes, we reach a uniquely defined map $N$ on a rather large subset of $\cM\setminus \Phii (\bGam)$. 

\medskip

We now wish to extend this map to a Lipschitz one defined on all $\cM$. It is not difficult to see that $N$ is already {\em globally} Lipschitz on its domain of definition and that it approaches the value $Q\a{0}$ on sequences converging to $\Phii (\bGam)$: this is because the height of $f_L$ (and thus that of $N_L$) is controlled by $C \ell (L)^{1+\beta_2}$  and cubes in $\sW$ close to $\bGam$ necessarily have small sidelength.  However, it does not serve our purposes to give a {\em global} Lipschitz extension of $N$ which does not respect the local properties of the map. In particular we desire an extension that on each Whitney region $\cL$ has small Lipschitz constant: the smaller the scale, the smaller should be the Lipschitz constant. To achieve this property we follow an elementary idea, which we next describe in the special case of dimension $m=2$. As a matter of fact, since $\Phii$ is Lipschitz we can, for the sake of our discussion, assume that $\Phii (L)$ is flat and coincides with the square $L$. We fix the four points $A, B, C$ and $D$ which are the four vertices of the square. We first wish to extend $N$ to four small neighborhoods of these points (if $N$ is not already defined there); we will call the neighborhoods $U (A), U(B), U (C)$ and $U (D)$ and we will fix them to be disks centered at the respective points with radius $\ell (L)/4$. 

\medskip

Take for instance $A$: the latter point is a common vertex for four squares $H_1, H_2, H_3, L$ of $\sW$. We then consider the intersection of the domain of $N$ with $H_1\cup H_2 \cup H_3\cup L$: if we restrict the map $N$ to this set and consider its Lipschitz constant, we can then use the Lipschitz extension theorem for multivalued functions to extend it to a neighborhood $U(A)$ of $A$, without increasing such constant by much. We proceed and extend the map separately to neighborhoods of $A$, $B$, $C$ and $D$. However when we extend the map to the neighborhood $U(B)$ we disregard what we did in the neighborhood of $A$ and we only take into consideration the ``original'' $N$: having chosen such neighborhoods sufficiently small the distance between two points $p$ and $q$ which belong to distinct neighborhoods (say $U(A)$ and $U(B)$) is larger than $\ell (L)/2$ and the height of the extension is no larger than $C \ell (L)^{1+\beta_2}$, thus providing automatically a good Lipschitz bound on $U(A)\cup U(B)$. This procedure can then be repeated for all squares and we have a new map $N'$ which is evidently defined in a neighborhood of any vertex of any $L\in \sW$.

With the same principle we extend $N'$ to neighborhoods $U(AB), U (BC), U (CD), U(DA)$ of the corresponding sides of the square $L$. This seems more problematic because, for instance, $U (AB)$ and $U(BC)$ intersect in a neighborhood of $B$. However $N'$ is already defined on $U(A)\cup U(B)\cup U(C)\cup U(D)$. Thus we need to extend it to the sets $V(AB) = U(AB) \setminus (U(A) \cup U(B))$ and $V(BC) = U(BC)\setminus (U(B)\cup U(C))$. The latter can be now assumed to be disjoint and separated by a distance of the order $c_0 \ell (L)$ for some $c_0>0$: it just suffices to choose the thickness of the neighborhoods $U(AB)$ and $U (BC)$ much smaller than the thickness of the neighborhoods $U(A)$, $U (B)$ and $U(C)$. We can then literally argue as above and gain a second extension of the map, say $N''$, which is defined on the boundary of any square $L$ of $\sW$. At this point the third (and final) extension is achieved by considering each square separately.

\section{Blow-up I: preliminary considerations and frequency function}

We are now ready to discuss the proof of Theorem \ref{t:cod>1}. We assume by contradiction that the theorem is false and, recalling Proposition \ref{p:successione}, we fix an area minimizing current $T$ of dimension $m$, a plane $\pi_0$ (which without loss of generality we assume to be $\R^m\times \{0\}$) an integer $Q>0$ and a sequence of radii $r_k\downarrow 0$ with the following properties:
\begin{itemize}
\item The excess $\bE (T_{0, r_k}, \bC_8 (\pi_0, 0))$ converges to $0$ as $k\uparrow \infty$;
\item The point $0$ is singular, $\Theta (T, 0)=Q$ and $\cH^{m-2 +\alpha}_\infty (\bB_1 \cap \sing_Q (T_{0, r_k}))\geq \eta>0$.
\end{itemize}
We spend the next paragraph to make our discussion above precise. In particular from now on we will assume the following hypotheses:

\begin{ipotesi}\label{ipotesi_principale}
Let $\eps_0\in ]0,1[$, $m,\bar{n}\in \N\setminus \{0\}$ and $l\in \N$. We denote by
\begin{itemize}
\item[(M)] $\Sigma\subset\R^{m+n} = \R^{m+\bar{n}+l}$ an embedded $(m+\bar{n})$-dimensional submanifold of class $C^{3,\eps_0}$;
\item[(C)] $T$ an integral current of dimension $m$ with compact support $\supp (T) \subset \Sigma$,
area minimizing in $\Sigma$.
\end{itemize}
\end{ipotesi}

\begin{ipotesi}[Contradiction]\label{assurdo}
There exist $m \geq 2, \bar{n}, l$, $\Sigma$ and $T$  as in Assumption \ref{ipotesi_principale} such that 
$\cH^{m-2+\alpha}(\sing(T)) >0$ for some $\alpha>0$. 
\end{ipotesi}

Moreover, let us recall the following subsets of $\reg (T)$ and $\sing (T)$, introduced a few sections ago.

\begin{definition}[$Q$-points] \label{d:regular_points}
For $Q\in \N$, we denote by $\rD_Q(T)$ the points of density $Q$ of the current $T$,
and set
\begin{gather*}
\reg_Q (T) := \reg (T)\cap \rD_Q (T) \quad\text{and}\quad
\sing_Q (T) := \sing (T)\cap \rD_Q (T).
\end{gather*}
\end{definition}

The first key player in the proof of Theorem \ref{t:cod>1} is then the following sequence of rescalings of the current $T$ in Assumption \ref{assurdo}.

\begin{proposition}[Contradiction sequence, {cf. \cite[Proposition 1.3]{DS5}}]\label{p:seq}
Under Assumption \ref{assurdo}, there are $m,n, Q \geq 2$, $\Sigma$ and $T$ as in Assumption
\ref{ipotesi_principale},
reals $\alpha,\eta>0$, and a sequence $r_k\downarrow 0$ such that
$0\in \rD_Q (T)$ and the following holds:
\begin{gather}
\lim_{k\to+\infty}\bE(T_{0,r_k}, \B_{6\sqrt{m}}) = 0,\label{e:seq1}\\
\lim_{k\to+\infty} \cH^{m-2+\alpha}_\infty (\rD_Q (T_{0,r_k}) \cap \B_1) > \eta,\label{e:seq2}\\
\cH^m \big((\B_1\cap \supp (T_{0, r_k}))\setminus \rD_Q (T_{0,r_k})\big) > 0\quad \forall\; k\in\N.\label{e:seq3}
\end{gather}
\end{proposition}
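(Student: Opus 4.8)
\emph{Approach.} The statement is a variant of Federer's dimension‑reduction argument (cf.\ Section~\ref{ss:Fedred}), with one crucial twist forced by the higher‑codimension phenomenon of disappearing singularities (Example~\ref{e:vbad}): singular points do \emph{not} survive blow‑up limits, but the density‑$Q$ sets $\rD_Q$ do, and one converts ``$\rD_Q$ large in the limit'' back into ``$\sing_Q$ large in the limit'' by the rigidity in Corollary~\ref{c:densita}. First I would reduce to a fixed multiplicity. Since the density $\Theta(T,\cdot)$ is everywhere defined by monotonicity and, by Almgren's stratification (Theorem~\ref{t:strat}) together with $\mathcal{S}_{m-1}(T)=\emptyset$ (Lemma~\ref{l:strat}) and $\dim\mathcal{S}_k(T)\le k<m-2+\alpha$ for $k\le m-2$, at $\cH^{m-2+\alpha}$‑a.e.\ point of $\sing(T)$ the density is a positive integer and there is a flat tangent cone (after first shrinking $\alpha$ so that $0<\alpha<1$); by countable subadditivity of $\cH^{m-2+\alpha}$ there is an integer $Q$ with $\cH^{m-2+\alpha}(\sing_Q(T))>0$, and $Q\ge 2$ because $\sing_1(T)=\emptyset$ by Corollary~\ref{c:densita}.

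\emph{Selecting a point and blowing up.} Using the standard comparison for Hausdorff \emph{content} (cf.\ \cite[Theorem~3.6 and Appendix~A]{Simon}), I would fix a point, translated to the origin, with $\Theta(T,0)=Q$, admitting a flat tangent cone, and with $\limsup_{r\downarrow 0}r^{-(m-2+\alpha)}\,\cH^{m-2+\alpha}_\infty(\sing_Q(T)\cap\bB_r(0))\ge c(m,\alpha)>0$. Pick $\rho_k\downarrow 0$ realizing this $\limsup$; by compactness (Theorem~\ref{t:AM-cpt}) and monotonicity a subsequence of $T_{0,\rho_k}$ converges to an area minimizing cone $S$ with $\Theta(S,0)=Q$. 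Since $S$ is a cone one has $\Theta(S,q)\le\Theta(S,0)=Q$ for all $q$, so $\rD_Q(S)$ is closed and contains every limit point of $\rD_Q(T_{0,\rho_k})$; combining this with the upper semicontinuity of $\cH^{m-2+\alpha}_\infty$ under Hausdorff convergence and the scaling of the content gives $\cH^{m-2+\alpha}_\infty(\rD_Q(S)\cap\bar\bB_1)\ge c(m,\alpha)$.

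\emph{The dichotomy and the iteration.} If $S$ is flat, then $S=Q\a{\pi}$ and the sequence $\{\rho_k\}$ already works for $T$ itself: \eqref{e:seq1} holds as $T_{0,\rho_k}\to Q\a{\pi}$; \eqref{e:seq2} is the previous step; and \eqref{e:seq3} is automatic, for if $\cH^m\big((\bB_1\cap\supp(T_{0,r}))\setminus\rD_Q(T_{0,r})\big)=0$ for some $r$ then the density of $T$ would equal $Q$ $\|T\|$‑a.e.\ in $\bB_r(0)$ and Corollary~\ref{c:densita} would make $0$ a regular point, contradicting $0\in\sing_Q(T)$. If instead $S$ is not flat, then $0\in\sing_Q(S)$ and I claim $\cH^{m-2+\alpha}_\infty(\sing_Q(S))>0$: otherwise $S':=S\res\reg_Q(S)$ would have $\supp(\partial S')\subset\sing_Q(S)$ with $\cH^{m-1}(\supp(\partial S'))=0$ (as $m-1>m-2+\alpha$), hence $\partial S'=0$; since $\|S'\|\le\|S\|$ one gets $\Theta(S',0)\le Q$, while upper semicontinuity forces $\Theta(S',0)\ge Q$, so $S=S'$ has density $Q$ off a $\|S\|$‑null set and Corollary~\ref{c:densita} makes $S$ flat, a contradiction. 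Thus I would replace $(T,0)$ by $(S,p_1)$ for a suitable $p_1\in\sing_Q(S)$ lying off the spine $V$ of $S$ (possible because $\dim V\le m-2<m-2+\alpha$, so $V$ carries no $\cH^{m-2+\alpha}_\infty$) and satisfying the content‑density property, and repeat the blow‑up step. From the second blow‑up on, the base point is nonzero and not in the spine, so by Lemma~\ref{l:split} the new cone splits off the line through it while retaining the invariances of $S$ (which persist at every dilation); hence the dimension of the spine strictly increases, and after at most $m$ iterations the blow‑up is a flat plane of multiplicity $Q$ and the procedure stops. At that stage the relevant current $\widehat T$ is an iterated tangent cone (area minimizing in the linear subspace it spans, which one may take as a flat $\Sigma$ with vanishing second fundamental form, and restricted to a large ball to make the support compact), $\widehat p\in\sing_Q(\widehat T)$ is the last base point, and the radius sequence along which $\widehat T_{\widehat p,r_k}$ converges to $Q\a{\pi}$ yields \eqref{e:seq1}, \eqref{e:seq2} follows from the content‑density choice ($\sing_Q\subset\rD_Q$), and \eqref{e:seq3} follows exactly as in the flat case.

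\emph{Main difficulty.} The delicate point, and the reason the one‑step argument fails, is precisely the disappearance of singularities in codimension $\ge2$: only $\rD_Q$ passes to blow‑up limits, and only after using the cone structure to bound densities from above. The transfer from ``$\rD_Q$ large'' to ``$\sing_Q$ large'' via Corollary~\ref{c:densita}, together with the termination of the line‑splitting iteration (monotone growth of the spine dimension), is the heart of the proof; a secondary nuisance is that one must work throughout with Hausdorff \emph{content} rather than measure, since only the content is upper semicontinuous under Hausdorff convergence of the singular sets.
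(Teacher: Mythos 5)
Your proposal is correct and follows the same Federer-style reduction argument as the paper's proof sketch for Proposition~\ref{p:successione}/\ref{p:seq}: pick a $Q$ with $\cH^{m-2+\alpha}(\sing_Q(T))>0$, select a base point of good content density admitting a flat tangent cone, blow up, transfer ``$\rD_Q$ large'' to ``$\sing_Q$ large'' for the tangent cone via the rigidity of Corollary~\ref{c:densita} (the boundary argument and the $\|S'\|\le\|S\|$ comparison you give are precisely the paper's), and iterate, splitting off lines until a flat tangent plane appears. You are in fact somewhat more explicit than the paper about two technical details — that the new base point must be chosen off the spine (and that this is possible because the spine has dimension $\le m-2<m-2+\alpha$, hence zero $\cH^{m-2+\alpha}$ and hence zero content), and that $\Theta(S,q)\le\Theta(S,0)=Q$ for a cone is what makes $\rD_Q(S)$ closed and forces limits of density-$Q$ points to stay in $\rD_Q(S)$ rather than jump above — but the underlying ideas and the dichotomy (flat $\Rightarrow$ done; non-flat $\Rightarrow$ $\sing_Q$ still large, split a line) are identical to the paper's.
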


In particular, by Proposition~\ref{p:seq} and  simple rescaling arguments,
we assume in the sequel the following.

\begin{ipotesi}\label{i:H'} Let $\eps_3\in ]0, \eps_2[$. Under Assumption~\ref{assurdo}, there exist  $m, n, Q \geq 2$, $\alpha, \eta > 0$,  $T$ and $\Sigma$ for which:
\begin{itemize}
\item[(a)] there is a sequence of radii 
$r_k\downarrow 0$ as in Proposition \ref{p:seq};
\item[(b)] the following holds:
\begin{gather}
T_0\Sigma = \R^{m+\bar{n}} \times \{0\}, \quad 
\supp (\partial T)\cap \B_{6\sqrt{m}} = \emptyset,\quad 0 \in D_Q(T),\label{e:aggiuntive}\\
\|T\| (\B_{6\sqrt{m} r}) \leq
r^m \left( Q\,\omega_m (6\sqrt{m})^m + \eps_3^2\right) \quad \text{for all } r\in (0,1),\label{e:(2.1)-del-cm}\\
 \mathbf{c} (\Sigma\cap \bB_{7\sqrt{m}}) \leq \eps_3\, .\label{e:piattezza_sigma}
\end{gather}
\end{itemize}
\end{ipotesi}

\subsection{A sequence of center manifolds} We wish to approximate the current with an $\cM$-normal approximation $N$ over a center manifold $\cM$. A first attempt could be the following: for some $r$ sufficiently large, the excess $\bE (T_{0,r}, \bC_r (\pi_0, 0))$ will be sufficiently small and $\Sigma_{0,r} := \iota_{0,r} (\Sigma)$ will be sufficiently flat. Assuming without loss of generality that $r=1$, we can then construct a center manifold $\cM$ and an $\cM$-normal approximation. However, we have no guarantee that this approximation is accurate at very small scales around $0$. This would certainly be the case if $0$ belonged to the contact set $\bGam$ of Definition  \ref{d:cm}, but of course it might be the case that $0$ belongs to some cube $L\in \sW$ where  the refining procedure has stopped. If this is the case at a certain small scale around $0$ the graph of $N$ might have a completely different behavior than $T$. 

\medskip

If we set $t_1 := 1$, we can then distinguish two situations. In the first one $0\in \bGam$ and we are thus satisfied with our center manifold and our approximating map, which we denote by $\cM_1$ and $N_1$. In the second one at some distance $s_1$ from the origin we encounter the first cube of size sufficiently large compared to $s_1$, say $\bar{c}_s s_1$ for some specified constant $\bar{c}_s$ (this number turns out to be $1/(16\sqrt{m})$ in our proof, cf. \cite[Section 2]{DS5}: its precise value is however not important, as long as it is a fixed parameter which does not depend upon those which enter the construction of the center manifold). We can assume that $s_1$ is (much) smaller than $t_1$: we just need to set the parameter $N_0$ in the construction of the center manifold accordingly large.

At the scale $s_1$ the two objects $\cM_1$ and $N_1$, i.e. the center manifold and the corresponding approximation, are not anymore serving our purposes. Now consider $T_{0, s_1}$: the latter current might or might not have sufficiently small excess to construct another center manifold. In the first case we set $t_2 = s_1$, otherwise we let $t_2$ be the first radius smaller than $s_1$ at which the excess goes below the desired threshold. We then construct the pair $\cM_2$ and $N_2$ taking $T_{0, t_2}$ as reference area minimizing current.

We formalize the above discussion in the following way. We set
\begin{equation}\label{e:def_R}
\mathcal{R}:=\big\{r\in ]0,1]: \bE (T, \B_{6\sqrt{m} r}) \leq \eps_3^2\big\}\,.
\end{equation}
Observe that, if $\{s_k\}\subset \mathcal{R}$
and $s_k\uparrow s$, then $s\in \mathcal{R}$.
We cover $\cR$ with a collection $\mathcal{F}=\{I_j\}_j$ of intervals
$I_j = ]s_j, t_j]$ defined as follows.
$t_0:= \max \{t: t\in \mathcal{R}\}$. Next assume, by induction, to have defined $t_j$ (and hence also
$t_0 > s_0\geq t_1 > s_1 \geq \ldots > s_{j-1}\geq t_j$) and consider the following objects:
\begin{itemize}
\item[-] $T_j := ((\iota_{0,t_{j}})_\sharp T)\res \bB_{6\sqrt{m}}$, $\Sigma_j := \iota_{0, t_j} (\Sigma) \cap \bB_{7\sqrt{m}}$; moreover, consider
for each $j$ an orthonormal system of coordinates so that, if we denote by $\pi_0$ the $m$-plane $\mathbb R^m\times \{0\}$, then $\bE (T_j, \bB_{6\sqrt{m}}, \pi_0) = \bE(T_j,\B_{6\sqrt{m}})$ (alternatively we can keep the system of coordinates fixed
and rotate the currents $T_j$).
\item[-] Let $\cM_j$ be the corresponding center manifold constructed
in Theorem \ref{t:cm} applied to $T_j$ and $\Sigma_j $ with respect to the $m$-plane $\pi_0$;
the manifold $\cM_j$ is then the graph of a map $\phii_j: \pi_0 \supset [-4,4]^m \to \pi_0^\perp$,
and we set $\Phii_j (x) := (x, \phii_j (x)) \in \pi_0\times \pi_0^\perp$.
\end{itemize}
Then, we consider the Whitney decomposition $\sW^{(j)}$ of $[-4,4]^m \subset \pi_0$ as in  Definition \ref{d:refining_procedure} (applied to $T_j$) and we define
\begin{equation}\label{e:s_j}
s_j := \max\, \left(\{c_s^{-1} \ell (L) : L\in \sW^{(j)} \mbox{ and } c_s^{-1} \ell (L) \geq \dist (0, L)\} \cup \{0\} \right)\, .
\end{equation} 
It can be shown, see below, that $s_j/t_j <2^{-5}$. In particular this ensures that $[s_j, t_j]$ is a (nontrivial) interval. 
Next, if $s_j =0$ we stop the induction. Otherwise we let $t_{j+1}$ be the largest element
in $\mathcal{R}\cap ]0, s_j]$ and proceed as above. Note moreover the following simple consequence of \eqref{e:s_j}:
\begin{itemize}
\item[(Stop)] If $s_j >0$ and $\bar{r} := s_j/t_j$, then there is $L\in \sW^{(j)}$ with 
\begin{equation}\label{e:st}
\ell(L) = c_s\,\bar r\, \qquad \mbox{and} \qquad L\cap \bar B_{\bar r} (0, \pi_0)\neq \emptyset\, 
\end{equation}
(in what follows $B_r (p, \pi)$ and $\bar B_r (p,\pi)$ will denote the open and closed disks 
$\bB_r (p)\cap (p+\pi)$, $\bar{\bB}_r (p) \cap (p+\pi)$);
\item[(Go)] If $\rho > \bar{r} := s_j/t_j$, then
\begin{equation}\label{e:go}
\ell (L) < c_s \rho \qquad \mbox{for all } L\in \sW^{j(k)} \mbox{ with $L\cap B_\rho (0, \pi_0) \neq \emptyset$.}
\end{equation}
In particular the latter inequality is true for every $\rho\in ]0,3]$ if $s_j =0$.
\end{itemize}

The following is a list of easy consequences of the definition. Given two sets $A$ and $B$, we define their
{\em separation} as the number ${\rm sep} (A,B) := \inf \{|x-y|:x\in A, y\in B\}$. Moreover we will use
the notation $\cB_r (p)$ for the geodesic balls in $\cM$.

\begin{proposition}\label{p:flattening}
Assuming $\eps_3$ sufficiently small, then the following holds:
\begin{itemize}
\item[(i)] $s_j < \frac{t_j}{2^5}$ and the family $\mathcal{F}$ is either countable
and $t_j\downarrow 0$, or finite and $I_j = ]0, t_j]$ for the largest $j$;
\item[(ii)] the union of the intervals of $\cF$ cover $\cR$,
and for $k$ large enough the radii $r_k$ in Assumption \ref{i:H'} belong to $\cR$;
\item[(iii)] if $r \in ]\frac{s_j}{t_j},3[$ and $J\in \mathscr{W}^{(j)}_n$ intersects
$B:= \p_{\pi_0} (\cB_r (p_j))$, with $p_j := \Phii_j(0)$,
then $J$ is in the
domain of influence $\mathscr{W}_n^{(j)} (H)$ (see \cite[Definition~3.3]{DS4})
of a cube $H\in \mathscr{W}^{(j)}_e$ with
\[
\ell (H)\leq 3 \, c_s\, r \quad \text{and}\quad 
\max\left\{{\rm sep}\, (H, B),  {\rm sep}\, (H, J)\right\}
\leq 3\sqrt{m}\, \ell (H) \leq \frac{3 r}{16};
\]
\item[(iv)] $\bE (T_j, \B_r )\leq C_0 \eps_3^2 \, r^{2-2\delta_2}$ for
every $r\in]\frac{s_j}{t_j},3[$.
\item[(v)] $\sup \{ \dist (x,\cM_j): x\in \supp(T_j) \cap \p^{-1}_j(\cB_r(p_j))\} \leq C_0\, (\bmo^j)^\frac{1}{2m} r^{1+\beta_2}$ for
every $r\in]\frac{s_j}{t_j},3[$, where
$\bmo^j := \max\{\mathbf{c}(\Sigma_j)^2 , \bE(T_j, \bB_{6\sqrt{m}})\}$. 
\end{itemize}
\end{proposition}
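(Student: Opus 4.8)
The final statement to prove is Proposition~\ref{p:flattening}, a list of five technical consequences of the construction of the sequence of center manifolds $\cM_j$ and of the definitions of the intervals $I_j = ]s_j, t_j]$ and the radii $s_j$ via \eqref{e:s_j}. I will treat the five items in order, since each builds on the previous ones and on the geometry of the Whitney decompositions $\sW^{(j)}$.

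\textbf{Item (i): the gap $s_j < t_j/2^5$ and the structure of $\mathcal{F}$.} The plan is to combine \eqref{e:prima_parte} from Proposition~\ref{p:whitney} with the defining relation \eqref{e:st} of $(\mathrm{Stop})$. By \eqref{e:prima_parte}, once $\eps_2$ (hence $\eps_3$) is small enough, $\sW^{(j)}$ contains no cube of side-length $2^{1-j'}$ for $j' \leq N_0 + 6$; every cube $L \in \sW^{(j)}$ therefore satisfies $\ell(L) \leq 2^{-5-N_0} \leq 2^{-5}$ (using \eqref{e:N0}, which forces $N_0$ large). Since by \eqref{e:st} there is an $L \in \sW^{(j)}$ with $\ell(L) = c_s (s_j/t_j)$ and $L \cap \bar B_{s_j/t_j}(0,\pi_0) \neq \emptyset$, and since $c_s = 1/(16\sqrt m)$, one gets $s_j/t_j = c_s^{-1}\ell(L) \leq 16\sqrt m \cdot 2^{-5-N_0} < 2^{-5}$ provided $N_0$ is chosen large enough. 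That $[s_j,t_j]$ is a genuine interval follows. The dichotomy "countable with $t_j \downarrow 0$" versus "finite ending with $]0,t_j]$" is then immediate from the induction: either $s_j = 0$ occurs for some $j$ (terminating case, and then $I_j = ]0,t_j]$ since $t_{j}$ was the largest element of $\cR \cap ]0,s_{j-1}]$ and $\cR$ is closed under increasing limits), or it never does, in which case $t_{j+1} \leq s_j < t_j/2^5$ forces $t_j \to 0$ geometrically.

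\textbf{Item (ii): $\mathcal{F}$ covers $\cR$ and contains the $r_k$ eventually.} Here the point is a maximality/exhaustion argument. By construction $t_0 = \max \cR$ (well-defined since $\cR$ is closed and contains points arbitrarily close to $0$ only if... — more precisely $\cR$ is relatively closed in $]0,1]$ by the remark after \eqref{e:def_R}, so its maximum exists). Suppose some $r^\star \in \cR$ is not covered by any $I_j$; since $\cR \cap ]0, s_j]$ is nonempty whenever $s_j > 0$ (it contains points near $0$ by \eqref{e:seq1} once we are in the contradiction setting, or by Assumption~\ref{i:H'}), the inductive choice of $t_{j+1}$ as the largest element of $\cR \cap ]0,s_j]$ together with $t_j \downarrow 0$ in the infinite case forces every point of $\cR$ into some $]s_j, t_j]$, a contradiction. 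For the second half: by \eqref{e:seq1}, $\bE(T_{0,r_k}, \B_{6\sqrt m}) \to 0$, hence $\bE(T, \B_{6\sqrt m r_k}) \to 0 < \eps_3^2$ for $k$ large, so $r_k \in \cR$ for $k \gg 1$, and by the first half $r_k$ lies in some interval of $\mathcal{F}$.

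\textbf{Items (iii), (iv), (v): the scale-invariant estimates on $]s_j/t_j, 3[$.} These are the heart of the matter and where I expect the real work, though even here the strategy is to transcribe estimates already available. The unifying idea is that on the range $r \in ]s_j/t_j, 3[$, by the very definition \eqref{e:s_j}–\eqref{e:go} of $s_j$ via $(\mathrm{Go})$, every Whitney cube $L \in \sW^{(j)}$ meeting $B_r(0,\pi_0)$ has $\ell(L) < c_s r$, i.e. the Whitney decomposition for $T_j$ behaves, at scale $r$ around the origin, exactly as if the origin were "deep" enough that no large stopped cube interferes. For (iv): apply \eqref{e:ex+ht_whitney} and the stopping condition to a cube $L$ of size comparable to $r$ meeting $B_r(0,\pi_0)$ — since $\ell(L) < c_s r$ but its ancestor of size $\geq c_s^{-1}\ell(L) \cdot (\text{something}) \gtrsim r$ was not stopped, one has $\bE(T_j, \B_L) \leq C_e \bmo^j \ell(L)^{2-2\delta_2}$ on an ancestor, and then a covering/monotonicity argument (Theorem~\ref{t:monot_imp} to compare excess at nearby scales) upgrades this to $\bE(T_j, \B_r) \leq C_0 \eps_3^2 r^{2-2\delta_2}$, using $\bmo^j \leq \eps_3^2$ from \eqref{e:small ex} and \eqref{e:piattezza_sigma}. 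For (v): this is exactly the height bound \eqref{e:pre_height}/\eqref{e:ex+ht_whitney} for $T_j$ transported to the ball $\p_j^{-1}(\cB_r(p_j))$, using Corollary~\ref{c:cover}(ii) and the fact that all relevant Whitney cubes at scale $r$ have $\ell(L) \lesssim r$; the constant $(\bmo^j)^{1/(2m)} r^{1+\beta_2}$ is the uniform version of $\bh(T_j, \B_L) \leq C_h (\bmo^j)^{1/2m}\ell(L)^{1+\beta_2}$. For (iii): this is the most delicate — one must trace the "domain of influence" bookkeeping from \cite[Definition 3.3]{DS4}. The idea: a cube $J \in \sW^{(j)}_n$ meeting $B = \p_{\pi_0}(\cB_r(p_j))$ is, by definition of the $(\mathrm{NN})$-stopping, neighbor to a chain leading back to a cube $H \in \sW^{(j)}_e \cup \sW^{(j)}_h$; one must argue the terminal cube is of type $e$ (type $h$ is excluded at this scale because the height bound of (v) already holds, contradicting an $h$-stop), estimate its side-length $\ell(H) \leq 3 c_s r$ by iterating (w3) along the chain and using $(\mathrm{Go})$, and bound the separations via \eqref{e:separazione} and the chain length. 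The main obstacle will be item (iii): keeping the combinatorics of the domains of influence under control and verifying that no type-$(h)$ cube appears, which requires invoking (v) in a bootstrap-consistent way. I would set up (iii) last, after (iv) and (v) are in hand so that the height/excess bounds can be fed into the argument that the stopping cube $H$ is excess-type.

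Throughout, the recurring technical device is that smallness of $\eps_3$ makes all the "hierarchy of parameters" constraints of Assumption~\ref{i:parametri} simultaneously satisfiable, so that Proposition~\ref{p:whitney}, Theorem~\ref{t:cm}, Theorem~\ref{t:approx} and Corollary~\ref{c:cover} all apply verbatim to each $T_j$; the only genuinely new content of Proposition~\ref{p:flattening} over those results is the uniformity in $j$ and the passage from "estimates at cube scales inside $\sW^{(j)}$" to "estimates at all scales $r \in ]s_j/t_j, 3[$ centered at the origin", which is precisely what the definition of $s_j$ in \eqref{e:s_j} was engineered to permit.
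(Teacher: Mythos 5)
Your items (i) and (ii) are essentially correct and match the intended argument: (i) combines the emptiness of $\sW^{j'}$ for $j'\leq N_0+6$ from Proposition~\ref{p:whitney} with the stopping relation \eqref{e:st}, and (ii) is a maximality argument plus \eqref{e:seq1}. For (iv) the idea is right — cubes at scale $\sim r$ near the origin are forced into $\sS^{(j)}$ by (Go), so \eqref{e:ex+ht_ancestors} applies — but the appeal to Theorem~\ref{t:monot_imp} is misplaced: that result controls the density ratio $\|T\|(\bB_\rho)/\rho^m$, not the spherical excess, and what is actually needed is just the elementary comparison $\bE(T_j,\bB_r)\leq C(\ell(L)/r)^m\,\bE(T_j,\B_L)$ for a cube $L\in\sS^{(j)}$ with $\bB_r(p_j)\subset\B_L$ and $\ell(L)\sim r$. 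Item (v) is a similar transcription, again directionally fine.

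The real gap is in your treatment of (iii), specifically in the claim that terminal (HT) cubes can be excluded via a bootstrap with (v). The quantities you propose to play off against each other live at \emph{incommensurate scales}. The (HT) stopping condition asserts $\bh(T_j,\B_H) > C_h(\bmo^j)^{1/2m}\ell(H)^{1+\beta_2}$, where $\ell(H)$ is a cube side-length with $\ell(H)\leq c_s r$ and may be vastly smaller than $r$. Item (v) asserts $\dist(x,\cM_j)\leq C_0(\bmo^j)^{1/2m}r^{1+\beta_2}$. Since $\ell(H)^{1+\beta_2}$ can be far smaller than $r^{1+\beta_2}$, a large height at the scale $\ell(H)$ is perfectly compatible with a small absolute distance to $\cM_j$ at the scale $r$, and no contradiction arises. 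So your exclusion of (HT) cubes does not go through. The correct route, indicated in Section~\ref{ss:splitting} of the survey, is to invoke \cite[Corollary 3.2]{DS4} directly: the reason a chain of (NN) cubes can never terminate at an (HT) cube is the \emph{propagation} of the layer separation guaranteed by Theorem~\ref{t:height} — once a cube $H$ has large height and thus splits into well-separated sheets, its smaller neighbors inherit that separation and themselves stop for the (HT) reason before they could become (NN) cubes. This is a local combinatorial/propagation fact about the Whitney decomposition, independent of the distance bound in (v). Once that is cited, the side-length and separation estimates in (iii) follow from iterating (w3) and \eqref{e:separazione} along the chain and using (Go), as you correctly anticipate.
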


\subsection{Frequency function} For each interval of flattening we have a center manifold $\cM_j$ and an approximating map $N_j$. Since each $r_k$ in Proposition \ref{p:seq} belongs to some interval of flattening $[s_{j(k)}, t_{j(k)}]$, we now hope to find suitable rescalings of the maps $N_{j(k)}$ which converge to a $\D$-minimizing map and which inherits the singularities of the currents $T_{0, r_k}$, thus contradicting the regularity theory for $\D$-minimizing functions. In other words, we hope to succeed in the very same program which failed in Section \ref{s:tentativo}. The reason is that the $N_j$ are now well-centered. In particular we claim that
we have a frequency function estimate for the $N_j$'s, similar to the estimate for $\D$-minimizing maps.

Consider the following Lipschitz (piecewise linear) function $\phi:[0+\infty[ \to [0,1]$ given by
\begin{equation*}
\phi (r) :=
\begin{cases}
1 & \text{for }\, r\in [0,\textstyle{\frac{1}{2}}],\\
2-2r & \text{for }\, r\in \,\, ]\textstyle{\frac{1}{2}},1],\\
0 & \text{for }\, r\in \,\, ]1,+\infty[.
\end{cases}
\end{equation*}
For every interval of flattening $I_j = ]s_j, t_j]$,
let $N_j$ be the normal approximation of $T_j$
on $\cM_j$ in \cite[Theorem~2.4]{DS4}.

\begin{definition}[Frequency functions]\label{d:frequency}
For every $r\in ]0,3]$ we define: 
\[
\bD_j (r) := \int_{\cM^j} \phi\left(\frac{d_j(p)}{r}
\right)\,|D N_j|^2(p)\, dp\quad\mbox{and}\quad
\bH_j (r) := - \int_{\cM^j} \phi'\left(\frac{d_j (p)}{r}\right)\,\frac{|N_j|^2(p)}{d_j(p)}\, dp\, ,
\]
where $d_j (p)$ is the geodesic distance on $\cM_j$ between $p$ and $\Phii_j (0)$.
If $\bH_j (r) > 0$, we define the {\em frequency function}
$\bI_j (r) := \frac{r\,\bD_j (r)}{\bH_j (r)}$. 
\end{definition}

The following is the main analytical estimate of the paper, which allows us to
exclude infinite order of contact among the different sheets of a minimizing
current.

\begin{theorem}[Main frequency estimate, {cf. \cite[Theorem 3.2]{DS5}}]\label{t:frequency}
If $\eps_3$ is sufficiently small, then
there exists a geometric constant $C_0$ such that, for every
$[a,b]\subset [\frac{s_j}{t_j}, 3]$ with $\bH_j \vert_{[a,b]} >0$, we have
\begin{equation}\label{e:frequency}
\bI_j (a) \leq C_0 (1 + \bI_j (b)).
\end{equation}
\end{theorem}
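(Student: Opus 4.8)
The plan is to mimic the classical proof of the almost-monotonicity of Almgren's frequency function (Theorem~\ref{t:frequency2}) in the ``curvilinear'' setting of the normal approximation $N_j$ over the center manifold $\cM_j$, carefully tracking the error terms coming from (a) the curvature of $\cM_j$, (b) the fact that $N_j$ is only an \emph{approximate} harmonic/stationary object (it solves an Euler--Lagrange system only up to the errors quantified in Theorem~\ref{t:approx}), and (c) the mismatch between the mass of $\bT_{F_j}$ and that of $T_j$ on the set $\cM_j\setminus\cK_j$. First I would fix an interval of flattening $I_j=]s_j,t_j]$, drop the index $j$, and introduce the smoothed versions of $\bD$ and $\bH$ exactly as in Definition~\ref{d:frequency}, together with an auxiliary ``boundary'' quantity $\bE(r):=\int_{\cM}\phi'(d/r)\,d^{-1}\sum_i\langle N_i,\partial_{d}N_i\rangle$ playing the role of $\int_{\partial B_r}\sum_i\langle f_i,\partial_\nu f_i\rangle$. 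The core is then to produce two differential (in)equalities: a first-variation (outer) identity relating $\bD(r)$ to $\bE(r)$ up to errors, and an inner-variation identity relating $\frac{d}{dr}\bD(r)$ to $\int \phi''(\cdots)\sum_i|\partial_d N_i|^2$ up to errors; combined with the elementary computation $\bH'(r)=\tfrac{m-1}{r}\bH(r)+\tfrac{2}{r}\bE(r)+\text{(curvature err.)}$ these give a differential inequality for $\log\bI(r)$ of the form $\big(\log\bI\big)'\ge -C\,\Xi(r)$, where $\Xi$ is an integrable error term.

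The key steps, in order, are: (1) Derive the outer-variation and inner-variation estimates for $N$ on $\cM$. Here one plugs into the Euler--Lagrange system the vector fields $X=\phi(d/r)\,\nabla d\cdot d$ and the corresponding outer perturbation; the stationarity of $T$ (hence of $\bT_F$ up to the mass error) together with the Taylor expansion of the mass from Theorem~\ref{t:taylor_area} yields identities with right-hand sides controlled by $\int_{\cL}|DN|^2$, $\int_{\cL}|N|\,|H|$, $\|\bT_F-T\|(\p^{-1}(\cL))$ and $\Lip(N)$-weighted higher-order terms. (2) Bound each of these error terms on the dyadic annuli $\{d\sim 2^{-i}r\}$ using the \emph{local} estimates \eqref{e:Lip_regional}--\eqref{e:av_region} of Theorem~\ref{t:approx} and the $C^3$ bounds $\|D\phii\|_{C^{2,\kappa}}\le C\bmo^{1/2}$ on the center manifold from Theorem~\ref{t:cm}; summing the geometric series in $i$ one gets, for $[a,b]\subset[s_j/t_j,3]$, a bound of the shape $\int_a^b\Xi(r)\,dr\le C_0\,\eps_3^{2\gamma}\,\log(b/a)+C_0$, the crucial point being that the errors are super-linear in $\bmo$ (powers like $\bmo^{\gamma_2}$ or $\ell(L)^{\kappa}$) while $\bD,\bH$ are only \emph{linear} in $\bmo$. (3) Translate the annular error bounds into the differential inequality $\frac{d}{dr}\log\big((1+\bI(r))\big)\ge -C_0\,\Xi(r)$ wherever $\bH>0$, and integrate from $a$ to $b$: $\log(1+\bI(a))\le\log(1+\bI(b))+C_0\int_a^b\Xi\le\log(1+\bI(b))+C_0$, which is precisely \eqref{e:frequency}. (4) Handle the bookkeeping subtleties: the positivity hypothesis $\bH_j|_{[a,b]}>0$ is used to ensure $\bI$ is well-defined throughout, and one must check via Proposition~\ref{p:flattening}(iv)--(v) and the stopping conditions (Stop)/(Go) that on the range $[s_j/t_j,3]$ the excess and height at every relevant scale stay controlled, so that the center-manifold estimates are applicable and the cubes $L\in\sW^{(j)}$ meeting $B_r$ have $\ell(L)\le C_0\,c_s\,r$.

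The main obstacle I expect is precisely controlling the error terms in the frequency differential inequality by a quantity whose integral over $[a,b]$ is bounded by $C_0\log(b/a)+C_0$ rather than by something that could blow up as $a\downarrow s_j/t_j$. Two features make this delicate. First, near the inner scale $s_j/t_j$ the Whitney decomposition of $\cM_j$ has cubes of size comparable to the distance from the center (condition (Stop)), so the approximation $N_j$ is \emph{least} accurate exactly where we most need precision; one must exploit that there the excess is still of order $\eps_3^2$ (Proposition~\ref{p:flattening}(iv)) and that the error exponents are strictly larger than the ``energy'' exponent, so the offending terms carry an extra positive power of the radius that kills the logarithmic divergence. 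Second, the linear term $-Q\int_\cM\langle H,\etaa\circ N\rangle$ in Theorem~\ref{t:taylor_area} is only \emph{quadratically} small in $\bmo$ a priori, which would be fatal; the estimate \eqref{e:av_region} on $\int_\cV|\etaa\circ N|$ — with its splitting into a term of order $\ell(L)^{m+3+\beta_2/3}$ and a term controlled by $\cG(N,Q\a{\etaa\circ N})^{2+\gamma_2}$ — is exactly what is needed to absorb this term into $\int|DN|^2$ times an integrable error, and getting the constants and powers to line up is the technical heart of the argument. Everything else is a (lengthy but routine) adaptation of the single-valued, flat-domain frequency computation already sketched for Theorem~\ref{t:frequency2}.
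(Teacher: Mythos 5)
Your proposal follows the paper's strategy closely and correctly identifies most of the building blocks: the outer and inner variation identities with error terms (Theorems~\ref{t:outer}, \ref{t:inner}), the Taylor expansion of mass (Theorem~\ref{t:taylor_area}), the use of \eqref{e:av_region} to tame the mean-curvature term, the local estimates of Theorem~\ref{t:approx} on Whitney regions, the excess decay of Proposition~\ref{p:flattening}(iv) to kill the logarithmic divergence, and the integration of a differential inequality for $\log(1+\bI_j)$. That matches what the paper sketches at the start of the ``Blow-up II'' section.

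However, one crucial ingredient is asserted without justification, and it is exactly what the paper singles out as the central new phenomenon. When you write that ``the errors are super-linear in $\bmo$ \ldots while $\bD, \bH$ are only \emph{linear} in $\bmo$'', the super-linearity of the errors is indeed supplied by \eqref{e:err_regional}, \eqref{e:av_region} and Proposition~\ref{p:flattening}(iv). But the claim that $\bD_j, \bH_j$ are \emph{at least} of order $\bmo$ (and not far smaller) requires genuine \emph{lower} bounds on the local energy and $L^2$ norm of $N_j$, because the mass discrepancy $\|\bT_{F_j}-T_j\|$ and several other terms in Theorems~\ref{t:outer}--\ref{t:inner} are \emph{absolute} error bounds, and in $(\log(1+\bI_j))'$ they get divided by $\bD_j$ or $\bH_j$. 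Without such lower bounds the ratios are uncontrolled. These lower bounds are the ``splitting before tilting'' estimates of Section~\ref{ss:splitting} (cf.\ \cite[Propositions 3.1, 3.4]{DS4}): a cube stopped by (HT) forces, via a layered version of the height bound, a definite separation of sheets and hence $\int|N_j|^2\gtrsim\bmo^{1/m}\ell(L)^{m+2+2\beta_2}$ near $\cL$; a cube stopped by (EX) forces, by comparison with the optimal De~Giorgi decay of Remark~\ref{r:alfa=1}, that the local $\D$-minimizing approximant cannot be close to $Q$ copies of a classical harmonic function, yielding $\int|DN_j|^2\gtrsim\bmo\,\ell(L)^{m+2-2\delta_2}$. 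This is the mechanism that makes the ratio $\text{error}/\bD_j$ decay as a positive power of $\ell(L)\lesssim r$, and it is not routine. Relatedly, your intermediate formula $\int_a^b\Xi\le C_0\eps_3^{2\gamma}\log(b/a)+C_0$ cannot be the final bound, since $b/a$ may be as large as $3t_j/s_j$, which is unbounded over $j$; your own ``main obstacle'' paragraph correctly anticipates that the extra power of $r$ must remove the logarithm, but the formula as written does not reflect this and should simply read $\int_a^b\Xi\le C_0$.
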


Recalling the proofs of Theorem \ref{t:frequency} and Theorem \ref{t:frequency2}, the monotonicity of the corresponding quantity for $\D$-minimizing functions rely on two identities which correspond to ``internal'' and ``external'' variations, cf. Proposition \ref{p:import}. In turn both variations can be interpreted as suitable deformations of the graphs of the functions. We then know that:
\begin{itemize}
\item The variations of the Dirichlet energy for $N_j$ is close to the variation of the mass of its graph;
\item The variation of the mass of its graph is close to the variation of the mass of $T_{0, t_j}$, which in turn is $0$ because it is area minimizing (this must be suitably adjusted if the minimizing property is inside some ambient manifold $\Sigma$).
\end{itemize}
We thus can write down identities which correspond to those of Proposition \ref{p:import} but will include several error terms. In particular, quite careful computations give the following formulae, where, for the moment, we drop the subscripts $j$.

\begin{theorem}[Expansion of outer variations, {cf. \cite[Theorem 4.2]{DS2}}]\label{t:outer}
If $\varphi\in C^1_c (\cM)$ and $X(p) := 
\varphi (\mathbf{p} (p)) (p-\mathbf{p}(p))$, then
\begin{align}
\delta \mathbf{T}_F (X) = \int_\cM \Big(\varphi \, |DN|^2 + 
\sum_i (N_i \otimes D \varphi) : DN_i\Big) - \underbrace{Q \int_\cM \varphi \langle H, \etaa\circ N\rangle}_{{\rm Err}_1} + \sum_{i=2}^3{\rm Err}_i
\label{e:outer} 
\end{align}
where
\begin{gather}
|{\rm Err}_2| \leq C \int_\cM |\varphi| |A|^2|N|^2\label{e:outer_resto_2}\\
|{\rm Err}_3|\leq C \int_\cM \Big(|\varphi| \big(|DN|^2 |N| |A| + |DN|^4\big) +
|D\varphi| \big(|DN|^3 |N| + |DN| |N|^2 |A|\big)\Big)\label{e:outer_resto_3}\, .
\end{gather}
\end{theorem}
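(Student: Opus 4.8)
The plan is to derive \eqref{e:outer}–\eqref{e:outer_resto_3} by computing the first variation of $\mass(\bT_F)$ along the vector field $X(p) = \varphi(\p(p))(p - \p(p))$, which is precisely the vector field generating the ``outer'' (left-composition) deformation of the multivalued graph, adapted to the curvilinear setting of the center manifold $\cM$. First I would recall that $X$ is Lipschitz, compactly supported (since $\varphi\in C^1_c(\cM)$), and tangent to $\Sigma$ by construction, because $x + N_i(x)\in\Sigma$ with $N_i(x)\perp T_x\cM$ and the flow of $X$ moves $x + N_i(x)$ along the fiber direction inside $\Sigma$ (using assumption (A1) of Definition~\ref{d:app} and the $C^{2,\kappa}$ regularity of $\p$ from Assumption~\ref{intorno_proiezione}). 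Thus $\delta\bT_F(X)$ is computed by the standard first variation formula for the area functional: $\delta\bT_F(X) = \int \dv_{\vec{\bT_F}} X \, d\|\bT_F\|$, where $\dv_{\vec{\bT_F}}$ denotes the tangential divergence along the approximate tangent planes of $\bT_F$.

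The core of the computation is then to expand $\dv_{\vec{\bT_F}} X$ in powers of $N$ and its derivatives. Here one uses the $Q$-valued area formula (Corollary~\ref{c:massa_grafico}, in the version for graphs over $\cM$) together with the Taylor expansion of the mass from Theorem~\ref{t:taylor_area}: the leading quadratic term $\tfrac12\int_\cM |DN|^2$ produces, upon differentiation along $X$, the first-order terms $\int_\cM(\varphi|DN|^2 + \sum_i (N_i\otimes D\varphi):DN_i)$; the linear term $-Q\int_\cM\langle H,\etaa\circ N\rangle$ in the mass expansion produces ${\rm Err}_1 = Q\int_\cM\varphi\langle H,\etaa\circ N\rangle$; and all the higher-order (trilinear and above) contributions in Theorem~\ref{t:taylor_area}, once differentiated, must be collected into ${\rm Err}_2$ and ${\rm Err}_3$ and bounded pointwise. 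The term ${\rm Err}_2$ gathers the contributions where two derivatives fall on the curvature of $\cM$ (hence the factor $|A|^2|N|^2$ with no gradient of $N$), arising from the second-order Taylor coefficients of the metric on $\cM$ and of the embedding; ${\rm Err}_3$ gathers everything genuinely cubic or higher in $(N, DN)$, with the distinction between a $\varphi$-factor and a $D\varphi$-factor reflecting whether the variation differentiates $N$ or the cutoff. I would organize the bookkeeping by writing the integrand of $\mass(\bT_F)$ as $Q\,dvol_\cM$ plus a correction $\mathcal{E}(x,N,DN)$ with explicit structure $\mathcal{E} = \tfrac12|DN|^2 - \langle H,\etaa\circ N\rangle\,Q^{-1}\cdot(\ldots) + O(|N|^2|A|^2) + O((|N|+|DN|)^3)$, then differentiate $\int \mathcal{E}$ along the flow of $X$ using the chain-rule formulas of Proposition~\ref{p:chain}, and match terms.

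The main obstacle I expect is the careful separation of the ``genuinely linear'' mean-curvature term ${\rm Err}_1$ from the spurious quadratic-in-$N$ terms that also involve the curvature of $\cM$: the naive expansion mixes $\langle H,\etaa\circ N\rangle$ with terms of the form $\langle A(N,\cdot),\ldots\rangle$ and one must be precise about which pieces are exactly $Q\int\varphi\langle H,\etaa\circ N\rangle$ (this precision is what later makes the frequency estimate work, since $\langle H,\etaa\circ N\rangle$ is the dangerous non-perturbative term of Theorem~\ref{t:taylor_area}) and which pieces are absorbed into the $|A|^2|N|^2$ bound of \eqref{e:outer_resto_2}. A secondary technical point is the interplay between the pushforward $\bT_F$ and the honest current $T$: strictly speaking \eqref{e:outer} is a statement about $\delta\bT_F$, so the proof is purely about the multivalued graph and does not use minimality of $T$ at this stage — minimality enters only later when one combines \eqref{e:outer} with $\delta T(X')=0$ for the corresponding deformation $X'$ of $T$ and estimates $\|\bT_F - T\|$ via \eqref{e:err_regional}. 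Finally, obtaining the sharp pointwise bounds \eqref{e:outer_resto_2}–\eqref{e:outer_resto_3} with the correct polynomial structure requires uniform control on $\Lip(N)$ and $\|N\|_{C^0}$, which is exactly what the smallness hypothesis (``the Lipschitz constant of $N$ is sufficiently small'') provides, allowing one to Taylor-expand the area integrand to the needed order and to discard remainders of order $\geq 4$ against the displayed cubic terms.
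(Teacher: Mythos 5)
Your strategy — realize the outer variation as the deformation $N_i\mapsto (1+\varepsilon\varphi)N_i$ of the normal field, feed this into the Taylor expansion of the mass of a normal graph, and differentiate at $\varepsilon=0$ — is essentially the proof of \cite[Theorem 4.2]{DS2}, and your bookkeeping of how the pieces of the area integrand contribute to ${\rm Err}_2$ (quadratic curvature corrections, producing $|A|^2|N|^2$) versus ${\rm Err}_3$ (cubic-and-higher in $(N,DN)$, split by whether the derivative hits $N$ or the cutoff) is correct. Do note that the survey's phrasing in Theorem~\ref{t:taylor_area}, ``H.O.T. at least trilinear in $N$'', is too loose for your purposes: the precise remainder in \cite[Theorem 3.2]{DS2} includes a piece bounded by $|A|^2|N|^2$, which is only quadratic in $N$, and that is exactly what feeds ${\rm Err}_2$ — so you are relying (correctly) on the sharper statement, not the survey's paraphrase.

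One inaccuracy to correct: the claim that $X(p)=\varphi(\p(p))(p-\p(p))$ is tangent to $\Sigma$ is false in general when $l=n-\bar{n}>0$. For $p=x+N_i(x)$ one has $X(p)=\varphi(x)\,N_i(x)$, and $N_i(x)$ is orthogonal to $T_x\cM$ but is \emph{not} an element of $T_p\Sigma$; the requirement $x+N_i(x)\in\Sigma$ in (A1) of Definition~\ref{d:app} does not imply that, nor does it imply that the ray $\{x+tN_i(x):t\geq 0\}$ — along which the flow of $X$ actually moves $p$ — stays inside $\Sigma$. This is harmless for the present statement, which is purely about $\delta\bT_F(X)$ and never invokes stationarity of $T$ (so no tangency is required, contrary to what your first paragraph suggests is the justification for the first-variation formula). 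But it matters in the downstream application you allude to: when \eqref{e:outer} is pitted against $\delta T=0$, one cannot simply take ``the corresponding deformation $X'$ of $T$'' to be $X$ itself — the non-$\Sigma$-tangential component of $X$ generates an extra error that must be estimated, and this is part of where the $|A|$- and $\mathbf{c}(\Sigma)$-dependent terms in the frequency estimate come from.
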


Let $Y$ be a $C^1$ vector field on $T\cM$ with compact
support and define $X$ on $\bU$ setting $X (p) = Y (\p (p))$. Let $\{\Psi_\eps\}_{\eps \in ]-\eta, \eta[}$ be
any isotopy with $\Psi_0 = {\rm id}$ and $\left.\frac{d}{d\eps}\right\vert_{\eps=0}
\Psi_\eps = Y$ and define the following
isotopy of $\mathbf{U}$: $
\Phi_\eps (p) = \Psi_\eps (\mathbf{p} (p)) + (p-\mathbf{p} (p))$.
Clearly $X = \left.\frac{d}{d\eps}\right|_{\eps =0} \Phi_\eps$. 

\begin{theorem}[Expansion of inner variations, {cf. \cite[Theorem 4.3]{DS2}}]\label{t:inner}
If $X$ is as above, then
\begin{align}
\delta \mathbf{T}_F (X) = \int_\cM \Big( \frac{|DN|^2}{2} {\rm div}_{\cM}\, Y -
\sum_i  D N_i : ( DN_i\cdot D_{\cM} Y)\Big)+ \sum_{i=1}^3{\rm Err}_i,\label{e:inner} 
\end{align}
where
\begin{gather}
{\rm Err}_1 = - Q \int_{ \cM}\big( \langle H, \etaa \circ N\rangle\, {\rm div}_{\cM} Y + \langle D_Y H, \etaa\circ N\rangle\big)\, ,\label{e:inner_resto_1}\allowdisplaybreaks\\
|{\rm Err}_2| \leq C \int_\cM |A|^2 \left(|DY| |N|^2  +|Y| |N|\, |DN|\right), \label{e:inner_resto_2}\allowdisplaybreaks\\
|{\rm Err}_3|\leq C \int_\cM \Big( |Y| |A| |DN|^2 \big(|N| + |DN|\big) + |DY| \big(|A|\,|N|^2 |DN| + |DN|^4\big)\Big)\label{e:inner_resto_3}\, .
\end{gather}
\end{theorem}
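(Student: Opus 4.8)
The statement is proved in \cite[Theorem 4.3]{DS2}; here is the scheme. The plan is to evaluate $\delta\bT_F(X)=\frac{d}{d\eps}\big|_{\eps=0}\mass\big((\Phi_\eps)_\sharp\bT_F\big)$ directly from the area formula, extending to a curved base the computation behind Corollary \ref{c:taylor_area} (carried out in \cite{DS2} for graphs over an $m$‑plane). Since $\Phi_\eps(x+N_i(x))=\Psi_\eps(x)+N_i(x)$, the current $(\Phi_\eps)_\sharp\bT_F$ is the $Q$‑valued push‑forward of $\a{\cM}$ under $x\mapsto\sum_i\a{\Psi_\eps(x)+N_i(x)}$; the uniform smallness of $\Lip(N)$ from Theorem \ref{t:approx} (cf. \eqref{e:Lip_regional}) makes the orientations of the $Q$ sheets almost parallel, so the no‑cancellation hypothesis \eqref{e:no_cancellation} holds and Lemma \ref{l:area} gives
\[
\mass\big((\Phi_\eps)_\sharp\bT_F\big)=\int_\cM \sum_i \mathbf J(G_{i,\eps})(x)\,d\cH^m(x),\qquad G_{i,\eps}(x):=\Psi_\eps(x)+N_i(x),
\]
where $\mathbf J(G_{i,\eps})$ is the tangential Jacobian of $G_{i,\eps}\colon\cM\to\R^{m+n}$. (The locally defined selections $N_i$ are turned into global Lipschitz maps, piece by piece, via Lemma \ref{l:chop}, and the variation is then summed over the pieces.)

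Differentiation under the integral sign is justified by the uniform Lipschitz control on $N$ and on $\Psi_\eps$ over the relevant compact set, so it remains to expand $\frac{d}{d\eps}\big|_{0}\mathbf J(G_{i,\eps})=\tfrac12\,\mathbf J(G_{i,0})\,\mathrm{tr}\big(M^{-1}\dot M\big)$, with $M_\eps:=(DG_{i,\eps})^{\top}DG_{i,\eps}$ and $\dot M$ its $\eps$‑derivative at $0$. Writing $D\Psi_0=\iota$ for the inclusion $T_x\cM\hookrightarrow\R^{m+n}$, using the Weingarten identity $\langle DN_i(e_a),e_b\rangle=-\langle N_i,A(e_a,e_b)\rangle$ to split $DN_i$ into its $O(|A|\,|N|)$ tangential part and its normal part $\nabla^\perp N_i$, and expanding $M$, $M^{-1}$, $\dot M$ to the appropriate order, the principal contributions collect pointwise to
\[
\tfrac12|DN_i|^2\,\dv_\cM Y-DN_i:(DN_i\cdot D_\cM Y)-\langle N_i,H\rangle\,\dv_\cM Y-\langle D_Y H,N_i\rangle+\dv_\cM Y,
\]
where $A$ is the second fundamental form of $\cM$, $H=\mathrm{tr}_{\cM}A$ its mean curvature, the last term $\dv_\cM Y$ comes from the diffeomorphism $\Psi_\eps$ acting on the area element and integrates to zero over $\cM$, and the term $\langle D_Y H,N_i\rangle$ appears because in $\mathbf J(G_{i,\eps})$ the curvature of $\cM$ is effectively transported along the flow $\Psi_\eps$, so its $\eps$‑derivative produces a derivative of $H$ in the direction $Y$. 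Everything else — terms cubic or higher in $(N,DN)$, terms quadratic in $N$ carrying two factors of $A$, and the discrepancy between $DN_i:(DN_i\cdot D_\cM Y)$ and the purely normal $\nabla^\perp N_i:(\nabla^\perp N_i\cdot D_\cM Y)$ — is routinely estimated, using $\Lip(N)\le1$, and placed into ${\rm Err}_2$ and ${\rm Err}_3$ so as to meet the structural bounds \eqref{e:inner_resto_2}--\eqref{e:inner_resto_3}. Summing over $i$ turns $\sum_i\langle N_i,H\rangle$ into $Q\langle\etaa\circ N,H\rangle$ and likewise for $\langle D_Y H,N_i\rangle$, producing ${\rm Err}_1$, and summing over the pieces of the decomposition yields \eqref{e:inner}.

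The main obstacle is the bookkeeping in the second step: one must extract from the $\eps$‑expansion of $\mathbf J(G_{i,\eps})$ exactly the $N$‑linear part (which has to reproduce ${\rm Err}_1$, including the delicate $\langle D_Y H,\etaa\circ N\rangle$ contribution) and the quadratic principal part, while controlling every remainder by the precise combination of $|A|$, $|Y|$, $|DY|$, $|N|$, $|DN|$ prescribed in \eqref{e:inner_resto_2}--\eqref{e:inner_resto_3}. This is where the curvature of $\cM$ genuinely enters — the induced metric, the normal connection $\nabla^\perp$ and the second fundamental form $A$ all get pulled back under $\Psi_\eps$ — and where the $C^{3,\kappa}$ estimates of Theorem \ref{t:cm}, together with the constraint $\cM\subset\Sigma$, are used to keep all these geometric quantities bounded. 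Once the single‑valued expansion is in place, the passage to $Q$‑valued maps is the same measure‑theoretic bookkeeping as in \cite{DS2}, and integrability of all the integrands is furnished by Theorem \ref{t:approx}.
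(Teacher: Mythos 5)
Your overall scheme is the right one and matches the paper: express $\mass\big((\Phi_\eps)_\sharp\bT_F\big)$ via the $Q$-valued area formula after decomposing $N$ into Lipschitz selections, observe $\Phi_\eps(x+N_i(x))=\Psi_\eps(x)+N_i(x)$ so that the deformed current is $\bT_{G_\eps}$ with $G_{i,\eps}=\Psi_\eps+N_i$, differentiate the tangential Jacobian at $\eps=0$, split $DN_i$ into its normal part and its tangential (Weingarten) part, and match the resulting terms against the stated expansion. That is precisely the strategy of \cite[Theorem 4.3]{DS2}.

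There is, however, one concrete misstep. You assert that, after expanding $M$, $M^{-1}$, $\dot M$, ``the principal contributions collect \emph{pointwise}'' to a formula that already contains the term $-\langle D_Y H,N_i\rangle$. This cannot be a pointwise identity: $\mathbf J(G_{i,\eps})$ is built from the first derivatives $D\Psi_\eps$ and $DN_i$ alone, so $\tfrac{d}{d\eps}\big|_0\mathbf J(G_{i,\eps})$ involves $Y$, $DY$, $N_i$, $DN_i$ and the second fundamental form $A$ at the point $x$, but no spatial derivative of $A$ (hence no $D_Y H$). What the pointwise expansion actually produces at linear order in $N_i$ is, besides $-\langle N_i,H\rangle\,\dv_\cM Y$, a term of the schematic form $g^{ab}\langle A(e_a,Y),\nabla^\perp_b N_i\rangle$ (and its symmetrizations), which is \emph{not} one of the allowed shapes in ${\rm Err}_1$--${\rm Err}_3$. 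The crux of the proof is precisely to integrate this term by parts — using compactness of $\supp Y$ once more and the Codazzi identity to turn $g^{ab}\nabla^\perp_{e_b}\big(A(e_a,Y)\big)$ into $D_Y H$ plus lower-order pieces — so that $-\langle D_Y H,\etaa\circ N\rangle$ emerges and the remainder falls into ${\rm Err}_2$ and ${\rm Err}_3$. Your heuristic that ``curvature is transported along the flow'' points at the right phenomenon, but as written the proof skips the integration-by-parts step that actually makes ${\rm Err}_1$ appear; without it the expansion does not close up in the stated form. Filling in this step (and checking the boundary terms vanish) is the missing content.
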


\section{Blow-up II: Splitting before tilting and bound on the frequency function}

In order to prove Theorem \ref{t:frequency} we hope to achieve two important conclusions from the center manifold
construction:
\begin{itemize}
\item[(A)] Each time we start one of the center manifolds $\cM_j$, the initial value of the frequency function, say $\bI_j (1)$, is bounded
a-priori independently of $j$;
\item[(B)] After applying the first variations, with the help of Theorem \ref{t:outer} and Theorem \ref{t:inner} we hope to derive an inequality of the form
\[
\frac{d}{dr} \bI_j (r) \geq - E(r) \bI_j (r) \, ,
\]
which in turn gives the monotonicity of $\bI_j (r) e^{\int_0^r E(\tau) d\tau}$.
\end{itemize}
Both points are the effect of a phenomenon which we call ``splitting before tilting'' inspired by Rivi\`ere's important paper \cite{Riviere}.

\subsection{The splitting before tilting phenomenon}\label{ss:splitting} Fix a center manifold $\cM$ (we drop the subscript to simplify the notation) and consider again the algorithm defining the corresponding Whitney decomposition, as in Definition \ref{d:refining_procedure}. Ignoring for the moment the special cubes $L$ which stopped because of the condition (NN), we must remember
that all other cubes must have stopped for one of the following two very precise reasons:
\begin{itemize}
\item[(h)] either the height in $\bB_L$ exceeds $C_h \bmo^{\sfrac{1}{2m}} \ell (L)^{1+\beta_2}$;
\item[(e)] or the excess in $\bB_L$ exceeds $C_e \bmo \ell (L)^{2-2\delta_2}$.
\end{itemize}
In the first occasion we use an appropriate ``layered version'' of the classical height bound for currents with small excess.
In order to avoid too many technicalities, we state the bound in a rather informal way.

\begin{theorem}[Layered height bound, {cf. \cite[Theorem A.1]{DS4}}]\label{t:height}
Let $T$ satisfy Assumption \ref{ipotesi_base_app} in the cylinder $\bC_{4r} (0, \pi_0)$ and let $E := \bE (T, \bC_{4r} (0, \pi_0)$. In a slightly smaller cylinder $\bC_\rho (0, \pi_0)$ the current is then supported in $k\leq Q$ disjoint layers of the form 
$B_\rho (0, \pi_0) \times B_{C \rho E^{\sfrac{1}{2m}}} (y_i, \pi_0^\perp)$: in each layer the 
density $\Theta (\cdot, T)$ does not exceed a certain integer $Q_i$ by much and $\sum_i Q_i = Q$. 
\end{theorem}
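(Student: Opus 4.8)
The plan is to deduce Theorem~\ref{t:height} from the classical single-layer height bound (the interior estimate that a current satisfying Assumption~\ref{ipotesi_base_app} with small excess $E$ is, in a slightly smaller cylinder, supported in a slab of vertical width $\le C\rho E^{\sfrac{1}{2m}}$, with density close to an integer there; cf. \cite[Section 4]{DS3} or the usual references) by an inductive decomposition on the multiplicity $Q$. First I would fix $\rho$ slightly smaller than $4r$ (say $\rho = 2r$) and apply the single-slab estimate to $T$ in $\bC_{4r}(0,\pi_0)$: this produces one slab $S_0 = B_{2r}(0,\pi_0)\times B_{C r E^{\sfrac{1}{2m}}}(y_0,\pi_0^\perp)$ containing $\supp(T)\cap \bC_{2r}(0,\pi_0)$. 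If the density in that slab is uniformly close to a single integer $Q$, we are done with $k=1$; otherwise the slab must itself split, and the key step is to use the excess to separate $T\res \bC_{2r}(0,\pi_0)$ into two integral currents $T = T' + T''$ lying in vertically disjoint sub-slabs.

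The separation mechanism is the heart of the matter. The idea is: because $\|T\|(\bC_{2r})$ differs from $Q\omega_m(2r)^m$ by at most $E(2r)^m$, the current $T$ is close in mass to $Q$ flat sheets, and closeness-to-flat forces the support to concentrate near finitely many horizontal planes. Concretely, I would look at the push-forward of $\|T\|$ under the vertical projection $\p^\perp_{\pi_0}$ onto $\pi_0^\perp$, restricted to a good sub-cylinder, and show via a Chebyshev/Fubini argument on the excess integrand $|\vec T - \vec\pi_0|^2$ that $\supp(T)$ meets at most $Q$ ``vertical levels'', each level being a connected component (in the $\pi_0^\perp$-variable) of the support at scale $E^{\sfrac{1}{2m}}$. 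One then slices $T$ by a level set $\{\p^\perp_{\pi_0} = t\}$ at a value $t$ in the gap between two consecutive levels where the slice mass vanishes (such $t$ exists by a measure-theoretic pigeonhole, since the total $\cH^{m}$-mass of the ``bad'' vertical layer must be small), and defines $T'$, $T''$ as the restrictions of $T$ to the two halves of the cylinder. Both are integral currents with no boundary in the interior of the sub-cylinder (the slicing being chosen generic), each satisfies Assumption~\ref{ipotesi_base_app} with multiplicities $Q'$, $Q''$ summing to $Q$, and each has excess no larger than a fixed multiple of $E$. The inductive hypothesis applied to $T'$ and $T''$ in their respective sub-cylinders then yields the desired layered structure, with the slab widths controlled because the radius shrinks only a bounded number of times (at most $Q$) and each shrinkage costs a fixed geometric factor.

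The main obstacle I anticipate is making the ``separation into sub-slabs'' step rigorous in the integral-current category, in particular ensuring that the pieces $T'$ and $T''$ are genuinely integral (not merely integer-rectifiable) with controlled boundary. This requires choosing the slicing height $t$ carefully so that $\langle T, \p^\perp_{\pi_0}, t\rangle = 0$ (or at least has vanishing mass), which is where the excess bound enters quantitatively: the set of ``bad'' heights --- those for which the horizontal slice is nonempty and does not sit cleanly in one of the $\le Q$ levels --- has total measure controlled by $C E^{\sfrac{1}{2m}}$ times the number of levels, so for $E$ small enough there is ample room to pick a good $t$. A secondary technical point is tracking constants through the (at most $Q$) inductive steps so that the final radius $\rho$ and the final width constant $C$ depend only on $m,n,\bar n,Q$; this is routine once the one-step estimate is set up, since $Q$ is fixed. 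The monotonicity formula and the density estimate (Corollary~\ref{c:densita}, Theorem~\ref{t:cones}) guarantee throughout that the densities $Q_i$ in the final layers are integers, which is what pins down $\sum_i Q_i = Q$.
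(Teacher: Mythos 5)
The paper itself gives no proof of this statement; it is stated ``in a rather informal way'' and cited from \cite[Theorem A.1]{DS4}, so I can only judge your sketch on its own terms. The overall shape --- reduce to the single-slab height estimate, separate the current into vertically disjoint pieces, recurse on the multiplicity --- is a legitimate strategy, and the bookkeeping terminates because $Q$ is fixed. But there are gaps that need more than a gesture.

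First, the slicing you propose is by $\p^\perp_{\pi_0}$, which takes values in $\pi_0^\perp\cong\R^n$ with $n\geq 2$ in the higher-codimension setting; ``slicing at a level $t$'' and ``a gap between consecutive levels'' presuppose a one-dimensional ordering that does not exist a priori. You need to fix a unit vector $\nu\in\pi_0^\perp$ and slice by the scalar $p\mapsto\langle\nu,\p^\perp_{\pi_0}(p)\rangle$, and when $k>2$ clusters sit in a genuinely $n$-dimensional configuration you must argue that at each inductive stage a direction exists along which two clusters separate. Second, the stated dichotomy --- ``density uniformly close to $Q$, else split'' --- does not track the geometry the theorem is about, and it is slippery: density near $Q$ is a pointwise statement, whereas what makes $k=1$ sufficient is the vertical spread of the support, which the single-layer height bound controls \emph{unconditionally} (so $k=1$ is always a valid, if useless, decomposition). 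What the application in Section~\ref{ss:splitting} requires is the contrapositive content: if $\bh(T,\bC_\rho,\pi_0)\gg \rho E^{1/(2m)}$ then necessarily $k\geq 2$ with quantitative separation. A cleaner organization that makes this unambiguous is to define the layers directly as the components of $\supp(T)\cap\bC_\rho$ under the equivalence $p\sim q$ iff $|\p^\perp_{\pi_0}(p)-\p^\perp_{\pi_0}(q)|\leq C\rho E^{1/(2m)}$, show each class has positive integral projection multiplicity (hence there are at most $Q$ of them), and then apply the single-layer height bound to each class; your recursion is this argument unrolled as a loop, but it is much easier to lose the constants that way.

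Third, the claim that $T'$ and $T''$ again satisfy Assumption~\ref{ipotesi_base_app} requires that $\p_\sharp T'=Q'\a{B}$ with $Q'\geq 1$ a \emph{positive} integer, and similarly for $T''$. The constancy theorem gives integrality, but positivity is not free: slices of integral currents can carry negative weights, and it is precisely the smallness of the excess (hence near-alignment of $\vec T$ with $\vec\pi_0$, so that the projection is orientation-preserving on most of $\supp(T)$) that forces the vertical slices $\langle T,\p,y\rangle$ to have non-negative atoms summing to $Q$, and forces each cluster to contribute at least one. Making this precise is also where your ``Chebyshev/Fubini argument'' must actually live, and it is intertwined with the BV estimate on the slicing map discussed in Section~\ref{ss:BVslice}, so worth spelling out rather than invoking.
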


In particular, if $C_h$ is relatively large compared to $C_e$, at any cube $L$ where the refining procedure has stopped by the (HT) condition the current will form at least {\em two} separate ``layers'' of thickness smaller than $\ell (L)^{1+ (1-\delta_2)/m}$ and parallel to $\pi_L$. The two layers are separated by a distance comparable to the height in $\bB_L$, say  $\frac{1}{2} C_h \bmo^{\sfrac{1}{2m}} \ell (L)^{1+\beta_2}$. Thus, {\em anywhere} in a region of  diameter $C \ell (L)$ which includes $\cL$ (where $C$ will depend on the constant $M_0$) we can expect $|N|$ to be at least as large as $\frac{1}{4} C_h \bmo^{\sfrac{1}{2m}} \ell (L)^{1+\beta_2}$.
Thus, on every region $\Omega$ of measure $c \ell (L)^m$ close to $\cL$ the size of $\int_\Omega |N|^2$ is at least $C \ell (L)^{m+2+2\beta_2}$. We refer to reader to \cite[Proposition 3.1]{DS4} (see also \cite[Section 5.5]{Ema}) for the precise statement.

\medskip

In the second case we would like to say that $\int_\Omega |DN|^2$ is at least $C \ell (L)^{m+2-2\delta_2}$ for any region $\Omega$ which is a ball in $\cM$ of radius $c \ell (L)$, sufficiently close to $\cL$. This is true but much more subtle. 

Recall that, if $H$ is the father of $L$, the excess in $\bB_H$ is smaller than $C_e \bmo \ell (H)^{2-2\delta_2}$. If the parameter $M_0$ is chosen very large, $\bB_L$ and $\bB_H$ are almost concentric and the radius of $\bB_L$ is twice the radius of $\bB_H$. On the other hand we know that, in $\bB_L$, the current $T$ can be approximated by a $\D$-minimizing $Q$-valued map. If the latter were too close to a multiple copy (with multiplicity $Q$) of a classical single valued harmonic function, then the argument illustrated in Section \ref{ss:DG} tells us that the excess in $\bB_L$ should be almost $\frac{1}{4}$ of that in $\bB_H$, cf. Remark \ref{r:alfa=1}. But this is not the case because the ratio is instead at least $2^{-2+2\delta_2}$: although small, the parameter $\delta_2$ makes here a big difference! Thus in $\bB_L$ $T$ is close to a ``non-classical'' $\D$-minimizer $f= \sum_i \a{f_i}$, more precisely we can certainly assume that the Dirichlet energy of $\bar{f} = \sum_i \a{f_i - \etaa\circ f_i}$ is not too small compared to that of $f$. For such maps we have the important property that in no region of their domain of definition can the energy become too small. Passing now to the ``curvilinear coordinates'' we can infer the same conclusion for 
$\int_\Omega |DN|^2$ whenever $\Omega$ is a region of a suitable size sufficiently close to $\cL$. For the precise statement we refer the reader to \cite[Proposition 3.4]{DS4} (see also \cite[Section 5.5]{Ema}).

\medskip

Finally, it is necessary to achieve some contol upon the behavior of $N$ in Whitney regions related to cubes where the refining procedure stopped because of the (NN) conditions. However, it is not difficult to see that these cubes can be partitioned into families, each family being sufficiently close to some cube of type (HH) or (EX). In fact it turns out that only the second condition might really occur, cf. \cite[Corollary 3.2]{DS4}. The idea is simple: if $L$ is an (NN) stopping cube, then it has a neighbor twice as big where the refining procedure stopped as well. We can iterate this consideration until we find a cube of type (HT) or (EX). This cube might be of type (EX): 
the separation condition of type (HT) propagates to nearby cubes of half size by the layered height bound of Theorem \ref{t:height}.
Consider now the union $\cU$ of the Whitney regions relative to all the cubes (NN) which lead to the same cube $L$ of type (EX) via the above procedure. Then the size of $\cU$ is comparable to that of the Whitney region $\cL$ and thus we can bound all the error terms in $\cU$ with the Dirichlet energy of $N$ in $\cL$.

\subsection{Uniform bound on $\bI_j (1)$}

If the intervals of flattening are finite, then there would be nothing to prove. What happens however if the intervals are not finite? Consider some ``starting point'' $t_j$. It essentially can be of two kinds: 
\begin{itemize}
\item $t_j = s_{j-1}$; in this case the center manifold $\cM_{j-1}$ had ceased to be ``good'' at scale $s_{j-1}$ and we encounter a cube $L\in \sW^{j-1}$ which is of size $s_{j-1}/t_{j-1}$ where the refining procedure has stopped. It is the first scale at which we encounter a large cube close to $0$: for this very precise reason it is not difficult to show that it must necessarily be a cube where either the excess or the height condition fails, cf. Proposition \ref{p:flattening}. But then the discussion in Section
\ref{ss:splitting} implies that the approximating map $N_{j-1}$ is well separated at this scale: this means that the denominator of the frequency function $\bI_{j-1}$ is not too small compared to the numerator. It remains to transfer this information to the frequency function $\bI_j$: note however that at the scale which interests us there is a large overlap between the graphs of (suitable rescalings of) $N_j$ and $N_{j-1}$ since they both approximate the same piece of the same current.
\item $t_j < s_{j-1}$; in this case the excess of $T_{0, s_j}$ is not small enough to construct a center manifold. $t_j$ is then the first radius at which the excess goes below the desired threshold. As such we also know that $\bE (T, \bB_{t_j})$ cannot be {\em too} small, i.e. the current cannot be too close to a flat plain at the scale $t_j$. But then we can expect that the denominator $\bH_j (1)$ is not too small compared to the numerator $\bD_j (1)$.
\end{itemize}
This discussion can be made rigorous and we refer the reader to \cite[Theorem 5.1]{DS5} for the details.

\section{Blow-up III: final arguments}

We have now gained a sequence of center manifolds $\cM_{l(k)}$ and of approximations $N_{l(k)}$, so that $r_k$ belongs to $]s_{l(k)}, t_{l(k)}]$, where $\{r_k\}$ is the sequence of Proposition \ref{p:seq}. Let us rescale the center manifolds by a factor $r_k/t_{l (k)}$, so to gain a sequence of manifolds $\bar\cM_k$ and maps $\bar N_k$ at ``scale 1'', which should give rather good approximations of $T_k := T_{0, r_k}$.  We fix for convenience a ``central point'' for each center manifold $\cM_l$: it will be the point $p_l$ lying in the plane $\{0\}\times \R^n$. Correspondingly $\bar{p}_k := p_{l(k)} t_{l(k)}/ r_k$ is the central point of $\bar\cM_k$. The geodesic balls with center $q$ and radius $\rho$ will be denoted by $\cB_\rho (q)$ in any of these manifolds.
We next normalize further the maps $\bar N_{k}$ dividing them by their ``$L^2$ norm'', namely by
\begin{equation}\label{e:aggiunta1}
\bh _k := \left(\int_{\cB_1 (\bar p_{k})} |\bar N_k|^2\right)^{\sfrac{1}{2}}\, ,
\end{equation}
i.e. we set
\begin{equation}\label{e:aggiunta2}
N^b_k := \frac{\bar N_k}{\bh_k}\, .
\end{equation}
Observe that $\bh_k$
 must be positive: by the discussion in Section \ref{ss:splitting}, it can vanish only if $\sW^{l(k)}$
is empty. However in the latter case $T_{0, r_k}$ would coincide with $Q$ copies of the (smooth) manifold $\bar\cM_{k}$, which cannot be the case because $0$ is a singular point.

\medskip

We wish to take a limit for (a suitable subsequence of) $N^b_k$: since $\bar \cM_k$ ``flattens'' (i.e. converges to $\pi_0$) we hope that the limit $N^b_\infty$ is a $Q$-valued map which has $L^2$ norm equal to $1$, because the convergence is strong in $L^2$. Such strong convergence will be achieved if we could prove that the rescaled 
maps have bounded Dirichlet energy, namely if we had an inequality of type
\begin{equation}\label{e:aggiunta3}
\int_{\cB_1 (\bar p_{k})} |DN^b_k|^2 \leq C\, ,
\end{equation}
for some constant $C$ which does not depend on $k$. In turn this bound corresponds to a ``reverse Sobolev'' inequality for the $N_{l(k)}$, i.e.
\[
\int_{\cB_r (p_{l(k)})} |D N_{l(k)}|^2 \leq C r^{-2} \int_{\cB_r (p_{l(k)})} |N_{l(k)}|^2\, .
\]
On the other hand this is precisely the bound that a bound on the frequency function as in Theorem \ref{t:frequency} is expected to yield.
For the details we refer the reader to \cite[Corollary 5.3]{DS5}.

\subsection{The final map $N^b_\infty$} We have now finally gained our ``blown-up'' map $N^b_\infty$, which is the limit (up to subsequences) of the maps $N^b_k$, and we know that it is a $Q$-valued map on $B_1 (0, \pi_0)$. Note that the estimate \eqref{e:av_region} (and the lower bounds discussed in Section \ref{ss:splitting} deliver the extra information that $\etaa\circ N^b_\infty \equiv 0$. This also helps us in the Taylor expansion of the area functional
to conclude that $N^b_\infty$ is $\D$-minimizing, cf. Theorem \ref{t:taylor_area} (we are of course ignoring the complications given by the ambient Riemannian manifold $\Sigma$). We now wish to succeed where the strategy outlined in Section \ref{s:tentativo} failed, i.e. in showing that the blown-up map $N^b_\infty$ must ``remember'' the singularities of the rescaled currents. Note however that we just need to show that the lower bound
\[
\cH^{m-2 +\alpha}_\infty (\bB_1 \cap \sing_Q (T_{0, r_k}))\geq \eta>0\, 
\]
induces a similar lower bound on the Hausdorff measure of the set of points $p$ where $N^b_\infty (p) = Q \a{\etaa \circ {N^b_\infty} (p)}$. Indeed, from such a lower bound and Theorem \ref{t:structure} we would conclude that $N^b_\infty$ is a classical single valued harmonic function counted $Q$ times. On the other hand $\etaa \circ N^b_\infty \equiv 0$ would then imply $N^b_\infty \equiv Q\a{0}$. This would then finally contradict what we concluded from the previous section, namely that
\[
\int |N^b_\infty|^2 =1 \, .
\]
Hence, consider the closed sets ${\rm D}_Q (T_{0, r_k} \cap \bar \bB_1)$ and let $\Gamma$ be their Hausdorff limit (after extraction of a subsequence). We wish to show that most points in $\Gamma$ are points of ``multiplicity $Q$'' for $N^b_\infty$, or briefly $Q$-points for $N^b_\infty$.

\subsection{Persistency of singularities} Recall that $N^b_\infty$ is continuous (because it is $\D$-minimizing). Thus a statement analogous to Theorem \ref{t:persistence} would guarantee that {\em all} points of $\Gamma$ are $Q$-points for $N^b_\infty$. However such a theorem is not available at present. We rather show that the subset of points of $\Gamma$ which fail to be $Q$-points of $N^b_\infty$ must be a set of $\cH^{m-2+\alpha}$-measure, which however is enough for our purposes. We finish therefore this survey by sketching the argument. 

\medskip

Indeed we will follow a slightly different strategy. Summarizing what achieved so far, $N^b_\infty$ is a nontrivial $\D$-minimizing map which has $\etaa\circ N^b_\infty \equiv 0$. Thus the set of points of multiplicity $Q$ for $N^b_\infty$ coincides with the closed set of $p$ where $N^b_\infty (p) = Q\a{0}$. Such set must have $\cH^{m-2+\alpha}$ measure zero. We can then identify a closed set $\Lambda$ for which, at the same time, we have
\begin{itemize}
\item For some positive $\vartheta$
\[
\sum_i |N^b_{\infty,i} - \etaa\circ N^b_{\infty}|^2 \geq 2 \vartheta \qquad \mbox{everywhere on $\Lambda$}\, 
\]
\item $\Lambda$ is the Hausdorff limit of closed subsets $\Lambda_k$ of ${\rm D}_Q (T_{0, r_k})$ with $\cH^{m-2+\alpha}$ measure larger than $\frac{\eta}{2}$.
\end{itemize}
By the H\"older continuity of $N^b_\infty$ there must be a fixed constant $\bar\sigma$ such that
\[
\mint_{B_\sigma (q)} \sum_i |N^b_{\infty, i} - \etaa\circ N^b_{\infty}|^2 \geq \vartheta >0\qquad \forall q\in \Lambda\, ,\forall \sigma < \bar \sigma .
\]
We now fix a positive $\sigma < \bar{\sigma}$, whose choice will be specified only at the very end.

\medskip

By $L^2$ convergence, for $k$ large we inherit the inequality
\begin{equation}\label{e:alto}
\mint_{\cB_\sigma (q)} \sum_i |\bar N_{k,i} - \etaa\circ \bar N_{k}|^2 \geq \frac{\vartheta}{2}\bh_k^2 >0\qquad \forall q\in \p_{\bar{\cM_k}} (\Lambda_k)\, ,
\end{equation}
where $\p_{\bar \cM_k}$ denotes the orthogonal projection onto $\bar\cM_k$. 
 Now observe that, for $k$ large enough, it is also true that 
for any $q\in \p_{\bar \cM_k} (\Lambda_k)$ there is a point $p\in {\rm D}_Q (T_{0, t_{l(k)}})$ which is in the proximity of $q$, at a scale much smaller than $\sigma$. 
A very favorable situation is when $q$ belongs to the contact set where $T_{0, r_k}$ coincides with $Q\a{\bar \cM_k}$ (and thus $p=q$): in this case 
\[
\lim_{\rho\downarrow 0} \mint_{\cB_\rho (q)} \sum_i |\bar N_{k,i} - \etaa\circ \bar N_{k}|^2 = 0
\]
On the other hand, even if this is not the case, it is possible to argue that for some radius $\rho<<\sigma$ the integral above goes below the threshold
$\frac{\vartheta}{4} \bh_k^2$, namely
\begin{equation}\label{e:basso}
\mint_{\cB_\rho (q)} \sum_i |\bar N_{k,i} - \etaa\circ \bar N_{k}|^2\leq  \frac{\vartheta}{4}\bh_k^2 \, .
\end{equation}

Now, the reader familiar with Morrey spaces will realize that \eqref{e:alto} and \eqref{e:basso} force the existence of some intermediate radius $t (q)\in ]\rho, \sigma[$ with
\begin{equation}\label{e:per_coprire}
\frac{c_0\, \vartheta}{\sigma^{\alpha}}  \bh_k^2\leq \frac1{t(q)^{m-2+\alpha}}\, ,
\int_{\cB_{t(q)} (q)} |D \bar N_k|^2,
\end{equation}
where $c_0$ is a universal constant. 

For each $p\in \Lambda_k$ let $t (p)$ be $t (\p_{\bar \cM_k} (q))$.
Now using an elementary covering argument we can cover $\Lambda_k$ with balls $\bB_{10 t (p_i)} (p_i)$ in such a way that the balls $\cB_{2t (p_i)} (p_i)$ are disjoint. Being $\bar \cM_k$ almost flat it is not difficult to see that even the balls $\cB_{t (q_i)} (q_i) := \cB_{t (p_i)} (\p_{\bar \cM_k} (p_i))$ must be disjoint. Since $\Lambda_k$ has  $\cH^{m-2+\alpha}$ measure larger than $\eta/2$, we achieve
\begin{align}
\frac{\eta}{2} \leq &\sum_i \omega_{m-2+\alpha} (10 t(q_i))^{m-2+\alpha} \leq \sum_i \frac{C \sigma^\alpha}{\bh_k^2} \int_{\cB_{t (q_i)} (q_i)} |D\bar N_k|^2\nonumber\\
\leq & \frac{C \sigma^\alpha}{\bh_k^2}\int_{\cB_1 (\bar{p}_k)} |D\bar N_k|^2 \leq C \sigma^\alpha\label{e:argomento_finale}\, .
\end{align}
In the last inequality we have used that $\bh_k^2$, the $L^2$ norm of $\bar N_k$,  controls the Dirichlet energy, recall \eqref{e:aggiunta1}, \eqref{e:aggiunta2} and \eqref{e:aggiunta3}. Although the constant $C$ in \eqref{e:argomento_finale} depends upon the sequence of rescalings, the current $T$ and several other parameters, it does not depend upon $\sigma$ and $\eta$. Thus for a suitable choice of $\sigma$ we reach a contradiction.

\bibliographystyle{plain}
\bibliography{bib-survey-JDG}

\end{document}